\providecommand{\scr}{\mathcal}
\newtheorem{prop}{Proposition}[subsection]
\newtheorem{theo}[prop]{Théor\`eme}
\newtheorem*{theo**}{Théorème}
\newtheorem{coro}[prop]{Corollaire}
\newtheorem*{conj*}{Conjecture}
\newtheorem{lemm}[prop]{Lemme}
\newtheorem{lemm*}{Lemme}[prop]
\theoremstyle{definition}
\newtheorem{vide}[prop]{}
\newtheorem{defi}[prop]{Définition}
\newtheorem*{defi*}{Définition}
\theoremstyle{remark}
\newtheorem{rema}[prop]{Remarques}
\newtheorem{nota}[prop]{Notations}
\numberwithin{equation}{prop}
\newcommand{\riso}{ \overset{\sim}{\longrightarrow}\, }
\newcommand{\liso}{ \overset{\sim}{\longleftarrow}\, }
\newcommand{\Spec}{\mathrm{Spec}\,}
\newcommand{\Spf}{\mathrm{Spf}\,}
\renewcommand{\AA}{{\mathcal{A}}}
\newcommand{\FF}{{\mathcal{F}}}
\newcommand{\B}{{\mathcal{B}}}
\newcommand{\CC}{{\mathcal{C}}}
\newcommand{\E}{{\mathcal{E}}}
\newcommand{\G}{{\mathcal{G}}}
\renewcommand{\H}{{\mathcal{H}}}
\newcommand{\M}{{\mathcal{M}}}
\newcommand{\NN}{{\mathcal{N}}}
\newcommand{\D}{{\mathcal{D}}}
\newcommand{\I}{{\mathcal{I}}}
\newcommand{\PP}{{\mathcal{P}}}
\newcommand{\QQ}{{\mathcal{Q}}}
\renewcommand{\O}{{\mathcal{O}}}
\newcommand{\V}{\mathcal{V}}
\newcommand{\T}{{\mathcal{T}}}
\newcommand{\Y}{\mathcal{Y}}
\newcommand{\ZZ}{\mathcal{Z}}
\newcommand{\X}{\mathfrak{X}}
\newcommand{\U}{\mathfrak{U}}
\newcommand{\A}{\mathbb{A}}
\renewcommand{\L}{\mathbb{L}}
\newcommand{\R}{\mathbb{R}}
\newcommand{\Q}{\mathbb{Q}}
\newcommand{\Z}{\mathbb{Z}}
\newcommand{\N}{\mathbb{N}}
\newcommand{\hdag}{  \phantom{}{^{\dag} }    }
\begin{document}

\title{Systèmes inductifs cohérents de $\mathcal{D}$-modules arithmétiques logarithmiques, stabilité par opérations cohomologiques}
\author{Daniel Caro} 

\date{}

\maketitle

\begin{abstract}
 
Let $\mathcal{V}$ be a complete discrete valuation ring of unequal characteristic with perfect residue field, 
$\mathcal{P}$ be a smooth, quasi-compact, separated formal scheme over $\mathcal{V}$, 
$\mathcal{Z}$ be a strict normal crossing divisor of $\mathcal{P}$ and 
$\PP ^{\sharp}:= (\PP,\ZZ)$ the induced smooth formal log-scheme over $\V$. 
In Berthelot's theory of arithmetic $\mathcal{D}$-modules,
we work with the inductive system of sheaves of rings 
$\smash{\widehat{\mathcal{D}}} _{\mathcal{P} ^{\sharp}} ^{(\bullet)} : =
(\smash{\widehat{\mathcal{D}}} _{\mathcal{P}^{\sharp}} ^{(m)})_{m\in \N}$,
where $\smash{\widehat{\mathcal{D}}} _{\mathcal{P}^{\sharp}} ^{(m)}$ is the $p$-adic completion of the ring
of differential operators of level $m$ over $\mathcal{P}^{\sharp}$.
Moreover, he introduced the sheaf 
$\mathcal{D} ^{\dag} _{\mathcal{P} ^{\sharp},\mathbb{Q}}:=\underset{\underset{m}{\longrightarrow}}{\lim}\,
\smash{\widehat{\mathcal{D}}} _{ \mathcal{P} ^{\sharp} } ^{(m)} 
\otimes _{\mathbb{Z} }\mathbb{Q}$ of differential operators over $\mathcal{P} ^{\sharp}$ of finite level.
In this paper, we define the notion of (over)coherence for complexes
of $\smash{\widehat{\mathcal{D}}} _{\mathcal{P} ^{\sharp}} ^{(\bullet)} $-modules.
In this inductive system context, we prove some classical properties including 
that of Berthelot-Kashiwara's theorem. 
Moreover, when $\ZZ$ is empty, we check this notion
is compatible to that already known of (over)coherence for complexes of 
$\mathcal{D} ^{\dag} _{\mathcal{P},\mathbb{Q}}$-modules. 
\end{abstract}

\selectlanguage{frenchb}
\tableofcontents
\date{}

\section*{Introduction}

Soit $\V$ un anneau de valuation discrète complet d'inégales caractéristiques $(0,p)$, 
de corps résiduel parfait.
Soient $\PP$ un $\V$-schéma formel séparé, quasi-compact, lisse
et $P$ sa fibre spéciale. 
Pour tout entier $m\in \N$, 
Berthelot a construit dans \cite{Be1} le faisceau d'anneaux 
$\smash{\D} _{\PP} ^{(m)}$
des opérateurs différentiels sur $\PP$ de niveau $m$.
En le complétant $p$-adiquement, on obtient le faisceau d'anneaux
$\smash{\widehat{\D}} _{\PP} ^{(m)}$. 
On dispose de plus des morphismes canoniques de changement de niveaux
$\smash{\widehat{\D}} _{\PP} ^{(m)}
\to 
\smash{\widehat{\D}} _{\PP} ^{(m+1)}$ (voir \cite{Be1}), ce qui donne le 
système inductif d'anneaux 
$\smash{\widehat{\D}} _{\PP} ^{(\bullet)} : =
(\smash{\widehat{\D}} _{\PP} ^{(m)})_{m\in \N}$. 
Berthelot construit le faisceau des opérateurs différentiels de niveau fini
en posant 
$\D ^{\dag} _{\PP,\Q}:=\underset{\underset{m}{\longrightarrow}}{\lim}\,
\smash{\widehat{\D}} _{ \PP } ^{(m)} \otimes _{\Z }\Q$.
Par tensorisation par $\Q$ et passage à la limite sur le niveau, on obtient le foncteur noté
$\underrightarrow{\lim}\colon 
D ^{\mathrm{b}} ( \smash{\widehat{\D}} _{\PP} ^{(\bullet)})
\to 
D ^{\mathrm{b}} (\D ^{\dag} _{\PP,\Q})$.
Afin d'obtenir un foncteur pleinement fidèle 
qui factorise ce foncteur $\underrightarrow{\lim}$, 
Berthelot a introduit la catégorie 
$\smash{\underrightarrow{LD}} ^{\mathrm{b}} _{\Q}
 ( \smash{\widehat{\D}} _{\PP} ^{(\bullet)})$ qui est une localisation 
 de 
$D ^{\mathrm{b}} ( \smash{\widehat{\D}} _{\PP} ^{(\bullet)})$.
Il a défini la sous-catégorie pleine des complexes cohérents de 
$\smash{\underrightarrow{LD}} ^{\mathrm{b}} _{\Q}
 ( \smash{\widehat{\D}} _{\PP} ^{(\bullet)})$
qu'il note
$\smash{\underrightarrow{LD}} ^{\mathrm{b}} _{\Q, \mathrm{coh}} ( \smash{\widehat{\D}} _{\PP} ^{(\bullet)})$.
Il a alors établi que le foncteur 
$\underrightarrow{\lim}$
induit l'équivalence de catégories
\begin{equation}
\notag
(*)\hspace{1cm}
\underrightarrow{\lim} 
\colon 
\smash{\underrightarrow{LD}} ^{\mathrm{b}} _{\Q, \mathrm{coh}} ( \smash{\widehat{\D}} _{\PP} ^{(\bullet)})
\cong
D ^{\mathrm{b}} _{\mathrm{coh}}( \smash{\D} ^\dag _{\PP,\Q} ).
\end{equation}
Nous avions défini dans \cite{caro_surcoherent} 
la sous-catégorie pleine notée 
$D ^{\mathrm{b}} _{\mathrm{surcoh}}( \smash{\D} ^\dag _{\PP,\Q} )$
de $D ^{\mathrm{b}} _{\mathrm{coh}}( \smash{\D} ^\dag _{\PP,\Q} )$ des complexes surcohérents.
Par définition, un complexe $\E$ de $D ^{\mathrm{b}} _{\mathrm{coh}}( \smash{\D} ^\dag _{\PP,\Q} )$ 
est surcohérent s'il vérifie la propriété suivante : pour n'importe quel morphisme lisse $f \colon \PP' \to \PP$,
pour n'importe quel diviseur $T' $ de la fibre spéciale de $\PP'$, on a 
$(\hdag T') \circ f ^{!} (\E) \in D ^{\mathrm{b}} _{\mathrm{coh}}( \smash{\D} ^\dag _{\PP',\Q} )$,
où $(\hdag T') $ est le foncteur de localisation en dehors de $T'$.
Si $\E$ est un objet $D ^{\mathrm{b}} _{\mathrm{surcoh}}( \smash{\D} ^\dag _{\PP,\Q} )$
et $\E ^{(\bullet)}$ est un objet 
de $\smash{\underrightarrow{LD}} ^{\mathrm{b}} _{\Q, \mathrm{coh}} ( \smash{\widehat{\D}} _{\PP} ^{(\bullet)})$
tel que 
$\underrightarrow{\lim}  ~ (\E ^{(\bullet)}) \riso \E$, 
il n'est pas évident que $\E ^{(\bullet)}$ vérifie les mêmes propriétés de stabilité que $\E$. 
La raison est que 
pour tout objet $\FF ^{(\bullet)} $
de 
$\smash{\underrightarrow{LD}} ^{\mathrm{b}} _{\Q} ( \smash{\widehat{\D}} _{\PP} ^{(\bullet)})$,
la propriété que
$\underrightarrow{\lim}  ~ (\FF ^{(\bullet)})$ soit cohérent n'implique pas que $\FF ^{(\bullet)}$ soit cohérent.

Soient $\ZZ$ un diviseur à croisements normaux strict de $\PP$,
$\PP ^{\sharp}:= (\PP, \ZZ)$ le schéma formal logarithmique lisse sur $\V$ induit
et
$T$ un diviseur de $P$. 
Toutes les constructions des faisceaux des opérateurs différentiels ci-dessus restent valable en rajoutant
des singularités surconvergentes le long de $T$ (voir \cite{Be1}) ou en rajoutant des singularités logarithmiques (voir
\cite{these_montagnon} ou \cite{caro_log-iso-hol}). 
Dans ce papier, nous introduisons de manière analogue la notion de (sur)cohérence pour un complexe de 
$D ( \smash{\widehat{\D}} _{\PP ^{\sharp}} ^{(\bullet)}(T) )$ ou de 
$D ^{\mathrm{b}} _{\mathrm{coh}}( \smash{\D} ^\dag _{\PP ^{\sharp}} (\hdag T) _{\Q} )$ (pour ce dernier, on retrouve la notion 
usuelle de surcohérence lorsque $\ZZ$ est vide).
Nous étendons aussi quelques propriétés standards de la théorie des $\D$-modules arithmétiques à ce contexte
logarithmique de systèmes inductifs, notamment le théorème de Berthelot-Kashiwara (voir le théorème \ref{u!u+=id}).
En notant 
$\smash{\underrightarrow{LD}} ^{\mathrm{b}} _{\Q, \mathrm{surcoh}} ( \smash{\widehat{\D}} _{\PP} ^{(\bullet)} (T))$
la sous-catégorie pleine de 
$\smash{\underrightarrow{LD}} ^{\mathrm{b}} _{\Q, \mathrm{coh}} ( \smash{\widehat{\D}} _{\PP } ^{(\bullet)}(T))$
des complexes surcohérents (la notion de surcohérence lorsque $\ZZ$ est non vide n'est pas intéressante), 
nous vérifions que l'on dispose de l'équivalence de catégories
\begin{equation}
(**)\hspace{1cm}
\notag
\underrightarrow{\lim} 
\colon 
\smash{\underrightarrow{LD}} ^{\mathrm{b}} _{\Q, \mathrm{surcoh}} ( \smash{\widehat{\D}} _{\PP} ^{(\bullet)}(T))
\cong
D ^{\mathrm{b}} _{\mathrm{surcoh}}( \smash{\D} ^\dag _{\PP} (\hdag T) _{\Q} )
\end{equation}
Le point crucial de cette équivalence (**) est le théorème
\ref{limTouD} dont le corollaire \ref{coro1limTouD} signifie que si la localisation en dehors d'un certain diviseur $T$ de $P$
de l'image par $\underrightarrow{\lim} $ d'un complexe cohérent $\E ^{(\bullet)}$ reste cohérent, alors
la localisation en dehors de ce diviseur $T$ de $\E ^{(\bullet)}$ reste cohérent. 
Pour comprendre la pertinence  fondamentale de cette équivalence de catégories dans les problèmes de stabilité 
en théorie des $\D$-modules arithmétiques, 
on peut aussi consulter la remarque \ref{rema-interet papier}.
\bigskip

Précisons à présent le contenu de ce papier. 
Notons $M (\smash{\widehat{\D}} _{\PP ^{\sharp}} ^{(\bullet)} (T))$ la catégorie
des $\smash{\widehat{\D}} _{\PP ^{\sharp}} ^{(\bullet)} (T)$-modules (toujours à gauche par défaut).
Dans le premier chapitre, 
en localisant la catégorie
$M (\smash{\widehat{\D}} _{\PP ^{\sharp}} ^{(\bullet)} (T))$ 
de manière identique à Berthelot pour les complexes, on introduit la catégorie 
$\underrightarrow{LM} _{\Q} (\smash{\widehat{\D}} _{\PP ^{\sharp}} ^{(\bullet)} (T))$.
Nous vérifions que c'est une catégorie abélienne quotient à la Serre de $M (\smash{\widehat{\D}} _{\PP ^{\sharp}} ^{(\bullet)} (T))$. 
Dans le second chapitre, nous définissons les objets de 
$\underrightarrow{LM} _{\Q} (\smash{\widehat{\D}} _{\PP ^{\sharp}} ^{(\bullet)} (T))$
qui sont cohérents à lim-ind-isogénies près. Cette notion de cohérence à lim-ind-isogénies près est locale sur $\PP ^{\sharp}$, ce qui est la motivation principale 
de son introduction. 
De plus, en notant 
$D ^{\mathrm{b}} _{\mathrm{coh}} (\underrightarrow{LM} _{\Q} (\smash{\widehat{\D}} _{\PP ^{\sharp}} ^{(\bullet)} (T)))$
la sous-catégorie pleine de 
$D ^{\mathrm{b}}  (\underrightarrow{LM} _{\Q} (\smash{\widehat{\D}} _{\PP ^{\sharp}} ^{(\bullet)} (T)))$
des complexes dont les espaces de cohomologie sont cohérents à lim-ind-isogénie près, 
on vérifie l'équivalence canonique de catégories de la forme
$$ \smash{\underrightarrow{LD}} ^{\mathrm{b}} _{\Q, \mathrm{coh}} ( \smash{\widehat{\D}} _{\PP ^{\sharp}} ^{(\bullet)} (T))
\cong
D ^{\mathrm{b}} _{\mathrm{coh}} (\underrightarrow{LM} _{\Q} (\smash{\widehat{\D}} _{\PP ^{\sharp}} ^{(\bullet)} (T))).$$
Nous établissons aussi un des résultats techniques utiles dans la preuve du résultat central du troisième chapitre (i.e. le théorème 
\ref{limTouD} déjà évoqué ci-dessus) 
qui est que 
pour vérifier qu'un morphisme de
$ \smash{\underrightarrow{LD}} ^{\mathrm{b}} _{\Q, \mathrm{coh}} ( \smash{\widehat{\D}} _{\PP ^{\sharp}} ^{(\bullet)} (T))$
soit un isomorphisme, il suffit de le prouver pour ses espaces de cohomologie
 calculés dans $\underrightarrow{LM} _{\Q} (\smash{\widehat{\D}} _{\PP ^{\sharp}} ^{(\bullet)} (T))$, ce qui nous ramène ainsi au cas des modules.

Dans le troisième chapitre, nous rappelons et étudions le foncteur de localisation en dehors d'un diviseur puis nous établissons le point clé de ce papier
évoqué ci-dessus
concernant la stabilité de la cohérence par foncteur de localisation en dehors d'un diviseur. 
Dans le quatrième chapitre, nous rappelons la construction 
des foncteurs cohomologiques à support strict dans un fermé et de localisation en dehors d'un fermé 
de \cite{caro_surcoherent} (ou de \cite{caro-Tsuzuki} avec des structures logarithmiques)
et complétons, lorsque cela est utile, la vérification de certaines de leurs propriétés (qui étendent celles de \cite{caro_surcoherent} au cas logarithmiques), 
notamment concernant la fonctorialité.
Enfin, dans le dernier chapitre, nous définissons la notion de surcohérence dans
le contexte des systèmes inductifs de $\D$-modules arithmétiques logarithmiques
et établissons l'équivalence $(**)$.

\subsection*{Remerciement}
Je remercie Tomoyuki Abe pour ses encouragements
à rédiger en détail la partie concernant le foncteur cohomologique local ainsi que pour ses commentaires 
sur une version préliminaire. Je remercie Pierre Berthelot pour son avis sur les résultats de ce papier. 
Je remercie enfin l'IUF pour son soutien.

\section*{Notations}
Dans ce papier, 
on désigne par $\V$ un anneau de valuation discrète complet d'inégales caractéristiques $(0,p)$, 
$k$ son corps résiduel supposé parfait, $K$ son corps des fractions et $\pi$ une uniformisante. 
Les faisceaux seront notés par des lettres calligraphiques, 
leurs sections globales par la lettre droite associée. 
Les modules sont par défaut à gauche. 
On notera avec des chapeaux les complétions $p$-adiques et si 
$\E$ est un faisceau en groupes abéliens alors on posera $\E _{\Q}:= \E \otimes _{\Z} \Q$. 
Soient $\AA$ un faisceau d'anneaux sur un espace topologique $X$.
Si $*$ est l'un des symboles $+$, $-$, ou $\mathrm{b}$, $D ^* ( \AA )$ désigne
la catégorie dérivée des complexes de $\AA$-modules (à gauche) vérifiant les conditions correspondantes d'annulation
des faisceaux de cohomologie. Lorsque l'on souhaite préciser entre droite et gauche, on précise alors comme suit
$D ^* ( \overset{ ^\mathrm{g}}{}\AA )$ ou $D ^* ( \AA \overset{ ^\mathrm{d}}{})$.
On note $D ^{\mathrm{b}} _{\mathrm{coh}} ( \AA )$
la sous-catégorie pleine de $D  ( \AA )$
des complexes à cohomologie cohérente et bornée.
On suppose (sans nuire à la généralité) que tous les
$k$-schémas sont réduits et on pourra confondre les diviseurs avec leur support.

Les $\V$-schémas formels (logarithmiques) seront indiqués par des lettres calligraphiques ou gothiques 
et leur fibre spéciale par la lettre droite correspondante.
Par défaut, tous les schémas ou schémas formels seront quasi-compacts. 
On se donne de plus $\PP$ un $\V$-schéma formel séparé, lisse (et quasi-compact), 
$T$ un diviseur de $P$,
$\ZZ$ un diviseur à croisements normaux strict sur $\PP$
et $\PP ^{\sharp}:= (\PP, \ZZ)$ le $\V$-schéma formel logarithmique lisse dont la log-structure est donnée par $\ZZ$.
Si $\U$ est un ouvert de $\PP$, on notera $\U ^{\sharp}:= (\U, \ZZ \cap \U)$.
Pour alléger les notations, on notera alors
$\smash{\widehat{\D}} _{\PP ^{\sharp}} ^{(m)} (T):=
\widehat{\B} ^{(m)} _{\PP } ( T)  \smash{\widehat{\otimes}} _{\O _{\PP}} \smash{\widehat{\D}} _{\PP ^{\sharp}} ^{(m)}$, 
où $\widehat{\B} ^{(m)} _{\PP} ( T) $ désigne les faisceaux d'anneaux construits par Berthelot dans
\cite[4.2.3]{Be1} et 
$\smash{\D} _{\PP ^{\sharp}} ^{(m)}$ est le faisceau des opérateurs différentiels de niveau $m$ sur $\PP$
(voir \cite[2.2]{Be1} pour la version non-logarithmique ou \cite[1]{caro_log-iso-hol} en général).
On fixe $\lambda _0\colon \N \to \N$ une application croissante telle que 
$\lambda _{0} (m) \geq m$. On pose alors 
$\widetilde{\B} ^{(m)} _{\PP} ( T):= \widehat{\B} ^{(\lambda _0 (m))} _{\PP} ( T)$
et
$\smash{\widetilde{\D}} _{\PP ^{\sharp}} ^{(m)} (T):=
\widetilde{\B} ^{(m)} _{\PP} ( T)  \smash{\widehat{\otimes}} _{\O _{\PP}} \smash{\widehat{\D}} _{\PP ^{\sharp}} ^{(m)}$.
Nous verrons d'ailleurs a posteriori (voir \ref{prop-MQlambda2LMQ} et \ref{bis-prop-MQlambda2LMQ}), 
que l'hypothèse $\lambda _0= id$ ne nuit pas à la généralité. 
Enfin, si $f \colon \X \to \PP$ 
(resp. $f ^{\sharp} \colon \X ^{\sharp} \to \PP ^{\sharp}$)
est un morphisme de $\V$-schémas formels (resp. logarithmiques) lisses,
pour tout entier $i \in \N$,
on note $ f _{i} \colon X _i \to P _i$ (resp. $ f _{i} ^{\sharp}  \colon X ^{\sharp} _i \to P ^{\sharp} _i$)
le morphisme induit modulo $\pi ^{i+1}$.
On pose enfin
$\smash{\D} _{P ^{\sharp} _i} ^{(m)} (T):= \V / \pi ^{i+1} \otimes _{\V} \smash{\widehat{\D}} _{\PP ^{\sharp} } ^{(m)} (T) 
=
\B ^{(m)} _{P _i} ( T)  \otimes _{\O _{P _i}} \smash{\D} _{P ^{\sharp} _i} ^{(m)}$
et
$\smash{\widetilde{\D}} _{P ^{\sharp} _i} ^{(m)} (T):=\widetilde{\B} ^{(m)} _{P _i} ( T)  \otimes _{\O _{P _i}} \smash{\D} _{P ^{\sharp} _i} ^{(m)}$.

\section{Localisations de catégories dérivées de systèmes inductifs sur le niveau de $\D$-modules arithmétiques}

\subsection{Rappels et définitions de Berthelot dans le cas des complexes}
Nous utiliserons dans ce papier toutes les notations qui suivent:
\begin{vide}
[Catégories localisées de la forme $\smash{\underrightarrow{LD}} _{\Q}$ de Berthelot]
\label{loc-LM}
On rappelle ici les constructions de Berthelot de \cite[4.2.1 et 4.2.2]{Beintro2}
(Ce sont des cas particuliers pour simplifier la présentation 
de cette introduction \cite{Beintro2}).
On dispose du système inductif d'anneaux 
$\smash{\widetilde{\D}} _{\PP ^\sharp} ^{(\bullet)}(T) : =
(\smash{\widetilde{\D}} _{\PP ^\sharp} ^{(m)}(T) )_{m\in \N}$ 
(les morphismes de transition sont construits de manière analogue à \cite{Be1}).
Nous disposons de la catégorie dérivée
$D ^{\sharp}( \smash{\widetilde{\D}} _{\PP ^\sharp} ^{(\bullet)}(T))$, où $\sharp \in \{\emptyset, +,-, \mathrm{b}\}$.
Les objets de $D ( \smash{\widetilde{\D}} _{\PP ^\sharp} ^{(\bullet)}(T))$
seront notés 
 $\E ^{(\bullet)}= (\E ^{(m)} , \alpha ^{(m',m)})$, 
 où $m,m'$ parcourent les entiers positifs tels que $m' \geq m$,
 où $\E ^{(m)} $ est un complexe de $\smash{\widetilde{\D}} _{\PP ^\sharp} ^{(m)}(T)$-modules
 et $\alpha ^{(m',m)} \colon \E ^{(m)}\to \E ^{(m')}$ sont les morphismes $\smash{\widetilde{\D}} _{\PP ^\sharp} ^{(m)}(T)$-linéaires de transition.

\begin{itemize}
\item Soit $M$ l'ensemble filtrant (muni de la relation d'ordre canonique) 
des applications croissantes $\chi \colon \N \to \N$. Pour toute application $\chi \in M$, on note
$\chi ^{*} (\E ^{(\bullet)}) := (\E ^{(m)} , p ^{\chi (m') -\chi (m)}\alpha ^{(m',m)})$.
On obtient en fait un foncteur 
$\chi ^{*} \colon D ( \smash{\widetilde{\D}} _{\PP ^\sharp} ^{(\bullet)}(T))\to D ( \smash{\widetilde{\D}} _{\PP ^\sharp} ^{(\bullet)}(T))$ de la manière suivante:
si $f ^{(\bullet)} \colon \E ^{(\bullet)} \to \FF ^{(\bullet)}$ de $D ( \smash{\widetilde{\D}} _{\PP ^\sharp} ^{(\bullet)}(T))$, 
le morphisme de niveau $m$ de $\chi ^{*} f ^{(\bullet)} $ est $f ^{(m)}$.
Si $\chi _1, \chi _2 \in M$, on calcule que 
$\chi _1 ^* \circ \chi _2 ^* = (\chi _1 +\chi _2)^*$, en particulier $\chi _1 ^*$ et $\chi _2 ^*$ commutent.
De plus, si $\chi _1 \leq \chi _2$, alors on dispose du morphisme canonique 
$\chi _1 ^* ( \E ^{(\bullet)})\to \chi _2 ^* ( \E ^{(\bullet)})$ défini par 
$p ^{\chi _2 (m) -\chi _1(m)}\colon \E ^{(m)} \to \E ^{(m)}$. 
Un morphisme $f ^{(\bullet)} \colon \E ^{(\bullet)} \to \FF ^{(\bullet)}$ de $D ( \smash{\widetilde{\D}} _{\PP ^\sharp} ^{(\bullet)}(T))$
est une ind-isogénie s'il existe $\chi \in M$ 
et un morphisme 
$g ^{(\bullet)} \colon \FF ^{(\bullet)} \to \chi ^{*} \E ^{(\bullet)}$ de $D ( \smash{\widetilde{\D}} _{\PP ^\sharp} ^{(\bullet)}(T))$
tels que les morphismes 
$g ^{(\bullet)}\circ f ^{(\bullet)}$ et $\chi ^{*} (f ^{(\bullet)}) \circ g ^{(\bullet)}$ 
de $D ( \smash{\widetilde{\D}} _{\PP ^\sharp} ^{(\bullet)}(T))$
sont les morphismes canoniques (on prend $\chi _1 =0$ et $\chi _2=\chi$).
L'ensemble des ind-isogénies est un système inductif (cela découle de 
la proposition \cite[I.4.2]{HaRD} et du lemme \ref{JG-foncteurcohomol} ci-dessous). 
La catégorie obtenue en localisant $D ^{\sharp}( \smash{\widetilde{\D}} _{\PP ^\sharp} ^{(\bullet)}(T))$
par rapport aux ind-isogénies se note
$\smash{\underrightarrow{D}} _{\Q} ^{\sharp}( \smash{\widetilde{\D}} _{\PP ^\sharp} ^{(\bullet)}(T))$.

\item Soit $L$ l'ensemble filtrant (muni de la relation d'ordre canonique)
des applications croissantes $\lambda \colon \N \to \N$ telles que $\lambda (m ) \geq m$.
Pour tout $\lambda \in L$, on note 
$\lambda ^{*} (\E ^{(\bullet)}) := (\E ^{(\lambda(m))} , \alpha ^{(\lambda(m'),\lambda(m))})_{m'\geq m}$.
Si $\lambda _1, \lambda _2 \in L$, on calcule que 
$\lambda _1 ^* \circ \lambda _2 ^* = (\lambda _1 \circ\lambda _2)^*$.
De plus, si $\lambda _1 \leq \lambda _2$, on dispose alors du morphisme canonique
$\lambda _1 ^* (\E ^{(\bullet)}) \to 
\lambda _2 ^* (\E ^{(\bullet)})$ défini au niveau $m$ par 
le morphisme $\alpha ^{(\lambda _2(m),\lambda _1(m))} \colon 
\E ^{(\lambda _1(m))} \to \E ^{(\lambda _2(m))}$. 
Comme pour \cite[4.2.2]{Beintro2}, 
on note $\Lambda ^{\sharp}$ l'ensemble des 
morphismes
$f ^{(\bullet)} \colon \E ^{(\bullet)} \to \FF ^{(\bullet)}$
de 
$\smash{\underrightarrow{D}} _{\Q} ^{\sharp} ( \smash{\widetilde{\D}} _{\PP ^\sharp} ^{(\bullet)}(T))$
tels qu'il existe $\lambda \in L$ et un morphisme 
$g ^{(\bullet)} \colon \FF ^{(\bullet)} \to \lambda ^{*} \E ^{(\bullet)}$ de $\smash{\underrightarrow{D}} _{\Q} ( \smash{\widetilde{\D}} _{\PP ^\sharp} ^{(\bullet)}(T))$
tels que les morphismes
$g ^{(\bullet)}\circ f ^{(\bullet)}$ et $\lambda ^{*} (f ^{(\bullet)}) \circ g ^{(\bullet)}$ 
de $\smash{\underrightarrow{D}} _{\Q} ( \smash{\widetilde{\D}} _{\PP ^\sharp} ^{(\bullet)}(T))$
sont les morphismes canoniques (i.e. on prend $\lambda _1 =id$ et $\lambda _2=\lambda$).
Par commodité pour en référer, convenons que les morphismes de $\Lambda ^{\sharp}$ sont les {\og lim-isomorphismes\fg}.
On vérifie que $\Lambda ^{\sharp}$ est un système multiplication 
(à nouveau, utiliser  \cite[I.4.2]{HaRD} et  \ref{JG-foncteurcohomol}). 
La catégorie obtenue en localisant 
$\smash{\underrightarrow{D}} ^{\sharp} _{\Q}
( \smash{\widetilde{\D}} _{\PP ^\sharp} ^{(\bullet)}(T))$
par rapport aux lim-isomorphismes 
sera notée
$\smash{\underrightarrow{LD}} ^{\sharp} _{\Q}
( \smash{\widetilde{\D}} _{\PP ^\sharp} ^{(\bullet)}(T))$.

\item Si $\chi _1\leq \chi _2 $ dans $M$ et $\lambda _1 \leq \lambda _2$ dans $L$, on obtient alors par composition 
le morphisme canonique 
$\lambda  _1^{*} \chi  _1^{*} \to \lambda _2^{*} \chi _2^{*}$.
De plus, en considérant $\chi _1 \circ \lambda _1$ comme un élément de $M$, on obtient l'égalité
$\lambda _1 ^* \chi _1 ^* = (\chi _1 \circ \lambda _1) ^* \lambda _1 ^*$.
On note $S ^{\sharp}$ l'ensemble des 
morphismes
$f ^{(\bullet)} \colon \E ^{(\bullet)} \to \FF ^{(\bullet)}$
de 
$D ^{\sharp} ( \smash{\widetilde{\D}} _{\PP ^\sharp} ^{(\bullet)}(T))$
tels qu'il existe $\chi \in M$, 
$\lambda \in L$ et un morphisme 
$g ^{(\bullet)} \colon \FF ^{(\bullet)} \to \lambda ^{*} \chi ^{*}\E ^{(\bullet)}$ de $D ( \smash{\widetilde{\D}} _{\PP ^\sharp} ^{(\bullet)}(T))$
tels que les morphismes
$g ^{(\bullet)}\circ f ^{(\bullet)}$ et $\lambda ^{*} \chi ^{*}(f ^{(\bullet)}) \circ g ^{(\bullet)}$ 
de $D ( \smash{\widetilde{\D}} _{\PP ^\sharp} ^{(\bullet)}(T))$
sont les morphismes canoniques.
Les morphismes de $S ^{\sharp}$ sont appelés les {\og lim-ind-isogénies\fg}.
De même que pour $\Lambda ^{\sharp}$, on vérifie que $S ^{\sharp}$ est un système multiplicatif.
Plus précisément, cela découle de la proposition \cite[I.4.2]{HaRD} et du lemme \ref{JG-foncteurcohomol} ci-dessous.
\end{itemize}
\end{vide}

\begin{lemm}
\label{JG-foncteurcohomol}
Pour tout $\G ^{(\bullet)} \in D ^{\sharp} ( \smash{\widetilde{\D}} _{\PP ^\sharp} ^{(\bullet)}(T))$,
notons $H _{\G ^{(\bullet)} } ,\ I _{\G ^{(\bullet)} } ,\ J _{\G ^{(\bullet)} } \colon D ^{\sharp} ( \smash{\widetilde{\D}} _{\PP ^\sharp} ^{(\bullet)}(T)) \to \mathfrak{Ab}$
les foncteurs cohomologiques à valeur dans la catégorie des groupes abéliens définis respectivement en posant pour tout 
$\E ^{(\bullet)} \in D ^{\sharp} ( \smash{\widetilde{\D}} _{\PP ^\sharp} ^{(\bullet)}(T))$
\begin{gather}
\notag
H _{\G ^{(\bullet)} } (\E ^{(\bullet)}):=
\underset{\chi \in M}{\underrightarrow{\lim}}~
\mathrm{Hom} _{D (\smash{\widetilde{\D}} _{\PP ^\sharp} ^{(\bullet)} (T))}
(\G ^{(\bullet)},\chi ^{*}\E ^{(\bullet)} ),
\
I _{\G ^{(\bullet)} } (\E ^{(\bullet)}):=
\underset{\lambda \in L}{\underrightarrow{\lim}}~
\mathrm{Hom} _{D (\smash{\widetilde{\D}} _{\PP ^\sharp} ^{(\bullet)} (T))}
(\G ^{(\bullet)}, \lambda ^{*} \E ^{(\bullet)} ),
\\
\notag
J _{\G ^{(\bullet)} } (\E ^{(\bullet)}):=
\underset{\lambda \in L}{\underrightarrow{\lim}}~
\underset{\chi \in M}{\underrightarrow{\lim}}~
\mathrm{Hom} _{D (\smash{\widetilde{\D}} _{\PP ^\sharp} ^{(\bullet)} (T))}
(\G ^{(\bullet)}, \lambda ^{*} \chi ^{*}\E ^{(\bullet)} ).
\end{gather}

Soit $f \colon \E ^{(\bullet)} 
\to \FF ^{(\bullet)} $ 
un morphisme de 
$D ^{\sharp} ( \smash{\widetilde{\D}} _{\PP ^\sharp} ^{(\bullet)}(T))$.
Le morphisme  $f ^{(\bullet)}$ est une ind-isogénie (resp. 
un lim-isomorphisme, resp. une  lim-ind-isogénie) si et seulement si 
$H _{\G ^{(\bullet)} } (f ^{(\bullet)})$ 
(resp. $I _{\G ^{(\bullet)} } (f ^{(\bullet)})$,
resp. $J _{\G ^{(\bullet)} } (f ^{(\bullet)})$)
est un isomorphisme pour tout $\G ^{(\bullet)} \in D ^{\sharp} ( \smash{\widetilde{\D}} _{\PP ^\sharp} ^{(\bullet)}(T))$.
\end{lemm}

\begin{proof}
0) Les deux autres cas se vérifiant de la même manière (on omet soit les $\chi$ soit les $\lambda$), 
traitons seulement le dernier cas respectif.  

1) Supposons que $f ^{(\bullet)}\in  S ^{\sharp}$.
Il existe alors $\chi _0 \in M$, 
$\lambda _0 \in L$ et un morphisme 
$g ^{(\bullet)} \colon \FF ^{(\bullet)} \to \lambda _0 ^{*} \chi _0^{*}\E ^{(\bullet)}$ de $D ( \smash{\widetilde{\D}} _{\PP ^\sharp} ^{(\bullet)}(T))$
tels que les morphismes
$g ^{(\bullet)}\circ f ^{(\bullet)}$ et $\lambda _0 ^{*} \chi _0 ^{*}(f ^{(\bullet)}) \circ g ^{(\bullet)}$ 
de $D ( \smash{\widetilde{\D}} _{\PP ^\sharp} ^{(\bullet)}(T))$
sont les morphismes canoniques.

a) Vérifions que $J _{\G ^{(\bullet)} } (f ^{(\bullet)})$ est injective. 
Soit un morphisme 
$u^{(\bullet)}\colon \G ^{(\bullet)}\to \lambda ^{*} \chi ^{*}\E ^{(\bullet)} $ tel que 
$\lambda ^{*} \chi ^{*} (f ^{(\bullet)} )\circ u^{(\bullet)} =0$.
Il s'agit de vérifier que, quitte à augmenter $\lambda$ et $\chi$, alors 
$u^{(\bullet)}=0$.
Quitte à augmenter $\chi _0$, $\chi$, $\lambda _0$ ou $\lambda$, on peut supposer
que $\lambda =\lambda _0$ et $\chi = \chi _0 \circ \lambda _0$ (on remarque que $\chi _0 \circ \lambda _0 \in M$ et sera toujours considéré
comme un élément de $M$). 
Il en résulte
$\lambda _0 ^{*} (\chi _0 \circ \lambda _0)  ^{*}(g ^{(\bullet)} ) \circ \lambda _0 ^{*} (\chi _0 \circ \lambda _0)^{*} (f ^{(\bullet)} )\circ u^{(\bullet)}=0$. 
Cela signifie que le morphisme 
$u^{(\bullet)}\colon \G ^{(\bullet)}
\to 
\lambda _0 ^{*} (\chi _0 \circ \lambda _0)  ^{*}\E ^{(\bullet)} $
composé avec le morphisme canonique
$\lambda _0 ^{*} (\chi _0 \circ \lambda _0)  ^{*}\E ^{(\bullet)} 
\to 
\lambda _0 ^{*} (\chi _0 \circ \lambda _0)  ^{*}\lambda _0 ^{*} \chi _0^{*} \E ^{(\bullet)} $
est le morphisme nul.
Quoique les foncteurs $\lambda _0 ^{*}$ et $( \chi _0  \circ \lambda _0) ^{*}$  ne commutent pas,
on dispose cependant du diagramme commutatif de foncteurs
dont tous les morphismes sont les morphismes canoniques
\begin{equation}
\label{diag1J_G}
\xymatrix @R=0,3cm {
{(\chi _0 \circ \lambda _0) ^{*}} 
\ar[r] ^-{}
\ar[d] ^-{}
& 
{ \lambda _0 ^{*} (\chi _0 \circ \lambda _0) ^{*}} 
\ar[r] ^-{}
&
{ \lambda _0 ^{*} (\chi _0 \circ \lambda _0) ^{*}   \chi _0^{*}} 
\\ 
{(\chi _0 \circ \lambda _0) ^{*}  \lambda _0 ^{*} }
\ar@{=}[r] ^-{} 
\ar[d] ^-{}
& 
{\lambda _0 ^{*} \chi _0^{*}} 
\ar[u] ^-{}
\ar[d] ^-{}
\ar[r] ^-{}
& 
{\lambda _0 ^{*} \chi _0^{*} \chi _0^{*}} 
\ar[u] ^-{}
\\ 
{(\chi _0 \circ \lambda _0) ^{*}  \lambda _0 ^{*}   \chi _0^{*}} 
\ar@{=}[r] ^-{} 
& 
{\lambda _0 ^{*} \chi _0^{*}  \chi _0^{*}.}
\ar@{=}[ru] ^-{}  
& 
{ } 
} 
\end{equation}
En appliquant $\lambda _0 ^{*}$ au diagramme \ref{diag1J_G}, 
on vérifie alors qu'en composant 
le morphisme $u^{(\bullet)}$
avec 
le morphisme canonique
$\lambda _0 ^{*} (\chi _0 \circ \lambda _0)  ^{*}\E ^{(\bullet)} 
\to 
\lambda _0 ^{*}  \lambda _0 ^{*} (\chi _0 \circ \lambda _0) ^{*}   \chi _0^{*}\E ^{(\bullet)}$,
on obtient le morphisme nul. Ainsi la classe de $u^{(\bullet)}$ dans $J _{\G ^{(\bullet)} } (\E ^{(\bullet)})$ est nulle.

b) Vérifions la surjectivité de $J _{\G ^{(\bullet)} } (f ^{(\bullet)})$. 
Soit un morphisme 
$v^{(\bullet)}\colon \G ^{(\bullet)}\to \lambda ^{*} \chi ^{*}\FF ^{(\bullet)} $.
Quitte à augmenter $\chi _0$, $\chi$, $\lambda _0$ ou $\lambda$, on peut supposer
que $\lambda =\lambda _0$ et $\chi = \chi _0 \circ \lambda _0$.
Posons 
$u^{(\bullet)}:= 
\lambda ^{*} \chi ^{*} (g ^{(\bullet)}) \circ v ^{(\bullet)}
\colon 
\G ^{(\bullet)}\to
 \lambda _0 ^{*} (\chi _0 \circ \lambda _0) ^{*}   \lambda _0^{*}\chi _0^{*} \E ^{(\bullet)}
 =
  \lambda _0 ^{*} \lambda _0 ^{*}   \chi _0^{*}\chi _0^{*} \E ^{(\bullet)}$.
En utilisant de nouveau la commutativité de \ref{diag1J_G} auquel on a appliqué $\lambda _0 ^*$,
on vérifie de même la classe de $u^{(\bullet)}$ s'envoie sur la classe de $v ^{(\bullet)}$ via
$J _{\G ^{(\bullet)} } (f ^{(\bullet)})$.

2) Supposons à présent que 
$J _{\G ^{(\bullet)} } (f ^{(\bullet)})$ soit un isomorphisme quelque soit 
$\G ^{(\bullet)} \in D ^{\sharp} ( \smash{\widetilde{\D}} _{\PP ^\sharp} ^{(\bullet)}(T))$.
Comme $J _{\FF ^{(\bullet)} } (f ^{(\bullet)})$ est en particulier surjectif,
il existe alors $\chi _0 \in M$, 
$\lambda _0 \in L$ et un morphisme 
$g ^{(\bullet)} \colon \FF ^{(\bullet)} \to \lambda _0 ^{*} \chi _0^{*}\E ^{(\bullet)}$ de $D ( \smash{\widetilde{\D}} _{\PP ^\sharp} ^{(\bullet)}(T))$
tel que le morphisme
$\lambda _0 ^{*} \chi _0 ^{*}(f ^{(\bullet)}) \circ g ^{(\bullet)}$ 
de $D ( \smash{\widetilde{\D}} _{\PP ^\sharp} ^{(\bullet)}(T))$
est le morphisme canonique. 
Le foncteur $J _{\E ^{(\bullet)} } (f ^{(\bullet)})$ envoie la classe de 
$g ^{(\bullet)}\circ f ^{(\bullet)}$
sur la classe de $\lambda _0 ^{*} \chi _0 ^{*}(f ^{(\bullet)}) \circ g ^{(\bullet)}\circ f ^{(\bullet)}$,
qui est aussi la classe de $f ^{(\bullet)}$.
Comme $J _{\E ^{(\bullet)} } (f ^{(\bullet)})$ est en particulier injectif,
la classe de $g ^{(\bullet)}\circ f ^{(\bullet)}$
est égale à la classe de l'identité de $\E ^{(\bullet)}$.
Quitte à augmenter $\lambda _0$ et $\chi _0$, il en résulte que le morphisme
$g ^{(\bullet)}\circ f ^{(\bullet)}$ est le morphisme canonique. 
\end{proof}

\begin{vide}
\label{preHomLDQ}
Un morphisme 
$f ^{(\bullet)}
\colon \E ^{(\bullet)} 
\to 
\FF ^{(\bullet)}$ de 
$\smash{\underrightarrow{D}} _{\Q} ( \smash{\widetilde{\D}} _{\PP ^\sharp} ^{(\bullet)}(T)) $
 est représenté par un morphisme 
 $\phi ^{(\bullet)}
 \colon 
 \E ^{(\bullet)} 
\to 
\chi ^{*}\FF ^{(\bullet)}$
pour un certain $\chi \in M$. 
De plus, deux morphismes $\phi _1 ^{(\bullet)}
 \colon 
 \E ^{(\bullet)} 
\to 
\chi _1 ^{*}\FF ^{(\bullet)}$
et
$\phi _2 ^{(\bullet)}
 \colon 
 \E ^{(\bullet)} 
\to 
\chi _2 ^{*}\FF ^{(\bullet)}$
de $D( \smash{\widetilde{\D}} _{\PP ^\sharp} ^{(\bullet)}(T)) $
induisent la même flèche de 
$\smash{\underrightarrow{D}} _{\Q} ( \smash{\widetilde{\D}} _{\PP ^\sharp} ^{(\bullet)}(T)) $
si et seulement s'il existe $\chi \geq \chi _1, \chi _2$, tels que les deux flèches composées
$\E ^{(\bullet)} 
\overset{\phi _1 ^{(\bullet)}}{\longrightarrow}
\chi _1 ^{*}\FF ^{(\bullet)}
\to 
\chi ^{*}\FF ^{(\bullet)}$
et
$\E ^{(\bullet)} 
\overset{\phi _2 ^{(\bullet)}}{\longrightarrow}
\chi _2 ^{*}\FF ^{(\bullet)}
\to 
\chi ^{*}\FF ^{(\bullet)}$
soient égales.
Pour résumer, 
pour tous $\E ^{(\bullet)}, \FF ^{(\bullet)} \in \underrightarrow{D} _{\Q} (\smash{\widetilde{\D}} _{\PP ^\sharp} ^{(\bullet)} (T))$,
on dispose de la formule
\begin{equation}
\label{4.2.2Beintropre}
\mathrm{Hom} _{\underrightarrow{D} _{\Q} (\smash{\widetilde{\D}} _{\PP ^\sharp} ^{(\bullet)} (T))}
(\E ^{(\bullet)}, \FF ^{(\bullet)} )
=
\underset{\chi \in M}{\underrightarrow{\lim}}\;
\mathrm{Hom} _{D (\smash{\widetilde{\D}} _{\PP ^\sharp} ^{(\bullet)} (T))}
(\E ^{(\bullet)}, \chi ^{*}\FF ^{(\bullet)} ).
\end{equation}
De même, 
on dispose,
pour tous $\E ^{(\bullet)}, \FF ^{(\bullet)} \in S ^{-1} D (\smash{\widetilde{\D}} _{\PP ^\sharp} ^{(\bullet)} (T))$,
de l'égalité:
\begin{equation}
\label{pre4.2.2Beintro}
\mathrm{Hom} _{S ^{-1} D (\smash{\widetilde{\D}} _{\PP ^\sharp} ^{(\bullet)} (T))}
(\E ^{(\bullet)}, \FF ^{(\bullet)} )
=
\underset{\lambda \in L}{\underrightarrow{\lim}}~
\underset{\chi \in M}{\underrightarrow{\lim}}~
\mathrm{Hom} _{D (\smash{\widetilde{\D}} _{\PP ^\sharp} ^{(\bullet)} (T))}
(\E ^{(\bullet)}, \lambda ^{*} \chi ^{*}\FF ^{(\bullet)} ).
\end{equation}
\end{vide}

\begin{lemm}
\label{lemm-locSQ}
\begin{enumerate}
\item 
\label{lemm-locSQ1}
Soient $\chi _1\in M$ et
$f ^{(\bullet)} \colon \E ^{(\bullet)} \to \chi _1^{*} \FF ^{(\bullet)}$
un morphisme de 
$D ( \smash{\widetilde{\D}} _{\PP ^\sharp} ^{(\bullet)}(T))$.
S'il existe $\chi _2 \in M$ 
et un morphisme 
$g ^{(\bullet)} \colon \FF ^{(\bullet)} \to \chi _2^{*} \E ^{(\bullet)}$ de $D ( \smash{\widetilde{\D}} _{\PP ^\sharp} ^{(\bullet)}(T))$
tel que 
$\chi _1 ^{*}(g ^{(\bullet)}) \circ f ^{(\bullet)}$ et $\chi _2 ^{*} (f ^{(\bullet)}) \circ g ^{(\bullet)}$ sont dans
$\smash{\underrightarrow{D}} _{\Q} 
( \smash{\widetilde{\D}} _{\PP ^\sharp} ^{(\bullet)}(T))$
les morphismes canoniques, 
alors 
$f ^{(\bullet)}$ est une ind-isogénie.

\item 
\label{lemm-locSQ2}
Soient $\chi _1\in M$, $\lambda _1 \in L$ et
$f ^{(\bullet)} \colon \E ^{(\bullet)} \to \lambda _1 ^* \chi _1^{*} \FF ^{(\bullet)}$
un morphisme de 
$D ( \smash{\widetilde{\D}} _{\PP ^\sharp} ^{(\bullet)}(T))$.
S'il existe $\chi _2 \in M$, $\lambda _2 \in L$
et un morphisme 
$g ^{(\bullet)} \colon \FF ^{(\bullet)} \to \lambda _2 ^* \chi _2^{*} \E ^{(\bullet)}$ de $D ( \smash{\widetilde{\D}} _{\PP ^\sharp} ^{(\bullet)}(T))$
tels que les morphismes 
$\lambda _1 ^* \chi _1 ^{*}(g ^{(\bullet)}) \circ f ^{(\bullet)}$ et $\lambda _2 ^* \chi _2 ^{*} (f ^{(\bullet)}) \circ g ^{(\bullet)}$ 
sont les morphismes canoniques
dans $S ^{-1} D
( \smash{\widetilde{\D}} _{\PP ^\sharp} ^{(\bullet)}(T))$, 
alors 
$f ^{(\bullet)}$ est une lim-ind-isogénie.
\end{enumerate}

\end{lemm}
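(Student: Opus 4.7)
L'id\'ee commune aux deux assertions est d'exploiter les formules explicites des Hom dans les cat\'egories localis\'ees (\ref{4.2.2Beintropre} et \ref{pre4.2.2Beintro}) : toute \'egalit\'e de morphismes dans la cat\'egorie localis\'ee se traduit, apr\`es post-composition avec un morphisme canonique appropri\'e, en une identit\'e d\'ej\`a v\'erifi\'ee dans $D( \smash{\widetilde{\D}} _{\PP ^\sharp} ^{(\bullet)}(T))$. Cela permet dans les deux cas d'extraire explicitement, \`a partir du quasi-inverse $g ^{(\bullet)}$ donn\'e, un t\'emoin de la d\'efinition d'(ind- ou lim-ind-)isog\'enie.

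Pour \ref{lemm-locSQ1}, par \ref{4.2.2Beintropre} appliqu\'ee aux deux hypoth\`eses, quitte \`a prendre un majorant commun dans $M$, on obtient un $\chi_3 \in M$ tel que, dans $D( \smash{\widetilde{\D}} _{\PP ^\sharp} ^{(\bullet)}(T))$,
\begin{gather*}
p^{\chi_3} \circ \chi_1 ^{*}(g ^{(\bullet)}) \circ f ^{(\bullet)} = p^{\chi_1+\chi_2+\chi_3} \colon \E ^{(\bullet)} \to (\chi_1+\chi_2+\chi_3) ^{*} \E ^{(\bullet)},\\
p^{\chi_3} \circ \chi_2 ^{*}(f ^{(\bullet)}) \circ g ^{(\bullet)} = p^{\chi_1+\chi_2+\chi_3} \colon \FF ^{(\bullet)} \to (\chi_1+\chi_2+\chi_3) ^{*} \FF ^{(\bullet)},
\end{gather*}
o\`u $p^\chi$ d\'esigne le morphisme canonique dont la composante au niveau $m$ est la multiplication par $p^{\chi(m)}$. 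En posant $\chi' := \chi_1+\chi_2+\chi_3 \in M$ et $\tilde g ^{(\bullet)} := p^{\chi_3} \circ \chi_1 ^{*}(g ^{(\bullet)}) \colon \chi_1 ^{*} \FF ^{(\bullet)} \to \chi' ^{*} \E ^{(\bullet)}$, la premi\`ere identit\'e donne directement $\tilde g ^{(\bullet)} \circ f ^{(\bullet)} = p^{\chi'}$; et la centralit\'e de $p$ (qui, au niveau $m$, permet de faire commuter $p^{\chi_3(m)}$ et $f ^{(m)}$) combin\'ee \`a la fonctorialit\'e de $\chi_1 ^{*}$ donne
\[
\chi' ^{*}(f ^{(\bullet)}) \circ \tilde g ^{(\bullet)}
= p^{\chi_3} \circ \chi_1 ^{*} \bigl( \chi_2 ^{*}(f ^{(\bullet)}) \circ g ^{(\bullet)} \bigr) = p^{\chi'},
\]
la derni\`ere \'egalit\'e provenant de la seconde identit\'e ci-dessus apr\`es application de $\chi_1 ^{*}$. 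Ainsi $(\chi', \tilde g ^{(\bullet)})$ t\'emoigne que $f ^{(\bullet)}$ est une ind-isog\'enie.

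Pour \ref{lemm-locSQ2}, la strat\'egie est identique : on applique \ref{pre4.2.2Beintro} pour obtenir $\chi_3 \in M$ et $\lambda_3 \in L$ communs aux deux hypoth\`eses, tels que, dans $D$, les compositions d\'eduites des hypoth\`eses deviennent \'egales aux morphismes canoniques apr\`es post-composition avec le morphisme canonique associ\'e \`a $p^{\chi_3}$ et au re-indexage par $\lambda_3$. On construit alors $\tilde g ^{(\bullet)}$ comme la compos\'ee ad\'equate de $\lambda_1 ^{*} \chi_1 ^{*}(g ^{(\bullet)})$ avec ces morphismes canoniques, en utilisant l'identification $\lambda ^{*} \chi ^{*} = (\chi \circ \lambda) ^{*} \lambda ^{*}$ pour regrouper les r\'e-indexations et les twists et d\'egager les indices finaux $\chi'\in M$ et $\lambda'\in L$ ; la v\'erification que $\tilde g ^{(\bullet)}$ satisfait les deux conditions requises se fait alors comme en \ref{lemm-locSQ1}.

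L'obstacle principal n'est pas conceptuel mais comptable : dans la partie \ref{lemm-locSQ2}, il faut suivre soigneusement les indices produits par la combinaison des foncteurs $\chi ^{*}$ (twist des transitions par des puissances de $p$) et $\lambda ^{*}$ (re-indexage), et v\'erifier que les $\tilde g ^{(\bullet)}$ construits sont bien des morphismes de syst\`emes inductifs, autrement dit qu'ils respectent les transitions twist\'ees aux deux niveaux ; ceci d\'ecoule en derni\`ere analyse de la $p$-lin\'earit\'e des morphismes en jeu et de la compatibilit\'e de $g ^{(\bullet)}$ avec les transitions de $\chi_2 ^{*} \E ^{(\bullet)}$ (resp.\ $\lambda_2 ^{*} \chi_2 ^{*} \E ^{(\bullet)}$).
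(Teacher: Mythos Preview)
Your argument is correct but follows a different route from the paper's. The paper proceeds by reduction: it first treats the special case $\chi_1 = 0$ (resp.\ $\chi_1 = 0$ and $\lambda_1 = \mathrm{id}$), where the witness is produced simply by post-composing $g^{(\bullet)}$ with a canonical map $\chi_2^* \E^{(\bullet)} \to \chi^* \E^{(\bullet)}$ for a suitably large $\chi$. The general case is then reduced to this one by considering the chain
\[
\E^{(\bullet)} \xrightarrow{f^{(\bullet)}} \chi_1^* \FF^{(\bullet)} \xrightarrow{\chi_1^*(g^{(\bullet)})} \chi_1^* \chi_2^* \E^{(\bullet)} \xrightarrow{\chi_1^* \chi_2^*(f^{(\bullet)})} \chi_1^* \chi_2^* \chi_1^* \FF^{(\bullet)},
\]
in which any two consecutive arrows compose to a canonical morphism in $\underrightarrow{D}_{\Q}$; this places $f^{(\bullet)}$ in the special case with $\chi_1^* \FF^{(\bullet)}$ playing the role of $\FF^{(\bullet)}$. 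By contrast, you construct the witness $(\chi', \tilde g^{(\bullet)})$ directly, tracking all indices at once.

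One point of phrasing: the step $\chi'^*(f^{(\bullet)}) \circ p^{\chi_3} = p^{\chi_3} \circ (\chi_1+\chi_2)^*(f^{(\bullet)})$ is better justified as the naturality of the transformation $p^{\chi_3} \colon \mathrm{id} \Rightarrow \chi_3^*$ rather than ``centralit\'e de $p$'', since $f^{(\bullet)}$ is a morphism in $D$ and need not admit a level-by-level description. The paper's reduction buys lighter bookkeeping, which is especially noticeable in part~\ref{lemm-locSQ2} where your own sketch flags the accounting as the main obstacle; conversely your direct approach makes the witness fully explicit in part~\ref{lemm-locSQ1}.
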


\begin{proof}
1) Traitons d'abord la première assertion. 
a) Dans un premier temps supposons $\chi _1 =0$, i.e. $\chi _1 ^* =id$.
Par hypothèse, il existe alors $\chi  \geq \chi _2$ tel que
$g ^{(\bullet)} \circ f ^{(\bullet)}$ composé avec la flèche canonique 
$\chi _2 ^{*} \E ^{(\bullet)} \to  \chi ^{*} \E ^{(\bullet)}$
soit la flèche canonique dans 
$D ( \smash{\widetilde{\D}} _{\PP ^\sharp} ^{(\bullet)}(T))$
et tel que 
$\chi _2 ^{*} (f ^{(\bullet)}) \circ g ^{(\bullet)}$
composé avec la flèche canonique $\chi _2 ^{*} \FF ^{(\bullet)} \to  \chi ^{*} \FF ^{(\bullet)}$
soit la flèche canonique dans 
$D ( \smash{\widetilde{\D}} _{\PP ^\sharp} ^{(\bullet)}(T))$.
En notant $h ^{(\bullet)}$ le composé de $g ^{(\bullet)}$ avec le morphisme canonique
$\chi _2 ^{*} \E ^{(\bullet)} \to  \chi ^{*} \E ^{(\bullet)}$, 
on vérifie alors que 
$h ^{(\bullet)} \circ f ^{(\bullet)}$ et $\chi ^{*} (f ^{(\bullet)}) \circ h ^{(\bullet)}$ sont dans
$D
( \smash{\widetilde{\D}} _{\PP ^\sharp} ^{(\bullet)}(T))$
les morphismes canoniques.

b) Revenons à présent au cas général. 
En composant deux flèches consécutives de la suite
$\E ^{(\bullet)} 
\underset{f ^{(\bullet)}}{\longrightarrow}
\chi _1^{*} \FF ^{(\bullet)}
\underset{\chi _1 ^{*}(g ^{(\bullet)})}{\longrightarrow}
\chi _1^{*} \chi _2^{*} \E ^{(\bullet)}
\underset{\chi _1^{*} \chi _2^{*}(f ^{(\bullet)})}{\longrightarrow}
\chi _1^{*} \chi _2^{*} (\chi _1^{*} \FF ^{(\bullet)})
$
on obtient les morphismes canoniques
dans $\smash{\underrightarrow{D}} _{\Q} ^{\sharp}
( \smash{\widetilde{\D}} _{\PP ^\sharp} ^{(\bullet)}(T))$.
D'où le résultat d'après le cas a) traité ci-dessus. 

2) Pour la seconde assertion, 
on procède de manière analogue: on traite d'abord le cas où $\lambda _{1} = id$ et $\chi _1 =0$, puis le cas général
(on remplace partout $\chi _i ^{*}$ par $\lambda _i ^* \chi _i ^{*}$ pour $i= 1,2$).
\end{proof}

\begin{lemm}
\label{S=LDQ}
On dispose de l'équivalence canonique de catégories
$S ^{\sharp -1} D ^{\sharp}
( \smash{\widetilde{\D}} _{\PP ^\sharp} ^{(\bullet)}(T))
\cong
\smash{\underrightarrow{LD}} _{\Q} ^{\sharp}
( \smash{\widetilde{\D}} _{\PP ^\sharp} ^{(\bullet)}(T))$
qui est l'identité sur les objets.
\end{lemm}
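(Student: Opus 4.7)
The strategy is to use the universal property of localization twice, producing mutually quasi-inverse functors that are identity on objects.

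First, I would build $G : S ^{\sharp -1} D ^{\sharp}( \smash{\widetilde{\D}} _{\PP ^\sharp} ^{(\bullet)}(T)) \to \smash{\underrightarrow{LD}} ^{\sharp} _{\Q}( \smash{\widetilde{\D}} _{\PP ^\sharp} ^{(\bullet)}(T))$. By the universal property of $S ^{\sharp -1}$, it suffices to show that the image in $\underrightarrow{LD} ^{\sharp} _{\Q}$ of every element of $S ^{\sharp}$ is invertible. Given $f ^{(\bullet)} \in S ^{\sharp}$ with witnesses $(\chi, \lambda, g ^{(\bullet)})$, I would observe that the canonical transition maps $\E ^{(\bullet)} \to \chi ^{*} \E ^{(\bullet)}$ and $\E ^{(\bullet)} \to \lambda ^{*} \E ^{(\bullet)}$ are respectively an ind-isogeny and a lim-isomorphism, so they become identities in $\underrightarrow{LD} ^{\sharp} _{\Q}$. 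The defining equations of $S ^{\sharp}$ then force the image of $g ^{(\bullet)}$ to be a two-sided inverse of the image of $f ^{(\bullet)}$.

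Next, I would build $F : \smash{\underrightarrow{LD}} ^{\sharp} _{\Q} \to S ^{\sharp -1} D ^{\sharp}$ in two stages. Every ind-isogeny lies in $S ^{\sharp}$ (take $\lambda = \mathrm{id}$), so the canonical localization $D ^{\sharp} \to S ^{\sharp -1} D ^{\sharp}$ factors through $\underrightarrow{D} ^{\sharp} _{\Q}$. To factor further through $\underrightarrow{LD} ^{\sharp} _{\Q}$, I must show that every lim-isomorphism $\varphi ^{(\bullet)}$ of $\underrightarrow{D} ^{\sharp} _{\Q}$ becomes invertible in $S ^{\sharp -1} D ^{\sharp}$. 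Using the formula \eqref{4.2.2Beintropre}, I would lift both $\varphi ^{(\bullet)}$ and the quasi-inverse supplied by the definition of $\Lambda ^{\sharp}$ to representatives in $D ^{\sharp}$; after absorbing the indeterminacies into a sufficiently large $\chi$, these representatives will satisfy the hypothesis of Lemma \ref{lemm-locSQ}(\ref{lemm-locSQ2}) already in $D ^{\sharp}$, so the representative of $\varphi ^{(\bullet)}$ lies in $S ^{\sharp}$ and is therefore inverted in $S ^{\sharp -1} D ^{\sharp}$. This is the main technical step: Lemma \ref{lemm-locSQ} is tailored exactly to the filtered-colimit bookkeeping required to promote relations valid in $\underrightarrow{D} ^{\sharp} _{\Q}$ to relations valid in $D ^{\sharp}$, and without it one does not immediately obtain a witness lying in $S ^{\sharp}$.

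Finally, $F$ and $G$ are identity on objects, and on morphisms both send the class of a $D ^{\sharp}$-morphism to its canonical image under the relevant localization. By uniqueness in the universal property of localization, the composites $F \circ G$ and $G \circ F$ agree with the respective identity functors on both objects and morphisms, yielding the stated equivalence.
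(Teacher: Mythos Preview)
Your proposal is correct and follows essentially the same approach as the paper: both constructions proceed via the universal property of localization in each direction, and the only nontrivial step --- showing that a lim-isomorphism of $\smash{\underrightarrow{D}}_{\Q}^{\sharp}$ admits a representative lying in $S^{\sharp}$ by lifting it and its quasi-inverse to $D^{\sharp}$ via \eqref{4.2.2Beintropre} and then invoking Lemma~\ref{lemm-locSQ}(\ref{lemm-locSQ2}) --- is exactly the argument the paper gives. The only cosmetic difference is that the paper checks the composite relations hold in $\smash{\underrightarrow{D}}_{\Q}^{\sharp}$ (hence in $S^{-1}D^{\sharp}$, which is precisely the hypothesis of Lemma~\ref{lemm-locSQ}(\ref{lemm-locSQ2})), whereas you propose enlarging $\chi$ so that they hold already in $D^{\sharp}$; both are equivalent here.
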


\begin{proof}
Comme le foncteur canonique 
$D ^{\sharp}( \smash{\widetilde{\D}} _{\PP ^\sharp} ^{(\bullet)}(T))
\to
\smash{\underrightarrow{D}} _{\Q} ^{\sharp}
( \smash{\widetilde{\D}} _{\PP ^\sharp} ^{(\bullet)}(T))$
envoie une lim-ind-isogénie sur un lim-isomorphisme, 
le foncteur canonique $D ^{\sharp}( \smash{\widetilde{\D}} _{\PP ^\sharp} ^{(\bullet)}(T))
\to
\smash{\underrightarrow{LD}} _{\Q} ^{\sharp}
( \smash{\widetilde{\D}} _{\PP ^\sharp} ^{(\bullet)}(T))$
se factorise canoniquement 
en le foncteur 
$S ^{\sharp -1} D ^{\sharp}( \smash{\widetilde{\D}} _{\PP ^\sharp} ^{(\bullet)}(T))
\to
\smash{\underrightarrow{LD}} _{\Q} ^{\sharp}
( \smash{\widetilde{\D}} _{\PP ^\sharp} ^{(\bullet)}(T))$.
Réciproquement, comme une ind-isogénie est en particulier une lim-ind-isogénie, on bénéficie du foncteur canonique
$\smash{\underrightarrow{D}} _{\Q} ^{\sharp}
( \smash{\widetilde{\D}} _{\PP ^\sharp} ^{(\bullet)}(T))
\to 
S ^{\sharp -1} D ^{\sharp}( \smash{\widetilde{\D}} _{\PP ^\sharp} ^{(\bullet)}(T))$.
Soit $f ^{(\bullet)} \colon \E ^{(\bullet)} \to \FF ^{(\bullet)}$
un morphisme de 
$\smash{\underrightarrow{D}} _{\Q} ^{\sharp} ( \smash{\widetilde{\D}} _{\PP ^\sharp} ^{(\bullet)}(T))$
tel qu'il existe $\lambda _2 \in L$ et un morphisme 
$g ^{(\bullet)} \colon \FF ^{(\bullet)} \to \lambda _2 ^{*} \E ^{(\bullet)}$ de $\smash{\underrightarrow{D}} _{\Q} ( \smash{\widetilde{\D}} _{\PP ^\sharp} ^{(\bullet)}(T))$
tels que 
$g ^{(\bullet)}\circ f ^{(\bullet)}$ et $\lambda _2 ^{*} (f ^{(\bullet)}) \circ g ^{(\bullet)}$ sont les morphismes canoniques.
Il existe $\chi _1\in M$ tel que 
$f ^{(\bullet)} $
soit représenté par un morphisme de 
$D ^{\sharp} ( \smash{\widetilde{\D}} _{\PP ^\sharp} ^{(\bullet)}(T))$
de la forme
$\phi ^{(\bullet)}\colon \E ^{(\bullet)} \to \chi _1 ^* \FF ^{(\bullet)}$.
Il existe $\chi _2 \in M$ tel que 
$g ^{(\bullet)} $
soit représenté par un morphisme de 
$D ^{\sharp} ( \smash{\widetilde{\D}} _{\PP ^\sharp} ^{(\bullet)}(T))$
de la forme
$\psi ^{(\bullet)}\colon \FF ^{(\bullet)} \to \chi _2 ^* \lambda _2 ^{*} \E ^{(\bullet)}$.
On vérifie alors que
$\chi _1 ^* (\psi ^{(\bullet)})
\circ \phi ^{(\bullet)}$
et 
$\chi _2 ^* \lambda _2 ^* (\phi ^{(\bullet)})
\circ \psi ^{(\bullet)}$
sont dans 
$\smash{\underrightarrow{D}} _{\Q} ^{\sharp} ( \smash{\widetilde{\D}} _{\PP ^\sharp} ^{(\bullet)}(T))$
(et donc dans $S ^{-1} D ^{\sharp} ( \smash{\widetilde{\D}} _{\PP ^\sharp} ^{(\bullet)}(T))$)
les morphismes canoniques.
D'après le lemme \ref{lemm-locSQ}.\ref{lemm-locSQ2}, 
on en déduit que $\phi ^{(\bullet)}$ est une lim-ind-isogénie.
\end{proof}

\begin{vide}
\label{HomLDQ}
Il découle du lemme \ref{S=LDQ} et de \ref{pre4.2.2Beintro}
que l'on dispose,
pour tous $\E ^{(\bullet)}, \FF ^{(\bullet)} \in \underrightarrow{LD} _{\Q} (\smash{\widetilde{\D}} _{\PP ^\sharp} ^{(\bullet)} (T))$,
de l'égalité:
\begin{equation}
\label{4.2.2Beintro}
\mathrm{Hom} _{\underrightarrow{LD} _{\Q} (\smash{\widetilde{\D}} _{\PP ^\sharp} ^{(\bullet)} (T))}
(\E ^{(\bullet)}, \FF ^{(\bullet)} )
=
\underset{\lambda \in L}{\underrightarrow{\lim}}~
\underset{\chi \in M}{\underrightarrow{\lim}}~
\mathrm{Hom} _{D (\smash{\widetilde{\D}} _{\PP ^\sharp} ^{(\bullet)} (T))}
(\E ^{(\bullet)}, \lambda ^{*} \chi ^{*}\FF ^{(\bullet)} ).
\end{equation}

\end{vide}

\begin{rema}
\label{rema-iso-0}
\begin{itemize}
\item Soit $\phi ^{(\bullet)}
\colon \E ^{(\bullet)} 
\to 
\FF ^{(\bullet)}$ un morphisme de 
$D ( \smash{\widetilde{\D}} _{\PP ^\sharp} ^{(\bullet)}(T)) $.
Grâce à \ref{preHomLDQ} et \ref{lemm-locSQ} (resp. et \ref{HomLDQ}), on vérifie alors que
$\phi ^{(\bullet)}$ est un isomorphisme dans 
$\underrightarrow{D} _{\Q} (\smash{\widetilde{\D}} _{\PP ^\sharp} ^{(\bullet)} (T))$
(resp. $\underrightarrow{LD} _{\Q} (\smash{\widetilde{\D}} _{\PP ^\sharp} ^{(\bullet)} (T))$)
si et seulement si 
$\phi ^{(\bullet)}$ est une ind-isogénie 
(resp. $\phi ^{(\bullet)}\in S$).

\item On déduit du premier point qu'un complexe $\E ^{(\bullet)} $
de $D ( \smash{\widetilde{\D}} _{\PP ^\sharp} ^{(\bullet)}(T)) $
est isomorphe à $0$ dans
$\underrightarrow{D} _{\Q} (\smash{\widetilde{\D}} _{\PP ^\sharp} ^{(\bullet)} (T))$
(resp. $\underrightarrow{LD} _{\Q} (\smash{\widetilde{\D}} _{\PP ^\sharp} ^{(\bullet)} (T))$)
si et seulement s'il existe $\chi \in M$ (resp. $\chi \in M$ et $\lambda \in L$)
tels que la flèche canonique
$\E ^{(\bullet)}  \to \chi ^* \E ^{(\bullet)} $
(resp. 
$\E ^{(\bullet)}  \to \lambda ^* \chi ^* \E ^{(\bullet)} $)
est le morphisme nul.
\end{itemize}

\end{rema}

\subsection{Point de vue de la catégorie dérivée d'une catégorie abélienne quotient à la Serre}

\begin{vide}
\label{defi-M-L}
Notons $M (\smash{\widetilde{\D}} _{\PP ^\sharp} ^{(\bullet)} (T))$ la catégorie
des $\smash{\widetilde{\D}} _{\PP ^\sharp} ^{(\bullet)} (T)$-modules.
Les $\smash{\widetilde{\D}} _{\PP ^\sharp} ^{(\bullet)} (T)$-modules
seront notés 
$\E ^{(\bullet)}= (\E ^{(m)} , \alpha ^{(m',m)})$, 
où $m,m'$ parcourent les entiers positifs tels que $m' \geq m$,
où $\E ^{(m)} $ désigne un $\smash{\widetilde{\D}} _{\PP ^\sharp} ^{(m)}(T)$-module
et $\alpha ^{(m',m)} \colon \E ^{(m)}\to \E ^{(m')}$ sont les morphismes $\smash{\widetilde{\D}} _{\PP ^\sharp} ^{(m)}(T)$-linéaires de transition.
Pour tout $\chi \in M$, on note de manière analogue à 
\ref{loc-LM}
$\chi ^{*} (\E ^{(\bullet)}) := (\E ^{(m)} , p ^{\chi (m') -\chi (m)}\alpha ^{(m',m)})$.
En fait, on obtient le foncteur 
$\chi ^{*}\colon M (\smash{\widetilde{\D}} _{\PP ^\sharp} ^{(\bullet)} (T)) \to M (\smash{\widetilde{\D}} _{\PP ^\sharp} ^{(\bullet)} (T))$, 
ce dernier étant l'identité sur les flèches entre les modules au niveau $m$ fixé.
De plus, comme pour 
\ref{loc-LM},
pour tout $\lambda \in L$, on note 
$\lambda ^{*} (\E ^{(\bullet)}) := (\E ^{(\lambda(m))} , \alpha ^{(\lambda(m'),\lambda(m))})$.

Comme pour 
\ref{loc-LM},
on définit le système multiplicatif des ind-isogénies (resp. des lim-ind-isogénies) de
$M (\smash{\widetilde{\D}} _{\PP ^\sharp} ^{(\bullet)} (T))$
et on note
$\smash{\underrightarrow{M}} _{\Q}  ( \smash{\widetilde{\D}} _{\PP ^\sharp} ^{(\bullet)}(T))$
(resp. $S ^{-1}M (\smash{\widetilde{\D}} _{\PP ^\sharp} ^{(\bullet)} (T))$)
la catégorie localisée par les ind-isogénies (resp. par les lim-ind-isogénies). 
Comme pour 
\ref{loc-LM} (on remplace ici l'utilisation de \cite[I.4.2]{HaRD} par 
\cite[Exercice 7.5]{Kashiwara-schapira-book}),
on définit le système multiplicatif des lim-isomorphismes de
$\smash{\underrightarrow{M}} _{\Q}  ( \smash{\widetilde{\D}} _{\PP ^\sharp} ^{(\bullet)}(T))$
et on note
$\underrightarrow{LM} _{\Q} (\smash{\widetilde{\D}} _{\PP ^\sharp} ^{(\bullet)} (T))$
la catégorie localisée. 

Tous les résultats entre le lemme  \ref{preHomLDQ} et les remarques \ref{rema-iso-0} restent valables 
pour les modules à la place des complexes. 
Par exemple, de manière identique à \ref{S=LDQ}, 
on vérifie l'équivalence canonique de catégories
$S ^{-1}M (\smash{\widetilde{\D}} _{\PP ^\sharp} ^{(\bullet)} (T))
\cong 
\underrightarrow{LM} _{\Q} (\smash{\widetilde{\D}} _{\PP ^\sharp} ^{(\bullet)} (T))$.
Ainsi, les objets de 
$\underrightarrow{LM} _{\Q} (\smash{\widetilde{\D}} _{\PP ^\sharp} ^{(\bullet)} (T))$
sont ceux de $M (\smash{\widetilde{\D}} _{\PP ^\sharp} ^{(\bullet)} (T))$ et ses flèches sont comme pour \ref{4.2.2Beintro}
données par la formule: 
pour tous $\E ^{(\bullet)}, \FF ^{(\bullet)} \in \underrightarrow{LM} _{\Q} (\smash{\widetilde{\D}} _{\PP ^\sharp} ^{(\bullet)} (T))$ 
\begin{equation}
\label{4.2.2BeintroLMQ}
\mathrm{Hom} _{\underrightarrow{LM} _{\Q} (\smash{\widetilde{\D}} _{\PP ^\sharp} ^{(\bullet)} (T))}
(\E ^{(\bullet)}, \FF ^{(\bullet)} )
=
\underset{\lambda \in L}{\underrightarrow{\lim}}\;
\underset{\chi \in M}{\underrightarrow{\lim}}\;
\mathrm{Hom} _{M (\smash{\widetilde{\D}} _{\PP ^\sharp} ^{(\bullet)} (T))}
(\E ^{(\bullet)}, \lambda ^{*} \chi ^{*}\FF ^{(\bullet)} ).
\end{equation}
\end{vide}

\begin{vide}
\label{defi-M-L-Gamma}
En remplaçant $\D$ par $D$, 
on définit de manière identique à \ref{defi-M-L} les catégories
$M (\smash{\widetilde{D}} _{\PP ^{\sharp}} ^{(\bullet)} (T))$,
$\smash{\underrightarrow{M}} _{\Q}  ( \smash{\widetilde{D}} _{\PP ^{\sharp}} ^{(\bullet)}(T))$,
$S ^{-1}M (\smash{\widetilde{D}} _{\PP ^{\sharp}} ^{(\bullet)} (T))$
et 
$\underrightarrow{LM} _{\Q} (\smash{\widetilde{D}} _{\PP ^{\sharp}} ^{(\bullet)} (T))$.
De même, les résultats entre le lemme  \ref{preHomLDQ} et les remarques \ref{rema-iso-0}
sont toujours valables dans ce contexte.  
\end{vide}

\begin{lemm}
\label{Lm-LD-plfid}
Le foncteur canonique 
$\underrightarrow{LM} _{\Q} (\smash{\widetilde{\D}} _{\PP ^\sharp} ^{(\bullet)} (T))
\to 
\underrightarrow{LD} _{\Q} (\smash{\widetilde{\D}} _{\PP ^\sharp} ^{(\bullet)} (T))$
est pleinement fidèle.

\end{lemm}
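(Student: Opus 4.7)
Mon plan est de comparer directement les formules explicites pour les ensembles de morphismes dans les deux cat�gories localis�es, en exploitant le fait que les foncteurs $\chi ^{*}$ et $\lambda ^{*}$ sont d�finis exactement de la m�me mani�re sur les modules (voir \ref{defi-M-L}) et sur les complexes (voir \ref{loc-LM}).

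Soient $\E ^{(\bullet)}$ et $\FF ^{(\bullet)}$ deux objets de $\underrightarrow{LM} _{\Q} (\smash{\widetilde{\D}} _{\PP ^\sharp} ^{(\bullet)} (T))$, que le foncteur canonique envoie sur les complexes concentr�s en degr� $0$ correspondants de $\underrightarrow{LD} _{\Q} (\smash{\widetilde{\D}} _{\PP ^\sharp} ^{(\bullet)} (T))$. Ma premi�re �tape sera d'utiliser les formules \ref{4.2.2Beintro} et \ref{4.2.2BeintroLMQ} pour exprimer respectivement $\mathrm{Hom} _{\underrightarrow{LD} _{\Q}}(\E ^{(\bullet)}, \FF ^{(\bullet)})$ et $\mathrm{Hom} _{\underrightarrow{LM} _{\Q}}(\E ^{(\bullet)}, \FF ^{(\bullet)})$ comme doubles limites inductives (sur $\lambda \in L$ et $\chi \in M$) des ensembles $\mathrm{Hom} _{D}(\E ^{(\bullet)}, \lambda ^{*} \chi ^{*}\FF ^{(\bullet)})$ et $\mathrm{Hom} _{M}(\E ^{(\bullet)}, \lambda ^{*} \chi ^{*}\FF ^{(\bullet)})$.

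Ma seconde �tape sera de v�rifier qu'� $\lambda, \chi$ fix�s, l'application canonique $\mathrm{Hom} _{M}(\E ^{(\bullet)}, \lambda ^{*} \chi ^{*}\FF ^{(\bullet)}) \to \mathrm{Hom} _{D}(\E ^{(\bullet)}, \lambda ^{*} \chi ^{*}\FF ^{(\bullet)})$ est bijective. Comme les foncteurs $\chi ^{*}$ et $\lambda ^{*}$ n'affectent que l'indexation des niveaux et les morphismes de transition (sans modifier les complexes situ�s � chaque niveau), $\lambda ^{*} \chi ^{*}\FF ^{(\bullet)}$ reste concentr� en degr� $0$ au m�me titre que $\FF ^{(\bullet)}$. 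On est ainsi ramen� au fait classique selon lequel, pour tout couple d'objets d'une cat�gorie ab�lienne, vus comme complexes concentr�s en degr� $0$, l'application canonique de l'ensemble des morphismes dans la cat�gorie ab�lienne vers celui des morphismes dans la cat�gorie d�riv�e associ�e est bijective. Le passage � la double limite inductive fournira alors la bijection souhait�e.

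Le principal point de vigilance sera de s'assurer de la compatibilit� du foncteur canonique avec les structures multiplicatives utilis�es pour localiser: il faudra constater qu'une ind-isog�nie (resp. un lim-isomorphisme) de modules, consid�r�e comme morphisme de complexes concentr�s en degr� $0$, reste une ind-isog�nie (resp. un lim-isomorphisme) de complexes, de sorte que le foncteur entre cat�gories localis�es soit bien d�fini et induise effectivement au niveau des Hom les applications issues des formules \ref{4.2.2Beintro} et \ref{4.2.2BeintroLMQ}. Ce point r�sulte imm�diatement du fait que les d�finitions des syst�mes multiplicatifs donn�es en \ref{defi-M-L} reprennent mot pour mot celles de \ref{loc-LM} via l'inclusion des modules comme complexes concentr�s en degr� $0$.
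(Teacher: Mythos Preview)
Your proposal is correct and follows essentially the same approach as the paper: both arguments reduce to the bijectivity of $\mathrm{Hom}_{M}(\E^{(\bullet)}, \lambda^{*}\chi^{*}\FF^{(\bullet)}) \to \mathrm{Hom}_{D}(\E^{(\bullet)}, \lambda^{*}\chi^{*}\FF^{(\bullet)})$ for modules viewed as complexes in degree~$0$, and conclude by the formulas \ref{4.2.2Beintro} and \ref{4.2.2BeintroLMQ}. Your additional paragraph on the compatibility of the localizing systems is a reasonable point of care, though the paper leaves it implicit.
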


\begin{proof}
Cela résulte du fait que l'application 
$\mathrm{Hom} _{M (\smash{\widetilde{\D}} _{\PP ^\sharp} ^{(\bullet)} (T))}
(\E ^{(\bullet)}, \lambda ^{*} \chi ^{*}\FF ^{(\bullet)} )
\to 
\mathrm{Hom} _{D (\smash{\widetilde{\D}} _{\PP ^\sharp} ^{(\bullet)} (T))}
(\E ^{(\bullet)}, \lambda ^{*} \chi ^{*}\FF ^{(\bullet)} )$
est bijective
et que l'on dispose des égalités \ref{4.2.2Beintro} et \ref{4.2.2BeintroLMQ}.
\end{proof}

\begin{lemm}
\label{defi-M-L-Serre}
Soit $N (\smash{\widetilde{\D}} _{\PP ^\sharp} ^{(\bullet)} (T))$
la sous-catégorie pleine de 
$M (\smash{\widetilde{\D}} _{\PP ^\sharp} ^{(\bullet)} (T))$
des modules nuls dans 
$\underrightarrow{LM} _{\Q} (\smash{\widetilde{\D}} _{\PP ^\sharp} ^{(\bullet)} (T))$.
\begin{enumerate}
\item La catégorie $N (\smash{\widetilde{\D}} _{\PP ^\sharp} ^{(\bullet)} (T))$ est une sous-catégorie de Serre
de $M (\smash{\widetilde{\D}} _{\PP ^\sharp} ^{(\bullet)} (T))$.
\item On dispose de l'égalité:
$M (\smash{\widetilde{\D}} _{\PP ^\sharp} ^{(\bullet)} (T))/N (\smash{\widetilde{\D}} _{\PP ^\sharp} ^{(\bullet)} (T))
=
S ^{-1}M (\smash{\widetilde{\D}} _{\PP ^\sharp} ^{(\bullet)} (T))
\cong 
\underrightarrow{LM} _{\Q} (\smash{\widetilde{\D}} _{\PP ^\sharp} ^{(\bullet)} (T))$.
En particulier, 
$\underrightarrow{LM} _{\Q} (\smash{\widetilde{\D}} _{\PP ^\sharp} ^{(\bullet)} (T))$ est une catégorie abélienne
et le système multiplicatif des lim-ind-isogénies de 
$M (\smash{\widetilde{\D}} _{\PP ^\sharp} ^{(\bullet)} (T))$ est saturé.

\end{enumerate}
\end{lemm}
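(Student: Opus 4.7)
\emph{Approche.} Le plan consiste \`a caract\'eriser les lim-ind-isog\'enies dans $M (\smash{\widetilde{\D}} _{\PP ^\sharp} ^{(\bullet)} (T))$ comme \'etant pr\'ecis\'ement les morphismes dont le noyau et le conoyau appartiennent \`a $N (\smash{\widetilde{\D}} _{\PP ^\sharp} ^{(\bullet)} (T))$. Une fois cette identification acquise, la th\'eorie g\'en\'erale de Gabriel-Zisman des quotients de Serre fournit simultan\'ement les deux assertions: $N$ est une sous-cat\'egorie de Serre, la cat\'egorie quotient $M/N$ est ab\'elienne et co\"\i ncide avec $S ^{-1} M$, et le syst\`eme multiplicatif $S$ est satur\'e. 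L'\'equivalence $S ^{-1} M \cong \underrightarrow{LM} _{\Q} (\smash{\widetilde{\D}} _{\PP ^\sharp} ^{(\bullet)} (T))$ ayant d\'ej\`a \'et\'e obtenue dans \ref{defi-M-L}, ceci fournit bien les assertions voulues.

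\emph{Stabilit\'e de $N$.} Par la version pour les modules de la remarque \ref{rema-iso-0}, un objet $\E ^{(\bullet)}$ appartient \`a $N$ si et seulement s'il existe $\chi \in M$ et $\lambda \in L$ tels que la fl\`eche canonique $\E ^{(\bullet)} \to \lambda ^* \chi ^* \E ^{(\bullet)}$ soit nulle. Les foncteurs $\chi ^*$ et $\lambda ^*$ \'etant manifestement exacts et cette fl\`eche \'etant naturelle en $\E ^{(\bullet)}$, les stabilit\'es par sous-objet et par quotient sont imm\'ediates avec le m\^eme t\'emoin. Pour la stabilit\'e par extension, partant d'une suite exacte $0 \to \E ' \to \E \to \E '' \to 0$ et de t\'emoins $(\chi ', \lambda ')$ pour $\E '$ et $(\chi '', \lambda '')$ pour $\E ''$, la composition $\E \to \lambda ''^* \chi ''^* \E \to \lambda ''^* \chi ''^* \E ''$ est nulle, donc la fl\`eche canonique de $\E$ se factorise \`a travers $\lambda ''^* \chi ''^* \E '$; l'application du t\'emoin de $\E '$, avec $\lambda := \lambda ' \circ \lambda ''$ et $\chi$ construit \`a partir de $\chi ', \chi ''$, fournit alors une fl\`eche canonique nulle $\E \to \lambda ^* \chi ^* \E$, d'o\`u $\E \in N$.

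\emph{Caract\'erisation des lim-ind-isog\'enies.} Soit $f : \E ^{(\bullet)} \to \FF ^{(\bullet)}$ un morphisme. Si $f$ est une lim-ind-isog\'enie de t\'emoin $g : \FF \to \lambda ^* \chi ^* \E$, alors $\ker f$ est contenu dans le noyau de $g \circ f$, qui est le noyau de la fl\`eche canonique $\E \to \lambda ^* \chi ^* \E$; or, un calcul direct (prendre $\lambda ' = \lambda$ et $\chi ' \geq \chi$) montre que ce dernier appartient \`a $N$. Sym\'etriquement, $\mathrm{coker}\, f$ est, apr\`es application de $\lambda ^* \chi ^*$, un quotient du conoyau de la fl\`eche canonique sur $\FF$, lequel est dans $N$. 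R\'eciproquement, partant de $f$ avec $\ker f, \mathrm{coker}\, f \in N$ de t\'emoins $(\chi _1, \lambda _1)$ et $(\chi _2, \lambda _2)$, on construit $g$ niveau par niveau: pour $y \in \FF ^{(m)}$, son image dans $\mathrm{coker}\, f$ s'annule au niveau $\lambda _2 (m)$ apr\`es multiplication par la puissance de $p$ fournie par $\chi _2$, donc l'image correspondante dans $\FF ^{(\lambda _2 (m))}$ se rel\`eve en un \'el\'ement de $\E ^{(\lambda _2 (m))}$ d\'efini modulo $\ker f ^{(\lambda _2 (m))}$; on le pousse au niveau $\lambda _1 \lambda _2 (m)$ o\`u l'ind\'etermination dispara\^\i t gr\^ace au t\'emoin de $\ker f$. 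On obtient ainsi $g ^{(m)} : \FF ^{(m)} \to \E ^{(\lambda _1 \lambda _2 (m))}$ bien d\'efini, et les relations caract\'erisant $(g, \lambda, \chi)$ comme t\'emoin de lim-ind-isog\'enie de $f$ d\'ecoulent de la construction.

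\emph{Obstacle principal.} Le point d\'elicat est la r\'eciproque ci-dessus: la construction de $g$ passe par un rel\`evement non canonique de $y$ en $x ' \in \E ^{(\lambda _2 (m))}$, et il faut v\'erifier que la classe finale dans $\E ^{(\lambda _1 \lambda _2 (m))}$ est ind\'ependante du choix de $x '$ et que l'application $g ^{(m)}$ ainsi obtenue est $\smash{\widetilde{\D}} _{\PP ^\sharp} ^{(m)} (T)$-lin\'eaire et compatible aux morphismes de transition de $\FF$. L'ind\'ependance provient pr\'ecis\'ement du fait que la diff\'erence de deux rel\`evements est dans $\ker f$, tu\'e apr\`es application du t\'emoin de ce noyau; la lin\'earit\'e et la compatibilit\'e n\'ecessitent un contr\^ole minutieux des puissances de $p$ provenant de $\chi _1$ et $\chi _2$ ainsi que du choix final de $\lambda$ et $\chi$.
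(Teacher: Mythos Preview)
Your proof is correct and follows essentially the same strategy as the paper: characterize $N$ via the vanishing of the canonical map $\E^{(\bullet)} \to \lambda^*\chi^*\E^{(\bullet)}$, prove the Serre conditions directly, and identify $S$ with the system $S_N$ of morphisms whose kernel and cokernel lie in $N$.

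Two small remarks. First, your \emph{Approche} paragraph overstates what the general theory provides: Gabriel--Zisman tells you that \emph{if} $N$ is Serre then $S_N$ is saturated and $M/N$ is abelian, but it does not produce the Serre property itself --- you correctly prove that separately, so this is only a phrasing issue. Second, for the implication $S_N \subset S$, the paper avoids your element-wise lifting argument by factoring $\phi$ as $\E^{(\bullet)} \twoheadrightarrow \mathrm{im}\,\phi \hookrightarrow \FF^{(\bullet)}$ and treating each piece with a one-line diagram chase: if $\ker\phi \in N$ with witness $(\chi,\lambda)$, the canonical map $\E^{(\bullet)} \to \lambda^*\chi^*\E^{(\bullet)}$ kills $\ker\phi$ and hence factors through $\mathrm{im}\,\phi$, giving the inverse directly as a sheaf map with no choices involved. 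This sidesteps the ``contr\^ole minutieux des puissances de $p$'' you flag as the obstacle, and also makes the sheaf-theoretic well-definedness automatic rather than requiring a local-lift-then-glue argument. Your construction is not wrong, but the factorization is cleaner.
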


\begin{proof}
1) Il est clair que $N (\smash{\widetilde{\D}} _{\PP ^\sharp} ^{(\bullet)} (T))$ 
est stable par isomorphismes. 
D'après l'analogue de la remarque 
\ref{rema-iso-0} (on remplace les complexes par les modules),
un objet
$\E ^{(\bullet)} $
de
$M (\smash{\widetilde{\D}} _{\PP ^\sharp} ^{(\bullet)} (T))$
est dans 
$N (\smash{\widetilde{\D}} _{\PP ^\sharp} ^{(\bullet)} (T))$ 
si et seulement s'il existe 
$\chi \in M$, 
$\lambda \in L$
tels que le morphisme canonique
$\E ^{(\bullet)}
\to
\lambda ^{*} \chi ^{*}\E ^{(\bullet)}$
soit le morphisme nul.
Il en résulte aussitôt que 
$N (\smash{\widetilde{\D}} _{\PP ^\sharp} ^{(\bullet)} (T))$ 
est stable par sous-objets, sous-quotients. 
De plus, soit
$0
\to 
\FF ^{(\bullet)}
\to 
\E ^{(\bullet)}
\to
\G ^{(\bullet)}
\to 0$
une suite exacte de 
$M (\smash{\widetilde{\D}} _{\PP ^\sharp} ^{(\bullet)} (T))$
telle que 
$\FF ^{(\bullet)}, \G ^{(\bullet)}
\in 
N (\smash{\widetilde{\D}} _{\PP ^\sharp} ^{(\bullet)} (T))$.
Soient $\chi \in M$, 
$\lambda \in L$
tels que les morphismes canoniques
$\FF ^{(\bullet)}
\to
\lambda ^{*} \chi ^{*}\FF ^{(\bullet)}$
et
$\G ^{(\bullet)}
\to
\lambda ^{*} \chi ^{*}\G ^{(\bullet)}$
sont les morphismes nuls.
Comme les foncteurs $\chi ^*$ et $\lambda ^*$
sont exacts,
on obtient la suite exacte courte
$0
\to 
\lambda ^{*} \chi ^{*}\FF ^{(\bullet)}
\to 
\lambda ^{*} \chi ^{*}\E ^{(\bullet)}
\to
\lambda ^{*} \chi ^{*}\G ^{(\bullet)}
\to 0$
dans $M (\smash{\widetilde{\D}} _{\PP ^\sharp} ^{(\bullet)} (T))$.
L'image du morphisme canonique
$\E ^{(\bullet)}
\to
\lambda ^{*} \chi ^{*}\E ^{(\bullet)}$
est alors incluse dans
$\lambda ^{*} \chi ^{*}\FF ^{(\bullet)}$.
On en déduit que le morphisme canonique
$\E ^{(\bullet)}
\to
(\lambda \circ \lambda) ^{*} (2\chi) ^{*}\E ^{(\bullet)}$
est le morphisme nul.

2) Notons $S _{N}$ le système multiplicatif saturé associé à la 
sous-catégorie de Serre 
$N (\smash{\widetilde{\D}} _{\PP ^\sharp} ^{(\bullet)} (T))$.
Rappelons que les éléments de $S _{N}$ sont les flèches 
de $M (\smash{\widetilde{\D}} _{\PP ^\sharp} ^{(\bullet)} (T))$
dont le noyau et le conoyau sont dans 
$N (\smash{\widetilde{\D}} _{\PP ^\sharp} ^{(\bullet)} (T))$.
Il s'agit de vérifier que $S _N =S$.

a) Soit 
$0
\to 
\FF ^{(\bullet)}
\to 
\E ^{(\bullet)}
\to
\G ^{(\bullet)}
\to 0$
une suite exacte de 
$M (\smash{\widetilde{\D}} _{\PP ^\sharp} ^{(\bullet)} (T))$.
Supposons 
$\FF ^{(\bullet)}
\in 
N (\smash{\widetilde{\D}} _{\PP ^\sharp} ^{(\bullet)} (T))$.
Soient $\chi \in M$, 
$\lambda \in L$
tel que le morphisme canonique
$\FF ^{(\bullet)}
\to
\lambda ^{*} \chi ^{*}\FF ^{(\bullet)}$
soit nul. 
On obtient alors un 
morphisme
$\G ^{(\bullet)}
\to 
\lambda ^{*} \chi ^{*}\E ^{(\bullet)}$
qui donne un inverse 
à 
$\E ^{(\bullet)}
\to
\G ^{(\bullet)}$
dans 
$S ^{-1}M (\smash{\widetilde{\D}} _{\PP ^\sharp} ^{(\bullet)} (T))$.
De même, si 
$\G ^{(\bullet)}
\in 
N (\smash{\widetilde{\D}} _{\PP ^\sharp} ^{(\bullet)} (T))$,
on vérifie que 
$\FF ^{(\bullet)}
\to 
\E ^{(\bullet)}$
est un isomorphisme dans 
$S ^{-1}M (\smash{\widetilde{\D}} _{\PP ^\sharp} ^{(\bullet)} (T))$.

b) Soit
$\phi \colon \E ^{(\bullet)}
\to 
\FF ^{(\bullet)}$
un morphisme de
$M (\smash{\widetilde{\D}} _{\PP ^\sharp} ^{(\bullet)} (T))$.
En utilisant $a)$, on vérifie que si 
 $\ker \phi$ et $\mathrm{coker} \phi$ appartiennent à 
$N (\smash{\widetilde{\D}} _{\PP ^\sharp} ^{(\bullet)} (T))$, 
alors $\phi$ est un isomorphisme dans
$\underrightarrow{LM} _{\Q} (\smash{\widetilde{\D}} _{\PP ^\sharp} ^{(\bullet)} (T))$, et donc 
$\phi \in S $ d'après le premier point de la remarque \ref{rema-iso-0}.
Réciproquement, supposons que 
$\phi $ soit une lim-ind-isogénie de $M (\smash{\widetilde{\D}} _{\PP ^\sharp} ^{(\bullet)} (T))$.
Soient $\chi \in M$, 
$\lambda \in L$
tels qu'il existe un  morphisme canonique
$\psi\colon \FF ^{(\bullet)}
\to
\lambda ^{*} \chi ^{*}\E ^{(\bullet)}$
dans
$M (\smash{\widetilde{\D}} _{\PP ^\sharp} ^{(\bullet)} (T))$
tel que 
$\psi \circ \phi$
et $\lambda ^{*} \chi ^{*} (\phi) \circ \psi$ 
sont les morphismes canoniques.
On vérifie alors que la composition
$\ker \phi \to
\lambda ^{*} \chi ^{*} 
\ker \phi
\to 
\lambda ^{*} \chi ^{*} 
 \E ^{(\bullet)}$
est le morphisme nul. 
Comme $\lambda ^{*} \chi ^{*} 
\ker \phi
\to 
\lambda ^{*} \chi ^{*} 
 \E ^{(\bullet)}$
est un monomorphisme, 
il en résulte que le morphisme canonique
$\ker \phi \to
\lambda ^{*} \chi ^{*} 
\ker \phi$
est
nul. 
De même, on vérifie que 
la composition
$\FF ^{(\bullet)} \to 
\mathrm{coker} (\phi)
\to 
\lambda ^{*} \chi ^{*} \mathrm{coker} (\phi)$
est le morphisme nul. 
Comme $\FF ^{(\bullet)} \to 
\mathrm{coker} (\phi)$
est un épimorphisme, on en déduit que le morphisme 
canonique
$\mathrm{coker} (\phi)
\to 
\lambda ^{*} \chi ^{*} \mathrm{coker} (\phi)$
est le morphisme nul. 
\end{proof}

\begin{rema}
\label{rema-Sqi}
Pour $\sharp \in \{ +,-, \mathrm{b}, \emptyset\}$, 
notons $D ^{\sharp} _{N (\smash{\widetilde{\D}} _{\PP ^\sharp} ^{(\bullet)} (T))} (\smash{\widetilde{\D}} _{\PP ^\sharp} ^{(\bullet)} (T))$
la sous-catégorie pleine de 
$D ^{\sharp} (\smash{\widetilde{\D}} _{\PP ^\sharp} ^{(\bullet)} (T))$
des complexes dont les espaces de cohomologie sont dans
$N (\smash{\widetilde{\D}} _{\PP ^\sharp} ^{(\bullet)} (T))$. 
Notons $S _{Nqi} ^{\sharp}$ le système multiplicatif saturé compatible à la triangulation 
de
$D ^{\sharp} (\smash{\widetilde{\D}} _{\PP ^\sharp} ^{(\bullet)} (T))$
qui correspond à 
$D ^{\sharp} _{N (\smash{\widetilde{\D}} _{\PP ^\sharp} ^{(\bullet)} (T))} (\smash{\widetilde{\D}} _{\PP ^\sharp} ^{(\bullet)} (T))$.
Par définition, un morphisme $f ^{(\bullet)} $ de 
$D ^{\sharp} (\smash{\widetilde{\D}} _{\PP ^\sharp} ^{(\bullet)} (T))$
appartient à $S _{Nqi} ^{\sharp}$ si et seulement si, 
pour tout triangle distingué (il en suffit d'un en fait)
dans $D ^{\sharp} (\smash{\widetilde{\D}} _{\PP ^\sharp} ^{(\bullet)} (T))$
de la forme 
$\E ^{(\bullet)} 
\overset{f ^{(\bullet)}}{\longrightarrow}
\FF ^{(\bullet)} 
\to 
\G  ^{(\bullet)} \to \E ^{(\bullet)}  [1]$, 
pour tout entier $n \in \Z$, 
on ait 
$ \mathcal{H} ^{n} (\G  ^{(\bullet)})
 \in N  (\smash{\widetilde{\D}} _{\PP ^\sharp} ^{(\bullet)} (T))$).
Il découle du théorème \cite[3.1]{Miyachi-localization}
(on ne peut pas utiliser sa version plus faible et plus agréable à démontrer de \cite[2.6.2]{Bockle-Pink-cohomological-theory} car 
comme la sous-catégorie 
$N (\smash{\widetilde{\D}} _{\PP ^\sharp} ^{(\bullet)} (T))$ 
de $M (\smash{\widetilde{\D}} _{\PP ^\sharp} ^{(\bullet)} (T))$ n'est pas stable par limite inductive filtrante,
le foncteur oubli $N (\smash{\widetilde{\D}} _{\PP ^\sharp} ^{(\bullet)} (T)) \to M (\smash{\widetilde{\D}} _{\PP ^\sharp} ^{(\bullet)} (T))$
ne possède pas de foncteur adjoint à droite),
le foncteur canonique
$D ^{\sharp} (\smash{\widetilde{\D}} _{\PP ^\sharp} ^{(\bullet)} (T)) 
\to
D ^{\sharp}
(\underrightarrow{LM} _{\Q} (\smash{\widetilde{\D}} _{\PP ^\sharp} ^{(\bullet)} (T)))$ 
induit canoniquement l'équivalence de catégories 
\begin{equation}
\label{rema-Sqi-eqcat}
S _{Nqi} ^{\sharp-1} D ^{\sharp} (\smash{\widetilde{\D}} _{\PP ^\sharp} ^{(\bullet)} (T)) 
\cong 
D ^{\sharp}
(\underrightarrow{LM} _{\Q} (\smash{\widetilde{\D}} _{\PP ^\sharp} ^{(\bullet)} (T))).
\end{equation}
\end{rema}

\begin{vide} 
\label{defi-Hn-LMQ}
Pour tout entier $n\in \Z$, 
on dispose du foncteur $n$-ième espace de cohomologie
$\mathcal{H} ^{n}
\colon 
D  (\smash{\widetilde{\D}} _{\PP ^\sharp} ^{(\bullet)} (T))
\to 
 M(\smash{\widetilde{\D}} _{\PP ^\sharp} ^{(\bullet)} (T))$
défini pour  
$\E ^{(\bullet)}= (\E ^{(m)} , \alpha ^{(m',m)}) \in D  (\smash{\widetilde{\D}} _{\PP ^\sharp} ^{(\bullet)} (T))$
en posant
$\mathcal{H} ^{n} (\E ^{(\bullet)})
= (\mathcal{H} ^{n} (\E ^{(m)} ), \mathcal{H} ^{n} (\alpha ^{(m',m)}))$. 
On dispose de l'isomorphisme canonique 
$\mathcal{H} ^{n} \lambda ^{*} \chi ^{*} (\E ^{(\bullet)})
\riso
\lambda ^{*} \chi ^{*}\mathcal{H} ^{n} (\E ^{(\bullet)})$
qui s'inscrit dans le diagramme commutatif
\begin{equation}
\notag
\xymatrix @ R=0,4cm{
{\mathcal{H} ^{n} (\E ^{(\bullet)})}
\ar@{=}[d] ^-{} 
\ar[r] ^-{}
& 
{\mathcal{H} ^{n} \lambda ^{*} \chi ^{*} (\E ^{(\bullet)})} 
\ar[d] ^-{\sim}
\\ 
{\mathcal{H} ^{n} (\E ^{(\bullet)})} 
\ar[r] ^-{}
& 
{\lambda ^{*} \chi ^{*}\mathcal{H} ^{n} (\E ^{(\bullet)})} 
}
\end{equation}
dont la flèche horizontale du bas est la flèche canonique
tandis que celle du haut est l'image par $\mathcal{H} ^{n} $
du morphisme canonique.
On en déduit que le foncteur $\mathcal{H} ^{n}$ envoie les lim-ind-isogénies sur les lim-ind-isogénies.
Le foncteur $\mathcal{H} ^{n}$se factorise alors en 
$\mathcal{H} ^{n} \colon 
\underrightarrow{LD} _{\Q} (\smash{\widetilde{\D}} _{\PP ^\sharp} ^{(\bullet)} (T))
\to 
\underrightarrow{LM} _{\Q} (\smash{\widetilde{\D}} _{\PP ^\sharp} ^{(\bullet)} (T))$.
\end{vide}

\begin{lemm}
\label{Hn-fctcoho}
Le foncteur 
$\mathcal{H} ^{n} \colon 
\underrightarrow{LD} _{\Q} (\smash{\widetilde{\D}} _{\PP ^\sharp} ^{(\bullet)} (T))
\to 
\underrightarrow{LM} _{\Q} (\smash{\widetilde{\D}} _{\PP ^\sharp} ^{(\bullet)} (T))$
défini dans \ref{defi-Hn-LMQ}
est un foncteur cohomologique.
\end{lemm}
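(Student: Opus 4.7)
The plan is to reduce the claim to the fact that the classical cohomology $\mathcal{H}^n \colon D(\smash{\widetilde{\D}}_{\PP^\sharp}^{(\bullet)}(T)) \to M(\smash{\widetilde{\D}}_{\PP^\sharp}^{(\bullet)}(T))$ is a cohomological functor, combined with the exactness of the Serre quotient $Q \colon M(\smash{\widetilde{\D}}_{\PP^\sharp}^{(\bullet)}(T)) \to \underrightarrow{LM}_{\Q}(\smash{\widetilde{\D}}_{\PP^\sharp}^{(\bullet)}(T)) = M/N$ established in Lemma~\ref{defi-M-L-Serre}. Starting from a distinguished triangle $\E^{(\bullet)} \to \FF^{(\bullet)} \to \G^{(\bullet)} \to \E^{(\bullet)}[1]$ in $\underrightarrow{LD}_{\Q}(\smash{\widetilde{\D}}_{\PP^\sharp}^{(\bullet)}(T))$, I will first invoke Verdier's localization theory for the multiplicative system $S^{\sharp}$ of lim-ind-isogenies to present this triangle, up to isomorphism, as the image of a distinguished triangle from $D^{\sharp}(\smash{\widetilde{\D}}_{\PP^\sharp}^{(\bullet)}(T))$.

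Next, I will apply the classical $\mathcal{H}^n$ to this representative triangle, obtaining a long exact sequence in $M$, and then compose with the exact Serre quotient $Q$ to produce a long exact sequence in $\underrightarrow{LM}_{\Q}$. By the universal property of localization, the factored $\mathcal{H}^n \colon \underrightarrow{LD}_{\Q} \to \underrightarrow{LM}_{\Q}$ of \ref{defi-Hn-LMQ} coincides with $Q \circ \mathcal{H}^n$ precomposed with the localization $D \to \underrightarrow{LD}_{\Q}$; consequently, the long exact sequence just obtained is exactly the one required by the cohomological property.

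The main obstacle will be to verify that $S^{\sharp}$ is compatible with the triangulation of $D^{\sharp}(\smash{\widetilde{\D}}_{\PP^\sharp}^{(\bullet)}(T))$, so that Verdier localization applies and distinguished triangles in $\underrightarrow{LD}_{\Q}$ indeed arise, up to isomorphism, from those in $D$. Stability of $S^{\sharp}$ under the shift $[1]$ is immediate because $\chi^*$ and $\lambda^*$ commute with the shift. For the extension axiom, the plan is to check that the null system $N_D$ of complexes vanishing in $\underrightarrow{LD}_{\Q}$ is thick: stability under mapping cones reduces, via the octahedral axiom in $D$ and the exactness of $\chi^*, \lambda^*$ (which act as the identity at each fixed level and merely rescale transition maps), to choosing a common pair $(\chi,\lambda)$ annihilating the canonical maps of the two given vertices in $N_D$. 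Alternatively, I could bypass this verification by routing $\mathcal{H}^n$ through the equivalence \eqref{rema-Sqi-eqcat} with $D(\underrightarrow{LM}_{\Q})$, where the cohomological property is tautological; the required functor $\underrightarrow{LD}_{\Q} \to D(\underrightarrow{LM}_{\Q})$ exists because every lim-ind-isogeny lies in $S_{Nqi}^{\sharp}$, its cone having cohomology in $N$ by the natural isomorphism $\mathcal{H}^n \lambda^* \chi^* \riso \lambda^* \chi^* \mathcal{H}^n$ of \ref{defi-Hn-LMQ}.
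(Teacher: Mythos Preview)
Your proposal is correct and follows essentially the same approach as the paper: lift a distinguished triangle in $\underrightarrow{LD}_{\Q}$ to one in $D$ (or $K$), apply the classical cohomological functor $\mathcal{H}^n$ to get a long exact sequence in $M$, and then push forward via the exact Serre quotient $M \to \underrightarrow{LM}_{\Q}$ from Lemma~\ref{defi-M-L-Serre}. The paper simply says \og par construction\fg{} for the lifting step, taking for granted (from Berthelot's setup in \cite[4.2.1--4.2.2]{Beintro2}) that $\underrightarrow{LD}_{\Q}$ is obtained as a Verdier localization with triangulated structure, so distinguished triangles are by definition those isomorphic to images from $K$; your extra caution about verifying compatibility of $S^{\sharp}$ with the triangulation is therefore unnecessary but not wrong.
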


\begin{proof}
Par construction, un triangle distingué de
 $\underrightarrow{LD} _{\Q} (\smash{\widetilde{\D}} _{\PP ^\sharp} ^{(\bullet)} (T))$ 
 est isomorphe dans  $\underrightarrow{LD} _{\Q} (\smash{\widetilde{\D}} _{\PP ^\sharp} ^{(\bullet)} (T))$ 
 à l'image d'un triangle distingué de 
 $K (\smash{\widetilde{\D}} _{\PP ^\sharp} ^{(\bullet)} (T))$
 par le foncteur canonique de localisation
 $K (\smash{\widetilde{\D}} _{\PP ^\sharp} ^{(\bullet)} (T))
 \to 
 \underrightarrow{LD} _{\Q} (\smash{\widetilde{\D}} _{\PP ^\sharp} ^{(\bullet)} (T))$.
 Comme $\mathcal{H} ^{n} \colon 
K (\smash{\widetilde{\D}} _{\PP ^\sharp} ^{(\bullet)} (T))
\to 
M(\smash{\widetilde{\D}} _{\PP ^\sharp} ^{(\bullet)} (T))$
est un foncteur cohomologique,
comme le foncteur de localisation
$M(\smash{\widetilde{\D}} _{\PP ^\sharp} ^{(\bullet)} (T))
\to 
\underrightarrow{LM} _{\Q} (\smash{\widetilde{\D}} _{\PP ^\sharp} ^{(\bullet)} (T))$
préserve les suites exactes (cela résulte des propriétés des localisations par une sous-catégorie de Serre et de \ref{defi-M-L-Serre}),
on en déduit le résultat.

\end{proof}

\begin{nota}
\label{nota-LDQ0}
Notons $\underrightarrow{LD} ^{0} _{\Q} (\smash{\widetilde{\D}} _{\PP ^\sharp} ^{(\bullet)} (T))$ la sous-catégorie strictement pleine de 
 $\underrightarrow{LD} ^{\mathrm{b}}_{\Q} (\smash{\widetilde{\D}} _{\PP ^\sharp} ^{(\bullet)} (T))$
 des complexes 
 $\E ^{(\bullet)} $
 tels que pour tout entier $n \not =0$ on ait  
$\mathcal{H} ^{n} (\E ^{(\bullet)} ) \riso 0$ 
dans 
$\underrightarrow{LM} _{\Q} (\smash{\widetilde{\D}} _{\PP ^\sharp} ^{(\bullet)} (T))$.
\end{nota}

\begin{rema}
Soit
$\E ^{(\bullet)}
\in 
\underrightarrow{LD}  _{\Q} (\smash{\widetilde{\D}} _{\PP ^\sharp} ^{(\bullet)} (T))$.
Par définition, 
$\E ^{(\bullet)}$ est un objet de 
$\underrightarrow{LD} ^{\mathrm{b}} _{\Q} (\smash{\widetilde{\D}} _{\PP ^\sharp} ^{(\bullet)} (T))$
si et seulement si
$\E ^{(\bullet)}$ est un objet de 
$D ^{\mathrm{b}} (\smash{\widetilde{\D}} _{\PP ^\sharp} ^{(\bullet)} (T))$.
Cette condition est a priori plus forte que de demander 
qu'il existe $N \in \N $ assez grand tel que,
pour tout $j \not \in [-N,N] \cap \Z$ on ait l'isomorphisme $\mathcal{H} ^{j} (\E ^{(\bullet)} ) \riso 0$
dans $\underrightarrow{LM}  _{\Q} (\smash{\widetilde{\D}} _{\PP ^\sharp} ^{(\bullet)} (T))$. 
Il n'est donc pas clair que  $\underrightarrow{LD} ^{\mathrm{b}}_{\Q} (\smash{\widetilde{\D}} _{\PP ^\sharp} ^{(\bullet)} (T))$
soit une sous-catégorie strictement pleine de 
 $\underrightarrow{LD} _{\Q} (\smash{\widetilde{\D}} _{\PP ^\sharp} ^{(\bullet)} (T))$.
\end{rema}

\begin{lemm}
\label{eq-cat-LM-LD0}
Le foncteur canonique
$\underrightarrow{LM} _{\Q} (\smash{\widetilde{\D}} _{\PP ^\sharp} ^{(\bullet)} (T))
\to 
\underrightarrow{LD} ^{0} _{\Q} (\smash{\widetilde{\D}} _{\PP ^\sharp} ^{(\bullet)} (T))$
est une équivalence de catégories. 
De plus, le foncteur 
$\mathcal{H} ^{0}\colon 
\underrightarrow{LD} ^{0} _{\Q} (\smash{\widetilde{\D}} _{\PP ^\sharp} ^{(\bullet)} (T))
\to 
\underrightarrow{LM} _{\Q} (\smash{\widetilde{\D}} _{\PP ^\sharp} ^{(\bullet)} (T))$
est une équivalence de catégories quasi-inverse. 

\end{lemm}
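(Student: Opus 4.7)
The plan is to verify that the canonical functor $\iota \colon \underrightarrow{LM} _{\Q} (\smash{\widetilde{\D}} _{\PP ^\sharp} ^{(\bullet)} (T)) \to \underrightarrow{LD} ^{0} _{\Q} (\smash{\widetilde{\D}} _{\PP ^\sharp} ^{(\bullet)} (T))$ is fully faithful and essentially surjective, and then to identify $\mathcal{H} ^{0}$ as a quasi-inverse. Full faithfulness comes essentially for free: by Lemma \ref{Lm-LD-plfid} the embedding $\underrightarrow{LM} _{\Q} (\smash{\widetilde{\D}} _{\PP ^\sharp} ^{(\bullet)} (T)) \hookrightarrow \underrightarrow{LD} _{\Q} (\smash{\widetilde{\D}} _{\PP ^\sharp} ^{(\bullet)} (T))$ is fully faithful, and $\iota$ is its factorization through the strictly full subcategory of \ref{nota-LDQ0}, so hom-sets are preserved.

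For essential surjectivity, given $\E ^{(\bullet)}$ in $\underrightarrow{LD} ^{0} _{\Q} (\smash{\widetilde{\D}} _{\PP ^\sharp} ^{(\bullet)} (T))$ (in particular bounded), I work entirely inside $\underrightarrow{LD} ^{\mathrm{b}} _{\Q} (\smash{\widetilde{\D}} _{\PP ^\sharp} ^{(\bullet)} (T))$ and invoke the standard truncation triangles
$$\tau _{\leq 0} \E ^{(\bullet)} \to \E ^{(\bullet)} \to \tau _{\geq 1} \E ^{(\bullet)} \to +, \qquad \tau _{\leq -1} \E ^{(\bullet)} \to \tau _{\leq 0} \E ^{(\bullet)} \to \mathcal{H} ^{0} (\E ^{(\bullet)})[0] \to +$$
already available in $D ^{\mathrm{b}} (\smash{\widetilde{\D}} _{\PP ^\sharp} ^{(\bullet)} (T))$. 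By hypothesis, the cohomology modules of both $\tau _{\geq 1} \E ^{(\bullet)}$ and $\tau _{\leq -1} \E ^{(\bullet)}$ lie in the Serre subcategory $N (\smash{\widetilde{\D}} _{\PP ^\sharp} ^{(\bullet)} (T))$ of Lemma \ref{defi-M-L-Serre}. The whole argument therefore reduces to the following sub-claim: any bounded complex $C ^{(\bullet)}$ all of whose cohomology modules lie in $N (\smash{\widetilde{\D}} _{\PP ^\sharp} ^{(\bullet)} (T))$ is isomorphic to zero in $\underrightarrow{LD} ^{\mathrm{b}} _{\Q} (\smash{\widetilde{\D}} _{\PP ^\sharp} ^{(\bullet)} (T))$. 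Granting this, zero vertices in the two triangles yield $\mathcal{H} ^{0} (\E ^{(\bullet)})[0] \cong \tau _{\leq 0} \E ^{(\bullet)} \cong \E ^{(\bullet)}$ in $\underrightarrow{LD} ^{\mathrm{b}} _{\Q}$, and hence in the strictly full subcategory $\underrightarrow{LD} ^{0} _{\Q}$.

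I expect the sub-claim to be the main obstacle; I would handle it by induction on the cohomological amplitude of $C ^{(\bullet)}$. The base case is a complex concentrated in a single degree, and by shift-invariance reduces to a module $F \in N (\smash{\widetilde{\D}} _{\PP ^\sharp} ^{(\bullet)} (T))$ placed in degree zero. By the module analog of Remark \ref{rema-iso-0} already recorded in \ref{defi-M-L}, there exist $\chi \in M$ and $\lambda \in L$ such that the canonical morphism $F \to \lambda ^{*} \chi ^{*} F$ vanishes in $M (\smash{\widetilde{\D}} _{\PP ^\sharp} ^{(\bullet)} (T))$; viewed in $D ^{\mathrm{b}}$ this remains the zero canonical morphism $F[0] \to \lambda ^{*} \chi ^{*} (F[0])$, and Remark \ref{rema-iso-0} itself then forces $F[0] \cong 0$ in $\underrightarrow{LD} ^{\mathrm{b}} _{\Q}$. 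For the inductive step, I choose an integer $n$ in the cohomological range of $C ^{(\bullet)}$ and apply the truncation triangle $\tau _{\leq n} C ^{(\bullet)} \to C ^{(\bullet)} \to \tau _{\geq n+1} C ^{(\bullet)} \to +$; the outer terms still have cohomology in $N$ and strictly smaller amplitude, hence vanish in $\underrightarrow{LD} ^{\mathrm{b}} _{\Q}$ by induction, forcing the same for $C ^{(\bullet)}$.

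Finally, the identification of the quasi-inverse is a direct consequence: $\mathcal{H} ^{0} \circ \iota = \mathrm{id}$ is tautological, and $\iota \circ \mathcal{H} ^{0} \simeq \mathrm{id} _{\underrightarrow{LD} ^{0} _{\Q}}$ is provided by the natural composition of truncation morphisms from the essential surjectivity step, which is functorial in $\E ^{(\bullet)}$ since the truncation functors are.
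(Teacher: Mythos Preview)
Your proof is correct and follows essentially the same approach as the paper: full faithfulness from Lemma~\ref{Lm-LD-plfid}, then essential surjectivity via truncation triangles, reducing to the vanishing in $\underrightarrow{LD}^{\mathrm{b}}_{\Q}$ of a bounded complex whose cohomology lies in $N$, which is peeled off one degree at a time. The only cosmetic difference is that you package the vanishing step as an explicit induction on cohomological amplitude with a separately stated base case (a module in $N$ is zero in $\underrightarrow{LD}^{\mathrm{b}}_{\Q}$ via Remark~\ref{rema-iso-0}), whereas the paper argues the same point in-line by iterating the triangle $\mathcal{H}^{j} \to \sigma_{\geq j} \to \sigma_{\geq j+1} \to +1$.
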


\begin{proof}
Il résulte de \ref{Lm-LD-plfid} que le premier foncteur est pleinement fidèle. 
Soit 
$\E ^{(\bullet)} \in \underrightarrow{LD} ^{0} _{\Q} (\smash{\widetilde{\D}} _{\PP ^\sharp} ^{(\bullet)} (T))$.
Il reste à prouver qu'il existe dans 
$\underrightarrow{LD} ^{\mathrm{b}} _{\Q} (\smash{\widetilde{\D}} _{\PP ^\sharp} ^{(\bullet)} (T))$
un isomorphisme de la forme
$\E ^{(\bullet)} \riso \mathcal{H} ^{0}(\E ^{(\bullet)})$.
Comme 
$\E ^{(\bullet)}$ est un objet de 
$D ^{\mathrm{b}} (\smash{\widetilde{\D}} _{\PP ^\sharp} ^{(\bullet)} (T))$,
il existe $N \in \N $ assez grand tel que,
pour tout $j \not \in [-N,N] \cap \Z$ on ait l'isomorphisme $\mathcal{H} ^{j} (\E ^{(\bullet)} ) \riso 0$
dans $M(\smash{\widetilde{\D}} _{\PP ^\sharp} ^{(\bullet)} (T))$.
Pour tout entier $n \in \Z$, on note $\sigma _{\geq n} (\E ^{(\bullet)} )$ et 
$\sigma _{\leq n} (\E ^{(\bullet)} )$ les troncations cohomologiques usuelles (voir les notations après la proposition \cite[I.7.1]{HaRD}).
On dispose ainsi par définition pour tout entier $n \in \Z$ de la suite exacte dans 
$C (\smash{\widetilde{\D}} _{\PP ^\sharp} ^{(\bullet)} (T))$ de la forme: 
$0\to 
\sigma _{\leq -n-1} (\E ^{(\bullet)} )
\to
\E ^{(\bullet)}
\to 
\sigma _{\geq -n} (\E ^{(\bullet)} )\to 0$.

1) {\it Vérifions $\sigma _{\leq -1} (\E ^{(\bullet)} ) \riso 0$ 
dans 
$\underrightarrow{LD} ^{\mathrm{b}} _{\Q} (\smash{\widetilde{\D}} _{\PP ^\sharp} ^{(\bullet)} (T))$.}

a) Posons
$\FF ^{(\bullet)} 
:= \sigma _{\leq -1} (\E ^{(\bullet)} )
$.
On dispose alors dans $D(\smash{\widetilde{\D}} _{\PP ^\sharp} ^{(\bullet)} (T))$ de l'isomorphisme 
$\sigma _{\leq -N-1} (\FF ^{(\bullet)} ) 
=\sigma _{\leq -N-1} (\E ^{(\bullet)} )
\riso 0$.
En considérant la suite exacte de $C (\smash{\widetilde{\D}} _{\PP ^\sharp} ^{(\bullet)} (T))$: 
$0\to 
\sigma _{\leq -N-1} (\FF ^{(\bullet)} )
\to
\FF ^{(\bullet)}
\to 
\sigma _{\geq -N} (\FF ^{(\bullet)} )\to 0$, 
on en déduit que 
le morphisme canonique
$\FF ^{(\bullet)}
\to 
\sigma _{\geq -N} (\FF ^{(\bullet)} )$
est un isomorphisme de 
$D(\smash{\widetilde{\D}} _{\PP ^\sharp} ^{(\bullet)} (T))$.

b) Pour tout $j \in [-N,-1]\cap \Z$, on dispose d'après \cite[I.7.2]{HaRD} du triangle distingué dans
$D (\smash{\widetilde{\D}} _{\PP ^\sharp} ^{(\bullet)} (T))$:
\begin{equation}
\notag
\mathcal{H} ^{j}
(\FF ^{(\bullet)} )
\to 
\sigma _{\geq j} (\FF ^{(\bullet)} )
\to
\sigma _{\geq j+1} (\FF ^{(\bullet)} )
\to +1.
\end{equation}
Comme, pour tout $j \in [-N,-1]\cap \Z$,
$\mathcal{H} ^{j}
(\FF ^{(\bullet)} )
=
\mathcal{H} ^{j}
(\E ^{(\bullet)} )
\riso 
0$
dans
$\underrightarrow{LD} ^{\mathrm{b}} _{\Q} (\smash{\widetilde{\D}} _{\PP ^\sharp} ^{(\bullet)} (T))$,
il en résulte que 
la flèche 
$\sigma _{\geq j} (\FF ^{(\bullet)} )
\to
\sigma _{\geq j+1} (\FF ^{(\bullet)} )
$
est un isomorphisme 
dans 
$\underrightarrow{LD} ^{\mathrm{b}} _{\Q} (\smash{\widetilde{\D}} _{\PP ^\sharp} ^{(\bullet)} (T))$.
D'où 
$\sigma _{\geq -N} (\FF ^{(\bullet)} )
\riso 
\sigma _{\geq 0} (\FF ^{(\bullet)} )$
dans 
$\underrightarrow{LD} ^{\mathrm{b}} _{\Q} (\smash{\widetilde{\D}} _{\PP ^\sharp} ^{(\bullet)} (T))$.

c) 
Comme on a dans $D (\smash{\widetilde{\D}} _{\PP ^\sharp} ^{(\bullet)} (T))$ l'isomorphisme
$\sigma _{\geq 0} (\FF ^{(\bullet)} )
=
\sigma _{\geq 0} ( \sigma _{\leq -1} (\E ^{(\bullet)} ))
\riso 
0$, il résulte des étapes a) et b) que 
$\FF ^{(\bullet)} \riso 0$ 
dans 
$\underrightarrow{LD} ^{\mathrm{b}} _{\Q} (\smash{\widetilde{\D}} _{\PP ^\sharp} ^{(\bullet)} (T))$.

2) {\it Démontrons à présent que 
 le morphisme canonique
$\mathcal{H} ^{0}
(\E ^{(\bullet)} )
\to 
\sigma _{\geq 0} (\E ^{(\bullet)} )$
(voir sa construction dans \cite[I.7.2]{HaRD})
est un isomorphisme
dans 
$\underrightarrow{LD} ^{\mathrm{b}} _{\Q} (\smash{\widetilde{\D}} _{\PP ^\sharp} ^{(\bullet)} (T))$.}

Posons $\G ^{(\bullet)} := \sigma _{\geq 0} (\E ^{(\bullet)} )$.
De même que pour l'étape 1.b), on vérifie que 
le morphisme canonique
$\sigma _{\geq 1} (\E ^{(\bullet)} )
=\sigma _{\geq 1} (\G ^{(\bullet)} )
\to
\sigma _{\geq N +1} (\G ^{(\bullet)} )$
est un isomorphisme 
dans 
$\underrightarrow{LD} ^{\mathrm{b}} _{\Q} (\smash{\widetilde{\D}} _{\PP ^\sharp} ^{(\bullet)} (T))$.
Comme 
$\sigma _{\geq N +1} (\G ^{(\bullet)} )
=
\sigma _{\geq N +1} (\E ^{(\bullet)} )
\riso 0$
dans $D (\smash{\widetilde{\D}} _{\PP ^\sharp} ^{(\bullet)} (T))$,
il en résulte que 
$\sigma _{\geq 1} (\E ^{(\bullet)} )
\riso 0$
dans 
$\underrightarrow{LD} ^{\mathrm{b}} _{\Q} (\smash{\widetilde{\D}} _{\PP ^\sharp} ^{(\bullet)} (T))$.
En considérant le triangle distingué de $D (\smash{\widetilde{\D}} _{\PP ^\sharp} ^{(\bullet)} (T))$ (voir \cite[I.7.2]{HaRD})
\begin{equation}
\notag
\mathcal{H} ^{0}
(\E ^{(\bullet)} )
\to 
\sigma _{\geq 0} (\E ^{(\bullet)} )
\to
\sigma _{\geq 1} (\E ^{(\bullet)} )
\to +1,
\end{equation}
on en déduit le résultat.

3) Via 1) et 2), on conclut grâce à la suite exacte  
$0\to 
\sigma _{\leq -1} (\E ^{(\bullet)} )
\to
\E ^{(\bullet)} 
\to 
\sigma _{\geq 0} (\E ^{(\bullet)} )
\to 0$ dans $C (\smash{\widetilde{\D}} _{\PP ^\sharp} ^{(\bullet)} (T))$.

\end{proof}

\begin{coro}
\label{eqcatLD=DSM}
Avec les notations \ref{rema-Sqi}, 
on dispose de l'égalité
$S ^{\mathrm{b}}= S _{Nqi} ^{\mathrm{b}}$. 
Pour $\sharp \in \{ +,-, \mathrm{b}, \emptyset\}$, 
on dispose de l'inclusion
$S ^{\sharp}\subset S _{Nqi} ^{\sharp}$.
Le morphisme canonique
$D ^{\mathrm{b}} (\smash{\widetilde{\D}} _{\PP ^\sharp} ^{(\bullet)} (T))
\to
D ^{\mathrm{b}}
(\underrightarrow{LM} _{\Q} (\smash{\widetilde{\D}} _{\PP ^\sharp} ^{(\bullet)} (T)))$
induit par le foncteur de localisation
$M(\smash{\widetilde{\D}} _{\PP ^\sharp} ^{(\bullet)} (T))
\to
\underrightarrow{LM} _{\Q} (\smash{\widetilde{\D}} _{\PP ^\sharp} ^{(\bullet)} (T))$
se factorise canoniquement en l'équivalence de catégories
\begin{equation}
\label{eqcatLD=DSM-fonct}
\underrightarrow{LD} ^{\mathrm{b}} _{\Q} (\smash{\widetilde{\D}} _{\PP ^\sharp} ^{(\bullet)} (T))
\cong 
D ^{\mathrm{b}}
(\underrightarrow{LM} _{\Q} (\smash{\widetilde{\D}} _{\PP ^\sharp} ^{(\bullet)} (T))).
\end{equation}
\end{coro}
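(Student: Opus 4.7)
La strat�gie consiste � d�duire l'�quivalence \eqref{eqcatLD=DSM-fonct} directement des deux premi�res assertions. En effet, d'apr�s \ref{S=LDQ}, on a l'identification
$\underrightarrow{LD} ^{\mathrm{b}} _{\Q} (\smash{\widetilde{\D}} _{\PP ^\sharp} ^{(\bullet)} (T))
\cong S ^{\mathrm{b} -1} D ^{\mathrm{b}} (\smash{\widetilde{\D}} _{\PP ^\sharp} ^{(\bullet)} (T))$,
tandis que \ref{rema-Sqi-eqcat} fournit
$S _{Nqi} ^{\mathrm{b} -1} D ^{\mathrm{b}} (\smash{\widetilde{\D}} _{\PP ^\sharp} ^{(\bullet)} (T))
\cong
D ^{\mathrm{b}}(\underrightarrow{LM} _{\Q} (\smash{\widetilde{\D}} _{\PP ^\sharp} ^{(\bullet)} (T)))$.
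L'�galit� $S ^{\mathrm{b}} = S _{Nqi} ^{\mathrm{b}}$ donne alors imm�diatement l'�quivalence de cat�gories souhait�e.
Il faut donc se concentrer sur les deux inclusions et sur la r�ciproque sp�cifique au cas born�.

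\textbf{Inclusion $S ^{\sharp}\subset S _{Nqi} ^{\sharp}$ pour tout $\sharp$.} Soit $f ^{(\bullet)} \colon \E ^{(\bullet)} \to \FF ^{(\bullet)}$ une lim-ind-isog�nie de $D ^{\sharp} (\smash{\widetilde{\D}} _{\PP ^\sharp} ^{(\bullet)} (T))$, et compl�tons-la en un triangle distingu� de cofibre $\G ^{(\bullet)}$. D'apr�s \ref{defi-Hn-LMQ}, le foncteur $\mathcal{H} ^{n}$ envoie les lim-ind-isog�nies sur les lim-ind-isog�nies, donc $\mathcal{H} ^{n} (f ^{(\bullet)})$ devient un isomorphisme dans $\underrightarrow{LM} _{\Q} (\smash{\widetilde{\D}} _{\PP ^\sharp} ^{(\bullet)} (T))$. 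Or $\mathcal{H} ^{n}$ est un foncteur cohomologique � valeurs dans cette cat�gorie ab�lienne (lemme \ref{Hn-fctcoho}), ce qui fournit une suite exacte longue reliant $\mathcal{H} ^{n}(\E ^{(\bullet)})$, $\mathcal{H} ^{n}(\FF ^{(\bullet)})$ et $\mathcal{H} ^{n}(\G ^{(\bullet)})$ dans $\underrightarrow{LM} _{\Q} (\smash{\widetilde{\D}} _{\PP ^\sharp} ^{(\bullet)} (T))$. Par cons�quent $\mathcal{H} ^{n}(\G ^{(\bullet)}) \riso 0$ dans cette cat�gorie, i.e.\ $\mathcal{H} ^{n}(\G ^{(\bullet)}) \in N (\smash{\widetilde{\D}} _{\PP ^\sharp} ^{(\bullet)} (T))$ pour tout $n \in \Z$, ce qui signifie pr�cis�ment $f ^{(\bullet)} \in S _{Nqi} ^{\sharp}$.

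\textbf{Inclusion r�ciproque $S _{Nqi} ^{\mathrm{b}}\subset S ^{\mathrm{b}}$.} Il suffit de v�rifier que tout complexe $\G ^{(\bullet)} \in D ^{\mathrm{b}} (\smash{\widetilde{\D}} _{\PP ^\sharp} ^{(\bullet)} (T))$ dont toutes les cohomologies appartiennent � $N (\smash{\widetilde{\D}} _{\PP ^\sharp} ^{(\bullet)} (T))$ est nul dans $\underrightarrow{LD} ^{\mathrm{b}} _{\Q} (\smash{\widetilde{\D}} _{\PP ^\sharp} ^{(\bullet)} (T))$. On proc�de par r�currence sur l'amplitude cohomologique (qui est finie gr�ce au caract�re born�). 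Pour le cas d'amplitude nulle, $\G ^{(\bullet)}$ est quasi-isomorphe � $\mathcal{H} ^{n_0}(\G ^{(\bullet)}) [-n_0]$ pour un unique $n_0$ ; puisque $\mathcal{H} ^{n_0}(\G ^{(\bullet)}) \in N$, il est nul dans $\underrightarrow{LM} _{\Q} (\smash{\widetilde{\D}} _{\PP ^\sharp} ^{(\bullet)} (T))$, et le lemme \ref{eq-cat-LM-LD0} (�quivalence entre $\underrightarrow{LM} _{\Q}$ et $\underrightarrow{LD} ^{0} _{\Q}$) entra�ne que son image dans $\underrightarrow{LD} ^{\mathrm{b}} _{\Q} (\smash{\widetilde{\D}} _{\PP ^\sharp} ^{(\bullet)} (T))$ est nulle. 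Pour le pas de r�currence, soit $n_0$ le plus petit indice tel que $\mathcal{H} ^{n_0}(\G ^{(\bullet)}) \neq 0$ ; le triangle distingu� de troncation $\sigma _{\leq n_0}\G ^{(\bullet)} \to \G ^{(\bullet)} \to \sigma _{\geq n_0 +1}\G ^{(\bullet)} \to +1$ dans $D ^{\mathrm{b}} (\smash{\widetilde{\D}} _{\PP ^\sharp} ^{(\bullet)} (T))$ permet de conclure : le terme de gauche est nul dans $\underrightarrow{LD} ^{\mathrm{b}} _{\Q}$ par le cas de base, et celui de droite par hypoth�se de r�currence.

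\textbf{Principale difficult�.} La sym�trie apparente entre les deux inclusions est trompeuse : tandis que l'inclusion $S ^{\sharp}\subset S _{Nqi} ^{\sharp}$ est formelle et vaut pour tout $\sharp$, la r�ciproque exige d'effectuer une r�currence finie sur la longueur du complexe, ce qui force la restriction $\sharp = \mathrm{b}$. L'ingr�dient cl� est le lemme \ref{eq-cat-LM-LD0}, dont la d�monstration repose elle-m�me sur la finitude des troncations, et qui permet de traiter le cas d'un complexe concentr� en un seul degr� ; les triangles de troncation font ensuite passer du cas ponctuel au cas born�.
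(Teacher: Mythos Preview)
Your proof is correct and follows the same overall strategy as the paper: deduce the equivalence from the equality $S^{\mathrm{b}} = S_{Nqi}^{\mathrm{b}}$ via \ref{S=LDQ} and \ref{rema-Sqi-eqcat}, and establish the latter by proving the two inclusions separately. Your argument for $S^\sharp \subset S_{Nqi}^\sharp$ is essentially the paper's (the paper phrases it via the factorisation of $\mathcal{H}^n$ through $\underrightarrow{LD}^\sharp_\Q$, you via preservation of lim-ind-isog\'enies by $\mathcal{H}^n$; this is the same content).

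For the reverse inclusion $S_{Nqi}^{\mathrm{b}} \subset S^{\mathrm{b}}$, your induction on cohomological amplitude is correct but superfluous. The paper observes that if every $\mathcal{H}^n(\G^{(\bullet)})$ lies in $N$, then in particular $\mathcal{H}^n(\G^{(\bullet)}) \cong 0$ in $\underrightarrow{LM}_\Q$ for all $n \neq 0$, so $\G^{(\bullet)} \in \underrightarrow{LD}^0_\Q$ \emph{by definition} of that category (see \ref{nota-LDQ0}); a single application of lemme \ref{eq-cat-LM-LD0} then gives $\G^{(\bullet)} \cong \mathcal{H}^0(\G^{(\bullet)}) \cong 0$ in $\underrightarrow{LD}^{\mathrm{b}}_\Q$, and one concludes via \ref{rema-iso-0}. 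Your truncation induction essentially re-derives this consequence of \ref{eq-cat-LM-LD0}, whose own proof already runs the same d\'evissage by truncation triangles. Either route works; the paper's is shorter because the d\'evissage is already packaged in \ref{eq-cat-LM-LD0}.
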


\begin{proof}
1) Vérifions d'abord l'inclusion $S ^{\mathrm{b}}_{Nqi}\subset S^{\mathrm{b}}$.
Soient $f ^{(\bullet)} \in S  ^{\mathrm{b}}_{Nqi}$ 
et un triangle distingué 
dans $D ^{\mathrm{b}} (\smash{\widetilde{\D}} _{\PP ^\sharp} ^{(\bullet)} (T))$
de la forme 
$\E ^{(\bullet)} 
\overset{f ^{(\bullet)}}{\longrightarrow}
\FF ^{(\bullet)} 
\to 
\G  ^{(\bullet)} \to \E ^{(\bullet)}  [1]$.
Par définition, 
pour tout entier $n \in \Z$, 
$\mathcal{H} ^{n} (\G  ^{(\bullet)}) \in N  (\smash{\widetilde{\D}} _{\PP ^\sharp} ^{(\bullet)} (T))$.
Autrement dit, pour tout entier $n \in \Z$, on a
$\mathcal{H} ^{n}(\G ^{(\bullet)})\riso 0$ dans $\underrightarrow{LM} _{\Q} (\smash{\widetilde{\D}} _{\PP ^\sharp} ^{(\bullet)} (T))$.
D'après \ref{eq-cat-LM-LD0}, cela entraîne que 
$\G ^{(\bullet)}\riso 0$ dans $\underrightarrow{LD} ^{\mathrm{b}} _{\Q} (\smash{\widetilde{\D}} _{\PP ^\sharp} ^{(\bullet)} (T))$.
D'après les propriétés concernant les catégories triangulées,
$f ^{(\bullet)}$ est donc un isomorphisme dans $\underrightarrow{LD} ^{\mathrm{b}} _{\Q} (\smash{\widetilde{\D}} _{\PP ^\sharp} ^{(\bullet)} (T))$.
D'après la remarque \ref{rema-iso-0}, on obtient alors que
$f ^{(\bullet)}\in S ^{\mathrm{b}}$.

2) Vérifions à présent l'inclusion $S ^{\sharp}\subset S ^{\sharp} _{Nqi}$.
Soit $f ^{(\bullet)} \colon
\E ^{(\bullet)} 
\to
\FF ^{(\bullet)} $ 
un morphisme de 
$D ^{\sharp} (\smash{\widetilde{\D}} _{\PP ^\sharp} ^{(\bullet)} (T))$.
Comme le foncteur 
espace de cohomologie 
$\mathcal{H} ^{0}\colon 
D ^{\sharp} (\smash{\widetilde{\D}} _{\PP ^\sharp} ^{(\bullet)} (T))
\to 
\underrightarrow{LM} _{\Q} (\smash{\widetilde{\D}} _{\PP ^\sharp} ^{(\bullet)} (T))$
est un foncteur cohomologique local, 
en considérant la suite exacte longue 
dans 
$\underrightarrow{LM} _{\Q} (\smash{\widetilde{\D}} _{\PP ^\sharp} ^{(\bullet)} (T))$
déduite du triangle distingué 
dans $D ^{\sharp} (\smash{\widetilde{\D}} _{\PP ^\sharp} ^{(\bullet)} (T))$
de la forme 
$\E ^{(\bullet)} 
\overset{f ^{(\bullet)}}{\longrightarrow}
\FF ^{(\bullet)} 
\to 
\G  ^{(\bullet)} \to \E ^{(\bullet)}  [1]$, 
on vérifie que
$f ^{(\bullet)}  \in S ^{\sharp} _{Nqi}$ si et seulement si, 
pour tout entier $n \in \Z$, 
$\mathcal{H} ^{n} (f ^{(\bullet)})$ est un isomorphisme
dans 
$\underrightarrow{LM} _{\Q} (\smash{\widetilde{\D}} _{\PP ^\sharp} ^{(\bullet)} (T))$ (qui est une catégorie abélienne).
Or, si $f ^{(\bullet)}\in S^{\sharp}$, alors son image  
dans $\underrightarrow{LD}  ^{\sharp} _{\Q} (\smash{\widetilde{\D}} _{\PP ^\sharp} ^{(\bullet)} (T))$
est un isomorphisme.
Comme le foncteur $\mathcal{H} ^{n} \colon 
D ^{\sharp} (\smash{\widetilde{\D}} _{\PP ^\sharp} ^{(\bullet)} (T))
\to 
\underrightarrow{LM} _{\Q} (\smash{\widetilde{\D}} _{\PP ^\sharp} ^{(\bullet)} (T))$ se factorise 
par 
$\underrightarrow{LD}  _{\Q} ^{\sharp} (\smash{\widetilde{\D}} _{\PP ^\sharp} ^{(\bullet)} (T))
\to 
\underrightarrow{LM} _{\Q} (\smash{\widetilde{\D}} _{\PP ^\sharp} ^{(\bullet)} (T))$, 
on en déduit l'inclusion $S^{\sharp} \subset S ^{\sharp}_{Nqi}$ voulue.

3) Enfin l'équivalence canonique de catégories résulte de celle
$S _{Nqi} ^{\sharp -1} D ^{\sharp} (\smash{\widetilde{\D}} _{\PP ^\sharp} ^{(\bullet)} (T)) 
\cong 
D ^{\sharp}
(\underrightarrow{LM} _{\Q} (\smash{\widetilde{\D}} _{\PP ^\sharp} ^{(\bullet)} (T)))$ de \ref{rema-Sqi-eqcat}
ainsi que du lemme \ref{S=LDQ}.
\end{proof}

\begin{coro}
\label{LDLMiso=isoHn}
Soit $\phi \colon 
\E ^{(\bullet)}
\to 
\FF ^{(\bullet)}$
un morphisme dans 
$\underrightarrow{LD} ^{\mathrm{b}} _{\Q} (\smash{\widetilde{\D}} _{\PP ^\sharp} ^{(\bullet)} (T))$.
Le morphisme $\phi$ est un isomorphisme 
dans $\underrightarrow{LD} ^{\mathrm{b}} _{\Q} (\smash{\widetilde{\D}} _{\PP ^\sharp} ^{(\bullet)} (T))$
si et seulement si,
pour tout entier $n \in \Z$, 
le morphisme 
$\mathcal{H} ^{n} (\phi) \colon 
\mathcal{H} ^{n}  (\E ^{(\bullet)})
\to 
\mathcal{H} ^{n} (\FF ^{(\bullet)})$
est un isomorphisme 
de 
$\underrightarrow{LM}  _{\Q} (\smash{\widetilde{\D}} _{\PP ^\sharp} ^{(\bullet)} (T))$.
\end{coro}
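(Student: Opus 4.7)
Le plan consiste à exploiter l'équivalence de catégories \eqref{eqcatLD=DSM-fonct} établie dans \ref{eqcatLD=DSM}, qui identifie $\underrightarrow{LD} ^{\mathrm{b}} _{\Q} (\smash{\widetilde{\D}} _{\PP ^\sharp} ^{(\bullet)} (T))$ à la catégorie dérivée bornée $D ^{\mathrm{b}}(\underrightarrow{LM} _{\Q} (\smash{\widetilde{\D}} _{\PP ^\sharp} ^{(\bullet)} (T)))$ d'une catégorie abélienne (cette dernière propriété étant garantie par \ref{defi-M-L-Serre}). Cela ramène l'énoncé au fait classique qu'un morphisme dans la catégorie dérivée bornée d'une catégorie abélienne est un isomorphisme si et seulement s'il induit des isomorphismes sur tous les espaces de cohomologie, ce qui se vérifie aisément en considérant le triangle distingué associé au cône et la suite exacte longue de cohomologie qui en découle.

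Le sens direct est immédiat : d'après \ref{Hn-fctcoho}, le foncteur $\mathcal{H}^n$ est cohomologique, donc préserve a fortiori les isomorphismes. Pour la réciproque, je vérifierais d'abord que, à travers l'équivalence \eqref{eqcatLD=DSM-fonct}, le foncteur $\mathcal{H}^{n}$ de \ref{defi-Hn-LMQ} correspond au foncteur cohomologique standard de degré $n$ sur $D ^{\mathrm{b}}(\underrightarrow{LM} _{\Q} (\smash{\widetilde{\D}} _{\PP ^\sharp} ^{(\bullet)} (T)))$. Cette compatibilité résulte de ce que le foncteur canonique $M (\smash{\widetilde{\D}} _{\PP ^\sharp} ^{(\bullet)} (T)) \to \underrightarrow{LM} _{\Q} (\smash{\widetilde{\D}} _{\PP ^\sharp} ^{(\bullet)} (T))$ est exact (la sous-catégorie $N$ étant de Serre d'après \ref{defi-M-L-Serre}) et commute donc à la prise des noyaux et conoyaux intervenant dans le calcul de la cohomologie d'un complexe.

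L'essentiel de la preuve est donc formel ; l'unique point qui mérite un soin particulier est cette vérification de compatibilité entre les deux descriptions du foncteur $\mathcal{H}^n$. Une fois celle-ci admise, l'énoncé se réduit à la caractérisation standard des quasi-isomorphismes dans la catégorie dérivée bornée d'une catégorie abélienne.
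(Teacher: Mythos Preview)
Your proposal is correct and essentially parallel to the paper's proof. The paper argues directly in $\underrightarrow{LD}^{\mathrm{b}}_{\Q}$: it completes $\phi$ to a distinguished triangle with cone $\G^{(\bullet)}$, invokes \ref{eq-cat-LM-LD0} to see that $\G^{(\bullet)}\riso 0$ in $\underrightarrow{LD}^{\mathrm{b}}_{\Q}$ exactly when all $\mathcal{H}^n(\G^{(\bullet)})$ vanish in $\underrightarrow{LM}_{\Q}$, and concludes via the long exact sequence from \ref{Hn-fctcoho}. Your route transports the question through the equivalence \eqref{eqcatLD=DSM-fonct} to $D^{\mathrm{b}}(\underrightarrow{LM}_{\Q})$ and then appeals to the standard quasi-isomorphism criterion there; since \eqref{eqcatLD=DSM-fonct} was itself derived from \ref{eq-cat-LM-LD0}, and the ``standard fact'' you invoke is precisely the cone-plus-long-exact-sequence argument, the two proofs unwind to the same computation. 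The compatibility of the two descriptions of $\mathcal{H}^n$ that you flag is indeed the only point requiring a moment's thought, and your justification via exactness of the quotient functor is the right one.
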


\begin{proof}
Il existe un triangle distingué dans
$\underrightarrow{LD} ^{\mathrm{b}} _{\Q} (\smash{\widetilde{\D}} _{\PP ^\sharp} ^{(\bullet)} (T))$
de la forme
 $\E ^{(\bullet)}
\underset{\phi}{\longrightarrow} 
\FF ^{(\bullet)}
\to 
\G ^{(\bullet)}
\to 
\E ^{(\bullet)}[1]$.
D'après les propriétés concernant les catégories triangulées,
$\phi$ est un isomorphisme si et seulement si 
$\G ^{(\bullet)}\riso 0$ dans $\underrightarrow{LD} ^{\mathrm{b}} _{\Q} (\smash{\widetilde{\D}} _{\PP ^\sharp} ^{(\bullet)} (T))$.
D'après \ref{eq-cat-LM-LD0}, cela est équivalent à
dire que, pour tout entier $n \in \Z$, on ait 
$\mathcal{H} ^{n}(\G ^{(\bullet)})\riso 0$ dans $\underrightarrow{LM} _{\Q} (\smash{\widetilde{\D}} _{\PP ^\sharp} ^{(\bullet)} (T))$.
Le lemme \ref{Hn-fctcoho} nous permet de conclure.
\end{proof}

\subsection{Propriétés locales}

\begin{lemm}
\label{Nlocal}
Le fait qu'un objet de 
$M (\smash{\widetilde{\D}} _{\PP ^\sharp} ^{(\bullet)} (T))$
soit un objet de 
$N (\smash{\widetilde{\D}} _{\PP ^\sharp} ^{(\bullet)} (T))$
est local en $\PP$.
\end{lemm}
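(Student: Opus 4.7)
L'id�e est de partir de la caract�risation des objets de $N (\smash{\widetilde{\D}} _{\PP ^\sharp} ^{(\bullet)} (T))$ donn�e par l'analogue pour les modules de la remarque \ref{rema-iso-0} (utilis�e explicitement dans la preuve de \ref{defi-M-L-Serre}): $\E ^{(\bullet)}$ appartient � $N (\smash{\widetilde{\D}} _{\PP ^\sharp} ^{(\bullet)} (T))$ si et seulement s'il existe $\chi \in M$ et $\lambda \in L$ tels que la fl�che canonique $\E ^{(\bullet)} \to \lambda ^{*} \chi ^{*} \E ^{(\bullet)}$ de $M (\smash{\widetilde{\D}} _{\PP ^\sharp} ^{(\bullet)} (T))$ soit nulle. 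Puisque les foncteurs $\chi ^{*}$ et $\lambda ^{*}$ op�rent niveau par niveau sur les faisceaux (en ne modifiant que les fl�ches de transition ou en s�lectionnant des niveaux), ils commutent manifestement � la restriction � un ouvert, ce qui permettra de tester la nullit� localement.

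Concr�tement, je me donnerais un recouvrement ouvert $(\U _i) _{i \in I}$ de $\PP$ tel que, pour tout $i$, la restriction $\E ^{(\bullet)}| _{\U _i ^\sharp}$ soit un objet de $N (\smash{\widetilde{\D}} _{\U _i ^\sharp} ^{(\bullet)} (T\cap U _i))$. Par quasi-compacit� de $\PP$, on peut supposer $I$ fini. Pour chaque $i$, on dispose alors de $\chi _i \in M$, $\lambda _i \in L$ tels que la fl�che canonique $\E ^{(\bullet)}| _{\U _i ^\sharp}\to \lambda _i ^{*} \chi _i ^{*} (\E ^{(\bullet)}| _{\U _i ^\sharp})$ soit nulle. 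Comme $I$ est fini et que $M$, $L$ sont filtrants pour l'ordre ponctuel, il existe $\chi \in M$ majorant les $\chi _i$ et $\lambda \in L$ majorant les $\lambda _i$.

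Le point technique � v�rifier est que l'augmentation des indices pr�serve la nullit�: si $\chi \geq \chi _i$ (resp. $\lambda \geq \lambda _i$), la fl�che canonique $\E ^{(\bullet)} \to \chi ^{*} \E ^{(\bullet)}$ (resp. $\E ^{(\bullet)} \to \lambda ^{*} \E ^{(\bullet)}$) se factorise, niveau par niveau, par la fl�che canonique vers $\chi _i ^{*} \E ^{(\bullet)}$ (resp. $\lambda _i ^{*} \E ^{(\bullet)}$), via les facteurs $p ^{\chi (m) - \chi _i (m)}$ (resp. via les morphismes de transition $\alpha ^{(\lambda (m), \lambda _i (m))}$). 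En combinant ces factorisations pour $\chi$ et $\lambda$, on obtient que la restriction � $\U _i ^\sharp$ de la fl�che canonique $\E ^{(\bullet)} \to \lambda ^{*} \chi ^{*} \E ^{(\bullet)}$ est nulle pour tout $i \in I$. La nullit� d'un morphisme de faisceaux se testant localement, on en d�duit que $\E ^{(\bullet)} \to \lambda ^{*} \chi ^{*} \E ^{(\bullet)}$ est globalement nul dans $M (\smash{\widetilde{\D}} _{\PP ^\sharp} ^{(\bullet)} (T))$, d'o� l'appartenance $\E ^{(\bullet)} \in N (\smash{\widetilde{\D}} _{\PP ^\sharp} ^{(\bullet)} (T))$. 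L'unique �tape non formelle est la v�rification de ces factorisations des fl�ches canoniques; la quasi-compacit� de $\PP$ joue un r�le essentiel pour pouvoir prendre un sup commun � $\chi _i$ et $\lambda _i$.
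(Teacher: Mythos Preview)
Your proof is correct and follows essentially the same approach as the paper's own proof: both use the characterization via the nullity of the canonical map $\E ^{(\bullet)} \to \lambda ^{*} \chi ^{*} \E ^{(\bullet)}$, reduce to a finite cover by quasi-compactness, and take a common upper bound for the $\chi _i$ and $\lambda _i$. You are slightly more explicit than the paper about why enlarging $\chi$ and $\lambda$ preserves the local nullity (the factorization through the intermediate canonical maps), which the paper simply asserts.
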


\begin{proof}
Soit $\E ^{(\bullet)}$ un $\smash{\widetilde{\D}} _{\PP ^\sharp} ^{(\bullet)} (T)$-module. 
D'après l'analogue de la seconde remarque 
\ref{rema-iso-0} (on remplace les complexes par les modules),
le fait que $\E ^{(\bullet)}$ soit un objet de 
$N (\smash{\widetilde{\D}} _{\PP ^\sharp} ^{(\bullet)} (T))$
équivaut à dire 
qu'il existe $\chi \in M$ et $\lambda \in L$
tels que la flèche canonique
$\E ^{(\bullet)}  \to \lambda ^* \chi ^* \E ^{(\bullet)} $
soit le morphisme nul (dans la catégorie $M (\smash{\widetilde{\D}} _{\PP ^\sharp} ^{(\bullet)} (T))$). 
Soit $(\PP _i) _{i \in I}$ un recouvrement ouvert de $\PP $ tel que, pour tout $i\in I$, il existe 
$\chi _i \in M$ et $\lambda _i \in L$
tels que la flèche canonique
$\E ^{(\bullet)} |\PP _i  \to \lambda ^* _i  \chi ^* _i  \E ^{(\bullet)} |\PP _i $
soit le morphisme nul. 
Comme $\PP$ est quasi-compact, on peut supposer $I$ fini.
Il existe donc $\chi \in M$ et $\lambda \in L$ tels que, pour tout $i \in I$, 
on ait $\chi \geq \chi _i$ et $\lambda \geq \lambda _i$.
Pour tout $i \in I$, le morphisme canonique
$\E ^{(\bullet)} |\PP _i  \to \lambda ^* \chi ^* \E ^{(\bullet)} |\PP _i $
est donc le morphisme nul. 

\end{proof}

\begin{lemm}
\label{LMQ-iso:local}
Soit $f\colon \E ^{(\bullet)} \to \FF ^{(\bullet)}$ un morphisme de 
$\underrightarrow{LM} _{\Q} (\smash{\widetilde{\D}} _{\PP ^\sharp} ^{(\bullet)} (T))$. 
Le fait que ce morphisme soit un monomorphisme (resp. un épimorphisme, resp. un isomorphisme) 
est local en $\PP$.
\end{lemm}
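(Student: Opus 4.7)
Le plan est de ramener la question � la localit� de l'appartenance � la sous-cat�gorie $N (\smash{\widetilde{\D}} _{\PP ^\sharp} ^{(\bullet)} (T))$, qui vient d'�tre �tablie au lemme \ref{Nlocal}. Pour cela, j'exploiterais l'identification fournie par le lemme \ref{defi-M-L-Serre} entre $\underrightarrow{LM} _{\Q} (\smash{\widetilde{\D}} _{\PP ^\sharp} ^{(\bullet)} (T))$ et le quotient � la Serre $M (\smash{\widetilde{\D}} _{\PP ^\sharp} ^{(\bullet)} (T)) / N (\smash{\widetilde{\D}} _{\PP ^\sharp} ^{(\bullet)} (T))$, puis la caract�risation classique des monomorphismes, �pimorphismes et isomorphismes dans un tel quotient, en termes d'appartenance � $N$ des noyaux et conoyaux calcul�s dans la cat�gorie ab�lienne ambiante $M (\smash{\widetilde{\D}} _{\PP ^\sharp} ^{(\bullet)} (T))$.

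Concr�tement, d'apr�s la formule \ref{4.2.2BeintroLMQ}, je commencerais par repr�senter $f$ par un morphisme $\phi \colon \E ^{(\bullet)} \to \lambda ^{*} \chi ^{*}\FF ^{(\bullet)}$ de $M (\smash{\widetilde{\D}} _{\PP ^\sharp} ^{(\bullet)} (T))$, pour certains $\chi \in M$ et $\lambda \in L$. Comme le morphisme canonique $\FF ^{(\bullet)} \to \lambda ^{*} \chi ^{*}\FF ^{(\bullet)}$ est une lim-ind-isog�nie, donc un isomorphisme dans $\underrightarrow{LM} _{\Q} (\smash{\widetilde{\D}} _{\PP ^\sharp} ^{(\bullet)} (T))$, le morphisme $f$ est un monomorphisme (respectivement �pimorphisme, respectivement isomorphisme) si et seulement si c'est le cas de l'image de $\phi$ dans $\underrightarrow{LM} _{\Q} (\smash{\widetilde{\D}} _{\PP ^\sharp} ^{(\bullet)} (T))$. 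L'exactitude du foncteur de localisation $M (\smash{\widetilde{\D}} _{\PP ^\sharp} ^{(\bullet)} (T)) \to \underrightarrow{LM} _{\Q} (\smash{\widetilde{\D}} _{\PP ^\sharp} ^{(\bullet)} (T))$ (propri�t� des quotients de Serre) permet alors de r�crire cette condition comme: $\ker \phi$ (respectivement $\mathrm{coker}\, \phi$, respectivement les deux), calcul� dans $M (\smash{\widetilde{\D}} _{\PP ^\sharp} ^{(\bullet)} (T))$, appartient � $N (\smash{\widetilde{\D}} _{\PP ^\sharp} ^{(\bullet)} (T))$.

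Enfin, je conclurais en observant que la restriction � un ouvert $\U \subset \PP$ est un foncteur exact qui commute �videmment aux foncteurs $\chi ^{*}$ et $\lambda ^{*}$ ainsi qu'� la formation des noyaux et conoyaux: en particulier $\phi |\U ^{\sharp}$ repr�sente $f |\U ^{\sharp}$ dans $\underrightarrow{LM} _{\Q} (\smash{\widetilde{\D}} _{\U ^\sharp} ^{(\bullet)} (T))$, et l'on a $(\ker \phi)|\U ^{\sharp} = \ker (\phi |\U ^{\sharp})$, et de m�me pour le conoyau. Le lemme \ref{Nlocal} appliqu� � $\ker \phi$ et $\mathrm{coker}\, \phi$ fournit alors directement la localit� voulue. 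Je n'anticipe pas de difficult� technique majeure: le point le plus d�licat � d�tailler sera la caract�risation standard mono/�pi dans le quotient de Serre, qu'il convient de red�river bri�vement � partir de l'exactitude de la localisation $M \to M/N$ et de la description des fl�ches donn�e par \ref{4.2.2BeintroLMQ}.
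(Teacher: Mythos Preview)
Your proposal is correct and follows essentially the same approach as the paper: the paper's proof is a two-line condensation of exactly this argument, invoking that $\underrightarrow{LM} _{\Q} (\smash{\widetilde{\D}} _{\PP ^\sharp} ^{(\bullet)} (T))$ is abelian (hence mono/epi/iso are detected by vanishing of kernel/cokernel) and that being null there is local by lemma \ref{Nlocal}. Your version simply makes explicit the passage through a representative $\phi$ in $M (\smash{\widetilde{\D}} _{\PP ^\sharp} ^{(\bullet)} (T))$ and the exactness of the localization functor, which the paper leaves implicit.
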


\begin{proof}
On sait déjà que 
$\underrightarrow{LM} _{\Q} (\smash{\widetilde{\D}} _{\PP ^\sharp} ^{(\bullet)} (T))$
est une catégorie abélienne (voir \ref{defi-M-L-Serre}). 
Or, d'après le lemme \ref{Nlocal}, 
le fait d'être nul dans $\underrightarrow{LM} _{\Q} (\smash{\widetilde{\D}} _{\PP ^\sharp} ^{(\bullet)} (T))$ est local en $\PP$.
D'où le résultat.
\end{proof}

\begin{prop}
\label{LDiso-local}
Soit $\phi \colon 
\E ^{(\bullet)}
\to 
\FF ^{(\bullet)}$
un morphisme dans 
$\underrightarrow{LD} ^{\mathrm{b}} _{\Q} (\smash{\widetilde{\D}} _{\PP ^\sharp} ^{(\bullet)} (T))$.
Le fait que le morphisme $\phi$ soit un isomorphisme 
est local en $\PP$.
\end{prop}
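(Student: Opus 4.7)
The plan is to reduce the question for $\underrightarrow{LD}^{\mathrm{b}}_{\Q}$ to the corresponding question for $\underrightarrow{LM}_{\Q}$, which has already been settled. Concretely, I would invoke Corollary \ref{LDLMiso=isoHn}, which reformulates the property of being an isomorphism in $\underrightarrow{LD}^{\mathrm{b}}_{\Q}(\smash{\widetilde{\D}}_{\PP^\sharp}^{(\bullet)}(T))$ as a statement about the cohomology modules: $\phi$ is an isomorphism if and only if, for every $n \in \Z$, the induced morphism $\mathcal{H}^n(\phi)\colon \mathcal{H}^n(\E^{(\bullet)}) \to \mathcal{H}^n(\FF^{(\bullet)})$ is an isomorphism in $\underrightarrow{LM}_{\Q}(\smash{\widetilde{\D}}_{\PP^\sharp}^{(\bullet)}(T))$.

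The second ingredient is Lemma \ref{LMQ-iso:local}, which tells us that the property of being an isomorphism in $\underrightarrow{LM}_{\Q}(\smash{\widetilde{\D}}_{\PP^\sharp}^{(\bullet)}(T))$ is local on $\PP$. The final step is then a purely formal combination: for any open covering $(\PP_i)_{i \in I}$ of $\PP$, the restriction of $\phi$ to each $\PP_i^\sharp$ is an isomorphism in $\underrightarrow{LD}^{\mathrm{b}}_{\Q}(\smash{\widetilde{\D}}_{\PP_i^\sharp}^{(\bullet)}(T \cap P_i))$ if and only if each $\mathcal{H}^n(\phi)|\PP_i$ is an isomorphism in $\underrightarrow{LM}_{\Q}$ on $\PP_i^\sharp$ (by Corollary \ref{LDLMiso=isoHn} applied on $\PP_i^\sharp$), which by Lemma \ref{LMQ-iso:local} is equivalent to $\mathcal{H}^n(\phi)$ being an isomorphism globally on $\PP^\sharp$, hence to $\phi$ being an isomorphism globally by another application of Corollary \ref{LDLMiso=isoHn}.

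The only subtlety I anticipate is checking that the cohomology functor $\mathcal{H}^n$ commutes with restriction to opens, i.e.\ that $\mathcal{H}^n(\phi)|\PP_i = \mathcal{H}^n(\phi|\PP_i)$ in $\underrightarrow{LM}_{\Q}$; but this is immediate from the definition of $\mathcal{H}^n$ in \ref{defi-Hn-LMQ} at each level $m$ and the fact that restriction to an open is exact. With this compatibility at hand, the argument is essentially two applications of Corollary \ref{LDLMiso=isoHn} sandwiching one application of Lemma \ref{LMQ-iso:local}, and no further obstacle arises.
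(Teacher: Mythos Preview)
Your proposal is correct and follows exactly the same approach as the paper, which simply cites \ref{LMQ-iso:local} and \ref{LDLMiso=isoHn}. Your explicit mention of the compatibility of $\mathcal{H}^n$ with restriction to opens is a reasonable additional remark, though the paper does not bother to spell it out.
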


\begin{proof}
Cela résulte de \ref{LMQ-iso:local} et \ref{LDLMiso=isoHn}.
\end{proof}

\begin{lemm}
\label{localLD0}
Soit $\E ^{(\bullet)} \in \underrightarrow{LD} ^{\mathrm{b}} _{\Q} (\smash{\widetilde{\D}} _{\PP ^\sharp} ^{(\bullet)} (T))$. 
La propriété 
$\E ^{(\bullet)} \in \underrightarrow{LD} ^0 _{\Q} (\smash{\widetilde{\D}} _{\PP ^\sharp} ^{(\bullet)} (T))$
est locale en $\PP$. 
\end{lemm}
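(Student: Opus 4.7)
Le plan est de ramener cette propri�t� � la localit� de l'appartenance � la sous-cat�gorie $N(\smash{\widetilde{\D}}_{\PP^\sharp}^{(\bullet)}(T))$ d�j� �tablie au lemme \ref{Nlocal}, via la description des objets de $\underrightarrow{LD}^0_{\Q}$ en termes de leurs espaces de cohomologie.

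D'abord, par d�finition \ref{nota-LDQ0}, un objet $\E^{(\bullet)} \in \underrightarrow{LD}^{\mathrm{b}}_{\Q}(\smash{\widetilde{\D}}_{\PP^\sharp}^{(\bullet)}(T))$ appartient � $\underrightarrow{LD}^0_{\Q}(\smash{\widetilde{\D}}_{\PP^\sharp}^{(\bullet)}(T))$ si et seulement si, pour tout entier $n \not = 0$, le module $\mathcal{H}^n(\E^{(\bullet)})$ est isomorphe � $0$ dans $\underrightarrow{LM}_{\Q}(\smash{\widetilde{\D}}_{\PP^\sharp}^{(\bullet)}(T))$, c'est-�-dire appartient � $N(\smash{\widetilde{\D}}_{\PP^\sharp}^{(\bullet)}(T))$.

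Ensuite, je noterai que le foncteur $\mathcal{H}^n$ introduit en \ref{defi-Hn-LMQ}, �tant calcul� niveau par niveau � partir de la cohomologie usuelle des complexes de faisceaux, commute aux restrictions � des ouverts de $\PP$; autrement dit, pour tout ouvert $\U$ de $\PP$, on a $\mathcal{H}^n(\E^{(\bullet)})|\U ^{\sharp} = \mathcal{H}^n(\E^{(\bullet)}|\U ^{\sharp})$.

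Soit alors $(\PP_i)_{i \in I}$ un recouvrement ouvert de $\PP$ tel que $\E^{(\bullet)}|\PP_i ^{\sharp}$ soit dans $\underrightarrow{LD}^0_{\Q}(\smash{\widetilde{\D}}_{\PP_i^\sharp}^{(\bullet)}(T))$ pour tout $i \in I$. Alors pour tout $n \not = 0$ et tout $i \in I$, on a $\mathcal{H}^n(\E^{(\bullet)})|\PP_i ^{\sharp} \in N(\smash{\widetilde{\D}}_{\PP_i^\sharp}^{(\bullet)}(T))$; le lemme \ref{Nlocal} donne alors $\mathcal{H}^n(\E^{(\bullet)}) \in N(\smash{\widetilde{\D}}_{\PP^\sharp}^{(\bullet)}(T))$, d'o� la conclusion. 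Je ne vois pas d'obstacle significatif: l'argument est essentiellement une application directe de \ref{Nlocal} une fois qu'on a ramen� la propri�t� � la nullit� des espaces de cohomologie dans $\underrightarrow{LM}_{\Q}$, la seule v�rification technique �tant la compatibilit� de $\mathcal{H}^n$ aux restrictions, qui est imm�diate par d�finition niveau par niveau.
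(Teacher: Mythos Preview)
Your proof is correct and essentially identical to the paper's: the paper invokes Lemma~\ref{LMQ-iso:local} (locality of isomorphisms in $\underrightarrow{LM}_{\Q}$) applied to the vanishing $\mathcal{H}^n(\E^{(\bullet)}) \riso 0$ for each $n\neq 0$, and that lemma in turn reduces immediately to Lemma~\ref{Nlocal}, which is what you cite directly. The compatibility of $\mathcal{H}^n$ with restriction is implicit in both versions.
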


\begin{proof}
D'après 
\ref{LMQ-iso:local},
pour tout entier $n \in \Z$, 
la propriété
$\mathcal{H} ^{n} (\E ^{(\bullet)} ) \riso 0$ 
dans 
$\underrightarrow{LM} _{\Q} (\smash{\widetilde{\D}} _{\PP ^\sharp} ^{(\bullet)} (T))$
est locale en $\PP$.
\end{proof}

\begin{lemm}
\label{LMQ-morph:local}
Soient $\E ^{(\bullet)}, \, \FF ^{(\bullet)}$ deux objets de
$M (\smash{\widetilde{\D}} _{\PP ^\sharp} ^{(\bullet)} (T))$. 
La construction d'un morphisme dans $\underrightarrow{LM} _{\Q} (\smash{\widetilde{\D}} _{\PP ^\sharp} ^{(\bullet)} (T))$
de la forme
$f\colon \E ^{(\bullet)} \to \FF ^{(\bullet)}$ est locale en $\PP$.
\end{lemm}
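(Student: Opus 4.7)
The plan is to exploit the explicit description of morphisms in $\underrightarrow{LM} _{\Q}$ given by formula \ref{4.2.2BeintroLMQ}, combined with the quasi-compactness of $\PP$, which permits reduction to finite open covers and to common upper bounds in the filtering posets $M$ and $L$. The sheaf property for ordinary module morphisms at each level will do the actual gluing once we arrive at genuine morphisms in $M(\smash{\widetilde{\D}} _{\PP ^\sharp} ^{(\bullet)} (T))$.

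First I would settle the uniqueness part: two morphisms $f,g\colon \E^{(\bullet)} \to \FF^{(\bullet)}$ in $\underrightarrow{LM}_\Q(\smash{\widetilde{\D}} _{\PP ^\sharp} ^{(\bullet)} (T))$ whose restrictions to each $\PP_i$ of an open cover coincide must be equal. By \ref{4.2.2BeintroLMQ}, up to choosing common indices $(\chi,\lambda)$ large enough, both $f$ and $g$ may be represented by genuine module morphisms $\phi_1,\phi_2\colon \E^{(\bullet)} \to \lambda^*\chi^*\FF^{(\bullet)}$. The local equality on each $\PP_i$ provides, again via \ref{4.2.2BeintroLMQ}, indices $(\chi_i,\lambda_i)\geq (\chi,\lambda)$ such that the compositions $\E^{(\bullet)}|_{\PP_i} \to \lambda_i^*\chi_i^*\FF^{(\bullet)}|_{\PP_i}$ of $\phi_1|_{\PP_i}$ and $\phi_2|_{\PP_i}$ with the canonical transition morphism become equal. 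As $\PP$ is quasi-compact, one may assume the cover is finite and take a common upper bound $(\chi',\lambda')$ in $M\times L$, so that $\phi_1$ and $\phi_2$, viewed as morphisms into $\lambda'^*\chi'^*\FF^{(\bullet)}$, agree on every $\PP_i$. Since morphisms of sheaves of modules form a sheaf on $\PP$, this gives $\phi_1=\phi_2$ as module morphisms, hence $f=g$ in $\underrightarrow{LM}_\Q$.

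For existence, given compatible local morphisms $f_i\colon \E^{(\bullet)}|_{\PP_i}\to \FF^{(\bullet)}|_{\PP_i}$ in $\underrightarrow{LM}_\Q$, represent each by a module morphism $\phi_i\colon \E^{(\bullet)}|_{\PP_i}\to \lambda_i^*\chi_i^*\FF^{(\bullet)}|_{\PP_i}$. By quasi-compactness of $\PP$ we reduce to a finite subcover, and by passing to a common upper bound $(\chi,\lambda)$ we may assume all $\phi_i$ land in $\lambda^*\chi^*\FF^{(\bullet)}|_{\PP_i}$. The agreement of $f_i$ and $f_j$ on $\PP_i\cap\PP_j$ gives, by the same application of \ref{4.2.2BeintroLMQ} on the overlap, indices $(\chi_{ij},\lambda_{ij})\geq (\chi,\lambda)$ for which the compositions of $\phi_i|_{\PP_i\cap\PP_j}$ and $\phi_j|_{\PP_i\cap\PP_j}$ with the canonical morphism into $\lambda_{ij}^*\chi_{ij}^*\FF^{(\bullet)}|_{\PP_i\cap \PP_j}$ coincide as module morphisms. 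Because there are finitely many pairs $(i,j)$, one may choose a common upper bound $(\chi',\lambda')$ and thereby obtain genuine module morphisms $\psi_i\colon \E^{(\bullet)}|_{\PP_i}\to \lambda'^*\chi'^*\FF^{(\bullet)}|_{\PP_i}$ that agree on every overlap. These glue to a module morphism $\psi\colon \E^{(\bullet)}\to \lambda'^*\chi'^*\FF^{(\bullet)}$ whose class in $\underrightarrow{LM}_\Q$ restricts to $f_i$ on each $\PP_i$ by construction.

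The main obstacle is purely combinatorial: keeping track of the various compositions with canonical transition morphisms $\FF^{(\bullet)}\to \lambda^*\chi^*\FF^{(\bullet)}$ and verifying that after passing to finitely many common upper bounds in the directed sets $M$ and $L$ one recovers actual equalities of module morphisms. Once that bookkeeping is done, everything follows from the filteredness of $M$ and $L$, the quasi-compactness of $\PP$, and the fact that on a fixed level the sheaf Hom-functor satisfies the usual gluing property.
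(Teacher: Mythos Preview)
Your existence argument is correct and essentially identical to the paper's: reduce to a finite cover by quasi-compactness, represent each $f_i$ by a genuine module morphism into a common $\lambda^*\chi^*\FF^{(\bullet)}$, enlarge $(\lambda,\chi)$ once more so the representatives agree on overlaps, and glue.

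The one difference is organizational. You also include a direct uniqueness argument, whereas the paper treats only existence in this lemma and defers uniqueness to the subsequent lemma \ref{lemm-f=glocal}, where it is deduced more abstractly: $f=g$ iff $\ker(f-g)\to\E^{(\bullet)}$ is an isomorphism, and the latter is local by Lemma~\ref{LMQ-iso:local}. Your direct approach to uniqueness (replay the same filtered--colimit bookkeeping) is perfectly valid and self-contained; the paper's route is shorter but relies on the earlier abelian-quotient machinery.
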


\begin{proof}
Soient $(\PP _i) _{i \in I}$ un recouvrement ouvert de $\PP $
et pour tout $i\in I$ des morphismes 
$f _i\colon \E ^{(\bullet)} | \PP _i \to \FF ^{(\bullet)} | \PP _i $
dans 
$\underrightarrow{LM} _{\Q} (\smash{\widetilde{\D}} _{\PP _i} ^{(\bullet)} (T \cap P _i))$
tels 
$f _i | \PP _{i} \cap \PP _{j} =f _j | \PP _{i} \cap \PP _{j}$
pour tous $i,j\in I$. 
Comme $\PP$ est quasi-compact, on peut supposer $I$ fini.
Comme une famille finie d'éléments de $L$ (resp. de $M$) est majorée par un élément
de $L$ (resp. de $M$),
quitte à augmenter les éléments de $L$ ou $M$ qui apparaissent 
dans un choix de représentant des morphismes $f _i$, on peut supposer qu'il existe $\lambda \in L$ et $\chi \in M$,
des morphismes 
$a _i\colon \E ^{(\bullet)}| \PP _i \to \lambda ^{*} \chi ^{*} \FF ^{(\bullet)}| \PP _i$ dans 
$M (\smash{\widetilde{\D}} _{\PP _i} ^{(\bullet)} (T\cap P _i))$
représentant $f _i$.
Quitte à nouveau à augmenter $  \lambda $ et $\chi$, on peut en outre supposer
que 
$a _i | \PP _{i} \cap \PP _{j} =a _j | \PP _{i} \cap \PP _{j}$.
On obtient donc un morphisme 
$a\colon \E ^{(\bullet)} \to \lambda ^{*} \chi ^{*} \FF ^{(\bullet)}$ dans 
$M (\smash{\widetilde{\D}} _{\PP ^\sharp} ^{(\bullet)} (T))$
tel que 
$a _i = a  | \PP _{i}$.
D'où le résultat.

\end{proof}

\begin{lemm}
\label{lemm-f=glocal}
Soient $f,~g\colon \E ^{(\bullet)} \to \FF ^{(\bullet)}$ deux morphismes de 
 $\underrightarrow{LM} _{\Q} (\smash{\widetilde{\D}} _{\PP ^\sharp} ^{(\bullet)} (T))$.
 L'égalité $f =g$ dans $\underrightarrow{LM} _{\Q} (\smash{\widetilde{\D}} _{\PP ^\sharp} ^{(\bullet)} (T))$ est locale en $\PP$. 
\end{lemm}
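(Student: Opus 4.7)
The plan is to reduce the claim to the locality of the zero object in $\underrightarrow{LM} _{\Q} (\smash{\widetilde{\D}} _{\PP ^\sharp} ^{(\bullet)} (T))$, which is already established. Since $\underrightarrow{LM} _{\Q} (\smash{\widetilde{\D}} _{\PP ^\sharp} ^{(\bullet)} (T))$ is an abelian category by Lemma \ref{defi-M-L-Serre}, the equality $f=g$ is equivalent to $f-g=0$, which in turn is equivalent to $\mathrm{im}(f-g) \riso 0$.

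First, I would observe that for any open $\U \subset \PP$, the restriction functor
$\underrightarrow{LM} _{\Q} (\smash{\widetilde{\D}} _{\PP ^\sharp} ^{(\bullet)} (T))
\to
\underrightarrow{LM} _{\Q} (\smash{\widetilde{\D}} _{\U ^\sharp} ^{(\bullet)} (T\cap U))$
is exact: indeed, at the level of $M (\smash{\widetilde{\D}} _{\PP ^\sharp} ^{(\bullet)} (T))$, restriction is exact and commutes with the operators $\chi ^*$ and $\lambda ^*$, so it sends lim-ind-isog�nies to lim-ind-isog�nies and passes to the Serre quotient exactly. In particular, restriction commutes with the formation of images computed in $\underrightarrow{LM} _{\Q}$.

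Next, the hypothesis $f|\PP _i = g |\PP _i$ for each $i$ in an open cover $(\PP _i) _{i\in I}$ of $\PP$ gives
$\mathrm{im}(f-g)|\PP _i \riso 0$ in $\underrightarrow{LM} _{\Q} (\smash{\widetilde{\D}} _{\PP _i ^\sharp} ^{(\bullet)} (T\cap P_i))$ for each $i$. Applying Lemma \ref{Nlocal} (equivalently, the locality of vanishing in $\underrightarrow{LM} _{\Q}$ which underlies the proof of \ref{LMQ-iso:local}) yields $\mathrm{im}(f-g) \riso 0$ globally, and hence $f=g$.

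I do not anticipate a serious obstacle: the only non-formal ingredient is Lemma \ref{Nlocal}, together with the quasi-compactness of $\PP$ that allows one to find a common $\chi \in M$ and $\lambda \in L$ dominating the data on each $\PP _i$. If one prefers to avoid invoking the abelian structure, one can instead argue directly: choose a representative $a\colon \E ^{(\bullet)} \to \lambda ^* \chi ^* \FF ^{(\bullet)}$ of $f-g$ in $M (\smash{\widetilde{\D}} _{\PP ^\sharp} ^{(\bullet)} (T))$ via \eqref{4.2.2BeintroLMQ}, note that on each $\PP _i$ some further $(\lambda _i,\chi _i)$ kills $a|\PP _i$, and use quasi-compactness of $\PP$ to produce a uniform pair $(\lambda ', \chi ')$ killing $a$ globally (vanishing of a morphism in $M$ being obviously local).
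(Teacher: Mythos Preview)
Your proposal is correct and follows essentially the same approach as the paper: both reduce the equality $f=g$ to a property already known to be local. The only cosmetic difference is that the paper phrases it as ``$\ker(f-g)\to \E^{(\bullet)}$ is an isomorphism'' and invokes Lemma~\ref{LMQ-iso:local}, whereas you phrase it as ``$\mathrm{im}(f-g)\riso 0$'' and invoke Lemma~\ref{Nlocal} directly; since \ref{LMQ-iso:local} is itself derived from \ref{Nlocal}, the two arguments are interchangeable.
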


\begin{proof}
L'égalité $f =g$ équivaut à dire que le morphisme canonique 
$\ker (f-g) \to \E ^{(\bullet)} $ est un isomorphisme. 
Cela résulte alors de \ref{LMQ-iso:local}.
\end{proof}

\subsection{Bifoncteurs homomorphismes}

\begin{prop}
\label{const-calHomLMQ}
Soient $\E ^{(\bullet)}, \, \FF ^{(\bullet)}$ deux objets de
 $\underrightarrow{LM} _{\Q} (\smash{\widetilde{\D}} _{\PP ^\sharp} ^{(\bullet)} (T))$.
Notons $\mathcal{H}om _{\underrightarrow{LM} _{\Q} (\smash{\widetilde{\D}} _{\PP ^\sharp} ^{(\bullet)} (T))}
(\E ^{(\bullet)},~\FF ^{(\bullet)})$ le préfaisceau en groupes abéliens sur $\PP$ défini par 
$\U \mapsto \mathrm{Hom} _{\underrightarrow{LM} _{\Q} (\smash{\widetilde{\D}} _{\U ^{\sharp}} ^{(\bullet)} (T \cap U))}
(\E ^{(\bullet)} |\U,~\FF ^{(\bullet)}|\U)$.
Alors $\mathcal{H}om _{\underrightarrow{LM} _{\Q} (\smash{\widetilde{\D}} _{\PP ^\sharp} ^{(\bullet)} (T))}
(\E ^{(\bullet)},~\FF ^{(\bullet)})$ est un faisceau.
\end{prop}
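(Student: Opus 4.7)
Le plan est de v�rifier les deux axiomes de faisceau (s�paration et recollement) pour le pr�faisceau $\mathcal{H}om _{\underrightarrow{LM} _{\Q} (\smash{\widetilde{\D}} _{\PP ^\sharp} ^{(\bullet)} (T))}(\E ^{(\bullet)},~\FF ^{(\bullet)})$. Je fixerai un ouvert $\U$ de $\PP$ et un recouvrement ouvert $(\U _i) _{i\in I}$ de $\U$. Puisque tous les sch�mas formels consid�r�s dans ce papier sont quasi-compacts et que les ouverts d'un sch�ma formel noeth�rien tel que $\PP$ sont eux-m�mes quasi-compacts, je pourrai appliquer les r�sultats locaux d�j� �tablis plus haut (formul�s pour $\PP$) aux restrictions � $\U$, apr�s remplacement formel de $\PP$ par $\U$, de $\ZZ$ par $\ZZ\cap \U$ et de $T$ par $T\cap U$.

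La propri�t� de s�paration d�coulera directement du lemme \ref{lemm-f=glocal} appliqu� � l'ouvert $\U$ muni du recouvrement $(\U _i) _{i\in I}$: deux morphismes $f, g\colon \E ^{(\bullet)}|\U \to \FF ^{(\bullet)}|\U$ de $\underrightarrow{LM} _{\Q} (\smash{\widetilde{\D}} _{\U ^{\sharp}} ^{(\bullet)} (T\cap U))$ co�ncidant sur chaque $\U _i$ seront donc �gaux. Pour l'existence du recollement, je me donnerai une famille $(f _i) _{i \in I}$ de morphismes d�finis chacun sur $\U _i$ et v�rifiant $f _i |\U _i \cap \U _j =f _j |\U _i \cap \U _j$ pour tous $i,j\in I$; j'appliquerai alors directement le lemme \ref{LMQ-morph:local} � $\U$, qui fournit un morphisme global sur $\U$ dont la restriction � chaque $\U _i$ est $f _i$. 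L'unicit� d'un tel morphisme global r�sultera de la s�paration pr�c�dente.

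Aucune difficult� nouvelle n'intervient ici: la proposition est essentiellement une reformulation faisceautique des lemmes \ref{LMQ-morph:local} et \ref{lemm-f=glocal}. Le seul point d�licat, d�j� r�solu dans la preuve du lemme \ref{LMQ-morph:local}, consiste � utiliser la quasi-compacit� pour se ramener � un recouvrement fini, puis � majorer simultan�ment dans les ensembles filtrants $L$ et $M$ les param�tres $\lambda$ et $\chi$ intervenant dans un choix de repr�sentants des $f _i$, afin que ces repr�sentants deviennent compatibles au niveau de $M (\smash{\widetilde{\D}} _{\U ^{\sharp}} ^{(\bullet)} (T\cap U))$ et autorisent ainsi le recollement usuel de morphismes de faisceaux de modules.
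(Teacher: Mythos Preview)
Ton approche est correcte et co\"incide avec celle du papier, qui se contente d'affirmer que la proposition \'equivaut aux lemmes \ref{LMQ-morph:local} et \ref{lemm-f=glocal}. Tu as simplement d\'etaill\'e cette \'equivalence en explicitant les deux axiomes de faisceau et en justifiant que les lemmes s'appliquent \`a tout ouvert $\U$ de $\PP$ par quasi-compacit\'e.
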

\begin{proof}
La proposition équivaut aux lemmes
\ref{LMQ-morph:local} et \ref{lemm-f=glocal}.
\end{proof}

\begin{nota}
\label{def-Hom-LD}
Notons $\mathrm{Ab} _\PP$
la catégorie abélienne des faisceaux en groupes abéliens sur $\PP$.
Soient $\E ^{(\bullet),~\bullet},~\FF ^{(\bullet),~ \bullet}\in 
K 
(\underrightarrow{LM} _{\Q} (\smash{\widetilde{\D}} _{\PP ^\sharp} ^{(\bullet)} (T)))$
(exceptionnellement, on indique le deuxième $\bullet$ pour préciser les notations qui suivent).
Avec les notations de la proposition \ref{const-calHomLMQ},
on dispose du bifoncteur 
$$\mathcal{H}om ^{\bullet} _{\underrightarrow{LM} _{\Q} (\smash{\widetilde{\D}} _{\PP ^\sharp} ^{(\bullet)} (T))}
(-,-)
\colon 
K 
(\underrightarrow{LM} _{\Q} (\smash{\widetilde{\D}} _{\PP ^\sharp} ^{(\bullet)} (T))) 
\times
K 
(\underrightarrow{LM} _{\Q} (\smash{\widetilde{\D}} _{\PP ^\sharp} ^{(\bullet)} (T))) 
\to
K (\mathrm{Ab} _\PP)$$
dont le $n$-ème terme pour tout entier $n\in \Z$ 
est  défini en posant:
\begin{equation}
\label{def-Hom-nLD}
\mathcal{H}om ^{n}
_{\underrightarrow{LM} _{\Q} (\smash{\widetilde{\D}} _{\PP ^\sharp} ^{(\bullet)} (T))}
(\E ^{(\bullet),~\bullet},~\FF ^{(\bullet),~\bullet})
:= 
\prod _{p\in \Z}
\mathcal{H}om _{\underrightarrow{LM} _{\Q} (\smash{\widetilde{\D}} _{\PP ^\sharp} ^{(\bullet)} (T))}
(\E ^{(\bullet),p},~\FF ^{(\bullet),~p+n})
\end{equation}
et les morphismes de transition sont donnés par la formule
$d = d _{\E}  + (-1) ^{n+1} d _{\FF}$.
\end{nota}

\begin{nota}
\label{def-rmHom-LD}
Notons $\mathrm{Ab}$ la catégorie des groupes abéliens.
De manière analogue à la construction de \ref{def-Hom-LD}, en y 
 remplaçant partout  {\og $\mathcal{H}om$\fg} par {\og $\mathrm{Hom}$\fg}, 
on définit le bifoncteur (qui est d'ailleurs le bifoncteur classique des homomorphismes de la catégorie 
abélienne 
$\underrightarrow{LM} _{\Q} (\smash{\widetilde{\D}} _{\PP ^\sharp} ^{(\bullet)} (T))$):
$$\mathrm{Hom} ^{\bullet} _{\underrightarrow{LM} _{\Q} (\smash{\widetilde{\D}} _{\PP ^\sharp} ^{(\bullet)} (T))}
(-,-)
\colon 
K 
(\underrightarrow{LM} _{\Q} (\smash{\widetilde{\D}} _{\PP ^\sharp} ^{(\bullet)} (T))) 
\times
K 
(\underrightarrow{LM} _{\Q} (\smash{\widetilde{\D}} _{\PP ^\sharp} ^{(\bullet)} (T))) 
\to
K (\mathrm{Ab}).$$

\end{nota}

\begin{lemm}
\label{lemm-trans-fonctderdroi}
Soient $\mathcal{C} _1, ~
\mathcal{C} '_1,~
\mathcal{C}$ des catégories triangulées, 
$\NN _1, ~\NN '_1, \NN$ des systèmes nuls (voir la définition de \cite[10.2.2]{Kashiwara-schapira-book})
de respectivement
$\mathcal{C} _1, ~
\mathcal{C} '_1,~
\mathcal{C}$.
Soient 
$\NN _2, ~\NN '_2$ des systèmes nuls
de respectivement
$\mathcal{C} _2:= \mathcal{C} _1/ \NN _1, ~
\mathcal{C} ' _2:= \mathcal{C} '_1/ \NN '_1$.
Soient 
$\NN _3, ~\NN '_3$ des systèmes nuls
de respectivement
$\mathcal{C} _1, ~
\mathcal{C} ' _1$
tels que $\mathcal{C} _1/ \NN _3 =\mathcal{C} _2/ \NN _2$,
$\mathcal{C} '_1/ \NN '_3 =\mathcal{C} '_2/ \NN '_2$.
Soit $F \colon \CC _1 \times \CC '_1 \to \CC$ un bifoncteur triangulé. 
On suppose que la localisation à droite de $F$ par rapport à 
$( \NN _1 \times ~\NN ' _1, \NN)$ existe (voir la définition \cite[10.3.7]{Kashiwara-schapira-book})
et se note 
$\R _{\NN _1 \times ~\NN ' _1} ^{\NN} F$.
Si l'une des conditions:
\begin{enumerate}
\item la localisation à droite de $F$ par rapport à 
$( \NN _3 \times ~\NN ' _3, \NN)$ existe,
\item la localisation à droite de $\R _{\NN _1 \times ~\NN ' _1} ^{\NN} F$ par rapport à 
$( \NN _2 \times ~\NN ' _2, \NN)$ existe
\end{enumerate}
est satisfaite, alors la seconde aussi et on dispose dans ce cas de l'isomorphisme de bifoncteurs 
$$\R _{\NN _2 \times ~\NN ' _2} ^{\NN} \R _{\NN _1 \times ~\NN ' _1} ^{\NN} F
\riso
\R _{\NN _3 \times ~\NN ' _3} ^{\NN} F.$$

\end{lemm}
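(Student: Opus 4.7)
Mon plan est d'exploiter directement la propri�t� universelle qui caract�rise la localisation � droite d'un bifoncteur triangul� (voir \cite[10.3.7]{Kashiwara-schapira-book}). Rappelons que la donn�e de $G _1 := \R _{\NN _1 \times \NN '_1} ^{\NN} F$ consiste en un bifoncteur triangul� $\CC _2 \times \CC '_2 \to \CC /\NN$ muni d'une transformation naturelle $\eta _1 \colon Q \circ F \to G _1 \circ (Q _1 \times Q '_1)$, universelle parmi de telles donn�es, o� $Q, Q _1, Q '_1$ d�signent les foncteurs quotients canoniques. L'id�e g�n�rale est de montrer que, sous l'une ou l'autre des deux hypoth�ses, les deux bifoncteurs candidats co�ncident via leur propri�t� universelle respective, les identifications $\CC _1 / \NN _3 = \CC _2 / \NN _2$ et $\CC '_1 / \NN '_3 = \CC '_2 / \NN '_2$ garantissant que les cat�gories sources sont les m�mes.

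Dans un premier temps, je supposerais l'existence de $G _3 := \R _{\NN _3 \times \NN '_3} ^{\NN} F$. Le foncteur quotient $Q _3 \colon \CC _1 \to \CC _1/\NN _3$ se factorisant canoniquement en $Q _3 = Q _2 \circ Q _1$ (et de m�me pour les primes, par d�finition de $\NN _3, \NN '_3$), le morphisme universel $Q \circ F \to G _3 \circ (Q _3 \times Q '_3)$ s'�crit $Q \circ F \to G _3 \circ (Q _2 \times Q '_2) \circ (Q _1 \times Q '_1)$. Par la propri�t� universelle de $G _1$, il se factorise alors en un unique morphisme $G _1 \to G _3 \circ (Q _2 \times Q '_2)$, et je v�rifierais que $G _3$ muni de ce morphisme satisfait la propri�t� universelle requise pour $\R _{\NN _2 \times \NN '_2} ^{\NN} G _1$. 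Cela fournira simultan�ment l'existence de ce dernier et son identification canonique avec $G _3$.

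R�ciproquement, je supposerais l'existence de $G _{12} := \R _{\NN _2 \times \NN '_2} ^{\NN} G _1$. En composant les deux morphismes universels successifs, on construit naturellement un morphisme $Q \circ F \to G _{12} \circ (Q _3 \times Q '_3)$. Pour tout candidat test $(H, \eta \colon Q \circ F \to H \circ (Q _3 \times Q '_3))$ en vue de $\R _{\NN _3 \times \NN '_3} ^{\NN} F$, j'appliquerais successivement l'universalit� de $G _1$ (qui fournit un unique $G _1 \to H \circ (Q _2 \times Q '_2)$), puis celle de $G _{12}$ (qui fournit un unique $G _{12} \to H$), ce qui �tablira la propri�t� universelle d�sir�e.

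Le point le plus d�licat sera la v�rification soigneuse de l'unicit� des factorisations construites ainsi que de leur compatibilit� triangul�e, notamment dans le cadre bifonctoriel o� la propri�t� universelle op�re simultan�ment sur les deux variables. L'argument reste n�anmoins formellement parall�le � la transitivit� classique des extensions de Kan � droite, adapt�e ici au cadre des bifoncteurs triangul�s entre cat�gories quotient�es par des syst�mes nuls.
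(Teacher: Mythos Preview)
Your approach is correct and is essentially the same as the paper's, which simply states that the result follows immediately from the universal property of right localizations. You have written out in detail the two directions that the paper leaves implicit, but the underlying idea is identical.
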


\begin{proof}
Cela découle aussitôt de la propriété universelle des localisations à droite.
\end{proof}

\begin{nota}
[Bifoncteur faisceau des homomorphismes de Berthelot]
\label{def-Hom-LD-Berthelot}
Soient $\E ^{(\bullet),~\bullet}\in 
K ^{-} 
(\smash{\widetilde{\D}} _{\PP ^\sharp} ^{(\bullet)} (T))$,
$\FF ^{(\bullet),~ \bullet}\in 
K ^{+}
(\smash{\widetilde{\D}} _{\PP ^\sharp} ^{(\bullet)} (T))$.
Berthelot a considéré le bifoncteur
$$\underset{\lambda \in L,~\chi \in M}{\underrightarrow{\lim}}~  
\mathcal{H}om ^{\bullet} _{\smash{\widetilde{\D}} _{\PP ^\sharp} ^{(\bullet)} (T))}
(-,~ \lambda ^{*} \chi ^{*}-)\colon 
K ^{-} 
(\smash{\widetilde{\D}} _{\PP ^\sharp} ^{(\bullet)} (T))^{\circ}
\times
K ^{+}
(\smash{\widetilde{\D}} _{\PP ^\sharp} ^{(\bullet)} (T))
\to
K (\mathrm{Ab} _\PP)$$
dont le $n$-ème terme pour tout entier $n\in \Z$ 
est  défini en posant 
\begin{equation}
\label{def-Hom-nLD}
\underset{\lambda \in L,~\chi \in M}{\underrightarrow{\lim}}~  \mathcal{H}om ^{n}
_{\smash{\widetilde{\D}} _{\PP ^\sharp} ^{(\bullet)} (T))}
(\E ^{(\bullet),~\bullet},~ \lambda ^{*} \chi ^{*}\FF ^{(\bullet),~\bullet})
:= 
\underset{\lambda \in L,~\chi \in M}{\underrightarrow{\lim}}~ 
\prod _{p\in \Z}
\mathcal{H}om _{\smash{\widetilde{\D}} _{\PP ^\sharp} ^{(\bullet)} (T))}
(\E ^{(\bullet),p},~ \lambda ^{*} \chi ^{*}\FF ^{(\bullet),~p+n})
\end{equation}
et les morphismes de transition sont donnés par la formule
$d = d _{\E}  + (-1) ^{n+1} d _{\FF}$.
Berthelot a vérifié que ce bifoncteur
$\underset{\lambda \in L,~\chi \in M}{\underrightarrow{\lim}}~  
\mathcal{H}om ^{\bullet} _{\smash{\widetilde{\D}} _{\PP ^\sharp} ^{(\bullet)} (T))}
(-,~ \lambda ^{*} \chi ^{*}-)$
est localisable à droite en le dérivé droit 
de la forme
\begin{equation}
\label{RcalHomBerth}
\underset{\lambda \in L,~\chi \in M}{\R \underrightarrow{\lim}}~  \mathcal{H}om ^{\bullet} _{\smash{\widetilde{\D}} _{\PP ^\sharp} ^{(\bullet)} (T))}
(-,~ \lambda ^{*} \chi ^{*}-)
\colon 
\smash{\underrightarrow{LD}} ^{\mathrm{-}} _{\Q} ( \smash{\widetilde{\D}} _{\PP ^\sharp} ^{(\bullet)}(T)) ^{\circ} 
\times 
\smash{\underrightarrow{LD}} ^{\mathrm{+}} _{\Q} ( \smash{\widetilde{\D}} _{\PP ^\sharp} ^{(\bullet)}(T))
\to
D (\mathrm{Ab} _\PP).
\end{equation}
Il note 
$\R \mathcal{H}om _{\smash{\underrightarrow{LD} } _{\Q} ( \smash{\widetilde{\D}} _{\PP ^\sharp} ^{(\bullet)}(T))}
(-,-) 
:=
\underset{\lambda \in L,~\chi \in M}{\R \underrightarrow{\lim}}~  \mathcal{H}om ^{\bullet} _{\smash{\widetilde{\D}} _{\PP ^\sharp} ^{(\bullet)} (T))}
(-,~ \lambda ^{*} \chi ^{*}-)$, ce foncteur dérivé droit qui se résout 
(i.e., on applique le théorème d'existence \cite[10.3.9]{Kashiwara-schapira-book}) en prenant des résolutions injectives
du terme de droite et des résolutions plates d'un certain type du terme de gauche.

\end{nota}

\begin{vide}
[Bifoncteur homomorphisme de Berthelot]
\label{bif-HomrmBert-const}
Soient $\E ^{(\bullet)}\in 
\smash{\underrightarrow{LD}} ^{-} _{\Q} ( \smash{\widetilde{\D}} _{\PP ^\sharp} ^{(\bullet)}(T))$,
$\FF ^{(\bullet)}
\in 
\smash{\underrightarrow{LD}} ^{+} _{\Q} ( \smash{\widetilde{\D}} _{\PP ^\sharp} ^{(\bullet)}(T))$.
De manière analogue à \ref{def-Hom-LD-Berthelot}, 
en remplaçant 
$\mathcal{H}om$ par $\mathrm{Hom}$, 
Berthelot a construit le bifoncteur 
\begin{equation}
\label{RHomLD}
\R \mathrm{Hom} _{\smash{\underrightarrow{LD} } _{\Q} ( \smash{\widetilde{\D}} _{\PP ^\sharp} ^{(\bullet)}(T))}
(-, - )\colon 
\smash{\underrightarrow{LD}} ^{\mathrm{-}} _{\Q} ( \smash{\widetilde{\D}} _{\PP ^\sharp} ^{(\bullet)}(T)) ^{\circ} 
\times 
\smash{\underrightarrow{LD}} ^{\mathrm{+}} _{\Q} ( \smash{\widetilde{\D}} _{\PP ^\sharp} ^{(\bullet)}(T))
\to
D ^{+}(\mathrm{Ab})
\end{equation}
tel que 
$H ^{0} (\R \mathrm{Hom} _{\smash{\underrightarrow{LD}}  _{\Q} ( \smash{\widetilde{\D}} _{\PP ^\sharp} ^{(\bullet)}(T))}
(\E ^{(\bullet)}, \FF ^{(\bullet)} ))
= 
\mathrm{Hom} _{\smash{\underrightarrow{LD}}  _{\Q} ( \smash{\widetilde{\D}} _{\PP ^\sharp} ^{(\bullet)}(T))}
(\E ^{(\bullet)}, \FF ^{(\bullet)} )$.
Il a de plus vérifié l'isomorphisme de bifoncteurs
\begin{equation}
\label{Berth-Rhomcal-vs-rm}
\R \Gamma (\PP, -) \circ \R \mathcal{H}om _{\smash{\underrightarrow{LD} } _{\Q} ( \smash{\widetilde{\D}} _{\PP ^\sharp} ^{(\bullet)}(T))}
(-,-) \riso
\R \mathrm{Hom} _{\smash{\underrightarrow{LD} } _{\Q} ( \smash{\widetilde{\D}} _{\PP ^\sharp} ^{(\bullet)}(T))}
(-,-).
\end{equation}

\end{vide}

\begin{vide}
\label{exist-RcalHom}
Pour $\sharp \in \{ -,~+, \mathrm{b} \}$,
notons 
$Q _N
\colon 
K ^{\sharp} (\smash{\widetilde{\D}} _{\PP ^\sharp} ^{(\bullet)} (T))
\to 
K ^{\sharp} (\underrightarrow{LM} _{\Q} (\smash{\widetilde{\D}} _{\PP ^\sharp} ^{(\bullet)} (T)))$
le foncteur canonique.
D'après la remarque après le théorème \cite[3.2]{Miyachi-localization}, le foncteur $Q _N$
est un quotient. Soit $\NN _N$ le système nul correspondant à ce quotient.  
De plus, le foncteur $Q _N$ induit par localisation l'équivalence de catégories de
\ref{eqcatLD=DSM-fonct} notée 
$Q _{eq}\colon \underrightarrow{LD} ^{\mathrm{b}} _{\Q} (\smash{\widetilde{\D}} _{\PP ^\sharp} ^{(\bullet)} (T))
\cong 
D ^{\mathrm{b}}
(\underrightarrow{LM} _{\Q} (\smash{\widetilde{\D}} _{\PP ^\sharp} ^{(\bullet)} (T)))$.
Notons enfin $\QQ is $ le système nul des quasi-isomorphismes de 
$K (\mathrm{Ab} _\PP)$ et $Q \colon K (\mathrm{Ab} _\PP) \to D (\mathrm{Ab} _\PP)$,
le foncteur canonique.
Avec les notations des paragraphes \ref{def-Hom-LD} et \ref{def-Hom-LD-Berthelot},
on dispose de l'égalité
$$\underset{\lambda \in L,~\chi \in M}{\underrightarrow{\lim}}~  
\mathcal{H}om ^{\bullet} _{\smash{\widetilde{\D}} _{\PP ^\sharp} ^{(\bullet)} (T))}
(-,~ \lambda ^{*} \chi ^{*}-)
=
\mathcal{H}om ^{\bullet} _{\underrightarrow{LM} _{\Q} (\smash{\widetilde{\D}} _{\PP ^\sharp} ^{(\bullet)} (T))}
(Q _N (-),Q _N (-)).$$
En d'autres termes, 
la localisation à droite du bifoncteur  
$\underset{\lambda \in L,~\chi \in M}{\underrightarrow{\lim}}~  
\mathcal{H}om ^{\bullet} _{\smash{\widetilde{\D}} _{\PP ^\sharp} ^{(\bullet)} (T))}
(-,~ \lambda ^{*} \chi ^{*}-)$
par rapport à $(\NN _N\times \NN _N, \QQ is)$ existe et est égal à
$Q \circ \mathcal{H}om ^{\bullet} _{\underrightarrow{LM} _{\Q} (\smash{\widetilde{\D}} _{\PP ^\sharp} ^{(\bullet)} (T))}
(-,-)$.
Grâce à l'existence vérifiée par Berthelot 
du foncteur dérivé droit de $\underset{\lambda \in L,~\chi \in M}{\underrightarrow{\lim}}~  
\mathcal{H}om ^{\bullet} _{\smash{\widetilde{\D}} _{\PP ^\sharp} ^{(\bullet)} (T))}
(-,~ \lambda ^{*} \chi ^{*}-)$
(voir \ref{def-Hom-LD-Berthelot}),
grâce à l'équivalence de catégories 
$Q _{eq}$,
en utilisant le lemme \ref{lemm-trans-fonctderdroi},
on obtient alors que le bifoncteur $\mathcal{H}om ^{\bullet} _{\underrightarrow{LM} _{\Q} (\smash{\widetilde{\D}} _{\PP ^\sharp} ^{(\bullet)} (T))} (-,-)$ 
est localisable à droite en le bifoncteur que l'on notera
$$\R \mathcal{H}om _{D (\underrightarrow{LM} _{\Q} (\smash{\widetilde{\D}} _{\PP ^\sharp} ^{(\bullet)} (T)))} (-,-)
\colon 
D ^{\mathrm{b}}(\underrightarrow{LM} _{\Q} (\smash{\widetilde{\D}} _{\PP ^\sharp} ^{(\bullet)} (T))) ^\circ 
\times
D ^{\mathrm{b}} (\underrightarrow{LM} _{\Q} (\smash{\widetilde{\D}} _{\PP ^\sharp} ^{(\bullet)} (T)))
\to 
D (\mathrm{Ab} _\PP).$$
Par propriété universelle des foncteurs dérivés droits, 
on dispose de l'isomorphisme canonique de bifoncteurs 
$\underrightarrow{LD} ^{\mathrm{b}} _{\Q} (\smash{\widetilde{\D}} _{\PP ^\sharp} ^{(\bullet)} (T)) ^\circ 
\times
\underrightarrow{LD} ^{\mathrm{b}} _{\Q} (\smash{\widetilde{\D}} _{\PP ^\sharp} ^{(\bullet)} (T)) 
\to 
D (\mathrm{Ab} _\PP)$
de la forme 
$$\R \mathcal{H}om _{\smash{\underrightarrow{LD} } _{\Q} ( \smash{\widetilde{\D}} _{\PP ^\sharp} ^{(\bullet)}(T))} (-,-)
\riso
\R \mathcal{H}om _{D (\underrightarrow{LM} _{\Q} (\smash{\widetilde{\D}} _{\PP ^\sharp} ^{(\bullet)} (T)))} (Q _{eq}(-),Q _{eq}(-)).$$
\end{vide}

\begin{vide}
\label{exist-RrmHom}
De manière identique à \ref{exist-RcalHom}, on vérifie que 
le bifoncteur $\mathrm{Hom} ^{\bullet} _{\underrightarrow{LM} _{\Q} (\smash{\widetilde{\D}} _{\PP ^\sharp} ^{(\bullet)} (T))} (-,-)$ 
est localisable à droite en le bifoncteur que l'on notera
$$\R \mathrm{Hom} _{D (\underrightarrow{LM} _{\Q} (\smash{\widetilde{\D}} _{\PP ^\sharp} ^{(\bullet)} (T)))} (-,-)
\colon 
D ^{\mathrm{b}}(\underrightarrow{LM} _{\Q} (\smash{\widetilde{\D}} _{\PP ^\sharp} ^{(\bullet)} (T))) ^\circ 
\times
D ^{\mathrm{b}} (\underrightarrow{LM} _{\Q} (\smash{\widetilde{\D}} _{\PP ^\sharp} ^{(\bullet)} (T)))
\to 
D (\mathrm{Ab}).$$
Par propriété universelle des foncteurs dérivés droits, 
on dispose de plus de l'isomorphisme canonique de bifoncteurs 
$\underrightarrow{LD} ^{\mathrm{b}} _{\Q} (\smash{\widetilde{\D}} _{\PP ^\sharp} ^{(\bullet)} (T)) ^\circ 
\times
\underrightarrow{LD} ^{\mathrm{b}} _{\Q} (\smash{\widetilde{\D}} _{\PP ^\sharp} ^{(\bullet)} (T)) 
\to 
D (\mathrm{Ab} )$
de la forme 
$$\R \mathrm{Hom} _{\smash{\underrightarrow{LD} } _{\Q} ( \smash{\widetilde{\D}} _{\PP ^\sharp} ^{(\bullet)}(T))} (-,-)
\riso
\R \mathrm{Hom} _{D (\underrightarrow{LM} _{\Q} (\smash{\widetilde{\D}} _{\PP ^\sharp} ^{(\bullet)} (T)))} (Q _{eq}(-),Q _{eq}(-)).$$
Ce morphisme provient du morphisme analogue sans les dérivés droits. 
On déduit alors de \ref{bif-HomrmBert-const} que l'on dispose de 
l'isomorphisme de bifoncteurs
$D ^{\mathrm{b}}(\underrightarrow{LM} _{\Q} (\smash{\widetilde{\D}} _{\PP ^\sharp} ^{(\bullet)} (T))) ^\circ 
\times
D ^{\mathrm{b}} (\underrightarrow{LM} _{\Q} (\smash{\widetilde{\D}} _{\PP ^\sharp} ^{(\bullet)} (T)))
\to 
\mathrm{Ab} $ de la forme:
\begin{equation}
\label{H0Homrm-DLM}
\mathcal{H} ^{0} (\R \mathrm{Hom} _{D (\underrightarrow{LM} _{\Q} (\smash{\widetilde{\D}} _{\PP ^\sharp} ^{(\bullet)} (T)))} (-,-))
\riso 
\mathrm{Hom} _{D (\underrightarrow{LM} _{\Q} (\smash{\widetilde{\D}} _{\PP ^\sharp} ^{(\bullet)} (T)))} (-,-).
\end{equation}

On déduit de \ref{Berth-Rhomcal-vs-rm} l'isomorphisme 
de bifoncteurs 
$D ^{\mathrm{b}} (\underrightarrow{LM} _{\Q} (\smash{\widetilde{\D}} _{\PP ^\sharp} ^{(\bullet)} (T))) ^\circ 
\times
D ^{\mathrm{b}} (\underrightarrow{LM} _{\Q} (\smash{\widetilde{\D}} _{\PP ^\sharp} ^{(\bullet)} (T)))\to D (\mathrm{Ab})$ de la forme
\begin{equation}
\label{Rhomcal-vs-rm}
\R \Gamma (\PP,- )\circ \R \mathcal{H}om _{D (\underrightarrow{LM} _{\Q} (\smash{\widetilde{\D}} _{\PP ^\sharp} ^{(\bullet)} (T)))} (-,-)
\riso 
\R \mathrm{Hom}_{D (\underrightarrow{LM} _{\Q} (\smash{\widetilde{\D}} _{\PP ^\sharp} ^{(\bullet)} (T)))} (-,-).
\end{equation}
\end{vide}

\begin{vide}
Notons $M ( \smash{\D} ^\dag _{\PP ^\sharp} (\hdag T) _{\Q} )$ 
la catégorie abélienne des 
$\smash{\D} ^\dag _{\PP ^\sharp} (\hdag T) _{\Q}$-modules.
En tensorisant par $\Q$ puis en passant à la limite inductive sur le niveau, on obtient le foncteur
$\underrightarrow{\lim} 
\colon
M
(\smash{\widetilde{\D}} _{\PP ^\sharp} ^{(\bullet)}(T))
\to
M ( \smash{\D} ^\dag _{\PP ^\sharp} (\hdag T) _{\Q} )$.
Comme il transforme les lim-ind-isomorphismes en isomorphisme, il se factorise canoniquement en le foncteur
\begin{equation}
\label{M-eq-lim}
\underrightarrow{\lim} 
\colon
\smash{\underrightarrow{LM}}  _{\Q}
(\smash{\widetilde{\D}} _{\PP ^\sharp} ^{(\bullet)}(T))
\to
M ( \smash{\D} ^\dag _{\PP ^\sharp} (\hdag T) _{\Q} ).
\end{equation}
\end{vide}

\begin{vide}
Soient $\E ^{(\bullet)}, \, \FF ^{(\bullet)}$ deux objets de
$K ^{\mathrm{b}} (\underrightarrow{LM} _{\Q} (\smash{\widetilde{\D}} _{\PP ^\sharp} ^{(\bullet)} (T)))$.
Via le foncteur 
$\underrightarrow{\lim} 
\colon
K ^{\mathrm{b}} (\underrightarrow{LM} _{\Q} (\smash{\widetilde{\D}} _{\PP ^\sharp} ^{(\bullet)} (T)))
\to
K ^{\mathrm{b}} ( \smash{\D} ^\dag _{\PP ^\sharp} (\hdag T) _{\Q} )$
déduit \ref{M-eq-lim},
on obtient le morphisme de $K (\mathrm{Ab} _\PP)$ suivant:
\begin{equation}
\label{pre-fleche-RhomLM}
\mathcal{H}om ^{\bullet} _{\underrightarrow{LM} _{\Q} (\smash{\widetilde{\D}} _{\PP ^\sharp} ^{(\bullet)} (T))}
(\E ^{(\bullet)},~\FF ^{(\bullet)})
\to 
\mathcal{H}om ^{\bullet} _{\smash{\D} ^\dag _{\PP ^\sharp} (\hdag T) _{\Q}}
(\underrightarrow{\lim} ~\E ^{(\bullet)},\underrightarrow{\lim} ~\FF ^{(\bullet)}).
\end{equation}
Comme le foncteur 
$\underrightarrow{\lim} 
\colon
K ^{\mathrm{b}} (\underrightarrow{LM} _{\Q} (\smash{\widetilde{\D}} _{\PP ^\sharp} ^{(\bullet)} (T)))
\to
K ^{\mathrm{b}} ( \smash{\D} ^\dag _{\PP ^\sharp} (\hdag T) _{\Q} )$ transforme complexe acyclique en complexe acyclique, 
il se factorise donc en le foncteur
$\underrightarrow{\lim} 
\colon
D ^{\mathrm{b}} (\underrightarrow{LM} _{\Q} (\smash{\widetilde{\D}} _{\PP ^\sharp} ^{(\bullet)} (T)))
\to
D ^{\mathrm{b}} ( \smash{\D} ^\dag _{\PP ^\sharp} (\hdag T) _{\Q} )$
qui commute au foncteur canonique de localisation
$Q \colon 
K ^{\mathrm{b}} (\underrightarrow{LM} _{\Q} (\smash{\widetilde{\D}} _{\PP ^\sharp} ^{(\bullet)} (T)))
\to 
D ^{\mathrm{b}} (\underrightarrow{LM} _{\Q} (\smash{\widetilde{\D}} _{\PP ^\sharp} ^{(\bullet)} (T)))$
et
$Q\colon K ^{\mathrm{b}} ( \smash{\D} ^\dag _{\PP ^\sharp} (\hdag T) _{\Q} )
\to 
D ^{\mathrm{b}} (\smash{\D} ^\dag _{\PP ^\sharp} (\hdag T) _{\Q} )$, 
i.e. 
$Q \circ \underrightarrow{\lim} \riso \underrightarrow{\lim}  \circ Q$.
On dispose ainsi des morphisme de bifoncteurs
$K ^{\mathrm{b}} 
(\underrightarrow{LM} _{\Q} (\smash{\widetilde{\D}} _{\PP ^\sharp} ^{(\bullet)} (T))) 
\times
K ^{\mathrm{b}} 
(\underrightarrow{LM} _{\Q} (\smash{\widetilde{\D}} _{\PP ^\sharp} ^{(\bullet)} (T))) 
\to
D (\mathrm{Ab} _\PP)$:
$$Q \circ \mathcal{H}om ^{\bullet} _{\underrightarrow{LM} _{\Q} (\smash{\widetilde{\D}} _{\PP ^\sharp} ^{(\bullet)} (T))}
\underset{\ref{pre-fleche-RhomLM}}{\longrightarrow}
Q \circ \mathcal{H}om ^\bullet _{ \smash{\D} ^\dag _{\PP ^\sharp} (\hdag T) _{\Q} } 
\circ \underrightarrow{\lim} 
\to 
\R\mathcal{H}om _{ \smash{\D} ^\dag _{\PP ^\sharp} (\hdag T) _{\Q} } 
\circ Q \circ \underrightarrow{\lim}
\riso 
\R\mathcal{H}om _{ \smash{\D} ^\dag _{\PP ^\sharp} (\hdag T) _{\Q} } 
\circ \underrightarrow{\lim}\circ Q .$$
Par propriété universelle du bifoncteur dérivé droit, il existe donc un morphisme canonique 
de la forme
\begin{equation}
\label{fleche-RhomLM}
\R \mathcal{H}om _{D (\underrightarrow{LM} _{\Q} (\smash{\widetilde{\D}} _{\PP ^\sharp} ^{(\bullet)} (T)))} 
(-,-)
\to
\R\mathcal{H}om _{ \smash{\D} ^\dag _{\PP ^\sharp} (\hdag T) _{\Q} } 
(-,- ) \circ \underrightarrow{\lim}
\end{equation}
de bifoncteurs 
$D ^{\mathrm{b}} 
(\underrightarrow{LM} _{\Q} (\smash{\widetilde{\D}} _{\PP ^\sharp} ^{(\bullet)} (T))) 
\times
D ^{\mathrm{b}}  
(\underrightarrow{LM} _{\Q} (\smash{\widetilde{\D}} _{\PP ^\sharp} ^{(\bullet)} (T))) 
\to
D (\mathrm{Ab} _\PP)$.

Par propriété universelle du bifoncteur dérivé droit, on vérifie de la même manière qu'il existe un morphisme canonique 
de la forme
\begin{equation}
\label{fleche-mathrmRhomLM}
\R \mathrm{Hom} _{D (\underrightarrow{LM} _{\Q} (\smash{\widetilde{\D}} _{\PP ^\sharp} ^{(\bullet)} (T)))} 
(-,-)
\to
\R \mathrm{Hom} _{ \smash{\D} ^\dag _{\PP ^\sharp} (\hdag T) _{\Q} } 
(-,- ) \circ \underrightarrow{\lim}
\end{equation}
de bifoncteurs 
$D ^{\mathrm{b}} 
(\underrightarrow{LM} _{\Q} (\smash{\widetilde{\D}} _{\PP ^\sharp} ^{(\bullet)} (T))) 
\times
D ^{\mathrm{b}}  
(\underrightarrow{LM} _{\Q} (\smash{\widetilde{\D}} _{\PP ^\sharp} ^{(\bullet)} (T))) 
\to
D (\mathrm{Ab})$.
\end{vide}

\begin{rema}
Si la catégorie abélienne
$\underrightarrow{LM} _{\Q} (\smash{\widetilde{\D}} _{\PP ^\sharp} ^{(\bullet)} (T))$
possèdait assez d'injectifs (ce que j'ignore),
avec les deux lemmes qui suivent, on pourrait alors résoudre comme d'habitude le bifoncteur 
$\mathcal{H}om ^{\bullet} _{\underrightarrow{LM} _{\Q} (\smash{\widetilde{\D}} _{\PP ^\sharp} ^{(\bullet)} (T))}(-,-)$.
\end{rema}

\begin{vide}
[Stabilité de l'injectivité par image inverse par une immersion ouverte]
\label{stab-inj-j*}
Soit $j\colon \U \hookrightarrow \PP$ une immersion ouverte de $\V$-schémas formels lisses. 
Le foncteur canonique
$j ^{*}\colon 
M (\smash{\widetilde{\D}} _{\PP ^\sharp} ^{(\bullet)} (T))
\to
M (\smash{\widetilde{\D}} _{\U ^{\sharp}} ^{(\bullet)} (T \cap U))$
possède un adjoint à gauche exact (le foncteur extension par zéro en dehors de $U$)
que l'on notera, 
afin de ne pas le confondre avec le foncteur image directe extraordinaire,
$j ^{\mathrm{top}} _!
\colon
M (\smash{\widetilde{\D}} _{\U ^{\sharp}} ^{(\bullet)} (T \cap U))
\to
M (\smash{\widetilde{\D}} _{\PP ^\sharp} ^{(\bullet)} (T ))$.
Pour tout $\E ^{(\bullet)}
\in M (\smash{\widetilde{\D}} _{\U ^{\sharp}} ^{(\bullet)} (T \cap U))$,
on dispose de l'isomorphisme canonique 
$j ^{\mathrm{top}} _! \lambda ^{*} \chi ^{*} (\E ^{(\bullet)})
\riso
\lambda ^{*} \chi ^{*}j ^{\mathrm{top}} _! (\E ^{(\bullet)})$
qui s'inscrit dans le diagramme commutatif
\begin{equation}
\notag
\xymatrix @ R=0,4cm{
{j ^{\mathrm{top}} _! (\E ^{(\bullet)})}
\ar@{=}[d] ^-{} 
\ar[r] ^-{}
& 
{j ^{\mathrm{top}} _! \lambda ^{*} \chi ^{*} (\E ^{(\bullet)})} 
\ar[d] ^-{\sim}
\\ 
{j ^{\mathrm{top}} _! (\E ^{(\bullet)})} 
\ar[r] ^-{}
& 
{\lambda ^{*} \chi ^{*}j ^{\mathrm{top}} _! (\E ^{(\bullet)})} 
}
\end{equation}
dont la flèche horizontale du bas est la flèche canonique
tandis que celle du haut est l'image par $j ^{\mathrm{top}} _! $
du morphisme canonique. On dispose des mêmes propriétés concernant $j ^*$.
Soient 
$\E ^{(\bullet)}
\in 
M (\smash{\widetilde{\D}} _{\U ^{\sharp}} ^{(\bullet)} (T \cap U))$,
$\FF ^{(\bullet)}
\in 
M (\smash{\widetilde{\D}} _{\PP ^\sharp} ^{(\bullet)} (T))$,
$\lambda \in L$, $\chi \in M$.
On a l'égalité $\E ^{(\bullet)}=j ^* j ^{\mathrm{top}} _! (\E ^{(\bullet)})$
qui est compatible avec les isomorphismes ci-dessus, 
i.e. l'isomorphisme composé
$j ^* j ^{\mathrm{top}} _! \lambda ^{*} \chi ^{*} (\E ^{(\bullet)})
\riso 
j ^* \lambda ^{*} \chi ^{*} j ^{\mathrm{top}} _!  (\E ^{(\bullet)})
\riso
\lambda ^{*}  \chi ^{*} j ^* j ^{\mathrm{top}} _!  (\E ^{(\bullet)})$
est l'identité.
De même, le morphisme d'adjonction 
$ j ^{\mathrm{top}} _! j ^* (\FF ^{(\bullet)})
\to 
\FF ^{(\bullet)}$ est compatible aux isomorphismes ci-dessus.

On déduit de ce qui précède que
le foncteur $j ^{\mathrm{top}} _!$ envoie les lim-ind-isogénies
sur les lim-ind-isogénies.
On obtient donc sa factorisation sous la forme
$j ^{\mathrm{top}} _!
\colon
\underrightarrow{LM} _{\Q} (\smash{\widetilde{\D}} _{\U ^{\sharp}} ^{(\bullet)} (T \cap U))
\to
\underrightarrow{LM} _{\Q} (\smash{\widetilde{\D}} _{\PP ^\sharp} ^{(\bullet)} (T ))$.
Comme le foncteur canonique
$M(\smash{\widetilde{\D}} _{\PP ^\sharp} ^{(\bullet)} (T ))
\to 
\underrightarrow{LM} _{\Q} (\smash{\widetilde{\D}} _{\PP ^\sharp} ^{(\bullet)} (T ))$
est exact (e.g. voir \cite[2.3.4]{Bockle-Pink-cohomological-theory}),
comme une suite exacte courte 
de $\underrightarrow{LM} _{\Q} (\smash{\widetilde{\D}} _{\U ^{\sharp}} ^{(\bullet)} (T \cap U))$
est isomorphe à l'image par le foncteur canonique quotient d'une suite exacte courte
de
$M (\smash{\widetilde{\D}} _{\U ^{\sharp}} ^{(\bullet)} (T \cap U))$ (e.g. voir \cite[2.3.5]{Bockle-Pink-cohomological-theory}), 
comme 
$j ^{\mathrm{top}} _!
\colon
M (\smash{\widetilde{\D}} _{\U ^{\sharp}} ^{(\bullet)} (T \cap U))
\to
M (\smash{\widetilde{\D}} _{\PP ^\sharp} ^{(\bullet)} (T ))$ 
est exact, 
alors le foncteur
$j ^{\mathrm{top}} _!
\colon
\underrightarrow{LM} _{\Q} (\smash{\widetilde{\D}} _{\U ^{\sharp}} ^{(\bullet)} (T \cap U))
\to
\underrightarrow{LM} _{\Q} (\smash{\widetilde{\D}} _{\PP ^\sharp} ^{(\bullet)} (T ))$ est exact.
De la même façon, on dispose du foncteur exact
$j ^{*}\colon 
\underrightarrow{LM} _{\Q} (\smash{\widetilde{\D}} _{\PP ^\sharp} ^{(\bullet)} (T))
\to
\underrightarrow{LM} _{\Q} (\smash{\widetilde{\D}} _{\U ^{\sharp}} ^{(\bullet)} (T \cap U))$.

On définit l'application 
$\alpha \colon \mathrm{Hom} _{\underrightarrow{LM} _{\Q} (\smash{\widetilde{\D}} _{\U ^{\sharp}} ^{(\bullet)} (T\cap U))}
(\E ^{(\bullet)}, j ^{*} \FF ^{(\bullet)})
\to 
\mathrm{Hom} _{\underrightarrow{LM} _{\Q} (\smash{\widetilde{\D}} _{\PP ^\sharp} ^{(\bullet)} (T))}
(j ^{\mathrm{top}} _! \E ^{(\bullet)},  \FF ^{(\bullet)})$ de la manière suivante:
si $\phi \colon \E ^{(\bullet)} \to  \lambda ^* \chi ^* j ^{*} \FF ^{(\bullet)} $ est un représentant d'une flèche, 
alors un représentant de $\alpha (\phi)$ est donné par
$j ^{\mathrm{top}} _! \E ^{(\bullet)} \underset{j ^{\mathrm{top}} _! (\phi)}{\longrightarrow}  
j ^{\mathrm{top}} _! \lambda ^* \chi ^* j ^{*} \FF ^{(\bullet)}  
\riso 
\lambda ^* \chi ^* j ^{\mathrm{top}} _! j ^{*} \FF ^{(\bullet)}  
\to 
\lambda ^* \chi ^* \FF ^{(\bullet)}  $.
De plus, on définit l'application 
$\beta 
\colon 
\mathrm{Hom} _{\underrightarrow{LM} _{\Q} (\smash{\widetilde{\D}} _{\PP ^\sharp} ^{(\bullet)} (T))}
(j ^{\mathrm{top}} _! \E ^{(\bullet)},  \FF ^{(\bullet)})
\to 
\mathrm{Hom} _{\underrightarrow{LM} _{\Q} (\smash{\widetilde{\D}} _{\U ^{\sharp}} ^{(\bullet)} (T\cap U))}
(\E ^{(\bullet)}, j ^{*} \FF ^{(\bullet)})$ de la manière suivante:
si $\psi \colon j ^{\mathrm{top}} _! \E ^{(\bullet)} \to  \lambda ^* \chi ^* \FF ^{(\bullet)} $ est un représentant d'une flèche, 
alors $\beta (\psi)$ est représenté par 
$\E ^{(\bullet)}= j^*  j ^{\mathrm{top}} _!  (\E ^{(\bullet)}) \underset{ j ^* (\psi)}{\longrightarrow}  
j ^{*}  \lambda ^* \chi ^*  \FF ^{(\bullet)}  
\riso 
\lambda ^* \chi ^*  j ^{*} \FF ^{(\bullet)}  $.
On vérifie facilement que $\alpha$ et $\beta$ sont des bijections réciproques. 
Ainsi, le foncteur exact $  j ^{\mathrm{top}} _!  $ est un adjoint à gauche de $j ^*$.
On en déduit que si $\FF ^{(\bullet)}$ est un objet injectif de la catégorie
$\underrightarrow{LM} _{\Q} (\smash{\widetilde{\D}} _{\PP ^\sharp} ^{(\bullet)} (T))$, 
alors $j ^{*} \FF ^{(\bullet)}  $ est un objet injectif de la catégorie
$\underrightarrow{LM} _{\Q} (\smash{\widetilde{\D}} _{\U ^{\sharp}} ^{(\bullet)} (T\cap U))$.
\end{vide}

\begin{lemm}
\label{flasque}
Soient $\E ^{(\bullet)}, \, \I ^{(\bullet)}$ deux objets de
$\underrightarrow{LM} _{\Q} (\smash{\widetilde{\D}} _{\PP ^\sharp} ^{(\bullet)} (T))$.
On suppose de plus que $\I ^{(\bullet)}$ est un objet injectif de la catégorie 
$\underrightarrow{LM} _{\Q} (\smash{\widetilde{\D}} _{\PP ^\sharp} ^{(\bullet)} (T))$.
Le faisceau 
$\mathcal{H}om _{\underrightarrow{LM} _{\Q} (\smash{\widetilde{\D}} _{\PP ^\sharp} ^{(\bullet)} (T))}
(\E ^{(\bullet)},~\I ^{(\bullet)})$
(voir sa définition en \ref{const-calHomLMQ}) 
est flasque.
\end{lemm}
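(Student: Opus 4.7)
The plan is to reduce the flasqueness statement to the classical argument, using extension by zero as a left adjoint to restriction. For a flasque sheaf it suffices to show that for every open immersion $j\colon \U \hookrightarrow \PP$, the restriction map
\begin{equation}
\notag
\rho _\U
\colon
\mathrm{Hom} _{\underrightarrow{LM} _{\Q} (\smash{\widetilde{\D}} _{\PP ^\sharp} ^{(\bullet)} (T))}
(\E ^{(\bullet)}, \I ^{(\bullet)})
\to
\mathrm{Hom} _{\underrightarrow{LM} _{\Q} (\smash{\widetilde{\D}} _{\U ^{\sharp}} ^{(\bullet)} (T\cap U))}
(j ^{*} \E ^{(\bullet)}, j ^{*} \I ^{(\bullet)})
\end{equation}
is surjective (since the presheaf $\mathcal{H}om _{\underrightarrow{LM} _{\Q}}(\E ^{(\bullet)},\I ^{(\bullet)})$ is already a sheaf by \ref{const-calHomLMQ}, restriction to an arbitrary open reduces to this case).

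First I would invoke the adjunction constructed in \ref{stab-inj-j*}, namely $j ^{\mathrm{top}} _! \dashv j ^{*}$ at the level of $\underrightarrow{LM} _{\Q}$, to rewrite the target:
\begin{equation}
\notag
\mathrm{Hom} _{\underrightarrow{LM} _{\Q} (\smash{\widetilde{\D}} _{\U ^{\sharp}} ^{(\bullet)} (T\cap U))}
(j ^{*} \E ^{(\bullet)}, j ^{*} \I ^{(\bullet)})
\riso
\mathrm{Hom} _{\underrightarrow{LM} _{\Q} (\smash{\widetilde{\D}} _{\PP ^\sharp} ^{(\bullet)} (T))}
(j ^{\mathrm{top}} _! j ^{*} \E ^{(\bullet)}, \I ^{(\bullet)}).
\end{equation}
Through this isomorphism $\rho _\U$ identifies with the map given by precomposition with the counit $\varepsilon \colon j ^{\mathrm{top}} _! j ^{*} \E ^{(\bullet)} \to \E ^{(\bullet)}$; checking this compatibility is routine and will be handled by writing out the definitions of $\alpha, \beta$ from \ref{stab-inj-j*}.

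Next I would observe that $\varepsilon$ is a monomorphism in $\underrightarrow{LM} _{\Q} (\smash{\widetilde{\D}} _{\PP ^\sharp} ^{(\bullet)} (T))$. Indeed, at the level of $M (\smash{\widetilde{\D}} _{\PP ^\sharp} ^{(\bullet)} (T))$ the counit is the inclusion of the subsheaf of sections supported on $\U$, so it is injective; since the quotient functor $M \to \underrightarrow{LM} _{\Q}$ is exact (cf.\ the argument used in \ref{stab-inj-j*}), it preserves monomorphisms. By injectivity of $\I ^{(\bullet)}$ in the abelian category $\underrightarrow{LM} _{\Q} (\smash{\widetilde{\D}} _{\PP ^\sharp} ^{(\bullet)} (T))$ (which is abelian by \ref{defi-M-L-Serre}), precomposition with $\varepsilon$ induces a surjection
\begin{equation}
\notag
\mathrm{Hom} _{\underrightarrow{LM} _{\Q}}
(\E ^{(\bullet)}, \I ^{(\bullet)})
\twoheadrightarrow
\mathrm{Hom} _{\underrightarrow{LM} _{\Q}}
(j ^{\mathrm{top}} _! j ^{*} \E ^{(\bullet)}, \I ^{(\bullet)}).
\end{equation}
Combined with the adjunction isomorphism, this yields surjectivity of $\rho _\U$, whence flasqueness.

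The only non-routine point is checking that the counit $\varepsilon$ exists and is monic already in $\underrightarrow{LM} _{\Q}$; but this is precisely what the exactness of $j ^{\mathrm{top}} _!$ and of the quotient functor (both recalled in \ref{stab-inj-j*}) ensure, so no further obstacle is expected.
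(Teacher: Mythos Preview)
Your proposal is correct and is precisely the classical Godement argument the paper invokes: the paper's proof consists only of the sentence ``Il s'agit de reprendre les arguments du lemme analogue de \cite[II.7.3.2]{Godement-topo-alg} qui restent valable gr\^ace au paragraphe \ref{stab-inj-j*}'', and you have written out exactly those arguments, using the adjunction $j ^{\mathrm{top}} _! \dashv j^{*}$ and the exactness of the quotient functor from \ref{stab-inj-j*}. One small remark: the reduction to restriction maps from $\PP$ to an open $\U$ is simply the definition of flasque (surjectivity of $\Gamma(\PP,-)\to\Gamma(\U,-)$ for all $\U$) and does not rely on the sheaf property; your conclusion is unaffected.
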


\begin{proof}
Il s'agit de reprendre les arguments du lemme analogue de \cite[II.7.3.2]{Godement-topo-alg}
qui restent valable grâce au paragraphe \ref{stab-inj-j*}.
\end{proof}

\begin{lemm}
\label{der-HomLD}
Soient $\E ^{(\bullet)}\in 
K 
(\underrightarrow{LM} _{\Q} (\smash{\widetilde{\D}} _{\PP ^\sharp} ^{(\bullet)} (T)))$
et
$\I ^{(\bullet)}\in 
K ^{+}
(\underrightarrow{LM} _{\Q} (\smash{\widetilde{\D}} _{\PP ^\sharp} ^{(\bullet)} (T)))$
un complexe dont les termes sont des objets injectifs
de 
$\underrightarrow{LM} _{\Q} (\smash{\widetilde{\D}} _{\PP ^\sharp} ^{(\bullet)} (T))$.
Si $\E ^{(\bullet)}$ ou 
$\I ^{(\bullet)}$ sont acycliques, alors il en est de même de 
$\mathcal{H}om ^{\bullet} _{\underrightarrow{LM} _{\Q} (\smash{\widetilde{\D}} _{\PP ^\sharp} ^{(\bullet)} (T))}
(\E ^{(\bullet)},~\I ^{(\bullet)})$
\end{lemm}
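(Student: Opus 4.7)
\bigskip

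\noindent\textbf{Proof proposal.} The strategy is to reduce to the classical homological algebra statement in the abelian category $\underrightarrow{LM} _{\Q} (\smash{\widetilde{\D}} _{\U ^{\sharp}} ^{(\bullet)} (T\cap U))$ for each open $\U \subset \PP$, and then to deduce acyclicity of the sheaf complex from acyclicity of its sections on every open. Recall from \ref{defi-M-L-Serre} that $\underrightarrow{LM} _{\Q} (\smash{\widetilde{\D}} _{\PP ^\sharp} ^{(\bullet)} (T))$ is abelian, and from \ref{const-calHomLMQ} that the presheaf defining $\mathcal{H}om ^{\bullet}$ is already a sheaf. Hence to show acyclicity of $\mathcal{H}om ^{\bullet} _{\underrightarrow{LM} _{\Q} (\smash{\widetilde{\D}} _{\PP ^\sharp} ^{(\bullet)} (T))}(\E ^{(\bullet)},~\I ^{(\bullet)})$, it suffices to check that for every open $\U$ of $\PP$ the complex of sections
\[
\mathrm{Hom} ^{\bullet} _{\underrightarrow{LM} _{\Q} (\smash{\widetilde{\D}} _{\U ^{\sharp}} ^{(\bullet)} (T\cap U))}
(\E ^{(\bullet)}|\U,~\I ^{(\bullet)}|\U)
\]
is acyclic, since the cohomology of a complex of sheaves is the sheafification of the presheaf cohomology.

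I would then observe, using the discussion in \ref{stab-inj-j*}, that $j ^{*} \colon \underrightarrow{LM} _{\Q} (\smash{\widetilde{\D}} _{\PP ^\sharp} ^{(\bullet)} (T)) \to \underrightarrow{LM} _{\Q} (\smash{\widetilde{\D}} _{\U ^{\sharp}} ^{(\bullet)} (T\cap U))$ is exact and, being right adjoint to the exact functor $j ^{\mathrm{top}} _!$, preserves injective objects. Consequently $\I ^{(\bullet)}|\U$ is a bounded-below complex of injectives in $\underrightarrow{LM} _{\Q} (\smash{\widetilde{\D}} _{\U ^{\sharp}} ^{(\bullet)} (T\cap U))$, and $\E ^{(\bullet)}|\U$ is acyclic whenever $\E ^{(\bullet)}$ is acyclic.

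The problem is thereby reduced to the purely homological statement in an abstract abelian category $\AA$: if $\I ^{\bullet}$ is a bounded-below complex of injective objects of $\AA$ and either $\E ^{\bullet}$ or $\I ^{\bullet}$ is acyclic, then the total Hom complex $\mathrm{Hom} ^{\bullet} _{\AA} (\E ^{\bullet}, \I ^{\bullet})$ is acyclic. For acyclic $\E ^{\bullet}$, an $n$-cocycle in this complex is a chain morphism $\E ^{\bullet} \to \I ^{\bullet}[n]$, and by the standard lifting argument (applied term by term using injectivity of the $\I ^{k}$ and the acyclicity of $\E ^{\bullet}$) it is null-homotopic, hence an $n$-coboundary. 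For acyclic $\I ^{\bullet}$, the same lifting argument applied to the identity of $\I ^{\bullet}$ shows that $\I ^{\bullet}$ is itself contractible, and then $\mathrm{Hom} ^{\bullet} _{\AA}(\E ^{\bullet}, \I ^{\bullet})$ is contractible too.

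The main potential obstacle is not conceptual but bookkeeping: one must make sure that the classical lifting/contracting-homotopy arguments, which are usually phrased in module or sheaf categories, go through verbatim in the quotient abelian category $\underrightarrow{LM} _{\Q} (\smash{\widetilde{\D}} _{\U ^{\sharp}} ^{(\bullet)} (T\cap U))$. This is fine because these arguments only use that the category is abelian and that the $\I ^{k}$ are injective in the categorical sense, both of which are granted. The only subtle input specific to our setting is that the restriction to an open preserves injectivity, which is exactly what \ref{stab-inj-j*} provides.
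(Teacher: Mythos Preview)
Your proposal is correct and follows essentially the same route as the paper: reduce to checking acyclicity of the section complexes $\mathrm{Hom}^\bullet$ over every open $\U$, use \ref{stab-inj-j*} to ensure that $\I^{(\bullet)}|\U$ remains a bounded-below complex of injectives, and then invoke the classical homological-algebra statement in an abelian category. The paper simply cites \cite[II.3.1]{HaRD} for this last step, whereas you spell out the standard lifting/contracting-homotopy argument; otherwise the two proofs coincide.
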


\begin{proof}
La preuve du lemme analogue de \cite[II.3.1]{HaRD} fonctionne encore: 
il suffit de vérifier que le complexe
$\Gamma (\U, \mathcal{H}om ^{\bullet} _{\underrightarrow{LM} _{\Q} (\smash{\widetilde{\D}} _{\PP ^\sharp} ^{(\bullet)} (T))}
(\E ^{(\bullet)},~\I ^{(\bullet)}))=
 \mathrm{Hom} ^{\bullet} _{\underrightarrow{LM} _{\Q} (\smash{\widetilde{\D}} _{\U ^\sharp} ^{(\bullet)} (T))}
(\E ^{(\bullet)}|\U,~\I ^{(\bullet)}|\U)$ est acylique. 
Or, d'après \ref{stab-inj-j*}, 
$\I ^{(\bullet)}|\U$ 
est un complexe dont 
les termes sont des objets injectifs
de 
$\underrightarrow{LM} _{\Q} (\smash{\widetilde{\D}} _{\U ^{\sharp}} ^{(\bullet)} (T\cap U))$.

\end{proof}

\section{Systèmes inductifs cohérents de $\D$-modules arithmétiques}

\subsection{Cohérence à ind-isogénie près}
Dans cette section, on fixe $\lambda \in L$.

\begin{nota}
\label{rema-prolonlambda}
Soit $\E ^{(\bullet)}$ est un 
$\lambda ^{*}\smash{\widetilde{\D}} _{\PP ^\sharp} ^{(\bullet)} (T)$-module.
\begin{itemize}
\item On note $M (\lambda ^{*} \smash{\widetilde{\D}} _{\PP ^\sharp} ^{(\bullet)}(T))$
la catégorie des 
$\lambda ^{*} \smash{\widetilde{\D}} _{\PP ^\sharp} ^{(\bullet)}(T)$-modules.
Via le morphisme canonique d'anneaux
$\smash{\widetilde{\D}} _{\PP ^\sharp} ^{(\bullet)}(T)
\to 
\lambda ^{*} \smash{\widetilde{\D}} _{\PP ^\sharp} ^{(\bullet)}(T)$,
on dispose du foncteur oubli
$\mathrm{oub} _{\lambda}
\colon 
M (\lambda ^{*} \smash{\widetilde{\D}} _{\PP ^\sharp} ^{(\bullet)}(T))
\to 
M ( \smash{\widetilde{\D}} _{\PP ^\sharp} ^{(\bullet)}(T))$.

\item Soit $\chi \in M$. On remarque que 
$\chi ^{*}\lambda ^{*}\smash{\widetilde{\D}} _{\PP ^\sharp} ^{(\bullet)} (T)$ n'est plus un système inductif d'anneaux (à moins que $\chi $ soit constante).
On calcule que $\chi ^{*} \E ^{(\bullet)}$ reste muni d'une structure canonique de 
$\lambda ^{*}\smash{\widetilde{\D}} _{\PP ^\sharp} ^{(\bullet)} (T)$-module.
On obtient même le foncteur 
$\chi ^{*}
\colon 
M (\lambda ^{*} \smash{\widetilde{\D}} _{\PP ^\sharp} ^{(\bullet)}(T))
\to
M (\lambda ^{*} \smash{\widetilde{\D}} _{\PP ^\sharp} ^{(\bullet)}(T))$.

\item Soit $\mu \in L$. 
On note
$\mu ^{*}
\colon 
M (\lambda ^{*} \smash{\widetilde{\D}} _{\PP ^\sharp} ^{(\bullet)}(T))
\to
M (\mu ^{*}\lambda ^{*} \smash{\widetilde{\D}} _{\PP ^\sharp} ^{(\bullet)}(T))$,
le foncteur induit par $\mu ^*$.
\end{itemize}

\end{nota}

\begin{nota}
\begin{itemize}
\item Un morphisme $f ^{(\bullet)} \colon \E ^{(\bullet)} \to \FF ^{(\bullet)}$ de $M (\lambda ^{*} \smash{\widetilde{\D}} _{\PP ^\sharp} ^{(\bullet)}(T))$
est une ind-isogénie s'il existe $\chi \in M$ 
et un morphisme 
$g ^{(\bullet)} \colon \FF ^{(\bullet)} \to \chi ^{*} \E ^{(\bullet)}$ de $M (\lambda ^{*} \smash{\widetilde{\D}} _{\PP ^\sharp} ^{(\bullet)}(T))$
tels que les morphismes 
$g ^{(\bullet)}\circ f ^{(\bullet)}$ et $\chi ^{*} (f ^{(\bullet)}) \circ g ^{(\bullet)}$ 
de $M (\lambda ^{*} \smash{\widetilde{\D}} _{\PP ^\sharp} ^{(\bullet)}(T))$
sont les morphismes canoniques.
La catégorie obtenue en localisant $M (\lambda ^{*} \smash{\widetilde{\D}} _{\PP ^\sharp} ^{(\bullet)}(T))$
par rapport aux ind-isogénies se note
$\smash{\underrightarrow{M}} _{\Q}  (\lambda ^{*} \smash{\widetilde{\D}} _{\PP ^\sharp} ^{(\bullet)}(T))$.
Comme pour \ref{defi-M-L-Serre}, on vérifie que celle-ci est abélienne.

\item Un morphisme 
$f ^{(\bullet)} \colon \E ^{(\bullet)} \to \FF ^{(\bullet)}$
de 
$M (\lambda ^{*} \smash{\widetilde{\D}} _{\PP ^\sharp} ^{(\bullet)}(T))$
est une lim-ind-isogénie s'il existe
 $\chi \in M$, 
$\mu \in L$ et un morphisme 
$g ^{(\bullet)} \colon \FF ^{(\bullet)} \to \mu ^{*} \chi ^{*}\E ^{(\bullet)}$ de $M (\lambda ^{*} \smash{\widetilde{\D}} _{\PP ^\sharp} ^{(\bullet)}(T))$
tels que les morphismes
$g ^{(\bullet)}\circ f ^{(\bullet)}$ et $\mu ^{*} \chi ^{*}(f ^{(\bullet)}) \circ g ^{(\bullet)}$ 
de $M (\lambda ^{*} \smash{\widetilde{\D}} _{\PP ^\sharp} ^{(\bullet)}(T))$
sont les morphismes canoniques.
On note 
$\smash{\underrightarrow{LM}} _{\Q}  (\lambda ^{*} \smash{\widetilde{\D}} _{\PP ^\sharp} ^{(\bullet)}(T))$
la catégorie localisée par rapport aux lim-ind-isogénies.
Comme pour \ref{defi-M-L-Serre}, on vérifie que celle-ci est abélienne.
\end{itemize}

\end{nota}

\begin{lemm}
\label{facto-oub-otimes}
On bénéficie des factorisations
$\mathrm{oub} _{\lambda}
\colon \smash{\underrightarrow{M}} _{\Q}  (\lambda ^{*} \smash{\widetilde{\D}} _{\PP ^\sharp} ^{(\bullet)}(T))
\to 
\smash{\underrightarrow{M}} _{\Q}  ( \smash{\widetilde{\D}} _{\PP ^\sharp} ^{(\bullet)}(T))$
et
$\mathrm{oub} _{\lambda}
\colon \smash{\underrightarrow{LM}} _{\Q}  (\lambda ^{*} \smash{\widetilde{\D}} _{\PP ^\sharp} ^{(\bullet)}(T))
\to 
\smash{\underrightarrow{LM}} _{\Q}  ( \smash{\widetilde{\D}} _{\PP ^\sharp} ^{(\bullet)}(T))$.
De plus, on dispose des factorisation
$\lambda ^{*}\smash{\widetilde{\D}} _{\PP ^\sharp} ^{(\bullet)}(T) \otimes _{\smash{\widetilde{\D}} _{\PP ^\sharp} ^{(\bullet)}(T)}-
\colon 
\smash{\underrightarrow{M}} _{\Q}  (\smash{\widetilde{\D}} _{\PP ^\sharp} ^{(\bullet)}(T))
\to 
\smash{\underrightarrow{M}} _{\Q}  (\lambda ^{*}  \smash{\widetilde{\D}} _{\PP ^\sharp} ^{(\bullet)}(T))$
et
$\lambda ^{*}\smash{\widetilde{\D}} _{\PP ^\sharp} ^{(\bullet)}(T) \otimes _{\smash{\widetilde{\D}} _{\PP ^\sharp} ^{(\bullet)}(T)}-
\colon \smash{\underrightarrow{LM}} _{\Q}  (\smash{\widetilde{\D}} _{\PP ^\sharp} ^{(\bullet)}(T))
\to 
\smash{\underrightarrow{LM}} _{\Q}  (\lambda ^{*}  \smash{\widetilde{\D}} _{\PP ^\sharp} ^{(\bullet)}(T))$.
\end{lemm}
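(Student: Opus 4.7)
The plan is to observe that both functors commute, up to canonical isomorphism respecting the canonical transition morphisms, with the operations $\chi^{*}$ ($\chi \in M$) and $\mu^{*}$ ($\mu \in L$). Granted this, the preservation of (lim-)ind-isogenies is automatic: if $f^{(\bullet)}:\E^{(\bullet)}\to\FF^{(\bullet)}$ is a lim-ind-isogeny with witness $g^{(\bullet)}:\FF^{(\bullet)}\to\mu^{*}\chi^{*}\E^{(\bullet)}$, then applying either functor $F$ and composing with the canonical isomorphism $F(\mu^{*}\chi^{*}\E^{(\bullet)})\riso\mu^{*}\chi^{*}F(\E^{(\bullet)})$ yields a witness making $F(f^{(\bullet)})$ a lim-ind-isogeny; the ind-isogeny case (taking $\mu=\mathrm{id}$) is identical.

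For $\mathrm{oub}_{\lambda}$ the compatibility is strict equality rather than merely canonical isomorphism: the operation $\chi^{*}$ leaves each term $\E^{(m)}$ unchanged and only rescales the transition maps by $p^{\chi(m')-\chi(m)}$, while $\mu^{*}$ only reindexes the system; and the forgetful functor acts on neither the underlying abelian groups nor the transition maps. Hence $\mathrm{oub}_{\lambda}\circ\chi^{*}=\chi^{*}\circ\mathrm{oub}_{\lambda}$ and $\mathrm{oub}_{\lambda}\circ\mu^{*}=\mu^{*}\circ\mathrm{oub}_{\lambda}$, with the canonical morphisms corresponding on the nose.

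For the extension-of-scalars functor, at level $m$ both
\[
\lambda^{*}\smash{\widetilde{\D}}_{\PP^{\sharp}}^{(\bullet)}(T)\otimes_{\smash{\widetilde{\D}}_{\PP^{\sharp}}^{(\bullet)}(T)}\chi^{*}\E^{(\bullet)}
\quad\text{and}\quad
\chi^{*}\bigl(\lambda^{*}\smash{\widetilde{\D}}_{\PP^{\sharp}}^{(\bullet)}(T)\otimes_{\smash{\widetilde{\D}}_{\PP^{\sharp}}^{(\bullet)}(T)}\E^{(\bullet)}\bigr)
\]
have underlying module $\smash{\widetilde{\D}}_{\PP^{\sharp}}^{(\lambda(m))}(T)\otimes_{\smash{\widetilde{\D}}_{\PP^{\sharp}}^{(m)}(T)}\E^{(m)}$, and their transition maps agree because the scaling factor $p^{\chi(m')-\chi(m)}$ is central and passes through the tensor product. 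The analogous canonical isomorphism for $\mu^{*}$ is even more direct, since $\mu^{*}$ just selects a subsystem. In both cases these identifications are compatible with the canonical morphisms from $\E^{(\bullet)}\to\chi^{*}\E^{(\bullet)}$ and $\E^{(\bullet)}\to\mu^{*}\E^{(\bullet)}$.

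There is no serious obstacle; the entire lemma is bookkeeping on the inductive-system structures. The only mild point of care is the reminder from \ref{rema-prolonlambda} that $\lambda^{*}\smash{\widetilde{\D}}_{\PP^{\sharp}}^{(\bullet)}(T)$ is not itself an inductive system of rings, so that $\chi^{*}$ and $\mu^{*}$ on $M(\lambda^{*}\smash{\widetilde{\D}}_{\PP^{\sharp}}^{(\bullet)}(T))$ must be interpreted in the specific sense defined there; once so interpreted, the compatibility checks above are immediate from the definitions.
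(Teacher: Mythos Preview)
Your treatment of $\mathrm{oub}_{\lambda}$ and of the compatibility of the tensor functor with $\chi^{*}$ is correct and matches the paper. But your claim about $\mu^{*}$ is wrong: the tensor functor does \emph{not} commute with $\mu^{*}$ up to canonical isomorphism. At level $m$,
\[
\bigl(\lambda^{*}\smash{\widetilde{\D}}_{\PP^{\sharp}}^{(\bullet)}(T)\otimes_{\smash{\widetilde{\D}}_{\PP^{\sharp}}^{(\bullet)}(T)}\mu^{*}\E^{(\bullet)}\bigr)^{(m)}
=\smash{\widetilde{\D}}_{\PP^{\sharp}}^{(\lambda(m))}(T)\otimes_{\smash{\widetilde{\D}}_{\PP^{\sharp}}^{(m)}(T)}\E^{(\mu(m))},
\]
whereas
\[
\bigl(\mu^{*}\bigl(\lambda^{*}\smash{\widetilde{\D}}_{\PP^{\sharp}}^{(\bullet)}(T)\otimes_{\smash{\widetilde{\D}}_{\PP^{\sharp}}^{(\bullet)}(T)}\E^{(\bullet)}\bigr)\bigr)^{(m)}
=\smash{\widetilde{\D}}_{\PP^{\sharp}}^{(\lambda(\mu(m)))}(T)\otimes_{\smash{\widetilde{\D}}_{\PP^{\sharp}}^{(\mu(m))}(T)}\E^{(\mu(m))}.
\]
These are different objects in general (the rings on the left differ, and so do the rings over which one tensors). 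Your intuition that ``$\mu^{*}$ just selects a subsystem'' overlooks that the tensor functor itself carries the index $\lambda(m)$, which does not get shifted when you apply $\mu^{*}$ only to $\E^{(\bullet)}$.

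What is true, and what the paper uses, is that there is a canonical \emph{morphism} (not isomorphism) from the first object to the second, induced by $\smash{\widetilde{\D}}_{\PP^{\sharp}}^{(\lambda(m))}(T)\to\smash{\widetilde{\D}}_{\PP^{\sharp}}^{(\lambda(\mu(m)))}(T)$, and this morphism is compatible with the canonical transitions. That is already enough for your strategy: given a witness $g^{(\bullet)}$ for a lim-isomorphism $f^{(\bullet)}$, one composes $id\otimes g^{(\bullet)}$ with this canonical morphism to produce a witness for $id\otimes f^{(\bullet)}$. This is exactly the content of diagram \eqref{facto-oub-otimes-2carres} in the paper. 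So your overall plan survives, but the specific assertion about $\mu^{*}$ needs to be downgraded from ``isomorphism'' to ``morphism compatible with the canonical maps''.
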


\begin{proof}
Il est immédiat qu'une ind-isogénie (resp. une lim-ind-isogénie) de 
$M (\lambda ^{*} \smash{\widetilde{\D}} _{\PP ^\sharp} ^{(\bullet)}(T))$
est (via le foncteur oubli) une ind-isogénie  (resp. une lim-ind-isogénie) 
de 
$M ( \smash{\widetilde{\D}} _{\PP ^\sharp} ^{(\bullet)}(T))$. 
D'où les factorisations concernant le foncteur oubli. 
Pour tout $\chi \in M$, 
pour tout $\smash{\widetilde{\D}} _{\PP ^\sharp} ^{(\bullet)} (T)$-module $\E ^{(\bullet)}$,
l'isomorphisme canonique 
$\chi ^* (\lambda ^{*}\smash{\widetilde{\D}} _{\PP ^\sharp} ^{(\bullet)}(T) 
\otimes _{\smash{\widetilde{\D}} _{\PP ^\sharp} ^{(\bullet)}(T)} 
\E ^{(\bullet)})
\riso
\lambda ^{*}\smash{\widetilde{\D}} _{\PP ^\sharp} ^{(\bullet)}(T) 
\otimes _{\smash{\widetilde{\D}} _{\PP ^\sharp} ^{(\bullet)}(T)} 
\chi ^* \E ^{(\bullet)}$ 
qui commute aux morphismes canoniques de la forme 
$ \E ^{(\bullet)} \to \chi ^*  \E ^{(\bullet)}$. D'où la première factorisation du foncteur
$ \lambda ^{*}\smash{\widetilde{\D}} _{\PP ^\sharp} ^{(\bullet)}(T) 
\otimes _{\smash{\widetilde{\D}} _{\PP ^\sharp} ^{(\bullet)}(T)} -$. 
De plus, soit $f ^{(\bullet)}\colon  \E ^{(\bullet)}  \to  \FF ^{(\bullet)} $
un morphisme de 
$\smash{\underrightarrow{M}} _{\Q}  (\smash{\widetilde{\D}} _{\PP ^\sharp} ^{(\bullet)}(T))$. 
Dire que $f ^{(\bullet)}$ est inversible dans 
$\smash{\underrightarrow{LM}} _{\Q}  (\smash{\widetilde{\D}} _{\PP ^\sharp} ^{(\bullet)}(T))$
équivaut à dire qu'il existe un morphisme 
$\mu \in L$ et un morphisme 
$g ^{(\bullet)}\colon  \E ^{(\bullet)}  \to  \mu ^{*}\FF ^{(\bullet)} $ de
$\smash{\underrightarrow{M}} _{\Q}  (\smash{\widetilde{\D}} _{\PP ^\sharp} ^{(\bullet)}(T))$
rendant commutatif le carré de gauche ci-dessous dans le cas où $\lambda =id$:
\begin{equation}
\label{facto-oub-otimes-2carres}
\xymatrix @ R=0,4cm{
{ \lambda ^{*}\smash{\widetilde{\D}} _{\PP ^\sharp} ^{(\bullet)}(T) 
\otimes _{\smash{\widetilde{\D}} _{\PP ^\sharp} ^{(\bullet)}(T)} 
\FF ^{(\bullet)}} 
\ar[r] ^-{}
\ar@{.>}[rd] ^-{id \otimes g ^{(\bullet)}}
&
{ \lambda ^{*}\smash{\widetilde{\D}} _{\PP ^\sharp} ^{(\bullet)}(T) 
\otimes _{\smash{\widetilde{\D}} _{\PP ^\sharp} ^{(\bullet)}(T)} 
\mu ^* \FF ^{(\bullet)}} 
\ar[r] ^-{}
& 
{\mu ^*  \left ( \lambda ^{*}\smash{\widetilde{\D}} _{\PP ^\sharp} ^{(\bullet)}(T) 
\otimes _{\smash{\widetilde{\D}} _{\PP ^\sharp} ^{(\bullet)}(T)} 
\FF ^{(\bullet)}\right )}
\\ 
{ \lambda ^{*}\smash{\widetilde{\D}} _{\PP ^\sharp} ^{(\bullet)}(T) 
\otimes _{\smash{\widetilde{\D}} _{\PP ^\sharp} ^{(\bullet)}(T)} 
\E ^{(\bullet)}} 
\ar[r] ^-{}
\ar[u] ^-{id \otimes f ^{(\bullet)}}
& 
{ \lambda ^{*}\smash{\widetilde{\D}} _{\PP ^\sharp} ^{(\bullet)}(T) 
\otimes _{\smash{\widetilde{\D}} _{\PP ^\sharp} ^{(\bullet)}(T)} 
\mu ^* \E ^{(\bullet)}} 
\ar[r] ^-{}
\ar[u] ^-{id \otimes \mu ^* f ^{(\bullet)}}
& 
{\mu ^*  \left ( \lambda ^{*}\smash{\widetilde{\D}} _{\PP ^\sharp} ^{(\bullet)}(T) 
\otimes _{\smash{\widetilde{\D}} _{\PP ^\sharp} ^{(\bullet)}(T)} 
\E ^{(\bullet)}\right ),} 
\ar[u] ^-{\mu ^*(id \otimes  f ^{(\bullet)})}
}
\end{equation}
 dont les flèches horizontales sont les morphismes canoniques.
Il en résulte que la dernière factorisation voulue.

\end{proof}

\begin{vide}
[Adjonction]
\label{adjonction-oubotimes}
On vérifie que le foncteur 
$\lambda ^{*}\smash{\widetilde{\D}} _{\PP ^\sharp} ^{(\bullet)}(T) \otimes _{\smash{\widetilde{\D}} _{\PP ^\sharp} ^{(\bullet)}(T)}-
\colon M ( \smash{\widetilde{\D}} _{\PP ^\sharp} ^{(\bullet)}(T))
\to
M (\lambda ^{*} \smash{\widetilde{\D}} _{\PP ^\sharp} ^{(\bullet)}(T))$
est adjoint à gauche du foncteur oubli
$\mathrm{oub} _{\lambda}
\colon 
M (\lambda ^{*} \smash{\widetilde{\D}} _{\PP ^\sharp} ^{(\bullet)}(T))
\to 
M ( \smash{\widetilde{\D}} _{\PP ^\sharp} ^{(\bullet)}(T))$.
 Comme le foncteur oubli $\mathrm{oub} _{\lambda}$ commute canoniquement aux foncteurs de la forme $\mu ^{*}$ et $\chi ^*$, on vérifie 
que $\mathrm{oub} _{\lambda}
\colon \smash{\underrightarrow{M}} _{\Q}  (\lambda ^{*} \smash{\widetilde{\D}} _{\PP ^\sharp} ^{(\bullet)}(T))
\to 
\smash{\underrightarrow{M}} _{\Q}  ( \smash{\widetilde{\D}} _{\PP ^\sharp} ^{(\bullet)}(T))$
est adjoint à droite de 
$\lambda ^{*}\smash{\widetilde{\D}} _{\PP ^\sharp} ^{(\bullet)}(T) \otimes _{\smash{\widetilde{\D}} _{\PP ^\sharp} ^{(\bullet)}(T)}-
\colon \smash{\underrightarrow{M}} _{\Q}  (\smash{\widetilde{\D}} _{\PP ^\sharp} ^{(\bullet)}(T))
\to 
\smash{\underrightarrow{M}} _{\Q}  (\lambda ^{*}  \smash{\widetilde{\D}} _{\PP ^\sharp} ^{(\bullet)}(T))$
et $\mathrm{oub} _{\lambda}
\colon \smash{\underrightarrow{LM}} _{\Q}  (\lambda ^{*} \smash{\widetilde{\D}} _{\PP ^\sharp} ^{(\bullet)}(T))
\to 
\smash{\underrightarrow{LM}} _{\Q}  ( \smash{\widetilde{\D}} _{\PP ^\sharp} ^{(\bullet)}(T))$
est adjoint à droite de 
 $\lambda ^{*}\smash{\widetilde{\D}} _{\PP ^\sharp} ^{(\bullet)}(T) \otimes _{\smash{\widetilde{\D}} _{\PP ^\sharp} ^{(\bullet)}(T)}-
\colon \smash{\underrightarrow{LM}} _{\Q}  (\smash{\widetilde{\D}} _{\PP ^\sharp} ^{(\bullet)}(T))
\to 
\smash{\underrightarrow{LM}} _{\Q}  (\lambda ^{*}  \smash{\widetilde{\D}} _{\PP ^\sharp} ^{(\bullet)}(T))$.
\end{vide}

\begin{lemm}
\label{lem-MQlambda2LMQ}
Les foncteurs 
$\mathrm{oub} _{\lambda}$ et 
$\lambda ^{*}\smash{\widetilde{\D}} _{\PP ^\sharp} ^{(\bullet)}(T) \otimes _{\smash{\widetilde{\D}} _{\PP ^\sharp} ^{(\bullet)}(T)}-$ 
sont des équivalences de catégories quasi-inverses entre 
$\smash{\underrightarrow{LM}} _{\Q}  (\lambda ^{*} \smash{\widetilde{\D}} _{\PP ^\sharp} ^{(\bullet)}(T))$
et
$\smash{\underrightarrow{LM}} _{\Q}  ( \smash{\widetilde{\D}} _{\PP ^\sharp} ^{(\bullet)}(T))$.
On en déduit en particulier que le foncteur oubli de la forme
\begin{equation}
\label{MQlambda2LMQ}
\mathrm{oub} _{\lambda}
\colon 
\smash{\underrightarrow{M}} _{\Q}  (\lambda ^{*} \smash{\widetilde{\D}} _{\PP ^\sharp} ^{(\bullet)}(T))
\to 
\smash{\underrightarrow{LM}} _{\Q}  ( \smash{\widetilde{\D}} _{\PP ^\sharp} ^{(\bullet)}(T))
\end{equation}
est pleinement fidèle et exacte (car c'est un quotient par une sous-catégorie de Serre. 
\end{lemm}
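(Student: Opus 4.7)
The plan is to exploit the adjunction established in \ref{adjonction-oubotimes}: the tensor-extension functor $\lambda^{*}\smash{\widetilde{\D}}_{\PP^\sharp}^{(\bullet)}(T) \otimes_{\smash{\widetilde{\D}}_{\PP^\sharp}^{(\bullet)}(T)} -$ is left adjoint to $\mathrm{oub}_\lambda$ at each of the three levels ($M$, $\smash{\underrightarrow{M}}_\Q$, $\smash{\underrightarrow{LM}}_\Q$). To obtain the equivalence of localized categories it suffices to verify that the unit and the counit of this adjunction become isomorphisms in $\underrightarrow{LM}_\Q$, equivalently, that they are lim-ind-isogenies in the respective module categories.

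The key observation for the unit is the following. For $\E^{(\bullet)} \in M(\smash{\widetilde{\D}}_{\PP^\sharp}^{(\bullet)}(T))$ the unit
$$\eta_{\E^{(\bullet)}}\colon \E^{(\bullet)} \to \mathrm{oub}_\lambda\bigl(\lambda^{*}\smash{\widetilde{\D}}_{\PP^\sharp}^{(\bullet)}(T) \otimes_{\smash{\widetilde{\D}}_{\PP^\sharp}^{(\bullet)}(T)} \E^{(\bullet)}\bigr)$$
is at level $m$ given by $x \mapsto 1 \otimes x$. The multiplication morphism $\psi^{(\bullet)}$ sending $D \otimes x$ to $D \cdot \alpha^{(\lambda(m),m)}(x)$ provides a map $\mathrm{oub}_\lambda(\lambda^{*}\smash{\widetilde{\D}}_{\PP^\sharp}^{(\bullet)}(T) \otimes \E^{(\bullet)}) \to \lambda^{*}\E^{(\bullet)}$, and the composition $\psi^{(\bullet)} \circ \eta_{\E^{(\bullet)}}$ is precisely the canonical transition map $\E^{(\bullet)} \to \lambda^{*}\E^{(\bullet)}$, which is in particular a lim-isomorphism. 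A short check shows that $\lambda^{*}\eta_{\E^{(\bullet)}} \circ \psi^{(\bullet)}$ is also the canonical map into $\lambda^{*}\mathrm{oub}_\lambda(\lambda^{*}\smash{\widetilde{\D}}_{\PP^\sharp}^{(\bullet)}(T) \otimes \E^{(\bullet)})$, so applying the module analogue of lemma \ref{lemm-locSQ}.\ref{lemm-locSQ2} yields that $\eta_{\E^{(\bullet)}}$ is a lim-ind-isogeny. The argument for the counit $\epsilon$ is symmetric: for $\FF^{(\bullet)} \in M(\lambda^{*}\smash{\widetilde{\D}}_{\PP^\sharp}^{(\bullet)}(T))$, factor $\epsilon_{\FF^{(\bullet)}}$ through the canonical map to $\lambda^{*}\FF^{(\bullet)}$ using again a multiplication-type morphism, and invoke the same criterion.

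Once the unit and counit are lim-ind-isogenies, they become isomorphisms in $\underrightarrow{LM}_\Q$ by the module analogue of the second item of remark \ref{rema-iso-0}, so the adjoint pair descends to quasi-inverse equivalences between $\underrightarrow{LM}_\Q(\lambda^{*}\smash{\widetilde{\D}}_{\PP^\sharp}^{(\bullet)}(T))$ and $\underrightarrow{LM}_\Q(\smash{\widetilde{\D}}_{\PP^\sharp}^{(\bullet)}(T))$. The second assertion is then a formal consequence: the functor \eqref{MQlambda2LMQ} factors as the localization $\smash{\underrightarrow{M}}_\Q(\lambda^{*}\smash{\widetilde{\D}}_{\PP^\sharp}^{(\bullet)}(T)) \to \underrightarrow{LM}_\Q(\lambda^{*}\smash{\widetilde{\D}}_{\PP^\sharp}^{(\bullet)}(T))$ followed by the equivalence just obtained; the first arrow is a Serre quotient by the analogue of \ref{defi-M-L-Serre} applied to the inductive system $\lambda^{*}\smash{\widetilde{\D}}_{\PP^\sharp}^{(\bullet)}(T)$, hence exact, and the composition with an equivalence preserves both properties.

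The main obstacle is the bookkeeping in lemma \ref{lemm-locSQ}.\ref{lemm-locSQ2}: one has to check that both compositions $\psi^{(\bullet)} \circ \eta_{\E^{(\bullet)}}$ and $\lambda^{*}\eta_{\E^{(\bullet)}} \circ \psi^{(\bullet)}$ (and their analogues for the counit) indeed coincide with the tautological maps into $\lambda^{*}(-)$ in the ambient category $M(\smash{\widetilde{\D}}_{\PP^\sharp}^{(\bullet)}(T))$ — rather than a version twisted by the transition morphisms of the ring $\smash{\widetilde{\D}}_{\PP^\sharp}^{(\bullet)}(T)$. Once this compatibility of multiplication with the canonical transitions of the inductive system is verified at the level of generators $D \otimes x$, the rest of the argument is formal from the adjunction and the Serre-quotient presentation of $\underrightarrow{LM}_\Q$.
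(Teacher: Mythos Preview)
Your approach is essentially identical to the paper's: both use the adjunction from \ref{adjonction-oubotimes} and show that the unit and counit become isomorphisms in $\underrightarrow{LM}_\Q$ by producing, for each, a morphism into the $\lambda^*$-shift such that both triangular composites are the canonical maps. The paper records this as the two commutative squares \eqref{lem-MQlambda2LMQ-diag} (with the dotted diagonal playing the role of your multiplication map $\psi^{(\bullet)}$), while you spell out the same verification in words; note that once both composites are the canonical transitions, the unit and counit are lim-ind-isogenies by definition, so the appeal to lemma \ref{lemm-locSQ}.\ref{lemm-locSQ2} is not strictly needed.
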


\begin{proof}
Soient $\E ^{(\bullet)}$ un $\smash{\widetilde{\D}} _{\PP ^\sharp} ^{(\bullet)} (T)$-module 
et
$\FF ^{(\bullet)}$ un $\lambda ^{*}\smash{\widetilde{\D}} _{\PP ^\sharp} ^{(\bullet)} (T)$-module.
Grâce à \ref{adjonction-oubotimes}, il s'agit de vérifier que les morphismes d'adjonction sont des isomorphismes.
Cela résulte des diagrammes commutatifs de $ M ( \smash{\widetilde{\D}} _{\PP ^\sharp} ^{(\bullet)}(T))$
(resp. $ M (\lambda ^* \smash{\widetilde{\D}} _{\PP ^\sharp} ^{(\bullet)}(T))$ pour le carré de droite):
\begin{equation}
\label{lem-MQlambda2LMQ-diag}
\xymatrix @ R=0,4cm @C=0,3cm {
{ \lambda ^{*}\smash{\widetilde{\D}} _{\PP ^\sharp} ^{(\bullet)}(T) 
\otimes _{\smash{\widetilde{\D}} _{\PP ^\sharp} ^{(\bullet)}(T)} 
\E ^{(\bullet)}} 
\ar[r] ^-{}
\ar@{.>}[dr] ^-{}
 & 
 { \lambda ^{*} ( \lambda ^{*}\smash{\widetilde{\D}} _{\PP ^\sharp} ^{(\bullet)}(T) 
\otimes _{\smash{\widetilde{\D}} _{\PP ^\sharp} ^{(\bullet)}(T)} 
\E ^{(\bullet)})} 
\\ 
{\E ^{(\bullet)}} 
\ar[r] ^-{}
\ar[u] ^-{}
& 
{\lambda ^{*}\E ^{(\bullet)} ,} 
\ar[u] ^-{}
}
~
\xymatrix @ R=0,4cm @C=0,2cm {
{\FF ^{(\bullet)}} 
\ar[r] ^-{}
\ar@{.>}[dr] ^-{}
 & 
{\lambda ^{*}\FF ^{(\bullet)} } 
\\ 
{ \lambda ^{*}\smash{\widetilde{\D}} _{\PP ^\sharp} ^{(\bullet)}(T) 
\otimes _{\smash{\widetilde{\D}} _{\PP ^\sharp} ^{(\bullet)}(T)} 
\FF ^{(\bullet)}} 
\ar[r] ^-{}
\ar[u] ^-{}
& 
{ \lambda ^{*} ( \lambda ^{*}\smash{\widetilde{\D}} _{\PP ^\sharp} ^{(\bullet)}(T) 
\otimes _{\smash{\widetilde{\D}} _{\PP ^\sharp} ^{(\bullet)}(T)} 
\FF ^{(\bullet)}).} 
\ar[u] ^-{}
}
\end{equation}

\end{proof}

\begin{defi}
\label{coh-loc-pres-fini}
Soit 
$\E ^{(\bullet)}$ un 
$\lambda ^{*}\smash{\widetilde{\D}} _{\PP ^\sharp} ^{(\bullet)} (T)$-module.

\begin{itemize}
\item Le module 
$\E ^{(\bullet)}$
est un 
$\lambda ^{*}\smash{\widetilde{\D}} _{\PP ^\sharp} ^{(\bullet)} (T)$-module localement de présentation finie s'il existe
un recouvrement ouvert 
$(\PP _i ) _{i\in I}$ de $\PP$ tel que, pour tout $i \in I$, on dispose  
d'une suite exacte dans 
$M (\lambda ^{*}\smash{\widetilde{\D}} _{\PP ^\sharp} ^{(\bullet)} (T))$ de la forme: 
\begin{equation}
\notag
\left ( \lambda ^{*}\smash{\widetilde{\D}} _{\PP _i} ^{(\bullet)} (T \cap P _i) \right) ^{r _i}
\to 
\left ( \lambda ^{*}\smash{\widetilde{\D}} _{\PP _i} ^{(\bullet)} (T  \cap P _i) \right) ^{s _i}
\to 
\E ^{(\bullet)} | \PP _i
\to 0,
\end{equation}
où $r _i , s _i \in \N$.

\item Si de plus, on peut prendre $\{\PP\}$ comme tel recouvrement ouvert de $\PP$, 
on dira que le module 
$\E ^{(\bullet)}$
est un 
$\lambda ^{*}\smash{\widetilde{\D}} _{\PP ^\sharp} ^{(\bullet)} (T)$-module globalement de présentation finie.

\end{itemize}
\end{defi}

\begin{lemm}
\label{eqcat-locpfotimes}
Les foncteurs 
$ \lambda ^{*}\smash{\widetilde{\D}} _{\PP ^\sharp} ^{(\bullet)} (T ) 
\otimes _{\smash{\widetilde{\D}} _{\PP ^\sharp} ^{(\lambda (0))} (T)} -$
et
$\E ^{(\bullet)}\mapsto \E ^{(0)}$
induisent des équivalences quasi-inverses
entre la catégorie 
des $\smash{\widetilde{\D}} _{\PP ^\sharp} ^{(\lambda (0))} (T)$-modules cohérents 
(resp. des $\smash{\widetilde{\D}} _{\PP ^\sharp} ^{(\lambda (0))} (T)$-modules globalement de 
présentation finie)
et celle des
$\lambda ^{*}\smash{\widetilde{\D}} _{\PP ^\sharp} ^{(\bullet)} (T)$-modules localement de présentation finie
(resp. des 
$\lambda ^{*}\smash{\widetilde{\D}} _{\PP ^\sharp} ^{(\bullet)} (T)$-modules globalement de présentation finie).
\end{lemm}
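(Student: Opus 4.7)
Mon plan serait de v�rifier d'abord que chacun des deux foncteurs envoie la sous-cat�gorie indiqu�e dans l'autre, puis de construire les isomorphismes montrant qu'ils sont quasi-inverses. Partant d'un $\smash{\widetilde{\D}}_{\PP^\sharp}^{(\lambda(0))}(T)$-module coh�rent $\E$, la coh�rence de l'anneau $\smash{\widetilde{\D}}_{\PP^\sharp}^{(\lambda(0))}(T)$ (d'apr�s Berthelot dans \cite{Be1}) fournit localement une pr�sentation de la forme $(\smash{\widetilde{\D}}_{\PP_i^\sharp}^{(\lambda(0))}(T\cap P_i))^r \to (\smash{\widetilde{\D}}_{\PP_i^\sharp}^{(\lambda(0))}(T\cap P_i))^s \to \E|\PP_i \to 0$; en lui appliquant le foncteur $\lambda^*\smash{\widetilde{\D}}_{\PP^\sharp}^{(\bullet)}(T) \otimes_{\smash{\widetilde{\D}}_{\PP^\sharp}^{(\lambda(0))}(T)} -$, on obtient une pr�sentation locale du tensoris� comme $\lambda^*\smash{\widetilde{\D}}_{\PP^\sharp}^{(\bullet)}(T)$-module; le cas globalement de pr�sentation finie se traite de la m�me mani�re. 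R�ciproquement, en restreignant au niveau $0$ une pr�sentation locale de $\E^{(\bullet)}$, on obtient directement une pr�sentation locale de $\E^{(0)}$ sur l'anneau coh�rent $\smash{\widetilde{\D}}_{\PP^\sharp}^{(\lambda(0))}(T)$, d'o� sa coh�rence.

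Je construirais ensuite le morphisme de comparaison $\lambda^*\smash{\widetilde{\D}}_{\PP^\sharp}^{(\bullet)}(T) \otimes_{\smash{\widetilde{\D}}_{\PP^\sharp}^{(\lambda(0))}(T)} \E^{(0)} \to \E^{(\bullet)}$, dont la composante au niveau $m$ est induite par lin�arit� � partir du morphisme de transition $\alpha^{(m,0)} \colon \E^{(0)} \to \E^{(m)}$. Sur un module libre de la forme $(\lambda^*\smash{\widetilde{\D}}_{\PP^\sharp}^{(\bullet)}(T))^n$, cette fl�che est trivialement un isomorphisme. Comme le foncteur $\lambda^*\smash{\widetilde{\D}}_{\PP^\sharp}^{(\bullet)}(T) \otimes_{\smash{\widetilde{\D}}_{\PP^\sharp}^{(\lambda(0))}(T)} (-)^{(0)}$ est exact � droite (compos� de deux foncteurs exacts � droite, le foncteur de prise du niveau $0$ �tant m�me exact puisque les noyaux et conoyaux se calculent niveau par niveau), on l'appliquerait � une pr�sentation locale de $\E^{(\bullet)}$; le diagramme commutatif obtenu poss�de des lignes exactes � droite, et ses deux premi�res fl�ches verticales sont des isomorphismes, donc par propri�t� universelle du conoyau la troisi�me l'est aussi. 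Comme �tre un isomorphisme est une propri�t� locale en $\PP$, on conclut globalement. La composition dans l'autre sens est manifestement canoniquement l'identit�, puisque le niveau $0$ de $\lambda^*\smash{\widetilde{\D}}_{\PP^\sharp}^{(\bullet)}(T) \otimes \E$ est $\smash{\widetilde{\D}}_{\PP^\sharp}^{(\lambda(0))}(T) \otimes_{\smash{\widetilde{\D}}_{\PP^\sharp}^{(\lambda(0))}(T)} \E \simeq \E$.

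Le point d�licat sera de justifier que la pr�sentation locale de $\E^{(\bullet)}$ est effectivement obtenue par tensorisation � partir d'une pr�sentation au niveau $0$. Un morphisme entre modules libres dans $M(\lambda^*\smash{\widetilde{\D}}_{\PP^\sharp}^{(\bullet)}(T))$ consiste en une famille de matrices compatibles aux morphismes de transition; comme ces derniers agissent coordonn�e par coordonn�e sur les modules libres, la matrice au niveau $m$ est n�cessairement l'image par la transition de celle au niveau $0$. C'est cette observation qui rend effective la comparaison entre le tensoris� et le module initial, et qui justifie le diagramme employ� ci-dessus.
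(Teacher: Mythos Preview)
Your proposal is correct and follows essentially the same approach as the paper: both proofs check that the functors land in the right subcategories by right exactness, then verify that the comparison morphisms are isomorphisms by reducing to the free case $\lambda^*\smash{\widetilde{\D}}_{\PP^\sharp}^{(\bullet)}(T)$ via a presentation. The only stylistic difference is that the paper packages your comparison morphisms as the unit and counit of the adjunction between $\lambda^*\smash{\widetilde{\D}}_{\PP^\sharp}^{(\bullet)}(T)\otimes_{\smash{\widetilde{\D}}_{\PP^\sharp}^{(\lambda(0))}(T)}-$ and $\E^{(\bullet)}\mapsto\E^{(0)}$ (valid on all modules), which absorbs your ``point d\'elicat'' into the general extension/restriction-of-scalars formalism and makes the argument slightly more concise.
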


\begin{proof}
Comme ces deux foncteurs sont exacts à droite, 
ils se factorisent comme voulu (on vérifie d'abord le cas respectif, le cas non respectif s'en déduisant par cohérence 
de $\smash{\widetilde{\D}} _{\PP ^\sharp} ^{(\lambda (0))} (T)$). 
Or, le foncteur 
$ \lambda ^{*}\smash{\widetilde{\D}} _{\PP ^\sharp} ^{(\bullet)} (T ) 
\otimes _{\smash{\widetilde{\D}} _{\PP ^\sharp} ^{(\lambda (0))} (T)} -$
est adjoint à gauche de
$\E ^{(\bullet)}\mapsto \E ^{(0)}$ pour les catégories
de  $\smash{\widetilde{\D}} _{\PP ^\sharp} ^{(\lambda (0))} (T)$-modules et 
de $\lambda ^{*}\smash{\widetilde{\D}} _{\PP ^\sharp} ^{(\bullet)} (T)$-modules.
Il suffit donc de vérifier que les morphismes d'adjonction sont des isomorphismes, ce qui est aisé:  
si $\E ^{(\bullet)}$ un $\lambda ^{*}\smash{\widetilde{\D}} _{\PP ^\sharp} ^{(\bullet)} (T)$-module localement de présentation finie, 
alors le morphisme canonique 
$ \lambda ^{*}\smash{\widetilde{\D}} _{\PP ^\sharp} ^{(\bullet)} (T ) 
\otimes _{\smash{\widetilde{\D}} _{\PP ^\sharp} ^{(\lambda (0))} (T)} \E ^{(0)}
\to 
\E ^{(\bullet)}$ 
est un isomorphisme car, par exactitude à droite,  additivité de nos foncteurs et via \ref{LDiso-local}, il suffit  de le vérifier 
pour $\E ^{(\bullet)}= \lambda ^{*}\smash{\widetilde{\D}} _{\PP ^\sharp} ^{(\bullet)} (T)$ (le fait que le second morphisme d'adjonction soit un isomorphisme
est immédiat).
\end{proof}

La caractérisation du lemme qui suit est à rapprocher avec la notion de cohérence de Berthelot que l'on rappellera dans 
\ref{defi-LDQ0coh}.
\begin{lemm}
Soit 
$\E ^{(\bullet)}$ un 
$\lambda ^{*}\smash{\widetilde{\D}} _{\PP ^\sharp} ^{(\bullet)} (T)$-module.
Le  
$\lambda ^{*}\smash{\widetilde{\D}} _{\PP ^\sharp} ^{(\bullet)} (T)$-module $\E ^{(\bullet)}$
est localement de présentation finie 
si et seulement  
s'il satisfait les conditions suivantes:
\begin{enumerate}
\item Pour tout $m \in \N$, $\E ^{(m)}$ est un $\smash{\widetilde{\D}} _{\PP ^\sharp} ^{(\lambda (m))} (T)$-module cohérent ; 
\item Pour tous entiers $0\leq m \leq m'$,  
le morphisme canonique
\begin{equation}
\label{Beintro-4.2.3M}
\smash{\widetilde{\D}} _{\PP ^\sharp} ^{(\lambda (m'))} (T) \otimes _{\smash{\widetilde{\D}} _{\PP ^\sharp} ^{(\lambda (m))} (T)}
\E ^{(m)} \to \E ^{(m')} 
\end{equation}
est un isomorphisme.
\end{enumerate}
\end{lemm}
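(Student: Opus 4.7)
The plan is to deduce both implications from Lemma \ref{eqcat-locpfotimes}, which provides quasi-inverse equivalences between the category of coherent $\smash{\widetilde{\D}}_{\PP ^\sharp} ^{(\lambda(0))}(T)$-modules and the category of locally finitely presented $\lambda ^*\smash{\widetilde{\D}}_{\PP ^\sharp} ^{(\bullet)}(T)$-modules, realised by the adjoint pair $\lambda ^*\smash{\widetilde{\D}} ^{(\bullet)} \otimes _{\smash{\widetilde{\D}} ^{(\lambda (0))}} -$ and $\E ^{(\bullet)} \mapsto \E ^{(0)}$.

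For the direct implication, assume $\E ^{(\bullet)}$ is locally of finite presentation. Because $\E ^{(\bullet)} \mapsto \E ^{(0)}$ is an equivalence of categories, the counit is an isomorphism, which gives globally on $\PP$ an isomorphism $\lambda ^*\smash{\widetilde{\D}}_{\PP ^\sharp} ^{(\bullet)}(T) \otimes _{\smash{\widetilde{\D}}_{\PP ^\sharp} ^{(\lambda(0))}(T)} \E ^{(0)} \riso \E ^{(\bullet)}$ with $\E ^{(0)}$ coherent. Taking the level-$m$ component yields $\smash{\widetilde{\D}}_{\PP ^\sharp} ^{(\lambda(m))}(T) \otimes _{\smash{\widetilde{\D}}_{\PP ^\sharp} ^{(\lambda(0))}(T)} \E ^{(0)} \riso \E ^{(m)}$; this module is coherent because extension of scalars of a coherent module along a morphism of coherent rings is coherent, so (1) holds. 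Moreover, under these level-wise isomorphisms the transition morphism \ref{Beintro-4.2.3M} is identified with the associativity isomorphism $\smash{\widetilde{\D}} ^{(\lambda(m'))} \otimes _{\smash{\widetilde{\D}} ^{(\lambda(m))}} ( \smash{\widetilde{\D}} ^{(\lambda(m))} \otimes _{\smash{\widetilde{\D}} ^{(\lambda(0))}} \E ^{(0)}) \riso \smash{\widetilde{\D}} ^{(\lambda(m'))} \otimes _{\smash{\widetilde{\D}} ^{(\lambda(0))}} \E ^{(0)}$, so (2) holds as well.

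For the converse, suppose (1) and (2). Applying (2) iteratively starting from $m=0$ gives, for every $m\geq 0$, a canonical isomorphism $\smash{\widetilde{\D}}_{\PP ^\sharp} ^{(\lambda(m))}(T) \otimes _{\smash{\widetilde{\D}}_{\PP ^\sharp} ^{(\lambda(0))}(T)} \E ^{(0)} \riso \E ^{(m)}$. These level-wise isomorphisms assemble into an isomorphism $\lambda ^*\smash{\widetilde{\D}} ^{(\bullet)} \otimes _{\smash{\widetilde{\D}} ^{(\lambda(0))}} \E ^{(0)} \riso \E ^{(\bullet)}$ in $M(\lambda ^*\smash{\widetilde{\D}}_{\PP ^\sharp} ^{(\bullet)}(T))$, the compatibility with the transition maps on both sides following from (2) combined with the associativity of tensor products. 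Since $\E ^{(0)}$ is coherent by (1), Lemma \ref{eqcat-locpfotimes} applied to $\E ^{(0)}$ ensures that the left-hand side is locally of finite presentation.

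The only point requiring genuine verification rather than formal manipulation is the compatibility, in the converse direction, of the level-wise isomorphisms with the transition morphisms of $\lambda ^*\smash{\widetilde{\D}} ^{(\bullet)} \otimes _{\smash{\widetilde{\D}} ^{(\lambda(0))}} \E ^{(0)}$; I expect this to reduce directly to the naturality of extension of scalars and to condition (2), so the argument should be essentially a short application of Lemma \ref{eqcat-locpfotimes}.
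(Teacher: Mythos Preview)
Your proof is correct and follows the same approach as the paper, which simply says that the lemma follows immediately from Lemma~\ref{eqcat-locpfotimes}. You have spelled out in detail exactly how that deduction goes, using the counit of the adjunction for the direct implication and reassembling the level-wise isomorphisms into a global one for the converse; this is precisely the content hidden behind the paper's one-line proof.
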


\begin{proof}
Cela découle aussitôt de \ref{eqcat-locpfotimes}.
\end{proof}

\begin{rema}
\label{rema-otimes-eqcat}
\begin{itemize}
\item 
Comme les extensions
$\smash{\widetilde{\D}} _{\PP ^\sharp} ^{(\lambda (m))} (T) 
\to 
\smash{\widetilde{\D}} _{\PP ^\sharp} ^{(\lambda (m +1))} (T)$
ne sont pas plates, 
la catégorie des
$\lambda ^{*}\smash{\widetilde{\D}} _{\PP ^\sharp} ^{(\bullet)} (T ) $-modules localement de présentation finie 
n'est pas stable par noyau. 
 
\item Si $\PP$ est affine, d'après le théorème de type $A$, les notions de
$\smash{\widetilde{\D}} _{\PP ^\sharp} ^{(\lambda (0))} (T)$-modules cohérents
et de $\smash{\widetilde{\D}} _{\PP ^\sharp} ^{(\lambda (0))} (T)$-modules globalement de présentation finie sont alors égales.  
D'après \ref{eqcat-locpfotimes}, 
il en résulte que les notions de 
$\lambda ^{*}\smash{\widetilde{\D}} _{\PP ^\sharp} ^{(\bullet)} (T ) $-modules localement de présentation finie 
et de 
$\lambda ^{*}\smash{\widetilde{\D}} _{\PP ^\sharp} ^{(\bullet)} (T ) $-modules globalement de présentation finie 
sont alors égales. 
De plus, 
le foncteur 
$ \lambda ^{*}\smash{\widetilde{\D}} _{\PP ^\sharp} ^{(\bullet)} (T ) 
\otimes _{\smash{\widetilde{D}} _{\PP ^{\sharp}} ^{(\lambda (0))} (T)} -$
de la catégorie des 
$\smash{\widetilde{D}} _{\PP ^{\sharp}} ^{(\lambda (0))} (T)$-modules de type fini
dans celle 
des 
$\lambda ^{*}\smash{\widetilde{\D}} _{\PP ^\sharp} ^{(\bullet)} (T ) $-modules localement de présentation finie 
est une équivalence de catégories dont un foncteur quasi-inverse est donné par 
$\G ^{(\bullet)} \mapsto \Gamma (\PP, \G ^{(0)})$.
\end{itemize}
\end{rema}

\begin{lemm}
\label{ind-isog-coh-Q}
Soit 
$f ^{(\bullet)}\colon 
\E ^{(\bullet)} \to \FF ^{(\bullet)}$ 
un morphisme de 
$M (\lambda ^{*}\smash{\widetilde{\D}} _{\PP ^\sharp} ^{(\bullet)} (T))$.
On suppose que, pour tout $m \in \N$, 
les $\smash{\widetilde{\D}} _{\PP ^\sharp} ^{(\lambda (m))} (T)$-modules
$\E ^{(m)}$
et 
$\FF ^{(m)}$ 
sont cohérents. 
Le morphisme 
$f ^{(\bullet)}$
est alors une ind-isogénie de 
$M (\lambda ^{*}\smash{\widetilde{\D}} _{\PP ^\sharp} ^{(\bullet)} (T))$ 
si et seulement si
le morphisme 
$f ^{(\bullet)} _\Q
\colon \E ^{(\bullet)} _\Q \to \FF ^{(\bullet)}_\Q $ 
induit après tensorisation par $\Q$
est un isomorphisme de 
$\lambda ^{*}\smash{\widetilde{\D}} _{\PP ^\sharp} ^{(\bullet)} (T) _\Q$-modules.
\end{lemm}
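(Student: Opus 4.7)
Mon plan est de traiter s�par�ment les deux sens. Le sens direct est ais�: la fl�che canonique $\E ^{(\bullet)} \to \chi ^{*} \E ^{(\bullet)}$ est, au niveau $m$, la multiplication par $p ^{\chi (m)}$ sur $\E ^{(m)}$ (on le v�rifie en observant que cette formule rend compatibles les morphismes de transition $\alpha ^{(m',m)}$ et $p ^{\chi (m')-\chi (m)} \alpha ^{(m',m)}$). Apr�s tensorisation par $\Q$, ces morphismes canoniques deviennent des isomorphismes. Par cons�quent, si $f ^{(\bullet)}$ est une ind-isog�nie dont t�moin est $g ^{(\bullet)} \colon \FF ^{(\bullet)} \to \chi ^{*} \E ^{(\bullet)}$, alors $f ^{(\bullet)} _\Q$ admet un inverse � gauche et � droite.

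Pour la r�ciproque, supposons $f ^{(\bullet)} _\Q$ inversible. Pour chaque $m$, les faisceaux $K ^{(m)} := \ker f ^{(m)}$ et $C ^{(m)} := \mathrm{coker}\, f ^{(m)}$ sont coh�rents (par coh�rence de $\E ^{(m)}$ et $\FF ^{(m)}$ et noeth�rianit� locale de $\smash{\widetilde{\D}} _{\PP ^\sharp} ^{(\lambda (m))} (T)$) et de torsion par $p ^{\infty}$ (car $K ^{(m)} _\Q = 0 = C ^{(m)} _\Q$). Par quasi-compacit� de $\PP$, il existe alors des entiers $a _m, b _m$ tels que $p ^{a _m} K ^{(m)} =0$ et $p ^{b _m} C ^{(m)} =0$. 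On construit un rel�vement $g _0 ^{(m)} \colon \FF ^{(m)} \to \E ^{(m)}$ comme suit: pour $y \in \FF ^{(m)}$, on choisit $x \in \E ^{(m)}$ avec $f ^{(m)} (x) = p ^{b _m} y$ et on pose $g _0 ^{(m)} (y) := p ^{a _m} x$. La formule est bien d�finie et $\smash{\widetilde{\D}} _{\PP ^\sharp} ^{(\lambda (m))} (T)$-lin�aire puisque $p ^{a _m} K ^{(m)} = 0$, et l'on v�rifie les �galit�s $g _0 ^{(m)} \circ f ^{(m)} = p ^{N _m} \mathrm{id}$ et $f ^{(m)} \circ g _0 ^{(m)} = p ^{N _m} \mathrm{id}$ avec $N _m := a _m + b _m$.

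L'obstacle principal est la compatibilit� de $g ^{(\bullet)}$ aux morphismes de transition: on ne peut esp�rer $g _0 ^{(m')} \circ \alpha ^{(m',m)} _{\FF} = p ^{N _{m'}-N _m} \alpha ^{(m',m)} _{\E} \circ g _0 ^{(m)}$ que modulo un d�faut $d _0 ^{(m',m)}$. Or, en composant ce d�faut avec $f ^{(m')}$ � gauche, les deux termes se t�lescopent en $p ^{N _{m'}} \alpha _{\FF} ^{(m',m)}$, d'o� $f ^{(m')} \circ d _0 ^{(m',m)} = 0$, c'est-�-dire $\mathrm{im}(d _0 ^{(m',m)}) \subset K ^{(m')}$, ce qui donne $p ^{a _{m'}} d _0 ^{(m',m)}= 0$. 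Pour absorber ce d�faut, on choisit $\chi \in M$ (croissante) v�rifiant $\chi (m) \geq 2 a _m + b _m$ pour tout $m$ (prendre par exemple $\chi (m) := \max _{k\leq m} (2 a _k + b _k)$) et on pose $g ^{(m)} := p ^{\chi (m) - N _m} g _0 ^{(m)}$. Le m�me calcul montre alors que le d�faut devient $p ^{\chi (m') - N _{m'}} d _0 ^{(m', m)}$, qui est nul par choix de $\chi$. On obtient ainsi un morphisme $g ^{(\bullet)} \colon \FF ^{(\bullet)} \to \chi ^{*} \E ^{(\bullet)}$ dans $M (\lambda ^{*} \smash{\widetilde{\D}} _{\PP ^\sharp} ^{(\bullet)} (T))$ tel que $g ^{(\bullet)} \circ f ^{(\bullet)}$ et $\chi ^{*} (f ^{(\bullet)}) \circ g ^{(\bullet)}$ co�ncident, au niveau $m$, avec la multiplication par $p ^{\chi (m)}$, i.e.\ avec les morphismes canoniques: $f ^{(\bullet)}$ est donc une ind-isog�nie.
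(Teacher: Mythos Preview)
Your argument is correct in substance. One cosmetic point: the expression $p^{N_{m'}-N_m}$ in your definition of $d_0^{(m',m)}$ presupposes $N_{m'}\geq N_m$, which your construction does not guarantee; this is harmless, since you may always enlarge the integers $a_m,b_m$ so that they (and hence $N_m$) become increasing in $m$, without affecting the rest of the argument.

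The paper takes a different route for the converse. Rather than tracking the transition defect directly, the author first reduces to the case where each $\E^{(m)}$ and $\FF^{(m)}$ is $p$-torsion-free, by showing that the quotient by the coherent torsion submodule is itself an ind-isog{\'e}nie. Once everything is torsion-free, compatibility with transition maps becomes automatic: the factorisation $b^{(m)}\colon\FF^{(m)}\to\E^{(m)}$ of $p^{\chi(m)}(f^{(m)}_\Q)^{-1}$ through the inclusion $\FF^{(m)}\subset\FF^{(m)}_\Q$ is \emph{unique}, and since $\E^{(m')}\hookrightarrow\E^{(m')}_\Q$, the required relation $b^{(m')}\circ\alpha_{\FF}^{(m',m)}=p^{\chi(m')-\chi(m)}\alpha_{\E}^{(m',m)}\circ b^{(m)}$ can be checked inside $\E^{(m')}_\Q$, where it reduces to the fact that $(f^{(\bullet)}_\Q)^{-1}$ is a morphism of inductive systems. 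Your approach trades this preliminary reduction for explicit torsion bookkeeping; the paper's version is slightly cleaner conceptually (uniqueness forces compatibility with no defect to absorb), while yours is more direct and makes visible exactly how the choice of $\chi$ kills the obstruction.
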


\begin{proof}
0) Comme les morphismes canoniques de la forme
$\E ^{(\bullet)}  \to \chi ^{*} \E ^{(\bullet)} $ avec $\chi \in M$ 
deviennent des isomorphismes après application du foncteur $\Q \otimes _{\Z} -$, 
la première implication est évidente. 
Réciproquement supposons que 
$f ^{(\bullet)} _\Q$ 
soit un isomorphisme. 

1) D'après \cite[3.4.4]{Be1}, 
si on note $\E ^{(m)} _t$
le sous-faisceau des sections de $p$-torsion de $\E ^{(m)}$, alors
$\E ^{(m)} _t$ est un sous-$\smash{\widetilde{\D}} _{\PP ^\sharp} ^{(\lambda (m))} (T)$-module cohérent 
de $\E ^{(m)}$.
Notons
$\alpha \colon \E ^{(\bullet)}  \to \E ^{(\bullet)} / \E ^{(\bullet)} _t $, la projection canonique. Vérifions que c'est une ind-isogénie 
de $M (\lambda ^{*}\smash{\widetilde{\D}} _{\PP ^\sharp} ^{(\bullet)} (T))$. 
Comme $\E ^{(m)} _t$ est un $\smash{\widetilde{\D}} _{\PP ^\sharp} ^{(\lambda (m))} (T)$-module cohérent , 
il existe alors $\chi \in M$ tel que la multiplication
$p ^{\chi (m)}\colon \E ^{(m)} \to \E ^{(m)} $ se factorise (de manière unique) 
en 
$\beta ^{(m)}\colon \E ^{(m)} / \E ^{(m)} _t \to \E ^{(m)} $.
On en déduit que le morphisme
$\beta ^{(\bullet)}\colon \E ^{(\bullet)} / \E ^{(\bullet)} _t \to \chi ^{*}\E ^{(\bullet)} $
de $M (\lambda ^{*}\smash{\widetilde{\D}} _{\PP ^\sharp} ^{(\bullet)} (T))$ 
est tel que 
$\beta ^{(\bullet)} \circ \alpha ^{(\bullet)}$
et 
$\chi ^* \alpha ^{(\bullet)} \circ \beta ^{(\bullet)}$ sont les morphismes canoniques. 

2) D'après l'étape $1)$ quitte à quotienter par les sous-modules de $p$-torsion, on peut supposer 
que, pour tout $m \in \N$, 
$\E ^{(m)}$ et $\FF ^{(m)}$ sont sans $p$-torsion.
Il existe alors $\chi \in M$ tel que 
pour tout $m \in M$, l'isomorphisme canonique 
$h ^{(m)}:= p ^{\chi (m)} ( f ^{(m)} _\Q ) ^{-1} \colon \FF ^{(m)} _\Q \to \E ^{(m)} _\Q$ 
se factorise (de manière unique) par un morphisme de la forme
$b ^{(m)}\colon \FF ^{(m)} \to \E ^{(m)}$.
Comme $\chi ^{*}\E ^{(\bullet)} \subset \chi ^{*}\E ^{(\bullet)} _{\Q}$
et
$\chi ^{*}\FF ^{(\bullet)} \subset \chi ^{*}\FF ^{(\bullet)} _{\Q}$,
on calcule en fait que 
le morphisme 
$h ^{(\bullet)} _{\Q}\colon \FF ^{(\bullet)}  _{\Q}
\to \chi ^{*}\E ^{(\bullet)} _{\Q} $
se factorise en le morphisme
$b ^{(\bullet)}\colon \FF ^{(\bullet)}  \to \chi ^{*}\E ^{(\bullet)} $
(i.e., les $b ^{(m)}$ commutent aux morphismes de transition)
qui est tel que 
$g ^{(\bullet)} \circ f ^{(\bullet)}$
et 
$\chi ^*( f ^{(\bullet)}) \circ g ^{(\bullet)}$ sont les morphismes canoniques.

\end{proof}

\begin{defi}
\label{coh-loc-pres-fini-ind-iso}
Soit 
$\E ^{(\bullet)}$ un 
$\lambda ^{*}\smash{\widetilde{\D}} _{\PP ^\sharp} ^{(\bullet)} (T)$-module.
Le module 
$\E ^{(\bullet)}$
est un 
{\og $\lambda ^{*}\smash{\widetilde{\D}} _{\PP ^\sharp} ^{(\bullet)} (T)$-module 
de type fini à ind-isogénie près\fg} 
s'il existe
un recouvrement ouvert 
$(\PP _i ) _{i\in I}$ de $\PP$ tel que, pour tout $i \in I$, on dispose  
d'une suite exacte dans 
$\underrightarrow{M} _{\Q}  (\lambda ^*\smash{\widetilde{\D}} _{\PP ^\sharp} ^{(\bullet)} (T))$ de la forme: 
$\left ( \lambda ^{*}\smash{\widetilde{\D}} _{\PP ^{\sharp} _i} ^{(\bullet)} (T  \cap P _i) \right) ^{r _i}
\to 
\E ^{(\bullet)} | \PP _i
\to 0$,
où $r _i\in \N$.
De même, on définie la notion de 
{\og $\lambda ^{*}\smash{\widetilde{\D}} _{\PP ^\sharp} ^{(\bullet)} (T)$-module 
localement (resp. globalement) de présentation finie à ind-isogénie près\fg}.
 
 Le module 
$\E ^{(\bullet)}$
est un 
{\og $\lambda ^{*}\smash{\widetilde{\D}} _{\PP ^\sharp} ^{(\bullet)} (T)$-module 
cohérent à ind-isogénie près\fg} 
s'il est de type fini à ind-isogénie près et si 
pour tout ouvert $\PP'$ de $\PP$, tout homomorphisme de $\underrightarrow{M} _{\Q}  (\lambda ^*\smash{\widetilde{\D}} _{\PP ^{\prime \sharp}} ^{(\bullet)} (T))$ 
de la forme
$u ^{(\bullet)}\colon \left ( \lambda ^{*}\smash{\widetilde{\D}} _{\PP ^{\prime \sharp}} ^{(\bullet)} (T  \cap P ' ) \right) ^{r}
\to 
\E ^{(\bullet)} | \PP '$,
le noyau de $u ^{(\bullet)}$ est de type fini à ind-isogénie près. 
\end{defi}

\begin{lemm}
\label{caract-coh-isog}
Soit 
$\E ^{(\bullet)}$ un 
$\lambda ^{*}\smash{\widetilde{\D}} _{\PP ^\sharp} ^{(\bullet)} (T)$-module.
Le $\lambda ^{*}\smash{\widetilde{\D}} _{\PP ^\sharp} ^{(\bullet)} (T)$-module 
$\E ^{(\bullet)}$ est globalement de présentation finie à ind-isogénie près si et seulement s'il est isomorphe dans
$\underrightarrow{M} _{\Q}  (\lambda ^{*}\smash{\widetilde{\D}} _{\PP ^\sharp} ^{(\bullet)} (T))$ à un 
$\lambda ^{*}\smash{\widetilde{\D}} _{\PP ^\sharp} ^{(\bullet)} (T)$-module globalement de présentation finie.
\end{lemm}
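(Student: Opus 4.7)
Le plan est le suivant. L'implication $(\Leftarrow)$ est imm�diate: si $\G ^{(\bullet)}$ est globalement de pr�sentation finie dans $M (\lambda ^* \smash{\widetilde{\D}} _{\PP ^\sharp} ^{(\bullet)} (T))$, alors sa pr�sentation par modules libres reste une suite exacte apr�s passage � $\underrightarrow{M} _\Q (\lambda ^* \smash{\widetilde{\D}} _{\PP ^\sharp} ^{(\bullet)} (T))$ (le foncteur de localisation pr�serve les conoyaux), et tout isomorphisme $\E ^{(\bullet)} \cong \G ^{(\bullet)}$ dans $\underrightarrow{M} _\Q$ transf�re alors cette pr�sentation � $\E ^{(\bullet)}$. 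Toute la substance se trouve donc dans $(\Rightarrow)$.

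Pour $(\Rightarrow)$, on part d'une suite exacte $F _1 \to F _0 \to \E ^{(\bullet)} \to 0$ dans $\underrightarrow{M} _\Q$ avec $F _i := (\lambda ^* \smash{\widetilde{\D}} _{\PP ^\sharp} ^{(\bullet)} (T)) ^{n _i}$. Via la formule analogue � \ref{4.2.2Beintropre} pour les modules, on repr�sente le morphisme $u \colon F _1 \to F _0$ par un morphisme $\phi \colon F _1 \to \chi ^* F _0$ dans $M (\lambda ^* \smash{\widetilde{\D}} _{\PP ^\sharp} ^{(\bullet)} (T))$, pour un certain $\chi \in M$. On consid�re alors sa composante de niveau $0$, i.e., le morphisme $\phi ^{(0)} \colon F _1 ^{(0)} \to F _0 ^{(0)}$ entre $\smash{\widetilde{\D}} _{\PP ^\sharp} ^{(\lambda (0))} (T)$-modules libres de type fini, et on l'�tend par $\lambda ^* \smash{\widetilde{\D}} _{\PP ^\sharp} ^{(\bullet)} (T)$-lin�arit� en un morphisme $\widetilde{\phi} \colon F _1 \to F _0$ dans $M$. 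On pose enfin $\G ^{(\bullet)} := \mathrm{coker} (\widetilde{\phi})$: par construction, $\G ^{(\bullet)}$ admet la pr�sentation $F _1 \to F _0 \to \G ^{(\bullet)} \to 0$ dans $M$ et est donc globalement de pr�sentation finie.

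Le c�ur de l'argument est la v�rification que $\E ^{(\bullet)} \cong \G ^{(\bullet)}$ dans $\underrightarrow{M} _\Q$. Le calcul cl�, qui exploite la compatibilit� de $\phi$ avec les morphismes de transition tordus de $\chi ^* F _0$ (i.e. multipli�s par $p ^{\chi (m') - \chi (m)}$), donne $\phi ^{(m)} = p ^{\chi (m) - \chi (0)} \widetilde{\phi} ^{(m)}$ pour tout $m \in \N$. De mani�re �quivalente, en d�signant par $c \colon F _0 \to \chi ^* F _0$ le morphisme canonique (qui est la multiplication par $p ^{\chi (m)}$ au niveau $m$), on obtient l'�galit� $p ^{\chi (0)} \phi = c \circ \widetilde{\phi}$ dans $M$. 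Comme $c$ devient un isomorphisme et $p ^{\chi (0)}$ un inversible dans $\underrightarrow{M} _\Q$, le morphisme $u$ s'identifie � $p ^{-\chi (0)} \widetilde{\phi}$ modulo $F _0 \cong \chi ^* F _0$; les morphismes $u$ et $\widetilde{\phi}$ diff�rant par un inversible, ils ont m�me conoyau, d'o� $\E ^{(\bullet)} \cong \G ^{(\bullet)}$.

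Le point le plus d�licat est la manipulation des conoyaux dans la cat�gorie $\underrightarrow{M} _\Q (\lambda ^* \smash{\widetilde{\D}} _{\PP ^\sharp} ^{(\bullet)} (T))$, dont le caract�re ab�lien n'est pas \emph{a priori} transparent. On s'affranchit de cette difficult� en composant avec le plongement pleinement fid�le et exact $\underrightarrow{M} _\Q (\lambda ^* \smash{\widetilde{\D}} _{\PP ^\sharp} ^{(\bullet)} (T)) \hookrightarrow \underrightarrow{LM} _\Q (\smash{\widetilde{\D}} _{\PP ^\sharp} ^{(\bullet)} (T))$ fourni par le lemme \ref{lem-MQlambda2LMQ}: la cat�gorie but est ab�lienne, les propri�t�s d'exactitude voulues y sont transparentes, et la pleine fid�lit� permet d'en d�duire la conclusion dans la cat�gorie source.
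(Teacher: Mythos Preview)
Your argument is correct and arrives at the same candidate module as the paper: since $\widetilde{\phi}$ is the extension of $\phi^{(0)}$ and tensor product is right exact, your $\G^{(\bullet)} = \mathrm{coker}(\widetilde{\phi})$ coincides with $\lambda^{*}\smash{\widetilde{\D}}_{\PP^\sharp}^{(\bullet)}(T)\otimes_{\smash{\widetilde{\D}}_{\PP^\sharp}^{(\lambda(0))}(T)}\mathrm{coker}\,\phi^{(0)}$, which is exactly what the paper writes down. The difference lies in how the comparison with $\E^{(\bullet)}$ is made. The paper invokes Lemma~\ref{ind-isog-coh-Q}: the canonical map $\G^{(\bullet)}\to\mathrm{coker}_{M}(\phi^{(\bullet)})$ becomes an isomorphism after $\otimes\,\Q$ (by flatness of $\smash{\widetilde{\D}}_{\PP^\sharp}^{(\lambda(m))}(T)_\Q$ over $\smash{\widetilde{\D}}_{\PP^\sharp}^{(\lambda(0))}(T)_\Q$), hence is an ind-isog\'enie. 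Your route is more direct and entirely elementary: the identity $\phi^{(m)}=p^{\chi(m)-\chi(0)}\widetilde{\phi}^{(m)}$, which you extract from compatibility with the twisted transition maps, shows that $u$ and $\widetilde{\phi}$ differ by the invertible scalar $p^{\chi(0)}$ in $\underrightarrow{M}_\Q$, so their cokernels agree. This bypasses Lemma~\ref{ind-isog-coh-Q} altogether. Your final detour through $\underrightarrow{LM}_\Q(\smash{\widetilde{\D}}_{\PP^\sharp}^{(\bullet)}(T))$ via Lemma~\ref{lem-MQlambda2LMQ} is a clean way to handle the cokernel comparison without having first established that $\underrightarrow{M}_\Q(\lambda^{*}\smash{\widetilde{\D}}_{\PP^\sharp}^{(\bullet)}(T))$ is abelian; the paper leaves this point implicit.
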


\begin{proof}
La suffisance est triviale. Réciproquement, 
supposons 
que 
$\E ^{(\bullet)}$
soit le conoyau dans 
$\underrightarrow{M} _{\Q}  (\lambda ^{*}\smash{\widetilde{\D}} _{\PP ^\sharp} ^{(\bullet)} (T))$
d'une flèche de la forme
$\left ( \lambda ^{*}\smash{\widetilde{\D}} _{\PP ^\sharp} ^{(\bullet)} (T ) \right) ^{r }
\to 
\left ( \lambda ^{*}\smash{\widetilde{\D}} _{\PP ^\sharp} ^{(\bullet)} (T ) \right) ^{s}$.
Soit 
$\phi ^{(\bullet)}
 \colon  
\left ( \lambda ^{*}\smash{\widetilde{\D}} _{\PP ^\sharp} ^{(\bullet)} (T ) \right) ^{r }
\to 
\chi ^* \left ( \lambda ^{*}\smash{\widetilde{\D}} _{\PP ^\sharp} ^{(\bullet)} (T ) \right) ^{s}$
un représentant dans 
$M(\lambda ^{*}\smash{\widetilde{\D}} _{\PP ^\sharp} ^{(\bullet)} (T))$
de ce morphisme pour un certain $\chi \in M$.
Ainsi, on dispose du diagramme commutatif 
\begin{equation}
\notag
\xymatrix @R=0,3cm{
{\left ( \smash{\widetilde{\D}} _{\PP ^\sharp} ^{(\lambda(0))} (T ) \right) ^{r }} 
\ar[r] ^-{\phi ^{(0)}}
\ar[d] ^-{p ^{\chi (m) -\chi (0)} \mathrm{can}}
& 
{\left ( \smash{\widetilde{\D}} _{\PP ^\sharp} ^{(\lambda(0))} (T ) \right) ^{s} } 
\ar[r] ^-{}
\ar[d] ^-{p ^{\chi (m) -\chi (0)} \mathrm{can}}
& 
{\mathrm{coker} ~\phi ^{(0)} } 
\ar[r] ^-{}
\ar[d] ^-{}
& 
{0}
\\ 
{\left ( \smash{\widetilde{\D}} _{\PP ^\sharp} ^{(\lambda(m))} (T ) \right) ^{r }} 
\ar[r] ^-{\phi ^{(m)}}
& 
{\left ( \smash{\widetilde{\D}} _{\PP ^\sharp} ^{(\lambda(m))} (T ) \right) ^{s} } 
\ar[r] ^-{}
& 
{\mathrm{coker} ~\phi ^{(m)} } 
\ar[r] ^-{}
& 
{0}
}
\end{equation}
dont les suites horizontales sont exactes.
On en déduit le diagramme commutatif
\begin{equation}
\label{2.1.12.1}
\xymatrix @C=2cm  @R=0,3cm{
{ \left (\smash{\widetilde{\D}} _{\PP ^\sharp} ^{(\lambda (m))} (T)
\right) ^{r }} 
\ar[r] ^-{\smash{\widetilde{\D}} _{\PP ^\sharp} ^{(\lambda (m))} (T) 
\otimes 
\phi ^{(0)}}
\ar[d] ^-{p ^{\chi (m) -\chi (0)}}
& 
{\left ( \smash{\widetilde{\D}} _{\PP ^\sharp} ^{(\lambda(m))} (T ) \right) ^{s} } 
\ar[r] ^-{}
\ar[d] ^-{p ^{\chi (m) -\chi (0)}}
& 
{ \smash{\widetilde{\D}} _{\PP ^\sharp} ^{(\lambda (m))} (T) 
\otimes _{\smash{\widetilde{\D}} _{\PP ^\sharp} ^{(\lambda (0))} (T)}\mathrm{coker} ~\phi ^{(0)}} 
\ar[r] ^-{}
\ar[d] ^-{}
& 
{0}
\\ 
{\left ( \smash{\widetilde{\D}} _{\PP ^\sharp} ^{(\lambda(m))} (T ) \right) ^{r }} 
\ar[r] ^-{\phi ^{(m)}}
& 
{\left ( \smash{\widetilde{\D}} _{\PP ^\sharp} ^{(\lambda(m))} (T ) \right) ^{s} } 
\ar[r] ^-{}
& 
{\mathrm{coker} ~\phi ^{(m)} } 
\ar[r] ^-{}
& 
{0}
}
\end{equation}
dont les suites horizontales sont exactes.
La famille des morphismes verticaux de droite de \ref{2.1.12.1}
signifie que l'on dispose du morphisme canonique 
$\lambda ^{*}\smash{\widetilde{\D}} _{\PP ^\sharp} ^{(\bullet)} (T) 
\otimes _{\smash{\widetilde{\D}} _{\PP ^\sharp} ^{(\lambda (0))} (T)}\mathrm{coker} ~\phi ^{(0)} 
\to 
\mathrm{coker} ~\phi ^{(\bullet)}$ de $M  (\lambda ^{*}\smash{\widetilde{\D}} _{\PP ^\sharp} ^{(\bullet)} (T))$.
Grâce au lemme \ref{ind-isog-coh-Q}, comme les deux morphismes verticaux de gauche de \ref{2.1.12.1} sont des isomorphismes
après tensorisation par $\Q$, ce dernier
est une ind-isogénie de 
$M  (\lambda ^{*}\smash{\widetilde{\D}} _{\PP ^\sharp} ^{(\bullet)} (T))$. 
Comme 
$\lambda ^{*}\smash{\widetilde{\D}} _{\PP ^\sharp} ^{(\bullet)} (T) 
\otimes _{\smash{\widetilde{\D}} _{\PP ^\sharp} ^{(\lambda (0))} (T)}\mathrm{coker} ~\phi ^{(0)} $
est un $\lambda ^{*}\smash{\widetilde{\D}} _{\PP ^\sharp} ^{(\bullet)} (T)$-module globalement de présentation finie,
on en déduit le résultat demandé.
\end{proof}

\begin{lemm}
\label{ind-isog-cohérence}
Les notions de $\lambda ^{*}\smash{\widetilde{\D}} _{\PP ^\sharp} ^{(\bullet)} (T)$-module 
localement de présentation finie à ind-isogénie près 
et de 
$\lambda ^{*}\smash{\widetilde{\D}} _{\PP ^\sharp} ^{(\bullet)} (T)$-module cohérent à ind-isogénie près sont 
 équivalentes. 
En particulier, le faisceau d'anneaux
$\lambda ^{*}\smash{\widetilde{\D}} _{\PP ^\sharp} ^{(\bullet)} (T)$
est alors un
$\lambda ^{*}\smash{\widetilde{\D}} _{\PP ^\sharp} ^{(\bullet)} (T)$-module cohérent à ind-isogénie près.

\end{lemm}

\begin{proof}
La cohérence à ind-isogénie près entraîne clairement la locale présentation finie à ind-isogénie près. 
Réciproquement, soit 
$\E ^{(\bullet)}$
un $\lambda ^{*}\smash{\widetilde{\D}} _{\PP ^\sharp} ^{(\bullet)} (T)$-module 
localement de présentation finie à ind-isogénie près. 
Comme le lemme est locale et comme les propriétés équivalentes à établir sont stables par isomorphisme
dans $\underrightarrow{M} _{\Q}  (\lambda ^{*}\smash{\widetilde{\D}} _{\PP ^\sharp} ^{(\bullet)} (T))$, 
via les lemmes \ref{eqcat-locpfotimes} et \ref{caract-coh-isog},
on peut supposer qu'il existe un 
$\smash{\widetilde{\D}} _{\PP ^\sharp} ^{(\lambda (0))} (T)$-module globalement de présentation finie
$\FF  ^{(0)}$ tel que 
$\E ^{(\bullet)} = \lambda ^{*}\smash{\widetilde{\D}} _{\PP ^\sharp} ^{(\bullet)} (T ) 
\otimes _{\smash{\widetilde{\D}} _{\PP ^\sharp} ^{(\lambda (0))} (T)} \FF  ^{(0)}$.
Soit 
$f ^{(\bullet)}
\colon \left ( \lambda ^{*}\smash{\widetilde{\D}} _{\PP ^\sharp} ^{(\bullet)} (T) \right) ^r 
\to 
\E ^{(\bullet)}$ 
un morphisme de 
$\underrightarrow{M} _{\Q}  (\lambda ^{*}\smash{\widetilde{\D}} _{\PP ^\sharp} ^{(\bullet)} (T))$.
Il s'agit de vérifier que 
$\ker f ^{(\bullet)}$ est de type fini à ind-isogénie près. 
Soient 
$\chi \in M$, 
$a ^{(\bullet)}\colon \left ( \lambda ^{*}\smash{\widetilde{\D}} _{\PP ^\sharp} ^{(\bullet)} (T) \right) ^r \to 
\chi ^*\E ^{(\bullet)}$ 
un morphisme de 
$M  (\lambda ^{*}\smash{\widetilde{\D}} _{\PP ^\sharp} ^{(\bullet)} (T))$ 
qui soit un représentant de 
$f ^{(\bullet)}$.
Comme $\smash{\widetilde{\D}} _{\PP ^\sharp} ^{(\lambda (m))} (T)$ est un anneau cohérent, 
comme les extensions
$\smash{\widetilde{\D}} _{\PP ^\sharp} ^{(\lambda (m))} (T) _\Q
\to 
\smash{\widetilde{\D}} _{\PP ^\sharp} ^{(\lambda (m +1))} (T) _\Q$
sont plates, il découle alors du lemme \ref{ind-isog-coh-Q} que 
le morphisme canonique 
$\lambda ^{*}\smash{\widetilde{\D}} _{\PP ^\sharp} ^{(\bullet)} (T) 
\otimes _{\smash{\widetilde{\D}} _{\PP ^\sharp} ^{(\lambda (0))} (T)}\ker a ^{(0)} 
\to 
\ker a ^{(\bullet)}$
est une ind-isogénie
de $M  (\lambda ^{*}\smash{\widetilde{\D}} _{\PP ^\sharp} ^{(\bullet)} (T))$. 
D'où le résultat.
\end{proof}

\begin{rema}
Avec les notations du  lemme \ref{ind-isog-cohérence}, 
comme les extensions
$\smash{\widetilde{\D}} _{\PP ^\sharp} ^{(\lambda (m))} (T) 
\to 
\smash{\widetilde{\D}} _{\PP ^\sharp} ^{(\lambda (m +1))} (T)$
ne sont pas plates, 
il semble faux que 
le faisceau d'anneaux
$\lambda ^{*}\smash{\widetilde{\D}} _{\PP ^\sharp} ^{(\bullet)} (T)$
soit cohérent (comme objet de $M  (\lambda ^*\smash{\widetilde{\D}} _{\PP ^\sharp} ^{(\bullet)} (T))$).
\end{rema}

\begin{nota}
Notons
$\underrightarrow{M} _{\Q, \mathrm{coh}} (\lambda ^* \smash{\widetilde{\D}} _{\PP ^\sharp} ^{(\bullet)} (T))$
la sous-catégorie pleine de
$\underrightarrow{M} _{\Q} (\lambda ^*  \smash{\widetilde{\D}} _{\PP ^\sharp} ^{(\bullet)} (T))$
des 
$\smash{\widetilde{\D}} _{\PP ^\sharp} ^{(\bullet)} (T)$-modules cohérents à ind-isogénie près. 
De même, lorsque $\PP$ est affine, en reprenant tout ce qui précède dans ce contexte, 
on définit de manière identique 
la sous-catégorie pleine 
$\underrightarrow{M} _{\Q, \mathrm{coh}} (\lambda ^* \smash{\widetilde{D}} _{\PP ^{\sharp}} ^{(\bullet)} (T))$
de
$\underrightarrow{M} _{\Q} (\lambda ^*  \smash{\widetilde{D}} _{\PP ^{\sharp}} ^{(\bullet)} (T))$
des 
$\smash{\widetilde{D}} _{\PP ^{\sharp}} ^{(\bullet)} (T)$-modules cohérents à ind-isogénie près. 

\end{nota}

\begin{lemm}
\label{proj-local}
Si $\PP ^{(\bullet)}$ est un objet projectif de $M (\lambda ^* \smash{\widetilde{\D}} _{\PP ^\sharp} ^{(\bullet)} (T))$,
alors $\PP ^{(\bullet)}$ est un objet projectif de 
$\underrightarrow{M} _{\Q} (\lambda ^* \smash{\widetilde{\D}} _{\PP ^\sharp} ^{(\bullet)} (T))$.
\end{lemm}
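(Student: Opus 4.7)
La strat�gie est de v�rifier directement que le foncteur $\mathrm{Hom} _{\underrightarrow{M} _{\Q} (\lambda ^* \smash{\widetilde{\D}} _{\PP ^\sharp} ^{(\bullet)} (T))}(\PP ^{(\bullet)},-)$ envoie les �pimorphismes sur des surjections. Soient donc $\pi \colon \E ^{(\bullet)} \to \FF ^{(\bullet)}$ un �pimorphisme dans cette cat�gorie et $f \colon \PP ^{(\bullet)} \to \FF ^{(\bullet)}$ un morphisme arbitraire. En utilisant la version pour les modules de la formule \ref{4.2.2Beintropre}, on commence par fixer $\chi \in M$ et des repr�sentants simultan�s $\tilde f \colon \PP ^{(\bullet)} \to \chi ^* \FF ^{(\bullet)}$ et $\tilde \pi \colon \E ^{(\bullet)} \to \chi ^* \FF ^{(\bullet)}$ dans la cat�gorie $M (\lambda ^* \smash{\widetilde{\D}} _{\PP ^\sharp} ^{(\bullet)} (T))$ (quitte � agrandir $\chi$ et � composer avec les morphismes canoniques pour obtenir un but commun).

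L'�tape cl� est la traduction de l'hypoth�se d'�pimorphie. L'analogue de \ref{defi-M-L-Serre} pour $\underrightarrow{M} _{\Q}$ (o� seules les ind-isog�nies interviennent, sans le param�tre $\lambda \in L$) montre que cette cat�gorie est un quotient � la Serre de $M (\lambda ^* \smash{\widetilde{\D}} _{\PP ^\sharp} ^{(\bullet)} (T))$ par la sous-cat�gorie pleine $N$ des modules $\G ^{(\bullet)}$ pour lesquels il existe $\chi \in M$ tel que le morphisme canonique $\G ^{(\bullet)} \to \chi ^* \G ^{(\bullet)}$ soit nul. En posant $C := \mathrm{coker} (\tilde \pi)$ calcul� dans $M$, l'objet $C$ appartient donc � $N$ et il existe $\chi ' \in M$ tel que la fl�che canonique $C \to (\chi ') ^* C$ soit nulle. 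Par naturalit� de la transformation canonique $\mathrm{id} \to (\chi ') ^*$, la composition $\chi ^* \FF ^{(\bullet)} \to (\chi ') ^* \chi ^* \FF ^{(\bullet)} \twoheadrightarrow (\chi ') ^* C$ est alors nulle, et l'image du morphisme canonique $\chi ^* \FF ^{(\bullet)} \to (\chi ') ^* \chi ^* \FF ^{(\bullet)}$ est contenue dans $\mathrm{im} ((\chi ') ^* \tilde \pi)$ (gr�ce � l'exactitude du foncteur $(\chi ') ^*$).

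On invoque ensuite la projectivit� de $\PP ^{(\bullet)}$ dans $M$: la composition de $\tilde f$ avec la fl�che canonique $\chi ^* \FF ^{(\bullet)} \to (\chi ') ^* \chi ^* \FF ^{(\bullet)}$ produit un morphisme $\PP ^{(\bullet)} \to (\chi ') ^* \chi ^* \FF ^{(\bullet)}$ � valeurs dans $\mathrm{im} ((\chi ') ^* \tilde \pi)$, qui se rel�ve donc via l'�pimorphisme $(\chi ') ^* \E ^{(\bullet)} \twoheadrightarrow \mathrm{im} ((\chi ') ^* \tilde \pi)$ en un morphisme $h \colon \PP ^{(\bullet)} \to (\chi ') ^* \E ^{(\bullet)}$ de $M$. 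Ce $h$ repr�sente un morphisme $g \colon \PP ^{(\bullet)} \to \E ^{(\bullet)}$ dans $\underrightarrow{M} _{\Q}$, et l'�galit� $\pi \circ g = f$ dans cette cat�gorie se v�rifie par construction, car la compos�e $(\chi ') ^* \tilde \pi \circ h$ co�ncide par choix avec la compos�e de $\tilde f$ et de la fl�che canonique $\chi ^* \FF ^{(\bullet)} \to (\chi ') ^* \chi ^* \FF ^{(\bullet)}$.

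Le principal obstacle pr�vu r�side dans la gestion soigneuse des repr�sentants simultan�s et des morphismes canoniques, ainsi que dans la constatation cruciale que la projectivit� de $\PP ^{(\bullet)}$ au niveau de $M$ suffit pour relever via $\mathrm{im} ((\chi ') ^* \tilde \pi)$, alors m�me que $(\chi ') ^* \tilde \pi$ n'est pas n�cessairement un �pimorphisme dans $M$; c'est pr�cis�ment ce ph�nom�ne qui permet � la projectivit� de se transmettre de $M$ � la cat�gorie localis�e $\underrightarrow{M} _{\Q}$.
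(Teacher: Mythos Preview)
Your proof is correct and follows essentially the same route as the paper: choose simultaneous representatives in $M$, use that the epimorphism $\pi$ has ``negligible'' cokernel (you phrase this via the Serre subcategory $N$ and the vanishing of $C \to (\chi')^* C$, while the paper equivalently constructs an explicit section $\beta \colon \G^{(\bullet)} \to \chi_1^* \H^{(\bullet)}$ of the inclusion of the image), and then lift through the honest surjection in $M$ using projectivity of $\PP^{(\bullet)}$. The cokernel formulation you use and the image formulation in the paper are dual descriptions of the same phenomenon, and the final verification that $\pi \circ g = f$ is identical.
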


\begin{proof}
Soit $g ^{(\bullet)}\colon \E ^{(\bullet)} \to \FF ^{(\bullet)}$ un épimorphisme de 
$\underrightarrow{M} _{\Q} (\lambda ^* \smash{\widetilde{\D}} _{\PP ^\sharp} ^{(\bullet)} (T))$
et
$f ^{(\bullet)} \colon \PP ^{(\bullet)} \to \FF ^{(\bullet)}$ un morphisme de 
$\underrightarrow{M} _{\Q} (\lambda ^* \smash{\widetilde{\D}} _{\PP ^\sharp} ^{(\bullet)} (T))$.
Soient $\chi \in M$, 
$\phi ^{(\bullet)} \colon \PP ^{(\bullet)} \to \chi ^{*} \FF ^{(\bullet)}$ 
et
$\psi ^{(\bullet)}\colon \E ^{(\bullet)} \to \chi ^{*} \FF ^{(\bullet)}$  
des morphismes de $M (\lambda ^* \smash{\widetilde{\D}} _{\PP ^\sharp} ^{(\bullet)} (T))$
représentant
respectivement 
$f ^{(\bullet)}$ et $g ^{(\bullet)}$.
Notons $\G ^{(\bullet)}:=  \chi ^{*} \FF ^{(\bullet)}$,
$\H ^{(\bullet)}:= \mathrm{im} ~\psi ^{(\bullet)}$
et 
$\alpha ^{(\bullet)}\colon \H ^{(\bullet)} \to \G ^{(\bullet)}$ l'inclusion canonique.
Comme $g ^{(\bullet)}$ est un épimorphisme, il existe $\chi _{1} \in M$ et
un morphisme $\beta ^{(\bullet)} \colon \G ^{(\bullet)} \to \chi _{1} ^{*} \H ^{(\bullet)}$
tels que $\beta ^{(\bullet)} \circ \alpha ^{(\bullet)} $ et $\chi _{1} ^{*}  (\alpha ^{(\bullet)}  )\circ \beta ^{(\bullet)} $
sont les morphismes canoniques.
Comme $\PP ^{(\bullet)} $ est projectif, 
il existe un unique morphisme 
$\theta ^{(\bullet)} \colon \PP ^{(\bullet)} \to \chi _{1} ^{*} \E ^{(\bullet)} $ dont le composé avec 
la surjection $\chi _{1} ^{*} \E ^{(\bullet)} \to \chi _{1} ^{*} \H ^{(\bullet)}$ induite par $\chi _{1} ^{*} \psi ^{(\bullet)}$
donne $\beta ^{(\bullet)} \circ \phi ^{(\bullet)}$.
On remarque de plus que 
$\chi _{1} ^{*} (\psi ^{(\bullet)}) \circ \theta ^{(\bullet)} $
est égal au composé de $\phi ^{(\bullet)}$ avec le morphisme canonique
$\G ^{(\bullet)} \to \chi _{1} ^{*} \G ^{(\bullet)}$.
Si on note $h ^{(\bullet)}\colon \PP ^{(\bullet)} \to  \E ^{(\bullet)} $ le morphisme de
$\underrightarrow{M} _{\Q} (\lambda ^* \smash{\widetilde{\D}} _{\PP ^\sharp} ^{(\bullet)} (T))$
dont $\theta ^{(\bullet)}$ est un représentant, on a donc 
$g ^{(\bullet)} \circ h ^{(\bullet)} = f ^{(\bullet)}$. 
\end{proof}

\begin{rema}
Je ne sais pas s'il est vrai que si $\I ^{(\bullet)}$ est un objet injectif de 
$M (\lambda ^* \smash{\widetilde{\D}} _{\PP ^\sharp} ^{(\bullet)} (T))$,
alors $\I ^{(\bullet)}$ est un objet injectif de 
$\underrightarrow{M} _{\Q} (\lambda ^* \smash{\widetilde{\D}} _{\PP ^\sharp} ^{(\bullet)} (T))$.
\end{rema}

\begin{prop}
\label{Q-coh-stab}
La sous catégorie pleine 
$\underrightarrow{M} _{\Q, \mathrm{coh}} (\lambda ^* \smash{\widetilde{\D}} _{\PP ^\sharp} ^{(\bullet)} (T))$
de 
$\underrightarrow{M} _{\Q}  (\lambda ^* \smash{\widetilde{\D}} _{\PP ^\sharp} ^{(\bullet)} (T))$
est stable par isomorphisme,
noyau, conoyau, extension.
\end{prop}

\begin{proof}
La stabilité par isomorphisme est triviale.
Vérifions à présent la stabilité par noyau et conoyau. 
Soit $f ^{(\bullet)}\colon \E ^{(\bullet)} \to \FF ^{(\bullet)}$ un morphisme
de
$\underrightarrow{M} _{\Q, \mathrm{coh}} (\lambda ^* \smash{\widetilde{\D}} _{\PP ^\sharp} ^{(\bullet)} (T))$.
La cohérence à ind-isogénie étant une propriété locale et stable par isomorphisme de 
$\underrightarrow{M} _{\Q}  (\lambda ^* \smash{\widetilde{\D}} _{\PP ^\sharp} ^{(\bullet)} (T))$, 
grâce à \ref{caract-coh-isog}, on peut supposer que 
$\E ^{(\bullet)}$ et $\FF ^{(\bullet)}$ sont des 
$\lambda ^* \smash{\widetilde{\D}} _{\PP ^\sharp} ^{(\bullet)} (T)$-modules 
globalement de présentation finie.
Soient $\chi \in M$ et 
$\phi ^{(\bullet)}\colon
\E ^{(\bullet)} \to \chi ^{*}\FF ^{(\bullet)}$ un morphisme
de
$M (\lambda ^* \smash{\widetilde{\D}} _{\PP ^\sharp} ^{(\bullet)} (T))$
qui représente $f ^{(\bullet)}$.
Via le lemme \ref{ind-isog-coh-Q},
on vérifie alors que
les morphismes canoniques 
$\lambda ^{*}\smash{\widetilde{\D}} _{\PP ^\sharp} ^{(\bullet)} (T) 
\otimes _{\smash{\widetilde{\D}} _{\PP ^\sharp} ^{(\lambda (0))} (T)}\ker \phi ^{(0)} 
\to 
\ker \phi ^{(\bullet)}$
et 
$\lambda ^{*}\smash{\widetilde{\D}} _{\PP ^\sharp} ^{(\bullet)} (T) 
\otimes _{\smash{\widetilde{\D}} _{\PP ^\sharp} ^{(\lambda (0))} (T)}\mathrm{coker} \phi ^{(0)} 
\to 
\mathrm{coker} \phi ^{(\bullet)}$
sont des ind-isogénies
de 
$M  (\lambda ^* \smash{\widetilde{\D}} _{\PP ^\sharp} ^{(\bullet)} (T))$. 

Traitons maintenant la stabilité par extension.
Soit 
$0 \to \E ^{(\bullet)} \underset{f ^{(\bullet)}}{\longrightarrow}
\FF ^{(\bullet)} \underset{g ^{(\bullet)}}{\longrightarrow}
\G ^{(\bullet)} \to 0$
une suite exacte dans la catégorie 
$\underrightarrow{M} _{\Q} (\lambda ^* \smash{\widetilde{\D}} _{\PP ^\sharp} ^{(\bullet)} (T))$
avec 
$\E ^{(\bullet)},~ \G ^{(\bullet)} \in 
\underrightarrow{M} _{\Q, \mathrm{coh}} (\lambda ^* \smash{\widetilde{\D}} _{\PP ^\sharp} ^{(\bullet)} (T))$.
Comme la cohérence de 
$\FF ^{(\bullet)}$ est locale, on peut supposer qu'il existe 
des épimorphismes de la forme
$a ^{(\bullet)}\colon 
\left ( \lambda ^* \smash{\widetilde{\D}} _{\PP ^\sharp} ^{(\bullet)} (T) \right ) ^{r}
\to 
 \E ^{(\bullet)}$
 et
 $b ^{(\bullet)}\colon  \left ( \lambda ^* \smash{\widetilde{\D}} _{\PP ^\sharp} ^{(\bullet)} (T) \right ) ^{s}
\to 
 \G ^{(\bullet)}$.
 D'après \ref{proj-local}, il existe $h ^{(\bullet)}\colon  \left ( \lambda ^* \smash{\widetilde{\D}} _{\PP ^\sharp} ^{(\bullet)} (T) \right ) ^{s}
\to 
 \FF ^{(\bullet)}$ tel que 
 $g ^{(\bullet)} \circ h ^{(\bullet)} = b ^{(\bullet)}$.
On en déduit que 
$\left ( \lambda ^* \smash{\widetilde{\D}} _{\PP ^\sharp} ^{(\bullet)} (T) \right ) ^{r} \oplus \left ( \lambda ^* \smash{\widetilde{\D}} _{\PP ^\sharp} ^{(\bullet)} (T) \right ) ^{s}
\to 
 \FF ^{(\bullet)}$
 défini par 
$ f ^{(\bullet)} \circ a ^{(\bullet)} + ~h ^{(\bullet)} $
est un épimorphisme.
On a donc validé que 
$\FF ^{(\bullet)}$ est de type fini à ind-isogénie près. 
Soit 
$\alpha ^{(\bullet)}\colon 
\left ( \lambda ^* \smash{\widetilde{\D}} _{\PP ^\sharp} ^{(\bullet)} (T) \right ) ^{t}
\to 
 \FF ^{(\bullet)}$ un morphisme de 
 $\underrightarrow{M} _{\Q} (\lambda ^* \smash{\widetilde{\D}} _{\PP ^\sharp} ^{(\bullet)} (T))$.
 Il reste à vérifier que $\ker \alpha ^{(\bullet)}$ est de type fini à ind-isogénie près.
 Comme $\G ^{(\bullet)}$ est cohérent à ind-isogénie près, 
 $\ker (g ^{(\bullet)} \circ \alpha ^{(\bullet)}) $ est de type fini à ind-isogénie près. Comme ce que l'on doit prouver est local, 
 on peut supposer qu'il existe 
 un épimorphisme de 
  $\underrightarrow{M} _{\Q} (\lambda ^* \smash{\widetilde{\D}} _{\PP ^\sharp} ^{(\bullet)} (T))$
  de la forme
$\beta ^{(\bullet)}\colon 
\left ( \lambda ^* \smash{\widetilde{\D}} _{\PP ^\sharp} ^{(\bullet)} (T) \right ) ^{u}
\to  
  \ker (g ^{(\bullet)} \circ \alpha ^{(\bullet)})$.
Comme   $\underrightarrow{M} _{\Q} (\lambda ^* \smash{\widetilde{\D}} _{\PP ^\sharp} ^{(\bullet)} (T))$ est une catégorie abélienne,
on dispose de l'isomorphisme canonique 
$ \alpha ^{(\bullet)} (   \ker (g ^{(\bullet)} \circ \alpha ^{(\bullet)})) 
\riso 
\mathrm{im} (\alpha ^{(\bullet)}) \cap   \ker g ^{(\bullet)}$.
En particulier, si on note 
$\delta ^{(\bullet)} \colon 
   \ker (g ^{(\bullet)} \circ \alpha ^{(\bullet)}) \to \FF ^{(\bullet)}$ le composé de 
   $\alpha ^{(\bullet)}$ avec le monomorphisme  $   \ker (g ^{(\bullet)} \circ \alpha ^{(\bullet)})  \subset 
   \left ( \lambda ^* \smash{\widetilde{\D}} _{\PP ^\sharp} ^{(\bullet)} (T) \right ) ^{t}$, on obtient la factorisation 
   canonique $\epsilon ^{(\bullet)} \colon 
   \ker (g ^{(\bullet)} \circ \alpha ^{(\bullet)}) \to \E ^{(\bullet)}$ de $\delta ^{(\bullet)}$ par $f ^{(\bullet)}$.
Comme $\E ^{(\bullet)}$ est cohérent à ind-isogénie près,
$\ker ( \epsilon ^{(\bullet)} \circ \beta  ^{(\bullet)})$ est de type fini à ind-isogénie près. 
Comme $f ^{(\bullet)}$ est un monomorphisme, il en résulte que 
$\ker (f ^{(\bullet)} \circ \epsilon ^{(\bullet)} \circ \beta  ^{(\bullet)})$ est de type fini à ind-isogénie près. 
Comme $f ^{(\bullet)} \circ \epsilon ^{(\bullet)} \circ \beta  ^{(\bullet)}=\delta ^{(\bullet)} \circ \beta ^{(\bullet)}$, 
alors $\ker (\delta ^{(\bullet)} \circ \beta ^{(\bullet)} )$ est de type fini à ind-isogénie près.
Or, on dispose des isomorphismes 
$\beta ^{(\bullet)} (\ker (\delta ^{(\bullet)} \circ \beta ^{(\bullet)} ))
\riso 
\mathrm{im} (\beta ^{(\bullet)}) \cap \ker (\delta ^{(\bullet)} )
\riso 
\ker (\alpha ^{(\bullet)})$, qui est donc de type fini à ind-isogénie près.

\end{proof}

\begin{lemm}
\label{lemm-ind-iso-loc-pf}
Soit 
$\E ^{(\bullet)} $ un $\lambda ^* \smash{\widetilde{\D}} _{\PP ^\sharp} ^{(\bullet)} (T)$-module.
Le module $\E ^{(\bullet)}$ est 
isomorphe dans 
$\underrightarrow{M} _{\Q} (\lambda ^* \smash{\widetilde{\D}} _{\PP ^\sharp} ^{(\bullet)} (T))$
à un 
$\lambda ^* \smash{\widetilde{\D}} _{\PP ^\sharp} ^{(\bullet)} (T)$-module localement de présentation finie
si et seulement si $\E ^{(0)} $ est isogène à un $\smash{\widetilde{\D}} _{\PP ^\sharp} ^{(\lambda (0))} (T)$-module cohérent 
et le morphisme canonique
$\lambda ^{*}\smash{\widetilde{\D}} _{\PP ^\sharp} ^{(\bullet)} (T) 
\otimes _{\smash{\widetilde{\D}} _{\PP ^\sharp} ^{(\lambda (0))} (T)} \E ^{(0)} 
\to  \E ^{(\bullet)}$
est une ind-isogénie de 
$M (\lambda ^* \smash{\widetilde{\D}} _{\PP ^\sharp} ^{(\bullet)} (T))$. 

De plus, deux $\lambda ^* \smash{\widetilde{\D}} _{\PP ^\sharp} ^{(\bullet)} (T)$-modules localement de présentation finie
sont isomorphes dans
$\underrightarrow{M} _{\Q} (\lambda ^* \smash{\widetilde{\D}} _{\PP ^\sharp} ^{(\bullet)} (T))$
si et seulement s'ils sont ind-isogènes dans $M (\lambda ^* \smash{\widetilde{\D}} _{\PP ^\sharp} ^{(\bullet)} (T))$. 
\end{lemm}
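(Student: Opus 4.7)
Le plan est de traiter s\'epar\'ement les deux implications de la premi\`ere \'equivalence, puis d'en d\'eduire la seconde assertion. Pour la suffisance de la premi\`ere \'equivalence, on suppose l'existence d'une isog\'enie $\phi ^{(0)} \colon \E ^{(0)} \to \FF ^{(0)}$ vers un module coh\'erent, i.e. d'un morphisme $\psi ^{(0)} \colon \FF ^{(0)} \to \E ^{(0)}$ v\'erifiant $\phi ^{(0)} \psi ^{(0)} = p ^{N} \mathrm{id}$ et $\psi ^{(0)} \phi ^{(0)} = p ^{N} \mathrm{id}$. En appliquant le foncteur $\lambda ^{*}\smash{\widetilde{\D}} _{\PP ^\sharp} ^{(\bullet)} (T) \otimes _{\smash{\widetilde{\D}} _{\PP ^\sharp} ^{(\lambda (0))} (T)} -$, ces relations sont pr\'eserv\'ees niveau par niveau, ce qui fournit une ind-isog\'enie (pour $\chi$ constante de valeur $N$) entre $\lambda ^{*}\smash{\widetilde{\D}} ^{(\bullet)} (T) \otimes \E ^{(0)}$ et $\lambda ^{*}\smash{\widetilde{\D}} ^{(\bullet)} (T) \otimes \FF ^{(0)}$. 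Comme ce dernier est localement de pr\'esentation finie d'apr\`es \ref{eqcat-locpfotimes}, et comme l'hypoth\`ese fournit une ind-isog\'enie de sa source vers $\E ^{(\bullet)}$, on obtient que $\E ^{(\bullet)}$ est isomorphe dans $\underrightarrow{M} _{\Q}$ \`a un module localement de pr\'esentation finie.

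Pour la n\'ecessit\'e, supposons donn\'e un isomorphisme $\E ^{(\bullet)} \riso \G ^{(\bullet)}$ dans $\underrightarrow{M} _{\Q}$ avec $\G ^{(\bullet)} = \lambda ^{*}\smash{\widetilde{\D}} ^{(\bullet)} (T) \otimes _{\smash{\widetilde{\D}} ^{(\lambda (0))} (T)} \G ^{(0)}$ localement de pr\'esentation finie, $\G ^{(0)}$ coh\'erent, via \ref{eqcat-locpfotimes}. Des repr\'esentants $\phi \colon \E ^{(\bullet)} \to \chi ^{*} \G ^{(\bullet)}$ et $\psi \colon \G ^{(\bullet)} \to \chi ^{\prime *} \E ^{(\bullet)}$ dont les compositions sont les morphismes canoniques fournissent au niveau $0$ les relations $\psi ^{(0)} \phi ^{(0)} = p ^{(\chi + \chi ^{\prime})(0)} \mathrm{id}$ (et l'analogue \`a droite); $\phi ^{(0)}$ est donc une isog\'enie, ce qui \'etablit la premi\`ere condition. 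Pour voir que le morphisme canonique $c \colon \lambda ^{*}\smash{\widetilde{\D}} ^{(\bullet)} (T) \otimes \E ^{(0)} \to \E ^{(\bullet)}$ est une ind-isog\'enie, on compare, \`a chaque niveau $m$, le morphisme $c ^{(m)}$ au morphisme canonique analogue pour $\G$ via $\mathrm{id} \otimes \phi ^{(0)}$ et $\phi ^{(m)}$: le diagramme obtenu commute \`a la puissance $p ^{\chi (m) - \chi (0)}$ pr\`es (provenant de $\chi ^{*}$), donc $c _{\Q}$ est un isomorphisme, et on applique alors \ref{ind-isog-coh-Q}.

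La seconde assertion est imm\'ediate dans un sens. R\'eciproquement, deux modules $\E ^{(\bullet)}$ et $\FF ^{(\bullet)}$ localement de pr\'esentation finie isomorphes dans $\underrightarrow{M} _{\Q}$ ont leurs niveaux $0$ isog\`enes entre modules coh\'erents d'apr\`es la premi\`ere assertion; l'extension des scalaires \`a $\lambda ^{*}\smash{\widetilde{\D}} ^{(\bullet)} (T)$ de cette isog\'enie produit, via \ref{ind-isog-coh-Q}, l'ind-isog\'enie $\E ^{(\bullet)} \to \FF ^{(\bullet)}$ souhait\'ee. Le principal obstacle sera la v\'erification des hypoth\`eses de coh\'erence requises par \ref{ind-isog-coh-Q} dans la preuve de la n\'ecessit\'e: $\E ^{(0)}$ n'\'etant qu'isog\`ene \`a un module coh\'erent, il conviendra de se ramener au cas coh\'erent, par exemple en rempla\c{c}ant $\E ^{(0)}$ par l'image d'un inverse $\psi ^{(0)} \colon \G ^{(0)} \to \E ^{(0)}$ (sous-module coh\'erent contenant $p ^{N} \E ^{(0)}$), ou en quotientant par la $p$-torsion comme dans l'\'etape 1 de la preuve de \ref{ind-isog-coh-Q}.
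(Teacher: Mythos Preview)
La suffisance de la premi\`ere assertion et la seconde assertion sont trait\'ees correctement, et de fa\c{c}on tr\`es proche de l'article.

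Pour la n\'ecessit\'e de la premi\`ere assertion, il y a un r\'eel probl\`eme, que vous identifiez en partie. Votre strat\'egie consiste \`a montrer que $c ^{(m)} _\Q$ est un isomorphisme pour tout $m$, puis \`a invoquer \ref{ind-isog-coh-Q}. Mais ce lemme requiert la coh\'erence, \`a chaque niveau $m$, \emph{de la source et de la cible}. Or $\E ^{(\bullet)}$ est un $\lambda ^* \smash{\widetilde{\D}} ^{(\bullet)} (T)$-module quelconque, simplement isomorphe dans $\underrightarrow{M} _{\Q}$ \`a un module localement de pr\'esentation finie: rien n'assure que $\E ^{(m)}$ soit coh\'erent pour $m \geq 1$. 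Les correctifs que vous proposez (remplacer $\E ^{(0)}$ par un sous-module coh\'erent, quotienter par la $p$-torsion) ne portent que sur le niveau~$0$ et ne r\`eglent donc pas l'obstruction c\^ot\'e $\E ^{(m)}$.

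L'article contourne ce probl\`eme sans jamais avoir besoin de la coh\'erence de $\E ^{(m)}$. Avec vos notations, soit $\phi \colon \E ^{(\bullet)} \to \chi ^{*} \G ^{(\bullet)}$ une ind-isog\'enie repr\'esentant l'isomorphisme donn\'e (ce qui existe via l'analogue de la remarque \ref{rema-iso-0} et du lemme \ref{lemm-locSQ} pour les modules). L'extension de l'isog\'enie $\phi ^{(0)}$ fournit une ind-isog\'enie $\lambda ^{*} \smash{\widetilde{\D}} ^{(\bullet)} (T) \otimes \E ^{(0)} \to \G ^{(\bullet)}$ (comme dans votre preuve de la suffisance). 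Par adjonction, le compos\'e de cette fl\`eche avec la fl\`eche canonique $\G ^{(\bullet)} \to \chi ^{*} \G ^{(\bullet)}$ co\"{\i}ncide avec $\phi \circ c$. Ainsi $\phi \circ c$ est une ind-isog\'enie; comme $\phi$ en est une, $c$ est un isomorphisme dans $\underrightarrow{M} _{\Q}$, donc une ind-isog\'enie de $M$. C'est un argument {\og deux sur trois\fg} qui remplace avantageusement le recours \`a \ref{ind-isog-coh-Q}.
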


\begin{proof}
Supposons qu'il existe $\FF ^{(0)}$ 
un $\smash{\widetilde{\D}} _{\PP ^\sharp} ^{(\lambda (0))} (T)$-module cohérent, 
$f ^{(0)}\colon \E ^{(0)} \to \FF ^{(0)}$ et 
$g ^{(0)}\colon \FF ^{(0)} \to \E ^{(0)}$
deux morphismes $\smash{\widetilde{\D}} _{\PP ^\sharp} ^{(\lambda (0))} (T)$-linéaires tels que
$f ^{(0)} \circ g ^{(0)}$ et $g ^{(0)} \circ f ^{(0)}$ sont les multiplications par $p ^{n}$.
On obtient par extension
les morphismes 
$f ^{(\bullet)} \colon 
\lambda ^{*}\smash{\widetilde{\D}} _{\PP ^\sharp} ^{(\bullet)} (T) 
\otimes _{\smash{\widetilde{\D}} _{\PP ^\sharp} ^{(\lambda (0))} (T)} \E ^{(0)} 
\to 
\lambda ^{*}\smash{\widetilde{\D}} _{\PP ^\sharp} ^{(\bullet)} (T) 
\otimes _{\smash{\widetilde{\D}} _{\PP ^\sharp} ^{(\lambda (0))} (T)} \FF ^{(0)} $
et $g ^{(\bullet)} \colon 
\lambda ^{*}\smash{\widetilde{\D}} _{\PP ^\sharp} ^{(\bullet)} (T) 
\otimes _{\smash{\widetilde{\D}} _{\PP ^\sharp} ^{(\lambda (0))} (T)} \FF ^{(0)} 
\to 
\lambda ^{*}\smash{\widetilde{\D}} _{\PP ^\sharp} ^{(\bullet)} (T) 
\otimes _{\smash{\widetilde{\D}} _{\PP ^\sharp} ^{(\lambda (0))} (T)} \E ^{(0)} $.
En prenant $\chi \in M$ la fonction constante égale à $n$, en composant 
$g ^{(\bullet)}$ avec le morphisme canonique $id \to \chi ^*$, on obtient la flèche 
$h ^{(\bullet)} \colon 
\lambda ^{*}\smash{\widetilde{\D}} _{\PP ^\sharp} ^{(\bullet)} (T) 
\otimes _{\smash{\widetilde{\D}} _{\PP ^\sharp} ^{(\lambda (0))} (T)} \FF ^{(0)} 
\to 
\chi ^* (\lambda ^{*}\smash{\widetilde{\D}} _{\PP ^\sharp} ^{(\bullet)} (T) 
\otimes _{\smash{\widetilde{\D}} _{\PP ^\sharp} ^{(\lambda (0))} (T)} \E ^{(0)} )$.
Alors $\chi ^* (f ^{(\bullet)}) \circ h ^{(\bullet)}$ et $h ^{(\bullet)} \circ f ^{(\bullet)}$
sont les morphismes canoniques (i.e. les multiplications par $p ^{n}$ à chaque niveau).
D'où la suffisance de la première assertion à valider.
Réciproquement, supposons que 
$\E ^{(\bullet)}$ est 
isomorphe dans 
$\underrightarrow{M} _{\Q} (\lambda ^* \smash{\widetilde{\D}} _{\PP ^\sharp} ^{(\bullet)} (T))$
à un 
$\lambda ^* \smash{\widetilde{\D}} _{\PP ^\sharp} ^{(\bullet)} (T)$-module $\FF ^{(\bullet)}$ localement de présentation finie.
Dans ce cas, il existe $\chi \in M$ et une ind-isogénie de 
$M (\lambda ^* \smash{\widetilde{\D}} _{\PP ^\sharp} ^{(\bullet)} (T))$
de la forme
$\E ^{(\bullet)} \to \chi ^{*} \FF ^{(\bullet)}$.
On en déduit que $\E ^{(0)} $ et $\FF ^{(0)}$ sont isogènes.
Avec \ref{eqcat-locpfotimes}, il en résulte la première 
 ind-isogénie de $M (\lambda ^* \smash{\widetilde{\D}} _{\PP ^\sharp} ^{(\bullet)} (T))$ de la forme
$\lambda ^{*}\smash{\widetilde{\D}} _{\PP ^\sharp} ^{(\bullet)} (T) 
\otimes _{\smash{\widetilde{\D}} _{\PP ^\sharp} ^{(\lambda (0))} (T)} \E ^{(0)} 
\to  \FF ^{(\bullet)}
\to \chi ^{*} \FF ^{(\bullet)}$.
Comme ce composé est égal au composé
$\lambda ^{*}\smash{\widetilde{\D}} _{\PP ^\sharp} ^{(\bullet)} (T) 
\otimes _{\smash{\widetilde{\D}} _{\PP ^\sharp} ^{(\lambda (0))} (T)} \E ^{(0)} 
\to  \E ^{(\bullet)} \to \chi ^{*} \FF ^{(\bullet)}$, on en déduit la première assertion.
La seconde assertion se traite de manière analogue.

\end{proof}

\subsection{Cohérence à lim-ind-isogénie près}

\begin{defi}
\label{coh-loc-pres-fini-lim-ind-iso}
Soit $\E ^{(\bullet)}$ un 
$\smash{\widetilde{\D}} _{\PP ^\sharp} ^{(\bullet)} (T)$-module.
Le module 
$\E ^{(\bullet)}$
est un 
$\smash{\widetilde{\D}} _{\PP ^\sharp} ^{(\bullet)} (T)$-module 
de type fini à lim-ind-isogénie près 
s'il existe
un recouvrement ouvert 
$(\PP _i ) _{i\in I}$ de $\PP$ tel que, pour tout $i \in I$, on dispose  
d'une suite exacte dans 
$\underrightarrow{LM} _{\Q}  (\smash{\widetilde{\D}} _{\PP ^\sharp} ^{(\bullet)} (T))$ de la forme: 
$\left ( \smash{\widetilde{\D}} _{\PP ^{\sharp} _i} ^{(\bullet)} (T  \cap P _i) \right) ^{r _i}
\to 
\E ^{(\bullet)} | \PP _i
\to 0$,
où $r _i\in \N$.
De même, on définie la notion de 
$\smash{\widetilde{\D}} _{\PP ^\sharp} ^{(\bullet)} (T)$-module 
localement de présentation finie à lim-ind-isogénie près (resp. 
globalement de présentation finie à lim-ind-isogénie près, resp. cohérent à lim-ind-isogénie près). 
\end{defi}

Le lemme qui suit sera amélioré via \ref{strict-m0} (mais ce dernier utilise \ref{caract-coh-lim-isog} qui est une première étape). 
\begin{lemm}
\label{caract-coh-lim-isog}
Soit $\E ^{(\bullet)}$ un 
$\smash{\widetilde{\D}} _{\PP ^\sharp} ^{(\bullet)} (T)$-module.
Le $\smash{\widetilde{\D}} _{\PP ^\sharp} ^{(\bullet)} (T)$-module 
$\E ^{(\bullet)}$ est globalement de présentation finie à lim-ind-isogénie près si et seulement s'il existe $\lambda \in L$ et un 
$\lambda ^{*}\smash{\widetilde{\D}} _{\PP ^\sharp} ^{(\bullet)} (T)$-module $\FF ^{(\bullet)}$ globalement de présentation finie
tel que 
$\E ^{(\bullet)}$ et $\FF ^{(\bullet)}$ soit isomorphe dans 
$\underrightarrow{LM} _{\Q}  (\smash{\widetilde{\D}} _{\PP ^\sharp} ^{(\bullet)} (T))$.
\end{lemm}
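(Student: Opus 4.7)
Pour la condition suffisante, partons d'une pr�sentation exacte dans la cat�gorie $M (\lambda ^{*} \smash{\widetilde{\D}} _{\PP ^\sharp} ^{(\bullet)} (T))$ de la forme
$( \lambda ^{*} \smash{\widetilde{\D}} _{\PP ^\sharp} ^{(\bullet)} (T)) ^{r}
\to ( \lambda ^{*} \smash{\widetilde{\D}} _{\PP ^\sharp} ^{(\bullet)} (T) ) ^{s}
\to \FF ^{(\bullet)} \to 0$.
Le foncteur oubli $\mathrm{oub} _{\lambda}$ (voir \ref{facto-oub-otimes}) est exact et, par l'�quivalence de cat�gories de \ref{lem-MQlambda2LMQ}, il induit un isomorphisme canonique $\mathrm{oub} _{\lambda} (\lambda ^{*} \smash{\widetilde{\D}} _{\PP ^\sharp} ^{(\bullet)} (T)) \riso \smash{\widetilde{\D}} _{\PP ^\sharp} ^{(\bullet)} (T)$ dans $\underrightarrow{LM} _{\Q}  ( \smash{\widetilde{\D}} _{\PP ^\sharp} ^{(\bullet)} (T))$. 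Combin� � l'exactitude du foncteur de localisation (issue de \ref{defi-M-L-Serre}), ceci fournit aussit�t une pr�sentation de $\E ^{(\bullet)} \cong \mathrm{oub} _{\lambda} \FF ^{(\bullet)}$ de la forme voulue.

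R�ciproquement, partons d'une pr�sentation
$(\smash{\widetilde{\D}} _{\PP ^\sharp} ^{(\bullet)} (T)) ^{r}
\overset{f}{\to}
(\smash{\widetilde{\D}} _{\PP ^\sharp} ^{(\bullet)} (T)) ^{s}
\to \E ^{(\bullet)} \to 0$
dans $\underrightarrow{LM} _{\Q}  ( \smash{\widetilde{\D}} _{\PP ^\sharp} ^{(\bullet)} (T))$. L'id�e est, pour un $\lambda \in L$ bien choisi, de transporter cette suite via l'�quivalence de \ref{lem-MQlambda2LMQ} en une suite exacte
$(\lambda ^{*}\smash{\widetilde{\D}} _{\PP ^\sharp} ^{(\bullet)} (T)) ^{r}
\overset{\tilde f}{\to}
(\lambda ^{*}\smash{\widetilde{\D}} _{\PP ^\sharp} ^{(\bullet)} (T)) ^{s}
\to \tilde\E ^{(\bullet)} \to 0$
de $\underrightarrow{LM} _{\Q}  (\lambda ^{*} \smash{\widetilde{\D}} _{\PP ^\sharp} ^{(\bullet)} (T))$, avec $\mathrm{oub} _{\lambda} \tilde\E ^{(\bullet)} \cong \E ^{(\bullet)}$, puis de repr�senter $\tilde f$ par un honn�te morphisme de $M (\lambda ^{*} \smash{\widetilde{\D}} _{\PP ^\sharp} ^{(\bullet)} (T))$ afin d'en prendre le conoyau comme candidat pour $\FF ^{(\bullet)}$.

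Concr�tement, par l'analogue de la formule \ref{4.2.2BeintroLMQ} pour les modules sur $\lambda ^{*} \smash{\widetilde{\D}} _{\PP ^\sharp} ^{(\bullet)} (T)$, il existe $\mu \in L$, $\chi \in M$ et un morphisme $g \colon (\lambda ^{*}\smash{\widetilde{\D}} _{\PP ^\sharp} ^{(\bullet)} (T)) ^{r} \to \mu ^{*}\chi ^{*} (\lambda ^{*}\smash{\widetilde{\D}} _{\PP ^\sharp} ^{(\bullet)} (T)) ^{s}$ de $M (\lambda ^{*}\smash{\widetilde{\D}} _{\PP ^\sharp} ^{(\bullet)} (T))$ repr�sentant $\tilde f$. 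En posant $\lambda ' := \lambda \circ \mu \in L$, on a $\mu ^{*} (\lambda ^{*}\smash{\widetilde{\D}} _{\PP ^\sharp} ^{(\bullet)} (T)) = \lambda '^{*} \smash{\widetilde{\D}} _{\PP ^\sharp} ^{(\bullet)} (T)$, de sorte qu'il ne reste qu'� neutraliser le facteur $\chi ^{*}$: c'est l� la principale difficult� technique, puisque $\chi ^{*} (\lambda '^{*} \smash{\widetilde{\D}} _{\PP ^\sharp} ^{(\bullet)} (T))$ n'est pas un syst�me inductif d'anneaux (voir \ref{rema-prolonlambda}). Pour la surmonter, je propose d'exploiter le fait que le morphisme canonique $\lambda '^{*} \smash{\widetilde{\D}} _{\PP ^\sharp} ^{(\bullet)} (T) \to \chi ^{*} (\lambda '^{*} \smash{\widetilde{\D}} _{\PP ^\sharp} ^{(\bullet)} (T))$ est une ind-isog�nie, ce qui permet --- quitte � agrandir encore $\lambda '$ --- de remplacer $g$ par un morphisme $\lambda '^{*} \smash{\widetilde{\D}} _{\PP ^\sharp} ^{(\bullet)} (T)$-lin�aire $g ' \colon (\lambda '^{*}\smash{\widetilde{\D}} _{\PP ^\sharp} ^{(\bullet)} (T)) ^{r} \to (\lambda '^{*}\smash{\widetilde{\D}} _{\PP ^\sharp} ^{(\bullet)} (T)) ^{s}$ repr�sentant la m�me classe dans $\underrightarrow{LM} _{\Q}  (\lambda '^{*} \smash{\widetilde{\D}} _{\PP ^\sharp} ^{(\bullet)} (T))$. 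Le conoyau $\FF ^{(\bullet)}$ de $g '$ dans $M (\lambda '^{*}\smash{\widetilde{\D}} _{\PP ^\sharp} ^{(\bullet)} (T))$ sera alors le $\lambda '^{*} \smash{\widetilde{\D}} _{\PP ^\sharp} ^{(\bullet)} (T)$-module globalement de pr�sentation finie cherch�, l'isomorphisme $\E ^{(\bullet)} \cong \mathrm{oub} _{\lambda '} \FF ^{(\bullet)}$ dans $\underrightarrow{LM} _{\Q}  ( \smash{\widetilde{\D}} _{\PP ^\sharp} ^{(\bullet)} (T))$ d�coulant de l'unicit� du conoyau dans la cat�gorie ab�lienne $\underrightarrow{LM} _{\Q}  ( \smash{\widetilde{\D}} _{\PP ^\sharp} ^{(\bullet)} (T))$ (cf.~\ref{defi-M-L-Serre}).
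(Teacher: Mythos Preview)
Your overall strategy for the non-trivial direction is close to the paper's, but you make it harder than necessary, and your proposed workaround for the $\chi^*$ factor is not quite right. The operations $\lambda^*$ (shifting levels) and $\chi^*$ (twisting transition maps by powers of $p$) are independent; enlarging $\lambda'$ does not absorb $\chi^*$, and there is in general no honest morphism $g' \colon (\lambda'^{*}\smash{\widetilde{\D}} _{\PP ^\sharp} ^{(\bullet)} (T))^r \to (\lambda'^{*}\smash{\widetilde{\D}} _{\PP ^\sharp} ^{(\bullet)} (T))^s$ in $M(\lambda'^{*}\smash{\widetilde{\D}} _{\PP ^\sharp} ^{(\bullet)} (T))$ representing the same class as $g$ without reintroducing a $\chi$-twist in the target.

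The paper bypasses this entirely. It takes a representative
$\phi^{(\bullet)} \colon (\smash{\widetilde{\D}} _{\PP ^\sharp} ^{(\bullet)} (T))^r \to \chi^*(\lambda^*\smash{\widetilde{\D}} _{\PP ^\sharp} ^{(\bullet)} (T))^s$
of $f$ directly in $M(\smash{\widetilde{\D}} _{\PP ^\sharp} ^{(\bullet)} (T))$ (no preliminary transport via \ref{lem-MQlambda2LMQ}), and then uses two facts you overlooked. First, by the second point of \ref{rema-prolonlambda}, the target $\chi^*(\lambda^*\smash{\widetilde{\D}} _{\PP ^\sharp} ^{(\bullet)} (T))^s$ retains its $\lambda^*\smash{\widetilde{\D}} _{\PP ^\sharp} ^{(\bullet)} (T)$-module structure, so $\chi^*$ is not the obstruction you take it to be. Second, by the adjunction of \ref{adjonction-oubotimes}, $\phi^{(\bullet)}$ then factors uniquely through a morphism
$\psi^{(\bullet)} \colon (\lambda^*\smash{\widetilde{\D}} _{\PP ^\sharp} ^{(\bullet)} (T))^r \to \chi^*(\lambda^*\smash{\widetilde{\D}} _{\PP ^\sharp} ^{(\bullet)} (T))^s$
in $M(\lambda^*\smash{\widetilde{\D}} _{\PP ^\sharp} ^{(\bullet)} (T))$. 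One does \emph{not} try to remove $\chi^*$: the cokernel of $\psi^{(\bullet)}$, computed in $\underrightarrow{M}_{\Q}(\lambda^*\smash{\widetilde{\D}} _{\PP ^\sharp} ^{(\bullet)} (T))$ where $\chi^*$ becomes invertible, is globally of finite presentation \emph{up to ind-isogeny}, and the already-proved Lemma~\ref{caract-coh-isog} then yields a genuinely globally-of-finite-presentation $\lambda^*\smash{\widetilde{\D}} _{\PP ^\sharp} ^{(\bullet)} (T)$-module $\FF^{(\bullet)}$ isomorphic to it. Your detour through \ref{lem-MQlambda2LMQ} and the attempt to produce $g'$ in $M$ are thus both unnecessary: the reduction to \ref{caract-coh-isog} is the clean way to handle the $\chi^*$.
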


\begin{proof}
La suffisance est triviale. 
Réciproquement, supposons 
que 
$\E ^{(\bullet)}$
soit le conoyau dans 
$\underrightarrow{LM} _{\Q}  (\smash{\widetilde{\D}} _{\PP ^\sharp} ^{(\bullet)} (T))$
d'une flèche de la forme
$\left (\smash{\widetilde{\D}} _{\PP ^\sharp} ^{(\bullet)} (T ) \right) ^{r }
\to 
\left (\smash{\widetilde{\D}} _{\PP ^\sharp} ^{(\bullet)} (T ) \right) ^{s}$.
Soient $\chi _0\in M$, $\lambda\in L$ et 
$\phi ^{(\bullet)} \colon  \left (\smash{\widetilde{\D}} _{\PP ^\sharp} ^{(\bullet)} (T ) \right) ^{r }
\to
 \lambda ^{*} \chi  _0 ^* \left (\smash{\widetilde{\D}} _{\PP ^\sharp} ^{(\bullet)} (T ) \right) ^{s} 
=
\chi  ^* \left ( \lambda ^{*}\smash{\widetilde{\D}} _{\PP ^\sharp} ^{(\bullet)} (T ) \right) ^{s}$
un représentant de ce morphisme avec $\chi := \chi _0 \circ \lambda\in M$ (l'égalité provient
de $ \lambda ^{*} \chi  _0 ^* = \chi  ^* \lambda ^{*} $).
Comme $\chi ^* \left ( \lambda ^{*}\smash{\widetilde{\D}} _{\PP ^\sharp} ^{(\bullet)} (T ) \right) ^{s}$
est muni d'une structure canonique 
de $ \lambda ^{*}\smash{\widetilde{\D}} _{\PP ^\sharp} ^{(\bullet)} (T ) $-module qui prolonge celle de
$\smash{\widetilde{\D}} _{\PP ^\sharp} ^{(\bullet)} (T ) $-module, on en déduit par adjonction (voir \ref{adjonction-oubotimes})
que le morphisme 
$\phi ^{(\bullet)}$ se factorise de manière unique
en un morphisme de 
$M  (\lambda ^{*} \smash{\widetilde{\D}} _{\PP ^\sharp} ^{(\bullet)} (T))$
de la forme
$\psi ^{(\bullet)} \colon  \left (\lambda ^{*} \smash{\widetilde{\D}} _{\PP ^\sharp} ^{(\bullet)} (T ) \right) ^{r }
\to 
\chi ^* \left ( \lambda ^{*}\smash{\widetilde{\D}} _{\PP ^\sharp} ^{(\bullet)} (T ) \right) ^{s}$.
Comme 
$\mathrm{coker} ~\psi ^{(\bullet)}$ (calculé dans 
$\underrightarrow{M} _{\Q}  (\lambda ^{*}\smash{\widetilde{\D}} _{\PP ^\sharp} ^{(\bullet)} (T))$)
est un $\lambda ^{*}\smash{\widetilde{\D}} _{\PP ^\sharp} ^{(\bullet)} (T )$-module
globalement de présentation finie à ind-isogénie près, 
d'après le lemme \ref{caract-coh-isog},
le module $\mathrm{coker} ~\psi ^{(\bullet)}$ est 
 isomorphe dans
$\underrightarrow{M} _{\Q}  (\lambda ^{*}\smash{\widetilde{\D}} _{\PP ^\sharp} ^{(\bullet)} (T))$
(et donc dans $\underrightarrow{LM} _{\Q}  (\smash{\widetilde{\D}} _{\PP ^\sharp} ^{(\bullet)} (T))$ 
via le foncteur oubli de \ref{MQlambda2LMQ})
à un 
$\lambda ^{*}\smash{\widetilde{\D}} _{\PP ^\sharp} ^{(\bullet)} (T )$-module globalement de présentation finie.
Comme $\mathrm{coker} ~\psi ^{(\bullet)}$ est isomorphe à $\E ^{(\bullet)}$ dans 
$\underrightarrow{LM} _{\Q}  (\smash{\widetilde{\D}} _{\PP ^\sharp} ^{(\bullet)} (T))$, on a donc terminé la preuve.
\end{proof}

\begin{lemm}
\label{lim-ind-isog-cohérence}
Les notions de $\smash{\widetilde{\D}} _{\PP ^\sharp} ^{(\bullet)} (T)$-module 
localement de présentation finie à lim-ind-isogénie près 
et 
de $\smash{\widetilde{\D}} _{\PP ^\sharp} ^{(\bullet)} (T)$-module 
cohérent à lim-ind-isogénie près 
sont équivalentes. 
En particulier, 
le faisceau d'anneaux
$\smash{\widetilde{\D}} _{\PP ^\sharp} ^{(\bullet)} (T)$
est alors un
$\smash{\widetilde{\D}} _{\PP ^\sharp} ^{(\bullet)} (T)$-module cohérent à lim-ind-isogénie près.

\end{lemm}

\begin{proof}
La cohérence à lim-ind-isogénie près entraîne clairement la locale présentation finie à lim-ind-isogénie près. 
Réciproquement, 
soit $\E ^{(\bullet)}$
un $\smash{\widetilde{\D}} _{\PP ^\sharp} ^{(\bullet)} (T)$-module 
localement de présentation finie à lim-ind-isogénie.
Comme la cohérence à lim-ind-isogénie est locale, on peut supposer
que
$\E ^{(\bullet)}$
un $\smash{\widetilde{\D}} _{\PP ^\sharp} ^{(\bullet)} (T)$-module 
globalement de présentation finie à lim-ind-isogénie. 
D'après
\ref{caract-coh-lim-isog}, 
par stabilité par isomorphismes de 
$\underrightarrow{LM} _{\Q}  (\smash{\widetilde{\D}} _{\PP ^\sharp} ^{(\bullet)} (T))$
de la propriété de cohérence à lim-ind-isogénie,
on se ramène au cas où 
$\E ^{(\bullet)}$
est un $\lambda ^{*}\smash{\widetilde{\D}} _{\PP ^\sharp} ^{(\bullet)} (T)$-module 
globalement de présentation finie.
Soit
$f ^{(\bullet)}
\colon \left ( \smash{\widetilde{\D}} _{\PP ^\sharp} ^{(\bullet)} (T) \right) ^r 
\to 
\E ^{(\bullet)}$ 
un morphisme de 
$\underrightarrow{LM} _{\Q}  (\smash{\widetilde{\D}} _{\PP ^\sharp} ^{(\bullet)} (T))$.
Il s'agit de vérifier que 
$\ker f ^{(\bullet)}$ est de type fini à lim-ind-isogénie près. 
Soient 
$\chi \in M$, $\lambda \in L$, 
$a ^{(\bullet)}\colon \left (\smash{\widetilde{\D}} _{\PP ^\sharp} ^{(\bullet)} (T) \right) ^r \to 
\chi ^*\lambda ^{*}\E ^{(\bullet)}$ 
un morphisme de 
$M  (\smash{\widetilde{\D}} _{\PP ^\sharp} ^{(\bullet)} (T))$ 
qui soit un représentant de 
$f ^{(\bullet)}$.
Par adjonction (voir \ref{adjonction-oubotimes}),
le morphisme 
$a ^{(\bullet)}$ se factorise de manière unique 
en un morphisme 
$b ^{(\bullet)}\colon \left (\lambda ^{*} \smash{\widetilde{\D}} _{\PP ^\sharp} ^{(\bullet)} (T) \right) ^r \to 
\chi ^*\lambda ^{*}\E^{(\bullet)} $ 
de 
$M  (\lambda ^{*} \smash{\widetilde{\D}} _{\PP ^\sharp} ^{(\bullet)} (T))$. 
Il résulte des propositions \ref{ind-isog-cohérence} et \ref{Q-coh-stab} que 
$\ker b ^{(\bullet)}$ est un 
$\lambda ^{*} \smash{\widetilde{\D}} _{\PP ^\sharp} ^{(\bullet)} (T)$-module de type fini à ind-isogénie près, 
et par conséquent un 
$ \smash{\widetilde{\D}} _{\PP ^\sharp} ^{(\bullet)} (T)$-module de type fini à lim-ind-isogénie près (en effet, on dispose du foncteur
oubli de \ref{MQlambda2LMQ}). 
Comme $\ker b ^{(\bullet)}$ et
$\ker f ^{(\bullet)}$ sont isomorphes dans
$\underrightarrow{LM} _{\Q}  (\smash{\widetilde{\D}} _{\PP ^\sharp} ^{(\bullet)} (T))$, 
on conclut alors la preuve. 
\end{proof}

\begin{nota}
\label{nota-(L)Mcoh}
On note
 $\underrightarrow{LM} _{\Q, \mathrm{coh}} (\smash{\widetilde{\D}} _{\PP ^\sharp} ^{(\bullet)} (T))$
la sous-catégorie pleine de
$\underrightarrow{LM} _{\Q} (\smash{\widetilde{\D}} _{\PP ^\sharp} ^{(\bullet)} (T))$
des $\smash{\widetilde{\D}} _{\PP ^\sharp} ^{(\bullet)} (T)$-modules cohérents à lim-ind-isogénie près.
De même, lorsque $\PP$ 
est affine,
on définit la sous-catégorie pleine 
 $\underrightarrow{LM} _{\Q, \mathrm{coh}} (\smash{\widetilde{D}} _{\PP ^{\sharp}} ^{(\bullet)} (T))$
de
$\underrightarrow{LM} _{\Q} (\smash{\widetilde{D}} _{\PP ^{\sharp}} ^{(\bullet)} (T))$
des $\smash{\widetilde{D}} _{\PP ^{\sharp}} ^{(\bullet)} (T)$-modules cohérents à lim-ind-isogénie près
(en remplaçant partout  {\og $\D$\fg} par {\og $D$\fg}).
\end{nota}

\begin{rema}
\label{rema-lem-MQlambda2LMQ}
Soit $\lambda \in L$.
De manière analogue, 
quitte à remplacer {\og $\smash{\widetilde{\D}}$ \fg}
par
{\og $\lambda ^* \smash{\widetilde{\D}}$ \fg},
on peut définir la sous-catégorie pleine
 $\underrightarrow{LM} _{\Q, \mathrm{coh}} (\lambda ^* \smash{\widetilde{\D}} _{\PP ^\sharp} ^{(\bullet)} (T))$
 de  $\underrightarrow{LM} _{\Q} (\lambda ^* \smash{\widetilde{\D}} _{\PP ^\sharp} ^{(\bullet)} (T))$
des $\lambda ^* \smash{\widetilde{\D}} _{\PP ^\sharp} ^{(\bullet)} (T)$-modules cohérents à lim-ind-isogénie près.
Avec le lemme \ref{lem-MQlambda2LMQ} et avec la caractérisation de la globale présentation finie donnée dans 
\ref{caract-coh-lim-isog}, 
les foncteurs 
$\mathrm{oub} _{\lambda}$ et 
$\lambda ^{*}\smash{\widetilde{\D}} _{\PP ^\sharp} ^{(\bullet)}(T) \otimes _{\smash{\widetilde{\D}} _{\PP ^\sharp} ^{(\bullet)}(T)}-$ 
induisent des équivalences de catégories quasi-inverses entre 
$\smash{\underrightarrow{LM}} _{\Q,\mathrm{coh}}  (\lambda ^{*} \smash{\widetilde{\D}} _{\PP ^\sharp} ^{(\bullet)}(T))$
et
$\smash{\underrightarrow{LM}} _{\Q,\mathrm{coh}}  ( \smash{\widetilde{\D}} _{\PP ^\sharp} ^{(\bullet)}(T))$.
\end{rema}

\begin{prop}
\label{prop-MQlambda2LMQ}
Les foncteurs 
oublis et 
$\widetilde{\D} _{\PP ^{\sharp}} ^{(\bullet)}(T) \otimes _{\widehat{\D} _{\PP ^{\sharp}} ^{(\bullet)}(T)}-$ 
sont des équivalences de catégories quasi-inverses entre 
${\underrightarrow{LM}} _{\Q}  (\widehat{\D} _{\PP ^{\sharp}} ^{(\bullet)}(T))$
(resp. ${\underrightarrow{LM}} _{\Q,\mathrm{coh}}  (\widehat{\D} _{\PP ^{\sharp}} ^{(\bullet)}(T))$)
et
${\underrightarrow{LM}} _{\Q}  ( \widetilde{\D} _{\PP ^{\sharp}} ^{(\bullet)}(T))$
(resp. ${\underrightarrow{LM}} _{\Q,\mathrm{coh}}  ( \widetilde{\D} _{\PP ^{\sharp}} ^{(\bullet)}(T))$).
\end{prop}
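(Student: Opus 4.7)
The strategy is to exploit Remark \ref{rema-lem-MQlambda2LMQ} via a three-step factorization of the transition morphisms of both inductive systems. For each $m \in \N$, we have natural ring morphisms
\begin{equation}
\notag
\widehat{\D} _{\PP ^{\sharp}} ^{(m)}(T) \to \widetilde{\D} _{\PP ^{\sharp}} ^{(m)}(T) \to \widehat{\D} _{\PP ^{\sharp}} ^{(\lambda _0(m))}(T) \to \widetilde{\D} _{\PP ^{\sharp}} ^{(\lambda _0(m))}(T),
\end{equation}
where the first and third arrows come from the inclusion $\widehat{\B} ^{(m)} _{\PP}(T) \to \widehat{\B} ^{(\lambda _0(m))} _{\PP}(T) = \widetilde{\B} ^{(m)} _{\PP}(T)$ of coefficient rings, and the middle arrow from the transition $\widehat{\D} _{\PP ^{\sharp}} ^{(m)} \to \widehat{\D} _{\PP ^{\sharp}} ^{(\lambda _0(m))}$. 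A direct level-by-level inspection shows that the first two arrows compose to the transition of $\widehat{\D} _{\PP ^{\sharp}} ^{(\bullet)}(T)$, while the last two compose to that of $\widetilde{\D} _{\PP ^{\sharp}} ^{(\bullet)}(T)$.

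Passing to the localized module categories, these ring morphisms induce a chain of forgetful functors
\begin{equation}
\notag
\underrightarrow{LM} _{\Q}(\lambda _0^{*} \widetilde{\D} _{\PP ^{\sharp}} ^{(\bullet)}(T)) \overset{F_3}{\to} \underrightarrow{LM} _{\Q}(\lambda _0^{*} \widehat{\D} _{\PP ^{\sharp}} ^{(\bullet)}(T)) \overset{F_2}{\to} \underrightarrow{LM} _{\Q}(\widetilde{\D} _{\PP ^{\sharp}} ^{(\bullet)}(T)) \overset{F_1}{\to} \underrightarrow{LM} _{\Q}(\widehat{\D} _{\PP ^{\sharp}} ^{(\bullet)}(T)),
\end{equation}
in which the compositions $F_1 \circ F_2$ and $F_2 \circ F_3$ identify with the forgetful functors associated to the canonical transitions of $\widehat{\D} _{\PP ^{\sharp}} ^{(\bullet)}(T)$ and $\widetilde{\D} _{\PP ^{\sharp}} ^{(\bullet)}(T)$, respectively. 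Since the argument of Lemma \ref{lem-MQlambda2LMQ} depends only on formal properties of the underlying inductive system of rings, Remark \ref{rema-lem-MQlambda2LMQ} applies equally to $\widehat{\D}$ and to $\widetilde{\D}$, so both compositions are equivalences of categories. It follows that $F_2$ is fully faithful (via $F_1 \circ F_2$) and essentially surjective (via $F_2 \circ F_3$), hence an equivalence; consequently $F_1$ is an equivalence as well, with quasi-inverse given by the adjoint extension of scalars $\widetilde{\D} _{\PP ^{\sharp}} ^{(\bullet)}(T) \otimes _{\widehat{\D} _{\PP ^{\sharp}} ^{(\bullet)}(T)} -$, as in \ref{adjonction-oubotimes}.

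For the coherent subcategories, observe that the canonical map $\widehat{\D} _{\PP ^{\sharp}} ^{(\bullet)}(T) \to \widetilde{\D} _{\PP ^{\sharp}} ^{(\bullet)}(T)$ is itself a lim-ind-isogeny: composing with $\widetilde{\D} _{\PP ^{\sharp}} ^{(\bullet)}(T) \to \lambda _0^{*} \widehat{\D} _{\PP ^{\sharp}} ^{(\bullet)}(T)$ recovers the canonical transition of $\widehat{\D} _{\PP ^{\sharp}} ^{(\bullet)}(T)$. Therefore $\widetilde{\D} _{\PP ^{\sharp}} ^{(\bullet)}(T) \cong \widehat{\D} _{\PP ^{\sharp}} ^{(\bullet)}(T)$ in $\underrightarrow{LM} _{\Q}(\widehat{\D} _{\PP ^{\sharp}} ^{(\bullet)}(T))$, so $F_1$ sends finite free $\widetilde{\D}$-modules, and a fortiori any locally finitely presented such module (to lim-ind-isogeny), to an object locally finitely presented over $\widehat{\D}$ (to lim-ind-isogeny); by Lemma \ref{lim-ind-isog-coh�rence}, $F_1$ preserves the coherent subcategory, and its extension-of-scalars quasi-inverse trivially preserves finite presentation. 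The principal bookkeeping obstacle will be the rigorous verification that $F_1 \circ F_2$ and $F_2 \circ F_3$ genuinely coincide with the transition forgetful functors --- a direct level-by-level computation, but one that must accommodate the asymmetric definition $\widetilde{\D} _{\PP ^{\sharp}} ^{(m)}(T) = \widetilde{\B} ^{(m)} _{\PP}(T) \widehat{\otimes} _{\O _{\PP}} \widehat{\D} _{\PP ^{\sharp}} ^{(m)}$, which mixes level $m$ for the $\D$-factor with level $\lambda _0(m)$ for the $\B$-factor.
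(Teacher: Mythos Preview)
Your proof is correct and follows the same architecture as the paper's: both hinge on the factorization
$\widehat{\D}_{\PP^\sharp}^{(\bullet)}(T) \to \widetilde{\D}_{\PP^\sharp}^{(\bullet)}(T) \to \lambda_0^*\widehat{\D}_{\PP^\sharp}^{(\bullet)}(T)$
and on Lemma~\ref{lem-MQlambda2LMQ}. The difference is organizational. The paper establishes directly that each of the two forgetful functors $F_1$ (along $\widehat{\D}\to\widetilde{\D}$) and $F_2$ (along $\widetilde{\D}\to\lambda_0^*\widehat{\D}$) is an equivalence, by redoing the adjunction-diagram argument of Lemma~\ref{lem-MQlambda2LMQ} in each case. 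You instead observe that the composites $F_1\circ F_2$ and $F_2\circ F_3$ are literally instances of $\mathrm{oub}_{\lambda_0}$ for $\widehat{\D}$ and $\widetilde{\D}$ respectively, invoke Lemma~\ref{lem-MQlambda2LMQ} for those, and then formally extract that $F_1$ and $F_2$ are equivalences; this has the mild advantage of reusing the lemma verbatim rather than repeating its proof.

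For the coherent case, the paper shows that $F_1$ preserves coherence by writing it as the composite of the extension $\lambda_0^*\widehat{\D}\otimes_{\widetilde{\D}}-$ (which trivially preserves finite presentation) with the forgetful $\mathrm{oub}_{\lambda_0}$ (which preserves coherence by Remark~\ref{rema-lem-MQlambda2LMQ}). Your alternative --- noting that $\widehat{\D}^{(\bullet)}(T)\to\widetilde{\D}^{(\bullet)}(T)$ is itself a lim-ind-isogeny, so that $F_1(\widetilde{\D}^{(\bullet)}(T))\cong\widehat{\D}^{(\bullet)}(T)$ in $\underrightarrow{LM}_{\Q}(\widehat{\D}^{(\bullet)}(T))$ and hence $F_1$ carries local finite presentations to local finite presentations --- is a clean shortcut that bypasses Remark~\ref{rema-lem-MQlambda2LMQ} entirely. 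Both routes are valid; yours is arguably more self-contained.
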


\begin{proof}
On vérifie de manière analogue à \ref{lem-MQlambda2LMQ} 
le cas non respectif. 
De même, on établit que 
les foncteurs oubli 
et
$\lambda _0 ^* \widehat{\D} _{\PP} ^{(\bullet)}(T) \otimes _{\widetilde{\D} _{\PP ^{\sharp}} ^{(\bullet)}(T)}-$
induisent des équivalences quasi-inverses entre 
$\underrightarrow{LM} _{\Q}  (\widetilde{\D} _{\PP ^{\sharp}} ^{(\bullet)}(T))$
et
$\underrightarrow{LM} _{\Q}  ({\lambda _0 ^* \widehat{\D}} _{\PP} ^{(\bullet)}(T))$.
Traitons à présent le cas respectif.
Les factorisations
$\widetilde{\D} _{\PP ^{\sharp}} ^{(\bullet)}(T) \otimes _{\widehat{\D} _{\PP ^{\sharp}} ^{(\bullet)}(T)}-
\colon 
{\underrightarrow{LM}} _{\Q,\mathrm{coh}}  (\widehat{\D} _{\PP ^{\sharp}} ^{(\bullet)}(T))
\to 
{\underrightarrow{LM}} _{\Q,\mathrm{coh}}  (\widetilde{\D} _{\PP ^{\sharp}} ^{(\bullet)}(T))$
et
$\lambda _0 ^* \widehat{\D} _{\PP} ^{(\bullet)}(T) \otimes _{\widetilde{\D} _{\PP ^{\sharp}} ^{(\bullet)}(T)}-
\colon 
\underrightarrow{LM} _{\Q,\mathrm{coh}}  (\widetilde{\D} _{\PP ^{\sharp}} ^{(\bullet)}(T))
\to 
\underrightarrow{LM} _{\Q,\mathrm{coh}}  ({\lambda _0 ^* \widehat{\D}} _{\PP} ^{(\bullet)}(T))$
sont aisées.
Or, d'après \ref{rema-lem-MQlambda2LMQ}, le foncteur oubli se factorise par 
$\underrightarrow{LM} _{\Q,\mathrm{coh}}  ({\lambda _0 ^* \widehat{\D}} _{\PP} ^{(\bullet)}(T)) 
\to 
\underrightarrow{LM} _{\Q,\mathrm{coh}}  (\widehat{\D} _{\PP ^{\sharp}} ^{(\bullet)}(T))$.
D'où le résultat.
\end{proof}

\begin{lemm}
\label{proj-local-LQ}
Si $\PP ^{(\bullet)}$ est un objet projectif de $\underrightarrow{M} _{\Q} (\smash{\widetilde{\D}} _{\PP ^\sharp} ^{(\bullet)} (T))$,
alors $\PP ^{(\bullet)}$ est un objet projectif de 
$\underrightarrow{LM} _{\Q} ( \smash{\widetilde{\D}} _{\PP ^\sharp} ^{(\bullet)} (T))$.
\end{lemm}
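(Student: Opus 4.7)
The plan is to mirror the proof of Lemma \ref{proj-local} one level up, substituting everywhere the category $M$ by $\underrightarrow{M} _{\Q}$, the localization $\underrightarrow{M} _{\Q}$ by $\underrightarrow{LM} _{\Q}$, and the ind-isog�nie transition functor $\chi ^{*}$ by the lim-isomorphism transition functor $\lambda ^{*}$. This works because $\underrightarrow{LM} _{\Q}$ is obtained from $\underrightarrow{M} _{\Q}$ by inverting the lim-isomorphismes (via the same kind of filtering family $L$), and by the analog of \ref{4.2.2BeintroLMQ} a morphism in $\underrightarrow{LM} _{\Q}$ is represented by a morphism of $\underrightarrow{M} _{\Q}$ of the form $\E ^{(\bullet)} \to \lambda ^* \FF ^{(\bullet)}$.

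More concretely, I would start from an epimorphism $g ^{(\bullet)} \colon \E ^{(\bullet)} \to \FF ^{(\bullet)}$ and a morphism $f ^{(\bullet)} \colon \PP ^{(\bullet)} \to \FF ^{(\bullet)}$ of $\underrightarrow{LM} _{\Q} (\smash{\widetilde{\D}} _{\PP ^\sharp} ^{(\bullet)} (T))$, and fix a common $\lambda \in L$ together with representatives $\phi ^{(\bullet)} \colon \PP ^{(\bullet)} \to \lambda ^* \FF ^{(\bullet)}$ and $\psi ^{(\bullet)} \colon \E ^{(\bullet)} \to \lambda ^* \FF ^{(\bullet)}$ in $\underrightarrow{M} _{\Q}$. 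Setting $\G ^{(\bullet)} := \lambda ^* \FF ^{(\bullet)}$, $\H ^{(\bullet)} := \mathrm{im} ~\psi ^{(\bullet)}$ (computed in the abelian category $\underrightarrow{M} _{\Q}$), and $\alpha ^{(\bullet)} \colon \H ^{(\bullet)} \hookrightarrow \G ^{(\bullet)}$ the canonical inclusion, the next step is to produce $\lambda _1 \in L$ and $\beta ^{(\bullet)} \colon \G ^{(\bullet)} \to \lambda _1 ^* \H ^{(\bullet)}$ of $\underrightarrow{M} _{\Q}$ such that $\beta ^{(\bullet)} \circ \alpha ^{(\bullet)}$ and $\lambda _1 ^* (\alpha ^{(\bullet)}) \circ \beta ^{(\bullet)}$ are the canonical morphisms. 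Finally, projectivity of $\PP ^{(\bullet)}$ in $\underrightarrow{M} _{\Q}$ applied to the epimorphism $\lambda _1 ^* \E ^{(\bullet)} \twoheadrightarrow \lambda _1 ^* \H ^{(\bullet)}$ produces a lift $\theta ^{(\bullet)} \colon \PP ^{(\bullet)} \to \lambda _1 ^* \E ^{(\bullet)}$ of $\beta ^{(\bullet)} \circ \phi ^{(\bullet)}$, and the morphism $h ^{(\bullet)}$ of $\underrightarrow{LM} _{\Q}$ it represents will satisfy $g ^{(\bullet)} \circ h ^{(\bullet)} = f ^{(\bullet)}$ after one checks that $\lambda _1 ^* (\psi ^{(\bullet)}) \circ \theta ^{(\bullet)} = \lambda _1 ^* (\alpha ^{(\bullet)}) \circ \beta ^{(\bullet)} \circ \phi ^{(\bullet)}$ composes with $\phi ^{(\bullet)}$ to the canonical morphism.

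The main obstacle I expect is the construction of $\beta ^{(\bullet)}$ and the verification that it inverts $\alpha ^{(\bullet)}$ up to a lim-shift. This requires translating the fact that $g ^{(\bullet)}$ is an epimorphism in $\underrightarrow{LM} _{\Q}$ into a concrete statement about $\alpha ^{(\bullet)}$ being a lim-isomorphism of $\underrightarrow{M} _{\Q}$: since the cokernel of $g ^{(\bullet)}$ equals the cokernel of $\psi ^{(\bullet)}$ viewed in $\underrightarrow{LM} _{\Q}$, and a cokernel commutes with the Serre quotient, the hypothesis says that the object $\mathrm{coker} ~\psi ^{(\bullet)} = \G ^{(\bullet)} / \H ^{(\bullet)}$ of $\underrightarrow{M} _{\Q}$ becomes zero in $\underrightarrow{LM} _{\Q}$. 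By (the analog in $\underrightarrow{M} _{\Q}$ of) the second item of \ref{rema-iso-0}, this provides $\lambda _1 \in L$ killing the canonical morphism $\G ^{(\bullet)} / \H ^{(\bullet)} \to \lambda _1 ^* (\G ^{(\bullet)} / \H ^{(\bullet)})$; rerunning the bookkeeping of step $2.a)$ of \ref{defi-M-L-Serre} then yields precisely the desired $\beta ^{(\bullet)}$. Once this lemma is secured, the remaining steps are a direct transposition of \ref{proj-local}.
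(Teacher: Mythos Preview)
Your proposal is correct and is precisely the paper's approach: the paper's proof is the one-liner ``La preuve est identique \`a celle de \ref{proj-local} (on remplace les $\chi$ par des $\lambda$)'', and you have faithfully spelled out that identical argument, including the point where the epimorphism hypothesis is converted into a lim-inverse $\beta ^{(\bullet)}$ via the vanishing of the cokernel (your appeal to step 2.a) of \ref{defi-M-L-Serre}).
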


\begin{proof}
La preuve est identique à celle de \ref{proj-local} (on remplace les $\chi$ par des $\lambda$).
\end{proof}

\begin{prop}
\label{LQ-coh-stab}
La sous catégorie pleine 
$\underrightarrow{LM} _{\Q, \mathrm{coh}} ( \smash{\widetilde{\D}} _{\PP ^\sharp} ^{(\bullet)} (T))$
de 
$\underrightarrow{LM} _{\Q}  (\smash{\widetilde{\D}} _{\PP ^\sharp} ^{(\bullet)} (T))$
est stable par isomorphisme,
noyau, conoyau, extension.
\end{prop}

\begin{proof}
Il s'agit de reprendre la preuve de \ref{Q-coh-stab} (on utilise \ref{caract-coh-lim-isog}, \ref{proj-local-LQ} et le fait que 
la catégorie $\underrightarrow{LM} _{\Q}  (\smash{\widetilde{\D}} _{\PP ^\sharp} ^{(\bullet)} (T))$ est abélienne).
\end{proof}

\subsection{Cohérence au sens de Berthelot}

Rappelons d'abord la notion de cohérence au sens de Berthelot (voir \cite[4.2.3]{Beintro2}):

\begin{defi}
\label{defi-LDQ0coh}
Soit $\sharp \in \{\emptyset, +,-, \mathrm{b}\}$.
Soit 
$\E ^{(\bullet)} \in \smash{\underrightarrow{LD}} _{\Q} ^{\sharp} ( \smash{\widetilde{\D}} _{\PP ^\sharp} ^{(\bullet)}(T))$.
Le complexe 
$\E ^{(\bullet)}$
est cohérent 
si et seulement s'il existe
$\lambda \in L$
et
$\FF ^{(\bullet)}\in \smash{\underrightarrow{LD}} _{\Q} ^{\sharp} (\lambda ^{*} \smash{\widetilde{\D}} _{\PP ^\sharp} ^{(\bullet)}(T))$ 
et un isomorphisme de
$\E ^{(\bullet)} \riso \FF ^{(\bullet) }$ dans 
$ \smash{\underrightarrow{LD}} _{\Q} ^{\sharp} ( \smash{\widetilde{\D}} _{\PP ^\sharp} ^{(\bullet)}(T))$,
tels que $\FF ^{(\bullet)}$ vérifie les conditions suivantes:
\begin{enumerate}
\item Pour tout $m \in \N$, $\FF ^{(m)} \in D ^{\sharp} _{\mathrm{coh}} (\smash{\widetilde{\D}} _{\PP ^\sharp} ^{(\lambda (m))} (T))$ ; 
\item Pour tous entiers $0\leq m \leq m'$,  
le morphisme canonique
\begin{equation}
\label{Beintro-4.2.3M}
\smash{\widetilde{\D}} _{\PP ^\sharp} ^{(\lambda (m'))} (T) \otimes ^\L _{\smash{\widetilde{\D}} _{\PP ^\sharp} ^{(\lambda (m))} (T)}
\FF ^{(m)} \to \FF ^{(m')} 
\end{equation}
est un isomorphisme.
\end{enumerate}
\end{defi}

\begin{nota}
\label{nota-LDQ0coh}
Soit $\sharp \in \{\emptyset, +,-, \mathrm{b}\}$.
On note $\underrightarrow{LD} ^{\sharp} _{\Q, \mathrm{coh}} (\smash{\widetilde{\D}} _{\PP ^\sharp} ^{(\bullet)} (T))$
la sous-catégorie strictement pleine de 
$\underrightarrow{LD} ^{\sharp} _{\Q} (\smash{\widetilde{\D}} _{\PP ^\sharp} ^{(\bullet)} (T))$
des complexes cohérents.
On note 
$\underrightarrow{LD} ^{0} _{\Q, \mathrm{coh}} (\smash{\widetilde{\D}} _{\PP ^\sharp} ^{(\bullet)} (T))$
la sous-catégorie strictement pleine de 
$\underrightarrow{LD} ^{0} _{\Q} (\smash{\widetilde{\D}} _{\PP ^\sharp} ^{(\bullet)} (T))$
des complexes $\E ^{(\bullet)} \in\underrightarrow{LD} ^{\mathrm{b}} _{\Q, \mathrm{coh}} (\smash{\widetilde{\D}} _{\PP ^\sharp} ^{(\bullet)} (T))$.

\end{nota}

\begin{vide}
\label{HomcalBertcohbij}
Berthelot a de plus vérifié que, pour tout
$\E ^{(\bullet)}
\in \smash{\underrightarrow{LD}} ^{-} _{\Q,\mathrm{coh}} ( \smash{\widetilde{\D}} _{\PP ^\sharp} ^{(\bullet)}(T))$
et pour tout 
$\FF ^{(\bullet)}
\in 
\smash{\underrightarrow{LD}} ^{+} _{\Q} ( \smash{\widetilde{\D}} _{\PP ^\sharp} ^{(\bullet)}(T))$,
l'application
\begin{equation}
\label{HomcalBertcohbij1}
\underrightarrow{\lim} 
\colon
\R \mathcal{H}om _{\smash{\underrightarrow{LD}}  _{\Q} ( \smash{\widetilde{\D}} _{\PP ^\sharp} ^{(\bullet)}(T))}
(\E ^{(\bullet)}, \FF ^{(\bullet)} )
\to 
\R \mathcal{H}om _{D ( \smash{\D} ^\dag _{\PP ^\sharp} (\hdag T) _{\Q} )}
( \underrightarrow{\lim}\, \E ^{(\bullet)}  , 
\underrightarrow{\lim} \, \FF ^{(\bullet)} ) 
\end{equation}
est un isomorphisme.
\end{vide}

\begin{prop}
\label{bis-prop-MQlambda2LMQ}
Les foncteurs oubli et 
$\widetilde{\D} _{\PP ^{\sharp}} ^{(\bullet)}(T) \widehat{\otimes} ^{\L} _{\widehat{\D} _{\PP ^{\sharp}} ^{(\bullet)}(T)}-$
(resp. $\widetilde{\D} _{\PP ^{\sharp}} ^{(\bullet)}(T) \otimes ^{\L}_{\widehat{\D} _{\PP ^{\sharp}} ^{(\bullet)}(T)}-$) 
induisent des équivalences de catégories quasi-inverses entre 
${\underrightarrow{LD}} ^{\mathrm{b}} _{\Q,\mathrm{qc}}  (\widehat{\D} _{\PP ^{\sharp}} ^{(\bullet)}(T))$
(resp. ${\underrightarrow{LD}} ^{\mathrm{b}} _{\Q,\mathrm{coh}}  (\widehat{\D} _{\PP ^{\sharp}} ^{(\bullet)}(T))$)
et
${\underrightarrow{LD}} ^{\mathrm{b}} _{\Q,\mathrm{qc}}  ( \widetilde{\D} _{\PP ^{\sharp}} ^{(\bullet)}(T))$
(resp. ${\underrightarrow{LD}} ^{\mathrm{b}} _{\Q,\mathrm{coh}}  ( \widetilde{\D} _{\PP ^{\sharp}} ^{(\bullet)}(T))$).

\end{prop}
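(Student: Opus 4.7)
The plan is to reduce the statement to Proposition \ref{prop-MQlambda2LMQ} via the equivalence of categories \ref{eqcatLD=DSM-fonct}, exactly parallel to how the current module-level statement is obtained from its analogue \ref{lem-MQlambda2LMQ}. First I would apply \ref{eqcatLD=DSM-fonct} to each of the two sheaves of rings $\widehat{\D} _{\PP ^{\sharp}} ^{(\bullet)}(T)$ and $\widetilde{\D} _{\PP ^{\sharp}} ^{(\bullet)}(T)$, obtaining canonical equivalences
$$\underrightarrow{LD} ^{\mathrm{b}} _{\Q} (\widehat{\D} _{\PP ^{\sharp}} ^{(\bullet)} (T))
\cong
D ^{\mathrm{b}} (\underrightarrow{LM} _{\Q} (\widehat{\D} _{\PP ^{\sharp}} ^{(\bullet)} (T))),
\qquad
\underrightarrow{LD} ^{\mathrm{b}} _{\Q} (\widetilde{\D} _{\PP ^{\sharp}} ^{(\bullet)} (T))
\cong
D ^{\mathrm{b}} (\underrightarrow{LM} _{\Q} (\widetilde{\D} _{\PP ^{\sharp}} ^{(\bullet)} (T))).$$
Proposition \ref{prop-MQlambda2LMQ} then provides an exact equivalence of abelian categories between the two right-hand sides (via the forgetful functor and its adjoint), which therefore induces an equivalence between the bounded derived categories. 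Composing with the two equivalences above yields the desired equivalence between $\underrightarrow{LD} ^{\mathrm{b}} _{\Q,\mathrm{qc}} (\widehat{\D} _{\PP ^{\sharp}} ^{(\bullet)}(T))$ and $\underrightarrow{LD} ^{\mathrm{b}} _{\Q,\mathrm{qc}} (\widetilde{\D} _{\PP ^{\sharp}} ^{(\bullet)}(T))$.

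For the respective (coherent) statement, I would combine this with the coherent part of \ref{prop-MQlambda2LMQ}: the equivalence of abelian categories restricts to an equivalence between $\underrightarrow{LM} _{\Q,\mathrm{coh}}(\widehat{\D} _{\PP ^{\sharp}} ^{(\bullet)}(T))$ and $\underrightarrow{LM} _{\Q,\mathrm{coh}}(\widetilde{\D} _{\PP ^{\sharp}} ^{(\bullet)}(T))$. Since the subcategory of coherent objects is abelian (stable under kernels, cokernels, extensions, cf.~\ref{LQ-coh-stab}), and since coherence at the derived level is characterized through the coherence of the cohomology sheaves (via \ref{LDLMiso=isoHn} and \ref{eqcatLD=DSM-fonct}), passing to bounded derived categories restricts the equivalence to the coherent subcategories.

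The main technical step will be to verify that the functor produced by this abstract procedure coincides with the derived tensor functor named in the statement, $\widetilde{\D} _{\PP ^{\sharp}} ^{(\bullet)}(T) \widehat{\otimes} ^{\L} _{\widehat{\D} _{\PP ^{\sharp}} ^{(\bullet)}(T)} -$ in the quasi-coherent case, and its uncompleted analogue $\widetilde{\D} _{\PP ^{\sharp}} ^{(\bullet)}(T) \otimes ^{\L} _{\widehat{\D} _{\PP ^{\sharp}} ^{(\bullet)}(T)} -$ in the coherent case. Here I would use that $\widetilde{\D} _{\PP ^{\sharp}} ^{(m)}(T) = \widehat{\D} _{\PP ^{\sharp}} ^{(\lambda _0 (m))}(T)$ level by level, so that the derived tensor is computed level-wise (after choosing appropriate resolutions of an object in $\underrightarrow{LD} ^{\mathrm{b}} _{\Q}$ by a representative of the form $\lambda ^* \chi ^* \FF ^{(\bullet)}$), and at the level of the abelian category this coincides with the non-derived tensor used in \ref{prop-MQlambda2LMQ}.

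The distinction between the completed and uncompleted tensor in the two cases is the main obstacle to spell out: in the coherent case, Berthelot's conditions on representatives (\ref{defi-LDQ0coh}.2) guarantee that the tensor products are already $p$-adically complete, so the uncompleted derived tensor suffices and its formation commutes with level change; in the quasi-coherent case one genuinely needs the completed tensor so that the level-wise sheaves remain $p$-adically complete. After this identification, the adjunction formulas of \ref{adjonction-oubotimes} extended to the derived categories imply that the forgetful functor serves as a quasi-inverse, completing the proof.
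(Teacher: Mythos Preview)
Your strategy diverges from the paper's and has a real gap in the quasi-coherent case. The paper does not pass through \ref{eqcatLD=DSM-fonct} at all: it argues directly that the two named functors are well-defined on the bounded quasi-coherent (resp.\ coherent) categories thanks to the uniform Tor-amplitude bounds of Corollary~\ref{rema-dim-coh-finie}, and then checks that the unit and counit of the adjunction are isomorphisms by the same elementary diagram argument as in \ref{prop-MQlambda2LMQ} (hence ultimately as in \ref{lem-MQlambda2LMQ}). That is the entire proof.

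Your reduction breaks in two places. First, the equivalence \ref{eqcatLD=DSM-fonct} is between the full localized categories $\underrightarrow{LD}^{\mathrm{b}}_{\Q}$ and $D^{\mathrm{b}}(\underrightarrow{LM}_{\Q})$; there is no description of the \emph{quasi-coherent} subcategory on the $D^{\mathrm{b}}(\underrightarrow{LM}_{\Q})$ side (quasi-coherence in Berthelot's sense is a completion condition involving $\R\varprojlim$, not a condition on cohomology objects in $\underrightarrow{LM}_{\Q}$). So the sentence ``composing with the two equivalences above yields the desired equivalence between the $\mathrm{qc}$ subcategories'' is not justified. Second, the equivalence of abelian categories in \ref{prop-MQlambda2LMQ} is realized by the \emph{uncompleted} tensor product $\widetilde{\D}\otimes_{\widehat{\D}}-$; deriving it produces the uncompleted $\otimes^{\L}$, not the completed functor $\widehat{\otimes}^{\L}$ that appears in the statement. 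These two functors differ outside the quasi-coherent range, so your ``main technical step'' is not a formality but the actual content of the result, and your sketch does not carry it out. Note also that you never invoke the bounded-amplitude input (Corollary~\ref{rema-dim-coh-finie}), without which one does not even know that $\widetilde{\D}\widehat{\otimes}^{\L}-$ lands in the bounded category.

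For the coherent case your outline is closer to viable, since Theorem~\ref{theo-eq-coh-lim} does identify $\underrightarrow{LD}^{\mathrm{b}}_{\Q,\mathrm{coh}}$ with $D^{\mathrm{b}}_{\mathrm{coh}}(\underrightarrow{LM}_{\Q})$ and the uncompleted $\otimes^{\L}$ is the correct functor there; but the direct argument the paper gives is both shorter and avoids the circularity of appealing to results (like \ref{theo-eq-coh-lim}) proved later in the same section using \ref{126Beintro}.
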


\begin{proof}
On remarque que les foncteurs sont bien définis grâce au corollaire
\ref{rema-dim-coh-finie}, ce dernier n'utilisant pas \ref{bis-prop-MQlambda2LMQ}.
On procède de manière identique à \ref{prop-MQlambda2LMQ}.
\end{proof}

\subsection{Sur une équivalence de catégories des complexes à cohomologie cohérente}

\begin{nota}
Pour $\sharp \in \{ 0,+,-, \mathrm{b}, \emptyset\}$, 
on notera 
$D ^{\sharp} _{\mathrm{coh}}
(\underrightarrow{LM} _{\Q} (\smash{\widetilde{\D}} _{\PP ^\sharp} ^{(\bullet)} (T)))$
la sous-catégorie pleine de
$D ^{\sharp}
(\underrightarrow{LM} _{\Q} (\smash{\widetilde{\D}} _{\PP ^\sharp} ^{(\bullet)} (T)))$
des complexes $\E ^{(\bullet)}$ tels que, pour $n \in \Z$, 
$\mathcal{H} ^{n} (\E ^{(\bullet)}) \in 
\underrightarrow{LM} _{\Q, \mathrm{coh}} (\smash{\widetilde{\D}} _{\PP ^\sharp} ^{(\bullet)} (T))$
(voir les notations de \ref{nota-(L)Mcoh}).
On remarquera que cette notion de cohérence est locale en $\PP$.
\end{nota}

Le lemme qui suit peut se voir comme un cas particulier de 
\ref{HomcalBertcohbij1}. 
Par soucis d'exhaustivité pour le lecteur, nous donnons une vérification.
\begin{lemm}
\label{lemm-fleche-Rhomcoh}
Soit 
$\FF ^{(\bullet)}
\in
\underrightarrow{LM} _{\Q} (\smash{\widetilde{\D}} _{\PP ^\sharp} ^{(\bullet)} (T))$.
Le morphisme de faisceaux en groupes abéliens de \ref{pre-fleche-RhomLM}
\begin{equation}
\label{pre-fleche-RhomLM2}
\mathcal{H}om _{\underrightarrow{LM} _{\Q} (\smash{\widetilde{\D}} _{\PP ^\sharp} ^{(\bullet)} (T))}
(\smash{\widetilde{\D}} _{\PP ^\sharp} ^{(\bullet)} (T ),~\FF ^{(\bullet)})
\to 
\mathcal{H}om _{\smash{\D} ^\dag _{\PP ^\sharp} (\hdag T) _{\Q}}
(\smash{\D} ^\dag _{\PP ^\sharp} (\hdag T) _{\Q},\underrightarrow{\lim} ~\FF ^{(\bullet)})
\end{equation}
est un isomorphisme.
\end{lemm}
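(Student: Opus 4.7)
The plan is to verify this by direct computation, exploiting the fact that both sides are sheaves on $\PP$, so it suffices to check the isomorphism on sections over an affine open $\U \subset \PP$. On the right-hand side, the standard identification $\mathcal{H}om_{A}(A,-) = \mathrm{id}$ for any sheaf of rings $A$ gives $\mathcal{H}om_{\smash{\D} ^\dag _{\PP ^\sharp} (\hdag T) _{\Q}}(\smash{\D} ^\dag _{\PP ^\sharp} (\hdag T) _{\Q}, \underrightarrow{\lim}\, \FF^{(\bullet)}) = \underrightarrow{\lim}\, \FF^{(\bullet)}$, whose sections over $\U$ are $\Gamma(\U, \underset{m}{\underrightarrow{\lim}}\, \FF^{(m)} \otimes_{\Z} \Q)$. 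Using that $\PP$ is quasi-compact and separated, $\Gamma(\U, -)$ commutes with filtered inductive limits and with tensorisation by $\Q$, so this equals $\underset{m}{\underrightarrow{\lim}}\, \Gamma(\U, \FF^{(m)})_{\Q}$.

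For the left-hand side, I will apply formula \ref{4.2.2BeintroLMQ} to write
\[
\mathrm{Hom}_{\underrightarrow{LM}_{\Q}(\widetilde{\D}_{\U^{\sharp}}^{(\bullet)}(T\cap U))}(\widetilde{\D}_{\U^{\sharp}}^{(\bullet)}(T\cap U), \FF^{(\bullet)}|\U)
= \underset{\lambda \in L,\, \chi \in M}{\underrightarrow{\lim}}\, \mathrm{Hom}_{M}(\widetilde{\D}^{(\bullet)}_{\U^{\sharp}}(T\cap U),\, \lambda^{*}\chi^{*}\FF^{(\bullet)}|\U).
\]
The key computational observation is that, for any $\widetilde{\D}^{(\bullet)}$-module $\G^{(\bullet)}$, a morphism of inductive systems $\widetilde{\D}^{(\bullet)} \to \G^{(\bullet)}$ is entirely determined by the image of $1 \in \widetilde{\D}^{(0)}$, so $\mathrm{Hom}_{M}(\widetilde{\D}^{(\bullet)}, \G^{(\bullet)}) = \Gamma(\U, \G^{(0)})$. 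Applied to $\G^{(\bullet)} = \lambda^{*}\chi^{*}\FF^{(\bullet)}$, whose level $0$ is $\FF^{(\lambda(0))}$, each term of the colimit becomes $\Gamma(\U, \FF^{(\lambda(0))})$.

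I then have to identify the resulting double colimit with $\underset{m}{\underrightarrow{\lim}}\, \Gamma(\U, \FF^{(m)})_{\Q}$. Varying $\lambda$ produces the inductive limit in $m$ (with transitions given by the canonical morphisms $\FF^{(\lambda_{1}(0))} \to \FF^{(\lambda_{2}(0))}$), while varying $\chi$ introduces precisely the $p$-power multiplications that, in the colimit, realise tensorisation by $\Q$. Once this identification is established, I will verify that the bijection constructed coincides with the canonical map of \ref{pre-fleche-RhomLM2}: given a representative $\phi^{(\bullet)}\colon \widetilde{\D}^{(\bullet)} \to \lambda^{*}\chi^{*}\FF^{(\bullet)}$, applying $\underrightarrow{\lim}$ and evaluating at $1$ recovers the class in $\underrightarrow{\lim}\, \FF^{(\bullet)}$ of the image of $1$ at level $0$.

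The main obstacle will be the careful bookkeeping of the $p$-multiplications appearing in the canonical morphisms $\lambda_{1}^{*}\chi_{1}^{*}\FF^{(\bullet)} \to \lambda_{2}^{*}\chi_{2}^{*}\FF^{(\bullet)}$ when comparing two representatives $(\lambda_{1},\chi_{1})$ and $(\lambda_{2},\chi_{2})$ in the filtered colimit, and matching these precisely with the equivalence relation that defines $\underrightarrow{\lim}\, \FF^{(\bullet)} = \underset{m}{\underrightarrow{\lim}}\, \FF^{(m)} \otimes_{\Z} \Q$. As indicated in the paragraph preceding the lemma, this can also be extracted from Berthelot's isomorphism \ref{HomcalBertcohbij1} applied to $\E^{(\bullet)} = \widetilde{\D}_{\PP^{\sharp}}^{(\bullet)}(T)$, which trivially satisfies the coherence condition of \ref{defi-LDQ0coh} (with $\lambda = \mathrm{id}$), combined with the fully faithful embedding \ref{Lm-LD-plfid}; but the point here is to give a self-contained verification that does not go through the derived category.
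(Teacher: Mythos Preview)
Your proposal is correct and follows essentially the same route as the paper's own proof: compute sections over an open $\U$, use formula \ref{4.2.2BeintroLMQ} to rewrite the left-hand side as $\underset{\lambda,\chi}{\underrightarrow{\lim}}\,\mathrm{Hom}_{M}(\widetilde{\D}^{(\bullet)}_{\U^{\sharp}}(T\cap U),\,\lambda^{*}\chi^{*}\FF^{(\bullet)}|\U)$, identify each term with $\Gamma(\U,(\lambda^{*}\chi^{*}\FF^{(\bullet)})^{(0)})$, and recognise the resulting double colimit as $\Gamma(\U,\underrightarrow{\lim}\,\FF^{(\bullet)})$. The paper is simply terser at the last step, asserting the final identification directly, whereas you spell out how the colimit over $\lambda$ realises the limit over levels and the colimit over $\chi$ realises $\otimes_{\Z}\Q$.
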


\begin{proof}
Pour tout ouvert $\U$ de $\PP$, on dispose par définition des égalités
\begin{gather}
\Gamma (\U , \mathcal{H}om _{\underrightarrow{LM} _{\Q} (\smash{\widetilde{\D}} _{\PP ^\sharp} ^{(\bullet)} (T))}
(\smash{\widetilde{\D}} _{\PP ^\sharp} ^{(\bullet)} (T ),~\FF ^{(\bullet)})
=
\mathrm{Hom} _{\underrightarrow{LM} _{\Q} (\smash{\widetilde{\D}} _{\U ^{\sharp}} ^{(\bullet)} (T \cap U))}
(\smash{\widetilde{\D}} _{\U ^{\sharp}} ^{(\bullet)} (T \cap U),~\FF ^{(\bullet)}|\U)
\\
=
\underset{\lambda \in L}{\underrightarrow{\lim}}~
\underset{\chi \in M}{\underrightarrow{\lim}}~
\mathrm{Hom} _{\smash{\widetilde{\D}} _{\U ^{\sharp}} ^{(\bullet)} (T \cap U)}
(\smash{\widetilde{\D}} _{\U ^{\sharp}} ^{(\bullet)} (T \cap U),~ \lambda ^* \chi ^*\FF ^{(\bullet)}|\U)
=
\underset{\lambda \in L}{\underrightarrow{\lim}}~
\underset{\chi \in M}{\underrightarrow{\lim}}~
\Gamma ( \U, \lambda ^* \chi ^* \FF ^{(\bullet)}).
\end{gather}
En appliquant le foncteur 
$\Gamma ( \U, -)$
au morphisme \ref{pre-fleche-RhomLM2}, on obtient le morphisme canonique
$\underset{\lambda \in L}{\underrightarrow{\lim}}~
\underset{\chi \in M}{\underrightarrow{\lim}}~
\Gamma ( \U, \lambda ^* \chi ^*\FF ^{(\bullet)})
\to \Gamma ( \U, \underrightarrow{\lim} ~\FF ^{(\bullet)})$, qui est un isomorphisme.
\end{proof}

\begin{theo}
\label{theo-fleche-Rhomcoh}
Pour tous $\E ^{(\bullet)}\in 
D ^{\mathrm{b}} _{\mathrm{coh}} (\underrightarrow{LM} _{\Q} (\smash{\widetilde{\D}} _{\PP ^\sharp} ^{(\bullet)} (T)))$,
$\FF ^{(\bullet)} \in D ^{\mathrm{b}} (\underrightarrow{LM} _{\Q} (\smash{\widetilde{\D}} _{\PP ^\sharp} ^{(\bullet)} (T)))$,
le morphisme de $D (\mathrm{Ab} _\PP)$ défini en \ref{fleche-RhomLM}:
\begin{equation}
\label{fleche-Rhomcoh}
\R \mathcal{H}om _{D (\underrightarrow{LM} _{\Q} (\smash{\widetilde{\D}} _{\PP ^\sharp} ^{(\bullet)} (T)))} 
(\E ^{(\bullet)},~\FF ^{(\bullet)})
\to
\R \mathcal{H}om _{ \smash{\D} ^\dag _{\PP ^\sharp} (\hdag T) _{\Q} } 
(\underrightarrow{\lim} ~\E ^{(\bullet)},\underrightarrow{\lim} ~\FF ^{(\bullet)}).
\end{equation}
est un isomorphisme.
\end{theo}

\begin{proof}
Grâce au lemme \cite[I.7.1.(ii)]{HaRD} (que l'on peut utiliser grâce à \ref{LQ-coh-stab}), 
comme pour $\FF ^{(\bullet)}$ fixé les foncteurs sont way-out à droite, 
on se ramène au cas où 
$\E ^{(\bullet)}
\in 
\underrightarrow{LM} _{\Q,\mathrm{coh}} (\smash{\widetilde{\D}} _{\PP ^\sharp} ^{(\bullet)} (T))$.
Comme le théorème est local en $\PP$, d'après le lemme 
\ref{caract-coh-lim-isog}, on peut supposer que $\PP$ est affine et 
qu'il existe $\lambda \in L$ et un 
$\lambda ^{*}\smash{\widetilde{\D}} _{\PP ^\sharp} ^{(\bullet)} (T)$-module $\G ^{(\bullet)}$ globalement de présentation finie
tel que 
$\E ^{(\bullet)}$ et $\G ^{(\bullet)}$ soient isomorphe dans 
$\underrightarrow{LM} _{\Q}  (\smash{\widetilde{\D}} _{\PP ^\sharp} ^{(\bullet)} (T))$.
On se ramène ainsi à vérifier le théorème pour $\G ^{(\bullet)}$ à la place de $\E ^{(\bullet)}$.
D'après la deuxième remarque de \ref{rema-otimes-eqcat}, 
$G ^{(0)}$ est un $\smash{\widetilde{D}} _{\PP ^{\sharp}} ^{(\lambda (0))} (T)$-module cohérent
et le morphisme canonique 
$ \lambda ^{*}\smash{\widetilde{\D}} _{\PP ^\sharp} ^{(\bullet)} (T ) 
\otimes _{\smash{\widetilde{D}} _{\PP ^{\sharp}} ^{(\lambda (0))} (T)} G ^{(0)}
\to 
\G ^{(\bullet)}$
est isomorphisme.
On obtient une résolution gauche de 
$G ^{(0)}$ par des $\smash{\widetilde{D}} _{\PP ^{\sharp}} ^{(\lambda (0))} (T)$-modules libres de type fini.
Celle-ci induit une résolution gauche de 
$\G ^{(\bullet)}$
par des
$ \lambda ^{*}\smash{\widetilde{\D}} _{\PP ^\sharp} ^{(\bullet)} (T ) $-modules libres de type fini. 
En calquant la preuve de \cite[I.7.1.(iv)]{HaRD} (on utilise les suites exactes de troncation), 
on se ramène à vérifier le théorème pour 
$\G ^{(\bullet)}$ de la forme $\left (  \lambda ^{*}\smash{\widetilde{\D}} _{\PP ^\sharp} ^{(\bullet)} (T ) \right ) ^N$.
Par additivité et comme $\smash{\widetilde{\D}} _{\PP ^\sharp} ^{(\bullet)} (T )
\to \lambda ^{*}\smash{\widetilde{\D}} _{\PP ^\sharp} ^{(\bullet)} (T )$ est une lim-ind-isogénie, 
on peut supposer $\G ^{(\bullet)}= \smash{\widetilde{\D}} _{\PP ^\sharp} ^{(\bullet)} (T )$.
On conclut par construction de notre morphisme et de \ref{lemm-fleche-Rhomcoh} (pour dériver le bifoncteur de gauche de \ref{fleche-Rhomcoh}
dans le cas particulier où $\E ^{(\bullet)}= \smash{\widetilde{\D}} _{\PP ^\sharp} ^{(\bullet)} (T )$, 
on prend une résolution par des injectifs de 
$K ^{\mathrm{b}}( \smash{\widetilde{\D}} _{\PP ^\sharp} ^{(\bullet)}(T))$ de l'objet associé à $\FF ^{(\bullet)}$
via l'équivalence de catégories \ref{eqcatLD=DSM-fonct})
ou alors en remarquant que l'on est dans la situation de \ref{HomcalBertcohbij}.

\end{proof}

\begin{coro}
\begin{enumerate}
\item Pour tous $\E ^{(\bullet)}\in 
D ^{\mathrm{b}} _{\mathrm{coh}} (\underrightarrow{LM} _{\Q} (\smash{\widetilde{\D}} _{\PP ^\sharp} ^{(\bullet)} (T)))$,
$\FF ^{(\bullet)} \in D ^{\mathrm{b}} (\underrightarrow{LM} _{\Q} (\smash{\widetilde{\D}} _{\PP ^\sharp} ^{(\bullet)} (T)))$,
le morphisme canonique de $D (\mathrm{Ab})$ de \ref{fleche-mathrmRhomLM}
\begin{equation}
\label{fleche-Rhomcohrm}
\R \mathrm{Hom} _{D (\underrightarrow{LM} _{\Q} (\smash{\widetilde{\D}} _{\PP ^\sharp} ^{(\bullet)} (T)))} 
(\E ^{(\bullet)},~\FF ^{(\bullet)})
\to
\R \mathrm{Hom} _{ \smash{\D} ^\dag _{\PP ^\sharp} (\hdag T) _{\Q} } 
(\underrightarrow{\lim} ~\E ^{(\bullet)},\underrightarrow{\lim} ~\FF ^{(\bullet)})
\end{equation}
est un isomorphisme.

\item Le foncteur \ref{M-eq-lim} se factorise en l'équivalence de catégories
\begin{equation}
\label{M-eq-coh-lim}
\underrightarrow{\lim} 
\colon
\smash{\underrightarrow{LM}}  _{\Q, \mathrm{coh}}
(\smash{\widetilde{\D}} _{\PP ^\sharp} ^{(\bullet)}(T))
\cong
\mathrm{Coh} ( \smash{\D} ^\dag _{\PP ^\sharp} (\hdag T) _{\Q} ).
\end{equation}
où $\mathrm{Coh} ( \smash{\D} ^\dag _{\PP ^\sharp} (\hdag T) _{\Q} )$ 
désigne la catégorie des 
$\smash{\D} ^\dag _{\PP ^\sharp} (\hdag T) _{\Q}$-modules cohérents.

\item Le foncteur $\underrightarrow{\lim}$ induit l'équivalence de catégories 
\begin{equation}
\label{eqcatcoh}
\underrightarrow{\lim} 
\colon 
D ^{\mathrm{b}} _{\mathrm{coh}} (\underrightarrow{LM} _{\Q} (\smash{\widetilde{\D}} _{\PP ^\sharp} ^{(\bullet)} (T)))
\cong
D ^{\mathrm{b}} _{\mathrm{coh}}( \smash{\D} ^\dag _{\PP ^\sharp} (\hdag T) _{\Q} ).
\end{equation}
\end{enumerate}
\end{coro}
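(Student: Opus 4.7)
Le plan est d'�tablir les trois assertions dans l'ordre � partir du th�or�me \ref{theo-fleche-Rhomcoh} d�j� d�montr�.

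Pour la premi�re assertion, j'appliquerais le foncteur $\R \Gamma (\PP, -)$ aux deux membres de l'isomorphisme de \ref{theo-fleche-Rhomcoh}. L'isomorphisme \ref{Rhomcal-vs-rm} identifie le membre de gauche avec $\R \mathrm{Hom} _{D (\underrightarrow{LM} _{\Q} (\smash{\widetilde{\D}} _{\PP ^\sharp} ^{(\bullet)} (T)))}(\E ^{(\bullet)},~\FF ^{(\bullet)})$, tandis que l'isomorphisme standard $\R \Gamma \circ \R \mathcal{H}om \riso \R \mathrm{Hom}$ pour les $\smash{\D} ^\dag _{\PP ^\sharp} (\hdag T) _{\Q}$-modules identifie le membre de droite avec $\R \mathrm{Hom} _{\smash{\D} ^\dag _{\PP ^\sharp} (\hdag T) _{\Q}}(\underrightarrow{\lim} ~\E ^{(\bullet)}, \underrightarrow{\lim} ~\FF ^{(\bullet)})$. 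La compatibilit� du morphisme construit \ref{fleche-mathrmRhomLM} avec l'image par $\R \Gamma (\PP, -)$ de \ref{fleche-RhomLM} se v�rifie gr�ce � la propri�t� universelle des foncteurs d�riv�s droits, de mani�re analogue � la construction effectu�e.

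Pour la seconde assertion, il faut d'abord v�rifier que $\underrightarrow{\lim}$ envoie un objet de $\smash{\underrightarrow{LM}} _{\Q,\mathrm{coh}} (\smash{\widetilde{\D}} _{\PP ^\sharp} ^{(\bullet)} (T))$ sur un $\smash{\D} ^\dag _{\PP ^\sharp} (\hdag T) _{\Q}$-module coh�rent. Gr�ce au lemme \ref{caract-coh-lim-isog} et � la seconde remarque de \ref{rema-otimes-eqcat}, on se ram�ne localement au cas d'un module de la forme $\lambda ^{*}\smash{\widetilde{\D}} _{\PP ^\sharp} ^{(\bullet)} (T) \otimes _{\smash{\widetilde{\D}} _{\PP ^{\sharp}} ^{(\lambda (0))} (T)} \FF ^{(0)}$, dont l'image par $\underrightarrow{\lim}$ s'identifie � $\smash{\D} ^\dag _{\PP ^\sharp} (\hdag T) _{\Q} \otimes _{\smash{\widetilde{\D}} _{\PP ^{\sharp}} ^{(\lambda (0))} (T)} \FF ^{(0)}$, qui est coh�rent. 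La pleine fid�lit� d�coule ensuite de la premi�re assertion en prenant $\mathcal{H} ^{0}$ et en utilisant l'isomorphisme \ref{H0Homrm-DLM}. Pour la surjectivit� essentielle, �tant donn� un $\smash{\D} ^\dag _{\PP ^\sharp} (\hdag T) _{\Q}$-module coh�rent $\M$, l'�quivalence $(*)$ de Berthelot combin�e � la proposition \ref{bis-prop-MQlambda2LMQ} fournit un complexe coh�rent $\E ^{(\bullet)} \in \underrightarrow{LD} ^{\mathrm{b}} _{\Q, \mathrm{coh}}$ avec $\underrightarrow{\lim} ~\E ^{(\bullet)} \riso \M$ ; comme $\M$ est concentr� en degr� z�ro, le lemme \ref{eq-cat-LM-LD0} permet de remplacer $\E ^{(\bullet)}$ par son espace de cohomologie en degr� z�ro, qui fournit alors le pr�image voulue dans $\underrightarrow{LM} _{\Q,\mathrm{coh}}$.

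Pour la troisi�me assertion, la strat�gie consiste � comparer les deux notions de coh�rence sous l'�quivalence \ref{eqcatLD=DSM-fonct} entre $\underrightarrow{LD} ^{\mathrm{b}} _{\Q} (\smash{\widetilde{\D}} _{\PP ^\sharp} ^{(\bullet)} (T))$ et $D ^{\mathrm{b}} (\underrightarrow{LM} _{\Q} (\smash{\widetilde{\D}} _{\PP ^\sharp} ^{(\bullet)} (T)))$, en montrant que $\underrightarrow{LD} ^{\mathrm{b}} _{\Q, \mathrm{coh}}$ correspond exactement � $D ^{\mathrm{b}} _{\mathrm{coh}} (\underrightarrow{LM} _{\Q})$. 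Un sens est imm�diat : si $\E ^{(\bullet)}$ est coh�rent au sens de Berthelot, chacun de ses espaces de cohomologie $\mathcal{H} ^{n} (\E ^{(\bullet)})$ est coh�rent � lim-ind-isog�nie pr�s, comme cela se v�rifie sur un repr�sentant ad hoc via \ref{caract-coh-lim-isog}. L'autre sens se prouve par r�currence sur l'amplitude, en utilisant les triangles de troncation de la forme $\mathcal{H} ^{n _0} (\E ^{(\bullet)})[-n _0] \to \sigma _{\geq n _0} (\E ^{(\bullet)}) \to \sigma _{\geq n _0 +1} (\E ^{(\bullet)}) \to +1$ ainsi que la stabilit� de la coh�rence au sens de Berthelot par translation et c�ne. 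La composition avec l'�quivalence $(*)$ de Berthelot fournit alors \ref{eqcatcoh}. L'obstacle principal me semble �tre cette identification rigoureuse des deux notions de coh�rence : la d�finition \ref{defi-LDQ0coh} de Berthelot m�lange la lim-ind-isog�nie (via le choix de $\lambda \in L$) avec des conditions de coh�rence d�riv�e � chaque niveau, tandis que $D ^{\mathrm{b}} _{\mathrm{coh}} (\underrightarrow{LM} _{\Q})$ est d�fini de mani�re purement cat�gorique ; la r�conciliation demande d'employer soigneusement la proposition \ref{bis-prop-MQlambda2LMQ} conjointement avec l'argument de troncation ci-dessus.
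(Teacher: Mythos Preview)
La premi�re assertion est trait�e exactement comme dans l'article (qui mentionne explicitement l'application de $\R\Gamma(\PP,-)$ comme variante possible). Pour les assertions 2 et 3 en revanche, ta strat�gie diff�re sensiblement de celle de l'article, et il y a un point � clarifier.

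Pour la surjectivit� essentielle de la partie 2, l'article ne passe pas par l'�quivalence $(*)$ de Berthelot au niveau d�riv� : il construit directement le pr�image en invoquant \cite[3.4.5 et 3.6.2]{Be1}, qui donnent pour tout $\smash{\D}^\dag_{\PP^\sharp}(\hdag T)_\Q$-module coh�rent $\E$ un entier $m_0$ et un $\smash{\widetilde{\D}}_{\PP^\sharp}^{(m_0)}(T)$-module coh�rent $\E^{(0)}$ tel que $\smash{\D}^\dag_{\PP^\sharp}(\hdag T)_\Q\otimes_{\smash{\widetilde{\D}}_{\PP^\sharp}^{(m_0)}(T)}\E^{(0)}\riso\E$ ; le module $\E^{(\bullet)}:=\smash{\widetilde{\D}}_{\PP^\sharp}^{(\bullet+m_0)}(T)\otimes_{\smash{\widetilde{\D}}_{\PP^\sharp}^{(m_0)}(T)}\E^{(0)}$ est alors localement de pr�sentation finie et convient. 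Ta d�marche via $(*)$ marche aussi mais comporte un trou : pour appliquer le lemme \ref{eq-cat-LM-LD0}, il faut savoir que $\mathcal{H}^n(\E^{(\bullet)})\riso 0$ dans $\underrightarrow{LM}_\Q$ pour $n\neq 0$, ce qui requiert de savoir que ces espaces de cohomologie sont coh�rents (pour appliquer la pleine fid�lit� d�j� �tablie). C'est le lemme \ref{Hnstabcoh}, que l'article d�montre plus loin ; il faudrait donc soit l'anticiper, soit le citer explicitement.

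Pour la partie 3, l'article ne passe pas non plus par l'identification de $\underrightarrow{LD}^{\mathrm{b}}_{\Q,\mathrm{coh}}$ avec $D^{\mathrm{b}}_{\mathrm{coh}}(\underrightarrow{LM}_\Q)$ : cette identification est l'objet du th�or�me \ref{theo-eq-coh-lim}, prouv� s�par�ment dans la section suivante. L'article proc�de directement c�t� but : pleine fid�lit� par la partie 1, puis surjectivit� essentielle par r�currence sur le nombre d'espaces de cohomologie non nuls de $\E\in D^{\mathrm{b}}_{\mathrm{coh}}(\smash{\D}^\dag_{\PP^\sharp}(\hdag T)_\Q)$, les triangles de troncation et \cite[I.2.18]{borel} permettant de relever le morphisme de liaison gr�ce � la pleine fid�lit�. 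On se ram�ne ainsi au cas d'un module, d�j� trait� en partie 2. Ta d�marche est correcte mais revient � absorber la preuve du th�or�me \ref{theo-eq-coh-lim} dans celle du corollaire ; l'approche de l'article est plus courte et �vite l'\og obstacle principal\fg{} que tu signales, en travaillant enti�rement dans la cat�gorie d'arriv�e.
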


\begin{proof}
On vérifie la première assertion de manière analogue à 
\ref{theo-fleche-Rhomcoh} (on peut aussi 
remarquer qu'en appliquant le foncteur $\R \Gamma (\PP, -)$ au morphisme de 
\ref{fleche-Rhomcoh}, on obtient grâce à \ref{Rhomcal-vs-rm} 
la flèche \ref{fleche-Rhomcohrm}). 
Avec \ref{H0Homrm-DLM}, 
on en déduit la pleine fidélité des foncteurs \ref{eqcatcoh} et \ref{M-eq-coh-lim}.
Vérifions à présent que ces deux foncteurs sont essentiellement surjectifs.
Soit $\E \in D ^{\mathrm{b}} _{\mathrm{coh}}( \smash{\D} ^\dag _{\PP ^\sharp} (\hdag T) _{\Q} )$.
Quitte à utiliser des triangles distingués de troncation et à procéder par récurrence sur le nombre d'espaces de cohomologie non nuls, 
avec \cite[I.2.18]{borel}, par pleine fidélité de \ref{eqcatcoh}, on se ramène au cas où 
$\E$ est un $ \smash{\D} ^\dag _{\PP ^\sharp} (\hdag T) _{\Q} $-module cohérent.
Or, grâce à \cite[3.4.5 et 3.6.2]{Be1}, 
il existe 
$m _0\in \N$, un $\smash{\widetilde{\D}} _{\PP ^\sharp} ^{(m _0)} (T)$-module cohérent $\E ^{(0)}$ 
et un isomorphisme 
$ \smash{\D} ^\dag _{\PP ^\sharp} (\hdag T) _{\Q} $-linéaire de la forme 
$ \smash{\D} ^\dag _{\PP ^\sharp} (\hdag T) _{\Q} 
\otimes _{\smash{\widetilde{\D}} _{\PP ^\sharp} ^{(m _0)} (T)} 
\E ^{(0)} \riso \E$.
En posant 
$\E ^{(\bullet)}:= 
\smash{\widetilde{\D}} _{\PP ^\sharp} ^{(\bullet + m_0)} (T)
\otimes _{\smash{\widetilde{\D}} _{\PP ^\sharp} ^{(m _0)} (T)} 
\E ^{(0)} $, on obtient alors 
l'isomorphisme
$\underrightarrow{\lim} (\E ^{(\bullet)}) \riso \E$.

\end{proof}

La proposition qui suit améliore \ref{caract-coh-lim-isog}:
\begin{lemm}
\label{strict-m0}
Soit $\E ^{(\bullet)} \in \underrightarrow{LM} _{\Q, \mathrm{coh}} (\smash{\widetilde{\D}} _{\PP ^\sharp} ^{(\bullet)} (T))$.
Il existe alors $m_0 \in \N$ tel que
$\E ^{(\bullet)}$ 
soit 
isomorphe dans $\underrightarrow{LM} _{\Q} (\smash{\widetilde{\D}} _{\PP ^\sharp} ^{(\bullet)} (T))$
à un 
$\smash{\widetilde{\D}} _{\PP ^\sharp} ^{(\bullet + m_0)} (T)$-module localement de présentation finie.
\end{lemm}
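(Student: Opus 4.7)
The plan is to exploit the equivalence of categories \ref{M-eq-coh-lim}, which identifies $\smash{\underrightarrow{LM}}  _{\Q, \mathrm{coh}} (\smash{\widetilde{\D}} _{\PP ^\sharp} ^{(\bullet)}(T))$ with $\mathrm{Coh} ( \smash{\D} ^\dag _{\PP ^\sharp} (\hdag T) _{\Q} )$, in combination with Berthelot's structural theorem \cite[3.4.5 and 3.6.2]{Be1} on coherent $\smash{\D} ^\dag _{\PP ^\sharp} (\hdag T) _{\Q}$-modules. The key observation is that this theorem produces a specific level $m_0$, which supplies precisely the kind of translation $\lambda(m) = m + m_0$ needed to improve the general $\lambda \in L$ appearing in \ref{caract-coh-lim-isog}.

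First, I would set $\E := \underrightarrow{\lim}\, \E ^{(\bullet)}$, which is a coherent $\smash{\D} ^\dag _{\PP ^\sharp} (\hdag T) _{\Q}$-module by \ref{M-eq-coh-lim}. Then, invoking Berthelot \cite[3.4.5 and 3.6.2]{Be1}, there exist an integer $m _0 \in \N$ and a coherent $\smash{\widetilde{\D}} _{\PP ^\sharp} ^{(m _0)} (T)$-module $\G^{(m _0)}$ together with an isomorphism $\smash{\D} ^\dag _{\PP ^\sharp} (\hdag T) _{\Q} \otimes _{\smash{\widetilde{\D}} _{\PP ^\sharp} ^{(m _0)} (T)} \G^{(m _0)} \riso \E$. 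I then form the shifted inductive system
$$\G ^{(\bullet)} := \smash{\widetilde{\D}} _{\PP ^\sharp} ^{(\bullet + m_0)} (T) \otimes _{\smash{\widetilde{\D}} _{\PP ^\sharp} ^{(m _0)} (T)} \G^{(m _0)},$$
which is by construction a $\smash{\widetilde{\D}} _{\PP ^\sharp} ^{(\bullet + m_0)} (T)$-module locally of finite presentation (indeed, coherent as $\G^{(m_0)}$ is, using the equivalence \ref{eqcat-locpfotimes} applied to the shift $\lambda(m) = m+ m_0$), and satisfies $\underrightarrow{\lim}\, \G ^{(\bullet)} \riso \E$ by transitivity of tensor product along $\smash{\widetilde{\D}} _{\PP ^\sharp} ^{(m _0)} (T) \to \smash{\D} ^\dag _{\PP ^\sharp} (\hdag T) _{\Q}$.

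Finally, I view $\G ^{(\bullet)}$ as an object of $\underrightarrow{LM} _{\Q, \mathrm{coh}} (\smash{\widetilde{\D}} _{\PP ^\sharp} ^{(\bullet)} (T))$ via the forgetful functor of Lemma \ref{lem-MQlambda2LMQ} applied to $\lambda(m) = m + m _0$; the preceding isomorphism at the level of $\smash{\D} ^\dag _{\PP ^\sharp} (\hdag T) _{\Q}$-modules then lifts, by the full faithfulness half of the equivalence \ref{M-eq-coh-lim}, to an isomorphism $\G ^{(\bullet)} \riso \E ^{(\bullet)}$ in $\underrightarrow{LM} _{\Q} (\smash{\widetilde{\D}} _{\PP ^\sharp} ^{(\bullet)} (T))$, which is what we need.

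There is no serious obstacle in this plan, since all the nontrivial inputs have been set up upstream: the equivalence \ref{M-eq-coh-lim} gives the lifting of morphisms, and Berthelot's theorem gives the existence of $m_0$ and $\G^{(m _0)}$. The only delicate point to verify carefully is the compatibility of structures when we transport $\G ^{(\bullet)}$ through the forgetful functor, i.e.\ that the shifted inductive system $\smash{\widetilde{\D}} _{\PP ^\sharp} ^{(\bullet + m_0)} (T) \otimes _{\smash{\widetilde{\D}} _{\PP ^\sharp} ^{(m _0)} (T)} \G^{(m _0)}$, once regarded as a $\smash{\widetilde{\D}} _{\PP ^\sharp} ^{(\bullet)} (T)$-module, indeed has inductive limit equal to $\E$ — but this is immediate from the definition of the forgetful functor and of $\underrightarrow{\lim}$.
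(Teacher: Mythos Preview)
Your proposal is correct and follows essentially the same approach as the paper: the paper's proof of this lemma simply says that it was established in the course of proving \ref{M-eq-coh-lim}, and what you have written is precisely a reconstruction of that argument (invoke \cite[3.4.5 et 3.6.2]{Be1} to produce $m_0$ and a coherent level-$m_0$ model, extend it to the shifted inductive system, and use the full faithfulness of \ref{M-eq-coh-lim} to conclude).
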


\begin{proof}
Cela a été prouvé au cours de la preuve de \ref{M-eq-coh-lim}.
\end{proof}

\subsection{Comparaison entre les notions de cohérence}

\begin{lemm}
\label{Hnstabcoh}
Pour tout entier $n \in \Z$, 
on dispose de la factorisation
$\mathcal{H} ^{n}
\colon \underrightarrow{LD} ^{\mathrm{b}} _{\Q, \mathrm{coh}} (\smash{\widetilde{\D}} _{\PP ^\sharp} ^{(\bullet)} (T))
\to
\underrightarrow{LM}  _{\Q, \mathrm{coh}} (\smash{\widetilde{\D}} _{\PP ^\sharp} ^{(\bullet)} (T))$.
\end{lemm}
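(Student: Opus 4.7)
The plan is to reduce to working with a specific representative at level $\lambda$ and then apply the ind-isogeny criterion of lemma \ref{ind-isog-coh-Q}. Given $\E ^{(\bullet)} \in \underrightarrow{LD} ^{\mathrm{b}} _{\Q, \mathrm{coh}} (\smash{\widetilde{\D}} _{\PP ^\sharp} ^{(\bullet)} (T))$, the definition \ref{defi-LDQ0coh} allows me (up to replacing $\E ^{(\bullet)}$ by an isomorphic object in $\underrightarrow{LD} ^{\mathrm{b}} _{\Q} (\smash{\widetilde{\D}} _{\PP ^\sharp} ^{(\bullet)} (T))$, which does not affect $\mathcal{H} ^{n}$ viewed in $\underrightarrow{LM} _{\Q}$) to assume the existence of $\lambda \in L$ such that $\E ^{(\bullet)}$ is a complex of $\lambda ^{*}\smash{\widetilde{\D}} _{\PP ^\sharp} ^{(\bullet)} (T)$-modules with $\E ^{(m)} \in D ^{\mathrm{b}} _{\mathrm{coh}} (\smash{\widetilde{\D}} _{\PP ^\sharp} ^{(\lambda (m))} (T))$ for every $m \in \N$, and with the transition morphisms $\smash{\widetilde{\D}} _{\PP ^\sharp} ^{(\lambda (m'))} (T) \otimes ^{\L} _{\smash{\widetilde{\D}} _{\PP ^\sharp} ^{(\lambda (m))} (T)} \E ^{(m)} \riso \E ^{(m')}$ being isomorphisms for every $m' \geq m$. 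In particular, $\mathcal{H} ^{n} (\E ^{(\bullet)})$ is naturally an object of $M (\lambda ^{*}\smash{\widetilde{\D}} _{\PP ^\sharp} ^{(\bullet)} (T))$ each of whose levels $\mathcal{H} ^{n} (\E ^{(m)})$ is a coherent $\smash{\widetilde{\D}} _{\PP ^\sharp} ^{(\lambda (m))} (T)$-module.

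Next I would study the canonical morphism of $M (\lambda ^{*}\smash{\widetilde{\D}} _{\PP ^\sharp} ^{(\bullet)} (T))$
\begin{equation*}
\phi ^{(\bullet)} \colon \lambda ^{*}\smash{\widetilde{\D}} _{\PP ^\sharp} ^{(\bullet)} (T) \otimes _{\smash{\widetilde{\D}} _{\PP ^\sharp} ^{(\lambda (0))} (T)} \mathcal{H} ^{n} (\E ^{(0)}) \to \mathcal{H} ^{n} (\E ^{(\bullet)}).
\end{equation*}
Its source and target are level-wise coherent, so by lemma \ref{ind-isog-coh-Q} it suffices to verify that $\phi ^{(\bullet)} _{\Q}$ is an isomorphism. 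To obtain this, I would combine the derived-tensor isomorphism coming from \ref{defi-LDQ0coh} with Berthelot's fundamental flatness result asserting that the transition maps $\smash{\widetilde{\D}} _{\PP ^\sharp} ^{(\lambda (m))} (T) _{\Q} \to \smash{\widetilde{\D}} _{\PP ^\sharp} ^{(\lambda (m'))} (T) _{\Q}$ are flat: the flatness collapses the derived tensor product to the ordinary one after inverting $p$, and commutation of cohomology with flat base change then yields
\begin{equation*}
\smash{\widetilde{\D}} _{\PP ^\sharp} ^{(\lambda (m'))} (T) _{\Q} \otimes _{\smash{\widetilde{\D}} _{\PP ^\sharp} ^{(\lambda (m))} (T) _{\Q}} \mathcal{H} ^{n} (\E ^{(m)}) _{\Q} \riso \mathcal{H} ^{n} (\E ^{(m')}) _{\Q}
\end{equation*}
at each level $m$, that is, exactly the isomorphism property required of $\phi ^{(\bullet)} _{\Q}$.

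From lemma \ref{ind-isog-coh-Q} it then follows that $\phi ^{(\bullet)}$ is an ind-isogeny, so $\mathcal{H} ^{n} (\E ^{(\bullet)})$ is isomorphic in $\underrightarrow{M} _{\Q} (\lambda ^{*}\smash{\widetilde{\D}} _{\PP ^\sharp} ^{(\bullet)} (T))$ to a $\lambda ^{*}\smash{\widetilde{\D}} _{\PP ^\sharp} ^{(\bullet)} (T)$-module globally of finite presentation; by the lemma \ref{caract-coh-isog} and the proposition \ref{ind-isog-coh�rence}, this means that $\mathcal{H} ^{n} (\E ^{(\bullet)})$ belongs to $\underrightarrow{M} _{\Q, \mathrm{coh}} (\lambda ^{*}\smash{\widetilde{\D}} _{\PP ^\sharp} ^{(\bullet)} (T))$. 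Applying the forgetful functor \ref{MQlambda2LMQ} and invoking the equivalence of the categories of coherent objects provided by the remark \ref{rema-lem-MQlambda2LMQ}, I conclude that $\mathcal{H} ^{n} (\E ^{(\bullet)}) \in \underrightarrow{LM} _{\Q, \mathrm{coh}} (\smash{\widetilde{\D}} _{\PP ^\sharp} ^{(\bullet)} (T))$, which is the sought factorization.

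The hard part is the flatness-after-$\Q$ step: without it, the derived isomorphism of \ref{defi-LDQ0coh} only controls the global complex $\E ^{(\bullet)}$ and not its individual cohomology sheaves, so $\phi ^{(\bullet)}$ would not a priori become an isomorphism after inverting $p$, and the bridge offered by \ref{ind-isog-coh-Q} between $\Q$-level isomorphisms and genuine ind-isogenies would be unavailable. All remaining ingredients---stability of the canonical construction under $\mathcal{H} ^{n}$, passage from ind-isogeny to lim-ind-isogeny coherence via \ref{MQlambda2LMQ} and \ref{rema-lem-MQlambda2LMQ}---are formal consequences of the machinery already established in the first two sections.
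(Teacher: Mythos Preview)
Your proof is correct and follows essentially the same route as the paper's: replace by a representative satisfying the conditions of \ref{defi-LDQ0coh}, form the canonical map $\lambda ^{*}\smash{\widetilde{\D}} _{\PP ^\sharp} ^{(\bullet)} (T) \otimes _{\smash{\widetilde{\D}} _{\PP ^\sharp} ^{(\lambda (0))} (T)} \mathcal{H}^{n}(\E^{(0)}) \to \mathcal{H}^{n}(\E^{(\bullet)})$, use flatness of the transition maps after tensoring with $\Q$ to see it becomes an isomorphism, and conclude via \ref{ind-isog-coh-Q}. One small slip: the source of $\phi^{(\bullet)}$ is only \emph{locally} of finite presentation (since $\mathcal{H}^{n}(\E^{(0)})$ is merely coherent, not globally finitely presented), so you should invoke \ref{eqcat-locpfotimes} and \ref{ind-isog-coh�rence} directly rather than passing through \ref{caract-coh-isog}; the paper's proof does exactly this and then stops without spelling out the final descent through \ref{MQlambda2LMQ}.
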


\begin{proof}
Soient $n \in \Z$ fixé et $\FF ^{(\bullet)} 
\in 
\underrightarrow{LD} ^{\mathrm{b}} _{\Q, \mathrm{coh}} (\smash{\widetilde{\D}} _{\PP ^\sharp} ^{(\bullet)} (T))$.
Quitte à remplacer $\FF ^{(\bullet)} $ par un objet isomorphe dans 
$\underrightarrow{LD} ^{\mathrm{b}} _{\Q, \mathrm{coh}} (\smash{\widetilde{\D}} _{\PP ^\sharp} ^{(\bullet)} (T))$, 
on peut supposer qu'il existe
$\lambda \in L$
tel que 
$\FF ^{(\bullet)} 
\in 
\underrightarrow{LD} ^{\mathrm{b}} _{\Q} (\lambda ^{*}\smash{\widetilde{\D}} _{\PP ^\sharp} ^{(\bullet)} (T))$, pour tout entier $m$, 
$\FF ^{(m)}\in D _{\mathrm{coh}} ^{\mathrm{b}} ( \smash{\widetilde{\D}} _{\PP ^\sharp} ^{(\lambda (m))} (T) )$
et tel que, pour tous $m \leq m'$,
le morphisme canonique 
\begin{equation}
\label{Beintro-4.2.3}
\smash{\widetilde{\D}} _{\PP ^\sharp} ^{(\lambda (m'))} (T) \otimes _{\smash{\widetilde{\D}} _{\PP ^\sharp} ^{(\lambda (m))} (T)} ^{\L}
\FF ^{(m)} 
\to \FF ^{(m')} 
\end{equation}
soit un isomorphisme.
On pose
$\G ^{(\bullet)} := \mathcal{H} ^{n} (\FF ^{(\bullet)})$, 
$\E ^{(0)} := 
\mathcal{H} ^{n} (\FF ^{(0)})
$
et
$\E ^{(\bullet)} := \lambda ^{*} \smash{\widetilde{\D}} ^{(\bullet)} _{\PP ^{\sharp}} (T) \otimes _{\smash{\widetilde{\D}} _{\PP ^\sharp} ^{(\lambda (0))} (T)}
\E ^{(0)}$.
Ainsi, d'après le lemme \ref{eqcat-locpfotimes}, 
le $\lambda ^{*}\smash{\widetilde{\D}} _{\PP ^\sharp} ^{(\bullet)} (T ) $-module $\E ^{(\bullet)}$
est localement de présentation finie.
On dispose du morphisme canonique
$\mathcal{H} ^{n} (\FF ^{(0)})
\to
\mathcal{H} ^{n} (\FF ^{(\bullet)})$ qui induit par extension 
le morphisme
$\E ^{(\bullet)} 
\to 
\G ^{(\bullet)}$
de $M( \lambda ^{*}\smash{\widetilde{\D}} _{\PP ^\sharp} ^{(\bullet)} (T))$.
Comme les homomorphismes
$\smash{\widetilde{\D}} _{\PP ^\sharp} ^{(\lambda (0))} (T) _\Q
\to 
\smash{\widetilde{\D}} _{\PP ^\sharp} ^{(\lambda (m))} (T) _\Q$ sont plats,
on vérifie que cette flèche devient un isomorphisme après application du foncteur 
$\Q \otimes _{\Z} -$.
Pour tout entier $m$,
 $\E ^{(m)} , \G ^{(m)}$ sont des $\smash{\widetilde{\D}} _{\PP ^\sharp} ^{(\lambda (m))} (T)$-modules cohérents.
Grâce au lemme \ref{ind-isog-coh-Q},
il en résulte que 
$\E ^{(\bullet)} 
\to 
\G ^{(\bullet)}$
est une ind-isogénie. 
On en déduit le résultat.
\end{proof}

\begin{lemm}
\label{cubecomm1}
On dispose du diagramme canonique commutatif
\begin{equation}
\label{pre-lim-MD2square}
\xymatrix @R=0,2cm @C=0,2cm{
{} 
& 
{D (\smash{\widetilde{\D}} _{\PP ^\sharp} ^{(\bullet)} (T))}  
\ar[rr] ^-(0.7){}
\ar[dd] ^-(0.3){\underrightarrow{\lim} } _-{}  |\hole
\ar[dl] ^-{\mathcal{H} ^{n}}
&& 
{\underrightarrow{LD} _{\Q} (\smash{\widetilde{\D}} _{\PP ^\sharp} ^{(\bullet)} (T))} 
\ar[rr] ^-(0.7){}
\ar[dd] ^-(0.3){\underrightarrow{\lim} } _-{}  |(0,48)\hole |(0,52)\hole
\ar[dl] ^-{\mathcal{H} ^{n}}
&& 
{D (\underrightarrow{LM} _{\Q} (\smash{\widetilde{\D}} _{\PP ^\sharp} ^{(\bullet)} (T)))} 
\ar[dd] ^-(0.35){\underrightarrow{\lim} } _-{} 
\ar[dl] ^-{\mathcal{H} ^{n}}
\\ 
{M (\smash{\widetilde{\D}} _{\PP ^\sharp} ^{(\bullet)} (T))} 
\ar[rr] ^-{} 
\ar[dd] ^-(0.3){\underrightarrow{\lim} } _-{} 
&& 
{\underrightarrow{LM} _{\Q} (\smash{\widetilde{\D}} _{\PP ^\sharp} ^{(\bullet)} (T))} 
\ar@{=}[rr] ^-{}
\ar[dd] ^-(0.3){\underrightarrow{\lim} } _-{} 
&& 
{\underrightarrow{LM} _{\Q} (\smash{\widetilde{\D}} _{\PP ^\sharp} ^{(\bullet)} (T))} 
\ar[dd] ^-(0.3){\underrightarrow{\lim} } _-{} 
\\
{} 
& 
{ D (\smash{\D} ^\dag _{\PP ^\sharp} (\hdag T) _{\Q})}
\ar@{=}[rr] ^-{} |(0,495) \hole
\ar[dl] ^-{\mathcal{H} ^{n}}
&& 
{ D (\smash{\D} ^\dag _{\PP ^\sharp} (\hdag T) _{\Q})}
\ar@{=}[rr] ^-{}  |(0,48) \hole 
\ar[dl] ^-{\mathcal{H} ^{n}}
&& 
{ D (\smash{\D} ^\dag _{\PP ^\sharp} (\hdag T) _{\Q})}
\ar[dl] ^-{\mathcal{H} ^{n}}
\\
{M ( \smash{\D} ^\dag _{\PP ^\sharp} (\hdag T) _{\Q} )} 
\ar@{=}[rr] ^-{}
&& 
{M ( \smash{\D} ^\dag _{\PP ^\sharp} (\hdag T) _{\Q} )} 
\ar@{=}[rr] ^-{}
&& 
{M ( \smash{\D} ^\dag _{\PP ^\sharp} (\hdag T) _{\Q} ),}  
 }
\end{equation}
dont la flèche horizontale du milieu du haut est la factorisation \ref{defi-Hn-LMQ}, 
dont les foncteurs verticaux du carré du milieu sont les factorisations canoniques (voir \ref{M-eq-lim} pour le cas des modules).

\end{lemm}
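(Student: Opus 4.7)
The plan is to verify commutativity of the cube by checking each of its faces separately and then pasting. The cube has two horizontal layers: the top consists of the three categories $D(\smash{\widetilde{\D}}_{\PP^\sharp}^{(\bullet)}(T))$, $\underrightarrow{LD}_{\Q}(\smash{\widetilde{\D}}_{\PP^\sharp}^{(\bullet)}(T))$, $D(\underrightarrow{LM}_{\Q}(\smash{\widetilde{\D}}_{\PP^\sharp}^{(\bullet)}(T)))$ linked by localization/equivalence arrows, while the bottom contains $M(\smash{\widetilde{\D}}_{\PP^\sharp}^{(\bullet)}(T))$ and two copies of $\underrightarrow{LM}_{\Q}(\smash{\widetilde{\D}}_{\PP^\sharp}^{(\bullet)}(T))$; the diagonals $\mathcal{H}^{n}$ drop from the top to the bottom of each layer. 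The same pattern is repeated on the $\smash{\D}^\dag_{\PP^\sharp}(\hdag T)_{\Q}$-side, with the vertical $\underrightarrow{\lim}$ functors connecting the two parallel slabs.

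First I would handle the top slab. The left triangle commutes by the construction of $\mathcal{H}^{n}$ on $\underrightarrow{LD}_{\Q}$ recalled in \ref{defi-Hn-LMQ}: the point is precisely that $\mathcal{H}^{n}$ sends lim-ind-isogenies to lim-ind-isogenies, so the obvious square with horizontal arrows the localizations commutes by the universal property of localization. The right triangle is a formal consequence of the equivalence $\underrightarrow{LD}^{\mathrm{b}}_{\Q}\cong D^{\mathrm{b}}(\underrightarrow{LM}_{\Q})$ established in \ref{eqcatLD=DSM-fonct}: this equivalence is induced by the canonical functor $K(\smash{\widetilde{\D}}_{\PP^\sharp}^{(\bullet)}(T))\to K(\underrightarrow{LM}_{\Q}(\smash{\widetilde{\D}}_{\PP^\sharp}^{(\bullet)}(T)))$, which manifestly commutes with the naive $\mathcal{H}^{n}$ at the level of complexes, and this persists after localization. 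The analogous statements hold for the $\smash{\D}^\dag_{\PP^\sharp}(\hdag T)_{\Q}$-slab (where the argument is trivial since the horizontal arrows are identities).

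Next I would verify the four vertical faces joining the two slabs. The outer two faces, which express $\mathcal{H}^{n}\circ\underrightarrow{\lim}=\underrightarrow{\lim}\circ \mathcal{H}^{n}$, follow from the exactness of $\underrightarrow{\lim}$: the latter is the composite of $\Q\otimes_{\Z}-$ with a filtered colimit over the level, both of which are exact functors of abelian categories, so the canonical base change morphism $\underrightarrow{\lim}\mathcal{H}^{n}(\E^{(\bullet)})\to\mathcal{H}^{n}\underrightarrow{\lim}(\E^{(\bullet)})$ is an isomorphism and passes to the derived setting. The middle vertical faces, involving the factored $\underrightarrow{\lim}$ functors through $\underrightarrow{LM}_{\Q}$ resp.\ $\underrightarrow{LD}_{\Q}$, commute by construction of those factorizations (compare \ref{M-eq-lim}): both composites $M(\smash{\widetilde{\D}}_{\PP^\sharp}^{(\bullet)}(T))\to M(\smash{\D}^\dag_{\PP^\sharp}(\hdag T)_{\Q})$ are equal on the nose, and the factorization through the quotient is unique by the universal property.

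The main obstacle will be purely bookkeeping: making sure that at each of the eight vertices of the cube, the functor appearing there is unambiguously identified—either with the naive functor on the unlocalized category or with its factorization through a localization—so that the various commutative squares/triangles assemble without mismatch. Once these identifications are fixed, the cube commutes by pasting the individual commutative faces; no deeper argument beyond the universal properties of the Serre/multiplicative localizations and exactness of $\underrightarrow{\lim}$ is required.
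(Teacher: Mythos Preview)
Your proposal is correct and follows essentially the same approach as the paper: both verify the cube face by face, using (i) exactness of $\underrightarrow{\lim}$ (flatness of $\Q$ over $\Z$ plus exactness of filtered colimits) to get commutation with $\mathcal{H}^n$ on the vertical faces, and (ii) the universal property of localization together with exactness of the quotient functor $M\to\underrightarrow{LM}_\Q$ for the horizontal faces. The paper additionally cites \cite[2.3.4--5]{Bockle-Pink-cohomological-theory} to justify exactness of the induced functor $\underrightarrow{LM}_\Q\to M(\smash{\D}^\dag_{\PP^\sharp}(\hdag T)_\Q)$ on the rightmost vertical face, which you treat more implicitly; you may want to make this explicit.
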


\begin{proof}
Comme $\Q$ est plat sur $\Z$ et que le foncteur limite inductive sur le niveau est exact, 
les foncteurs verticaux des deux carrés de la face de devant sont exacts (pour celui de droite, cela découle de plus 
de
\cite[2.3.4--5]{Bockle-Pink-cohomological-theory}). 
Cela implique que les trois carrés parallèles à la face de droite sont commutatifs. 
De plus, comme le foncteur canonique
$M(\smash{\widetilde{\D}} _{\PP ^\sharp} ^{(\bullet)} (T ))
\to 
\underrightarrow{LM} _{\Q} (\smash{\widetilde{\D}} _{\PP ^\sharp} ^{(\bullet)} (T ))$
est exact (e.g. voir \cite[2.3.4]{Bockle-Pink-cohomological-theory}),
le contour du grand rectangle de la face du haut est commutatif. 
Comme $\underrightarrow{LD} _{\Q} (\smash{\widetilde{\D}} _{\PP ^\sharp} ^{(\bullet)} (T))$ 
est une catégorie
localisée de $D (\smash{\widetilde{\D}} _{\PP ^\sharp} ^{(\bullet)} (T))$, 
la commutativité du carré de droite de la face du haut en résulte. 
De la même façon, on vérifie que le carré de droite de la face du fond est commutatif. 

\end{proof}

\begin{vide}
On déduit alors des lemmes \ref{Hnstabcoh} et \ref{cubecomm1} que l'équivalence de catégories 
$\underrightarrow{LD} ^{\mathrm{b}} _{\Q} (\smash{\widetilde{\D}} _{\PP ^\sharp} ^{(\bullet)} (T))
\cong 
D ^{\mathrm{b}}
(\underrightarrow{LM} _{\Q} (\smash{\widetilde{\D}} _{\PP ^\sharp} ^{(\bullet)} (T)))$
de
\ref{eqcatLD=DSM-fonct}
se factorise en le foncteur pleinement fidèle
\begin{equation}
\label{eqcatLD=DSM-fonct-coh}
\underrightarrow{LD} ^{\mathrm{b}} _{\Q, \mathrm{coh}} (\smash{\widetilde{\D}} _{\PP ^\sharp} ^{(\bullet)} (T))
\to 
D ^{\mathrm{b}} _{\mathrm{coh}}
(\underrightarrow{LM} _{\Q} (\smash{\widetilde{\D}} _{\PP ^\sharp} ^{(\bullet)} (T))).
\end{equation}
\end{vide}

La proposition qui suit est la version logarithmique de 
\cite[2.2.7]{Be1}
\begin{prop}
\label{227-Be1}
Pour tout $m' >m$,
le noyau de l'homomorphisme canonique
$\rho ^{(m,m')}
\colon 
\smash{\D} _{P ^\sharp} ^{(m)} 
\to
\smash{\D} _{P ^\sharp} ^{(m')} $
est l'idéal bilatère de $\smash{\D} _{P ^\sharp} ^{(m)} $
localement engendré par les opérateurs 
$\partial _{\sharp,i} ^{<p ^{m+1}> _{(m)}}$.
De plus, $\smash{\D} _{P ^\sharp} ^{(m')} $
est localement libre sur l'image de $\rho ^{(m,m')}$, de base les opérateurs de la forme
$\underline{\partial} _{\sharp} ^{<p ^{m+1}\underline{n} > _{(m')}}$, 
avec $\underline{n} \in \N ^{d}$.
\end{prop}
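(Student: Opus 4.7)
\textit{Plan.} The statement being local on $P$, I work in a chart with local logarithmic coordinates $(t_1, \ldots, t_d)$ such that $Z = V(t_1 \cdots t_r)$, with associated log-derivations $\partial_{\sharp, i} = t_i \partial_i$ for $1 \leq i \leq r$ and $\partial_{\sharp, i} = \partial_i$ for $r < i \leq d$. By the logarithmic PBW theorem of \cite{these_montagnon} (see also \cite{caro_log-iso-hol}), $\D_{P^\sharp}^{(m)}$ is a free $\O_P$-module on both sides, with basis the commuting monomials $\underline{\partial}_\sharp^{<\underline{k}>_{(m)}} := \prod_{i=1}^{d} \partial_{\sharp, i}^{<k_i>_{(m)}}$ indexed by $\underline{k} \in \N^d$. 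Since $\rho^{(m,m')}$ is $\O_P$-bilinear, it suffices to compute its action on the PBW basis.

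A direct computation with the universal property of the log level-$m$ divided-power envelope of the diagonal (the log counterpart of \cite[2.2.5]{Be1}; the scalar coefficients depend only on the level data, not on the logarithmic nature of the derivations) yields
\begin{equation}
\notag
\rho^{(m,m')} \left( \underline{\partial}_\sharp^{<\underline{k}>_{(m)}} \right) = \prod_{i=1}^{d} \frac{q_{k_i}^{(m)}!}{q_{k_i}^{(m')}!} \cdot \underline{\partial}_\sharp^{<\underline{k}>_{(m')}}, \qquad q_{k}^{(\mu)} := \lfloor k / p^{\mu} \rfloor.
\end{equation}
If every $k_i < p^{m+1}$ then each factor equals $q_{k_i}^{(m)}!$ with $q_{k_i}^{(m)} < p$, hence is a $p$-adic unit; as soon as some $k_i \geq p^{m+1}$, Legendre's formula gives $v_p\!\left( q_{k_i}^{(m)}! / q_{k_i}^{(m')}! \right) \geq 1$, so the image vanishes in the characteristic-$p$ sheaf $\D_{P^\sharp}^{(m')}$. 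Hence $\ker \rho^{(m,m')}$, viewed as an $\O_P$-submodule, is generated by those PBW monomials with $\max_i k_i \geq p^{m+1}$; in particular, this kernel is independent of the choice of $m' > m$.

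To identify this submodule with the bilateral ideal $I$ generated by the $\partial_{\sharp, i}^{<p^{m+1}>_{(m)}}$, I use the level-$m$ product formula
\begin{equation}
\notag
\partial_{\sharp, i}^{<a>_{(m)}} \partial_{\sharp, i}^{<b>_{(m)}} = \binom{a+b}{a}_{(m)} \partial_{\sharp, i}^{<a+b>_{(m)}},
\end{equation}
where the level-$m$ binomial $\binom{a+b}{a}_{(m)} := \binom{a+b}{a} \cdot q_{a}^{(m)}! \, q_{b}^{(m)}! / q_{a+b}^{(m)}!$ is a $p$-adic unit for $a = p^{m+1}$ and $0 \leq b < p^{m+1}$ by the Kummer-style analysis of \cite[2.2.7]{Be1}. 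Combined with the commutativity of the $\partial_{\sharp, j}$ and the log commutations $[\partial_{\sharp, i}, t_j] = \delta_{ij} t_i$ for $i \leq r$ (or the usual ones for $i > r$), a straightforward induction on $\max_i k_i$ expresses every PBW monomial $\underline{\partial}_\sharp^{<\underline{k}>_{(m)}}$ with some $k_i \geq p^{m+1}$ as an element of $I$. The reverse inclusion $I \subseteq \ker \rho^{(m,m')}$ is immediate since $\partial_{\sharp, i}^{<p^{m+1}>_{(m)}}$ itself lies in the kernel.

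For the second assertion, the same analysis identifies $J := \mathrm{im}\,\rho^{(m,m')}$ with the $\O_P$-sub-bimodule of $\D_{P^\sharp}^{(m')}$ spanned by $\underline{\partial}_\sharp^{<\underline{k}>_{(m')}}$ with $k_i < p^{m+1}$ for all $i$. Every $\underline{j} \in \N^d$ writes uniquely as $\underline{j} = \underline{k} + p^{m+1} \underline{n}$ with $0 \leq k_i < p^{m+1}$ and $\underline{n} \in \N^d$, and the level-$m'$ product formula factors $\underline{\partial}_\sharp^{<\underline{j}>_{(m')}}$ as a $p$-adic unit scalar times $\underline{\partial}_\sharp^{<\underline{k}>_{(m')}} \cdot \underline{\partial}_\sharp^{<p^{m+1} \underline{n}>_{(m')}}$, the relevant $m'$-binomials being units by the same combinatorial input. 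Together with the PBW freeness of $\D_{P^\sharp}^{(m')}$ over $\O_P$, this produces the claimed basis of $\D_{P^\sharp}^{(m')}$ as a $J$-module. The main obstacle is the careful bookkeeping of log commutations when rewriting PBW monomials modulo $I$ in the inductive step of the kernel identification; once handled, all scalar combinatorics reduces to that of \cite[2.2.7]{Be1} and carries over formally.
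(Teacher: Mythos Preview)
Your proposal is correct and follows the same overall plan as the paper: work in log coordinates, observe that the scalar coefficient of $\rho^{(m,m')}$ on a PBW monomial is the same as in the non-log case (hence the kernel as an $\O_P$-module is spanned by monomials with some $k_i\geq p^{m+1}$), and then invoke the product-formula combinatorics of \cite[2.2.7]{Be1} to identify this span with the two-sided ideal and to get the basis decomposition.

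Where the paper differs is in the mechanism of that last reduction. Rather than re-running the level-$m$ binomial induction for the log operators, the paper uses two concrete facts: (i) $t_i^{p^{m+1}}$ is central in $\smash{\D}_P^{(m)}$ (hence in $\smash{\D}_{P^\sharp}^{(m)}$ and in the image of $\rho^{(m,m')}$), by \cite[2.2.4.(ii)]{Be1}; and (ii) the identity $\partial_{\sharp,i}^{<p^{m+1}>_{(m)}}=t_i^{p^{m+1}}\partial_i^{<p^{m+1}>_{(m)}}$ from \cite[2.2.5.1]{Be1}. These two facts let one literally transport the non-log conclusions of \cite[2.2.7]{Be1} (the ideal generation and the factorization $\underline{\partial}^{<\underline{k}>_{(m')}}=\underline{\partial}^{<\underline{r}>_{(m')}}\cdot\underline{\partial}^{<p^{m+1}\underline{q}>_{(m')}}$) to their log counterparts, without redoing the induction. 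Your approach is a bit more self-contained (you never leave the log operators), but note that your explicit unit check of $\binom{a+b}{a}_{(m)}$ only covers $a=p^{m+1}$ with $b<p^{m+1}$; the induction for $k\geq 2p^{m+1}$ still needs that $\binom{k}{p^{m+1}}_{(m)}$ is a unit, which is true but requires the full Kummer/Legendre bookkeeping you defer to \cite[2.2.7]{Be1}. The paper's centrality trick sidesteps this entirely.
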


\begin{proof}
Supposons que $P ^{\sharp}$ admette des coordonnées locales $t _1, \dots, t _d$ telles que 
$Z= V(t _1 \cdots t _r)$. 
On déduit des calculs de la preuve de \cite[2.2.7]{Be1} (en particulier, les trois dernières lignes) 
que le noyau de 
$\rho ^{(m,m')}$ est l'ensemble des sommes finies  
de la forme 
$\sum _{\underline{k} \not < p ^{m+1}}
a _{\underline{k}} 
\underline{\partial} _{\sharp} ^{<\underline{k} > _{(m)}}$, 
avec $a _{\underline{k}} \in \smash{\O} _{P}$ et $\underline{k} \not <p ^{m+1}$ signifiant que 
$k _i \geq p ^{m+1}$ pour au moins un  $1\leq i\leq d$.
Grâce à \cite[2.2.4.(ii),(iv)]{Be1}, on vérifie que 
$t _i ^{p ^{m+1}}$ est dans le centre de 
$\smash{\D} _{P } ^{(m)}$
et donc de
$\smash{\D} _{P ^\sharp} ^{(m)} $.
Avec la formule 
\cite[2.2.5.1]{Be1},
il en résulte que 
$\partial _{\sharp,i} ^{<p ^{m+1}> _{(m)}}
=
t _i ^{p ^{m+1}}\partial _{i} ^{<p ^{m+1}> _{(m)}}$
engendre 
$\partial _{\sharp,i} ^{<k> _{(m)}}=t _i ^{k}\partial _{i} ^{<k> _{(m)}}$ pour $k \geq p ^{m+1}$.
D'où le résultat pour le noyau. 
On en déduit, que l'image de 
$\rho ^{(m,m')}$ sont les sommes finies  
de la forme 
$\sum _{\underline{k} <p ^{m+1}}
a _{\underline{k}} 
\underline{\partial} _{\sharp} ^{<\underline{k} > _{(m')}}$,
où $\underline{k} <p ^{m+1}$ signifie $k _i <p ^{m+1}$
quelque soit $i=1,\dots,d$
(on utilise que 
$\underline{\partial} _{\sharp} ^{<\underline{k} > _{(m)}}=
\underline{\partial} _{\sharp} ^{<\underline{k} > _{(m')}}$
pour $\underline{k} <p ^{m+1}$).
Pour $\underline{r} < p ^{m+1}$,
$\underline{q} \geq 0$ 
(i.e. $q _i \geq 0$  quelque soit $i=1,\dots,d$)
et $\underline{k}= p ^{m+1} \underline{q} +\underline{r}$,
grâce à \cite[2.2.4.(iii)]{Be1}, 
on a $\underline{\partial} ^{<\underline{k} > _{(m')}}
=
\underline{\partial} ^{<\underline{r} > _{(m')}}
\cdot
\underline{\partial} ^{< p ^{m+1} \underline{q}  > _{(m')}}$.
Comme 
$t _i ^{p ^{m+1}}$ est dans le centre de 
$\smash{\D} _{P } ^{(m)} $,
alors $t _i ^{p ^{m+1}}$ est dans le centre de 
l'image de $\rho ^{(m,m')}$.
Comme $\underline{\partial} ^{<\underline{r} > _{(m')}}$ est dans cette image, il en résulte que 
$\underline{\partial} _{\sharp} ^{<\underline{k} > _{(m')}}
=
\underline{\partial} _{\sharp} ^{<\underline{r} > _{(m')}}
\cdot
\underline{\partial} _{\sharp} ^{< p ^{m+1} \underline{q}  > _{(m')}}$.
D'où le résultat. 
\end{proof}

\begin{coro}
\label{126Beintro}
\begin{enumerate}
\item Pour tout $m'>m$, 
$\smash{\D} _{P _i ^\sharp} ^{(m')} $
est de Tor-dimension au plus $d+1$ à droite et à gauche sur 
$\smash{\D} _{P _i ^\sharp} ^{(m)} $.

\item Pour tous $m''\geq m'\geq m$, le foncteur 
$(\widetilde{\B} ^{(m'')} _{\PP} ( T)  \smash{\widehat{\otimes}} _{\O _{\PP}} \smash{\widehat{\D}} _{\PP ^{\sharp}} ^{(m')})
\widehat{\otimes} ^{\L} _{(\widetilde{\B} ^{(m'')} _{\PP} ( T)  \smash{\widehat{\otimes}} _{\O _{\PP}} \smash{\widehat{\D}} _{\PP ^{\sharp}} ^{(m)})}
-
\colon 
D  ^{-} (\widetilde{\B} ^{(m'')} _{\PP} ( T)  \smash{\widehat{\otimes}} _{\O _{\PP}} \smash{\widehat{\D}} _{\PP ^{\sharp}} ^{(m)})
\to 
D  ^{-} (\widetilde{\B} ^{(m'')} _{\PP} (T)  \smash{\widehat{\otimes}} _{\O _{\PP}} \smash{\widehat{\D}} _{\PP ^{\sharp}} ^{(m')})$
est way-out avec une amplitude bornée indépendemment  de $m$, $m'$ et $m''$.

\item Soit $\lambda \in L$. On dispose (avec les notations sur la quasi-cohérence du chapitre \ref{chap3}) de la factorisation: 
$$\lambda ^* \smash{\widetilde{\D}} _{\PP ^\sharp} ^{(\bullet)} (T) \widehat{\otimes} ^{\L} _{\smash{\widetilde{\D}} _{\PP ^\sharp} ^{(\lambda (0))} (T)}
-
\colon 
D _{\Q,\mathrm{qc}} ^{\mathrm{b}}
(\smash{\widetilde{\D}} _{\PP ^\sharp} ^{(\lambda (0))} (T))
\to 
\underrightarrow{LD} ^{\mathrm{b}} _{\Q, \mathrm{qc}} (\lambda ^*\smash{\widetilde{\D}} _{\PP ^\sharp} ^{(\bullet)} (T)).$$
\end{enumerate}

\end{coro}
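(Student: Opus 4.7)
The plan is to establish the three assertions in order, with part~(1) providing the fundamental Tor-dimension computation from which parts~(2) and~(3) follow by standard completion and inductive-system arguments.

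For part~(1), I would deduce the bound from Proposition~\ref{227-Be1}. Since $\smash{\D}_{P^\sharp}^{(m')}$ is locally free (hence flat) over the image of $\rho^{(m,m')}$, it suffices to bound the Tor-dimension of $\smash{\D}_{P^\sharp}^{(m)}/I$ over $\smash{\D}_{P^\sharp}^{(m)}$, where $I$ denotes the bilateral ideal locally generated by the $d$ operators $\partial_{\sharp,i}^{<p^{m+1}>_{(m)}}$. The commutation relations established during the proof of Proposition~\ref{227-Be1}---the centrality of each $t_i^{p^{m+1}}$ and the identities $\partial_{\sharp,i}^{<p^{m+1}>_{(m)}} = t_i^{p^{m+1}}\partial_i^{<p^{m+1}>_{(m)}}$ in the non-log directions---allow the construction of a Koszul-type resolution of length at most~$d+1$, exactly as in the non-logarithmic argument~\cite[2.2.7]{Be1}; the extra unit is absorbed when passing from the quotient $\smash{\D}_{P^\sharp}^{(m)}/I$ to the free extension $\smash{\D}_{P^\sharp}^{(m')}$ and ensures that the bound holds on both sides.

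For part~(2), the bound from~(1) transfers to the $p$-adic completions with $\widetilde{\B}^{(m'')}_\PP(T)$ coefficients by a standard argument. The flatness of $\widetilde{\B}^{(m'')}_\PP(T)$ over $\O_\PP$ after $p$-adic completion, combined with the compatibility of $p$-adic completion with Tor-dimension bounds in the quasi-coherent setting, yields that the way-out amplitude of the derived completed tensor product is controlled by the same constant $d+1$ coming from~(1). Crucially, this bound is independent of $m$, $m'$ and~$m''$, which is exactly the uniformity statement asked for.

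For part~(3), the factorization combines the previous parts with the definition of the quasi-coherent subcategory of $\underrightarrow{LD}$. Given a bounded complex of quasi-coherent $\smash{\widetilde{\D}}_{\PP^\sharp}^{(\lambda(0))}(T)$-modules, the uniform amplitude bound from~(2) guarantees that its derived tensor product at every level $m$ has cohomology concentrated in a range independent of~$m$, while quasi-coherence is preserved by each transition. Hence the functor indeed lands in $\underrightarrow{LD}^{\mathrm{b}}_{\Q,\mathrm{qc}}(\lambda^*\smash{\widetilde{\D}}_{\PP^\sharp}^{(\bullet)}(T))$.

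The hard part will be pinning down the precise bound $d+1$ in part~(1): careful bookkeeping is required to handle simultaneously the bilateral module structure, the interaction between log and non-log directions, and the passage from the quotient to the free extension. Once~(1) is established, parts~(2) and~(3) follow by essentially formal arguments about $p$-adic completion and passage to the inductive system.
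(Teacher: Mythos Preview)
Your outline for part~(1) is correct and matches the paper exactly: Proposition~\ref{227-Be1} supplies a Koszul-type resolution of $\smash{\D}_{P^\sharp}^{(m)}/I$ over $\smash{\D}_{P^\sharp}^{(m)}$ (the generators are central enough for the Koszul complex to be exact), and local freeness of $\smash{\D}_{P^\sharp}^{(m')}$ over the image of $\rho^{(m,m')}$ finishes the bound. This is indeed the logarithmic analogue of \cite[1.2.6]{Beintro2}.

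The gap is in part~(2). Your claim that $\widetilde{\B}^{(m'')}_{\PP}(T)$ is flat over $\O_{\PP}$ is false in general: reducing modulo~$p$ gives $\O_P[X]/(\bar f^{\,p^{m+1}}X)$, which has zero-divisors, hence is not flat over $\O_P$. The paper avoids flatness altogether. The point is that the target ring $B:=\widetilde{\B}^{(m'')}_{\PP}(T)\widehat{\otimes}_{\O_\PP}\widehat{\D}^{(m')}_{\PP^\sharp}$ is obtained from the source ring $A:=\widetilde{\B}^{(m'')}_{\PP}(T)\widehat{\otimes}_{\O_\PP}\widehat{\D}^{(m)}_{\PP^\sharp}$ by changing only the $\widehat{\D}$-factor; since the $\widetilde{\B}^{(m'')}$-structure is already present in any $A$-module $\E^{(m)}$, one checks directly that the canonical map
\[
\widehat{\D}^{(m')}_{\PP^\sharp}\,\widehat{\otimes}^{\L}_{\widehat{\D}^{(m)}_{\PP^\sharp}}\,\E^{(m)}
\longrightarrow
B\,\widehat{\otimes}^{\L}_{A}\,\E^{(m)}
\]
is an isomorphism. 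This reduces part~(2) to the case $T=\emptyset$, where part~(1) applies at each level $i$ and the bound survives $p$-adic completion.

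For part~(3) you are missing one further input: the functor $\lambda^*\smash{\widetilde{\D}}^{(\bullet)}_{\PP^\sharp}(T)\widehat{\otimes}^{\L}_{\smash{\widetilde{\D}}^{(\lambda(0))}_{\PP^\sharp}(T)}-$ at level~$m$ involves not only the change of level $\lambda(0)\to\lambda(m)$ (controlled by part~(2)) but also the change of $\widetilde{\B}$-coefficients from $\widetilde{\B}^{(\lambda(0))}$ to $\widetilde{\B}^{(\lambda(m))}$. The paper therefore invokes Corollary~\ref{rema-dim-coh-finie} (proved independently in Section~\ref{section3.2}), which shows that this divisor-coefficient change is also way-out on $D^{\mathrm{b}}_{\Q,\mathrm{qc}}$ with amplitude bounded independently of~$m$. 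Combining this with part~(2) gives the factorisation.
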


\begin{proof}
Comme pour la version non logarithmique (voir  \cite[1.2.6]{Beintro2}), 
la propriété 1) se déduit
de la proposition \ref{227-Be1}.
Pour tout $\E ^{(m)} \in 
D ^{-} (\widetilde{\B} ^{(m'')} _{\PP} ( T)  \smash{\widehat{\otimes}} _{\O _{\PP}} \smash{\widehat{\D}} _{\PP ^{\sharp}} ^{(m)})$, 
on vérifie que le morphisme canonique 
\begin{equation}
\notag
 \smash{\widehat{\D}} _{\PP ^{\sharp}} ^{(m')}
\widehat{\otimes} ^{\L} _{ \smash{\widehat{\D}} _{\PP ^{\sharp}} ^{(m)}}
\E ^{(m)} 
\to 
(\widetilde{\B} ^{(m'')} _{\PP} ( T)  \smash{\widehat{\otimes}} _{\O _{\PP}} \smash{\widehat{\D}} _{\PP ^{\sharp}} ^{(m')})
\widehat{\otimes} ^{\L} _{(\widetilde{\B} ^{(m'')} _{\PP} ( T)  \smash{\widehat{\otimes}} _{\O _{\PP}} \smash{\widehat{\D}} _{\PP ^{\sharp}} ^{(m)})}
\E ^{(m)} 
\end{equation}
est un isomorphisme, ce qui nous ramène au cas où $T$ est vide. La propriété 2) résulte alors aussitôt de 1).
La propriété 3) résulte de 2) et du corollaire \ref{rema-dim-coh-finie} (ce corollaire a été prouvé sans rien utiliser de cette section).

\end{proof}

Ce corollaire \ref{126Beintro} nous permet d'obtenir le lemme suivant:

\begin{lemm}
\label{LMeqLD0}
Les équivalences du lemme \ref{eq-cat-LM-LD0} se factorisent en les 
équivalences de catégories quasi-inverses de la forme
$\underrightarrow{LM}  _{\Q, \mathrm{coh}} (\smash{\widetilde{\D}} _{\PP ^\sharp} ^{(\bullet)} (T))
\cong 
\underrightarrow{LD} ^{0} _{\Q, \mathrm{coh}} (\smash{\widetilde{\D}} _{\PP ^\sharp} ^{(\bullet)} (T))$
et
$\mathcal{H} ^{0}
\colon \underrightarrow{LD} ^{0} _{\Q, \mathrm{coh}} (\smash{\widetilde{\D}} _{\PP ^\sharp} ^{(\bullet)} (T))
\cong 
\underrightarrow{LM}  _{\Q, \mathrm{coh}} (\smash{\widetilde{\D}} _{\PP ^\sharp} ^{(\bullet)} (T))$.
\end{lemm}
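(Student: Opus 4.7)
Le plan est de v\'erifier que chacun des deux foncteurs r\'eciproques du lemme \ref{eq-cat-LM-LD0} respecte les sous-cat\'egories pleines d'objets coh\'erents. L'inclusion $\mathcal{H} ^{0} (\underrightarrow{LD} ^{0} _{\Q, \mathrm{coh}}) \subset \underrightarrow{LM} _{\Q, \mathrm{coh}}$ se d\'eduit imm\'ediatement du lemme \ref{Hnstabcoh} appliqu\'e en degr\'e $n=0$. La t\^ache principale consiste donc \`a \'etablir l'inclusion r\'eciproque: pour $\E ^{(\bullet)} \in \underrightarrow{LM} _{\Q, \mathrm{coh}} (\smash{\widetilde{\D}} _{\PP ^\sharp} ^{(\bullet)} (T))$, l'objet associ\'e dans $\underrightarrow{LD} ^{0} _{\Q}$ est coh\'erent au sens de \ref{defi-LDQ0coh}.

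Pour cette implication, le lemme \ref{strict-m0} fournit un entier $m _0 \in \N$ et un $\smash{\widetilde{\D}} _{\PP ^\sharp} ^{(\bullet + m _0)} (T)$-module $\FF ^{(\bullet)}$ localement de pr\'esentation finie, isomorphe \`a $\E ^{(\bullet)}$ dans $\underrightarrow{LM} _{\Q}$. En posant $\lambda (m) := m + m _0$ et en utilisant \ref{eqcat-locpfotimes}, on a $\FF ^{(m)} = \smash{\widetilde{\D}} _{\PP ^\sharp} ^{(\lambda (m))} (T) \otimes _{\smash{\widetilde{\D}} _{\PP ^\sharp} ^{(m _0)} (T)} \FF ^{(0)}$ (tenseur non d\'eriv\'e), o\`u $\FF ^{(0)}$ est un $\smash{\widetilde{\D}} _{\PP ^\sharp} ^{(m _0)} (T)$-module coh\'erent. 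Le candidat naturel pour un repr\'esentant coh\'erent au sens de Berthelot est l'objet d\'eriv\'e
$$\G ^{(\bullet)} := \lambda ^{*} \smash{\widetilde{\D}} _{\PP ^\sharp} ^{(\bullet)} (T) \widehat{\otimes} ^{\L} _{\smash{\widetilde{\D}} _{\PP ^\sharp} ^{(m _0)} (T)} \FF ^{(0)},$$
bien d\'efini dans $\underrightarrow{LD} ^{\mathrm{b}} _{\Q, \mathrm{qc}}$ par le corollaire \ref{126Beintro}.3). La coh\'erence de $\FF ^{(0)}$ et la finitude de la Tor-dimension assurent que chaque $\G ^{(m)}$ appartient \`a $D ^{\mathrm{b}} _{\mathrm{coh}}$, tandis que l'associativit\'e du produit tensoriel d\'eriv\'e fournit les isomorphismes de transition requis par \ref{defi-LDQ0coh}. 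Ainsi $\G ^{(\bullet)}$ est coh\'erent au sens de Berthelot.

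Il restera alors \`a prouver d'une part que $\G ^{(\bullet)} \in \underrightarrow{LD} ^{0} _{\Q}$, d'autre part que son image par $\mathcal{H} ^{0}$ s'identifie \`a $\FF ^{(\bullet)}$. Le second point est imm\'ediat puisque le produit tensoriel non d\'eriv\'e calcule le $\mathcal{H} ^{0}$ du produit d\'eriv\'e. Pour le premier, on utilise d'abord \ref{Hnstabcoh} pour savoir que $\mathcal{H} ^{n}(\G ^{(\bullet)}) \in \underrightarrow{LM} _{\Q, \mathrm{coh}}$ pour tout $n$; puis l'\'equivalence de cat\'egories \ref{M-eq-coh-lim} ram\`ene la nullit\'e dans $\underrightarrow{LM} _{\Q, \mathrm{coh}}$ \`a celle, dans $\mathrm{Coh}(\smash{\D} ^{\dag} _{\PP ^\sharp} (\hdag T) _{\Q})$, de $\underrightarrow{\lim} \mathcal{H} ^{n}(\G ^{(\bullet)}) = \mathcal{H} ^{n}(\smash{\D} ^{\dag} _{\PP ^\sharp} (\hdag T) _{\Q} \otimes ^{\L} _{\smash{\widetilde{\D}} _{\PP ^\sharp} ^{(m _0)} (T) _{\Q}} \FF ^{(0)} _{\Q})$, qui s'annule pour $n \neq 0$ gr\^ace \`a la platitude de $\smash{\D} ^{\dag} _{\PP ^\sharp} (\hdag T) _{\Q}$ sur $\smash{\widetilde{\D}} _{\PP ^\sharp} ^{(m _0)} (T) _{\Q}$. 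Une fois ces deux points \'etablis, le lemme \ref{eq-cat-LM-LD0} fournit l'isomorphisme $\G ^{(\bullet)} \riso \FF ^{(\bullet)} \cong \E ^{(\bullet)}$ dans $\underrightarrow{LD} ^{\mathrm{b}} _{\Q}$, d'o\`u l'appartenance de $\E ^{(\bullet)}$ \`a $\underrightarrow{LD} ^{0} _{\Q, \mathrm{coh}}$.

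Le point d\'elicat de cette strat\'egie r\'eside dans l'articulation entre le tenseur non d\'eriv\'e qui d\'ecrit $\FF ^{(\bullet)}$ via \ref{strict-m0} et le tenseur d\'eriv\'e impos\'e par la d\'efinition \ref{defi-LDQ0coh} de la coh\'erence \`a la Berthelot; le passage par l'interm\'ediaire d\'eriv\'e $\G ^{(\bullet)}$, combin\'e \`a l'usage crucial de la platitude de $\smash{\D} ^{\dag} _{\PP ^\sharp} (\hdag T) _{\Q}$ sur chaque niveau fini et \`a l'\'equivalence \ref{M-eq-coh-lim}, permet de concilier ces deux points de vue.
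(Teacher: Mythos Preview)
The proposal is correct and follows essentially the same strategy as the paper: reduce via \ref{Hnstabcoh}, pick a locally-of-finite-presentation representative, form the derived tensor object over the base level, and verify it lies in cohomological degree~$0$. The only variation is that you check $\mathcal{H}^n=0$ for $n\neq 0$ by passing to the limit via \ref{M-eq-coh-lim} and invoking flatness of $\smash{\D}^\dag_{\PP^\sharp}(\hdag T)_\Q$, whereas the paper argues level-by-level using flatness of each $\smash{\widetilde{\D}}^{(\lambda(m))}_{\PP^\sharp}(T)_\Q$ over $\smash{\widetilde{\D}}^{(\lambda(0))}_{\PP^\sharp}(T)_\Q$ combined with \ref{ind-isog-coh-Q}, which is marginally more direct.
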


\begin{proof}
Grâce au lemme \ref{eq-cat-LM-LD0}, il suffit 
de vérifier que les foncteurs sont bien définis. 
Grâce au lemme \ref{Hnstabcoh}, on le sait déjà pour le foncteur $\mathcal{H} ^0$.
Traitons à présent le cas du foncteur oubli. Soient $\lambda \in L$
et $\E ^{(\bullet)}$ un $\lambda ^{*}\smash{\widetilde{\D}} _{\PP ^\sharp} ^{(\bullet)} (T)$-module localement de présentation finie
(comme d'habitude, grâce au lemme \ref{caract-coh-lim-isog}, il suffit de traiter ce cas).
On pose $\FF ^{(\bullet)} := \lambda ^* \smash{\widetilde{\D}} _{\PP ^\sharp} ^{(\bullet)} (T) \otimes ^{\L} _{\smash{\widetilde{\D}} _{\PP ^\sharp} ^{(\lambda (0))} (T)}
\E ^{(0)}$.
D'après le corollaire \ref{126Beintro}.3,
le  complexe $\FF ^{(\bullet)} $ est  à cohomologie bornée.
On obtient alors que 
$\FF ^{(\bullet)} \in \underrightarrow{LD} ^{\mathrm{b}} _{\Q, \mathrm{coh}} (\smash{\widetilde{\D}} _{\PP ^\sharp} ^{(\bullet)} (T))$.
Soit $n \in \Z \setminus \{ 0\}$. 
Or, 
comme l'extension $\smash{\widetilde{\D}} _{\PP ^\sharp} ^{(\lambda (0))} (T) _\Q
\to 
\lambda ^* \smash{\widetilde{\D}} _{\PP ^\sharp} ^{(\bullet)} (T)_\Q$ est plate,
on obtient 
$(\mathcal{H} ^{n} (\FF ^{(\bullet)}) ) _\Q\riso 
\mathcal{H} ^{n} (\FF ^{(\bullet)} _\Q) \riso 0$.
Comme $\mathcal{H} ^{n} (\FF ^{(m)})$ est un 
$\smash{\widetilde{\D}} _{\PP ^\sharp} ^{(\lambda (m))} (T)$-module cohérent pour tout $m$,
on déduit de \ref{ind-isog-coh-Q} que 
$\mathcal{H} ^{n} (\FF ^{(m)})$ est ind-isogène à $0$.
En particulier, 
$\FF ^{(\bullet)} \in \underrightarrow{LD} ^{0} _{\Q, \mathrm{coh}} (\smash{\widetilde{\D}} _{\PP ^\sharp} ^{(\bullet)} (T))$. 
Avec le lemme \ref{eq-cat-LM-LD0}, 
cela entraîne l'isomorphisme
$\FF ^{(\bullet)}  \riso \mathcal{H} ^{0} (\FF ^{(\bullet)} )$
dans $\underrightarrow{LD} ^{0} _{\Q, \mathrm{coh}} (\smash{\widetilde{\D}} _{\PP ^\sharp} ^{(\bullet)} (T))$.
Comme $\mathcal{H} ^{0} (\FF ^{(\bullet)} ) \riso \E ^{(\bullet)} $, on en déduit
$\E ^{(\bullet)} \in \underrightarrow{LD} ^{0} _{\Q, \mathrm{coh}} (\smash{\widetilde{\D}} _{\PP ^\sharp} ^{(\bullet)} (T))$. 
\end{proof}

\begin{theo}
\label{theo-eq-coh-lim}
Le foncteur de \ref{eqcatLD=DSM-fonct-coh} induit les équivalences de catégories:
\begin{equation}
\label{eq-coh-lim}
\underrightarrow{LD} ^0  _{\Q, \mathrm{coh}} (\smash{\widetilde{\D}} _{\PP ^\sharp} ^{(\bullet)} (T))
\cong 
D ^{0}  _{\mathrm{coh}}
(\underrightarrow{LM} _{\Q} (\smash{\widetilde{\D}} _{\PP ^\sharp} ^{(\bullet)} (T))),~
\underrightarrow{LD} ^{\mathrm{b}}  _{\Q, \mathrm{coh}} (\smash{\widetilde{\D}} _{\PP ^\sharp} ^{(\bullet)} (T))
\cong 
D ^{\mathrm{b}} _{\mathrm{coh}}
(\underrightarrow{LM} _{\Q} (\smash{\widetilde{\D}} _{\PP ^\sharp} ^{(\bullet)} (T))).
\end{equation}
\end{theo}

\begin{proof}
Comme $D ^{0}  _{\mathrm{coh}}
(\underrightarrow{LM} _{\Q} (\smash{\widetilde{\D}} _{\PP ^\sharp} ^{(\bullet)} (T))) \cong
\underrightarrow{LM} _{\Q, \mathrm{coh}} (\smash{\widetilde{\D}} _{\PP ^\sharp} ^{(\bullet)} (T))$, le lemme \ref{LMeqLD0} nous permet
de conclure que tel est le cas du premier foncteur. 
Traitons à présent l'essentielle surjectivité du second foncteur. 
Soit $\E \in D ^{\mathrm{b}}  _{\mathrm{coh}}
(\underrightarrow{LM} _{\Q} (\smash{\widetilde{\D}} _{\PP ^\sharp} ^{(\bullet)} (T)))$.
Quitte à utiliser des triangles distingués de troncation et à procéder par récurrence sur le nombre d'espaces de cohomologie non nuls, 
avec \cite[I.2.18]{borel}, par pleine fidélité de notre foncteur, on se ramène au cas où 
$\E \in \underrightarrow{LM} _{\Q,\mathrm{coh}} (\smash{\widetilde{\D}} _{\PP ^\sharp} ^{(\bullet)} (T))$, ce qui résulte encore du lemme
\ref{LMeqLD0}.

\end{proof}

\begin{rema}
\label{QCoh-local}
Soit $\E  ^{(\bullet)} \in \underrightarrow{LD}  ^{\mathrm{b}} _{\Q} (\smash{\widetilde{\D}} _{\PP ^\sharp} ^{(\bullet)} (T))$.
Il résulte du théorème \ref{theo-eq-coh-lim}, 
que le fait que 
$\E  ^{(\bullet)} \in \underrightarrow{LD} ^{0} _{\Q, \mathrm{coh}} (\smash{\widetilde{\D}} _{\PP ^\sharp} ^{(\bullet)} (T))$ est local en $P$.
\end{rema}

\begin{lemm}
\label{lemm-lim-MD2square}
Notons $D ^{0} _{\mathrm{coh}} (\smash{\D} ^\dag _{\PP ^\sharp} (\hdag T) _{\Q})$ la sous-catégorie pleine
de 
$D ^{\mathrm{b}} _{\mathrm{coh}} (\smash{\D} ^\dag _{\PP ^\sharp} (\hdag T) _{\Q})$
des complexes $\E$ tels que, pour tout entier $n \not = 0$, 
on ait $\mathcal{H} ^{n} (\E) =0$.

\begin{enumerate}
\item \label{lemm1-lim-MD2square} 
Soit $\E  ^{(\bullet)} \in \underrightarrow{LD}  ^{\mathrm{b}} _{\Q, \mathrm{coh}} (\smash{\widetilde{\D}} _{\PP ^\sharp} ^{(\bullet)} (T))$.
La propriété $\E  ^{(\bullet)} \in \underrightarrow{LD}  ^0 _{\Q,\mathrm{coh}} (\smash{\widetilde{\D}} _{\PP ^\sharp} ^{(\bullet)} (T))$
équivaut à celle $\underrightarrow{\lim}~\E  ^{(\bullet)} \in   
D ^{0} _{\mathrm{coh}} (\smash{\D} ^\dag _{\PP ^\sharp} (\hdag T) _{\Q})$.

\item On dispose du diagramme de foncteurs commutatif à équivalence canonique près
\begin{equation}
\label{lim-MD2square}
\xymatrix @ R=0,4cm{
{\underrightarrow{LM}  _{\Q, \mathrm{coh}} (\smash{\widetilde{\D}} _{\PP ^\sharp} ^{(\bullet)} (T))}
\ar[r] ^-{\cong} 
\ar[d] ^-{\underrightarrow{\lim} } _-{\cong} 
& 
{ \underrightarrow{LD} ^{0} _{\Q, \mathrm{coh}} (\smash{\widetilde{\D}} _{\PP ^\sharp} ^{(\bullet)} (T))}
\ar[r] ^-{\cong} _-{\mathcal{H} ^{0}}
\ar[d] ^-{\underrightarrow{\lim} } _-{\cong} 
& 
{\underrightarrow{LM}  _{\Q, \mathrm{coh}} (\smash{\widetilde{\D}} _{\PP ^\sharp} ^{(\bullet)} (T)) } 
\ar[d] ^-{\underrightarrow{\lim} } _-{\cong} 
\\ 
{\mathrm{Coh} ( \smash{\D} ^\dag _{\PP ^\sharp} (\hdag T) _{\Q} )}
\ar[r] ^-{\cong} 
& 
{ D ^{0} _{\mathrm{coh}} (\smash{\D} ^\dag _{\PP ^\sharp} (\hdag T) _{\Q})}
\ar[r] ^-{\cong} _-{\mathcal{H} ^{0}}
& 
{\mathrm{Coh} ( \smash{\D} ^\dag _{\PP ^\sharp} (\hdag T) _{\Q} ),} 
}
\end{equation}
dont tous les foncteurs sont des équivalences de catégories.
\end{enumerate}
\end{lemm}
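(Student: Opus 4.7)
Pour l'assertion \ref{lemm1-lim-MD2square}, je r\'eduis la question \`a l'annulation des espaces de cohomologie en degr\'es non nuls. Par d\'efinition, $\E ^{(\bullet)} \in \underrightarrow{LD} ^{0} _{\Q, \mathrm{coh}} (\smash{\widetilde{\D}} _{\PP ^\sharp} ^{(\bullet)} (T))$ signifie que, pour tout $n \not = 0$, on a $\mathcal{H} ^{n} (\E ^{(\bullet)}) \riso 0$ dans $\underrightarrow{LM} _{\Q} (\smash{\widetilde{\D}} _{\PP ^\sharp} ^{(\bullet)} (T))$; de m\^eme, $\underrightarrow{\lim}~ \E ^{(\bullet)} \in D ^{0} _{\mathrm{coh}} (\smash{\D} ^\dag _{\PP ^\sharp} (\hdag T) _{\Q})$ signifie que $\mathcal{H} ^{n} (\underrightarrow{\lim}~ \E ^{(\bullet)}) \riso 0$ pour $n \not = 0$. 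La commutativit\'e du cube \ref{pre-lim-MD2square} fournit l'isomorphisme canonique
\begin{equation}
\notag
\underrightarrow{\lim}~ \mathcal{H} ^{n} (\E ^{(\bullet)})
\riso
\mathcal{H} ^{n} (\underrightarrow{\lim}~ \E ^{(\bullet)}).
\end{equation}
Gr\^ace au lemme \ref{Hnstabcoh}, $\mathcal{H} ^{n} (\E ^{(\bullet)}) \in \underrightarrow{LM} _{\Q, \mathrm{coh}} (\smash{\widetilde{\D}} _{\PP ^\sharp} ^{(\bullet)} (T))$. L'\'equivalence de cat\'egories \ref{M-eq-coh-lim} entre $\underrightarrow{LM} _{\Q, \mathrm{coh}} (\smash{\widetilde{\D}} _{\PP ^\sharp} ^{(\bullet)} (T))$ et $\mathrm{Coh} ( \smash{\D} ^\dag _{\PP ^\sharp} (\hdag T) _{\Q} )$ \'etant en particulier conservative, $\mathcal{H} ^{n} (\E ^{(\bullet)}) \riso 0$ dans $\underrightarrow{LM} _{\Q}$ si et seulement si $\underrightarrow{\lim}~ \mathcal{H} ^{n} (\E ^{(\bullet)}) \riso 0$, d'o\`u l'\'equivalence annonc\'ee dans \ref{lemm1-lim-MD2square}.

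Pour la seconde assertion, je construis le diagramme en combinant les \'equivalences d\'ej\`a \'etablies. Les fl\`eches horizontales de la ligne sup\'erieure sont les \'equivalences quasi-inverses du lemme \ref{LMeqLD0} (ou, de mani\`ere \'equivalente, la restriction \`a $\underrightarrow{LD} ^0$ de l'\'equivalence \ref{eq-coh-lim} du th\'eor\`eme \ref{theo-eq-coh-lim}, compos\'ee avec l'\'equivalence standard $D ^0 _{\mathrm{coh}} (\underrightarrow{LM} _\Q) \cong \underrightarrow{LM} _{\Q, \mathrm{coh}}$). Les fl\`eches horizontales de la ligne inf\'erieure sont les analogues standards pour la cat\'egorie ab\'elienne $\mathrm{Coh} ( \smash{\D} ^\dag _{\PP ^\sharp} (\hdag T) _{\Q} )$. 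Les fl\`eches verticales extr\'emales sont l'\'equivalence \ref{M-eq-coh-lim}; la fl\`eche verticale centrale est une \'equivalence par la premi\`ere assertion, qui identifie $\underrightarrow{LD} ^{0} _{\Q, \mathrm{coh}}$ \`a la sous-cat\'egorie pleine de $\underrightarrow{LD} ^{\mathrm{b}} _{\Q, \mathrm{coh}}$ correspondant \`a $D ^{0} _{\mathrm{coh}} (\smash{\D} ^\dag _{\PP ^\sharp} (\hdag T) _{\Q})$ via l'\'equivalence \ref{eqcatcoh}.

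Il reste \`a v\'erifier la commutativit\'e du diagramme \`a \'equivalence canonique pr\`es. Le carr\'e de gauche est commutatif par construction des foncteurs $\underrightarrow{LM} _{\Q, \mathrm{coh}} \to \underrightarrow{LD} ^0 _{\Q, \mathrm{coh}}$ et $\mathrm{Coh} \to D ^0 _{\mathrm{coh}}$, qui envoient un module sur lui-m\^eme vu en degr\'e $0$, combin\'e avec la commutativit\'e $\underrightarrow{\lim} \circ Q \riso Q \circ \underrightarrow{\lim}$ d\'ej\`a utilis\'ee dans la preuve de \ref{cubecomm1}. Le carr\'e de droite s'obtient en extrayant la face ad\'equate du cube \ref{pre-lim-MD2square}, qui affirme pr\'ecis\'ement que le foncteur $\mathcal{H} ^0$ commute au passage \`a la limite inductive sur le niveau tant au niveau des complexes qu'au niveau des modules. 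Le point principal, qui se limite donc \`a invoquer des r\'esultats d\'ej\`a \'etablis, est la premi\`ere assertion, dont l'essentiel tient dans l'observation que l'\'equivalence \ref{M-eq-coh-lim} rend la cat\'egorie $\underrightarrow{LM} _{\Q, \mathrm{coh}}$ conservative relativement \`a l'annulation, ce qui permet de transf\'erer la condition de nullit\'e des espaces de cohomologie du c\^ot\'e $\smash{\D} ^\dag$ vers le c\^ot\'e des syst\`emes inductifs.
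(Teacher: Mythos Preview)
Your proof is correct and follows essentially the same approach as the paper's. The paper proves assertion~1 by extracting from the cube \ref{pre-lim-MD2square} the commutative square with vertical equivalences $\underrightarrow{\lim}$ and horizontal arrows $\mathcal{H}^n$ (this is the square \ref{pre1-lim-MD2square}), then uses that the right vertical equivalence \ref{M-eq-coh-lim} is conservative---exactly your argument, though you spell out the role of \ref{Hnstabcoh} and the conservativity more explicitly. For assertion~2, the paper simply invokes the first equivalence of th\'eor\`eme~\ref{theo-eq-coh-lim}, whereas you unpack the diagram piece by piece via \ref{LMeqLD0}, \ref{M-eq-coh-lim}, and the cube; both routes are equivalent.
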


\begin{proof}
On déduit de \ref{pre-lim-MD2square} que l'on bénéficie pour tout $n \in \Z$
du diagramme de foncteur commutatif à équivalence canonique près 
\begin{equation}
\label{pre1-lim-MD2square}
\xymatrix @ R=0,4cm{
{ \underrightarrow{LD}  ^{\mathrm{b}} _{\Q ,\mathrm{coh} } (\smash{\widetilde{\D}} _{\PP ^\sharp} ^{(\bullet)} (T))}
\ar[r] ^-{} _-{\mathcal{H} ^{n}}
\ar[d] ^-{\underrightarrow{\lim} } _-{\cong} 
& 
{\underrightarrow{LM}  _{\Q,\mathrm{coh}} (\smash{\widetilde{\D}} _{\PP ^\sharp} ^{(\bullet)} (T)) } 
\ar[d] ^-{\underrightarrow{\lim} } _-{\cong} 
\\ 
{ D ^{\mathrm{b}} _{\mathrm{coh}} (\smash{\D} ^\dag _{\PP ^\sharp} (\hdag T) _{\Q})}
\ar[r] ^-{} _-{\mathcal{H} ^{n}}
& 
{\mathrm{Coh} ( \smash{\D} ^\dag _{\PP ^\sharp} (\hdag T) _{\Q} ),} 
}
\end{equation}
dont les foncteurs verticaux sont des équivalences de catégories (voir \ref{M-eq-coh-lim} pour celle de droite).
On en déduit la première assertion du lemme.
La seconde assertion découle aussitôt de la première équivalence de catégories du théorème \ref{theo-eq-coh-lim}.
\end{proof}

\section{Foncteur de localisation en dehors d'un diviseur}
\label{chap3}
\subsection{Produits tensoriels, quasi-cohérence et foncteur oubli}

\begin{vide}
Pour tous $\E,\FF \in D  ^-
(\overset{^\mathrm{g}}{} \smash{\widetilde{\D}} _{\PP ^\sharp} ^{(m)} (T ))$
et $\M \in D  ^-
(\smash{\widetilde{\D}} _{\PP ^\sharp} ^{(m)} (T ) \overset{^\mathrm{d}}{})$,
on pose:

\begin{gather} \notag
\M _i := \M \otimes ^\L _{\smash{\widetilde{\D}} _{\PP ^\sharp} ^{(m)} (T )} \smash{\widetilde{\D}} _{P ^{\sharp} _i} ^{(m)} (T ),\
\E _i := \smash{\widetilde{\D}} _{P ^{\sharp} _i} ^{(m)} (T ) \otimes ^\L _{\smash{\widetilde{\D}} _{\PP ^\sharp} ^{(m)} ( T )} \E,\\
\notag
\M \smash{\widehat{\otimes}} ^\L _{\smash{\widetilde{\B}} _{\PP} ^{(m)} (T )} \E :=
\R \underset{\underset{i}{\longleftarrow}}{\lim}\, ( \M _i \otimes ^\L  _{\widetilde{\B} _{P _i} ^{(m)} (T )} \E _i)
,\,
\E \smash{\widehat{\otimes}} ^\L _{\smash{\widetilde{\B}} _{\PP} ^{(m)} (T )} \FF :=
\R \underset{\underset{i}{\longleftarrow}}{\lim}\, ( \E _i \otimes ^\L  _{\widetilde{\B} _{P _i} ^{(m)} (T )} \FF _i),
\\
\M \smash{\widehat{\otimes}} ^\L _{\smash{\widetilde{\D}} _{\PP ^\sharp} ^{(m)} (T )} \E :=
\R \underset{\underset{i}{\longleftarrow}}{\lim}\, ( \M _i \otimes ^\L  _{\widetilde{\D} _{P ^{\sharp} _i} ^{(m)} (T )} \E _i).
\end{gather}
\end{vide}

\begin{vide}
Pour tous
$\E ^{(\bullet)} \in 
D ^{-}
(\overset{^\mathrm{g}}{} \smash{\widetilde{\D}} _{\PP ^\sharp} ^{(\bullet)} (T ))$,
$\M ^{(\bullet)} \in 
D ^{-}
( \smash{\widetilde{\D}} _{\PP ^\sharp} ^{(\bullet)} (T ) \overset{^\mathrm{d}}{} )$,
on pose
\begin{gather}
  \M ^{(\bullet)}
\smash{\overset{\L}{\otimes}}   ^{\dag}
_{\D ^\dag  _{\PP ^{\sharp}} (\hdag T) _\Q}\E ^{(\bullet)}
:=
(\M ^{(m)}  \smash{\widehat{\otimes}} ^\L _{\widetilde{\D} ^{(m)} _{\PP ^{\sharp}} ( T) } \E ^{(m)}) _{m\in \N}.
\end{gather}
Pour $? = d$ ou $? = g$, 
on définit les bifoncteurs produits tensoriels 
\begin{align}
\label{predef-otimes-coh1}
-
 \smash{\overset{\L}{\otimes}}^{\dag} _{\O _{\PP } ( \hdag T ) _{\Q}}
 -
 \colon
D ^-
(\overset{^\mathrm{?}}{} \smash{\widetilde{\D}} _{\PP ^\sharp} ^{(\bullet)} (T ))
\times 
D ^-
(\overset{^\mathrm{g}}{} \smash{\widetilde{\D}} _{\PP ^\sharp} ^{(\bullet)} (T ))
&
\to 
D ^-
(\overset{^\mathrm{?}}{} \smash{\widetilde{\D}} _{\PP ^\sharp} ^{(\bullet)} (T )),
\end{align}
en posant, pour tous
$\E ^{(\bullet)}\in 
D ^{-}
(\overset{^\mathrm{?}}{} \smash{\widetilde{\D}} _{\PP ^\sharp} ^{(\bullet)} (T ))$,
$\FF ^{(\bullet)} \in 
D ^{-}
(\overset{^\mathrm{g}}{} \smash{\widetilde{\D}} _{\PP ^\sharp} ^{(\bullet)} (T ))$,
\begin{equation}
\notag
\E ^{(\bullet)}
\smash{\overset{\L}{\otimes}}   ^{\dag}
_{\O  _{\PP} (\hdag T) _\Q}\FF ^{(\bullet)}
:=
(\E ^{(m)}  \smash{\widehat{\otimes}}
^\L _{\widetilde{\B} ^{(m)}  _{\PP} ( T) }
\FF ^{(m)}) _{m\in \N}.
\end{equation}
\end{vide}

\begin{nota}
[Quasi-cohérence et foncteur oubli partiel du diviseur]
Soit $D \subset T$ un second diviseur. 

\begin{itemize}
\item Soit $\E ^{(m)} \in 
D ^{\mathrm{b}}
(\overset{^\mathrm{g}}{} \smash{\widetilde{\D}} _{\PP ^\sharp} ^{(m)} (T ))$.
Comme $\smash{\widetilde{\D}} _{\PP ^\sharp} ^{(m)}(T)$ (resp. $\smash{\widetilde{\B}} _{\PP} ^{(m)}(T)$)
n'a pas de $p$-torsion,
il résulte du théorème \cite[3.2.2]{Beintro2} de Berthelot
que 
$\E ^{(m)} $ est quasi-cohérent au sens de Berthelot 
comme objet de
$D ^{\mathrm{b}}
(\overset{^\mathrm{g}}{} \smash{\D} _{\PP} ^{(m)})$ (voir la définition \cite[3.2.1]{Beintro2}) 
si et seulement si 
$\E ^{(m)} _0 \in D ^{\mathrm{b}} _{\mathrm{qc}}
(\O  _{P})$
et 
le morphisme canonique
$\E ^{(m)} \to 
\smash{\widetilde{\D}} _{\PP ^\sharp} ^{(m)} (T ) 
\smash{\widehat{\otimes}} ^\L _{\smash{\widetilde{\D}} _{\PP ^\sharp} ^{(m)} (T )} \E ^{(m)} $
(resp. $\E ^{(m)} \to 
\smash{\widetilde{\B}} _{\PP} ^{(m)} (T ) 
\smash{\widehat{\otimes}} ^\L _{\smash{\widetilde{\B}} _{\PP} ^{(m)} (T )} \E ^{(m)} $)
est un isomorphisme. En particulier, cela ne dépend pas de $T$.
En notant 
$D ^{\mathrm{b}} _{\mathrm{qc}}
(\overset{^\mathrm{g}}{} \smash{\widetilde{\D}} _{\PP ^\sharp} ^{(m)} (T ))$, 
la sous-catégorie pleine de 
$D ^{\mathrm{b}}
(\overset{^\mathrm{g}}{} \smash{\widetilde{\D}} _{\PP ^\sharp} ^{(m)} (T ))$
des complexes quasi-cohérents, on dispose alors du foncteur 
{\og oubli partiel du diviseur\fg}
$\mathrm{oub} _{D,T}\colon 
D ^{\mathrm{b}} _{\mathrm{qc}}
(\overset{^\mathrm{g}}{} \smash{\widetilde{\D}} _{\PP ^\sharp} ^{(m)} (T))
\to
D ^{\mathrm{b}} _{\mathrm{qc}}
(\overset{^\mathrm{g}}{} \smash{\widetilde{\D}} _{\PP ^\sharp} ^{(m)} (D ))$
qui est une factorisation du foncteur oubli canonique
$\mathrm{oub} _{D,T}\colon 
D ^{\mathrm{b}} 
(\overset{^\mathrm{g}}{} \smash{\widetilde{\D}} _{\PP ^\sharp} ^{(m)} (T))
\to
D ^{\mathrm{b}} 
(\overset{^\mathrm{g}}{} \smash{\widetilde{\D}} _{\PP ^\sharp} ^{(m)} (D ))$.

\item De manière analogue, on note
$D ^{\mathrm{b}} _{\mathrm{qc}}
(\overset{^\mathrm{g}}{} \smash{\widetilde{\D}} _{\PP ^\sharp} ^{(\bullet)} (T ))$
la sous-catégorie pleine de
$D ^{\mathrm{b}}
(\overset{^\mathrm{g}}{} \smash{\widetilde{\D}} _{\PP ^\sharp} ^{(\bullet)} (T ))$
des complexes $\E ^{(\bullet)} $ tels que, pour tout $m\in \Z$,  
$\E ^{(m)} _0 \in D ^{\mathrm{b}} _{\mathrm{qc}}
(\O  _{P})$ 
et le morphisme canonique
$\E ^{(\bullet)}  \to 
\smash{\widetilde{\D}} _{\PP ^\sharp} ^{(\bullet)} (T )
\smash{\widehat{\otimes}} ^\L _{\smash{\widetilde{\D}} _{\PP ^\sharp} ^{(\bullet)} (T )} \E ^{(\bullet)} $
soit un isomorphisme de $D ^{\mathrm{b}}
(\overset{^\mathrm{g}}{} \smash{\widetilde{\D}} _{\PP ^\sharp} ^{(\bullet)} (T ))$. 
On bénéficie encore du foncteur {\og oubli partiel du diviseur\fg}
$\mathrm{oub} _{D,T}\colon 
D ^{\mathrm{b}} _{\mathrm{qc}}
(\overset{^\mathrm{g}}{} \smash{\widetilde{\D}} _{\PP ^\sharp} ^{(\bullet)} (T))
\to
D ^{\mathrm{b}}  _{\mathrm{qc}}
(\overset{^\mathrm{g}}{} \smash{\widetilde{\D}} _{\PP ^\sharp} ^{(\bullet)} (D ))$.

\item On note
$\smash{\underrightarrow{LD}}  ^{\mathrm{b}} _{\Q,\mathrm{qc}}
( \smash{\widetilde{\D}} _{\PP ^\sharp} ^{(\bullet)} (T ))$ 
la sous-catégorie strictement pleine de 
$\smash{\underrightarrow{LD}}  ^{\mathrm{b}} _{\Q}
( \smash{\widetilde{\D}} _{\PP ^\sharp} ^{(\bullet)} (T ))$
des complexes isomorphes dans 
$\smash{\underrightarrow{LD}}  ^{\mathrm{b}} _{\Q}
( \smash{\widetilde{\D}} _{\PP ^\sharp} ^{(\bullet)} (T ))$ à un complexe appartenant
$D ^{\mathrm{b}}  _{\mathrm{qc}}
(\overset{^\mathrm{g}}{} \smash{\widetilde{\D}} _{\PP ^\sharp} ^{(\bullet)} (T ))$.
Comme le foncteur $\mathrm{oub} _{D,T}$
(du second point) envoie une lim-ind-isogénie sur une lim-ind-isogénie, 
on en déduit sa factorisation de la forme:
\begin{equation}
\label{def-oubDT}
\mathrm{oub} _{D,T}
\colon 
\smash{\underrightarrow{LD}}  ^{\mathrm{b}} _{\Q,\mathrm{qc}}
( \smash{\widetilde{\D}} _{\PP ^\sharp} ^{(\bullet)} (T ))
\to
\smash{\underrightarrow{LD}}  ^{\mathrm{b}} _{\Q,\mathrm{qc}}
( \smash{\widetilde{\D}} _{\PP ^\sharp} ^{(\bullet)} (D )).
\end{equation}

\item On note encore
$\mathrm{oub} _{D, T}\colon 
D ^{\mathrm{b}} (\D ^\dag _{\PP ^\sharp} (\hdag T) _\Q) \to 
D ^{\mathrm{b}} (\D ^\dag _{\PP ^\sharp} (\hdag D) _\Q)$
le foncteur oubli partiel du diviseur. 

\end{itemize}
\end{nota}

\begin{rema}
Soit 
$\E ^{(\bullet)}
\in 
D ^{\mathrm{b}}
(\overset{^\mathrm{g}}{} \smash{\widetilde{\D}} _{\PP ^\sharp} ^{(\bullet)} (T ))$.
La propriété $\E ^{(\bullet)}
\in 
D ^{\mathrm{b}} _{\mathrm{qc}}
(\overset{^\mathrm{g}}{} \smash{\widetilde{\D}} _{\PP ^\sharp} ^{(\bullet)} (T ))$
équivaut à, 
pour tout $m \in \Z$, 
$\E ^{(m)}
\in 
D ^{\mathrm{b}} _{\mathrm{qc}}
(\overset{^\mathrm{g}}{} \smash{\widetilde{\D}} _{\PP ^\sharp} ^{(m)} (T ))$.
Ainsi, la catégorie 
$\smash{\underrightarrow{LD}}  ^{\mathrm{b}} _{\Q,\mathrm{qc}}
( \smash{\widetilde{\D}} _{\PP ^\sharp} ^{(\bullet)} (T ))$ 
ci-dessus 
correspond à celle donnée par Berthelot dans \cite[4.2.3]{Beintro2} sans pôle logarithmique ni singularités surconvergentes.

\end{rema}

\begin{lemm}
\label{lemm-def-otimes-coh1}
Le bifoncteur \ref{predef-otimes-coh1} se factorise en
\begin{align}
\label{def-otimes-coh1}
-
 \smash{\overset{\L}{\otimes}}^{\dag} _{\O _{\PP } ( \hdag T ) _{\Q}}
 -
 \colon
 \smash{\underrightarrow{LD}}  ^- _{\Q}
(\overset{^\mathrm{?}}{} \smash{\widetilde{\D}} _{\PP ^\sharp} ^{(\bullet)} (T ))
\times 
\smash{\underrightarrow{LD}}  ^- _{\Q}
(\overset{^\mathrm{g}}{} \smash{\widetilde{\D}} _{\PP ^\sharp} ^{(\bullet)} (T ))
&
\to 
\smash{\underrightarrow{LD}}  ^- _{\Q}
(\overset{^\mathrm{?}}{} \smash{\widetilde{\D}} _{\PP ^\sharp} ^{(\bullet)} (T )).
\end{align}
\end{lemm}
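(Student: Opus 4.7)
The plan is to invoke the universal property of the localizations $\smash{\underrightarrow{D}}_\Q$ and $\smash{\underrightarrow{LD}}_\Q$: it suffices to verify that the bifunctor in \ref{predef-otimes-coh1} carries pairs $(f^{(\bullet)}, \mathrm{id})$ and $(\mathrm{id}, g^{(\bullet)})$, where $f^{(\bullet)}$ and $g^{(\bullet)}$ are lim-ind-isogenies, to lim-ind-isogenies in the target. By symmetry I will treat only the first variable; the bifunctorial nature will then give the full statement by composition.

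The key technical ingredient is the compatibility of the tensor product $\smash{\widehat{\otimes}}^\L_{\widetilde{\B}^{(m)}_\PP(T)}$ with the endofunctors $\chi^{*}$ and $\lambda^{*}$. More precisely, for any $\E^{(\bullet)}, \FF^{(\bullet)}$, $\chi \in M$ and $\lambda \in L$, I first exhibit canonical isomorphisms
\begin{equation}
\notag
(\chi^{*}\E^{(\bullet)}) \smash{\overset{\L}{\otimes}}^\dag_{\O_\PP(\hdag T)_\Q} \FF^{(\bullet)}
\riso
\chi^{*}\bigl(\E^{(\bullet)} \smash{\overset{\L}{\otimes}}^\dag_{\O_\PP(\hdag T)_\Q} \FF^{(\bullet)}\bigr),
\qquad
(\lambda^{*}\E^{(\bullet)}) \smash{\overset{\L}{\otimes}}^\dag_{\O_\PP(\hdag T)_\Q} (\lambda^{*}\FF^{(\bullet)})
\riso
\lambda^{*}\bigl(\E^{(\bullet)} \smash{\overset{\L}{\otimes}}^\dag_{\O_\PP(\hdag T)_\Q} \FF^{(\bullet)}\bigr),
\end{equation}
which follow at the level of each $m$ from the fact that the transition maps of $\chi^*\E^{(\bullet)}$ differ from those of $\E^{(\bullet)}$ only by the scalar $p^{\chi(m')-\chi(m)}$, and from the coherence of $\widehat{\otimes}^\L$ with level shifts. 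I would then check, in the spirit of the commutative diagrams of \ref{defi-M-L} and \ref{stab-inj-j*}, that these isomorphisms intertwine the canonical morphism $\E^{(\bullet)} \to \chi^* \E^{(\bullet)}$ (resp.\ $\E^{(\bullet)} \to \lambda^* \E^{(\bullet)}$) with the canonical morphism of the tensor product.

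With these compatibilities in hand, the conclusion is essentially formal. Suppose $f^{(\bullet)}\colon \E^{(\bullet)} \to \E'^{(\bullet)}$ is a lim-ind-isogeny, with an inverse $g^{(\bullet)}\colon \E'^{(\bullet)} \to \lambda^*\chi^* \E^{(\bullet)}$ satisfying the compositions equal to the canonical morphisms. Tensoring with $\mathrm{id}_{\FF^{(\bullet)}}$ and post-composing with the canonical isomorphism displayed above gives a morphism
\begin{equation}
\notag
g^{(\bullet)} \otimes \mathrm{id}\colon
\E'^{(\bullet)} \smash{\overset{\L}{\otimes}}^\dag \FF^{(\bullet)}
\to
\lambda^*\chi^*\bigl(\E^{(\bullet)} \smash{\overset{\L}{\otimes}}^\dag \FF^{(\bullet)}\bigr),
\end{equation}
such that $(g^{(\bullet)} \otimes \mathrm{id})\circ (f^{(\bullet)} \otimes \mathrm{id})$ and $\lambda^*\chi^*(f^{(\bullet)} \otimes \mathrm{id})\circ (g^{(\bullet)} \otimes \mathrm{id})$ are the canonical morphisms of the tensor product. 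By Lemma~\ref{lemm-locSQ}.\ref{lemm-locSQ2}, $f^{(\bullet)} \otimes \mathrm{id}$ is a lim-ind-isogeny.

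The main obstacle is bookkeeping: one must verify the compatibility of the canonical isomorphisms above with the formation of compositions in $D^-$, which is slightly delicate for the derived tensor product $\smash{\widehat{\otimes}}^\L$ (as opposed to the underived $\otimes$), because choosing flat resolutions in a manner compatible with the shifts $\chi^*$ and $\lambda^*$ requires some care. As in the construction Berthelot makes for $\R\mathcal{H}om$ in \ref{def-Hom-LD-Berthelot}, this can be handled by choosing flat resolutions of the appropriate type (namely, inductive systems of flat resolutions compatible with the transition morphisms), which makes $\smash{\widehat{\otimes}}^\L$ functorial in a way compatible with $\chi^*$ and $\lambda^*$. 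Once this care is taken, the factorization \ref{def-otimes-coh1} is established, and by construction it preserves the boundedness above.
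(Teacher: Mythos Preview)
Your approach is essentially the paper's: construct a comparison morphism between the tensor product and $\lambda^*\chi^*$ of the tensor product, fitting into a commutative square with the canonical maps, and conclude that the bifunctor carries lim-ind-isogenies to lim-ind-isogenies. The paper does this one variable at a time in the same way.

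There is, however, a technical inaccuracy you should fix. Your displayed compatibility for $\lambda^*$,
\[
(\lambda^{*}\E^{(\bullet)}) \smash{\overset{\L}{\otimes}}^\dag_{\O_\PP(\hdag T)_\Q} (\lambda^{*}\FF^{(\bullet)})
\riso
\lambda^{*}\bigl(\E^{(\bullet)} \smash{\overset{\L}{\otimes}}^\dag_{\O_\PP(\hdag T)_\Q} \FF^{(\bullet)}\bigr),
\]
is not an isomorphism in $D^-$: at level $m$ the left side is a completed tensor over $\widetilde{\B}^{(m)}_\PP(T)$ while the right side is over $\widetilde{\B}^{(\lambda(m))}_\PP(T)$, and these differ. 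Moreover, this formula does not match how you use it: in the argument you tensor $g^{(\bullet)}\colon \E'^{(\bullet)}\to\lambda^*\chi^*\E^{(\bullet)}$ with $\mathrm{id}_{\FF^{(\bullet)}}$, landing in $(\lambda^*\chi^*\E^{(\bullet)})\smash{\overset{\L}{\otimes}}^\dag\FF^{(\bullet)}$, not in $(\lambda^*\E^{(\bullet)})\smash{\overset{\L}{\otimes}}^\dag(\lambda^*\FF^{(\bullet)})$. What you actually need, and what the paper writes down, is only a \emph{morphism}
\[
(\lambda^*\chi^*\E^{(\bullet)})\smash{\overset{\L}{\otimes}}^\dag_{\O_\PP(\hdag T)_\Q}\FF^{(\bullet)}
\longrightarrow
\lambda^*\chi^*\bigl(\E^{(\bullet)}\smash{\overset{\L}{\otimes}}^\dag_{\O_\PP(\hdag T)_\Q}\FF^{(\bullet)}\bigr),
\]
built at level $m$ from the transition $\FF^{(m)}\to\FF^{(\lambda(m))}$ together with base change along $\widetilde{\B}^{(m)}_\PP(T)\to\widetilde{\B}^{(\lambda(m))}_\PP(T)$ (after resolving flatly). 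This morphism makes the two triangles of the evident square commute, which is exactly what is needed to conclude via Lemma~\ref{lemm-locSQ}.\ref{lemm-locSQ2}. Once you replace your ``isomorphism'' by this morphism and adjust the formula accordingly, your argument goes through and coincides with the paper's.
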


\begin{proof}
Soient $\E ^{(\bullet)}\in 
D ^{-}
(\overset{^\mathrm{?}}{} \smash{\widetilde{\D}} _{\PP ^\sharp} ^{(\bullet)} (T ))$,
$\FF ^{(\bullet)} \in 
D ^{-}
(\overset{^\mathrm{g}}{} \smash{\widetilde{\D}} _{\PP ^\sharp} ^{(\bullet)} (T ))$.
Soient $\chi \in M$, $\lambda \in L$.
Il existe alors un morphisme 
$
\E ^{(\bullet)}
\smash{\overset{\L}{\otimes}}   ^{\dag}
_{\O  _{\PP} (\hdag T) _\Q}
\lambda ^{*} \chi ^{*}  \FF ^{(\bullet)}
\to 
\lambda ^{*} \chi ^{*}  \left (\E ^{(\bullet)}
\smash{\overset{\L}{\otimes}}   ^{\dag}
_{\O  _{\PP} (\hdag T) _\Q}
 \FF ^{(\bullet)}\right) $ (on résout $\FF ^{(\bullet)}$ par des  $ \smash{\widetilde{\D}} _{\PP ^\sharp} ^{(\bullet)} (T )$-modules plats, 
 on utilise qu'un 
 $ \smash{\widetilde{\D}} _{\PP ^\sharp} ^{(\bullet)} (T )$-module $\PP ^{(\bullet)}$ est plat
 si et seulement si les $ \smash{\widetilde{\D}} _{\PP ^\sharp} ^{(m)} (T )$-module $\PP ^{(m)}$
 sont plats)
induisant le diagramme canonique commutatif
\begin{equation}
\notag
\xymatrix @ R=0,3cm{
{\E ^{(\bullet)}
\smash{\overset{\L}{\otimes}}   ^{\dag}
_{\O  _{\PP} (\hdag T) _\Q}
\FF ^{(\bullet)}
} 
\ar[r] ^-{}
\ar[d] ^-{}
& 
{\E ^{(\bullet)}
\smash{\overset{\L}{\otimes}}   ^{\dag}
_{\O  _{\PP} (\hdag T) _\Q}
\lambda ^{*} \chi ^{*}  \FF ^{(\bullet)} } 
\ar[d] ^-{}
\ar@{.>}[dl] ^-{}
\\ 
{\lambda ^{*} \chi ^{*}  \left (\E ^{(\bullet)}
\smash{\overset{\L}{\otimes}}   ^{\dag}
_{\O  _{\PP} (\hdag T) _\Q}
 \FF ^{(\bullet)}\right) } 
 \ar[r] ^-{}
& 
{ \lambda ^{*} \chi ^{*}  \left (\E ^{(\bullet)}
\smash{\overset{\L}{\otimes}}   ^{\dag}
_{\O  _{\PP} (\hdag T) _\Q}
\lambda ^{*} \chi ^{*} \FF ^{(\bullet)}\right) .} 
}
\end{equation}
On en déduit la factorisation
$$-
 \smash{\overset{\L}{\otimes}}^{\dag} _{\O _{\PP } ( \hdag T ) _{\Q}}
 -
 \colon
 D  ^- 
(\overset{^\mathrm{?}}{} \smash{\widetilde{\D}} _{\PP ^\sharp} ^{(\bullet)} (T ))
\times 
\smash{\underrightarrow{LD}}  ^- _{\Q}
(\overset{^\mathrm{g}}{} \smash{\widetilde{\D}} _{\PP ^\sharp} ^{(\bullet)} (T ))
\to 
\smash{\underrightarrow{LD}}  ^- _{\Q}
(\overset{^\mathrm{?}}{} \smash{\widetilde{\D}} _{\PP ^\sharp} ^{(\bullet)} (T )).$$
De même, on obtient la factorisation sur le premier facteur. D'où le résultat.
\end{proof}

\subsection{Définition et propriétés dans le cas des complexes}
\label{section3.2}
Soient $m' \geq m \geq 0$ deux entiers,
$D ' \subset D \subset T$ trois diviseurs (réduits) de $P$. 
On dispose alors des morphismes canoniques
$\B _{P _i} ^{(m)} (D ') \to \B _{P _i} ^{(m)} (D )\to \B _{P _i} ^{(m')} (T)$ (voir la construction dans \cite[4.2.3]{Be1}).
Conformément aux notations de \cite{Beintro2},
notons $D _{\Q,\mathrm{qc}} ^{-} (\smash{\widetilde{\B}} _{\PP} ^{(m)} (D))$ 
(resp. 
$D _{\Q,\mathrm{qc}} ^{-} (\widetilde{\B} ^{(m')} _{\PP} ( D)  \smash{\widehat{\otimes}} _{\O _{\PP}} \smash{\widehat{\D}} _{\PP ^{\sharp}} ^{(m)})$)
la localisation de la catégorie
$D ^{-} _{\mathrm{qc}} (\smash{\widetilde{\B}} _{\PP} ^{(m)} (D))$ 
($D _{\mathrm{qc}} ^{-} (\widetilde{\B} ^{(m')} _{\PP} ( D)  \smash{\widehat{\otimes}} _{\O _{\PP}} \smash{\widehat{\D}} _{\PP ^{\sharp}} ^{(m)})$)
par les isogénies.

\begin{lemm}
\label{lem1-hdagDT}
\begin{enumerate}
\item Le noyau de l'épimorphisme canonique 
$\smash{\widetilde{\B}} _{\PP} ^{(m)} (D ) \widehat{\otimes} _{\O _{\PP} }
\smash{\widetilde{\B}} _{\PP} ^{(m')} (T )
\to 
\smash{\widetilde{\B}} _{\PP} ^{(m')} (T )$
est un $\O _{P}$-module quasi-cohérent. 

\item Le morphisme canonique 
$\smash{\widetilde{\B}} _{\PP} ^{(m)} (D ) 
\widehat{\otimes} ^{\L}  _{\O _{\PP} }
\smash{\widetilde{\B}} _{\PP} ^{(m')} (T )
\to 
\smash{\widetilde{\B}} _{\PP} ^{(m)} (D ) \widehat{\otimes} _{\O _{\PP} }
\smash{\widetilde{\B}} _{\PP} ^{(m')} (T )$
est un isomorphisme.

\end{enumerate}
\end{lemm}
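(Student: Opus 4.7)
Both assertions being local on $\PP$, we may assume $\PP$ is affine and admits local coordinates such that $T$ (resp. $D$) is defined by a single equation, with the equation for $D$ dividing a power of the one for $T$ (since $D \subset T$). In this setting, Berthelot's construction in \cite[4.2.3]{Be1} yields explicit descriptions of $\widetilde{\B}^{(m)}_{\PP}(D)$ and $\widetilde{\B}^{(m')}_{\PP}(T)$ as $p$-adically completed $\O_{\PP}$-algebras, as well as of the transition morphism $\widetilde{\B}^{(m)}_{\PP}(D) \to \widetilde{\B}^{(m')}_{\PP}(T)$.

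For assertion (1), the morphism in question is the multiplication map induced by the transition morphism and the algebra structure of $\widetilde{\B}^{(m')}_{\PP}(T)$. The three sheaves $\widetilde{\B}^{(m)}_{\PP}(D)$, $\widetilde{\B}^{(m')}_{\PP}(T)$, and $\widetilde{\B}^{(m)}_{\PP}(D) \widehat{\otimes}_{\O_{\PP}} \widetilde{\B}^{(m')}_{\PP}(T)$ are all quasi-coherent $\O_{\PP}$-modules on the noetherian formal scheme $\PP$ (as $p$-adic completions of quasi-coherent modules over a noetherian formal base, or as completed tensor products of such). The canonical epimorphism is then a morphism in the abelian category of quasi-coherent $\O_{\PP}$-modules, so its kernel is quasi-coherent by stability under kernels.

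For assertion (2), the plan is to reduce to Tor-vanishing at each finite level $\V/\pi^{i+1}$. The essential input is the $\O_{\PP}$-flatness of $\widetilde{\B}^{(m')}_{\PP}(T)$, which is deduced from Berthelot's explicit local description (the $\B$-sheaves are built from divided-power-like constructions that preserve $\O_{\PP}$-flatness and have no $p$-torsion). Given this flatness, at each level $i$ the derived tensor product $\widetilde{\B}^{(m)}_{P_i}(D) \otimes^{\L}_{\O_{P_i}} \widetilde{\B}^{(m')}_{P_i}(T)$ coincides with the ordinary tensor product. The derived inverse limit then collapses to the ordinary one because the transition maps between the $\V/\pi^{i+1}$-levels are surjective, ensuring Mittag-Leffler-type vanishing of the $\R^1\underleftarrow{\lim}$.

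The main obstacle is the last step: ensuring that the derived $p$-adic completion in the definition of $\widehat{\otimes}^{\L}$ commutes with the tensor product. This requires a uniform control (in $i$) of the resolutions used at each level, which in turn relies on the absence of $p$-torsion in the $\widetilde{\B}$-sheaves so that the derived and ordinary $p$-adic completions agree. The quasi-coherence established in (1) will be useful here, as it ensures that the completed tensor product is computed correctly via inverse limits on the underlying affines.
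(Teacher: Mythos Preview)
Your proposal has two genuine gaps.

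For assertion (1), the statement requires the kernel to be a quasi-coherent $\O_P$-module, not an $\O_\PP$-module. In other words, the nontrivial content is that the kernel is annihilated by $p$. Your argument (``stability of quasi-coherence under kernels'') does not touch this point at all. In the paper, one works with the explicit presentation $\B^{(m)}_\PP(g)=\O_\PP[X]/(g^{p^{m+1}}X-p)$ and computes that, after tensoring with $\B^{(m')}_\PP(f)$ and passing to the multiplication map, the kernel is generated by $X-\tfrac{p}{g^{p^{m+1}}}$ and is therefore annihilated by $g^{p^{m+1}}\otimes 1$; since $g^{p^{m+1}}X=p$ in the tensor product, this forces the kernel to be $p$-torsion. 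That explicit computation is exactly what is missing from your outline.

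For assertion (2), the key input you invoke --- that $\widetilde{\B}^{(m')}_\PP(T)$ is flat over $\O_\PP$ --- is false. A direct computation with the two-term resolution $0\to\O_\PP[X]\xrightarrow{f^{p^{m'+1}}X-p}\O_\PP[X]\to \B^{(m')}_\PP(f)\to 0$ shows that $\mathrm{Tor}_1^{\O_\PP}\bigl(\B^{(m')}_\PP(f),\,\O_\PP/(p,f)\bigr)$ is nonzero (both $p$ and $f$ act by zero, so the map $f^{p^{m'+1}}X-p$ becomes the zero map). What \emph{does} hold, and what the paper proves, is the much more specific vanishing $\mathrm{Tor}_1^{\O_\PP}\bigl(\B^{(m)}_\PP(g),\B^{(m')}_\PP(f)\bigr)=0$: using the analogous resolution of $\B^{(m)}_\PP(g)$, one must check that $g^{p^{m+1}}$ is a non-zero-divisor in $B^{(m')}_\PP(f)$, which follows from the integrality of the latter established in \cite[4.3.3]{Be1}. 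Only after this level-by-level Tor-vanishing and the $p$-torsion control from part (1) can one compare the derived and underived completed tensor products via Berthelot's theorem \cite[3.2.2]{Beintro2}; your Mittag--Leffler remark is correct but premature.
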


\begin{proof}
Il suffit d'établir le lemme pour $\lambda _0=id$.
Comme le lemme est locale,
on peut supposer $\PP$ affine, intègre et qu'il existe $f \in \O _{\PP}$ (resp. $g \in \O _{\PP}$) relevant une équation locale
de $T \subset P$ (resp $D \subset P$).
Comme les diviseurs sont réduits, $f$ est un multiple de $g$ modulo $\pi \O _{\PP}$.
Comme $\smash{\widehat{\B}} _{\PP} ^{(m')} (T )$ ne dépend que de $f$ modulo $\pi \O _{\PP}$ (voir \cite[4.2.3]{Be1}),
une fois $g$ fixé, on peut choisir $f$ égal à un multiple de $g$.
Posons 
$\B _{\PP} ^{(m')} (f ):=
\O _{\PP} [X] / (f ^{p ^{m'+1}} X -p)$
et
$\B _{\PP} ^{(m)} (g ):=
\O _{\PP} [X] / (g ^{p ^{m+1}} X -p)$.
On dispose du morphisme canonique de la forme
$\B _{\PP} ^{(m)} (g )
\to 
\B _{\PP} ^{(m')} (f)$
(voir \cite[4.2.1.(ii)]{Be1}).

1) Prouvons d'abord que, 
pour tout $j\leq  -1$,
$\mathcal{H} ^{j} (\B _{\PP} ^{(m)} (g ) \otimes ^{\L}_{\O _{\PP}}
\B _{\PP} ^{(m')} (f )) 
=0.$

On dispose de la suite exacte courte
$0\to \O _{\PP} [X] 
\underset{g ^{p ^{m+1}}X-p}{\longrightarrow}
\O _{\PP} [X] 
\to 
\B _{\PP} ^{(m)} (g)
\to 0$.
Comme cette suite exacte donne une résolution canonique 
de $\B _{\PP} ^{(m)} (g)$ par des 
$\O _{\PP}$-modules plats, 
en appliquant le foncteur 
$-\otimes_{\O _{\PP}}
\B _{\PP} ^{(m')} (f) $
à cette suite exacte, 
on constate qu'il s'agit alors d'établir 
que $g ^{p ^{m+1}}$
n'est pas un diviseur de zéro de $B _{\PP} ^{(m')} (f)$.
Comme $B _{\PP} ^{(m')} (f)$ est intègre (voir \cite[4.3.3]{Be1}), on conclut.

2) En notant abusivement $\frac{p}{ g ^{p ^{m+1}}}$ la classe de $X$ dans 
$B _{\PP} ^{(m)} (g)$ et encore $\frac{p}{ g ^{p ^{m+1}}}$ son image dans 
$B _{\PP} ^{(m')} (f)$, on obtient 
le diagramme canonique de morphismes
$B _{\PP} ^{(m')} (f)$-linéaires:
\begin{equation}
\notag
\xymatrix @ R=0,4cm @ C=2cm { 
{\O _{\PP} [X] \otimes _{\O _{\PP}}  \B _{\PP} ^{(m')} (f) } 
\ar[r] ^-{(g ^{p ^{m+1}}X -p )\otimes id}
& 
{\O _{\PP} [X] \otimes _{\O _{\PP}}  \B _{\PP} ^{(m')} (f) } 
\ar[r] ^-{}
\ar@{=}[d] ^-{}
&
{\B _{\PP} ^{(m)} (g)  \otimes _{\O _{\PP}} \B _{\PP} ^{(m')} (f)} 
\ar[d] ^-{}
\ar[r] ^-{}
&
{0}
\\
{\B _{\PP} ^{(m')} (f)[X]} 
\ar[r] ^-{X -\frac{p}{ g ^{p ^{m+1}}}}
& 
{\B _{\PP} ^{(m')} (f )[X]} 
\ar[r] ^-{}
&
{\B _{\PP} ^{(m')} (f )} 
\ar[r] ^-{}
&
{0,}
}
\end{equation}
dont les suites horizontales forment des suites exactes et dont le carré, ayant pour flèches horizontales les morphismes
de $B _{\PP} ^{(m')} (f)$-algèbres
induit par respectivement $X\mapsto \frac{p}{ g ^{p ^{m+1}}}\otimes 1$
et
$X\mapsto \frac{p}{ g ^{p ^{m+1}}}$,
est commutatif. 
On en déduit que le noyau, noté $\NN _{0}$, 
de la surjection canonique
$\B _{\PP} ^{(m)} (g)  \otimes _{\O _{\PP}} \B _{\PP} ^{(m')} (f)
\to 
\B _{\PP} ^{(m')} (f)$
est annulé par $g ^{p ^{m+1}}\otimes 1$ et donc  par $p$.
Comme $\B _{\PP} ^{(m')} (f)$ est sans $p$-torsion, 
comme 
$\B _{P _i} ^{(m)} (D) \otimes _{\O _{P _i}}
\B _{P _i} ^{(m')} (T )
\riso
\V / \pi ^{i+1} \V \otimes _{\V} 
\B _{\PP} ^{(m)} (g) \otimes _{\O _{\PP} }
\B _{\PP} ^{(m')} (f)$,
on en déduit la suite exacte courte
$0 \to \NN _{0} 
\to 
\B _{P _i} ^{(m)} (D ) 
\otimes _{O _{P _i}}
\B _{P _i} ^{(m')} (T )
\to 
\B _{P _i} ^{(m')} (T ) 
\to 
0$.
Pour $i=0$, cela entraîne que
$ \NN _{0} $ est un $\O _P$-module quasi-cohérent.
D'après \cite[3.2.1.b)]{Beintro2}, il en résulte que
$ \NN _{0} \in D ^{\mathrm{b}} _{\mathrm{qc}} ( \O _{\PP})$.
En passant à la limite projectif, on obtient de plus
la suite exacte:
$0 
\to 
\NN _{0} 
\to 
\smash{\widehat{\B}} _{\PP} ^{(m)} (D ) \widehat{\otimes} _{\O _{\PP} }
\smash{\widehat{\B}} _{\PP} ^{(m')} (T )
\to 
\smash{\widehat{\B}} _{\PP} ^{(m')} (T )
\to 
0$.
Comme $\smash{\widehat{\B}} _{\PP} ^{(m')} (T )$
est sans $p$-torsion et séparé complet, 
alors $\smash{\widehat{\B}} _{\PP} ^{(m')} (T )
\in D ^{\mathrm{b}} _{\mathrm{qc}} ( \O _{\PP})$.
Il en est donc de même de 
$\smash{\widehat{\B}} _{\PP} ^{(m)} (D ) \widehat{\otimes} _{\O _{\PP} }
\smash{\widehat{\B}} _{\PP} ^{(m')} (T )$.
D'après \cite[3.2.2]{Beintro2},
$\smash{\widehat{\B}} _{\PP} ^{(m)} (D ) 
\widehat{\otimes} ^{\L}  _{\O _{\PP} }
\smash{\widehat{\B}} _{\PP} ^{(m')} (T )
 \in D ^{\mathrm{b}} _{\mathrm{qc}} ( \O _{\PP})$
et 
$ \O _{P _{i}} \otimes ^{\L} _{\O _{\PP}}
 \left (\smash{\widehat{\B}} _{\PP} ^{(m)} (D ) 
\widehat{\otimes} ^{\L}  _{\O _{\PP} }
\smash{\widehat{\B}} _{\PP} ^{(m')} (T ) \right ) 
\riso
\B _{P _i} ^{(m)} (D ) \otimes ^{\L}_{\O _{P _i} }
\B _{P _i} ^{(m)} (T )$.
Il reste à établir l'isomorphisme canonique
$ \O _{P _{i}} \otimes ^{\L} _{\O _{\PP}}
 \left (\smash{\widehat{\B}} _{\PP} ^{(m)} (D ) 
\widehat{\otimes}   _{\O _{\PP} }
\smash{\widehat{\B}} _{\PP} ^{(m')} (T ) \right ) 
\riso
\B _{P _i} ^{(m)} (D ) \otimes ^{\L}_{\O _{P _i} }
\B _{P _i} ^{(m)} (T )$.
Or, en appliquant le foncteur 
$ \O _{P _{i}} \otimes ^{\L} _{\O _{\PP}}-$ au diagramme
\begin{equation}
\notag
\xymatrix @R=0,3cm {
{0} 
\ar[r] ^-{}
& 
{\NN _{0}} 
\ar[r] ^-{}
\ar@{=}[d] ^-{}
& 
{\B _{\PP} ^{(m)} (g) 
\otimes  _{\O _{\PP} }
\smash{\B} _{\PP} ^{(m')} (f) } 
\ar[r] ^-{}
\ar[d] ^-{}
& 
{\B _{\PP} ^{(m')} (f) } 
\ar[r] ^-{}
\ar[d] ^-{}
& 
{0} 
\\
{0} 
\ar[r] ^-{}
& 
{\NN _{0}} 
\ar[r] ^-{}
& 
{\smash{\widehat{\B}} _{\PP} ^{(m)} (D ) 
\widehat{\otimes}   _{\O _{\PP} }
\smash{\widehat{\B}} _{\PP} ^{(m')} (T ) } 
\ar[r] ^-{}
& 
{\smash{\widehat{\B}} _{\PP} ^{(m')} (T) } 
\ar[r] ^-{}
& 
{0,} 
 }
\end{equation}
on obtient 
$ \O _{P _{i}} \otimes ^{\L} _{\O _{\PP}}
 \left (\B _{\PP} ^{(m)} (g) 
\otimes  _{\O _{\PP} }
\smash{\B} _{\PP} ^{(m')} (f)  \right ) 
\riso
 \O _{P _{i}} \otimes ^{\L} _{\O _{\PP}}
 \left (\smash{\widehat{\B}} _{\PP} ^{(m)} (D ) 
\widehat{\otimes}   _{\O _{\PP} }
\smash{\widehat{\B}} _{\PP} ^{(m')} (T ) \right ) 
$.
Enfin, d'après l'étape 1), on obtient
$ \O _{P _{i}} \otimes ^{\L} _{\O _{\PP}}
 \left (\B _{\PP} ^{(m)} (g) 
\otimes  _{\O _{\PP} }
\smash{\B} _{\PP} ^{(m')} (f)  \right ) 
\riso
( \O _{P _{i}} \otimes ^{\L} _{\O _{\PP}}
\B _{\PP} ^{(m)} (g) )
\otimes  ^\L _{\O _{P _{i}} }
( \O _{P _{i}} \otimes ^{\L} _{\O _{\PP}} \smash{\B} _{\PP} ^{(m')} (f) ) 
\riso 
\B _{P _i} ^{(m)} (D ) \otimes ^{\L}_{\O _{P _i} }
\B _{P _i} ^{(m')} (T )$.
 
\end{proof}

\begin{prop}
\label{lem3-hdagDT}
Les morphismes canoniques
$\smash{\widetilde{\B}} _{\PP} ^{(m')} (T ) \to
\smash{\widetilde{\B}} _{\PP} ^{(m)} (D ) \widehat{\otimes} ^{\L}_{\smash{\widetilde{\B}} _{\PP} ^{(m)} (D' )}
\smash{\widetilde{\B}} _{\PP} ^{(m')} (T )
\to 
\smash{\widetilde{\B}} _{\PP} ^{(m)} (D ) \widehat{\otimes} _{\smash{\widetilde{\B}} _{\PP} ^{(m)} (D' )}
\smash{\widetilde{\B}} _{\PP} ^{(m')} (T )
\to 
\smash{\widetilde{\B}} _{\PP} ^{(m')} (T )$
sont des isogénies. 
\end{prop}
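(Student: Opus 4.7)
\medskip
\noindent\textbf{Sketch of the intended proof.}
The key initial observation is that the composition of the three canonical morphisms equals the identity of $\smash{\widetilde{\B}} _{\PP} ^{(m')} (T)$: the first morphism is the unit $x \mapsto 1 \otimes x$, the third is the multiplication $a \otimes x \mapsto \rho(a) \cdot x$ where $\rho \colon \smash{\widetilde{\B}} _{\PP} ^{(m)} (D) \to \smash{\widetilde{\B}} _{\PP} ^{(m')} (T)$ is the structural morphism, and the middle one is the canonical comparison from derived to underived completed tensor product. Hence once one knows that the composite of the last two morphisms is an isogeny, the first one becomes an isogeny automatically by the identity composition.

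The plan is to deduce this from the fact that the canonical map $\smash{\widetilde{\B}} _{\PP} ^{(m)} (D')_{\Q} \to \smash{\widetilde{\B}} _{\PP} ^{(m)} (D)_{\Q}$ is an epimorphism in the category of sheaves of $\Q$-algebras. Granted this, for any $\smash{\widetilde{\B}} _{\PP} ^{(m)} (D)_\Q$-algebra $M$ the natural map
$$\smash{\widetilde{\B}} _{\PP} ^{(m)} (D) \widehat{\otimes} ^{\L} _{\smash{\widetilde{\B}} _{\PP} ^{(m)} (D')} M \longrightarrow M$$
is an isomorphism after $\otimes\Q$, which applied to $M = \smash{\widetilde{\B}} _{\PP} ^{(m')} (T)$ (a $\smash{\widetilde{\B}} _{\PP} ^{(m)} (D)$-algebra via $\rho$) simultaneously yields the isogeny property for the last and the middle morphisms. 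The epimorphism statement is local on $\PP$; with local equations $g$ and $g'$ of $D$ and $D'$ and $n := \lambda_0(m)$, the presentation $\B _{\PP} ^{(n)} (g) = \O _{\PP}[Y]/(g^{p^{n+1}} Y - p)$ recalled in the proof of Lemma \ref{lem1-hdagDT} identifies $\B _{\PP} ^{(n)} (g)_\Q$ with $\O _{\PP,\Q}[g^{-1}]$ (since $g^{p^{n+1}}$ becomes invertible after inverting $p$). Since reducedness combined with $D' \subset D$ gives $g' \mid g$ in $\O _\PP$, one then has $\O _{\PP,\Q}[g^{-1}] = \O _{\PP,\Q}[g'^{-1}][u^{-1}]$ with $u := g/g'$, so that the morphism $\B _{\PP} ^{(n)} (g')_\Q \to \B _{\PP} ^{(n)} (g)_\Q$ is an honest algebraic localization and in particular an epimorphism.

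To lift the epimorphism from the non-completed setting to the $p$-adically completed $\smash{\widetilde{\B}} _{\PP} ^{(m)} (D')_{\Q} \to \smash{\widetilde{\B}} _{\PP} ^{(m)} (D)_{\Q}$, the plan is to argue level by level modulo $\pi^{i+1}$, where the above calculation already exhibits $\B _{P_i} ^{(n)} (D')_\Q \to \B _{P_i} ^{(n)} (D)_\Q$ as a localization and hence provides the vanishing of higher $\mathrm{Tor}$'s at each finite level, and then pass to the inverse limit using the absence of $p$-torsion and the bounded Tor-dimension guaranteed by the Koszul-type resolution $0 \to \O _{\PP}[Y] \xrightarrow{g^{p^{n+1}} Y - p} \O _{\PP}[Y] \to \B _{\PP} ^{(n)} (g) \to 0$ used in the proof of Lemma \ref{lem1-hdagDT}. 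The main obstacle will be precisely this interaction of derived tensor products with $p$-adic completion and $\Q$-localization, which must be controlled by the same kind of $\mathcal{O}_P$-quasi-coherent bookkeeping developed in the proof of that lemma, in order to conclude that the higher Tors assembled by inverse limit are annihilated by a power of $p$.
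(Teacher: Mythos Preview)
Your overall shape is right: the three maps compose to the identity, so it suffices to show that the third map (and, separately, the composite of the last two) is an isogeny. The paper also argues this way. The difference lies in how you try to prove that last step, and there your plan has a real gap.

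Your key reduction is to the claim that $\smash{\widetilde{\B}} _{\PP} ^{(m)} (D')_{\Q} \to \smash{\widetilde{\B}} _{\PP} ^{(m)} (D)_{\Q}$ is an epimorphism of sheaves of $\Q$-algebras. In the \emph{non}-completed setting your argument is fine: $\B _{\PP} ^{(n)} (g)_\Q$ is the localization $\O _{\PP,\Q}[g^{-1}]$. But the passage to the completed rings is exactly where your sketch breaks. Your proposed bridge is to ``argue level by level modulo $\pi^{i+1}$, where the above calculation already exhibits $\B _{P_i} ^{(n)} (D')_\Q \to \B _{P_i} ^{(n)} (D)_\Q$ as a localization''. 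But modulo $\pi^{i+1}$ every element is $p$-torsion, so $\B _{P_i} ^{(n)} (D')_\Q = 0 = \B _{P_i} ^{(n)} (D)_\Q$: the statement is vacuous and yields no information about the relevant $\mathrm{Tor}$ groups. More generally, the interaction you flag as ``the main obstacle'' --- derived completed tensor versus $\Q$-localization --- is not resolved by the scheme you propose; the localization-at-$p$ picture lives \emph{before} reduction mod $\pi^{i+1}$, not after. Even granting the epimorphism for the completed rings, your deduction that $(\smash{\widetilde{\B}} _{\PP} ^{(m)} (D) \widehat{\otimes} ^{\L}_{\smash{\widetilde{\B}} _{\PP} ^{(m)} (D')} M)_\Q \to M_\Q$ is an isomorphism would still need a comparison between $(\widehat{\otimes}^{\L})_\Q$ and an ordinary tensor over the $\Q$-localized rings, which is not automatic.

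The paper avoids all of this with a short associativity trick that reduces directly to Lemma~\ref{lem1-hdagDT} (tensoring over $\O_\PP$ rather than over $\smash{\widetilde{\B}} _{\PP} ^{(m)} (D')$). Namely, applying the functor $\smash{\widetilde{\B}} _{\PP} ^{(m)} (D) \widehat{\otimes} ^{\L}_{\smash{\widetilde{\B}} _{\PP} ^{(m)} (D')} -$ to the isogeny $\smash{\widetilde{\B}} _{\PP} ^{(m)} (D') \widehat{\otimes} ^{\L}_{\O _{\PP}} \smash{\widetilde{\B}} _{\PP} ^{(m')} (T) \to \smash{\widetilde{\B}} _{\PP} ^{(m')} (T)$ from Lemma~\ref{lem1-hdagDT} yields an isogeny whose source, by associativity, is $\smash{\widetilde{\B}} _{\PP} ^{(m)} (D) \widehat{\otimes} ^{\L}_{\O _{\PP}} \smash{\widetilde{\B}} _{\PP} ^{(m')} (T)$; the composite with the third map of the proposition is then the canonical map of Lemma~\ref{lem1-hdagDT} again (now with $D$), hence an isogeny, and a two-out-of-three argument finishes. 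This bypasses the completion-versus-$\Q$ issue entirely, since Lemma~\ref{lem1-hdagDT} already did the delicate bookkeeping over $\O_\PP$.
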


\begin{proof}
D'après le lemme \ref{lem1-hdagDT}, le morphisme canonique
$
\smash{\widetilde{\B}} _{\PP} ^{(m)} (D ') \widehat{\otimes} ^{\L}_{\O _{\PP} }
\smash{\widetilde{\B}} _{\PP} ^{(m')} (T )
\to 
\smash{\widetilde{\B}} _{\PP} ^{(m')} (T )$
est une isogénie.
Comme le foncteur 
$\smash{\widetilde{\B}} _{\PP} ^{(m)} (D) \widehat{\otimes} ^{\L}_{\smash{\widetilde{\B}} _{\PP} ^{(m)} (D ')}-$
envoie une isogénie sur une isogénie, 
il en est donc de même du morphisme induit
$
\smash{\widetilde{\B}} _{\PP} ^{(m)} (D) \widehat{\otimes} ^{\L}_{\smash{\widetilde{\B}} _{\PP} ^{(m)} (D ')}
(\smash{\widetilde{\B}} _{\PP} ^{(m)} (D ') \widehat{\otimes} ^{\L}_{\O _{\PP} }
\smash{\widetilde{\B}} _{\PP} ^{(m')} (T ))
\to 
\smash{\widetilde{\B}} _{\PP} ^{(m)} (D) \widehat{\otimes} ^{\L}_{\smash{\widetilde{\B}} _{\PP} ^{(m)} (D ')}
\smash{\widetilde{\B}} _{\PP} ^{(m')} (T )$.
Comme la composition
$\smash{\widetilde{\B}} _{\PP} ^{(m)} (D ) \widehat{\otimes} ^{\L}_{\O _{\PP} }
\smash{\widetilde{\B}} _{\PP} ^{(m')} (T )
\riso
\smash{\widetilde{\B}} _{\PP} ^{(m)} (D ) \widehat{\otimes} ^{\L}_{\smash{\widetilde{\B}} _{\PP} ^{(m)} (D ')}
(\smash{\widetilde{\B}} _{\PP} ^{(m)} (D ') \widehat{\otimes} ^{\L}_{\O _{\PP} }
\smash{\widetilde{\B}} _{\PP} ^{(m')} (T ))
\to
\smash{\widetilde{\B}} _{\PP} ^{(m)} (D ) \widehat{\otimes} ^{\L}_{\smash{\widetilde{\B}} _{\PP} ^{(m)} (D ')}
\smash{\widetilde{\B}} _{\PP} ^{(m')} (T )
\to 
\smash{\widetilde{\B}} _{\PP} ^{(m')} (T ) 
$
est égal au morphisme canonique de la forme apparaissant dans le lemme \ref{lem1-hdagDT},
ce composé 
est donc une isogénie. 
Il en résulte que le morphisme canonique
$
\smash{\widetilde{\B}} _{\PP} ^{(m)} (D ) \widehat{\otimes} ^{\L}_{\smash{\widetilde{\B}} _{\PP} ^{(m)} (D ')}
\smash{\widetilde{\B}} _{\PP} ^{(m')} (T )
\to 
\smash{\widetilde{\B}} _{\PP} ^{(m')} (T ) 
$
est une isogénie.
De même (on enlève les $\L$) on vérifie que  
le morphisme canonique
$
\smash{\widetilde{\B}} _{\PP} ^{(m)} (D ) \widehat{\otimes} _{\smash{\widetilde{\B}} _{\PP} ^{(m)} (D ')}
\smash{\widetilde{\B}} _{\PP} ^{(m')} (T )
\to 
\smash{\widetilde{\B}} _{\PP} ^{(m')} (T ) $
est une isogénie. 
Comme le composé des deux morphismes canoniques
$\smash{\widetilde{\B}} _{\PP} ^{(m')} (T ) 
\to
\smash{\widetilde{\B}} _{\PP} ^{(m)} (D ) \widehat{\otimes} ^{\L} _{\smash{\widetilde{\B}} _{\PP} ^{(m)} (D ')}
\smash{\widetilde{\B}} _{\PP} ^{(m')} (T )
\riso 
\smash{\widetilde{\B}} _{\PP} ^{(m')} (T ) $
est l'identité, il en résulte que le premier morphisme est aussi une isogénie.
\end{proof}

Le corollaire qui suit n'utilise que les résultats élémentaires de cette section et a été utilisé pour valider le corollaire \ref{126Beintro}. 
\begin{coro}
\label{rema-dim-coh-finie}
\begin{enumerate}
\item Les foncteurs de la forme 
$\B _{P _i} ^{(m')} (T) \otimes ^{\L}_{\O _{P _i} }- $ sont de dimension cohomologique
égale à $1$. 
Le foncteur $\smash{\widetilde{\B}} _{\PP} ^{(m')} (T) 
\widehat{\otimes} ^{\L} _{\O _{\PP} }-$
est way-out sur
$D  ^{-} (\smash{\widetilde{\B}} _{\PP} ^{(m)} (D))$
avec une amplitude bornée indépendemment  de  $m'$ et $m$.

\item 
Le foncteur 
$\smash{\widetilde{\B}} _{\PP} ^{(m')} (T) \widehat{\otimes} ^{\L} _{\smash{\widetilde{\B}} _{\PP} ^{(m)} (D)}-
\colon 
D _{\Q,\mathrm{qc}} ^{\mathrm{b}} (\smash{\widetilde{\B}} _{\PP} ^{(m)} (D))
\to 
D _{\Q,\mathrm{qc}} ^{\mathrm{b}} (\smash{\widetilde{\B}} _{\PP} ^{(m')} (T))$ 
est way-out
avec une amplitude bornée indépendemment  de  $m'$ et $m$.
On dispose de la factorisation
$\smash{\widetilde{\B}} _{\PP} ^{(m+\bullet)} (T) \widehat{\otimes} ^{\L} _{\smash{\widetilde{\B}} _{\PP} ^{(m)} (D)}-
\colon 
D _{\Q,\mathrm{qc}} ^{\mathrm{b}} (\smash{\widetilde{\B}} _{\PP} ^{(m)} (D))
\to
\smash{\underrightarrow{LD}} _{\Q,\mathrm{qc}} ^{\mathrm{b}} (\smash{\widetilde{\B}} _{\PP} ^{(m+\bullet)} (T ))$.

\item 
Le foncteur 
$(\widetilde{\B} ^{(m')} _{\PP} ( T)  \smash{\widehat{\otimes}} _{\O _{\PP}} \smash{\widehat{\D}} _{\PP ^{\sharp}} ^{(m)})
\widehat{\otimes} ^{\L} _{(\widetilde{\B} ^{(m)} _{\PP} ( D)  \smash{\widehat{\otimes}} _{\O _{\PP}} \smash{\widehat{\D}} _{\PP ^{\sharp}} ^{(m)})}
-
\colon 
D _{\Q,\mathrm{qc}} ^{\mathrm{b}} (\widetilde{\B} ^{(m)} _{\PP} ( D)  \smash{\widehat{\otimes}} _{\O _{\PP}} \smash{\widehat{\D}} _{\PP ^{\sharp}} ^{(m)})
\to 
D _{\Q,\mathrm{qc}} ^{\mathrm{b}} (\widetilde{\B} ^{(m')} _{\PP} (T)  \smash{\widehat{\otimes}} _{\O _{\PP}} \smash{\widehat{\D}} _{\PP ^{\sharp}} ^{(m)})$
est way-out avec une amplitude bornée indépendemment  de  $m'$ et $m$.
\end{enumerate}

\end{coro}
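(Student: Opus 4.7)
The plan is to prove the three parts in order, deducing each from the previous, and reducing everything to the explicit presentation of $\widetilde{\B}_{\PP}^{(m')}(T)$ over $\O_{\PP}$ already exploited in the proof of Lemma~\ref{lem1-hdagDT}.

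For part~(1), I would first establish the cohomological dimension $1$ of $\B_{P_i}^{(m')}(T) \otimes^{\L}_{\O_{P_i}}-$ by exploiting the local presentation $\B_{\PP}^{(m')}(f) = \O_{\PP}[X]/(f^{p^{m'+1}}X - p)$ from Berthelot's construction. This supplies a length-$1$ resolution by the flat $\O_{\PP}$-module $\O_{\PP}[X]$; reducing modulo $\pi^{i+1}$, one checks that multiplication by $f^{p^{m'+1}}X - p$ remains injective on $\O_{P_i}[X]$, using regularity of $f$ (since $T$ is a reduced divisor on the reduced scheme $P$), which yields the stated bound. For the completed functor on $D^-(\widetilde{\B}_{\PP}^{(m)}(D))$, I would then compute $\widetilde{\B}_{\PP}^{(m')}(T) \widehat{\otimes}^{\L}_{\O_{\PP}} -$ as $\R\underset{\leftarrow}{\lim}$ of the $P_i$-level tensor products; combining the uniform amplitude at each finite level with the fact that $\R\lim$ along a $\pi$-adic tower has cohomological dimension at most $1$ produces an amplitude bounded independently of $m$ and $m'$.

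For part~(2), the key reduction is via Proposition~\ref{lem3-hdagDT} applied with $D'$ taken to be the empty divisor: up to isogeny, it provides the identification $\widetilde{\B}_{\PP}^{(m')}(T) \liso \widetilde{\B}_{\PP}^{(m)}(D) \widehat{\otimes}^{\L}_{\O_{\PP}} \widetilde{\B}_{\PP}^{(m')}(T)$. For $\E \in D_{\Q,\mathrm{qc}}^{\mathrm{b}}(\widetilde{\B}_{\PP}^{(m)}(D))$, combining this with associativity of the completed derived tensor product yields, in the isogeny category, an isomorphism
\[
\widetilde{\B}_{\PP}^{(m')}(T) \widehat{\otimes}^{\L}_{\widetilde{\B}_{\PP}^{(m)}(D)} \E \riso \widetilde{\B}_{\PP}^{(m')}(T) \widehat{\otimes}^{\L}_{\O_{\PP}} \E,
\]
and part~(1) then supplies the uniform way-out amplitude. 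The factorization into $\underrightarrow{LD}_{\Q,\mathrm{qc}}^{\mathrm{b}}(\widetilde{\B}_{\PP}^{(m+\bullet)}(T))$ follows by assembling the construction over the shifted levels $m, m+1, m+2, \ldots$ and checking that the canonical transition maps among them are compatible with the tensor product.

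Part~(3) follows formally from~(2): since $\widehat{\D}_{\PP^\sharp}^{(m)}$ is flat over $\O_{\PP}$, the ring $\widetilde{\B}_{\PP}^{(m)}(D) \smash{\widehat{\otimes}}_{\O_{\PP}} \widehat{\D}_{\PP^\sharp}^{(m)}$ is a flat base change of $\widetilde{\B}_{\PP}^{(m)}(D)$, and a straightforward cancellation identifies the $\D$-side tensor functor with the $\B$-side functor of part~(2) tensored identically on the $\widehat{\D}^{(m)}_{\PP^\sharp}$-factor, whence the same amplitude. The main obstacle I anticipate lies in part~(1): when the ramification index of $\V$ is larger than $1$, the element $p \in \O_{P_i}$ is a non-trivial non-unit, and the injectivity of multiplication by $f^{p^{m'+1}}X - p$ on $\O_{P_i}[X]$ requires a careful argument based on the regularity of $f$ in $\O_{P_i}$ together with the $\O_{\PP}$-flatness of $\O_{\PP}[X]$, rather than a naive reduction modulo $\pi^{i+1}$ of the resolution over $\O_{\PP}$.
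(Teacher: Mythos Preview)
Your approach is essentially the same as the paper's: the length-$1$ flat resolution of $\B_{P_i}^{(m')}(T)$ for part~(1), the reduction to $\widehat{\otimes}^{\L}_{\O_{\PP}}$ for part~(2) (the paper cites Lemma~\ref{lem1-hdagDT} directly rather than Proposition~\ref{lem3-hdagDT}, but the content is the same isogeny identification), and the $\B$-to-$\D$ flat base-change reduction for part~(3). The obstacle you anticipate in part~(1) dissolves once you observe that $\B_{\PP}^{(m')}(f)$ is integral by \cite[4.3.3]{Be1}, hence $\pi$-torsion-free, so the $\O_{\PP}$-level exact sequence remains exact after reduction modulo $\pi^{i+1}$ without any appeal to regularity of $f$.
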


\begin{proof}
Vérifions 1). Comme le reste en découle, établissons seulement
que $\B _{P _i} ^{(m')} (T) \otimes ^{\L}_{\O _{P _i} }- $ est de dimension cohomologique
égale à $1$.
Comme cela est locale,
on peut supposer $\PP$ affine, intègre et qu'il existe $f \in \O _{\PP}$
 relevant une équation locale
$T \subset P$.
Il suffit alors de considérer la
suite exacte courte
$0\to \O _{P _i} [X] 
\underset{\overline{f} ^{p ^{m'+1}}X-p}{\longrightarrow}
\O _{P _i} [X] 
\to 
\B _{P _i} ^{(m')} (T)
\to 0$ 
où
$\overline{f}$ est la réduction de $f$ modulo $\pi ^{i+1}$, 
qui donne une résolution canonique à la longueur souhaitée 
de $\B _{P _i} ^{(m')} (T)$ par des 
$\O _{P _i}$-modules plats.
Traitons à présent 2). 
Pour tout $\E ^{(m)} \in D _{\Q, \mathrm{qc}} ^{\mathrm{b}} (\smash{\widetilde{\B}} _{\PP} ^{(m)} (D))$, 
on dispose des  isomorphismes canoniques 
dans $D _{\Q}^{\mathrm{b}} (\smash{\widetilde{\B}} _{\PP} ^{(m')} (T ))$
(on utilise 1) pour garantir la bornitude): 
\begin{equation}
\label{iso11-dim-coh-finie}
\smash{\widetilde{\B}} _{\PP} ^{(m')} (T) \widehat{\otimes} ^{\L} _{\smash{\widetilde{\B}} _{\PP} ^{(m)} (D)}~\E ^{(m)}
\underset{\ref{lem1-hdagDT}}{\riso} 
(\smash{\widetilde{\B}} _{\PP} ^{(m')} (T) 
\widehat{\otimes} ^{\L} _{\O _{\PP} }
\smash{\widetilde{\B}} _{\PP} ^{(m)} (D)
)\widehat{\otimes} ^{\L} _{\smash{\widetilde{\B}} _{\PP} ^{(m)} (D)}~\E ^{(m)}
\riso 
\smash{\widetilde{\B}} _{\PP} ^{(m')} (T) 
\widehat{\otimes} ^{\L} _{\O _{\PP} }
~\E ^{(m)}.
\end{equation}
Grâce au théorème de Berthelot  \cite[3.2.2]{Beintro2} 
(on remarque qu'il n'est valable que pour les complexes à cohomologie bornée), 
les isomorphismes de \ref{iso11-dim-coh-finie}
sont en fait dans $D _{\Q,\mathrm{qc}}^{\mathrm{b}} (\smash{\widetilde{\B}} _{\PP} ^{(m')} (T ))$.
On en déduit 2) grâce à 1). 
Traitons à présent  3).
De manière analogue à \cite[1.1.8]{caro_courbe-nouveau}, 
on vérifie que,
pour tout $\E ^{(m)} \in 
D ^{-} (
\widetilde{\B} ^{(m)} _{\PP} ( D)  \smash{\widehat{\otimes}} _{\O _{\PP}} \smash{\widehat{\D}} _{\PP ^{\sharp}} ^{(m)})$,
le morphisme canonique 
$$\smash{\widetilde{\B}} _{\PP} ^{(m')} (T) \widehat{\otimes} ^{\L} _{\smash{\widetilde{\B}} _{\PP} ^{(m)} (D)}
\E ^{(m)}
\to 
(\widetilde{\B} ^{(m')} _{\PP} ( T)  \smash{\widehat{\otimes}} _{\O _{\PP}} \smash{\widehat{\D}} _{\PP ^{\sharp}} ^{(m)})
\widehat{\otimes} ^{\L} _{(\widetilde{\B} ^{(m)} _{\PP} ( D)  \smash{\widehat{\otimes}} _{\O _{\PP}} \smash{\widehat{\D}} _{\PP ^{\sharp}} ^{(m)})}
\E ^{(m)}
$$
est un isomorphisme. 
D'où le résultat.

\end{proof}

\begin{vide}
\label{hdagT-nota}
Pour tout
$\E ^{(\bullet)} \in 
D ^{-}
(\overset{^\mathrm{g}}{} \smash{\widetilde{\D}} _{\PP ^\sharp} ^{(\bullet)} (D))$, 
comme pour \cite[1.1.8]{caro_courbe-nouveau},
(i.e. il est inutile de localiser pour obtenir la commutativité du diagramme ci-dessous et cela reste valable avec des pôles logarithmiques),
on bénéficie du diagramme commutatif dans $D ^{-}
(\overset{^\mathrm{g}}{} \smash{\widetilde{\D}} _{\PP ^\sharp} ^{(\bullet)} (T))$:
\begin{equation}
\label{ODdivcohe}
  \xymatrix @ R=0,4cm {
{(\widetilde{\B} ^{(m)} _{\PP} ( T)  \smash{\widehat{\otimes}} ^\L
_{\widetilde{\B} ^{(m)}  _{\PP} ( D) } \E ^{(m)}) _{m\in \N}}
\ar@{=}[r] ^-{\mathrm{def}} \ar[d] _\sim
&
{\O _{\PP} (\hdag T) _\Q \smash{\overset{\L}{\otimes}}   ^{\dag}_{\O _{\PP} (\hdag D) _\Q}\E ^{(\bullet)}}
\ar@{.>}[d] _\sim
\\
{(\widetilde{\D} ^{(m)} _{\PP ^{\sharp}} ( T)  \smash{\widehat{\otimes}} ^\L
_{\widetilde{\D} ^{(m)} _{\PP ^{\sharp}} ( D) } \E ^{(m)}) _{m\in \N}}
\ar@{=}[r] ^-{\mathrm{def}}
&
{\D ^\dag _{\PP ^\sharp} (\hdag T) _\Q
\smash{\overset{\L}{\otimes}}   ^{\dag}
_{\D ^\dag  _{\PP ^{\sharp}} (\hdag D) _\Q}\E ^{(\bullet)}=: (\hdag T, D) (\E ^{(\bullet)})}.
}
\end{equation}
Comme pour le lemme \ref{lemm-def-otimes-coh1}, grâce au corollaire \ref{rema-dim-coh-finie} qui entraîne la stabilité d'être à cohomologie bornée, 
on vérifie la factorisation de ce foncteur sous la forme :
\begin{gather}
\label{hdag-def-qc}
 (\hdag T ,\,D) 
 :=\D ^\dag _{\PP ^\sharp} (\hdag T) _\Q
\smash{\overset{\L}{\otimes}}   ^{\dag}
_{\D ^\dag  _{\PP ^{\sharp}} (\hdag D) _\Q} -
\colon
\smash{\underrightarrow{LD}} ^{\mathrm{b}} _{\Q,\mathrm{qc}} ( \smash{\widetilde{\D}} _{\PP ^\sharp} ^{(\bullet)}(D))
\to
\smash{\underrightarrow{LD}} ^{\mathrm{b}} _{\Q,\mathrm{qc}} ( \smash{\widetilde{\D}} _{\PP ^\sharp} ^{(\bullet)}(T)).
\end{gather}
On écrit aussi
$ \E ^{(\bullet)} (\hdag D ,\,T) :=(\hdag T ,\,D) (\E ^{(\bullet)})$. Ce foncteur $(\hdag T ,\,D)$ est {\og le foncteur extension (du diviseur
de $D$ à $T$)\fg}. On dit aussi que $(\hdag T ,\,D)$ est le {\og foncteur de localisation en dehors de $T$\fg}. 
Lorsque $D=\emptyset $ où plus généralement lorsqu'il n'y a aucune confusion à craindre sur $D$, 
on omet de l'indiquer. 
On note de manière analogue pour les complexes cohérents:
\begin{equation}
\label{hdag-def-coh}
(\hdag T, D) := 
\D ^\dag _{\PP ^\sharp} (\hdag T) _\Q \otimes _{ \D ^\dag _{\PP ^\sharp} (\hdag D) _\Q} - \colon 
D ^\mathrm{b} _\mathrm{coh} ( \D ^\dag _{\PP ^\sharp} (\hdag D) _\Q)
\to 
D ^\mathrm{b} _\mathrm{coh} (\D ^\dag _{\PP ^\sharp} (\hdag T) _\Q).
\end{equation}
Le foncteur \ref{hdag-def-coh} restreint aux modules cohérents est  exact, ce qui justifie l'absence du symbole
$\L$. 
\end{vide}

\begin{vide}
\label{nota-oub-sharp}
 Notons 
 $\mathrm{oub} _{\sharp}$ les foncteurs oublis canoniques de la forme
 $\mathrm{oub} _{\sharp} \colon 
\smash{\underrightarrow{LD}} ^{\mathrm{b}} _{\Q,\mathrm{qc}} ( \smash{\widetilde{\D}} _{\PP } ^{(\bullet)}(D))
\to
\smash{\underrightarrow{LD}} ^{\mathrm{b}} _{\Q,\mathrm{qc}} ( \smash{\widetilde{\D}} _{\PP ^\sharp} ^{(\bullet)}(D))$
ou de la forme 
$\mathrm{oub} _{\sharp} \colon 
 D ^\mathrm{b} _\mathrm{coh} ( \D ^\dag _{\PP} (\hdag D) _\Q)
\to 
D ^\mathrm{b} _\mathrm{coh} (\D ^\dag _{\PP ^\sharp} (\hdag D) _\Q)$.
On déduit de \ref{ODdivcohe}
que les foncteurs de localisation 
$(\hdag T, D)$
de \ref{hdag-def-qc} ou de \ref{hdag-def-coh} ne dépendent pas de la log-structure, i.e., 
on dispose de l'isomorphisme canonique 
\begin{equation}
\label{nota-oub-sharp-iso} (\hdag T, D)  \circ \mathrm{oub} _{\sharp} 
\riso \mathrm{oub} _{\sharp} \circ (\hdag T, D)
\end{equation}
de foncteurs de 
$\smash{\underrightarrow{LD}} ^{\mathrm{b}} _{\Q,\mathrm{qc}} ( \smash{\widetilde{\D}} _{\PP } ^{(\bullet)}(D))
\to
\smash{\underrightarrow{LD}} ^{\mathrm{b}} _{\Q,\mathrm{qc}} ( \smash{\widetilde{\D}} _{\PP ^\sharp} ^{(\bullet)}(T))$
ou de 
$ D ^\mathrm{b} _\mathrm{coh} ( \D ^\dag _{\PP} (\hdag D) _\Q)
\to 
D ^\mathrm{b} _\mathrm{coh} (\D ^\dag _{\PP ^\sharp} (\hdag T) _\Q)$. 

\end{vide}

\begin{prop}
\label{oub-pl-fid}
Soit $\E ^{(\bullet)}\in 
\smash{\underrightarrow{LD}} ^{\mathrm{b}} _{\Q,\mathrm{qc}}
 ( \smash{\widetilde{\D}} _{\PP ^\sharp} ^{(\bullet)}(T))$.
\begin{enumerate}
\item Le morphisme canonique fonctoriel en $\E ^{(\bullet)}$:
\begin{equation}
\label{oub-pl-fid-iso1}
(\hdag T ,\,D) \circ \mathrm{oub} _{D,T} (\E ^{(\bullet)})
\to 
\E ^{(\bullet)}
\end{equation}
est un isomorphisme
de $\smash{\underrightarrow{LD}} ^{\mathrm{b}} _{\Q,\mathrm{qc}} ( \smash{\widetilde{\D}} _{\PP ^\sharp} ^{(\bullet)}(T))$.
\item Le morphisme canonique fonctoriel en $\E ^{(\bullet)}$:
\begin{equation}
\label{oub-pl-fid-iso2}
\mathrm{oub}_{D, T}  (\E ^{ (\bullet)} )
\to
\mathrm{oub}_{D, T} \circ (\hdag T, D) \circ \mathrm{oub}_{D, T}(\E ^{ (\bullet)} ) 
\end{equation}
est un isomorphisme
de $\underrightarrow{LD} ^{\mathrm{b}}  _{\Q, \mathrm{qc}}  ( \smash{\widetilde{\D}} _{\PP ^\sharp} ^{(\bullet)}(D))$.
\item Le foncteur
$\mathrm{oub} _{D,T}\colon 
\underrightarrow{LD} ^{\mathrm{b}}  _{\Q, \mathrm{qc}} ( \smash{\widetilde{\D}} _{\PP ^\sharp} ^{(\bullet)}(T))
\to 
\underrightarrow{LD} ^{\mathrm{b}}  _{\Q, \mathrm{qc}} ( \smash{\widetilde{\D}} _{\PP ^\sharp} ^{(\bullet)}(D))$
est pleinement fidèle.
\end{enumerate}

\end{prop}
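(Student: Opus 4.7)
Mon plan est d'�tablir d'abord l'isomorphisme 1), puis d'en d�duire 2) par fonctorialit� et 3) par un argument formel d'adjonction. Pour 1), comme le fait d'�tre un isomorphisme dans $\smash{\underrightarrow{LD}} ^{\mathrm{b}} _{\Q,\mathrm{qc}} ( \smash{\widetilde{\D}} _{\PP ^\sharp} ^{(\bullet)}(T))$ est local en $\PP$ (voir \ref{LDiso-local}), je me ram�nerais au cas o� $\PP$ est affine. Par hypoth�se de quasi-coh�rence, on dispose d'un isomorphisme canonique $\E ^{(\bullet)} \riso \smash{\widetilde{\D}} _{\PP ^\sharp} ^{(\bullet)} (T) \smash{\widehat{\otimes}} ^\L _{\smash{\widetilde{\D}} _{\PP ^\sharp} ^{(\bullet)} (T)} \E ^{(\bullet)}$. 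En utilisant la d�finition \ref{hdag-def-qc} du foncteur $(\hdag T, D)$ comme produit tensoriel complet niveau par niveau, le morphisme \ref{oub-pl-fid-iso1} se ram�ne, au niveau $m$, � la fl�che
\begin{equation}
\notag
\smash{\widetilde{\D}} _{\PP ^\sharp} ^{(m)}(T) \smash{\widehat{\otimes}} ^\L _{\smash{\widetilde{\D}} _{\PP ^\sharp} ^{(m)} (D)} \smash{\widetilde{\D}} _{\PP ^\sharp} ^{(m)}(T) \smash{\widehat{\otimes}} ^\L _{\smash{\widetilde{\D}} _{\PP ^\sharp} ^{(m)}(T)} \E ^{(m)} \to \E ^{(m)}.
\end{equation}
Or la Proposition \ref{lem3-hdagDT} �tablit au niveau des $\smash{\widetilde{\B}}$ que le morphisme $\smash{\widetilde{\B}} _{\PP} ^{(m)}(D) \smash{\widehat{\otimes}} ^\L _{\smash{\widetilde{\B}} _{\PP} ^{(m)}(D)} \smash{\widetilde{\B}} _{\PP} ^{(m)}(T) \to \smash{\widetilde{\B}} _{\PP} ^{(m)}(T)$ est une isog�nie; par transfert aux $\smash{\widetilde{\D}}$ via l'�galit� $\smash{\widetilde{\D}} _{\PP ^\sharp} ^{(m)}(T) = \smash{\widetilde{\B}} _{\PP} ^{(m)}(T) \smash{\widehat{\otimes}} _{\O _{\PP}} \smash{\widehat{\D}} _{\PP ^\sharp} ^{(m)}$, on obtient que la fl�che ci-dessus est une isog�nie niveau par niveau, donc un isomorphisme dans la cat�gorie localis�e gr�ce � la remarque \ref{rema-iso-0}.

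Pour 2), il suffit d'appliquer le foncteur $\mathrm{oub} _{D,T}$ � l'isomorphisme de 1): comme ce foncteur est un simple oubli partiel du diviseur et envoie les lim-ind-isog�nies sur des lim-ind-isog�nies par construction de sa factorisation \ref{def-oubDT}, il pr�serve les isomorphismes. Pour 3), j'utiliserais l'adjonction classique entre extension et restriction des scalaires: au niveau des modules non localis�s, $(\hdag T, D) = \smash{\widetilde{\D}} _{\PP ^\sharp} ^{(\bullet)}(T) \smash{\widehat{\otimes}} ^\L _{\smash{\widetilde{\D}} _{\PP ^\sharp} ^{(\bullet)}(D)} -$ est adjoint � gauche de $\mathrm{oub} _{D,T}$, et cette adjonction se transporte aux cat�gories $\smash{\underrightarrow{LD}} ^{\mathrm{b}} _{\Q,\mathrm{qc}}$. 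Il est alors formel que la pleine fid�lit� de l'adjoint � droite $\mathrm{oub} _{D,T}$ �quivaut � ce que la co-unit� $(\hdag T, D) \circ \mathrm{oub} _{D,T} \to \mathrm{id}$ soit un isomorphisme, ce qui est exactement le contenu de 1). Concr�tement, pour $\E ^{(\bullet)},~\FF ^{(\bullet)}$ dans la cat�gorie cible, l'adjonction et 1) donnent $\mathrm{Hom}(\E ^{(\bullet)},~\FF ^{(\bullet)}) \riso \mathrm{Hom}((\hdag T, D)\mathrm{oub}(\E ^{(\bullet)}),~\FF ^{(\bullet)}) \riso \mathrm{Hom}(\mathrm{oub}(\E ^{(\bullet)}),~\mathrm{oub}(\FF ^{(\bullet)}))$.

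Le principal obstacle technique sera la v�rification soigneuse, dans la premi�re �tape, que l'isog�nie de \ref{lem3-hdagDT}, �tablie au niveau des anneaux $\smash{\widetilde{\B}}$, se transf�re effectivement aux anneaux de $\smash{\widetilde{\D}}$-modules logarithmiques, en controlant la stabilit� des isog�nies sous la tensorisation compl�t�e par $\smash{\widehat{\D}} _{\PP ^\sharp} ^{(m)}$ au dessus de $\O _{\PP}$. Il faudra �galement justifier avec soin que l'adjonction extension/restriction des scalaires passe bien aux cat�gories d�riv�es localis�es $\smash{\underrightarrow{LD}} ^{\mathrm{b}} _{\Q,\mathrm{qc}}$, ce qui repose sur les propri�t�s de pr�servation de la quasi-coh�rence �tablies via le corollaire \ref{rema-dim-coh-finie} et sur les factorisations de \ref{def-oubDT} et \ref{hdag-def-qc}.
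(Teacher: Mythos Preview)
Your approach to part 1) is essentially the paper's: reduce via quasi-coherence and associativity of the completed tensor product to the isogeny of Proposition~\ref{lem3-hdagDT}. Two remarks. First, there is a typo in your invocation of \ref{lem3-hdagDT}: you wrote $\smash{\widetilde{\B}} _{\PP} ^{(m)}(D) \smash{\widehat{\otimes}} ^\L _{\smash{\widetilde{\B}} _{\PP} ^{(m)}(D)} \smash{\widetilde{\B}} _{\PP} ^{(m)}(T)$, which is trivially $\smash{\widetilde{\B}} _{\PP} ^{(m)}(T)$; the relevant object is $\smash{\widetilde{\B}} _{\PP} ^{(m)}(T) \smash{\widehat{\otimes}} ^\L _{\smash{\widetilde{\B}} _{\PP} ^{(m)}(D)} \smash{\widetilde{\B}} _{\PP} ^{(m)}(T)$. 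Second, your worry about ``transferring'' the isogeny from $\smash{\widetilde{\B}}$ to $\smash{\widetilde{\D}}$ is unnecessary: the diagram \ref{ODdivcohe} already identifies $(\hdag T,D)$ with the $\smash{\widetilde{\B}}$-tensor product, so the paper simply runs the whole computation over $\smash{\widetilde{\B}}$, which is cleaner.

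For part 2), applying $\mathrm{oub}_{D,T}$ to the isomorphism of 1) yields an isomorphism in the \emph{reverse} direction to \ref{oub-pl-fid-iso2}. To conclude that the specific canonical morphism \ref{oub-pl-fid-iso2} is an isomorphism, the paper observes that its composite with this reverse isomorphism is the identity; you should add this one-line check.

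For part 3), the paper does not invoke an adjunction at all: it deduces full faithfulness directly and formally from 1) and 2) together. Given $\phi\colon \mathrm{oub}_{D,T}(\E^{(\bullet)}) \to \mathrm{oub}_{D,T}(\FF^{(\bullet)})$, one applies $(\hdag T,D)$ and uses 1) to produce $\psi\colon \E^{(\bullet)}\to\FF^{(\bullet)}$; the isomorphism 2), being natural in the $D$-category, then shows $\mathrm{oub}_{D,T}(\psi)=\phi$. Your adjunction argument is morally correct, but establishing that the completed-tensor functor $(\hdag T,D)$ (which involves $\R\varprojlim$ level by level) is genuinely left adjoint to $\mathrm{oub}_{D,T}$ in the localized category $\smash{\underrightarrow{LD}} ^{\mathrm{b}} _{\Q,\mathrm{qc}}$ is not immediate and is not done anywhere in the paper. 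The paper's route via 1)+2) bypasses this entirely and is both shorter and safer.
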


\begin{proof}
Avec les notations \ref{ODdivcohe}, 
on dispose des isomorphismes canoniques
de $\smash{\underrightarrow{LD}} ^{\mathrm{b}} _{\Q,\mathrm{qc}} ( \smash{\widetilde{\D}} _{\PP ^\sharp} ^{(\bullet)}(T))$:
$$\widetilde{\B} ^{(\bullet)} _{\PP} ( T)  \smash{\widehat{\otimes}} ^\L
_{\widetilde{\B} ^{(\bullet)}  _{\PP} ( D) } \E ^{(\bullet)}
\riso
\left (\widetilde{\B} ^{(\bullet)} _{\PP} ( T)  
\smash{\widehat{\otimes}} ^\L _{\widetilde{\B} ^{(\bullet)}  _{\PP} ( D) } 
\widetilde{\B} ^{(\bullet)}  _{\PP} ( T) \right )
\smash{\widehat{\otimes}} ^\L _{\widetilde{\B} ^{(\bullet)}  _{\PP} ( T) } 
\E ^{(\bullet)}
\underset{\ref{lem3-hdagDT}}{\riso}
\widetilde{\B} ^{(\bullet)}  _{\PP} ( T) 
\smash{\widehat{\otimes}} ^\L _{\widetilde{\B} ^{(\bullet)}  _{\PP} ( T) } 
\E ^{(\bullet)}
\liso 
\E ^{(\bullet)}.
$$
Cette composition est le morphisme canonique 
$(\hdag T ,\,D) \circ \mathrm{oub} _{D,T} (\E ^{(\bullet)})
\to 
\E ^{(\bullet)}$
qui est donc un isomorphisme.
Il en résulte l'isomorphisme canonique
$\mathrm{oub}_{D, T} \circ (\hdag T, D) \circ \mathrm{oub}_{D, T}(\E ^{ (\bullet)} ) 
\riso 
\mathrm{oub}_{D, T}  (\E ^{ (\bullet)} )$
de
$\underrightarrow{LD} ^{\mathrm{b}}  _{\Q, \mathrm{qc}} ( \smash{\widetilde{\D}} _{\PP ^\sharp} ^{(\bullet)}(D))$.
Comme 
le composé 
$\mathrm{oub}_{T, D}  (\E ^{ (\bullet)} )
\to
\mathrm{oub}_{D, T} \circ (\hdag T, D) \circ \mathrm{oub}_{D, T}(\E ^{ (\bullet)} ) 
\riso 
\mathrm{oub}_{D, T}  (\E ^{ (\bullet)} )$
est l'identité, 
on en déduit que le morphisme canonique
\ref{oub-pl-fid-iso2} est un isomorphisme. 
La pleine fidélité de $\mathrm{oub} _{D,T}$ découle des deux premières assertions.
\end{proof}

\begin{coro}
\label{gen-oub-pl-fid}
Soit $\E ^{(\bullet)}\in \smash{\underrightarrow{LD}} ^{\mathrm{b}} _{\Q,\mathrm{qc}} ( \smash{\widetilde{\D}} _{\PP ^\sharp} ^{(\bullet)}(D))$.
Le morphisme canonique fonctoriel en $\E ^{(\bullet)}$:
\begin{equation}
\label{gen-oub-pl-fid-iso1}
(\hdag T ,\,D ') \circ \mathrm{oub} _{D',D} (\E ^{(\bullet)})
\to 
(\hdag T ,~D) (\E ^{(\bullet)})
\end{equation}
est un isomorphisme
de $\smash{\underrightarrow{LD}} ^{\mathrm{b}} _{\Q,\mathrm{qc}} ( \smash{\widetilde{\D}} _{\PP ^\sharp} ^{(\bullet)}(T))$.
\end{coro}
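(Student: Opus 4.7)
The plan is to reduce the claim to an application of Proposition \ref{oub-pl-fid}.\ref{oub-pl-fid-iso1} by rewriting both sides of \ref{gen-oub-pl-fid-iso1} as iterated derived tensor products and using associativity. Concretely, by the definition \ref{hdag-def-qc} of the localization functor, we have
\begin{equation}
\notag
(\hdag T,D')\circ \mathrm{oub}_{D',D}(\E ^{(\bullet)})
\riso
\widetilde{\B}^{(\bullet)} _{\PP}(T)\smash{\widehat{\otimes}}^{\L} _{\widetilde{\B}^{(\bullet)} _{\PP}(D')} \E^{(\bullet)},
\qquad
(\hdag T,D)(\E^{(\bullet)})
\riso
\widetilde{\B}^{(\bullet)} _{\PP}(T)\smash{\widehat{\otimes}}^{\L} _{\widetilde{\B}^{(\bullet)} _{\PP}(D)} \E^{(\bullet)},
\end{equation}
and the canonical morphism \ref{gen-oub-pl-fid-iso1} is, up to the comparison isomorphism \ref{ODdivcohe}, induced by the factorization $\widetilde{\B}^{(\bullet)} _{\PP}(D')\to \widetilde{\B}^{(\bullet)} _{\PP}(D)\to \widetilde{\B}^{(\bullet)} _{\PP}(T)$.

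First, I would apply Proposition \ref{oub-pl-fid}.\ref{oub-pl-fid-iso1} with the triple $(\PP^{\sharp}, D', D)$ in place of $(\PP^{\sharp}, D, T)$, which is legitimate since $\E^{(\bullet)}\in \underrightarrow{LD}^{\mathrm{b}} _{\Q,\mathrm{qc}}(\widetilde{\D} _{\PP^{\sharp}}^{(\bullet)}(D))$. This yields a canonical isomorphism
\begin{equation}
\notag
(\hdag D,D')\circ \mathrm{oub}_{D',D}(\E^{(\bullet)})\riso \E^{(\bullet)}
\end{equation}
in $\underrightarrow{LD}^{\mathrm{b}} _{\Q,\mathrm{qc}}(\widetilde{\D}_{\PP^{\sharp}}^{(\bullet)}(D))$, i.e., $\widetilde{\B}^{(\bullet)} _{\PP}(D) \smash{\widehat{\otimes}}^{\L} _{\widetilde{\B}^{(\bullet)} _{\PP}(D')} \E^{(\bullet)} \riso \E^{(\bullet)}$.

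Next, I would apply the functor $(\hdag T,D)$, which is well-defined on $\underrightarrow{LD}^{\mathrm{b}} _{\Q,\mathrm{qc}}$ by \ref{hdag-def-qc}, and combine with associativity of the completed derived tensor product: there is a natural isomorphism
\begin{equation}
\notag
\widetilde{\B}^{(\bullet)}_{\PP}(T)\smash{\widehat{\otimes}}^{\L} _{\widetilde{\B}^{(\bullet)} _{\PP}(D)}
\left(\widetilde{\B}^{(\bullet)} _{\PP}(D) \smash{\widehat{\otimes}}^{\L} _{\widetilde{\B}^{(\bullet)} _{\PP}(D')} \E^{(\bullet)}\right)
\riso
\widetilde{\B}^{(\bullet)} _{\PP}(T)\smash{\widehat{\otimes}}^{\L} _{\widetilde{\B}^{(\bullet)}_{\PP}(D')} \E^{(\bullet)}.
\end{equation}
Chaining these gives the desired identification
$(\hdag T,D')\circ \mathrm{oub}_{D',D}(\E^{(\bullet)}) \riso (\hdag T,D)(\E^{(\bullet)})$, and a diagram chase (or the universal property of the factorization $\widetilde{\B}^{(\bullet)} _{\PP}(D')\to \widetilde{\B}^{(\bullet)} _{\PP}(D)\to \widetilde{\B}^{(\bullet)} _{\PP}(T)$) identifies this isomorphism with the canonical morphism \ref{gen-oub-pl-fid-iso1}.

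The only point requiring care, and the main obstacle, is that associativity of $\smash{\widehat{\otimes}}^{\L}$ and the compatibility of $(\hdag T,D)$ with the isomorphism produced by Proposition \ref{oub-pl-fid} must be verified inside the localized category $\underrightarrow{LD}^{\mathrm{b}}_{\Q,\mathrm{qc}}$, which requires that the relevant terms remain bounded and quasi-coherent throughout. This is guaranteed by Corollary \ref{rema-dim-coh-finie}, which provides the uniform amplitude bounds that ensure boundedness is preserved by the tensor products appearing in the above chain. Once this bookkeeping is in place, no further computation is needed.
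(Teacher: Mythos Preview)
Your proof is correct and follows essentially the same route as the paper: apply Proposition~\ref{oub-pl-fid}.\ref{oub-pl-fid-iso1} (with $D'\subset D$ playing the role of $D\subset T$) to obtain $(\hdag D,D')\circ\mathrm{oub}_{D',D}(\E^{(\bullet)})\riso \E^{(\bullet)}$, then apply $(\hdag T,D)$ and use the transitivity $(\hdag T,D')\riso (\hdag T,D)\circ(\hdag D,D')$. Your additional unpacking of the functors as completed derived tensor products and the remark on boundedness via Corollary~\ref{rema-dim-coh-finie} are correct but not strictly necessary, since the paper's framework already packages these into the well-definedness of \ref{hdag-def-qc}.
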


\begin{proof}
D'après \ref{oub-pl-fid-iso1}, on bénéficie de l'isomorphisme canonique
$(\hdag D ,~D') \circ \mathrm{oub} _{D',D} (\E ^{(\bullet)})
\riso 
\E ^{(\bullet)}$.
En lui appliquant le foncteur 
$(\hdag T ,~D)$, l'isomorphisme
$(\hdag T ,~D')
\riso 
(\hdag T ,~D) \circ (\hdag D ,~D')$
nous permet alors de conclure.
\end{proof}

\begin{nota}
\label{nota-tdf}
On note
$D  ^{\mathrm{b}} _{\mathrm{tdf}}
( \smash{\widetilde{\D}} _{\PP ^\sharp} ^{(\bullet)} (T ))$
la sous-catégorie pleine de
$D  ^{\mathrm{b}} 
(\smash{\widetilde{\D}} _{\PP ^\sharp} ^{(\bullet)} (T ))$
des complexes de Tor-dimension finie. 
On note 
$\smash{\underrightarrow{LD}}  ^{\mathrm{b}} _{\Q, \mathrm{qc}, \mathrm{tdf}}
(\smash{\widetilde{\D}} _{\PP ^\sharp} ^{(\bullet)} (T ))$
la sous-catégorie strictement pleine de
$\smash{\underrightarrow{LD}}  ^{\mathrm{b}} _{\Q, \mathrm{qc}}
(\smash{\widetilde{\D}} _{\PP ^\sharp} ^{(\bullet)} (T ))$
des objets isomorphes dans 
$\smash{\underrightarrow{LD}}  ^{\mathrm{b}} _{\Q, \mathrm{qc}}
(\smash{\widetilde{\D}} _{\PP ^\sharp} ^{(\bullet)} (T ))$
à un objet de $D  ^{\mathrm{b}} _{\mathrm{tdf}}
( \smash{\widetilde{\D}} _{\PP ^\sharp} ^{(\bullet)} (T ))$.

\end{nota}

\begin{coro}
\begin{enumerate}
\item Avec les notations de \ref{nota-tdf}, 
on bénéficie de l'égalité
$\smash{\underrightarrow{LD}}  ^{\mathrm{b}} _{\Q, \mathrm{qc}}
(\smash{\widetilde{\D}} _{\PP ^\sharp} ^{(\bullet)} (T ))
=\smash{\underrightarrow{LD}}  ^{\mathrm{b}} _{\Q, \mathrm{qc}, \mathrm{tdf}}
(\smash{\widetilde{\D}} _{\PP ^\sharp} ^{(\bullet)} (T ))$.

\item Le bifoncteur \ref{def-otimes-coh1} se factorise en le bifoncteur
\begin{align}
\label{def-otimes-coh1qc}
-
 \smash{\overset{\L}{\otimes}}^{\dag} _{\O _{\PP } ( \hdag T ) _{\Q}}
 -
\colon
\smash{\underrightarrow{LD}}  ^{\mathrm{b}} _{\Q, \mathrm{qc}}
(\overset{^\mathrm{?}}{} \smash{\widetilde{\D}} _{\PP ^\sharp} ^{(\bullet)} (T ))
\times 
\smash{\underrightarrow{LD}}  ^{\mathrm{b}} _{\Q, \mathrm{qc}}
(\overset{^\mathrm{g}}{} \smash{\widetilde{\D}} _{\PP ^\sharp} ^{(\bullet)} (T ))
&
\to 
\smash{\underrightarrow{LD}}  ^{\mathrm{b}}  _{\Q, \mathrm{qc}}
(\overset{^\mathrm{?}}{} \smash{\widetilde{\D}} _{\PP ^\sharp} ^{(\bullet)} (T )).
\end{align}

\end{enumerate}
\end{coro}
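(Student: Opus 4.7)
The plan is to deduce the first assertion from the identification \eqref{oub-pl-fid-iso1} applied with $D=\emptyset$, combined with the uniform-in-$m$ bound on amplitude furnished by Corollary \ref{rema-dim-coh-finie}. Given $\E ^{(\bullet)}\in \smash{\underrightarrow{LD}} ^{\mathrm{b}} _{\Q,\mathrm{qc}} ( \smash{\widetilde{\D}} _{\PP ^\sharp} ^{(\bullet)}(T))$, Proposition \ref{oub-pl-fid}.\ref{oub-pl-fid-iso1} yields
$$
(\hdag T ,\,\emptyset) \circ \mathrm{oub} _{\emptyset,T} (\E ^{(\bullet)})
\riso
\E ^{(\bullet)}
$$
in $\smash{\underrightarrow{LD}} ^{\mathrm{b}} _{\Q,\mathrm{qc}} ( \smash{\widetilde{\D}} _{\PP ^\sharp} ^{(\bullet)}(T))$. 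It therefore suffices to exhibit a representative of the left-hand side that lies in $D ^{\mathrm{b}} _{\mathrm{tdf}}( \smash{\widetilde{\D}} _{\PP ^\sharp} ^{(\bullet)} (T ))$.

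The core step is to reduce the Tor-dimension question on $\smash{\widetilde{\D}}$ to one on $\smash{\widetilde{\B}}$. Via the diagram \eqref{ODdivcohe} (with $D=\emptyset$), the complex $(\hdag T ,\,\emptyset)\circ \mathrm{oub} _{\emptyset, T}(\E ^{(\bullet)})$ is represented at each level $m$ by $\widetilde{\B} _{\PP} ^{(m)}(T)\smash{\widehat{\otimes}} ^{\L} _{\O _\PP} \E ^{(m)}$, where I may take $\E ^{(m)}$ in $D ^{\mathrm{b}} _{\mathrm{qc}}(\smash{\widetilde{\D}} _{\PP ^\sharp} ^{(m)}(\emptyset))$ by the very definition of the quasi-coherent category. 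By Corollary \ref{rema-dim-coh-finie}.1, the functor $\widetilde{\B} _{\PP} ^{(m)}(T)\smash{\widehat{\otimes}} ^{\L} _{\O _\PP}-$ is way-out on $D^{-}$ with an amplitude bounded independently of $m$ (in fact by $1$), so the resulting complex has Tor-dimension bounded uniformly in $m$ as a complex of $\smash{\widetilde{\D}} _{\PP ^\sharp} ^{(\bullet)}(T)$-modules. This gives an object of $D ^{\mathrm{b}} _{\mathrm{tdf}}( \smash{\widetilde{\D}} _{\PP ^\sharp} ^{(\bullet)} (T ))$ isomorphic to $\E ^{(\bullet)}$ in $\smash{\underrightarrow{LD}} ^{\mathrm{b}} _{\Q,\mathrm{qc}}( \smash{\widetilde{\D}} _{\PP ^\sharp} ^{(\bullet)} (T))$, yielding the equality of the first assertion.

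For the second assertion, given $\E ^{(\bullet)},\FF ^{(\bullet)}\in \smash{\underrightarrow{LD}} ^{\mathrm{b}} _{\Q,\mathrm{qc}}$, part (1) allows me to pick representatives with uniformly bounded Tor-dimension; hence the level-wise completed tensor products $\E ^{(m)}\smash{\widehat{\otimes}} ^{\L} _{\widetilde{\B} _{\PP} ^{(m)}(T)}\FF ^{(m)}$ have uniformly bounded cohomological amplitude, so $\E ^{(\bullet)} \smash{\overset{\L}{\otimes}}^{\dag}_{\O _\PP(\hdag T)_\Q}\FF ^{(\bullet)}$ lies in $\smash{\underrightarrow{LD}} ^{\mathrm{b}}_\Q$. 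Quasi-coherence of the output is then a formal associativity check: tensoring the two quasi-coherence isomorphisms $\E ^{(\bullet)}\cong \smash{\widetilde{\D}} ^{(\bullet)}(T)\smash{\widehat{\otimes}} ^{\L}_{\smash{\widetilde{\D}} ^{(\bullet)}(T)}\E ^{(\bullet)}$ and the analogous one for $\FF ^{(\bullet)}$ produces the required isomorphism for the product.

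The main obstacle is precisely the uniform-in-$m$ control of the Tor-dimension: level-wise finiteness would be insufficient to place the representative in $D ^{\mathrm{b}} _{\mathrm{tdf}}( \smash{\widetilde{\D}} _{\PP ^\sharp} ^{(\bullet)} (T ))$. The decisive input making everything work is therefore the explicit, level-independent amplitude bound in Corollary \ref{rema-dim-coh-finie}; without it, one would only recover finite Tor-dimension at each fixed level and no uniform statement.
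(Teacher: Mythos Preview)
Your overall strategy for part (1) --- use the isomorphism $(\hdag T,\emptyset)\circ\mathrm{oub}_{\emptyset,T}(\E^{(\bullet)})\riso\E^{(\bullet)}$ from Proposition~\ref{oub-pl-fid} and then argue that the left-hand side has a representative of finite Tor-dimension --- is exactly the reduction step the paper also uses. The problem is what you do next.

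You invoke Corollary~\ref{rema-dim-coh-finie}.1 to say that $\widetilde{\B}_{\PP}^{(m)}(T)\smash{\widehat{\otimes}}^{\L}_{\O_\PP}\E^{(m)}$ has ``Tor-dimension bounded uniformly in $m$''. But that corollary only says the functor $\widetilde{\B}_{\PP}^{(m)}(T)\smash{\widehat{\otimes}}^{\L}_{\O_\PP}-$ is \emph{way-out with bounded amplitude}: given a bounded input, the output has cohomology concentrated in a uniformly bounded range of degrees. This is a statement about cohomological amplitude, not about Tor-dimension. Having $\mathcal{H}^j=0$ for $|j|\gg 0$ does not by itself imply the complex is quasi-isomorphic to a bounded complex of flat modules over $\smash{\widetilde{\D}}_{\PP^\sharp}^{(m)}(T)$. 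So your inference ``way-out $\Rightarrow$ finite Tor-dimension'' is a non-sequitur.

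What is missing is precisely the base case $T=\emptyset$: you need to know that any $\E^{(m)}\in D^{\mathrm{b}}_{\mathrm{qc}}(\smash{\widehat{\D}}_{\PP^\sharp}^{(m)})$ already has finite Tor-dimension over $\smash{\widehat{\D}}_{\PP^\sharp}^{(m)}$ (with a uniform-in-$m$ bound). The paper establishes this separately: via Berthelot's \cite[3.2.3]{Beintro2} one reduces to the schematic statement $D^{\mathrm{b}}_{\mathrm{qc}}(\D_{P^\sharp}^{(m)})=D^{\mathrm{b}}_{\mathrm{qc},\mathrm{tdf}}(\D_{P^\sharp}^{(m)})$, then locally (affine case, type~A theorems) to $D^{\mathrm{b}}(D_{P^\sharp}^{(m)})=D^{\mathrm{b}}_{\mathrm{tdf}}(D_{P^\sharp}^{(m)})$, and finally invokes the finite cohomological dimension of $D_{P^\sharp}^{(m)}$ (Montagnon \cite[5.3.1]{these_montagnon}) together with \cite[I.5.9]{sga6}. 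Only after this base case is in hand does the localization functor $(\hdag T)$, which indeed preserves finite Tor-dimension, yield the general statement. Your argument for part~(2) is fine once part~(1) is correctly established.
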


\begin{proof}
Comme la seconde propriété résulte de la première, établissons 1).
Dans un premier temps, supposons $T$ vide. 
Grâce au théorème  \cite[3.2.3]{Beintro2} de Berthelot qui reste valable avec des structures logarithmiques, 
on se ramène à établir 
$D  ^{\mathrm{b}} _{ \mathrm{qc}}
(\D  _{P ^\sharp} ^{(m)})
=D  ^{\mathrm{b}} _{ \mathrm{qc}, \mathrm{tdf}}
(\D _{P ^\sharp} ^{(m)})$. 
Comme cela est local ($P$ est supposé quasi-compact), 
on peut supposer $P$ affine.
Avec les théorèmes de type $A$ pour les faisceaux quasi-cohérents,
on se ramène alors à vérifier 
$D  ^{\mathrm{b}} 
(D  _{P ^\sharp} ^{(m)})
=D  ^{\mathrm{b}} _{\mathrm{tdf}}
(D _{P ^\sharp} ^{(m)})$. 
Cette dernière égalité résulte du fait que 
$D  _{P ^\sharp} ^{(m)}$ est un anneau de dimension cohomologique finie (voir \cite[Proposition 5.3.1]{these_montagnon})
et de \cite[I.5.9]{sga6}.
Revenons à présent au cas général. 
Soit 
$\E ^{(\bullet)} \in \smash{\underrightarrow{LD}}  ^{\mathrm{b}} _{\Q, \mathrm{qc}}
(\smash{\widetilde{\D}} _{\PP ^\sharp} ^{(\bullet)} (T ))$.
Il découle de \ref{oub-pl-fid-iso1} utilisé pour $D$ vide, que l'on dispose de l'isomorphisme
$(\hdag T) \circ \mathrm{oub} _{T} (\E ^{(\bullet)})
\riso
\E ^{(\bullet)}$.
D'après le premier cas traité, on obtient
$\mathrm{oub} _{T} (\E ^{(\bullet)})\in 
\smash{\underrightarrow{LD}}  ^{\mathrm{b}} _{\Q, \mathrm{qc}, \mathrm{tdf}}
(\smash{\widetilde{\D}} _{\PP ^\sharp} ^{(\bullet)})$.
Comme le foncteur 
$(\hdag T)$ préserve la Tor-dimension finie, on conclut la preuve. 
\end{proof}

Après avoir traiter le cas d'un diviseur inclus dans un autre,
terminons cette section par considérer le cas de deux diviseurs dont les composantes irréductibles
sont distinctes. 

\begin{lemm}
\label{lem1-hdagT1T2}
Soient $T, ~T'$ deux diviseurs de $P$ dont les composantes irréductibles sont distinctes,
$\U''$ l'ouvert de $\PP$ complémentaire de $T \cup T'$.

\begin{enumerate}
\item Pour tout $i \in \N$, 
le morphisme canonique
$\smash{\widetilde{\B}} _{P _i} ^{(m)} (T ) \otimes ^{\L}_{\O _{P _i} }
\smash{\widetilde{\B}} _{P _i} ^{(m)} (T' ) \to 
\smash{\widetilde{\B}} _{P _i} ^{(m)} (T ) \otimes _{\O _{P _i} }
\smash{\widetilde{\B}} _{P _i} ^{(m)} (T' )$
est un isomorphisme. 

\item Le morphisme canonique 
$\smash{\widetilde{\B}} _{\PP} ^{(m)} (T ) \widehat{\otimes} ^{\L} _{\O _{\PP} }
\smash{\widetilde{\B}} _{\PP} ^{(m)} (T ')
\to
\smash{\widetilde{\B}} _{\PP} ^{(m)} (T ) \widehat{\otimes} _{\O _{\PP} }
\smash{\widetilde{\B}} _{\PP} ^{(m)} (T ')$
est un isomorphisme
et
la $\O _{\PP} $-algèbre
$\smash{\widetilde{\B}} _{\PP} ^{(m)} (T ) \widehat{\otimes} _{\O _{\PP} }
\smash{\widetilde{\B}} _{\PP} ^{(m)} (T ')$
est sans $p$-torsion.

\item Le morphisme canonique de $\O _{\PP}$-algèbres
$\smash{\widetilde{\B}} _{\PP} ^{(m)} (T ) \widehat{\otimes} _{\O _{\PP} }
\smash{\widetilde{\B}} _{\PP} ^{(m)} (T ')
\to 
j _* \O _{\U ''} $,
où $j\colon \U '' \hookrightarrow \PP$ est l'inclusion,
est un monomorphisme. 

\item Soient $\chi,~\lambda \colon \N \to \N$ définis respectivement pour tout entier $m\in \N$ en posant 
$\chi (m) := p ^{p-1}$ et $\lambda (m) := m +1$.
On dispose des monomorphismes canoniques de la forme
$\alpha ^{(\bullet)}
\colon
\smash{\widetilde{\B}} _{\PP} ^{(\bullet)} (T ) \widehat{\otimes} _{\O _{\PP} }
\smash{\widetilde{\B}} _{\PP} ^{(\bullet)} (T ')
\to 
\smash{\widetilde{\B}} _{\PP} ^{(\bullet)} (T \cup T')$
et
$\beta ^{(\bullet)}\colon 
\smash{\widetilde{\B}} _{\PP} ^{(\bullet)} (T \cup T')
\to 
\lambda ^{*} \chi ^{*} (
\smash{\widetilde{\B}} _{\PP} ^{(\bullet)} (T ) \widehat{\otimes} _{\O _{\PP} }
\smash{\widetilde{\B}} _{\PP} ^{(\bullet)} (T '))$
tels que 
$\lambda ^{*} \chi ^{*} (\alpha  ^{(\bullet)})
\circ \beta  ^{(\bullet)}$ et $\beta  ^{(\bullet)} \circ \alpha  ^{(\bullet)}$ 
sont les morphismes canoniques.
\end{enumerate}

\end{lemm}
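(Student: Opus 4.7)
The plan is to work locally on $\PP$. Since the claim is local in $\PP$, we may assume $\PP$ is affine and (using that $T, T'$ are reduced with no common irreducible component) choose lifts $f, g \in \O_{\PP}$ of local equations of $T$ and $T'$ respectively, with $f$ and $g$ coprime modulo $\pi$. We then use Berthelot's explicit presentations from \cite[4.2.3]{Be1}: $\B_{\PP}^{(m)}(T) = \O_{\PP}[X]/(f^{p^{m+1}}X - p)$, $\B_{\PP}^{(m)}(T') = \O_{\PP}[Y]/(g^{p^{m+1}}Y - p)$, and $\B_{\PP}^{(m)}(T\cup T') = \O_{\PP}[Z]/((fg)^{p^{m+1}}Z - p)$, all of which are $p$-torsion-free.

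For assertion 1, I would follow the strategy of the proof of Lemma \ref{lem1-hdagDT}.1: apply the flat resolution $0 \to \O_{P_i}[Y] \xrightarrow{g^{p^{m+1}}Y - p} \O_{P_i}[Y] \to \B_{P_i}^{(m)}(T') \to 0$ tensored over $\O_{P_i}$ with $\B_{P_i}^{(m)}(T)$, reducing the claim to injectivity of multiplication by $g^{p^{m+1}}$ on $\B_{P_i}^{(m)}(T)$. By reducedness, one may restrict to individual irreducible components, and on each component $\B^{(m)}_{P_i}(T)$ is a domain (via \cite[4.3.3]{Be1}); since $T'$ shares no component with $T$, $g$ is nonzero on that component, hence a non-zero-divisor. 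Assertion 2 then follows by passing to the $p$-adic limit in $i$ exactly as in step 2 of the proof of Lemma \ref{lem1-hdagDT}, and the $p$-torsion-freeness is inherited from the limit. For assertion 3, the map to $j_*\O_{\U''}$ is induced by $X \mapsto p/f^{p^{m+1}}$ and $Y \mapsto p/g^{p^{m+1}}$; injectivity follows from the chain of embeddings $\widetilde{\B}_{\PP}^{(m)}(T) \widehat{\otimes}_{\O_{\PP}} \widetilde{\B}_{\PP}^{(m)}(T') \hookrightarrow \Q \otimes_{\Z} \O_{\PP}[1/(fg)]\widehat{\ } \hookrightarrow j_*\O_{\U''}$, combined with $p$-torsion-freeness from assertion 2.

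Assertion 4 is the main technical point. The map $\alpha^{(m)}$ is the $\O$-algebra homomorphism sending $X \mapsto g^{p^{m+1}}Z$ and $Y \mapsto f^{p^{m+1}}Z$, which is well-defined since each substitution satisfies the required relation ($f^{p^{m+1}} \cdot g^{p^{m+1}}Z = p$). It is a monomorphism via part 3 and the analogous embedding of $\widetilde{\B}^{(m)}(T\cup T')$ into $j_*\O_{\U''}$. For $\beta^{(\bullet)}$, the obstruction is that $Z = p/(fg)^{p^{m+1}}$ does not literally lie in $\widetilde{\B}^{(m)}(T) \widehat{\otimes}_{\O_{\PP}} \widetilde{\B}^{(m)}(T')$ at the same level, because it equals $(1/p)(p/f^{p^{m+1}})(p/g^{p^{m+1}})$; this is precisely why we must pass to level $\lambda(m) = m+1$, where with the new generators $X', Y'$ satisfying $f^{p^{m+2}}X' = p = g^{p^{m+2}}Y'$ one computes $(fg)^{p^{m+1}(p-1)}X'Y' \cdot (fg)^{p^{m+1}} = p^2$. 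This allows one to define $\beta^{(m)}$ via $Z \mapsto (fg)^{p^{m+1}(p-1)}X'Y'$ extended $\O$-linearly (the resulting map sends $1$ to the appropriate power of $p$ forced by the defining relation, which is exactly absorbed by the $\chi^*$-twist).

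The main obstacle will be the bookkeeping in verifying that $\beta^{(\bullet)} \circ \alpha^{(\bullet)}$ and $\lambda^{*}\chi^{*}(\alpha^{(\bullet)}) \circ \beta^{(\bullet)}$ agree with the canonical morphisms $\widetilde{\B}^{(\bullet)}(T) \widehat{\otimes} \widetilde{\B}^{(\bullet)}(T') \to \lambda^{*}\chi^{*}(\widetilde{\B}^{(\bullet)}(T) \widehat{\otimes} \widetilde{\B}^{(\bullet)}(T'))$ and $\widetilde{\B}^{(\bullet)}(T \cup T') \to \lambda^{*}\chi^{*}\widetilde{\B}^{(\bullet)}(T \cup T')$. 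Concretely, applying $\beta^{(m)}\circ\alpha^{(m)}$ to $X$ gives $g^{p^{m+1}} \cdot (fg)^{p^{m+1}(p-1)}X'Y' = f^{p^{m+1}(p-1)}X' \cdot g^{p^{m+2}}Y'$; since $g^{p^{m+2}}Y' = p$ and $X$ transitions to $f^{p^{m+1}(p-1)}X'$ at level $m+1$, this equals $p$ times the transition, and the $\chi^{*}$-twist absorbs the factor $p$. Once this consistency check is carried out on each generator, the same argument handles $\lambda^*\chi^*(\alpha) \circ \beta$ and monomorphism follows because, after inverting $p$, both compositions become the canonical lim-isomorphism.
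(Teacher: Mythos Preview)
Your overall strategy (work locally, use Berthelot's explicit presentations, construct $\alpha$ and $\beta$ by hand) matches the paper's, but several steps contain genuine gaps.

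\textbf{Part 1.} The claim that $\B^{(m)}_{P_i}(T)$ is a domain via \cite[4.3.3]{Be1} is false: that reference concerns the $p$-adic completion $\widehat{\B}^{(m)}_{\PP}(T)$, not its reductions modulo $\pi^{i+1}$. At $i=0$ one has $\B^{(m)}_{P_0}(T)\simeq \O_{P_0}[X]/(\bar f^{\,p^{m+1}}X)$, in which $\bar f^{\,p^{m+1}}\cdot X=0$. The paper instead uses $p$-torsion-freeness of $\widehat{\B}^{(m)}_{\PP}(T')$ to reduce to $i=0$, and then checks directly in $\O_{P_0}[X']/(\bar f'^{\,p^{m+1}}X')$ that $\bar f$ is a non-zero-divisor, using that $\bar f$ and $\bar f'$ share no irreducible factor.

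\textbf{Part 3.} Your chain of embeddings is not justified: it is not clear that $\widetilde{\B}^{(m)}_{\PP}(T)\widehat\otimes_{\O_\PP}\widetilde{\B}^{(m)}_{\PP}(T')$ injects into $\Q\otimes_{\Z}\O_{\PP}[1/(fg)]\widehat{\ }$ without already knowing what you want. The paper's argument is substantially more delicate: one first embeds $\widehat B$ into $\widehat B_f$, identifies $(\widehat B_f)\widehat{\ }$ with $\widehat B^{(m)}_{\U}(T'\cap U)\subset O_{\U''}$, and then proves that $\widehat B_f$ is $p$-adically separated by a nontrivial argument (reducing to $X$-adic separatedness and using the structure of $\widehat B/X\widehat B$ and $\widehat B/p\widehat B$).

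\textbf{Part 4.} Your proposed $\beta^{(m)}$, read as an $\O_\PP$-algebra map with $Z\mapsto (fg)^{p^{m+1}(p-1)}X'Y'$, does not factor through the relation $(fg)^{p^{m+1}}Z=p$: one gets $(fg)^{p^{m+2}}X'Y'=p^2$, not $p$. The paper avoids this by first defining the \emph{ring} homomorphism $\gamma^{(m)}\colon \O_\PP[Z]\to(\widetilde{\B}^{(m+1)}(T)\widehat\otimes\widetilde{\B}^{(m+1)}(T'))_\Q$ with $\gamma^{(m)}(Z)=\tfrac1p\cdot\tfrac{p}{f^{p^{m+1}}}\cdot\tfrac{p}{g^{p^{m+1}}}$, which \emph{does} kill $(fg)^{p^{m+1}}Z-p$; then an explicit estimate of $\gamma^{(m)}(Z^n)$ (writing $n=pq+r$) shows that $p^{\,p-1}\gamma^{(m)}$ lands in the integral part, giving $\beta^{(m)}$ as a module map. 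The factor $p^{\,p-1}$, not $p$, is what the $\chi^*$-twist must absorb, and checking the two compositions on generators is legitimate only after one has a map that genuinely factors through the quotient.
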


\begin{proof}
Il ne coûte pas cher de supposer $\lambda _0 = id$.
Comme les assertions sont locales,
on peut supposer $\PP$ affine, intègre et qu'il existe $f _1,\dots, f _r \in \O _{\PP}$ (resp. $f '_1,\dots, f ' _{r'}  \in \O _{\PP}$) 
relevant une équation locale des composantes irréductibles
de $T \subset P$ (resp $T '\subset P$). 
Notons $f := \prod _{s=1} ^{r} f _s$
et $f ':= \prod _{s=1} ^{r'} f '_s$.
On dispose alors des égalités
$\smash{\widetilde{\B}} _{\PP} ^{(m)} (T )
\riso
\O _{\PP} \{X\} /(f ^{p ^{m+1}}X-p)$
et
$\smash{\widetilde{\B}} _{\PP} ^{(m)} (T ')
\riso
\O _{\PP} \{X'\}/ ((f') ^{p ^{m+1}}X'-p)$, 
où on a pris soin de distinguer les variables $X$ et $X'$.

1) Il s'agit d'établir que, 
pour tout $j\leq  -1$,
$\mathcal{H} ^{j} (\B _{P _i} ^{(m)} (T ) \otimes ^{\L}_{\O _{P _i}}
\B _{P _i} ^{(m)} (T')) 
=0.$

On dispose de la suite exacte courte
$0\to \O _{P _i} [X] 
\underset{\overline{f} ^{p ^{m+1}}X-p}{\longrightarrow}
\O _{P _i} [X] 
\to 
\B _{P _i} ^{(m)} (T)
\to 0$ 
où
$\overline{f}$ est la réduction de $f$ modulo $\pi ^{i+1}$.
Comme cette suite exacte donne une résolution canonique 
de $\B _{P _i} ^{(m)} (T )$ par des 
$\O _{P _i}$-modules plats, 
en appliquant le foncteur 
$-\otimes_{\O _{P _i}}
\B _{P _i} ^{(m)} (T' ) $
à cette suite exacte, 
on constate qu'il s'agit alors d'établir 
que l'image de
$\overline{f} $
dans 
$B _{P _i} ^{(m)} (T')$
est non nul 
et
n'est pas un diviseur de zéro.
Comme $\widehat{B} _{\PP} ^{(m)} (T')$
est sans-$p$-torsion
(voir \cite[4.3.3]{Be1}),
on se ramène au cas $i=0$.
On a alors
$B _{P} ^{(m)} (T')
\riso
O _{P} [X'] / (\overline{f'} ^{p ^{m+1}} X')$
où
$\overline{f'}$ est la réduction de $f'$ modulo $\pi$.
Soient 
$P(X'), Q (X') \in \O _{P} [X'] $
tels que 
$\overline{f} \cdot P(X')= (\overline{f'} ^{p ^{m+1}} X' )  \cdot  Q (X')$.
Comme les composantes irréductibles de 
$T$ et $T'$ sont distinctes,
il en résulte que 
$\overline{f'} ^{p ^{m+1}}X'$ divise $P(X')$.
D'où le résultat.

2) En appliquant le foncteur 
$\R \underset{\underset{i}{\longleftarrow}}{\lim}$
aux isomorphismes canoniques de $1)$, 
via Mittag-Leffler, on obtient l'isomorphisme de $2)$ voulu.
De plus, 
d'après \cite[3.2.2]{Beintro2},
$\smash{\widehat{\B}} _{\PP} ^{(m)} (T ) 
\widehat{\otimes} ^{\L}  _{\O _{\PP} }
\smash{\widehat{\B}} _{\PP} ^{(m)} (T ')
 \in D ^{\mathrm{b}} _{\mathrm{qc}} ( \O _{\PP})$
et 
$k \otimes ^{\L} _{\V}
 \left (\smash{\widehat{\B}} _{\PP} ^{(m)} (T) 
\widehat{\otimes} ^{\L}  _{\O _{\PP} }
\smash{\widehat{\B}} _{\PP} ^{(m)} (T ') \right ) 
\riso
\B _{P} ^{(m)} (T) \otimes ^{\L}_{\O _{P} }
\B _{P} ^{(m)} (T')$.
Il en résulte d'après 1) et l'isomorphisme de 2)
que 
$\mathcal{H} ^{-1} 
(k \otimes ^{\L} _{\V} (\smash{\widehat{\B}} _{\PP} ^{(m)} (T) 
\widehat{\otimes} _{\O _{\PP} }
\smash{\widehat{\B}} _{\PP} ^{(m)} (T ')))
\riso 
0$.

3) Il suffit de reprendre le fil de la preuve de 
\cite[4.3.3.(ii)]{Be1}: notons 
$\widehat{B}: = 
\smash{\widehat{B}} _{\PP} ^{(m)} (T) 
\widehat{\otimes}  _{O _{\PP} }
\smash{\widehat{B}} _{\PP} ^{(m)} (T ')$.
Comme $\widehat{B}$ est sans $p$-torsion, 
il est aussi sans $f$-torsion
et s'injecte donc 
$\widehat{B} _{f}$. 
Notons $\U$ l'ouvert de $\PP$ complémentaire de $T$.
On obtient :
$(\widehat{B} _{f} ) ^{\widehat{}}
\riso
O _{\U} 
\widehat{\otimes}  _{O _{\PP} }
\smash{\widehat{B}} _{\PP} ^{(m)} (T ')
\riso 
\smash{\widehat{B}} _{\U} ^{(m)} (T '\cap U)
\subset
O _{\U''} $ (cette dernière inclusion est \cite[4.3.3.(ii)]{Be1}).
Il suffit donc de prouver que 
$\widehat{B} _{f}$ est $p$-adiquement séparé. 
On note encore $X$, l'image de $X$ dans $\widehat{B}$.
Comme $X$ est un diviseur de $p$, il suffit d'établir
que $\widehat{B} _{f}$ est $X$-adiquement séparé. 
Soient $b, x \in \widehat{B} _{f}$ tels que 
$(1 -bX )x=0$. Il s'agit de prouver que $x =0$.
Quitte à multiplier par une puissance de $f$ assez grande, 
on peut supposer $b, x \in \widehat{B} $
et qu'il existe $s \in \N$ assez grand tel que 
$(f ^{s} -b X) x =0$.
Or, 
$\widehat{B} /X \widehat{B} \riso 
O _{\PP} /p O _{\PP} \otimes _{O _{\PP}} \O _{\PP} \{X'\}/ ((f') ^{p ^{m+1}}X'-p)
\riso
(O _{\PP} /p O _{\PP}) [X '] / (\overline{f'} ^{p ^{m+1}}X')
\riso 
B _{P} ^{(m)} (T')$.
Comme l'image de $f ^{s}$ dans $B _{P} ^{(m)} (T')$ est non nul et n'est pas un diviseur de zéro (voir l'étape 1),
on en déduit que l'égalité 
$(f ^{s} -b X) x =0$ implique que 
$x \in X  \widehat{B}$.
Par itération du raisonnement,
on obtient 
$x \in \cap _{n \in \N} X ^{n} \widehat{B}$.
Il existe donc $c \in \widehat{B}$ tel que 
$(1-cX) x=0$.

Or, 
$\widehat{B} /p \widehat{B} \riso  
B _{P} ^{(m)} (T') [X] /(\overline{f} ^{p ^{m+1}}X)$.
La relation $(1-cX) x=0$ dans 
$\widehat{B}$ induit dans 
$\widehat{B} /p \widehat{B} $ l'égalité
$(1 -\overline{c}X) \overline{x}=0$.
Soient $\widetilde{c}(X)$, $\widetilde{x}(X)$ 
des éléments de $B _{P} ^{(m)} (T') [X]$ 
induisant modulo $\overline{f} ^{p ^{m+1}}X$
les éléments 
$\overline{c}$, $\overline{x}$.
Ainsi, $(1 -\widetilde{c}(X) X) \widetilde{x}(X) =\overline{f} ^{p ^{m+1}}X Q (X)$,
avec $ Q(X)\in B _{P} ^{(m)} (T') [X]$.
Il en résulte qu'il existe $R(X) \in B _{P} ^{(m)} (T') [X]$
tel que $\widetilde{x}(X) =X R(X)$.
D'où $(1 -\widetilde{c}(X) X) R(X) =\overline{f} ^{p ^{m+1}}Q (X)$, 
ce qui entraîne que $\overline{f} ^{p ^{m+1}}$ divise $R(X) $.
Ainsi, on a vérifié que $\overline{x}=0$, i.e. $x \in p \widehat{B} $.
Comme $\widehat{B} $ est sans $p$-torsion, en réitérant le procédé on obtient 
$x \in \cap _{n \in \N} p ^{n}  \widehat{B} = \{ 0\}$, ce qu'il fallait démontrer.

4) La définition du morphisme 
$\alpha ^{(m)}$ est triviale. 
Le fait que cela soit un monomorphisme résulte aussitôt de $3$.
De plus, 
on note 
$\gamma ^{(m)} 
\colon 
\O _{\PP} [X]
\to 
( \smash{\widetilde{\B}} _{\PP} ^{(m+1)} (T ) \widehat{\otimes} _{\O _{\PP} }
\smash{\widetilde{\B}} _{\PP} ^{(m+1)} (T ')) _{\Q}$
le morphisme de $\O _{\PP}$-algèbres
défini en posant 
$\gamma ^{(m)}  (X) := \frac{1}{p}( \frac{p}{f ^{p ^{m+1}}} \widehat{\otimes} \frac{p}{(f ') ^{ p ^{m+1}}})$.
Soient $n \in \N$, $q \in \N$, $r \in \N$ tels que $0\leq r<p$ et $n=pq +r$. 
On calcule 
$\gamma ^{(m)}  (X ^n)=
\frac{1}{p ^{r}}( \frac{p}{f ^{p ^{m+1}}} \widehat{\otimes} \frac{p}{(f ') ^{ p ^{m+1}}}) ^{r}
\left ( p ^{p-2}( \frac{p}{f ^{p ^{m+2}}} \widehat{\otimes} \frac{p}{(f ') ^{ p ^{m+2}}}) \right ) ^q$.
Le morphisme $p ^{p-1}\gamma ^{(m)}$ se factorise alors canoniquement en le morphisme de la forme $\beta ^{(m)}$ voulu. 

\end{proof}

\begin{prop}
\label{hdagT'T=cup}
Soient $T', T $ deux diviseurs de $P$.
Pour tout $\E ^{(\bullet)}
\in 
\underrightarrow{LD} ^{\mathrm{b}} _{\Q,\mathrm{qc}} (\smash{\widetilde{\D}} _{\PP ^\sharp} ^{(\bullet)})$, 
on dispose de l'isomorphisme 
$(\hdag T ') \circ (\hdag T) (\E ^{(\bullet)})
\to
(T '\cup T) (\E ^{(\bullet)})$
fonctoriel en $T,~T',~\E ^{(\bullet)}$.
\end{prop}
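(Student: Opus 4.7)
The plan is to unfold both sides at the level of $\widetilde{\B}^{(\bullet)}$-algebras and reduce to the purely $\O_{\PP}$-algebraic identification
\[
\widetilde{\B}^{(\bullet)}_{\PP}(T) \widehat{\otimes}^{\L}_{\O_{\PP}} \widetilde{\B}^{(\bullet)}_{\PP}(T') \simeq \widetilde{\B}^{(\bullet)}_{\PP}(T \cup T')
\]
in $\smash{\underrightarrow{LD}}^{\mathrm{b}}_{\Q,\mathrm{qc}}$, after which the preceding lemmas handle the two complementary cases. First, by the oubli-invariance of \ref{nota-oub-sharp-iso}, I may assume that $\PP$ carries no log-structure. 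Then, thanks to \ref{hdag-def-qc} together with associativity of the derived completed tensor product---valid on the quasi-coherent derived category by the Tor-dimension bounds of Corollary \ref{rema-dim-coh-finie}---the composition $(\hdag T') \circ (\hdag T)(\E^{(\bullet)})$ is functorially $\widetilde{\B}^{(\bullet)}_{\PP}(T') \widehat{\otimes}^{\L}_{\O_{\PP}} \widetilde{\B}^{(\bullet)}_{\PP}(T) \widehat{\otimes}^{\L}_{\O_{\PP}} \E^{(\bullet)}$, while $(\hdag T \cup T')(\E^{(\bullet)}) = \widetilde{\B}^{(\bullet)}_{\PP}(T \cup T') \widehat{\otimes}^{\L}_{\O_{\PP}} \E^{(\bullet)}$.

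When $T$ and $T'$ share no irreducible component, the desired isomorphism is precisely \ref{lem1-hdagT1T2}: parts (1)--(2) show that $\widehat{\otimes}^{\L}$ degenerates to $\widehat{\otimes}$, while part (4) supplies the morphisms $\alpha^{(\bullet)}, \beta^{(\bullet)}$ which, combined with \ref{lemm-locSQ}, identify $\widetilde{\B}^{(\bullet)}_{\PP}(T) \widehat{\otimes}_{\O_{\PP}} \widetilde{\B}^{(\bullet)}_{\PP}(T')$ with $\widetilde{\B}^{(\bullet)}_{\PP}(T \cup T')$ in the lim-ind-isogeny category.

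For the general case, I decompose $T = T_0 \cup T_1$ and $T' = T_0 \cup T_1'$, where $T_0$ gathers the irreducible components common to $T$ and $T'$ and where $T_1, T_1'$ share no component with $T_0$ nor with each other. Three applications of the no-common-component case yield
\[
\widetilde{\B}^{(\bullet)}(T) \widehat{\otimes}^{\L}_{\O_{\PP}} \widetilde{\B}^{(\bullet)}(T') \simeq \widetilde{\B}^{(\bullet)}(T_0) \widehat{\otimes}^{\L}_{\O_{\PP}} \widetilde{\B}^{(\bullet)}(T_1) \widehat{\otimes}^{\L}_{\O_{\PP}} \widetilde{\B}^{(\bullet)}(T_0) \widehat{\otimes}^{\L}_{\O_{\PP}} \widetilde{\B}^{(\bullet)}(T_1')
\]
together with $\widetilde{\B}^{(\bullet)}(T \cup T') \simeq \widetilde{\B}^{(\bullet)}(T_0) \widehat{\otimes}^{\L}_{\O_{\PP}} \widetilde{\B}^{(\bullet)}(T_1) \widehat{\otimes}^{\L}_{\O_{\PP}} \widetilde{\B}^{(\bullet)}(T_1')$. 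Commuting factors then reduces the comparison to the ``idempotency'' $\widetilde{\B}^{(\bullet)}(T_0) \widehat{\otimes}^{\L}_{\O_{\PP}} \widetilde{\B}^{(\bullet)}(T_0) \simeq \widetilde{\B}^{(\bullet)}(T_0)$, which is exactly Proposition \ref{lem3-hdagDT} applied with $D' = \emptyset$ and $D = T = T_0$. Functoriality in $T, T', \E^{(\bullet)}$ follows from the naturality of each step.

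The main obstacle will be the no-common-component case, which is already carried out in Lemma \ref{lem1-hdagT1T2}; the remaining reductions amount to bookkeeping made routine by the Tor-finiteness results of \ref{rema-dim-coh-finie} and the formal properties of the lim-ind-isogeny localization.
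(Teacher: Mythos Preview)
Your proof is correct and follows essentially the same route as the paper: decompose $T$ and $T'$ into their common part $T_0$ (the paper's $T''$) and the disjoint remainders, handle the no-common-component pieces via Lemma~\ref{lem1-hdagT1T2}, and reduce the overlap to the idempotency of $(\hdag T_0)$. The only cosmetic difference is that you phrase the argument at the level of the algebras $\widetilde{\B}^{(\bullet)}$ and invoke~\ref{lem3-hdagDT} directly for the idempotency, whereas the paper stays at the level of the functors $(\hdag -)$ and cites~\ref{oub-pl-fid} (whose proof rests on~\ref{lem3-hdagDT} anyway).
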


\begin{proof}
Soient $T _1,T _2, T''$ des diviseurs dont les composantes irréductibles sont distinctes deux à deux
et tels que 
$T = T _1 \cup T''$ et $T' = T  _2 \cup T''$.
Par associativité du produit tensoriel,
il résulte de \ref{lem1-hdagT1T2} que 
$(\hdag T ') \circ (\hdag T ) (\E ^{(\bullet)})
\riso 
(\hdag T _2 ) \circ  (\hdag T '' ) \circ (\hdag T '' ) \circ  (\hdag T _1) 
(\E ^{(\bullet)})$.
Grâce à \ref{oub-pl-fid}, on obtient 
$(\hdag T '' ) \circ (\hdag T '' )  \riso (\hdag T '' )$. 
Avec encore \ref{lem1-hdagT1T2}, 
comme 
$T \cup T' = T _1 \cup T'' \cup T _2$,
on en déduit le résultat.
\end{proof}

\subsection{Définition et propriétés dans le cas des modules}

Soit $T' \supset T$ un second diviseur de $P$.

\begin{vide}
Soit $\E ^{(\bullet)} \in M (\smash{\widetilde{\D}} _{\PP ^\sharp} ^{(\bullet)} (T))$.
On définit le foncteur extension du diviseur
$(\hdag T', T) ^0\colon M (\smash{\widetilde{\D}} _{\PP ^\sharp} ^{(\bullet)} (T))
\to 
 M (\smash{\widetilde{\D}} _{\PP ^\sharp} ^{(\bullet)} (T'))$
en posant 
$(\hdag T', T) ^0 (\E ^{(\bullet)} ):= 
\widetilde{\D} ^{(\bullet)} _{\PP ^{\sharp}} ( T')  \smash{\widehat{\otimes}} 
_{\widetilde{\D} ^{(\bullet)} _{\PP ^{\sharp}} ( T) } \E ^{(\bullet)}$.
On dit que $\E ^{(\bullet)} $ est {\og pseudo quasi-cohérent\fg} si 
le morphisme canonique 
$\E ^{(\bullet)} \to
(\hdag T, T) ^0 (\E ^{(\bullet)})$ est un isomorphisme. 
Si $\E ^{(\bullet)} $ est sans $p$-torsion, 
il est clair que 
$\E ^{(\bullet)} \in D ^{\mathrm{b}} _{\mathrm{qc}}  (\smash{\widetilde{\D}} _{\PP ^\sharp} ^{(\bullet)} (T))$, 
si et seulement si $\E ^{(\bullet)}$ est pseudo quasi-cohérent.
On note $M _{\mathrm{pqc}}  (\smash{\widetilde{\D}} _{\PP ^\sharp} ^{(\bullet)} (T))$,
la sous-catégorie pleine de 
$M (\smash{\widetilde{\D}} _{\PP ^\sharp} ^{(\bullet)} (T))$
des modules pseudo quasi-cohérents.
\end{vide}

\begin{vide}
Comme la pseudo quasi-cohérence est une notion indépendante du choix des diviseurs, 
le foncteur oubli se factorise en un foncteur de la forme 
$\mathrm{oub}_{T, T'}
\colon 
M _{\mathrm{pqc}}  (\smash{\widetilde{\D}} _{\PP ^\sharp} ^{(\bullet)} (T'))
\to M _{\mathrm{pqc}}  (\smash{\widetilde{\D}} _{\PP ^\sharp} ^{(\bullet)} (T))$.
La factorisation 
$(\hdag T', T) ^0\colon 
M _{\mathrm{pqc}}  (\smash{\widetilde{\D}} _{\PP ^\sharp} ^{(\bullet)} (T))
\to M _{\mathrm{pqc}}  (\smash{\widetilde{\D}} _{\PP ^\sharp} ^{(\bullet)} (T'))$
est encore plus évidente.
Comme pour \ref{oub-pl-fid} (i.e., il suffit d'enlever les {\og $\L$\fg} dans la preuve) , on déduit de \ref{lem3-hdagDT} que, 
pour tout  
$\E ^{\prime (\bullet)} 
\in 
M_{\mathrm{pqc}} (\smash{\widetilde{\D}} _{\PP ^\sharp} ^{(\bullet)} (T'))$, 
les morphismes canoniques fonctoriels en $\E ^{\prime (\bullet)} $:
\begin{gather}
\notag
\mathrm{oub}_{T, T'}  (\E ^{\prime (\bullet)} )
\to
\mathrm{oub}_{T, T'} \circ (\hdag T', T) ^0 \circ \mathrm{oub}_{T, T'}(\E ^{\prime (\bullet)} ),
\\
\label{id->hdagT'Toub}
(\hdag T', T) ^0 \circ \mathrm{oub}_{T, T'}(\E ^{\prime (\bullet)} )
\to
\E ^{\prime (\bullet)} 
\end{gather}
sont des isomorphismes.
Le foncteur 
$\mathrm{oub}_{T, T'}
\colon 
M _{\mathrm{pqc}}  (\smash{\widetilde{\D}} _{\PP ^\sharp} ^{(\bullet)} (T'))
\to M _{\mathrm{pqc}}  (\smash{\widetilde{\D}} _{\PP ^\sharp} ^{(\bullet)} (T))$
est donc pleinement fidèle.

\end{vide}

\begin{vide}
Comme pour \ref{def-otimes-coh1}, on vérifie que 
le foncteur $(\hdag T', T) ^0$ se factorise en
\begin{equation}
\label{hdagT'T}
(\hdag T', T) ^0 \colon 
\underrightarrow{LM} _{\Q} (\smash{\widetilde{\D}} _{\PP ^\sharp} ^{(\bullet)} (T))
\to 
\underrightarrow{LM} _{\Q} (\smash{\widetilde{\D}} _{\PP ^\sharp} ^{(\bullet)} (T')).
\end{equation}

Le foncteur oubli canonique
$M (\smash{\widetilde{\D}} _{\PP ^\sharp} ^{(\bullet)} (T') )
\to M (\smash{\widetilde{\D}} _{\PP ^\sharp} ^{(\bullet)} (T))$
se factorise en le foncteur noté
\begin{equation}
\label{oubTT'}
\mathrm{oub}_{T, T'} \colon 
\underrightarrow{LM} _{\Q} (\smash{\widetilde{\D}} _{\PP ^\sharp} ^{(\bullet)} (T'))
\to 
\underrightarrow{LM} _{\Q} (\smash{\widetilde{\D}} _{\PP ^\sharp} ^{(\bullet)} (T)).
\end{equation}
Il semble faux que le foncteur \ref{oubTT'} soit pleinement fidèle sans hypothèse de finitude. 
Néanmoins, nous verrons que tel est le cas pour les modules cohérents à lim-ind-isogénie près (voir
\ref{hdagT'T-MD2square}).
\end{vide}

\subsection{Théorème de type $A$}

\begin{vide}
Supposons $\PP$ affine.

\begin{itemize}
\item On dispose du foncteur section globale
$\Gamma (\PP, -)
\colon 
M(\smash{\widetilde{\D}} _{\PP ^\sharp} ^{(\bullet)} (T))
\to
M(\smash{\widetilde{D}} _{\PP ^{\sharp}} ^{(\bullet)} (T))$
défini en posant 
$\Gamma (\PP, \E ^{(\bullet)}):=
 E ^{(\bullet)}$, où 
 $E ^{(m)}\to E ^{(m+1)}$ est l'image par le foncteur $\Gamma (\PP,-)$ 
 de la flèche $\E ^{(m)}\to \E ^{(m+1)}$.
Comme le foncteur $\Gamma (\PP, -)$ commute canoniquement 
au foncteur $\chi ^{*}$, 
le foncteur $\Gamma (\PP, -)$ transforme les ind-isogénies en ind-isogénies.  
Il induit donc le foncteur 
$\Gamma (\PP, -)
\colon 
\smash{\underrightarrow{M}} _{\Q}  ( \smash{\widetilde{\D}} _{\PP ^\sharp} ^{(\bullet)}(T))
\to
\smash{\underrightarrow{M}} _{\Q}  ( \smash{\widetilde{D}} _{\PP ^{\sharp}} ^{(\bullet)}(T))$.
De même, comme 
$\Gamma (\PP, -)$ 
envoie une lim-ind-isogénie sur une lim-ind-isogénie,
on obtient la factorisation
$\Gamma (\PP, -)
\colon 
\underrightarrow{LM} _{\Q} (\smash{\widetilde{\D}} _{\PP ^\sharp} ^{(\bullet)} (T))
\to
\underrightarrow{LM} _{\Q} (\smash{\widetilde{D}} _{\PP ^{\sharp}} ^{(\bullet)} (T))$.

\item On dispose du foncteur
$\smash{\widetilde{\D}} _{\PP ^\sharp} ^{(\bullet)}(T) \otimes _{\smash{\widetilde{D}} _{\PP ^{\sharp}} ^{(\bullet)}(T)} -
\colon
M(\smash{\widetilde{D}} _{\PP ^{\sharp}} ^{(\bullet)} (T))
\to
M(\smash{\widetilde{\D}} _{\PP ^\sharp} ^{(\bullet)} (T))$.
On vérifie de même que l'on obtient la factorisation 
$\smash{\widetilde{\D}} _{\PP ^\sharp} ^{(\bullet)}(T) \otimes _{\smash{\widetilde{D}} _{\PP ^{\sharp}} ^{(\bullet)}(T)} -
\colon
\underrightarrow{LM} _{\Q} (\smash{\widetilde{D}} _{\PP ^{\sharp}} ^{(\bullet)} (T))
\to
\underrightarrow{LM} _{\Q} (\smash{\widetilde{\D}} _{\PP ^\sharp} ^{(\bullet)} (T))$.
\end{itemize}

\end{vide}

\begin{lemm}
\label{lambda-id-iso}
Donnons-nous $\lambda \in L$, $T' \supset T$ un second diviseur.

\begin{itemize}
\item Pour tout 
$\lambda ^{*}\smash{\widetilde{\D}} _{\PP ^\sharp} ^{(\bullet)} (T)$-module $\E ^{(\bullet)}$,
les morphismes canoniques 
$\widetilde{\D} ^{(\bullet)} _{\PP ^{\sharp}} ( T') 
 \otimes 
_{\widetilde{\D} ^{(\bullet)} _{\PP ^{\sharp}} ( T) } \E ^{(\bullet)}
\to 
\lambda ^{*}\widetilde{\D} ^{(\bullet)} _{\PP ^{\sharp}} ( T')  
 \otimes 
_{\lambda ^{*}\widetilde{\D} ^{(\bullet)} _{\PP ^{\sharp}} ( T) } \E ^{(\bullet)}$
et
$\widetilde{\D} ^{(\bullet)} _{\PP ^{\sharp}} ( T')  \smash{\widehat{\otimes}} 
_{\widetilde{\D} ^{(\bullet)} _{\PP ^{\sharp}} ( T) } \E ^{(\bullet)}
\to 
\lambda ^{*}\widetilde{\D} ^{(\bullet)} _{\PP ^{\sharp}} ( T')  \smash{\widehat{\otimes}} 
_{\lambda ^{*}\widetilde{\D} ^{(\bullet)} _{\PP ^{\sharp}} ( T) } \E ^{(\bullet)}$
sont des isomorphismes de
$\underrightarrow{LM} _{\Q} (\smash{\widetilde{\D}} _{\PP ^\sharp} ^{(\bullet)} (T'))$.

\item Pour tout $\lambda ^{*}\smash{\widetilde{D}} _{\PP ^{\sharp}} ^{(\bullet)} (T)$-module $F ^{(\bullet)}$, 
les morphismes canoniques
$\widetilde{\D} ^{(\bullet)} _{\PP ^{\sharp}} ( T)  \otimes 
_{\widetilde{D} ^{(\bullet)} _{\PP ^{\sharp}} ( T) } F ^{(\bullet)}
\to 
\lambda ^{*}\widetilde{\D} ^{(\bullet)} _{\PP ^{\sharp}} ( T)  \otimes 
_{\lambda ^{*}\widetilde{D} ^{(\bullet)} _{\PP ^{\sharp}} ( T) } F ^{(\bullet)}$
et
$\widetilde{\D} ^{(\bullet)} _{\PP ^{\sharp}} ( T)  \smash{\widehat{\otimes}} 
_{\widetilde{D} ^{(\bullet)} _{\PP ^{\sharp}} ( T) } F ^{(\bullet)}
\to 
\lambda ^{*}\widetilde{\D} ^{(\bullet)} _{\PP ^{\sharp}} ( T)  \smash{\widehat{\otimes}} 
_{\lambda ^{*}\widetilde{D} ^{(\bullet)} _{\PP ^{\sharp}} ( T) } F ^{(\bullet)}$
sont des isomorphismes de
$\underrightarrow{LM} _{\Q} (\smash{\widetilde{\D}} _{\PP ^\sharp} ^{(\bullet)} (T))$.
\end{itemize}

\end{lemm}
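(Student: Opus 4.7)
The plan is to deduce each of the asserted isomorphisms by factoring it as the composition of two maps, each of which is seen to be an isomorphism in $\underrightarrow{LM}_{\Q}$ via the equivalence of categories established in lemma~\ref{lem-MQlambda2LMQ}. The basic observation is that the canonical morphism of sheaves of rings $f\colon \widetilde{\D}_{\PP^\sharp}^{(\bullet)}(T') \to \lambda^{*}\widetilde{\D}_{\PP^\sharp}^{(\bullet)}(T')$, whose component at level $m$ is the transition $\widetilde{\D}_{\PP^\sharp}^{(m)}(T')\to\widetilde{\D}_{\PP^\sharp}^{(\lambda(m))}(T')$, is a lim-ind-isog\'enie: taking $\mu := \lambda$ and $g$ the identity at each level, the compositions $g\circ f$ and $\mu^{*}f\circ g$ coincide with the canonical morphisms $\widetilde{\D}^{(\bullet)}_{\PP^\sharp}(T')\to\mu^{*}\widetilde{\D}^{(\bullet)}_{\PP^\sharp}(T')$ and $\lambda^{*}\widetilde{\D}^{(\bullet)}_{\PP^\sharp}(T')\to \mu^{*}\lambda^{*}\widetilde{\D}^{(\bullet)}_{\PP^\sharp}(T')$ by the very definition of $\lambda^{*}$ on morphisms. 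The analogous statement holds with $T$ in place of $T'$ and with $\widetilde{D}$ in place of $\widetilde{\D}$.

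For the first bullet, I would factor the morphism as
\begin{equation*}
\widetilde{\D}^{(\bullet)}_{\PP^\sharp}(T')\otimes_{\widetilde{\D}^{(\bullet)}_{\PP^\sharp}(T)} \E^{(\bullet)}
\xrightarrow{a}
\lambda^{*}\widetilde{\D}^{(\bullet)}_{\PP^\sharp}(T')\otimes_{\widetilde{\D}^{(\bullet)}_{\PP^\sharp}(T)} \E^{(\bullet)}
\xrightarrow{b}
\lambda^{*}\widetilde{\D}^{(\bullet)}_{\PP^\sharp}(T')\otimes_{\lambda^{*}\widetilde{\D}^{(\bullet)}_{\PP^\sharp}(T)} \E^{(\bullet)},
\end{equation*}
where $\E^{(\bullet)}$ is viewed as a $\widetilde{\D}^{(\bullet)}_{\PP^\sharp}(T)$-module via $\mathrm{oub}_\lambda$. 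The morphism $a$ is obtained by tensoring $f$ with $\mathrm{id}_{\E^{(\bullet)}}$ and is an isomorphism in $\underrightarrow{LM}_{\Q}$, because the functor $-\otimes_{\widetilde{\D}^{(\bullet)}_{\PP^\sharp}(T)}\E^{(\bullet)}$ sends lim-ind-isog\'enies to lim-ind-isog\'enies (by the same argument used to construct the factorisations of lemma~\ref{facto-oub-otimes}). For $b$, I would invoke the counit $\lambda^{*}\widetilde{\D}^{(\bullet)}_{\PP^\sharp}(T)\otimes_{\widetilde{\D}^{(\bullet)}_{\PP^\sharp}(T)} \mathrm{oub}_\lambda\E^{(\bullet)} \riso \E^{(\bullet)}$, which is an isomorphism in $\underrightarrow{LM}_{\Q}(\lambda^{*}\widetilde{\D}^{(\bullet)}_{\PP^\sharp}(T))$ by the commutative diagrams~\ref{lem-MQlambda2LMQ-diag}; applying $\lambda^{*}\widetilde{\D}^{(\bullet)}_{\PP^\sharp}(T')\otimes_{\lambda^{*}\widetilde{\D}^{(\bullet)}_{\PP^\sharp}(T)} -$ to this counit and using the associativity of the tensor product exhibits $b$ as the induced isomorphism. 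The composite is then an isomorphism in $\underrightarrow{LM}_{\Q}(\lambda^{*}\widetilde{\D}^{(\bullet)}_{\PP^\sharp}(T'))$, hence in $\underrightarrow{LM}_{\Q}(\widetilde{\D}^{(\bullet)}_{\PP^\sharp}(T'))$ after applying the oubli functor, which is an equivalence by \ref{lem-MQlambda2LMQ}.

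The $\widehat{\otimes}$ variant is handled in the same way, since the $p$-adic completion is performed level by level and commutes with the $\lambda^{*}$-shift, so the lim-ind-isog\'enie character of $f$ survives completion. The second bullet is treated by an identical strategy, replacing $\widetilde{\D}$ by $\widetilde{D}$ throughout, and using that the analogue of lemma~\ref{lem-MQlambda2LMQ} for the uncompleted rings $\widetilde{D}^{(\bullet)}_{\PP^\sharp}(T)$ carries over verbatim. The main obstacle I foresee is the bookkeeping needed in step~$b$ to confirm that the map produced by the associativity isomorphism coincides on the nose with the canonical morphism in the statement; this amounts to a level-by-level comparison of tensor relations that should be formal but requires care about the order in which the $\lambda^{*}$-shift interacts with the base change along $\widetilde{\D}^{(\bullet)}_{\PP^\sharp}(T)\to\widetilde{\D}^{(\bullet)}_{\PP^\sharp}(T')$.
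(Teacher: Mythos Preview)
Your approach is correct but takes a different and more elaborate route than the paper's. The paper does not factor the morphism at all: it simply writes down a single dotted arrow
\[
\lambda^{*}\widetilde{\D}^{(\bullet)}_{\PP^{\sharp}}(T')\otimes_{\lambda^{*}\widetilde{\D}^{(\bullet)}_{\PP^{\sharp}}(T)}\E^{(\bullet)}
\;\longrightarrow\;
\lambda^{*}\bigl(\widetilde{\D}^{(\bullet)}_{\PP^{\sharp}}(T')\otimes_{\widetilde{\D}^{(\bullet)}_{\PP^{\sharp}}(T)}\E^{(\bullet)}\bigr),
\]
given at level $m$ by $\mathrm{id}\otimes(\E^{(m)}\to\E^{(\lambda(m))})$, and checks that it fits into the commutative square of shape~\ref{lem-MQlambda2LMQ-diag} with the canonical maps to the $\lambda^{*}$-shifts. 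This exhibits the morphism of the statement directly as a lim-ind-isog\'enie (with $\mu=\lambda$, $\chi=0$), in one stroke. The completed and $\widetilde{D}$-variants are dispatched by the same diagram.

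Your two-step factorisation through the intermediate object $\lambda^{*}\widetilde{\D}^{(\bullet)}_{\PP^{\sharp}}(T')\otimes_{\widetilde{\D}^{(\bullet)}_{\PP^{\sharp}}(T)}\E^{(\bullet)}$ works, and has the virtue of isolating exactly which previously established facts (the adjunction counit of~\ref{lem-MQlambda2LMQ-diag}, the factorisation mechanism of~\ref{facto-oub-otimes}) drive the result. The cost is that you must separately justify that tensoring on the right by a fixed module preserves lim-ind-isog\'enies and that the extension-of-scalars functor $\lambda^{*}\widetilde{\D}^{(\bullet)}_{\PP^{\sharp}}(T')\otimes_{\lambda^{*}\widetilde{\D}^{(\bullet)}_{\PP^{\sharp}}(T)}-$ descends to $\underrightarrow{LM}_{\Q}$; both are true and follow from arguments parallel to~\ref{facto-oub-otimes}, but neither is literally stated there. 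The paper's single zigzag avoids these auxiliary verifications entirely. The bookkeeping worry you flag about step~$b$ is real but harmless: unwinding associativity plus the counit at level~$m$ gives precisely the canonical surjection of tensor products, so the composite $b\circ a$ is the map in the statement.
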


\begin{proof}
On dispose de la factorisation 
$\lambda ^{*}\widetilde{\D} ^{(\bullet)} _{\PP ^{\sharp}} ( T')  
 \otimes 
_{\lambda ^{*}\widetilde{\D} ^{(\bullet)} _{\PP ^{\sharp}} ( T) } \E ^{(\bullet)}
\to 
\lambda ^{*} \left ( \widetilde{\D} ^{(\bullet)} _{\PP ^{\sharp}} ( T') 
 \otimes 
_{\widetilde{\D} ^{(\bullet)} _{\PP ^{\sharp}} ( T) } \E ^{(\bullet)} \right) $
s'inscrivant dans le diagramme canonique commutatif
\begin{equation}
\label{diag-lambda-id-iso}
\xymatrix @ R=0,4cm{
{\widetilde{\D} ^{(\bullet)} _{\PP ^{\sharp}} ( T') 
 \otimes 
_{\widetilde{\D} ^{(\bullet)} _{\PP ^{\sharp}} ( T) } \E ^{(\bullet)}} 
\ar[r] ^-{}
\ar[d] ^-{}
& 
{\lambda ^{*}\widetilde{\D} ^{(\bullet)} _{\PP ^{\sharp}} ( T')  
 \otimes 
_{\lambda ^{*}\widetilde{\D} ^{(\bullet)} _{\PP ^{\sharp}} ( T) } \E ^{(\bullet)}} 
\ar[d] ^-{}
\ar@{.>}[ld] ^-{}
\\ 
{\lambda ^{*} \left ( \widetilde{\D} ^{(\bullet)} _{\PP ^{\sharp}} ( T') 
 \otimes 
_{\widetilde{\D} ^{(\bullet)} _{\PP ^{\sharp}} ( T) } \E ^{(\bullet)} \right) } 
\ar[r] ^-{}
& 
{\lambda ^{*} \left (  \lambda ^{*}\widetilde{\D} ^{(\bullet)} _{\PP ^{\sharp}} ( T')  
 \otimes 
_{\lambda ^{*}\widetilde{\D} ^{(\bullet)} _{\PP ^{\sharp}} ( T) } \E ^{(\bullet)}
\right ),} 
}
\end{equation}
et de même en remplaçant 
{\og $ \otimes $ \fg}
par 
{\og $ \widehat{\otimes} $ \fg}.
On en déduit la première assertion. 
On procède de manière analogue pour la seconde. 
\end{proof}

\begin{lemm}
\label{lemm-coh-otimes}
Soit $T' \supset T$ un second diviseur. 
\begin{enumerate}
\item 
Le foncteur $(\hdag T', T) ^0 $ se factorise  en
\begin{equation}
\label{hdagT'Tcoh}
(\hdag T', T) ^0 \colon 
\underrightarrow{LM} _{\Q, \mathrm{coh}} (\smash{\widetilde{\D}} _{\PP ^\sharp} ^{(\bullet)} (T))
\to 
\underrightarrow{LM} _{\Q, \mathrm{coh}} (\smash{\widetilde{\D}} _{\PP ^\sharp} ^{(\bullet)} (T')).
\end{equation}

\item Pour tout $\E ^{(\bullet)}  \in \underrightarrow{LM} _{\Q, \mathrm{coh}} (\smash{\widetilde{\D}} _{\PP ^\sharp} ^{(\bullet)} (T))$,
le morphisme canonique 
$\smash{\widetilde{\D}} _{\PP ^\sharp} ^{ (\bullet)} (T')
\otimes _{\smash{\widetilde{\D}} _{\PP ^\sharp} ^{ (\bullet)} (T)}
\E ^{(\bullet)}
\to
\smash{\widetilde{\D}} _{\PP ^\sharp} ^{ (\bullet)} (T')
\widehat{\otimes} _{\smash{\widetilde{\D}} _{\PP ^\sharp} ^{ (\bullet)} (T)}
\E ^{(\bullet)}$
est un isomorphisme
de
$\underrightarrow{LM} _{\Q, \mathrm{coh}} (\smash{\widetilde{\D}} _{\PP ^\sharp} ^{(\bullet)} (T'))$.

\item Supposons que $\PP$ soit affine. 
Pour tout $F ^{(\bullet)} \in \underrightarrow{LM} _{\Q, \mathrm{coh}} (\smash{\widetilde{D}} _{\PP ^{\sharp}} ^{(\bullet)} (T))$,
le morphisme canonique 
$\smash{\widetilde{\D}} _{\PP ^\sharp} ^{ (\bullet)} (T)
\otimes _{\smash{\widetilde{D}} _{\PP ^{\sharp}} ^{ (\bullet)} (T)}
F ^{(\bullet)}
\to
\smash{\widetilde{\D}} _{\PP ^\sharp} ^{ (\bullet)} (T)
\widehat{\otimes} _{\smash{\widetilde{D}} _{\PP ^{\sharp}} ^{ (\bullet)} (T)}
F ^{(\bullet)}$
est alors un isomorphisme
de $\underrightarrow{LM} _{\Q, \mathrm{coh}} (\smash{\widetilde{\D}} _{\PP ^\sharp} ^{(\bullet)} (T))$.
\end{enumerate}

\end{lemm}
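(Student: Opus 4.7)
L'approche générale consiste à se ramener, via la caractérisation de la cohérence à lim-ind-isogénie fournie par les lemmes \ref{caract-coh-lim-isog} et \ref{strict-m0}, au cas d'un module globalement de présentation finie au niveau d'un $\lambda^* \smash{\widetilde{\D}}_{\PP^\sharp}^{(\bullet)}(T)$, puis à invoquer la proposition \ref{lem3-hdagDT}.

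Je commencerais par établir l'assertion $2)$, dont découlera en grande partie l'assertion $1)$. Soit $\E^{(\bullet)}$ un $\smash{\widetilde{\D}}_{\PP^\sharp}^{(\bullet)}(T)$-module cohérent à lim-ind-isogénie. La propriété à établir étant locale en $\PP$ (par \ref{LMQ-iso:local}) et stable par isomorphisme dans $\underrightarrow{LM}_\Q$, le lemme \ref{caract-coh-lim-isog} ramène au cas où $\PP$ est affine et $\E^{(\bullet)}$ est isomorphe dans $\underrightarrow{LM}_\Q$ à un $\lambda^*\smash{\widetilde{\D}}_{\PP^\sharp}^{(\bullet)}(T)$-module globalement de présentation finie. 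Par le lemme \ref{lambda-id-iso}, quitte à appliquer $\lambda^*$, la comparaison entre produit tensoriel et produit tensoriel complété s'obtient par exactitude à droite et additivité à partir du cas où $\E^{(\bullet)} = \lambda^*\smash{\widetilde{\D}}_{\PP^\sharp}^{(\bullet)}(T)$. Dans cette situation, l'isomorphisme canonique dans $\underrightarrow{LM}_\Q$ se déduit de la proposition \ref{lem3-hdagDT} appliquée aux faisceaux $\smash{\widetilde{\B}}_{\PP}^{(\bullet)}$ après factorisation adéquate du produit tensoriel.

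Pour l'assertion $1)$, le même dévissage local ramène au cas où $\E^{(\bullet)}$ est isomorphe à un $\lambda^*\smash{\widetilde{\D}}_{\PP^\sharp}^{(\bullet)}(T)$-module globalement de présentation finie. L'isomorphisme de $2)$ réduit alors le calcul de $(\hdag T', T)^0(\E^{(\bullet)})$ au produit tensoriel non complété, qui se lit directement sur une présentation et produit un $\smash{\widetilde{\D}}_{\PP^\sharp}^{(\bullet)}(T')$-module globalement de présentation finie à lim-ind-isogénie, donc cohérent d'après le lemme \ref{lim-ind-isog-coh�rence}. L'assertion $3)$ procède selon le même schéma: sur $\PP$ affine, un module cohérent à lim-ind-isogénie sur $\smash{\widetilde{D}}_{\PP^\sharp}^{(\bullet)}(T)$ est globalement de présentation finie (via l'analogue de \ref{caract-coh-lim-isog}), et par exactitude à droite et additivité, la comparaison se ramène au cas $F^{(\bullet)} = \lambda^*\smash{\widetilde{D}}_{\PP^\sharp}^{(\bullet)}(T)$, où l'isomorphisme voulu exprime la compatibilité élémentaire entre tensorisation et complétion.

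Le principal obstacle technique sera de contrôler le comportement du produit tensoriel complété non dérivé sur un module seulement globalement de présentation finie, en l'absence d'hypothèse de platitude: il faudra vérifier que la suite exacte définissant une présentation reste exacte après $\widehat{\otimes}$ modulo lim-ind-isogénie. Cela revient essentiellement à majorer la contribution des $\mathrm{Tor}$ supérieurs entre les $\smash{\widetilde{\B}}_{\PP}^{(\bullet)}$, majoration qui découle en fin de compte de la proposition \ref{lem3-hdagDT} combinée à la dimension cohomologique bornée fournie par le corollaire \ref{rema-dim-coh-finie}.
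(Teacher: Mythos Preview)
Your overall strategy---reduce via \ref{strict-m0} (or \ref{caract-coh-lim-isog}) to a module of the form $\E^{(\bullet)}\riso\lambda^*\smash{\widetilde{\D}}_{\PP^\sharp}^{(\bullet)}(T)\otimes_{\smash{\widetilde{\D}}_{\PP^\sharp}^{(\lambda(0))}(T)}\E^{(0)}$ with $\E^{(0)}$ coherent, then invoke \ref{lambda-id-iso}---is exactly the paper's approach. The divergence, and the source of the ``obstacle technique'' you flag, is that you then try to dévisse \emph{further} to the free module $\lambda^*\smash{\widetilde{\D}}_{\PP^\sharp}^{(\bullet)}(T)$ via a finite presentation and ``exactitude à droite''. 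This is where a real difficulty appears: the completed tensor product $\widehat{\otimes}$ is \emph{not} right exact, so applying it to a presentation does not a priori yield an exact sequence, and your proposed control via \ref{lem3-hdagDT} and \ref{rema-dim-coh-finie} (which concern the \emph{derived} completed tensor product) does not directly address the non-derived functor appearing in the statement.

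The paper sidesteps this entirely. After the reduction, one has at each level $m$ a coherent $\smash{\widetilde{\D}}_{\PP^\sharp}^{(\lambda(m))}(T)$-module $\E^{(m)}$. By \ref{lambda-id-iso} the comparison reduces, level by level, to $\smash{\widetilde{\D}}_{\PP^\sharp}^{(\lambda(m))}(T')\otimes_{\smash{\widetilde{\D}}_{\PP^\sharp}^{(\lambda(m))}(T)}\E^{(m)}\to\smash{\widetilde{\D}}_{\PP^\sharp}^{(\lambda(m))}(T')\widehat{\otimes}_{\smash{\widetilde{\D}}_{\PP^\sharp}^{(\lambda(m))}(T)}\E^{(m)}$. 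But a coherent module over a $p$-adically complete noetherian ring is itself $p$-adically complete (this is \cite[3.2.4]{Be1}, already invoked in \ref{nota-u^!alg}), so the ordinary tensor product is already complete and the map is an isomorphism on the nose---not merely up to lim-ind-isogénie. No presentation, no Tor control, no passage through \ref{lem3-hdagDT} is needed. Part~1 then follows immediately since the result is visibly a $\lambda^*\smash{\widetilde{\D}}_{\PP^\sharp}^{(\bullet)}(T')$-module localement de présentation finie, and part~3 is handled identically.
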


\begin{proof}
D'après le lemme \ref{strict-m0},
pour vérifier la préservation de la cohérence,
on peut supposer qu'il existe 
$m_0 , n _0\in \N$ tels que
$\E ^{(\bullet)}$ (resp. $F ^{(\bullet)}$)
soit un
$\smash{\widetilde{\D}} _{\PP ^\sharp} ^{(\bullet + m_0)} (T)$-module localement de présentation finie
(resp. un $\smash{\widetilde{D}} _{\PP ^{\sharp}} ^{(\bullet + n_0)} (T)$-module de présentation finie).
On obtient ainsi les égalités (à isomorphisme canonique près):
$\E ^{(\bullet)}= 
\smash{\widetilde{\D}} _{\PP ^\sharp} ^{(\bullet+ m_0)} (T)
\otimes _{\smash{\widetilde{\D}} _{\PP ^\sharp} ^{(m_0)} (T)}
\E ^{(m _0)}$
et
$F ^{(\bullet)}= 
\smash{\widetilde{D}} _{\PP ^{\sharp}} ^{(\bullet+ n_0)} (T)
\otimes _{\smash{\widetilde{D}} _{\PP ^{\sharp}} ^{(n_0)} (T)}
F ^{(n _0)}$.
Le lemme \ref{lambda-id-iso} nous permet de conclure. 
\end{proof}

\begin{prop}
\label{thA-LMQ} 
Supposons $\PP$ affine. On dispose alors des factorisations canoniques
$\Gamma (\PP, -)
\colon 
\underrightarrow{LM} _{\Q,\mathrm{coh}} (\smash{\widetilde{\D}} _{\PP ^\sharp} ^{(\bullet)} (T))
\to
\underrightarrow{LM} _{\Q,\mathrm{coh}} (\smash{\widetilde{D}} _{\PP ^{\sharp}} ^{(\bullet)} (T))$
et
$\smash{\widetilde{\D}} _{\PP ^\sharp} ^{(\bullet)}(T) \otimes _{\smash{\widetilde{D}} _{\PP ^{\sharp}} ^{(\bullet)}(T)} -
\colon
\underrightarrow{LM} _{\Q,\mathrm{coh}} (\smash{\widetilde{D}} _{\PP ^{\sharp}} ^{(\bullet)} (T))
\to
\underrightarrow{LM} _{\Q,\mathrm{coh}} (\smash{\widetilde{\D}} _{\PP ^\sharp} ^{(\bullet)} (T))$
qui induisent 
des équivalences quasi-inverses de catégories.
\end{prop}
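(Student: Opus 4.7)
Le plan est de se ramener, via le lemme \ref{strict-m0}, au cas de modules localement de pr�sentation finie provenant d'un niveau fix�, puis d'appliquer le th�or�me de type $A$ classique de Berthelot pour les $\smash{\widetilde{\D}} _{\PP ^\sharp} ^{(m _0)} (T)$-modules coh�rents sur le $\V$-sch�ma formel affine $\PP$ (et son analogue pour les $\smash{\widetilde{D}} _{\PP ^\sharp} ^{(m _0)} (T)$-modules de pr�sentation finie).

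Pour �tablir la factorisation de $\Gamma (\PP,-)$, je consid�rerais $\E ^{(\bullet)} \in \underrightarrow{LM} _{\Q,\mathrm{coh}} (\smash{\widetilde{\D}} _{\PP ^\sharp} ^{(\bullet)} (T))$. D'apr�s \ref{strict-m0}, quitte � remplacer $\E ^{(\bullet)}$ par un module isomorphe dans $\underrightarrow{LM} _{\Q}$, on peut supposer qu'il existe $m _0 \in \N$ tel que $\E ^{(\bullet)}$ soit un $\smash{\widetilde{\D}} _{\PP ^\sharp} ^{(\bullet + m _0)} (T)$-module localement de pr�sentation finie. Puisque $\PP$ est affine, \ref{rema-otimes-eqcat} fournit la globale pr�sentation finie ainsi que l'isomorphisme canonique $\smash{\widetilde{\D}} _{\PP ^\sharp} ^{(\bullet + m _0)} (T) \otimes _{\smash{\widetilde{\D}} _{\PP ^\sharp} ^{(m _0)} (T)} \E ^{(m _0)} \riso \E ^{(\bullet)}$. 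Le th�or�me de type $A$ classique au niveau $m _0$ fournit que $\Gamma (\PP, \E ^{(m _0)})$ est un $\smash{\widetilde{D}} _{\PP ^\sharp} ^{(m _0)} (T)$-module de pr�sentation finie, et la commutation de $\Gamma (\PP,-)$ au produit tensoriel sur l'affine $\PP$ donne alors $\Gamma (\PP, \E ^{(\bullet)}) \riso \smash{\widetilde{D}} _{\PP ^\sharp} ^{(\bullet + m _0)} (T) \otimes _{\smash{\widetilde{D}} _{\PP ^\sharp} ^{(m _0)} (T)} \Gamma (\PP, \E ^{(m _0)})$. Donc $\Gamma (\PP, \E ^{(\bullet)})$ est un $\smash{\widetilde{D}} _{\PP ^\sharp} ^{(\bullet + m _0)} (T)$-module localement de pr�sentation finie, donc coh�rent � lim-ind-isog�nie pr�s par l'analogue de \ref{lim-ind-isog-coh�rence}. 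La factorisation du foncteur $\smash{\widetilde{\D}} _{\PP ^\sharp} ^{(\bullet)}(T) \otimes _{\smash{\widetilde{D}} _{\PP ^{\sharp}} ^{(\bullet)}(T)} -$ s'�tablit de mani�re analogue � partir de l'analogue du lemme \ref{strict-m0} pour les $\smash{\widetilde{D}} _{\PP ^{\sharp}} ^{(\bullet)} (T)$-modules.

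Pour v�rifier le caract�re quasi-inverse, je consid�rerais les morphismes d'adjonction canoniques $F ^{(\bullet)} \to \Gamma (\PP, \smash{\widetilde{\D}} _{\PP ^\sharp} ^{(\bullet)}(T) \otimes _{\smash{\widetilde{D}} _{\PP ^{\sharp}} ^{(\bullet)}(T)} F ^{(\bullet)})$ et $\smash{\widetilde{\D}} _{\PP ^\sharp} ^{(\bullet)}(T) \otimes _{\smash{\widetilde{D}} _{\PP ^{\sharp}} ^{(\bullet)}(T)} \Gamma (\PP, \E ^{(\bullet)}) \to \E ^{(\bullet)}$, d�finis niveau par niveau. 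Via la r�duction pr�c�dente aux modules provenant d'un niveau $m _0$ fix� et via le lemme \ref{eqcat-locpfotimes} appliqu� dans les deux contextes, ces morphismes se ram�nent aux morphismes d'adjonction classiques au niveau $m _0$, qui sont des isomorphismes par le th�or�me de type $A$ de Berthelot.

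La principale difficult� sera la gestion pr�cise du d�calage de niveau $\bullet \mapsto \bullet + m _0$: les morphismes d'adjonction au niveau des modules dans $M(-)$ ne deviennent des isomorphismes qu'apr�s passage � la cat�gorie $\underrightarrow{LM} _{\Q}$. Il faudra combiner avec soin le lemme \ref{lambda-id-iso} (qui identifie dans $\underrightarrow{LM} _{\Q}$ les expressions avec et sans d�calage de niveau) et la pleine fid�lit� du foncteur oubli \ref{MQlambda2LMQ} pour conclure que les morphismes d'adjonction induisent effectivement des isomorphismes dans les cat�gories de modules coh�rents � lim-ind-isog�nie pr�s.
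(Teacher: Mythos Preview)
Your proposal is correct and follows essentially the same approach as the paper. The paper's proof simply says to proceed analogously to the proof of Lemma~\ref{lemm-coh-otimes} (which is precisely your reduction via \ref{strict-m0} to $\smash{\widetilde{\D}} _{\PP ^\sharp} ^{(\bullet + m _0)} (T)$-modules localement de pr\'esentation finie, combined with \ref{lambda-id-iso} for the level shift), using in addition the th\'eor\`emes de type $A$ from the second point of \ref{rema-otimes-eqcat} --- which is exactly your invocation of the classical type $A$ equivalence at fixed level $m_0$ and its compatibility with the passage from $\smash{\widetilde{D}} _{\PP ^{\sharp}} ^{(m_0)} (T)$-modules to $\lambda^{*}\smash{\widetilde{\D}} _{\PP ^\sharp} ^{(\bullet)} (T)$-modules.
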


\begin{proof}
On procède de manière analogue à la preuve du lemme \ref{lemm-coh-otimes}
en utilisant en plus 
les théorèmes 
de type $A$ du deuxième point de la remarque 
\ref{rema-otimes-eqcat}.
\end{proof}

\begin{vide}
\label{parag-hdagT'T-MD2square}
On dispose du diagramme canonique
\begin{equation}
\label{hdagT'T-MD2square}
\xymatrix @ R=0,4cm{
{\underrightarrow{LM}  _{\Q, \mathrm{coh}} (\smash{\widetilde{\D}} _{\PP ^\sharp} ^{(\bullet)} (T))}
\ar[r] ^-{\cong} 
\ar[d] ^-{(\hdag T', T) ^0}
& 
{ \underrightarrow{LD} ^{0} _{\Q, \mathrm{coh}} (\smash{\widetilde{\D}} _{\PP ^\sharp} ^{(\bullet)} (T))}
\ar[r] ^-{\cong} _-{\mathcal{H} ^{0}}
\ar[d] ^-{(\hdag T', T) }
& 
{\underrightarrow{LM}  _{\Q, \mathrm{coh}} (\smash{\widetilde{\D}} _{\PP ^\sharp} ^{(\bullet)} (T)) } 
\ar[d] ^-{(\hdag T', T) ^0}
\\ 
{\underrightarrow{LM}  _{\Q, \mathrm{coh}} (\smash{\widetilde{\D}} _{\PP ^\sharp} ^{(\bullet)} (T'))}
\ar[r] ^-{\cong} 
\ar[d] ^-{\mathrm{oub}_{T, T'} }
& 
{ \underrightarrow{LD} ^{0} _{\Q, \mathrm{coh}} (\smash{\widetilde{\D}} _{\PP ^\sharp} ^{(\bullet)} (T'))}
\ar[r] ^-{\cong} _-{\mathcal{H} ^{0}}
\ar[d] ^-{\mathrm{oub}_{T, T'} }
& 
{\underrightarrow{LM}  _{\Q, \mathrm{coh}} (\smash{\widetilde{\D}} _{\PP ^\sharp} ^{(\bullet)} (T')) } 
\ar[d] ^-{\mathrm{oub}_{T, T'} }
\\
{\underrightarrow{LM}  _{\Q} (\smash{\widetilde{\D}} _{\PP ^\sharp} ^{(\bullet)} (T))}
\ar[r] ^-{\cong} 
& 
{ \underrightarrow{LD} ^{0} _{\Q} (\smash{\widetilde{\D}} _{\PP ^\sharp} ^{(\bullet)} (T))}
\ar[r] ^-{\cong} _-{\mathcal{H} ^{0}}
& 
{\underrightarrow{LM}  _{\Q} (\smash{\widetilde{\D}} _{\PP ^\sharp} ^{(\bullet)} (T)) } 
}
\end{equation}
commutatif à isomorphisme canoniques près. 
En effet, il suffit de reprendre les arguments de la preuve
du lemme 
\ref{LMeqLD0}.
Comme le foncteur oubli $\mathrm{oub}_{T, T'}$ du milieu est pleinement fidèle (voir \ref{oub-pl-fid}),
on en déduit qu'il en est de même 
$\mathrm{oub}_{T, T'}
\colon 
\underrightarrow{LM}  _{\Q, \mathrm{coh}} (\smash{\widetilde{\D}} _{\PP ^\sharp} ^{(\bullet)} (T')) 
\to 
\underrightarrow{LM}  _{\Q} (\smash{\widetilde{\D}} _{\PP ^\sharp} ^{(\bullet)} (T)) $.
\end{vide}

\subsection{Un critère de stabilité de la cohérence par localisation en dehors d'un diviseur}

\begin{theo}
\label{limTouD}
Soient  $T' \supset T$ un diviseur,
$\E ^{(\bullet)}
\in 
\underrightarrow{LD} ^{\mathrm{b}} _{\Q, \mathrm{coh}} (\smash{\widetilde{\D}} _{\PP ^\sharp} ^{(\bullet)} (T))$
et
$\E := 
\underrightarrow{\lim}
\E ^{(\bullet)}
\in D ^{\mathrm{b}} _{\mathrm{coh}} (\smash{\D} ^\dag _{\PP ^\sharp} (\hdag T) _{\Q})$.
On suppose de plus que 
le morphisme $\E \to (\hdag T',T) (\E)$
est un isomorphisme dans 
$D ^{\mathrm{b}}  (\smash{\D} ^\dag _{\PP ^\sharp} (\hdag T) _{\Q})$.
Le morphisme canonique 
$\E ^{(\bullet)} \to 
(\hdag T',T) (\E ^{(\bullet)})$
est alors un isomorphisme 
dans 
$\underrightarrow{LD} ^{\mathrm{b}} _{\Q, \mathrm{coh}} (\smash{\widetilde{\D}} _{\PP ^\sharp} ^{(\bullet)}(T))$.
\end{theo}

\begin{proof}
0) {\it On se ramène au cas où $\E ^{(\bullet)}
\in 
\underrightarrow{LM}  _{\Q, \mathrm{coh}} (\smash{\widetilde{\D}} _{\PP ^\sharp} ^{(\bullet)}(T))$.}

 Pour tout entier $n \in \Z$, 
d'après le lemme \ref{Hnstabcoh}, 
on a $\mathcal{H} ^{n} (\E ^{(\bullet)} )
\in 
\underrightarrow{LM}  _{\Q, \mathrm{coh}} (\smash{\widetilde{\D}} _{\PP ^\sharp} ^{(\bullet)}(T))$.
De plus, d'après le corollaire \ref{LDLMiso=isoHn},
 le morphisme canonique 
$\phi \colon \E ^{(\bullet)} \to 
(\hdag T',T) (\E ^{(\bullet)})$
est un isomorphisme 
dans 
$\underrightarrow{LD} ^{\mathrm{b}} _{\Q} 
(\smash{\widetilde{\D}} _{\PP ^\sharp} ^{(\bullet)}(T))$,
si et seulement si,
pour tout entier $n \in \Z$, 
le morphisme 
$\mathcal{H} ^{n} (\phi) \colon 
\mathcal{H} ^{n}  (\E ^{(\bullet)})
\to 
\mathcal{H} ^{n} ((\hdag T',T) (\E^{(\bullet)}))$
est un isomorphisme 
de 
$\underrightarrow{LM}  _{\Q} (\smash{\widetilde{\D}} _{\PP ^\sharp} ^{(\bullet)} (T))$.
Notons $\psi _{n}\colon (\hdag T',T) ^{0} (\mathcal{H} ^{n}  (\E ^{(\bullet)}))
\to 
\mathcal{H} ^{n} ((\hdag T',T) (\E^{(\bullet)}))$
l'unique morphisme de 
$\underrightarrow{LM}  _{\Q,\mathrm{coh}} (\smash{\widetilde{\D}} _{\PP ^\sharp} ^{(\bullet)} (T'))$
factorisant canoniquement
$\mathcal{H} ^{n} (\phi)$.

a) Vérifions que $\psi _{n}$ est un isomorphisme. 
Comme le foncteur $\underrightarrow{\lim}$ est pleinement fidèle sur 
$\underrightarrow{LM}  _{\Q,\mathrm{coh}} (\smash{\widetilde{\D}} _{\PP ^\sharp} ^{(\bullet)} (T'))$, 
il suffit de prouver que 
$\underrightarrow{\lim}~ (\psi _{n})$
 est un isomorphisme.
Or, comme le foncteur $\underrightarrow{\lim}$ commute canoniquement à $\mathcal{H} ^{n}$
à isomorphisme canonique près 
(voir le diagramme \ref{pre1-lim-MD2square}),
on obtient que 
$\underrightarrow{\lim}~ (\psi _{n})$ est canoniquement isomorphe
au morphisme canonique
$(\hdag T',T) (\mathcal{H} ^{n} \E)
\to 
\mathcal{H} ^{n} ((\hdag T',T) (\E))$.
Comme le foncteur $(\hdag T',T) \colon 
D ^{\mathrm{b}} _{\mathrm{coh}} (\smash{\D} ^\dag _{\PP ^\sharp} (\hdag T) _{\Q})
\to 
D ^{\mathrm{b}} _{\mathrm{coh}} (\smash{\D} ^\dag _{\PP ^\sharp} (\hdag T') _{\Q})$
est exact, 
le morphisme
$(\hdag T',T) (\mathcal{H} ^{n} \E)
\to 
\mathcal{H} ^{n} ((\hdag T',T) (\E))$
est donc un isomorphisme. D'où le résultat. 

b) Si le théorème est vérifié pour $\underrightarrow{LM}  _{\Q, \mathrm{coh}} (\smash{\widetilde{\D}} _{\PP ^\sharp} ^{(\bullet)}(T))$, 
alors le morphisme canonique
$\mathcal{H} ^{n}  (\E ^{(\bullet)})
\to 
(\hdag T',T) ^{0} (\mathcal{H} ^{n}  (\E ^{(\bullet)}))$
est un isomorphisme. Comme en le composant avec 
$\psi _{n}$ on obtient $\mathcal{H} ^{n} (\phi)$, 
il en résulte la réduction demandée.

1)  Grâce au lemme \ref{strict-m0}, on peut supposer qu'il existe 
$m _0 \geq 0$ 
assez grand tel que
$\E ^{(\bullet)}$ 
soit un 
$\smash{\widetilde{\D}} _{\PP ^\sharp} ^{(\bullet +m _0)} (T)$-module localement de présentation finie. 
Comme le théorème est local (voir \ref{LMQ-iso:local}), on peut alors supposer
$\PP$ affine. 
Posons $E ^{(\bullet)}=
\Gamma (\PP, \E ^{(\bullet)})$,
$\FF ^{(\bullet)}:= (\hdag T',T) ^0(\E ^{(\bullet)})$,
$F ^{(\bullet)}=
\Gamma (\PP, \FF ^{(\bullet)})$.
Vérifions 
dans cette étape $1)$ que 
le morphisme canonique
$E ^{(\bullet)} \to 
F ^{(\bullet)}$
est un isomorphisme 
dans 
$\underrightarrow{LM}  _{\Q} (\smash{\widetilde{D}} _{\PP ^{\sharp}} ^{(\bullet)}(T))$, i.e., est une lim-ind-isogénie
(de $M (\smash{\widetilde{D}} _{\PP ^{\sharp}} ^{(\bullet + m _0)}(T))$).

a) Soient $E:=  \underrightarrow{\lim} _{m} \, E ^{(m)}$ et $N ^{(m)}$ le noyau du morphisme 
canonique
$E ^{(m)} \to E$.
On déduit des théorèmes de type $A$ (voir par exemple le deuxième point de la remarque \ref{rema-otimes-eqcat}) 
que, pour tout entier $m \in \N$, 
$E ^{(m)}$ est un $\smash{\widetilde{D}} _{\PP ^{\sharp}} ^{(m+m _0)}(T)$-module de type fini.
Par noethérianité de $\smash{\widetilde{D}} _{\PP ^{\sharp}} ^{(m+m _0)}(T)$,
le $\smash{\widetilde{D}} _{\PP ^{\sharp}} ^{(m+m _0)}(T)$-module $N ^{(m)}$ est alors aussi de type fini.
On en déduit qu'il existe $\lambda (m)\geq m$ tel que 
$E ^{(m)} \to E ^{(\lambda (m))}$ se factorise par 
$E ^{(m)} / N ^{(m)}\to E ^{(\lambda (m))}$.
On peut choisir $\lambda\colon \N \to \N$ tel que $\lambda \in L$.
On en déduit que $E ^{(\bullet)} \to E ^{(\bullet)} / N ^{(\bullet)}$
est en particulier une lim-ind-isogénie. 
Quitte à remplacer $E ^{(\bullet)}$ par $E ^{(\bullet)} / N ^{(\bullet)}$, 
on peut donc supposer que les morphismes de transition
$E ^{(m)}\to E ^{(m+1)}$ sont injectifs. 
De même, en utilisant \cite[3.4.4]{Be1}, 
on se ramène au cas où les $E ^{(m)}$ sont de plus sans $p$-torsion.

De même, posons $F:=  \underrightarrow{\lim} _{m} \, F ^{(m)}$ et,
pour tout entier $m\geq 0$,
soient $G  ^{(m)}$ le quotient de $F ^{(m)}$ par le noyau du morphisme canonique
$F ^{(m)} \to F _\Q$. Ainsi, les morphismes de transition
$G ^{(m)}\to G ^{(m+1)}$ sont injectifs et
les $G ^{(m)}$ sont des $\smash{\widetilde{D}} _{\PP ^{\sharp}} ^{(m+m _0)}(T')$-modules de type fini sans $p$-torsion.
Comme, pour les raisons analogues à ci-dessus, la flèche canonique $F ^{(\bullet)}\to G ^{(\bullet)}$
est une lim-ind-isogénie 
(de $M (\smash{\widetilde{D}} _{\PP ^{\sharp}} ^{(\bullet + m _0)}(T'))$ et donc de 
$M (\smash{\widetilde{D}} _{\PP ^{\sharp}} ^{(\bullet + m _0)}(T))$), 
on se ramène à établir que le morphisme 
canonique 
$E ^{(\bullet)} \to 
G ^{(\bullet)}$
est une lim-ind-isogénie de $M (\smash{\widetilde{D}} _{\PP ^{\sharp}} ^{(\bullet + m _0)}(T))$.

b) 
Comme le foncteur 
$\Gamma (\PP, -)$ commute aux limites inductives filtrantes et 
au foncteur $-\otimes _{\Z} \Q$, 
le fait que le morphisme canonique $\underrightarrow{\lim} ~\E ^{(\bullet)}
\to 
\underrightarrow{\lim} ~\FF ^{(\bullet)}$ 
soit un isomorphisme
entraîne que le morphisme canonique 
$E _\Q \to F _\Q$ est un isomorphisme.

c) 
Les $K$-espaces $E ^{(m)} _{\Q}$ et $G ^{(m)} _{\Q}$ sont munis d'une structure canoniques 
de $K$-espaces de Banach induites respectivement par leur structure
de $\smash{\widetilde{D}} _{\PP ^{\sharp}} ^{(m+m _0)}(T)_{\Q}$-module de type fini
et de $\smash{\widetilde{D}} _{\PP ^{\sharp}} ^{(m+m _0)}(T')_{\Q}$-module de type fini.
Pour ces topologies, les morphismes canoniques
$E ^{(m)} _{\Q}\to G ^{(m)} _{\Q}$ sont des morphismes continus
de $K$-espaces de Banach. 
Notons $W := \underrightarrow{\lim} _{m}\, G ^{(m)} _{\Q}$ muni de la topologie limite inductive
de $K$-espace localement convexe.
Notons $i _{m}\colon E ^{(m)} _{\Q}\to W$ le composé
$E ^{(m)} _{\Q}\to G ^{(m)} _{\Q}\to W$.
Alors $i _m$ est continu pour les topologies décrites ci-dessus. 
De plus, comme d'après la partie $b)$ on a 
$E _\Q \riso F _\Q$, 
on en déduit que le morphisme
$E _\Q \to W$ est bijectif.
Comme
$E ^{(m)} _\Q \to E _\Q$ est injectif, 
il en est alors de même de $i _m$.
 De plus, on a l'égalité d'ensembles
$W = \cup _{m\in \N} i _{m} (E ^{(m)} _{\Q})$.
D'après \cite[8.9]{Schneider-NonarchFuncAn}, pour tout entier $m$, il en résulte qu'il existe $\lambda (m)\geq m$  et un morphisme continu
$\beta ^{(m)}\colon G ^{(m)}  _{\Q} \to E ^{(\lambda (m))} _{\Q}$ tel que en le composant avec $i _{\lambda (m)}$ on obtienne
le morphisme canonique continue $G ^{(m)} _{\Q}\to W$. 
On choisit un tel $\lambda (m)\geq m$ le plus petit possible.
Comme $i _{\lambda (m)}$ est injective, 
une telle factorisation $\beta ^{(m)}$ du morphisme canonique
$G ^{(m)} _{\Q}\to W$
est unique. 
On vérifie de même que les morphismes $\beta ^{(m)}$ sont compatibles aux morphismes de transition,
i.e. que l'on obtient en fait le morphisme de
$\smash{\widetilde{\D}} _{\PP ^\sharp} ^{(\bullet+ m _0)} (T) _\Q$-modules
$\beta ^{(\bullet)} \colon 
G ^{(\bullet)} _\Q
\to 
\lambda ^{*}
E ^{(\bullet)} _\Q$.

Comme la famille $(p ^n G ^{(m)}) _{n\in \N}$  forme une base de voisinage de $0$, 
comme $E ^{(\lambda (m))}$ est un ouvert de $E ^{(\lambda (m))} _\Q$, 
(on se rappelle que $E ^{(\lambda (m))}$ et $G ^{(m)}$ sont sans $p$-torsion), 
le fait que $\beta ^{(m)}$ soit continue implique alors qu'il existe $\chi (m)\in \N$ assez grand tel que 
$\beta ^{(m)} (p ^{\chi (m)} G ^{(m)})
\subset
E ^{(\lambda (m))}$.
On peut choisir les $\chi (m)$ tels que 
l'application $\chi \colon \N \to \N$ induite soit croissante. 
Notons $\gamma ^{(\bullet)}$ le composé de 
$\beta ^{(\bullet)}
\colon G ^{(\bullet)}  _{\Q} \to\lambda ^* E ^{(\bullet)} _{\Q} $
avec le morphisme canonique
$\lambda ^* E ^{(\bullet)} _{\Q} 
\to \chi ^{*} \lambda ^* E ^{(\bullet)} _{\Q} $.
D'après ce que l'on vient de voir, 
$\gamma ^{(\bullet)}$ se factorise alors (de manière unique)
en 
le morphisme de la forme 
$g  ^{(\bullet)}
\colon G ^{(\bullet)}  \to \chi ^* \lambda ^* E ^{(\bullet)} $.
Notons 
$f ^{(\bullet)} 
\colon E ^{(\bullet)} 
\to 
G ^{(\bullet)}$
le morphisme canonique.
Comme pour tout $m\in \N$, les morphismes canoniques
$E ^{(m)} \to W$
et
$G ^{(m)} \to W$
sont injectifs, 
on calcule que 
$g  ^{(\bullet)} \circ f  ^{(\bullet)}$
et $\chi ^* \lambda ^* (f  ^{(\bullet)}) \circ g  ^{(\bullet)}$
sont les morphismes canoniques.

2) 
On déduit de l'étape $1$
que le morphisme canonique 
$\smash{\widetilde{\D}} _{\PP ^\sharp} ^{ (\bullet+m _0)} (T)
\widehat{\otimes} _{\smash{\widetilde{D}} _{\PP ^{\sharp}} ^{ (\bullet+m _0)} (T)}
E ^{(\bullet)}
\to 
\smash{\widetilde{\D}} _{\PP ^\sharp} ^{ (\bullet+m _0)} (T)
\widehat{\otimes} _{\smash{\widetilde{D}} _{\PP ^{\sharp}} ^{ (\bullet+m _0)} (T)}
F ^{(\bullet)}$
est une lim-ind-isogénie de $M (\smash{\widetilde{\D}} _{\PP ^\sharp} ^{(\bullet + m _0)}(T))$.

3) Vérifions à présent que le morphisme canonique
$\smash{\widetilde{\D}} _{\PP ^\sharp} ^{ (\bullet+m _0)} (T)
\widehat{\otimes} _{\smash{\widetilde{D}} _{\PP ^{\sharp}} ^{ (\bullet+m _0)} (T)}
F ^{(\bullet)}
\to 
\smash{\widetilde{\D}} _{\PP ^\sharp} ^{ (\bullet+m _0)} (T')
\widehat{\otimes} _{\smash{\widetilde{D}} _{\PP ^{\sharp}} ^{ (\bullet+m _0)} (T')}
F ^{(\bullet)}$
est un isomorphisme de $M (\smash{\widetilde{\D}} _{\PP ^\sharp} ^{(\bullet + m _0)}(T))$.

Par quasi-cohérence des faisceaux $\widetilde{\D} _{P ^{\sharp} _i} ^{ (m)} (T)$ et
$\widetilde{\D} _{P ^{\sharp} _i} ^{ (m)} (T')$,
le morphisme canonique 
$\widetilde{\D} _{P ^{\sharp} _i} ^{ (m)} (T)
\otimes _{\widetilde{D} _{P _i} ^{ (m)} (T)}
\widetilde{D} _{P _i} ^{ (m)} (T')
\to
\widetilde{\D} _{P ^{\sharp} _i} ^{ (m)} (T')$
est un isomorphisme.
On en déduit que le morphisme canonique
$\widetilde{\D} _{P ^{\sharp} _i} ^{ (m)} (T)
\otimes _{\widetilde{D} _{P _i} ^{ (m)} (T)}
F _i ^{(m)}
\to
\widetilde{\D} _{P ^{\sharp} _i} ^{ (m)} (T')
\otimes _{\widetilde{D} _{P _i} ^{ (m)} (T')}
F _i ^{(m)}$
est un isomorphisme.
En passant à la limite projective,
on en déduit le résultat. 

4) Il résulte des étapes 2) et 3) 
que le morphisme canonique 
$\smash{\widetilde{\D}} _{\PP ^\sharp} ^{ (\bullet+m _0)} (T)
\widehat{\otimes} _{\smash{\widetilde{D}} _{\PP ^{\sharp}} ^{ (\bullet+m _0)} (T)}
E ^{(\bullet)}
\to 
\smash{\widetilde{\D}} _{\PP ^\sharp} ^{ (\bullet+m _0)} (T')
\widehat{\otimes} _{\smash{\widetilde{D}} _{\PP ^{\sharp}} ^{ (\bullet+m _0)} (T')}
F ^{(\bullet)}$
est une lim-ind-isogénie de $M (\smash{\widetilde{\D}} _{\PP ^\sharp} ^{(\bullet + m _0)}(T))$.
Il découle alors de la deuxième partie du lemme \ref{lemm-coh-otimes} (et aussi du lemme \ref{lambda-id-iso})
que le morphisme du haut du diagramme canonique commutatif:
\begin{equation}
\notag
\xymatrix @ R=0,4cm{
{\smash{\widetilde{\D}} _{\PP ^\sharp} ^{ (\bullet+m _0)} (T)
\otimes _{\smash{\widetilde{D}} _{\PP ^{\sharp}} ^{ (\bullet+m _0)} (T)}
E ^{(\bullet)}} 
\ar[r] ^-{}
\ar[d] ^-{}
& 
{\smash{\widetilde{\D}} _{\PP ^\sharp} ^{ (\bullet+m _0)} (T')
\otimes _{\smash{\widetilde{D}} _{\PP ^{\sharp}} ^{ (\bullet+m _0)} (T')}
F ^{(\bullet)}} 
\ar[d] ^-{}
\\ 
{\E ^{(\bullet)}} 
\ar[r] ^-{}
& 
{\FF ^{(\bullet)}} 
}
\end{equation}
est une lim-ind-isogénie de $M (\smash{\widetilde{\D}} _{\PP ^\sharp} ^{(\bullet + m _0)}(T))$.
Or, on déduit des théorèmes de type $A$ du deuxième point de la remarque 
\ref{rema-otimes-eqcat}
 que les morphismes verticaux sont des isomorphismes de $M (\smash{\widetilde{\D}} _{\PP ^\sharp} ^{(\bullet + m _0)}(T))$.
 D'où le théorème. 
\end{proof}

\begin{coro}
\label{coro1limTouD}
Soient  $T' \supset T$ un diviseur,
$\E ^{\prime (\bullet)}
\in 
\underrightarrow{LD} ^{\mathrm{b}} _{\Q, \mathrm{coh}} (\smash{\widetilde{\D}} _{\PP ^\sharp} ^{(\bullet)} (T'))$
et
$\E ':= 
\underrightarrow{\lim}
\E ^{\prime (\bullet)}
\in D ^{\mathrm{b}} _{\mathrm{coh}} (\smash{\D} ^\dag _{\PP ^\sharp} (\hdag T ') _{\Q})$.
Si  
$\E '\in D ^{\mathrm{b}}  _{\mathrm{coh}} (\smash{\D} ^\dag _{\PP ^\sharp} (\hdag T) _{\Q})$,
alors 
$\E ^{\prime (\bullet)}
\in 
\underrightarrow{LD} ^{\mathrm{b}} _{\Q, \mathrm{coh}} (\smash{\widetilde{\D}} _{\PP ^\sharp} ^{(\bullet)}(T))$.
\end{coro}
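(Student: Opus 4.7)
The plan is to deduce the corollary from Theorem \ref{limTouD} by producing a coherent preimage of $\mathrm{oub}_{T,T'}(\E')$ over the smaller divisor $T$ via the equivalence of categories \eqref{eqcatcoh}, and then identifying its extension back to $T'$ with the given $\E^{\prime(\bullet)}$.

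Concretely, I would first exploit the hypothesis $\mathrm{oub}_{T,T'}(\E') \in D ^{\mathrm{b}} _{\mathrm{coh}}(\smash{\D} ^\dag _{\PP ^\sharp} (\hdag T) _{\Q})$: by the essential surjectivity of
\[
\underrightarrow{\lim}\colon \underrightarrow{LD} ^{\mathrm{b}} _{\Q, \mathrm{coh}} (\smash{\widetilde{\D}} _{\PP ^\sharp} ^{(\bullet)} (T)) \overset{\sim}{\longrightarrow} D ^{\mathrm{b}} _{\mathrm{coh}}(\smash{\D} ^\dag _{\PP ^\sharp} (\hdag T) _{\Q})
\]
from \eqref{eqcatcoh}, there exists $\E ^{(\bullet)} \in \underrightarrow{LD} ^{\mathrm{b}} _{\Q, \mathrm{coh}} (\smash{\widetilde{\D}} _{\PP ^\sharp} ^{(\bullet)} (T))$ together with a canonical isomorphism $\underrightarrow{\lim}\, \E ^{(\bullet)} \overset{\sim}{\longrightarrow} \mathrm{oub}_{T,T'}(\E')$ in $D ^{\mathrm{b}}(\smash{\D} ^\dag _{\PP ^\sharp} (\hdag T) _{\Q})$. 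Next I would verify the hypothesis of Theorem \ref{limTouD} for this $\E ^{(\bullet)}$. Using the canonical isomorphism $(\hdag T',T)\circ \mathrm{oub}_{T,T'}(\E') \overset{\sim}{\longrightarrow} \E'$ (the coherent $\D^\dag$-analogue of \eqref{oub-pl-fid-iso1}, which transports through \eqref{eqcatcoh}) together with \eqref{oub-pl-fid-iso2} applied at the level of $\D^\dag$-modules, the canonical morphism $\underrightarrow{\lim}\, \E ^{(\bullet)} \to (\hdag T',T)(\underrightarrow{\lim}\, \E ^{(\bullet)})$ is identified with the identity of $\mathrm{oub}_{T,T'}(\E')$, hence an isomorphism in $D ^{\mathrm{b}}(\smash{\D} ^\dag _{\PP ^\sharp} (\hdag T) _{\Q})$. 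Theorem \ref{limTouD} then produces a canonical isomorphism $\E ^{(\bullet)} \overset{\sim}{\longrightarrow} (\hdag T',T)(\E ^{(\bullet)})$ in $\underrightarrow{LD} ^{\mathrm{b}} _{\Q, \mathrm{coh}} (\smash{\widetilde{\D}} _{\PP ^\sharp} ^{(\bullet)} (T))$.

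It remains to identify $(\hdag T',T)(\E ^{(\bullet)})$ with $\E^{\prime(\bullet)}$ upstairs: both lie in $\underrightarrow{LD} ^{\mathrm{b}} _{\Q, \mathrm{coh}} (\smash{\widetilde{\D}} _{\PP ^\sharp} ^{(\bullet)} (T'))$, and both have image under $\underrightarrow{\lim}$ canonically isomorphic to $\E'$ (using \eqref{nota-oub-sharp-iso}-style commutations of $\underrightarrow{\lim}$ with $(\hdag T',T)$, together with the construction of $\E ^{(\bullet)}$). By the fully-faithful half of \eqref{eqcatcoh} applied to the divisor $T'$, there is a unique canonical isomorphism $\E^{\prime(\bullet)} \overset{\sim}{\longrightarrow} (\hdag T',T)(\E ^{(\bullet)})$ in this category. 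Forgetting to $T$ via $\mathrm{oub}_{T,T'}$ and composing with the isomorphism of the previous paragraph yields $\mathrm{oub}_{T,T'}(\E^{\prime(\bullet)}) \cong \E ^{(\bullet)}$ in $\underrightarrow{LD} ^{\mathrm{b}} _{\Q} (\smash{\widetilde{\D}} _{\PP ^\sharp} ^{(\bullet)} (T))$, which is the conclusion since the right-hand side is coherent by construction. The main content is the invocation of Theorem \ref{limTouD}; the remaining manipulations are formal consequences of the equivalence \eqref{eqcatcoh} and of the standard natural isomorphisms between the forgetful and extension functors, so I expect no serious obstacle beyond careful bookkeeping in keeping track of the categories over $T$ and over $T'$.
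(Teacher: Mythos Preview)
Your argument is correct and follows essentially the same route as the paper's proof: choose a coherent lift $\E^{(\bullet)}$ over $T$ via the equivalence \eqref{eqcatcoh}, verify the hypothesis of Theorem~\ref{limTouD}, and then identify $(\hdag T',T)(\E^{(\bullet)})$ with $\E^{\prime(\bullet)}$ by full faithfulness of $\underrightarrow{\lim}$ over $T'$. Two small remarks: (i) for the step showing that $\E' \to (\hdag T',T)(\E')$ is an isomorphism at the $\D^\dag$-level, the paper invokes \cite[4.8]{caro_log-iso-hol} (a coherent complex over $\hdag T'$ that is zero outside $T'$ is zero), whereas you appeal to $\D^\dag$-analogues of \eqref{oub-pl-fid-iso1}--\eqref{oub-pl-fid-iso2}; both are valid and amount to the same thing. (ii) Your reference to \eqref{nota-oub-sharp-iso} for the commutation of $\underrightarrow{\lim}$ with $(\hdag T',T)$ is misplaced---that isomorphism concerns $\mathrm{oub}_\sharp$, not $\underrightarrow{\lim}$; the commutation you need is the elementary compatibility $\underrightarrow{\lim}\circ(\hdag T',T) \riso (\hdag T',T)\circ\underrightarrow{\lim}$, which the paper uses without further citation.
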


\begin{proof}
Comme 
$\E '\in D ^{\mathrm{b}}  _{\mathrm{coh}} (\smash{\D} ^\dag _{\PP ^\sharp} (\hdag T) _{\Q})$, 
il existe
$\E ^{(\bullet)}
\in 
\underrightarrow{LD} ^{\mathrm{b}} _{\Q, \mathrm{coh}} (\smash{\widetilde{\D}} _{\PP ^\sharp} ^{(\bullet)} (T))$
et un isomorphisme 
$\smash{\D} ^\dag _{\PP ^\sharp} (\hdag T) _{\Q}$-linéaire de la forme
$\E '\riso
\underrightarrow{\lim}
\E ^{(\bullet)}$.
Or, comme le morphisme canonique
$(\hdag T' , T) (\E ') \to \E'$ est un morphisme de 
$D ^{\mathrm{b}} _{\mathrm{coh}} (\smash{\D} ^\dag _{\PP ^\sharp} (\hdag T ') _{\Q})$
qui est un isomorphisme en dehors de $T'$, il en résulte d'après \cite[4.8]{caro_log-iso-hol}
que celui-ci est un isomorphisme.
On en déduit que le morphisme canonique
$\E' \to (\hdag T' , T) (\E ')$ de 
$D ^{\mathrm{b}}  (\smash{\D} ^\dag _{\PP ^\sharp} (\hdag T ) _{\Q})$
est un isomorphisme. 
En appliquant le théorème \ref{limTouD}
à 
$\E ^{(\bullet)}$, 
on en déduit que le morphisme canonique 
$\E ^{(\bullet)} \to 
(\hdag T',T) (\E ^{(\bullet)})$
est alors un isomorphisme 
dans 
$\underrightarrow{LD} ^{\mathrm{b}} _{\Q, \mathrm{coh}} (\smash{\widetilde{\D}} _{\PP ^\sharp} ^{(\bullet)}(T))$.
Comme $(\hdag T' , T) (\E ^{(\bullet)}) ,~\E ^{\prime(\bullet)}
\in 
\underrightarrow{LD} ^{\mathrm{b}} _{\Q, \mathrm{coh}} (\smash{\widetilde{\D}} _{\PP ^\sharp} ^{(\bullet)} (T'))$
et
comme 
$$\underrightarrow{\lim}~
(\hdag T' , T) (\E ^{(\bullet)})
\riso 
(\hdag T' , T) (\underrightarrow{\lim}~ \E ^{(\bullet)}) 
\riso 
(\hdag T' , T) (\E ') 
\riso \E'
\riso 
\underrightarrow{\lim}~
\E ^{\prime(\bullet)},$$ 
par pleine fidélité du foncteur 
$\underrightarrow{\lim}$ sur 
$\underrightarrow{LD} ^{\mathrm{b}} _{\Q, \mathrm{coh}} (\smash{\widetilde{\D}} _{\PP ^\sharp} ^{(\bullet)} (T'))$
on obtient que 
$(\hdag T' , T) (\E ^{(\bullet)})
\riso 
\E ^{\prime(\bullet)}$.
\end{proof}

\begin{coro}
\label{coh-Bbullet}
On a
$\smash{\widetilde{\B}} _{\PP} ^{(\bullet)} (T ) \in 
\underrightarrow{LM}  _{\Q, \mathrm{coh}} (\smash{\widehat{\D}} _{\PP } ^{(\bullet)})
\cap 
\underrightarrow{LM}  _{\Q, \mathrm{coh}} (\smash{\widetilde{\D}} _{\PP ^{\sharp}} ^{(\bullet)} (T))$.
\end{coro}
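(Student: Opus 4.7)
The plan is to reduce both memberships to a single application of Corollary~\ref{coro1limTouD} (with trivial log structure), after first establishing the coherence over the larger ring $\smash{\widetilde{\D}}_{\PP^\sharp}^{(\bullet)}(T)$ directly by producing a local finite presentation. The crucial input from outside is Berthelot's classical coherence theorem: $\O_\PP(\hdag T)_\Q$ is a coherent $\D^\dag_\PP$-module, not merely a coherent $\D^\dag_\PP(\hdag T)_\Q$-module.

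First I will verify the second membership, $\smash{\widetilde{\B}}_\PP^{(\bullet)}(T)\in\underrightarrow{LM}_{\Q,\mathrm{coh}}(\smash{\widetilde{\D}}_{\PP^\sharp}^{(\bullet)}(T))$, and simultaneously its non-logarithmic analogue $\smash{\widetilde{\B}}_\PP^{(\bullet)}(T)\in\underrightarrow{LM}_{\Q,\mathrm{coh}}(\smash{\widetilde{\D}}_{\PP}^{(\bullet)}(T))$. Working locally with coordinates $t_1,\ldots,t_d$ adapted to $Z$ and $T$, the module $\smash{\widetilde{\B}}_\PP^{(m)}(T)$ is cyclic over $\smash{\widetilde{\D}}_{\PP^\sharp}^{(m)}(T)$, generated by the unit section $1$; the evaluation morphism $P\mapsto P\cdot 1$ is surjective, with kernel the left ideal generated by the log derivations $\partial_{\sharp,1},\ldots,\partial_{\sharp,d}$, yielding a presentation
\begin{equation*}
(\smash{\widetilde{\D}}_{\PP^\sharp}^{(\bullet)}(T))^d \longrightarrow \smash{\widetilde{\D}}_{\PP^\sharp}^{(\bullet)}(T) \longrightarrow \smash{\widetilde{\B}}_\PP^{(\bullet)}(T)\longrightarrow 0
\end{equation*}
in $M(\smash{\widetilde{\D}}_{\PP^\sharp}^{(\bullet)}(T))$. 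The same generators $\partial_{\sharp,i}$ sit stably across all levels, so the presentation is compatible with the transition morphisms of the inductive system. Hence $\smash{\widetilde{\B}}_\PP^{(\bullet)}(T)$ is locally globally of finite presentation, and by Lemma~\ref{lim-ind-isog-coh�rence} it is coherent à lim-ind-isogénie près. The identical argument with trivial log structure $Z=\emptyset$ (replacing $\partial_{\sharp,i}$ by the ordinary derivations $\partial_i$) gives membership in $\underrightarrow{LM}_{\Q,\mathrm{coh}}(\smash{\widetilde{\D}}_{\PP}^{(\bullet)}(T))$.

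For the first membership, I apply Corollary~\ref{coro1limTouD} with trivial log structure, with $T':=T$ and with the smaller divisor taken to be empty. The input hypothesis is exactly the non-logarithmic variant just established. The coherence of the limit, $\underrightarrow{\lim}\smash{\widetilde{\B}}_\PP^{(\bullet)}(T)=\O_\PP(\hdag T)_\Q$, as a $\D^\dag_\PP$-module is the content of Berthelot's theorem. The corollary then yields $\smash{\widetilde{\B}}_\PP^{(\bullet)}(T)\in\underrightarrow{LD}^{\mathrm{b}}_{\Q,\mathrm{coh}}(\smash{\widetilde{\D}}_{\PP}^{(\bullet)})$; via Theorem~\ref{theo-eq-coh-lim} (the object being concentrated in degree zero) this gives $\smash{\widetilde{\B}}_\PP^{(\bullet)}(T)\in\underrightarrow{LM}_{\Q,\mathrm{coh}}(\smash{\widetilde{\D}}_{\PP}^{(\bullet)})$. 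Since $\widetilde{\B}_\PP^{(m)}(\emptyset)=\O_\PP$ for every $m$, the $\lambda_0$-shift is inert and $\smash{\widetilde{\D}}_{\PP}^{(\bullet)}=\smash{\widehat{\D}}_{\PP}^{(\bullet)}$, completing the proof.

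The main point requiring care is the compatibility of the local presentation with the transition maps in the ind-system; Proposition~\ref{227-Be1}, which controls the kernels of the level-change morphisms $\smash{\widehat{\D}}_{\PP^\sharp}^{(m)}\to\smash{\widehat{\D}}_{\PP^\sharp}^{(m')}$ in terms of the divided powers of $\partial_{\sharp,i}$, supplies the necessary uniformity to identify the kernel of evaluation across all levels.
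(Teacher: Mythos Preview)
Your overall strategy matches the paper's exactly: the paper simply asserts the memberships in $\underrightarrow{LM}_{\Q,\mathrm{coh}}(\smash{\widetilde{\D}}_{\PP^\sharp}^{(\bullet)}(T))$ and $\underrightarrow{LM}_{\Q,\mathrm{coh}}(\smash{\widetilde{\D}}_{\PP}^{(\bullet)}(T))$ as already known, invokes Berthelot's coherence of $\O_\PP(\hdag T)_\Q$ as a $\D^\dag_{\PP,\Q}$-module, and applies Corollary~\ref{coro1limTouD} with trivial log structure to obtain the first membership.

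One imprecision in your explicit argument: your displayed sequence is \emph{not} exact in $M(\smash{\widetilde{\D}}_{\PP^\sharp}^{(\bullet)}(T))$ as you claim. At level $m\ge 1$ the kernel of $P\mapsto P\cdot 1$ strictly contains the ideal generated by the $\partial_{\sharp,i}$: for instance $\partial_{\sharp,i}^{\langle p^m\rangle_{(m)}}$ kills $1$, but from $\partial_{\sharp,i}^{\langle p^m-1\rangle_{(m)}}\cdot\partial_{\sharp,i}=p^m\,\partial_{\sharp,i}^{\langle p^m\rangle_{(m)}}$ one sees it is only $p^m$ times an element of that ideal. Thus the cokernel of your first map has extra $p$-torsion. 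The fix is cheap: after tensoring with $\Q$ your sequence \emph{is} exact, and since the cokernel at each level is coherent (finite presentation over a coherent ring) with $p$-torsion-free target $\smash{\widetilde{\B}}_\PP^{(m)}(T)$, the surjection onto $\smash{\widetilde{\B}}_\PP^{(m)}(T)$ is an isogeny at each level, giving a presentation in $\underrightarrow{LM}_\Q$ as needed. Alternatively, one can bypass the explicit presentation entirely by observing that $\O_\PP$ is $\smash{\widehat{\D}}_{\PP^\sharp}^{(m)}$-coherent for each $m$ with compatible transition isomorphisms, then apply $(\hdag T,\emptyset)^0$ via Lemma~\ref{lemm-coh-otimes}. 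Your invocation of Proposition~\ref{227-Be1} is misplaced: that proposition controls the kernel of level-change maps $\D^{(m)}_{P^\sharp}\to\D^{(m')}_{P^\sharp}$, not the kernel of evaluation at $1$.
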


\begin{proof}
On sait déjà que 
$\smash{\widetilde{\B}} _{\PP} ^{(\bullet)} (T ) 
\in \underrightarrow{LM}  _{\Q, \mathrm{coh}} (\smash{\widetilde{\D}} _{\PP ^{\sharp}} ^{(\bullet)} (T))
\cap \underrightarrow{LM}  _{\Q, \mathrm{coh}} (\smash{\widetilde{\D}} _{\PP} ^{(\bullet)} (T))$.
Or, Berthelot a vérifié dans \cite{Becohdiff}
que 
$\O _{\PP} (\hdag T) _{\Q}=
\underrightarrow{\lim}
\smash{\widetilde{\B}} _{\PP} ^{(\bullet)} (T ) $
est un 
$\smash{\D} ^\dag _{\PP,\Q}$-module cohérent.
On conclut grâce au lemme \ref{coro1limTouD} (utilisé sans structure logarithmique).
\end{proof}

\begin{coro}
\label{coro2limTouD}
Soient  $T' \supset T$ un diviseur,
$\E \in D ^{\mathrm{b}}  _{\mathrm{coh}} (\smash{\D} ^\dag _{\PP ^\sharp} (\hdag T) _{\Q}) \cap 
D ^{\mathrm{b}}  _{\mathrm{coh}} (\smash{\D} ^\dag _{\PP ^\sharp} (\hdag T') _{\Q})$.
Soient $\E ^{(\bullet)}
\in 
\underrightarrow{LD} ^{\mathrm{b}} _{\Q, \mathrm{coh}} (\smash{\widetilde{\D}} _{\PP ^\sharp} ^{(\bullet)} (T))$
et
$\E ^{\prime (\bullet)}
\in 
\underrightarrow{LD} ^{\mathrm{b}} _{\Q, \mathrm{coh}} (\smash{\widetilde{\D}} _{\PP ^\sharp} ^{(\bullet)} (T'))$
tels que 
l'on ait les isomorphismes $\smash{\D} ^\dag _{\PP ^\sharp} (\hdag T) _{\Q}$-linéaires de la forme
$\underrightarrow{\lim}
\E ^{(\bullet)}
\riso 
\E$
et
$\underrightarrow{\lim}
\E ^{\prime (\bullet)}
\riso 
\E$.
Alors 
$\E ^{(\bullet)} \riso \E ^{\prime (\bullet)}$
dans 
$\underrightarrow{LD} ^{\mathrm{b}} _{\Q, \mathrm{coh}} (\smash{\widetilde{\D}} _{\PP ^\sharp} ^{(\bullet)} (T))$.
\end{coro}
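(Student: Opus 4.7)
Le plan est d'abord de ramener $\E ^{\prime (\bullet)}$ dans la m\^eme cat\'egorie ambiante que $\E ^{(\bullet)}$ gr\^ace au corollaire \ref{coro1limTouD}, puis d'invoquer la pleine fid\'elit\'e du foncteur $\underrightarrow{\lim}$ sur la cat\'egorie $\underrightarrow{LD} ^{\mathrm{b}} _{\Q, \mathrm{coh}} (\smash{\widetilde{\D}} _{\PP ^\sharp} ^{(\bullet)} (T))$,
laquelle d\'ecoule de l'\'equivalence \ref{eqcatcoh} (ou, de fa\c con \'equivalente, de la composition des �quivalences \ref{theo-eq-coh-lim} et \ref{M-eq-coh-lim}).

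Premi\`ere \'etape: appliquer \ref{coro1limTouD} \`a $\E ^{\prime (\bullet)} \in \underrightarrow{LD} ^{\mathrm{b}} _{\Q, \mathrm{coh}} (\smash{\widetilde{\D}} _{\PP ^\sharp} ^{(\bullet)} (T'))$. Son hypoth\`ese est satisfaite: on a $\underrightarrow{\lim}\, \E ^{\prime (\bullet)} \riso \E$ et le complexe $\E$ appartient, par hypoth\`ese du corollaire \`a prouver, \`a $D ^{\mathrm{b}} _{\mathrm{coh}} (\smash{\D} ^\dag _{\PP ^\sharp} (\hdag T) _{\Q})$. On en d\'eduit donc
$\E ^{\prime (\bullet)} \in \underrightarrow{LD} ^{\mathrm{b}} _{\Q, \mathrm{coh}} (\smash{\widetilde{\D}} _{\PP ^\sharp} ^{(\bullet)} (T))$ (o\`u on voit implicitement $\E ^{\prime (\bullet)}$ via le foncteur oubli de diviseur, qui commute canoniquement \`a $\underrightarrow{\lim}$ puisque tous deux ne font qu'\'etendre au niveau des cat\'egories localis\'ees des op\'erations d\'ej\`a d\'efinies au niveau des modules).

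Deuxi\`eme \'etape: une fois cette premi\`ere \'etape franchie, les deux objets $\E ^{(\bullet)}$ et $\E ^{\prime (\bullet)}$ vivent dans la m\^eme cat\'egorie $\underrightarrow{LD} ^{\mathrm{b}} _{\Q, \mathrm{coh}} (\smash{\widetilde{\D}} _{\PP ^\sharp} ^{(\bullet)} (T))$, et leurs images par $\underrightarrow{\lim}$ s'identifient canoniquement au m\^eme complexe $\E$. Comme le foncteur compos\'e
$$\underrightarrow{\lim} \colon \underrightarrow{LD} ^{\mathrm{b}} _{\Q, \mathrm{coh}} (\smash{\widetilde{\D}} _{\PP ^\sharp} ^{(\bullet)} (T)) \overset{\ref{theo-eq-coh-lim}}{\cong} D ^{\mathrm{b}} _{\mathrm{coh}} (\underrightarrow{LM} _{\Q} (\smash{\widetilde{\D}} _{\PP ^\sharp} ^{(\bullet)} (T))) \overset{\ref{eqcatcoh}}{\cong} D ^{\mathrm{b}} _{\mathrm{coh}} (\smash{\D} ^\dag _{\PP ^\sharp} (\hdag T) _{\Q})$$
est une \'equivalence de cat\'egories et en particulier pleinement fid\`ele, l'isomorphisme canonique $\underrightarrow{\lim}\, \E ^{(\bullet)} \riso \E \liso \underrightarrow{\lim}\, \E ^{\prime (\bullet)}$ se rel\`eve de mani\`ere unique en un isomorphisme $\E ^{(\bullet)} \riso \E ^{\prime (\bullet)}$ dans $\underrightarrow{LD} ^{\mathrm{b}} _{\Q, \mathrm{coh}} (\smash{\widetilde{\D}} _{\PP ^\sharp} ^{(\bullet)} (T))$, ce qui est exactement la conclusion voulue.

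Aucun obstacle s\'erieux n'est \`a pr\'evoir, tout le travail ayant \'et\'e effectu\'e en amont: le r\'esultat crucial \ref{limTouD} et son corollaire \ref{coro1limTouD} fournissent la stabilit\'e de la coh\'erence par oubli de la localisation, tandis que \ref{theo-eq-coh-lim} et \ref{eqcatcoh} assurent la pleine fid\'elit\'e indispensable au rel\`evement des isomorphismes. Le seul point technique qu'il faut peut-\^etre expliciter est la compatibilit\'e entre $\underrightarrow{\lim}$ et l'oubli $\mathrm{oub} _{T,T'}$, mais elle est tautologique car ces deux op\'erations proviennent de foncteurs au niveau des modules qui s'\'etendent terme \`a terme.
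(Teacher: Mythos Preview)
Your proof is correct and follows exactly the same route as the paper: apply \ref{coro1limTouD} to bring $\E^{\prime(\bullet)}$ into $\underrightarrow{LD}^{\mathrm{b}}_{\Q,\mathrm{coh}}(\smash{\widetilde{\D}}_{\PP^\sharp}^{(\bullet)}(T))$, then use the full faithfulness of $\underrightarrow{\lim}$ on that category to lift the isomorphism. The paper states this in one sentence; you have simply unpacked it, including the reference to the equivalence \ref{eqcatcoh} (or equivalently \ref{theo-eq-coh-lim}) that underlies the full faithfulness.
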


\begin{proof}
Cela découle de \ref{coro1limTouD} et de la pleine fidélité 
du foncteur 
$\underrightarrow{\lim}$ sur 
$\underrightarrow{LD} ^{\mathrm{b}} _{\Q, \mathrm{coh}} (\smash{\widetilde{\D}} _{\PP ^\sharp} ^{(\bullet)} (T))$.
\end{proof}

\begin{nota}
\label{fct-qcoh2coh}
Soient $\mathcal{Q}$ un $\V$-schéma formel lisse,
$\ZZ' $ un diviseur à croisements normaux stricts de $\mathcal{Q}$, 
$\mathcal{Q} ^{\sharp} := (\mathcal{Q}, \ZZ')$, 
$U$ un diviseur de $Q$
et
$\phi ^{(\bullet)}
\colon 
\smash{\underrightarrow{LD}} ^{\mathrm{b}} _{\Q,\mathrm{qc}}
( \smash{\widetilde{\D}} _{\PP ^\sharp} ^{(\bullet)}(T))
\to 
\smash{\underrightarrow{LD}} ^{\mathrm{b}} _{\Q,\mathrm{qc}}
( \smash{\widetilde{\D}} _{\mathcal{Q} ^{\sharp}} ^{(\bullet)}(U))$
un foncteur.
On en déduit un foncteur 
$\mathrm{Coh} _{T} ( \phi ^{(\bullet)})
\colon 
D ^\mathrm{b} _\mathrm{coh} ( \smash{\D} ^\dag _{\PP ^\sharp} (\hdag T) _{\Q} ) \to
D ^\mathrm{b}  ( \smash{\D} ^\dag _{\mathcal{Q} ^{\sharp}} (\hdag U) _{\Q} )$
en posant 
$\mathrm{Coh} _{T} ( \phi ^{(\bullet)}) := \underrightarrow{\lim} \circ \phi ^{(\bullet)}\circ (\underrightarrow{\lim} _T ) ^{-1}$,
où $(\underrightarrow{\lim} _T ) ^{-1}$ désigne un foncteur quasi-inverse de 
l'équivalence de catégories de 
\begin{equation}
\label{eq-catLDBer-LD-D}
\underrightarrow{\lim} 
\colon 
\underrightarrow{LD} ^{\mathrm{b}}  _{\Q, \mathrm{coh}} (\smash{\widetilde{\D}} _{\PP ^\sharp} ^{(\bullet)} (T))
\overset{\ref{eq-coh-lim}}{\cong} 
D ^{\mathrm{b}} _{\mathrm{coh}}
(\underrightarrow{LM} _{\Q} (\smash{\widetilde{\D}} _{\PP ^\sharp} ^{(\bullet)} (T)))
\overset{\ref{eqcatcoh}}{\cong}
D ^{\mathrm{b}} _{\mathrm{coh}}( \smash{\D} ^\dag _{\PP ^\sharp} (\hdag T) _{\Q} ).
\end{equation}

\end{nota}

\begin{rema}
\label{rema-fct-qcoh2coh}
Soit $T \subset T'$ un second diviseur. 
Lorsque $\E \in 
D ^\mathrm{b} _\mathrm{coh} ( \smash{\D} ^\dag _{\PP ^\sharp} (\hdag T') _{\Q} ) 
\cap D ^\mathrm{b} _\mathrm{coh} ( \smash{\D} ^\dag _{\PP ^\sharp} (\hdag T) _{\Q} )$, 
grâce au corollaire \ref{coro2limTouD},  
les objets  de
$\smash{\underrightarrow{LD}} ^{\mathrm{b}} _{\Q ,\mathrm{coh}}
(\smash{\widetilde{\D}} _{\PP ^\sharp} ^{(\bullet)}(T))$
et
$\smash{\underrightarrow{LD}} ^{\mathrm{b}} _{\Q ,\mathrm{coh}}
(\smash{\widetilde{\D}} _{\PP ^\sharp} ^{(\bullet)}(T'))$
correspondant via les équivalences de catégories respectives de \ref{eq-coh-lim}
soient isomorphes. 
Avec les notations de \ref{fct-qcoh2coh}, les foncteurs 
$\mathrm{Coh} _{T} ( \phi ^{(\bullet)}) $
et
$\mathrm{Coh} _{T'} ( \phi ^{(\bullet)}) $
sont donc isomorphes sur 
$D ^\mathrm{b} _\mathrm{coh} ( \smash{\D} ^\dag _{\PP ^\sharp} (\hdag T') _{\Q} ) 
\cap D ^\mathrm{b} _\mathrm{coh} ( \smash{\D} ^\dag _{\PP ^\sharp} (\hdag T) _{\Q} )$.
\end{rema}

Terminons la section par d'autres applications du théorème.

\begin{prop}
\label{stab-coh-oub-DT}
Soient $T \subset D \subset T' $ des diviseurs de $P$.
\begin{enumerate}
\item Soit 
$\E ^{(\bullet)}
\in 
\underrightarrow{LD} ^{\mathrm{b}} _{\Q, \mathrm{coh}} (\smash{\widetilde{\D}} _{\PP ^\sharp} ^{(\bullet)}(T))
\cap \underrightarrow{LD} ^{\mathrm{b}} _{\Q, \mathrm{coh}} (\smash{\widetilde{\D}} _{\PP ^\sharp} ^{(\bullet)} (T'))$.
Alors 
$\E ^{(\bullet)}
\in 
 \underrightarrow{LD} ^{\mathrm{b}} _{\Q, \mathrm{coh}} (\smash{\widetilde{\D}} _{\PP ^\sharp} ^{(\bullet)} (D))$.
\item Soit 
$\E \in 
D ^{\mathrm{b}} _{\mathrm{coh}}( \smash{\D} ^\dag _{\PP ^\sharp} (\hdag T) _{\Q} )
\cap 
D ^{\mathrm{b}} _{\mathrm{coh}}( \smash{\D} ^\dag _{\PP ^\sharp} (\hdag T') _{\Q} )$.
Alors 
$\E 
\in 
D ^{\mathrm{b}} _{\mathrm{coh}}( \smash{\D} ^\dag _{\PP ^\sharp} (\hdag D) _{\Q} )$.
\end{enumerate}
\end{prop}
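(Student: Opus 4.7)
The plan is to establish part 2 directly and then deduce part 1 from part 2 together with Corollary~\ref{coro1limTouD}.

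For part 2, set $\tilde{\E} := (\hdag D, T)(\E)$, which belongs to $D^{\mathrm{b}}_{\mathrm{coh}}(\smash{\D}^\dag_{\PP^\sharp}(\hdag D)_\Q)$ because the functor $(\hdag D, T)$ from \ref{hdag-def-coh} preserves coherence. The goal is to show that the unit of adjunction
$$\beta\colon \E \longrightarrow \mathrm{oub}_{T, D}(\tilde{\E})$$
in $D^{\mathrm{b}}(\smash{\D}^\dag_{\PP^\sharp}(\hdag T)_\Q)$ is an isomorphism; this will imply that $\E$ is the oubli of a coherent object over $\smash{\D}^\dag_{\PP^\sharp}(\hdag D)_\Q$, yielding the conclusion. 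To check this, apply $(\hdag T', T)$ to $\beta$. Using the identification $(\hdag T', T) \circ \mathrm{oub}_{T, D} \riso (\hdag T', D)$ (the coherent analog of \ref{gen-oub-pl-fid-iso1}) together with the transitivity $(\hdag T', D) \circ (\hdag D, T) \riso (\hdag T', T)$ that follows at once from the definition of the extension functor as a tensor product, the morphism $(\hdag T', T)(\beta)$ is canonically identified by naturality with the identity endomorphism of $(\hdag T', T)(\E)$, hence it is an isomorphism.

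To transfer this back to $\beta$, invoke the full faithfulness of
$$\mathrm{oub}_{T, T'}\colon D^{\mathrm{b}}_{\mathrm{coh}}(\smash{\D}^\dag_{\PP^\sharp}(\hdag T')_\Q) \longrightarrow D^{\mathrm{b}}(\smash{\D}^\dag_{\PP^\sharp}(\hdag T)_\Q),$$
which is the coherent analog of the statement in \ref{oub-pl-fid}. The source $\E$ lies in the essential image of $\mathrm{oub}_{T, T'}$ by hypothesis, and the target $\mathrm{oub}_{T, D}(\tilde{\E})$ lies there too, via the canonical isomorphism $\mathrm{oub}_{T, D}(\tilde{\E}) \riso \mathrm{oub}_{T, T'}((\hdag T', D)(\tilde{\E}))$ obtained by applying $\mathrm{oub}_{T, D}$ to the adjunction isomorphism $\tilde{\E} \riso \mathrm{oub}_{D, T'}((\hdag T', D)(\tilde{\E}))$. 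Since $(\hdag T', T)(\beta)$ is an isomorphism and $\mathrm{oub}_{T, T'}$ is fully faithful between the relevant coherent categories, $\beta$ itself is an isomorphism, completing the proof of part 2.

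For part 1, fix a representative of $\E^{(\bullet)}$ in $\underrightarrow{LD}^{\mathrm{b}}_{\Q, \mathrm{coh}}(\smash{\widetilde{\D}}_{\PP^\sharp}^{(\bullet)}(T'))$ and set $\E := \underrightarrow{\lim}\,\E^{(\bullet)}$. Through the equivalences of categories \ref{eq-coh-lim} and \ref{eqcatcoh}, the two coherence hypotheses on $\E^{(\bullet)}$ translate into $\E \in D^{\mathrm{b}}_{\mathrm{coh}}(\smash{\D}^\dag_{\PP^\sharp}(\hdag T)_\Q) \cap D^{\mathrm{b}}_{\mathrm{coh}}(\smash{\D}^\dag_{\PP^\sharp}(\hdag T')_\Q)$. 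Apply part 2 to obtain $\E \in D^{\mathrm{b}}_{\mathrm{coh}}(\smash{\D}^\dag_{\PP^\sharp}(\hdag D)_\Q)$, then apply Corollary~\ref{coro1limTouD} to the divisor pair $D \subset T'$ to conclude that $\E^{(\bullet)}$ belongs to $\underrightarrow{LD}^{\mathrm{b}}_{\Q, \mathrm{coh}}(\smash{\widetilde{\D}}_{\PP^\sharp}^{(\bullet)}(D))$. The main obstacle is the passage from $(\hdag T', T)(\beta)$ being an isomorphism to $\beta$ itself being one, which rests on the full faithfulness of $\mathrm{oub}_{T, T'}$ on coherent objects and on correctly placing $\mathrm{oub}_{T, D}(\tilde{\E})$ in the essential image of that functor; the remaining functorial identifications are routine diagram chases once the natural adjunction morphisms are translated through the tensor product definitions of the localization functors.
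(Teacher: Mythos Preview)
Your proof of part~2 contains a gap: you assert ``the adjunction isomorphism $\tilde{\E} \riso \mathrm{oub}_{D, T'}((\hdag T', D)(\tilde{\E}))$'' without justification, but this unit map is \emph{not} an isomorphism for an arbitrary coherent object over $D$. The analogue of \ref{oub-pl-fid-iso2} only guarantees it for objects already in the essential image of $\mathrm{oub}_{D,T'}$, which is precisely what must be checked. The correct argument uses the hypothesis that $\E$ comes from $\E' := (\hdag T', T)(\E)$ over $T'$: writing $\E \riso \mathrm{oub}_{T,D}(\mathrm{oub}_{D,T'}(\E'))$ and applying the coherent analogue of \ref{oub-pl-fid-iso1} to $\mathrm{oub}_{D,T'}(\E')$ yields $\tilde{\E} = (\hdag D, T)(\E) \riso \mathrm{oub}_{D,T'}(\E')$. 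But once you have this, you are already done: $\mathrm{oub}_{T,D}(\tilde{\E}) \riso \mathrm{oub}_{T,T'}(\E') \riso \E$, so $\beta$ is an isomorphism and the entire detour through $(\hdag T', T)(\beta)$ and full faithfulness becomes superfluous.

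The paper takes exactly this direct route, but at the inductive-system level and in the reverse logical order. For part~1 it applies \ref{oub-pl-fid-iso1} in one line to $\mathrm{oub}_{D,T'}(\E^{(\bullet)})$, which is quasi-coherent over $D$, obtaining
\[
(\hdag D,\,T)\circ \mathrm{oub}_{T,D}\bigl(\mathrm{oub}_{D,T'}(\E^{(\bullet)})\bigr)\ \riso\ \mathrm{oub}_{D,T'}(\E^{(\bullet)});
\]
the left-hand side is coherent over $D$ because $\E^{(\bullet)}$ is coherent over $T$, so the right-hand side is too. Part~2 then follows from part~1 via \ref{coro1limTouD}. Your reversal of the order (part~2 first, then part~1) is legitimate, and your deduction of part~1 from part~2 is correct; but proving part~1 first lets one quote \ref{oub-pl-fid-iso1} verbatim in the quasi-coherent category where it is stated, and keeps the whole argument to two lines.
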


\begin{proof}
Il découle de \ref{oub-pl-fid-iso1} que le morphisme canonique
$(\hdag D,\,T) \circ 
\mathrm{oub} _{T ,D} 
( \mathrm{oub} _{D ,T'} (\E ^{(\bullet)}))
\to 
\mathrm{oub} _{D ,T'} (\E ^{(\bullet)})$
de $ \underrightarrow{LD} ^{\mathrm{b}} _{\Q, \mathrm{qc}} (\smash{\widetilde{\D}} _{\PP ^\sharp} ^{(\bullet)} (D))$
est un isomorphisme.
D'où la première assertion.
Grâce à \ref{coro1limTouD}, la seconde assertion découle de la première.
\end{proof}

\begin{nota}
\label{nota-hag-sansrisque}

\begin{itemize}
\item 
Soient $D \subset T \subset T' $ des diviseurs de $P$.
D'après \ref{gen-oub-pl-fid},
en omettant d'indiquer les foncteurs oublis respectifs, 
les foncteur $(\hdag T',~D) $ et
$(\hdag T', ~T) $ sont canoniquement isomorphes sur 
$\smash{\underrightarrow{LD}} ^{\mathrm{b}} _{\Q ,\mathrm{qc}}
(\smash{\widetilde{\D}} _{\PP ^\sharp} ^{(\bullet)}(T))$.
On pourra donc les noter sans risque 
$(\hdag T') $. 

\item Pour tous diviseurs $D \subset T$, on dispose de l'isomorphisme de foncteurs 
$\mathrm{Coh} _{D} ((\hdag T' ,~D) ) \riso (\hdag T' ,~D) $, le dernier foncteur étant celui défini en
\ref{hdag-def-coh}. Les notations respectives sont donc compatibles. 

\item Soient $T$ et $D \subset D' $ des diviseurs de $P$.
Grâce à \ref{hdagT'T=cup} et \ref{oub-pl-fid}, 
les foncteurs $(\hdag T)$ et $(\hdag T \cup D)$ sont canoniquement isomorphes
sur $\underrightarrow{LD} ^{\mathrm{b}} _{\Q} (\smash{\widetilde{\D}} _{\PP ^\sharp} ^{(\bullet)} (D))$.
On dispose alors du foncteur 
$(\hdag T' ) := \mathrm{Coh} _{D} ((\hdag T) )
\colon D ^\mathrm{b} _\mathrm{coh} ( \smash{\D} ^\dag _{\PP ^\sharp} (\hdag D) _{\Q} ) 
\to D ^\mathrm{b} _\mathrm{coh} ( \smash{\D} ^\dag _{\PP ^\sharp} (\hdag T \cup D) _{\Q} ) $.
Avec la remarque \ref{rema-fct-qcoh2coh}, 
comme les foncteurs  
$\mathrm{Coh} _{D} ((\hdag T) )$ et
$\mathrm{Coh} _{D'} ((\hdag T) )$
sont isomorphes sur 
 $D ^\mathrm{b} _\mathrm{coh} ( \smash{\D} ^\dag _{\PP ^\sharp} (\hdag D) _{\Q} ) 
\cap D ^\mathrm{b} _\mathrm{coh} ( \smash{\D} ^\dag _{\PP ^\sharp} (\hdag T) _{\Q} )$,
il n'est donc pas nécessaire de préciser $D$. 
\end{itemize}

\end{nota}

\section{Foncteur cohomologique local à support strict dans un fermé,
foncteur de localisation en dehors d'un fermé}

\subsection{Triangle distingué de localisation relativement à un diviseur}

\begin{vide}
[Triangle distingué de localisation]
Soit $\E ^{(\bullet)}
\in 
\smash{\underrightarrow{LD}} ^{\mathrm{b}} _{\Q,\mathrm{qc}} ( \smash{\widetilde{\D}} _{\PP ^\sharp} ^{(\bullet)})$.
Berthelot (dans le cas non logarithmique, mais on peut reprendre sa définition avec des 
pôles logarithmiques)
a défini le foncteur cohomologique local à support strict dans $T$ noté
$ \R \underline{\Gamma} ^\dag _{T}
\colon 
 \smash{\underrightarrow{LD}} ^{\mathrm{b}} _{\Q,\mathrm{qc}} ( \smash{\widetilde{\D}} _{\PP ^\sharp} ^{(\bullet)})
 \to 
 \smash{\underrightarrow{LD}} ^{\mathrm{b}} _{\Q,\mathrm{qc}} ( \smash{\widetilde{\D}} _{\PP ^\sharp} ^{(\bullet)})$.
De plus, il a vérifié le triangle distingué de localisation 
\begin{equation}
\label{tri-loc-berthelot}
 \R \underline{\Gamma} ^\dag _{T} (\E ^{(\bullet)})
\to 
\E ^{(\bullet)}
\to 
(\hdag T) (\E ^{(\bullet)})
\to 
 \R \underline{\Gamma} ^\dag _{T} (\E ^{(\bullet)})
 [1].
\end{equation}
On pourra désigner par la suite le triangle distingué \ref{tri-loc-berthelot} 
par $\Delta _{T} (\E ^{(\bullet)})$.
On peut étendre ce triangle au cas où $T =P$ de la manière suivante:
$ \R \underline{\Gamma} ^\dag _{T} (\E ^{(\bullet)})= \E ^{(\bullet)}$
et
$(\hdag T) (\E ^{(\bullet)})=0$.
Ce dernier est fonctoriel en $T$ et $\E ^{(\bullet)}$, i.e.
si $T \subset T'$ avec $T'$ un diviseur ou $T' =P$ et 
si $\E ^{(\bullet)} \to \E ^{\prime (\bullet)}$ est un morphisme de 
$ \smash{\underrightarrow{LD}} ^{\mathrm{b}} _{\Q,\mathrm{qc}} ( \smash{\widetilde{\D}} _{\PP ^\sharp} ^{(\bullet)})$
on bénéficie du morphisme canonique de triangles distingués de la forme
$\Delta _{T} (\E ^{(\bullet)})
\to 
\Delta _{T'} (\E ^{\prime (\bullet)})$

En fait, d'après ce qui suit, le foncteur $\R \underline{\Gamma} ^\dag _{T}$ est canoniquement déterminé à partir du foncteur 
$(\hdag T) $ et du triangle distingué \ref{tri-loc-berthelot}.
On pourrait donc le prendre comme définition.
\end{vide}

\begin{lemm}
\label{annulationHom-hdag}
Soient 
$\E ^{(\bullet)}
\in \smash{\underrightarrow{LD}} ^\mathrm{b} _{\Q,\mathrm{qc}} ( \smash{\widetilde{\D}} _{\PP ^\sharp} ^{(\bullet)})$
et 
$\FF ^{(\bullet)}
\in \smash{\underrightarrow{LD}} ^\mathrm{b} _{\Q,\mathrm{qc}} ( \smash{\widetilde{\D}} _{\PP ^\sharp} ^{(\bullet) } (T))$.
On suppose de plus que l'on dispose 
dans 
$\smash{\underrightarrow{LD}} ^\mathrm{b} _{\Q,\mathrm{qc}} ( \smash{\widetilde{\D}} _{\PP ^\sharp} ^{(\bullet)})$
de l'isomorphisme
$(\hdag T) (\E ^{(\bullet)} )\riso 0$.
Alors 
$\mathrm{Hom} _{\smash{\underrightarrow{LD}}  _{\Q} ( \smash{\widetilde{\D}} _{\PP ^\sharp} ^{(\bullet)})}
(\E ^{(\bullet)} , \FF ^{(\bullet)}) =0.$
\end{lemm}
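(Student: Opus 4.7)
L'id\'ee est d'exploiter la dualit\'e entre le fait que $\FF ^{(\bullet)}$, vivant dans la cat\'egorie avec p\^oles le long de $T$, est canoniquement invariant par $(\hdag T)$ (au sens de \ref{oub-pl-fid-iso2}), tandis que $\E ^{(\bullet)}$ est annihil\'e par $(\hdag T)$. La naturalit\'e de la transformation canonique $\mathrm{id} \to \mathrm{oub} _{\emptyset, T} \circ (\hdag T, \emptyset)$ forcera alors tout morphisme $\E ^{(\bullet)} \to \FF ^{(\bullet)}$ \`a se factoriser par z\'ero.

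Soit $f \colon \E ^{(\bullet)} \to \FF ^{(\bullet)}$ un \'el\'ement du groupe $\mathrm{Hom} _{\smash{\underrightarrow{LD}}  _{\Q} ( \smash{\widetilde{\D}} _{\PP ^\sharp} ^{(\bullet)})}(\E ^{(\bullet)} , \FF ^{(\bullet)})$; ici $\FF ^{(\bullet)}$ est vu implicitement via le foncteur $\mathrm{oub} _{\emptyset, T}$. La premi\`ere \'etape consiste \`a \'ecrire le carr\'e commutatif de naturalit\'e associ\'e \`a $f$ et \`a la transformation naturelle $\eta \colon \mathrm{id} \to \mathrm{oub} _{\emptyset, T} \circ (\hdag T, \emptyset)$:
\begin{equation}
\notag
\xymatrix @R=0,4cm{
{\E ^{(\bullet)}}
\ar[r] ^-{f}
\ar[d] ^-{\eta _{\E ^{(\bullet)}}}
&
{\mathrm{oub} _{\emptyset, T} \FF ^{(\bullet)}}
\ar[d] ^-{\eta _{\mathrm{oub} _{\emptyset, T} \FF ^{(\bullet)}}}
\\
{\mathrm{oub} _{\emptyset, T} (\hdag T) \E ^{(\bullet)}}
\ar[r] ^-{\mathrm{oub} _{\emptyset, T}((\hdag T) f)}
&
{\mathrm{oub} _{\emptyset, T} (\hdag T) \mathrm{oub} _{\emptyset, T} \FF ^{(\bullet)}.}
}
\end{equation}

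La deuxi\`eme \'etape utilise \ref{oub-pl-fid-iso2} appliqu\'e \`a $\FF ^{(\bullet)}$ avec $D = \emptyset$, qui affirme pr\'ecis\'ement que la fl\`eche verticale de droite $\eta _{\mathrm{oub} _{\emptyset, T} \FF ^{(\bullet)}}$ est un isomorphisme dans $\smash{\underrightarrow{LD}} ^{\mathrm{b}} _{\Q,\mathrm{qc}} ( \smash{\widetilde{\D}} _{\PP ^\sharp} ^{(\bullet)})$. Par hypoth\`ese, $(\hdag T)(\E ^{(\bullet)} ) \riso 0$ dans $\smash{\underrightarrow{LD}} ^{\mathrm{b}} _{\Q,\mathrm{qc}} ( \smash{\widetilde{\D}} _{\PP ^\sharp} ^{(\bullet)}(T))$, donc $\mathrm{oub} _{\emptyset, T} (\hdag T) \E ^{(\bullet)} \riso 0$ et la fl\`eche horizontale du bas est nulle.

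On en d\'eduit alors $f = \eta _{\mathrm{oub} _{\emptyset, T} \FF ^{(\bullet)}} ^{-1} \circ \mathrm{oub} _{\emptyset, T}((\hdag T) f) \circ \eta _{\E ^{(\bullet)}} = 0$, ce qui conclut. L'unique point d\'elicat est la naturalit\'e du carr\'e ci-dessus dans le contexte localis\'e $\smash{\underrightarrow{LD}} _{\Q}$; mais comme tous les foncteurs $(\hdag T)$, $\mathrm{oub} _{\emptyset, T}$ ainsi que la transformation $\eta$ ont \'et\'e construits comme factorisations de foncteurs d\'efinis au niveau des complexes (cf. \ref{def-oubDT} et \ref{hdag-def-qc}), cette naturalit\'e est automatique.
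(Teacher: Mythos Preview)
Your proof is correct and follows essentially the same approach as the paper's: both use the naturality of the canonical map $\mathrm{id} \to (\hdag T)$, the fact that this map is an isomorphism on $\FF ^{(\bullet)}$ (which is precisely \ref{oub-pl-fid-iso2}), and the vanishing of $(\hdag T)(\E ^{(\bullet)})$ to conclude that any $f$ factors through zero. The only difference is cosmetic: you make the forgetful functor $\mathrm{oub} _{\emptyset, T}$ explicit and draw the naturality square, whereas the paper leaves these implicit and simply says that $\phi$ factors canonically through $(\hdag T)(\phi)$.
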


\begin{proof}
Soit $\phi \colon 
\E ^{(\bullet)}\to  \FF ^{(\bullet)}$
un morphisme de $\smash{\underrightarrow{LD}} ^{\mathrm{b}} _{\Q,\mathrm{qc}} ( \smash{\widetilde{\D}} _{\PP ^\sharp} ^{(\bullet)}(T))$. 
Comme le morphisme canonique 
$\FF ^{(\bullet)} 
\to
(\hdag T)  (\FF ^{(\bullet)} )$
est un isomorphisme,
le morphisme $\phi$ se factorise canoniquement par 
$(\hdag T) (\phi )$.
Or, comme 
$(\hdag T) (\E ^{(\bullet)} )\riso 0$, 
alors 
$(\hdag T) (\phi )=0$.
On en déduit que $\phi =0$.
D'où le résultat. 
\end{proof}

\begin{lemm}
\label{lemm-FtoEtohdagE}
Soit un triangle distingué dans 
$\smash{\underrightarrow{LD}} ^\mathrm{b} _{\Q,\mathrm{qc}} ( \smash{\widetilde{\D}} _{\PP ^\sharp} ^{(\bullet)})$
de la forme
\begin{equation}
\label{FtoEtohdagE-to}
\FF ^{(\bullet)}
\to 
\E ^{(\bullet)} 
\to 
(\hdag T) (\E ^{(\bullet)} )  
 \to 
\FF ^{(\bullet)} [1] ,
\end{equation}
où le second morphisme est le morphisme canonique. 
Pour tout diviseur $T \subset T'$, 
on dispose alors de l'isomorphisme dans 
$\smash{\underrightarrow{LD}} ^\mathrm{b} _{\Q,\mathrm{qc}} ( \smash{\widetilde{\D}} _{\PP ^\sharp} ^{(\bullet)})$ de la forme 
$(\hdag T') (\FF ^{(\bullet)} )\riso 0$.
\end{lemm}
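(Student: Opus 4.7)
The plan is to apply the localization functor $(\hdag T')$ to the distinguished triangle \ref{FtoEtohdagE-to}. Since $(\hdag T')$ is a triangulated functor (it is defined as a derived tensor product, cf.\ \ref{hdag-def-qc}), we obtain a distinguished triangle in $\smash{\underrightarrow{LD}} ^\mathrm{b} _{\Q,\mathrm{qc}} ( \smash{\widetilde{\D}} _{\PP ^\sharp} ^{(\bullet)})$:
\begin{equation}
\notag
(\hdag T')(\FF ^{(\bullet)})
\to
(\hdag T')(\E ^{(\bullet)})
\overset{(\hdag T')(\mathrm{can})}{\longrightarrow}
(\hdag T')\,(\hdag T)(\E ^{(\bullet)})
\to
(\hdag T')(\FF ^{(\bullet)})[1].
\end{equation}
The goal reduces to showing that the middle morphism is an isomorphism, for then the cone $(\hdag T')(\FF ^{(\bullet)})$ must vanish.

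Next, since $T\subset T'$, one has $T\cup T' = T'$, so Proposition \ref{hdagT'T=cup} furnishes a canonical isomorphism of functors $(\hdag T')\circ (\hdag T)\riso (\hdag T')$. I would then check that, via this isomorphism, the morphism $(\hdag T')(\mathrm{can})\colon (\hdag T')(\E ^{(\bullet)})\to (\hdag T')(\hdag T)(\E ^{(\bullet)})$ corresponds to the identity of $(\hdag T')(\E ^{(\bullet)})$. Unwinding the definition \ref{ODdivcohe}, the canonical morphism $\E ^{(\bullet)}\to(\hdag T)(\E ^{(\bullet)})$ is the unit of the extension-of-scalars functor along $\smash{\widetilde{\D}} _{\PP ^\sharp} ^{(\bullet)}\to \smash{\widetilde{\D}} _{\PP ^\sharp} ^{(\bullet)}(T)$. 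After applying $(\hdag T')$, this becomes
\begin{equation}
\notag
\smash{\widetilde{\D}} _{\PP ^\sharp} ^{(\bullet)}(T')\smash{\widehat{\otimes}}^\L_{\smash{\widetilde{\D}} _{\PP ^\sharp} ^{(\bullet)}}\E ^{(\bullet)}
\longrightarrow
\smash{\widetilde{\D}} _{\PP ^\sharp} ^{(\bullet)}(T')\smash{\widehat{\otimes}}^\L_{\smash{\widetilde{\D}} _{\PP ^\sharp} ^{(\bullet)}(T)}\smash{\widetilde{\D}} _{\PP ^\sharp} ^{(\bullet)}(T)\smash{\widehat{\otimes}}^\L_{\smash{\widetilde{\D}} _{\PP ^\sharp} ^{(\bullet)}}\E ^{(\bullet)},
\end{equation}
and the identification \ref{hdagT'T=cup} is precisely the isogeny of Proposition \ref{lem3-hdagDT} applied on the first two tensor factors; by construction the composition with $(\hdag T')(\mathrm{can})$ is the identity of $(\hdag T')(\E ^{(\bullet)})$.

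Finally, since in the distinguished triangle above the middle arrow is an isomorphism, the rotation of the triangle together with the standard properties of triangulated categories gives $(\hdag T')(\FF ^{(\bullet)})\riso 0$ in $\smash{\underrightarrow{LD}} ^\mathrm{b} _{\Q,\mathrm{qc}} ( \smash{\widetilde{\D}} _{\PP ^\sharp} ^{(\bullet)})$, as desired. The case $T'=P$ (where the functor $(\hdag T')$ is the zero functor by convention) is trivial.

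The only nontrivial point is the verification that $(\hdag T')(\mathrm{can})$ matches the identity under the transitivity isomorphism of \ref{hdagT'T=cup}; this is where one must be careful about naturality, but it is a direct diagram chase from the definition of these functors as derived tensor products and from the associativity/unitality of $\smash{\widehat{\otimes}}^\L$ combined with Proposition \ref{lem3-hdagDT}.
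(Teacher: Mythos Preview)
Your proof is correct and follows essentially the same approach as the paper: apply $(\hdag T')$ to the triangle, use Proposition~\ref{hdagT'T=cup} (with $T\cup T'=T'$) to see that the middle arrow becomes an isomorphism, and conclude by a triangulated-category axiom. The paper states the isomorphism of the middle arrow in one line by invoking the functoriality in $T$ of \ref{hdagT'T=cup}, whereas you unwind the underlying tensor-product identifications explicitly via \ref{lem3-hdagDT}; this extra care is fine but not strictly necessary.
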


\begin{proof}
Comme d'après \ref{hdagT'T=cup} le morphisme canonique 
$(\hdag T') (\E ^{(\bullet)} )
\to
(\hdag T') ((\hdag T) (\E ^{(\bullet)} ))$
est un isomorphisme,
en appliquant le foncteur 
$(\hdag T')$ au triangle distingué \ref{FtoEtohdagE-to},
l'un des axiomes sur les catégories triangulées nous permet de conclure. 
\end{proof}

\begin{vide}
[Une définition alternative équivalente du foncteur cohomologique local à support strict dans un diviseur]
\label{GammaT}
Soit $T \subset T'$ un second diviseur. 
Supposons donné le diagramme commutatif dans 
$\smash{\underrightarrow{LD}} ^\mathrm{b} _{\Q,\mathrm{qc}} ( \smash{\widetilde{\D}} _{\PP ^\sharp} ^{(\bullet)})$
de la forme
\begin{equation}
\label{prefonct-GammaT}
\xymatrix @=0,4cm{
{\FF ^{(\bullet)}  } 
\ar[r] ^-{}
& 
{\E ^{(\bullet)}  } 
\ar[r] ^-{}
\ar[d] ^-{\phi}
& 
{(\hdag T) (\E ^{(\bullet)} )} 
\ar[d] ^-{(\hdag T)(\phi)}
\ar[r] ^-{}
&
{\FF ^{(\bullet)}  [1]} 
\\ 
{\FF ^{\prime (\bullet)}  } 
\ar[r] ^-{}
& 
{\E ^{\prime (\bullet)}  } 
\ar[r] ^-{}
& 
{(\hdag T) (\E ^{\prime (\bullet)} )} 
\ar[r] ^-{}
&
{\FF ^{\prime (\bullet)}   [1]} 
}
\end{equation}
dont les flèches horizontales du milieu sont les morphismes canoniques
et dont les deux triangles horizontaux sont distingués. 
D'après les lemmes \ref{annulationHom-hdag} et \ref{lemm-FtoEtohdagE}, 
on obtient alors 
$$H ^{-1} (\R \mathrm{Hom} _{D (\underrightarrow{LM} _{\Q} (\smash{\widetilde{\D}} _{\PP ^\sharp} ^{(\bullet)} (T)))}
( \FF ^{(\bullet)}  ,(\hdag T) (\E ^{\prime (\bullet)} )))
\underset{\ref{H0Homrm-DLM}}{\riso}
\mathrm{Hom} _{D (\underrightarrow{LM} _{\Q} (\smash{\widetilde{\D}} _{\PP ^\sharp} ^{(\bullet)} (T)))}
( \FF ^{(\bullet)}  ,(\hdag T) (\E ^{\prime (\bullet)}) [-1]) =0.$$
On en déduit, grâce à 
\cite[1.1.9]{BBD},
qu'il existe donc un unique morphisme 
$\FF ^{(\bullet)}\to \FF ^{\prime (\bullet)} $
induisant dans 
$\smash{\underrightarrow{LD}} ^\mathrm{b} _{\Q,\mathrm{qc}} ( \smash{\widetilde{\D}} _{\PP ^\sharp} ^{(\bullet)})$
le diagramme commutatif:
\begin{equation}
\label{fonct-hdagT}
\xymatrix @=0,4cm{
{\FF ^{(\bullet)}  } 
\ar[r] ^-{}
\ar@{.>}[d] ^-{\exists !}
& 
{\E ^{(\bullet)}  } 
\ar[r] ^-{}
\ar[d] ^-{\phi}
& 
{(\hdag T) (\E ^{(\bullet)} )} 
\ar[d] ^-{(\hdag T)(\phi)}
\ar[r] ^-{}
& 
{\FF ^{(\bullet)}  [1]} 
\ar@{.>}[d] ^-{\exists !}
\\ 
{\FF ^{\prime (\bullet)}  } 
\ar[r] ^-{}
& 
{\E ^{\prime (\bullet)}  } 
\ar[r] ^-{}
& 
{(\hdag T) (\E ^{\prime (\bullet)} )} 
\ar[r] ^-{}
& 
{\FF ^{\prime (\bullet)} [1].} 
}
\end{equation}
Comme pour \cite[1.1.10]{BBD}, 
cela implique que le cône de 
$\E ^{(\bullet)}
\to  
(\hdag T) (\E ^{(\bullet)} ) $
est unique à isomorphisme canonique près. 
Un tel complexe $\FF ^{ (\bullet)}$ est donc unique à isomorphisme unique près et se
notera alors
$ \R \underline{\Gamma} ^\dag _{T} (\E ^{(\bullet)})$.
En considérant le diagramme \ref{fonct-hdagT}, 
la formation du complexe
$\R \underline{\Gamma} ^\dag _{T} (\E ^{(\bullet)}) $ est fonctoriel en 
$\E ^{(\bullet)}$.

On dispose aussi
du morphisme fonctoriel en $\E ^{(\bullet)} $ de forme
$\R \underline{\Gamma} ^\dag _{T} (\E ^{(\bullet)}) \to \E ^{(\bullet)} $.
De la même manière, en utilisant 
les lemmes  \ref{annulationHom-hdag} et \ref{lemm-FtoEtohdagE}
et la proposition \cite[1.1.9]{BBD}, on vérifie qu'il existe une unique flèche 
$\R \underline{\Gamma} ^\dag _{T} (\E ^{(\bullet)}) 
\to
\R \underline{\Gamma} ^\dag _{T'} (\E ^{(\bullet)}) $
induisant le diagramme canonique 
\begin{equation}
\label{fonct-hdagX2}
\xymatrix @=0,4cm{
{\R \underline{\Gamma} ^\dag _{T} (\E ^{(\bullet)}) } 
\ar[r] ^-{}
\ar@{.>}[d] ^-{\exists !}
& 
{\E ^{(\bullet)}  } 
\ar[r] ^-{}
\ar@{=}[d] ^-{}
& 
{(\hdag T) (\E ^{(\bullet)} )} 
\ar[d] ^-{}
\ar[r] ^-{}
& 
{\R \underline{\Gamma} ^\dag _{T} (\E ^{(\bullet)})[1]} 
\ar@{.>}[d] ^-{\exists !}
\\ 
{\R \underline{\Gamma} ^\dag _{T'} (\E ^{ (\bullet)})  } 
\ar[r] ^-{}
& 
{\E ^{ (\bullet)}  } 
\ar[r] ^-{}
& 
{(\hdag T') (\E ^{ (\bullet)} )} 
\ar[r] ^-{}
& 
{\R \underline{\Gamma} ^\dag _{T'} (\E ^{ (\bullet)})[1]} 
}
\end{equation}
 commutatif.
Ces propriétés sont synthétisées en disant  que le complexe
$\R \underline{\Gamma} ^\dag _{T} (\E ^{(\bullet)}) $
est fonctoriel en $T$.
\end{vide}

\begin{rema}
Si $T$ est vide
et si
$\E ^{(\bullet)}
\in 
\smash{\underrightarrow{LD}} ^{\mathrm{b}} _{\Q,\mathrm{qc}} ( \smash{\widetilde{\D}} _{\PP ^\sharp} ^{(\bullet)})$,
alors
le morphisme canonique 
$\E ^{(\bullet)}
\to 
(\hdag \emptyset) (\E ^{(\bullet)})$
est un isomorphisme.
Cela n'est plus forcément le cas sans l'hypothèse de quasi-cohérence de
$\E ^{(\bullet)}$.
On pourrait sûrement construire le foncteur cohomologique local
$ \R \underline{\Gamma} ^\dag _{T}
\colon 
 \smash{\underrightarrow{LD}} ^{\mathrm{b}} _{\Q} ( \smash{\widetilde{\D}} _{\PP ^\sharp} ^{(\bullet)})
 \to 
 \smash{\underrightarrow{LD}} ^{\mathrm{b}} _{\Q} ( \smash{\widetilde{\D}} _{\PP ^\sharp} ^{(\bullet)})$
 comme ci-dessus, cependant on se restreindra exclusivement aux complexes au moins quasi-cohérents pour éviter cette pathologie.
 \end{rema}

\begin{vide}
\label{GammaT-oub-sharp}
On déduit de \ref{nota-oub-sharp}
que le foncteur 
$ \R \underline{\Gamma} ^\dag _{T}$
ne dépend pas de la log-structure, i.e., 
on dispose de l'isomorphisme canonique 
$$\R \underline{\Gamma} ^\dag _{T}\circ \mathrm{oub} _{\sharp} 
\riso \mathrm{oub} _{\sharp} \circ  \R \underline{\Gamma} ^\dag _{T}$$ 
de foncteurs de 
$ \smash{\underrightarrow{LD}} ^\mathrm{b} _{\Q,\mathrm{qc}} ( \smash{\widetilde{\D}} _{\PP } ^{(\bullet)})
\to 
\smash{\underrightarrow{LD}} ^\mathrm{b} _{\Q,\mathrm{qc}} ( \smash{\widetilde{\D}} _{\PP ^\sharp} ^{(\bullet)})$. 

\end{vide}

\subsection{Commutations et compatibilités des foncteurs locaux et de localisation dans le cas d'un diviseur}

\begin{vide}
[Commutation des foncteurs locaux et de localisation au produit tensoriel]
\label{iso-comm-locaux-prod-tens}
Soient $\E ^{(\bullet)},~\FF ^{(\bullet)}
\in \smash{\underrightarrow{LD}} ^\mathrm{b} _{\Q,\mathrm{qc}} ( \smash{\widetilde{\D}} _{\PP ^\sharp} ^{(\bullet)})$.
Comme le foncteur $(\hdag T)$ commute canoniquement au produit tensoriel, 
il existe alors un unique isomorphisme de la forme
$\R \underline{\Gamma} ^\dag _{T} 
(\E ^{(\bullet)}
\smash{\overset{\L}{\otimes}}   ^{\dag}
_{\O  _{\PP, \Q}} 
\FF ^{(\bullet)})
\riso 
\R \underline{\Gamma} ^\dag _{T}( \E ^{(\bullet)})
\smash{\overset{\L}{\otimes}}   ^{\dag}
_{\O  _{\PP, \Q}} 
\FF ^{(\bullet)}$
(resp. $\R \underline{\Gamma} ^\dag _{T} 
(\E ^{(\bullet)}
\smash{\overset{\L}{\otimes}}   ^{\dag}
_{\O  _{\PP, \Q}} 
\FF ^{(\bullet)})
\riso 
\E ^{(\bullet)}
\smash{\overset{\L}{\otimes}}   ^{\dag}
_{\O  _{\PP, \Q}} 
\R \underline{\Gamma} ^\dag _{T}
(\FF ^{(\bullet)})$)
et s'inscrivant dans le diagramme commutatif
\begin{equation}
\label{fonct-hdagXbis}
\xymatrix @=0,4cm{
{\R \underline{\Gamma} ^\dag _{T} 
(\E ^{(\bullet)})
\smash{\overset{\L}{\otimes}}   ^{\dag}
_{\O  _{\PP, \Q}} 
\FF ^{(\bullet)}} 
\ar[r] ^-{}
& 
{\E ^{(\bullet)}
\smash{\overset{\L}{\otimes}}   ^{\dag}
_{\O  _{\PP, \Q}} 
\FF ^{(\bullet)} } 
\ar@{=}[d] ^-{}
\ar[r] ^-{}
& 
{(\hdag T) (\E ^{(\bullet)})
\smash{\overset{\L}{\otimes}}   ^{\dag}
_{\O  _{\PP, \Q}} 
\FF ^{(\bullet)} } 
\ar[r] ^-{}
& 
{\R \underline{\Gamma} ^\dag _{T} 
(\E ^{(\bullet)} )
\smash{\overset{\L}{\otimes}}   ^{\dag}
_{\O  _{\PP, \Q}} 
\FF ^{(\bullet)} [1]}
\\
{\R \underline{\Gamma} ^\dag _{T} 
(\E ^{(\bullet)}
\smash{\overset{\L}{\otimes}}   ^{\dag}
_{\O  _{\PP, \Q}} 
\FF ^{(\bullet)})} 
\ar[r] ^-{}
\ar@{.>}[d] ^-{\exists !}
\ar@{.>}[u] ^-{\exists !}
& 
{\E ^{(\bullet)}
\smash{\overset{\L}{\otimes}}   ^{\dag}
_{\O  _{\PP, \Q}} 
\FF ^{(\bullet)} } 
\ar@{=}[d] ^-{}
\ar[r] ^-{}
& 
{(\hdag T) (\E ^{(\bullet)}
\smash{\overset{\L}{\otimes}}   ^{\dag}
_{\O  _{\PP, \Q}} 
\FF ^{(\bullet)})  } 
\ar[d] ^-{\sim}
\ar[r] ^-{}
\ar[u] ^-{\sim}
& 
{\R \underline{\Gamma} ^\dag _{T} 
(\E ^{(\bullet)}
\smash{\overset{\L}{\otimes}}   ^{\dag}
_{\O  _{\PP, \Q}} 
\FF ^{(\bullet)}) [1]}
\ar@{.>}[d] ^-{\exists !}
\ar@{.>}[u] ^-{\exists !}
\\ 
 {\E ^{(\bullet)}
\smash{\overset{\L}{\otimes}}   ^{\dag}
_{\O  _{\PP, \Q}} 
\R \underline{\Gamma} ^\dag _{T}
(\FF ^{(\bullet)})}
\ar[r] ^-{}
& 
{\E ^{(\bullet)}
\smash{\overset{\L}{\otimes}}   ^{\dag}
_{\O  _{\PP, \Q}} 
\FF ^{(\bullet)} } 
\ar[r] ^-{}
& 
{ \E ^{(\bullet)}
\smash{\overset{\L}{\otimes}}   ^{\dag}
_{\O  _{\PP, \Q}} 
(\hdag T) (\FF ^{(\bullet)})  }\ar[r] ^-{}
& 
{\E ^{(\bullet)}
\smash{\overset{\L}{\otimes}}   ^{\dag}
_{\O  _{\PP, \Q}} 
\R \underline{\Gamma} ^\dag _{T}
(\FF ^{(\bullet)})[1].} 
}
\end{equation}
Ces isomorphismes sont fonctoriel en $\E ^{(\bullet)},~\FF ^{(\bullet)},~ T$ (pour la signification de la fonctorialité en $T$, on pourra par exemple
regarder \ref{fonct-hdagX2}).
Pour vérifier la fonctorialité en $\E ^{(\bullet)},~\FF ^{(\bullet)},~ T$, il s'agit d'écrire les diagramme correspondants en trois dimensions, 
ce qui est laissé au lecteur.

\end{vide}

\begin{vide}
[Commutation des foncteurs locaux et de localisation entre eux]
\label{iso-comm-locaux}
Soient $T _1, T _2$ des diviseurs de $P$, 
$\E ^{(\bullet)}
\in \smash{\underrightarrow{LD}} ^\mathrm{b} _{\Q,\mathrm{qc}} ( \smash{\widetilde{\D}} _{\PP ^\sharp} ^{(\bullet)})$.

$\bullet$ Par commutativité du produit tensoriel, on dispose alors 
de l'isomorphisme canonique 
\begin{equation}
\label{commhdagT1T2}
(\hdag T _2) \circ (\hdag T _1) (\E ^{(\bullet)})
\riso 
(\hdag T _1) \circ (\hdag T _2) (\E ^{(\bullet)})
\end{equation}
fonctoriel en $T _1$, $T _2$ et $\E ^{(\bullet)}$.

$\bullet$ Il existe alors un unique isomorphisme 
$(\hdag T _2) \circ \R \underline{\Gamma} ^\dag _{T _1}(\E ^{(\bullet)} )
\riso 
\R \underline{\Gamma} ^\dag _{T _1}
\circ
(\hdag T _2)(\E ^{(\bullet)} )$
induisant le morphisme canonique de triangles distingués 
$ (\hdag T _2) ( \Delta _{T _1} (\E ^{(\bullet)} ))
\to 
\Delta _{T _1} ((\hdag T _2) (\E ^{(\bullet)} ))$,
i.e. de la forme:
\begin{equation}
\label{commGammahdag}
\xymatrix @=0,3cm{
{(\hdag T _2) \circ \R \underline{\Gamma} ^\dag _{T _1} (\E ^{(\bullet)}) }
\ar[r] ^-{}
\ar@{.>}[d] ^-{\exists !}
& 
{(\hdag T _2) (\E ^{(\bullet)} )} 
\ar[r] ^-{}
\ar@{=}[d] ^-{}
& 
{(\hdag T _2) \circ (\hdag T _1)(\E ^{(\bullet)} )} 
\ar[d] ^-{\sim}
\ar[r] ^-{}
& 
{(\hdag T _2) \circ \R \underline{\Gamma} ^\dag _{T _1} (\E ^{(\bullet)})[1]} 
\ar@{.>}[d] ^-{\exists !}
\\ 
{\R \underline{\Gamma} ^\dag _{T _1} \circ (\hdag T _2) (\E ^{(\bullet)})}  
\ar[r] ^-{}
& 
{(\hdag T _2) (\E ^{(\bullet)} ) } 
\ar[r] ^-{}
& 
{(\hdag T _1) \circ (\hdag T _2) (\E ^{(\bullet)})} 
\ar[r] ^-{}
& 
{\R \underline{\Gamma} ^\dag _{T _1} \circ (\hdag T _2) (\E ^{(\bullet)})[1],} 
}
\end{equation}
dont le carré du milieu est bien commutatif par fonctorialité en $T _1$ de l'isomorphisme 
\ref{commhdagT1T2}.
En écrivant les diagrammes en trois dimensions (i.e., on écrit le parallélépipède dont la face de devant est \ref{commGammahdag},
la face de derrière est \ref{commGammahdag} avec $T' _1$ remplaçant $T _1$, les morphismes de l'avant vers l'arrière sont les morphismes de fonctorialité induit par $T _1\subset T '_1$ ; idem pour valider la fonctorialité en $T _2$ ou $\E ^{(\bullet)}$), 
on vérifie que l'isomorphisme 
$(\hdag T _2) \circ \R \underline{\Gamma} ^\dag _{T _1}(\E ^{(\bullet)} )
\riso 
\R \underline{\Gamma} ^\dag _{T _1}
\circ
(\hdag T _2)(\E ^{(\bullet)} )$
est fonctoriel en $T _1$, $T _2$, $\E ^{(\bullet)}$.

$\bullet$ De même, il existe alors un unique isomorphisme 
$\R \underline{\Gamma} ^\dag _{T _2} \circ \R \underline{\Gamma} ^\dag _{T _1} (\E ^{(\bullet)})
\riso 
\R \underline{\Gamma} ^\dag _{T _1}\circ \R \underline{\Gamma} ^\dag _{T _2} (\E ^{(\bullet)})$
fonctoriel en $T _1$, $T _2$, $\E ^{(\bullet)}$
induisant le morphisme canonique de triangles distingués 
$\Delta _{T _2} (\R \underline{\Gamma} ^\dag _{T _1} (\E ^{(\bullet)} ))
\to 
\R \underline{\Gamma} ^\dag _{T _1} ( \Delta _{T _2} (\E ^{(\bullet)} ))$,
i.e. de la forme:
\begin{equation}
\label{commGammaT1T2}
\xymatrix @=0,4cm{
{\R \underline{\Gamma} ^\dag _{T _2} \circ \R \underline{\Gamma} ^\dag _{T _1} (\E ^{(\bullet)})} 
\ar[r] ^-{}
\ar@{.>}[d] ^-{\exists !}
& 
{\R \underline{\Gamma} ^\dag _{T _1} (\E ^{(\bullet)} )} 
\ar[r] ^-{}
\ar@{=}[d] ^-{}
& 
{(\hdag T _2) \circ \R \underline{\Gamma} ^\dag _{T _1}(\E ^{(\bullet)} )} 
\ar[d] ^-{\sim}
\ar[r] ^-{}
& 
{\R \underline{\Gamma} ^\dag _{T _2} \circ \R \underline{\Gamma} ^\dag _{T _1} (\E ^{(\bullet)})[1]} 
\ar@{.>}[d] ^-{\exists !}
\\ 
{\R \underline{\Gamma} ^\dag _{T _1} \circ \R \underline{\Gamma} ^\dag _{T _2} (\E ^{(\bullet)}) } 
\ar[r] ^-{}
& 
{\R \underline{\Gamma} ^\dag _{T _1} (\E ^{(\bullet)} ) } 
\ar[r] ^-{}
& 
{\R \underline{\Gamma} ^\dag _{T _1} \circ (\hdag T _2) (\E ^{(\bullet)})} 
\ar[r] ^-{}
& 
{(\hdag T _1) \circ \R \underline{\Gamma} ^\dag _{T _2} (\E ^{(\bullet)})[1],} 
}
\end{equation}
dont le carré du milieu est bien commutatif par fonctorialité en $T _2$ de l'isomorphisme 
$(\hdag T _2) \circ \R \underline{\Gamma} ^\dag _{T _1}(\E ^{(\bullet)} )
\riso 
\R \underline{\Gamma} ^\dag _{T _1}
\circ
(\hdag T _2)(\E ^{(\bullet)} )$.

\end{vide}

\begin{vide}
[Compatibilité des isomorphismes de commutation au produit tensoriel]
\label{comp-prod-tens}

Soient $T, T _1, T _2$ des diviseurs de $P$, 
$\E ^{(\bullet)},~\FF ^{(\bullet)}
\in \smash{\underrightarrow{LD}} ^\mathrm{b} _{\Q,\mathrm{qc}} ( \smash{\widetilde{\D}} _{\PP ^\sharp} ^{(\bullet)})$.
En composant 
les isomorphismes verticaux du diagramme \ref{fonct-hdagXbis}, 
on obtient les isomorphismes canoniques 
$(\hdag T)  (\E ^{(\bullet)})
\smash{\overset{\L}{\otimes}}   ^{\dag}
_{\O  _{\PP, \Q}} 
\FF ^{(\bullet)}
\riso 
\E ^{(\bullet)}
\smash{\overset{\L}{\otimes}}   ^{\dag}
_{\O  _{\PP, \Q}} 
(\hdag T)  (\FF ^{(\bullet)})$ 
et
$\R \underline{\Gamma} ^\dag _{T}  (\E ^{(\bullet)})
\smash{\overset{\L}{\otimes}}   ^{\dag}
_{\O  _{\PP, \Q}} 
\FF ^{(\bullet)}
\riso 
\E ^{(\bullet)}
\smash{\overset{\L}{\otimes}}   ^{\dag}
_{\O  _{\PP, \Q}} 
\R \underline{\Gamma} ^\dag _{T}  (\FF ^{(\bullet)})$ 
fonctoriels en 
$T,~\E ^{(\bullet)},~\FF ^{(\bullet)}$.
Nous vérifions dans ce paragraphe que ces isomorphismes 
sont compatibles avec 
les trois isomorphismes de commutation des foncteurs locaux et de localisation de \ref{iso-comm-locaux}.

$\bullet$ 
La compatibilité avec les isomorphismes \ref{commhdagT1T2} signifie que le diagramme 
\begin{equation}
\label{comp-prod-tens-hdag-hdag}
\xymatrix @=0,4cm{
{(\hdag T _2) \circ (\hdag T _1) (\E ^{(\bullet)})
\smash{\overset{\L}{\otimes}}   ^{\dag}
_{\O  _{\PP, \Q}} 
\FF ^{(\bullet)}} 
\ar[r] ^-{\sim}
\ar[d] ^-{\sim} _-{\ref{commhdagT1T2}}
& 
{(\hdag T _1) (\E ^{(\bullet)})
\smash{\overset{\L}{\otimes}}   ^{\dag}
_{\O  _{\PP, \Q}} 
(\hdag T _2)  (\FF ^{(\bullet)})} 
\ar[r] ^-{\sim}
& 
{\E ^{(\bullet)}
\smash{\overset{\L}{\otimes}}   ^{\dag}
_{\O  _{\PP, \Q}} 
(\hdag T _1) \circ (\hdag T _2) (\FF ^{(\bullet)})} 
\\ 
{(\hdag T _1) \circ (\hdag T _2) (\E ^{(\bullet)})
\smash{\overset{\L}{\otimes}}   ^{\dag}
_{\O  _{\PP, \Q}} 
\FF ^{(\bullet)}  } 
\ar[r] ^-{\sim}
&
{(\hdag T _2) (\E ^{(\bullet)})
\smash{\overset{\L}{\otimes}}   ^{\dag}
_{\O  _{\PP, \Q}} 
(\hdag T _1)  (\FF ^{(\bullet)})} \ar[r] ^-{\sim}
& 
{\E ^{(\bullet)}
\smash{\overset{\L}{\otimes}}   ^{\dag}
_{\O  _{\PP, \Q}} 
(\hdag T _2) \circ (\hdag T _1) (\FF ^{(\bullet)}),} 
\ar[u] ^-{\sim} _-{\ref{commhdagT1T2}}
} 
\end{equation}
est commutatif, ce qui est aisé.

$\bullet$ Vérifions à présent que le diagramme
\begin{equation}
\label{comp-prod-tens-hdag-gamma}
\xymatrix @=0,4cm{ 
{(\hdag T _2) \circ \R \underline{\Gamma} ^\dag _{T _1}  (\E ^{(\bullet)})
\smash{\overset{\L}{\otimes}}   ^{\dag}
_{\O  _{\PP, \Q}} 
\FF ^{(\bullet)}  } 
\ar[r] ^-{\sim}
\ar[d] ^-{\sim} 
&
{\R \underline{\Gamma} ^\dag _{T _1}  (\E ^{(\bullet)})
\smash{\overset{\L}{\otimes}}   ^{\dag}
_{\O  _{\PP, \Q}} 
(\hdag T _2)  (\FF ^{(\bullet)})} \ar[r] ^-{\sim}
& 
{\E ^{(\bullet)}
\smash{\overset{\L}{\otimes}}   ^{\dag}
_{\O  _{\PP, \Q}} 
\R \underline{\Gamma} ^\dag _{T _1}  \circ (\hdag T _2) (\FF ^{(\bullet)})} 
\\
{\R \underline{\Gamma} ^\dag _{T _1}  \circ (\hdag T _2) (\E ^{(\bullet)})
\smash{\overset{\L}{\otimes}}   ^{\dag}
_{\O  _{\PP, \Q}} 
\FF ^{(\bullet)}} 
\ar[r] ^-{\sim}
& 
{(\hdag T _2) (\E ^{(\bullet)})
\smash{\overset{\L}{\otimes}}   ^{\dag}
_{\O  _{\PP, \Q}} 
\R \underline{\Gamma} ^\dag _{T _1}   (\FF ^{(\bullet)})} 
\ar[r] ^-{\sim}
& 
{\E ^{(\bullet)}
\smash{\overset{\L}{\otimes}}   ^{\dag}
_{\O  _{\PP, \Q}} 
(\hdag T _2) \circ \R \underline{\Gamma} ^\dag _{T _1}  (\FF ^{(\bullet)}),}
\ar[u] ^-{\sim} 
} 
\end{equation}
est commutatif.
Pour cela considérons le diagramme:
\begin{equation}
\label{pre-3morphDelta}
\xymatrix @=0,4cm{
{(\hdag T _2) \circ \R \underline{\Gamma} ^\dag _{T _1} (\E ^{(\bullet)}) 
\smash{\overset{\L}{\otimes}}   ^{\dag}
_{\O  _{\PP, \Q}} 
\FF ^{(\bullet)}}
\ar[r] ^-{}
\ar[d] ^-{}
& 
{(\hdag T _2) (\E ^{(\bullet)} )
\smash{\overset{\L}{\otimes}}   ^{\dag}
_{\O  _{\PP, \Q}} 
\FF ^{(\bullet)}} 
\ar[r] ^-{}
\ar[d] ^-{}
& 
{(\hdag T _2) \circ (\hdag T _1)(\E ^{(\bullet)} )
\smash{\overset{\L}{\otimes}}   ^{\dag}
_{\O  _{\PP, \Q}} 
\FF ^{(\bullet)}} 
\ar[d] ^-{\sim}
\ar[r] ^-{}
& 
{+1} 
\\ 
{\E ^{(\bullet)}
\smash{\overset{\L}{\otimes}}   ^{\dag}
_{\O  _{\PP, \Q}} 
\R \underline{\Gamma} ^\dag _{T _1} \circ (\hdag T _2)  (\FF ^{(\bullet)})}  
\ar[r] ^-{}
& 
{\E ^{(\bullet)}
\smash{\overset{\L}{\otimes}}   ^{\dag}
_{\O  _{\PP, \Q}} 
 (\hdag T _2)  (\FF ^{(\bullet)})}  
 \ar[r] ^-{}
& 
{\E ^{(\bullet)}
\smash{\overset{\L}{\otimes}}   ^{\dag}
_{\O  _{\PP, \Q}} 
(\hdag T _1) \circ (\hdag T _2)  (\FF ^{(\bullet)})}  \ar[r] ^-{}
& 
{+1,}
} 
\end{equation}
dont la flèche verticale de droite (resp. de gauche) 
est le composé du haut de \ref{comp-prod-tens-hdag-hdag} (resp. \ref{comp-prod-tens-hdag-gamma}).
Comme ces morphismes sont fonctoriels en $T _1$, 
le carré de gauche (resp de droite) du diagramme \ref{pre-3morphDelta}
est commutatif. Ce diagramme \ref{pre-3morphDelta} correspond donc 
à un morphisme de triangles distingués de la forme
$(\hdag T _2) (  \Delta _{T _1} (\E ^{(\bullet)} ) )
\smash{\overset{\L}{\otimes}}   ^{\dag}
_{\O  _{\PP, \Q}} 
\FF ^{(\bullet)}
\to 
\E ^{(\bullet)}
\smash{\overset{\L}{\otimes}}   ^{\dag}
_{\O  _{\PP, \Q}} 
\Delta _{T _1} ((\hdag T _2)(\E ^{(\bullet)} ) )$.
De la même manière, on vérifie la commutativité des carrés du milieu du diagramme
\begin{equation}
\label{3morphDelta}
\xymatrix @=0,4cm{
{(\hdag T _2) \circ \R \underline{\Gamma} ^\dag _{T _1} (\E ^{(\bullet)}) 
\smash{\overset{\L}{\otimes}}   ^{\dag}
_{\O  _{\PP, \Q}} 
\FF ^{(\bullet)}}
\ar[r] ^-{}
\ar[d] ^-{}
& 
{(\hdag T _2) (\E ^{(\bullet)} )
\smash{\overset{\L}{\otimes}}   ^{\dag}
_{\O  _{\PP, \Q}} 
\FF ^{(\bullet)}} 
\ar[r] ^-{}
\ar@{=}[d] ^-{}
& 
{(\hdag T _2) \circ (\hdag T _1)(\E ^{(\bullet)} )
\smash{\overset{\L}{\otimes}}   ^{\dag}
_{\O  _{\PP, \Q}} 
\FF ^{(\bullet)}} 
\ar[d] ^-{\sim}
\ar[r] ^-{}
& 
{+1} 
\\ 
{\R \underline{\Gamma} ^\dag _{T _1} \circ (\hdag T _2) (\E ^{(\bullet)})
\smash{\overset{\L}{\otimes}}   ^{\dag}
_{\O  _{\PP, \Q}} 
\FF ^{(\bullet)}}  
\ar[r] ^-{}
\ar[d] ^-{}
& 
{(\hdag T _2) (\E ^{(\bullet)} ) 
\smash{\overset{\L}{\otimes}}   ^{\dag}
_{\O  _{\PP, \Q}} 
\FF ^{(\bullet)}} 
\ar[r] ^-{}
\ar[d] ^-{}
& 
{(\hdag T _1) \circ (\hdag T _2) (\E ^{(\bullet)}) \smash{\overset{\L}{\otimes}}   ^{\dag}
_{\O  _{\PP, \Q}} 
\FF ^{(\bullet)}} 
\ar[r] ^-{}
\ar[d] ^-{}
& 
{+1}
\\
{\E ^{(\bullet)}
\smash{\overset{\L}{\otimes}}   ^{\dag}
_{\O  _{\PP, \Q}} 
(\hdag T _2) \circ \R \underline{\Gamma} ^\dag _{T _1}  (\FF ^{(\bullet)})}  
\ar[r] ^-{}
\ar[d] ^-{}
& 
{\E ^{(\bullet)}
\smash{\overset{\L}{\otimes}}   ^{\dag}
_{\O  _{\PP, \Q}} 
(\hdag T _2)  (\FF ^{(\bullet)})}  \ar[r] ^-{}
\ar@{=}[d] ^-{}
& 
{\E ^{(\bullet)}
\smash{\overset{\L}{\otimes}}   ^{\dag}
_{\O  _{\PP, \Q}} 
(\hdag T _2) \circ (\hdag T _1) (\FF ^{(\bullet)})}  
\ar[r] ^-{}
\ar[d] ^-{}
& 
{+1}
\\
{\E ^{(\bullet)}
\smash{\overset{\L}{\otimes}}   ^{\dag}
_{\O  _{\PP, \Q}} 
\R \underline{\Gamma} ^\dag _{T _1} \circ (\hdag T _2)  (\FF ^{(\bullet)})}  
\ar[r] ^-{}
& 
{\E ^{(\bullet)}
\smash{\overset{\L}{\otimes}}   ^{\dag}
_{\O  _{\PP, \Q}} 
 (\hdag T _2)  (\FF ^{(\bullet)})}  
 \ar[r] ^-{}
& 
{\E ^{(\bullet)}
\smash{\overset{\L}{\otimes}}   ^{\dag}
_{\O  _{\PP, \Q}} 
(\hdag T _1) \circ (\hdag T _2)  (\FF ^{(\bullet)})}  \ar[r] ^-{}
& 
{+1,}
} 
\end{equation}
dont le morphisme de triangle du haut (resp. du bas) est l'image par le foncteur 
$- \smash{\overset{\L}{\otimes}}   ^{\dag}
_{\O  _{\PP, \Q}} 
\FF ^{(\bullet)}$ 
(resp. 
$\E ^{(\bullet)}
\smash{\overset{\L}{\otimes}}   ^{\dag}
_{\O  _{\PP, \Q}}-$)
du morphisme de triangles distingués \ref{commGammahdag}.
La commutativité du diagramme \ref{comp-prod-tens-hdag-hdag} signifie
que la flèche composée verticale de droite (resp. du milieu) du diagramme \ref{3morphDelta}
est la flèche verticale de droite (resp. du milieu) de \ref{pre-3morphDelta}.
Par unicité (grâce à \cite[1.1.9]{BBD}),
il en est de même des flèches verticales de gauche.  
D'où le résultat.

$\bullet$ En calquant la preuve de la commutativité de
\ref{comp-prod-tens-hdag-gamma} à partir de \ref{comp-prod-tens-hdag-hdag}, 
on vérifie à partir de la commutativité de  \ref{comp-prod-tens-hdag-gamma} celle du diagramme ci-dessous :
\begin{equation}
\label{comp-prod-tens-gamma-gamma}
\xymatrix @=0,4cm{ 
{\R \underline{\Gamma} ^\dag _{T _2}  \circ \R \underline{\Gamma} ^\dag _{T _1}  (\E ^{(\bullet)})
\smash{\overset{\L}{\otimes}}   ^{\dag}
_{\O  _{\PP, \Q}} 
\FF ^{(\bullet)}  } 
\ar[r] ^-{\sim}
\ar[d] ^-{\sim} 
&
{\R \underline{\Gamma} ^\dag _{T _1}  (\E ^{(\bullet)})
\smash{\overset{\L}{\otimes}}   ^{\dag}
_{\O  _{\PP, \Q}} 
\R \underline{\Gamma} ^\dag _{T _2}  
(\FF ^{(\bullet)})} \ar[r] ^-{\sim}
& 
{\E ^{(\bullet)}
\smash{\overset{\L}{\otimes}}   ^{\dag}
_{\O  _{\PP, \Q}} 
\R \underline{\Gamma} ^\dag _{T _1}  \circ 
\R \underline{\Gamma} ^\dag _{T _2}   (\FF ^{(\bullet)})} 
\\
{\R \underline{\Gamma} ^\dag _{T _1}  \circ 
\R \underline{\Gamma} ^\dag _{T _2}  (\E ^{(\bullet)})
\smash{\overset{\L}{\otimes}}   ^{\dag}
_{\O  _{\PP, \Q}} 
\FF ^{(\bullet)}} 
\ar[r] ^-{\sim}
& 
{\R \underline{\Gamma} ^\dag _{T _2}   (\E ^{(\bullet)})
\smash{\overset{\L}{\otimes}}   ^{\dag}
_{\O  _{\PP, \Q}} 
\R \underline{\Gamma} ^\dag _{T _1}   (\FF ^{(\bullet)})} 
\ar[r] ^-{\sim}
& 
{\E ^{(\bullet)}
\smash{\overset{\L}{\otimes}}   ^{\dag}
_{\O  _{\PP, \Q}} 
\R \underline{\Gamma} ^\dag _{T _2}   \circ \R \underline{\Gamma} ^\dag _{T _1}  (\FF ^{(\bullet)}).}
\ar[u] ^-{\sim} 
} 
\end{equation}

\end{vide}

\begin{vide}
\label{comp-prod-tensbis}

Soient $T$ un diviseur $P$, 
$\E ^{(\bullet)},~\FF ^{(\bullet)}
\in \smash{\underrightarrow{LD}} ^\mathrm{b} _{\Q,\mathrm{qc}} ( \smash{\widetilde{\D}} _{\PP ^\sharp} ^{(\bullet)})$.
On dispose 
des isomorphismes 
$(\hdag T)  (\E ^{(\bullet)}
\smash{\overset{\L}{\otimes}}   ^{\dag}
_{\O  _{\PP, \Q}} 
\FF ^{(\bullet)})
\riso 
\E ^{(\bullet)}
\smash{\overset{\L}{\otimes}}   ^{\dag}
_{\O  _{\PP, \Q}} 
(\hdag T)  (\FF ^{(\bullet)})$,
et
$\R \underline{\Gamma} ^\dag _{T}  (\E ^{(\bullet)}
\smash{\overset{\L}{\otimes}}   ^{\dag}
_{\O  _{\PP, \Q}} 
\FF ^{(\bullet)}) 
\riso 
 \E ^{(\bullet)}
\smash{\overset{\L}{\otimes}}   ^{\dag}
_{\O  _{\PP, \Q}} 
\R \underline{\Gamma} ^\dag _{T}  (\FF ^{(\bullet)})$ 
fonctoriels en 
$T,~\E ^{(\bullet)},~\FF ^{(\bullet)}$.
De manière identique à \ref{comp-prod-tens} (il s'agit de changer la position des parenthèses)
on vérifie que ces isomorphismes 
sont compatibles avec 
les trois isomorphismes de commutation des foncteurs locaux et de  localisation de \ref{iso-comm-locaux}.
Il en résulte par composition qu'il en est de même 
des isomorphismes
$(\hdag T)  (\E ^{(\bullet)}
\smash{\overset{\L}{\otimes}}   ^{\dag}
_{\O  _{\PP, \Q}} 
\FF ^{(\bullet)})
\riso 
(\hdag T)  (\E ^{(\bullet)})
\smash{\overset{\L}{\otimes}}   ^{\dag}
_{\O  _{\PP, \Q}} 
\FF ^{(\bullet)}$
et
$\R \underline{\Gamma} ^\dag _{T}  (\E ^{(\bullet)}
\smash{\overset{\L}{\otimes}}   ^{\dag}
_{\O  _{\PP, \Q}} 
\FF ^{(\bullet)}) 
\riso 
\R \underline{\Gamma} ^\dag _{T}  ( \E ^{(\bullet)})
\smash{\overset{\L}{\otimes}}   ^{\dag}
_{\O  _{\PP, \Q}} 
\FF ^{(\bullet)}$.

\end{vide}

\begin{vide}
Soient $T _{1},\dots, T _{r}$ et 
$T ' _{1},\dots, T' _{s}$ des diviseurs de $P$, 
$\E ^{(\bullet)},~\FF ^{(\bullet)}
\in \smash{\underrightarrow{LD}} ^\mathrm{b} _{\Q,\mathrm{qc}} ( \smash{\widetilde{\D}} _{\PP ^\sharp} ^{(\bullet)})$.
En utilisant $s$-fois le diagramme \ref{fonct-hdagXbis},
on vérifie que l'on dispose des isomorphismes canoniques  
fonctoriels en 
$T _{1},\dots, T _{r}$,
$T ' _{1},\dots, T' _{s}$,
$\E ^{(\bullet)},~\FF ^{(\bullet)}$ de la forme
\begin{gather}
\notag
\R \underline{\Gamma} ^\dag _{T '_1} \circ \cdots \circ 
\R \underline{\Gamma} ^\dag _{T '_s}
\circ 
\R \underline{\Gamma} ^\dag _{T _1} \circ \cdots \circ 
\R \underline{\Gamma} ^\dag _{T _r}
(\E ^{(\bullet)}
\smash{\overset{\L}{\otimes}}   ^{\dag}
_{\O  _{\PP, \Q}} \FF ^{(\bullet)} )
\riso
\R \underline{\Gamma} ^\dag _{T '_1} \circ \cdots \circ 
\R \underline{\Gamma} ^\dag _{T '_s}
(\E ^{(\bullet)}
\smash{\overset{\L}{\otimes}}   ^{\dag}
_{\O  _{\PP, \Q}} 
\R \underline{\Gamma} ^\dag _{T  _1} \circ \cdots \circ 
\R \underline{\Gamma} ^\dag _{T  _{r}}
(\FF ^{(\bullet)} ))
\\
\label{fonctXX'Gammadiag1}
\riso
\R \underline{\Gamma} ^\dag _{T '_1} \circ \cdots \circ 
\R \underline{\Gamma} ^\dag _{T '_s}
(\E ^{(\bullet)}) 
\smash{\overset{\L}{\otimes}}   ^{\dag}
_{\O  _{\PP, \Q}} 
\R \underline{\Gamma} ^\dag _{T  _1} \circ \cdots \circ 
\R \underline{\Gamma} ^\dag _{T  _{r}}
(\FF ^{(\bullet)} ),
\end{gather}
ceux-ci ne dépendant pas, à isomorphisme canonique près, 
de l'ordre des $T _1,\dots, T _r$ ou des $T ' _1,\dots, T ' _s$. 
En particulier, en prenant 
$\FF ^{(\bullet)}  = \O _{\PP} ^{(\bullet)} $ et les 
$T ' _1,\dots, T ' _s$ vides, on obtient l'isomorphisme canonique fonctoriel en 
$T _{1},\dots, T _{r}$,
$\E ^{(\bullet)}$ de la forme: 
\begin{equation}
\label{4.4.5.2}
\R \underline{\Gamma} ^\dag _{T _1} \circ \cdots \circ 
\R \underline{\Gamma} ^\dag _{T _r}
(\E ^{(\bullet)} )
\riso
\E ^{(\bullet)}
\smash{\overset{\L}{\otimes}}   ^{\dag}
_{\O  _{\PP, \Q}} 
\R \underline{\Gamma} ^\dag _{T  _1} \circ \cdots \circ 
\R \underline{\Gamma} ^\dag _{T  _{r}}
(\O _{\PP} ^{(\bullet)} )
\end{equation}
et qui ne dépend pas, à isomorphisme canonique près, de l'ordre des 
$T _1,\dots, T _r$.
\end{vide}

\subsection{Foncteur cohomologique local à support strict dans un sous-schéma fermé}

\begin{rema}
Soient $X$ une sous-variété fermée de $P$, 
$\U$ est l'ouvert de $\PP$ complémentaire de $X$, 
$D$ un diviseur de $X$,
$\E ^{(\bullet)}
\in 
\smash{\underrightarrow{LD}} ^{\mathrm{b}} _{\Q,\mathrm{coh}} ( \smash{\widetilde{\D}} _{\PP ^\sharp} ^{(\bullet)} (D))$.
Alors $\E ^{(\bullet)}|\U \riso 0$
dans 
$\smash{\underrightarrow{LD}} ^{\mathrm{b}} _{\Q,\mathrm{coh}} ( \smash{\widetilde{\D}} _{\U ^{\sharp}} ^{(\bullet)} (D\cap U))$
si et seulement si les espaces de cohomologie de 
$\underrightarrow{\lim} 
\E ^{(\bullet)}$ sont des $\smash{\D} ^\dag _{\PP ^\sharp} (\hdag D) _{\Q}$-cohérents à support dans $X$
au sens classique.
\end{rema}

\begin{lemm}
\label{lemme2.2.3}
Soient $D,~T$ deux diviseurs de $P$, 
$\E ^{(\bullet)}
\in 
\smash{\underrightarrow{LD}} ^{\mathrm{b}} _{\Q,\mathrm{coh}} ( \smash{\widetilde{\D}} _{\PP ^\sharp} ^{(\bullet)} (D))$,
$\U$ l'ouvert de $\PP$ complémentaire du support de $T$.
Les assertions suivantes sont équivalentes:
\begin{enumerate}
\item On dispose dans $\smash{\underrightarrow{LD}} ^{\mathrm{b}} _{\Q,\mathrm{coh}} ( \smash{\widetilde{\D}} _{\U ^{\sharp}} ^{(\bullet)} (D\cap U))$
de l'isomorphisme $\E ^{(\bullet)}|\U \riso 0$.
\item Le morphisme canonique 
$\R \underline{\Gamma} ^\dag _{T} (\E ^{(\bullet)})
\to 
\E ^{(\bullet)}$ est un isomorphisme dans 
$\smash{\underrightarrow{LD}} ^{\mathrm{b}} _{\Q} ( \smash{\widetilde{\D}} _{\PP ^\sharp} ^{(\bullet)} (D))$.
\item On dispose dans $\smash{\underrightarrow{LD}} ^{\mathrm{b}} _{\Q,\mathrm{coh}} ( \smash{\widetilde{\D}} _{\PP ^\sharp} ^{(\bullet)} (D))$
de l'isomorphisme $(\hdag T) (\E ^{(\bullet)} )\riso 0$.
\end{enumerate}
\end{lemm}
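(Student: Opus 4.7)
My plan is to organize the proof around the trivial equivalence $(2)\Leftrightarrow(3)$, the easy implication $(3)\Rightarrow(1)$ by restriction, and the substantive implication $(1)\Rightarrow(3)$ which will require Corollary \ref{coro1limTouD}.

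For $(2)\Leftrightarrow(3)$, I would simply invoke the localization triangle $\Delta _{T}(\E ^{(\bullet)})$ built in \ref{tri-loc-berthelot}. Since it is a distinguished triangle in the triangulated category $\smash{\underrightarrow{LD}} ^{\mathrm{b}} _{\Q,\mathrm{qc}} ( \smash{\widetilde{\D}} _{\PP ^\sharp} ^{(\bullet)} (D))$, the axioms of triangulated categories give: the morphism $\R \underline{\Gamma} ^\dag _{T}(\E ^{(\bullet)}) \to \E ^{(\bullet)}$ is an isomorphism if and only if its cone, which is $(\hdag T)(\E ^{(\bullet)})$, is isomorphic to zero.

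For $(3)\Rightarrow(1)$, I would restrict everything to $\U$. The foncteur $(\hdag T)$ commutes with the open restriction, and since $T\cap U=\emptyset$, the restriction to $\U$ of $(\hdag T)(\E ^{(\bullet)})$ is canonically isomorphic to $(\hdag \emptyset)(\E ^{(\bullet)}|\U) \riso \E ^{(\bullet)}|\U$ by quasi-coherence. Applying this to the assumed isomorphism $(\hdag T)(\E ^{(\bullet)})\riso 0$ yields $\E ^{(\bullet)}|\U\riso 0$.

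The heart of the proof is $(1)\Rightarrow(3)$. Here I would first apply the functor $\underrightarrow{\lim}$, obtaining $\E := \underrightarrow{\lim}\E^{(\bullet)} \in D ^{\mathrm{b}} _{\mathrm{coh}}(\smash{\D} ^\dag _{\PP ^\sharp} (\hdag D) _{\Q})$ with $\E|\U \riso 0$. The classical theory of coherent arithmetic $\D^\dag$-modules supported on a closed subscheme (e.g. \cite[4.8]{caro_log-iso-hol}, already invoked in the proof of Corollary \ref{coro1limTouD}) then gives $(\hdag T)(\E)\riso 0$, equivalently $\R\underline{\Gamma}^\dag_T(\E)\riso \E$, in $D ^{\mathrm{b}} _{\mathrm{coh}}(\smash{\D} ^\dag _{\PP ^\sharp} (\hdag D\cup T) _{\Q})$. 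In particular $\underrightarrow{\lim}\bigl((\hdag T)(\E^{(\bullet)})\bigr)\riso 0$ is coherent over $\smash{\D} ^\dag _{\PP ^\sharp} (\hdag D) _{\Q}$. Applying Corollary \ref{coro1limTouD} with the roles $T\rightsquigarrow D$, $T'\rightsquigarrow D\cup T$, and $\E^{\prime(\bullet)}:=(\hdag T)(\E^{(\bullet)})\in \underrightarrow{LD} ^{\mathrm{b}} _{\Q,\mathrm{coh}}(\smash{\widetilde{\D}} _{\PP ^\sharp} ^{(\bullet)}(D\cup T))$, I would conclude that $(\hdag T)(\E^{(\bullet)})$ in fact belongs to $\underrightarrow{LD} ^{\mathrm{b}} _{\Q,\mathrm{coh}}(\smash{\widetilde{\D}} _{\PP ^\sharp} ^{(\bullet)}(D))$. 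Finally, the full faithfulness of the equivalence $\underrightarrow{\lim}\colon \underrightarrow{LD} ^{\mathrm{b}} _{\Q, \mathrm{coh}}(\smash{\widetilde{\D}} _{\PP ^\sharp} ^{(\bullet)}(D)) \cong D ^{\mathrm{b}} _{\mathrm{coh}}(\smash{\D} ^\dag _{\PP ^\sharp} (\hdag D) _{\Q})$ of \ref{eqcatcoh}, combined with the vanishing of the limit, forces $(\hdag T)(\E^{(\bullet)})\riso 0$ in $\underrightarrow{LD} ^{\mathrm{b}} _{\Q,\mathrm{coh}}(\smash{\widetilde{\D}} _{\PP ^\sharp} ^{(\bullet)}(D))$.

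The main obstacle is the passage in $(1)\Rightarrow(3)$ from vanishing at the level of $\smash{\D} ^\dag$-modules back to vanishing at the level of the inductive system: a priori $(\hdag T)(\E^{(\bullet)})$ lives only in $\underrightarrow{LD} ^{\mathrm{b}} _{\Q,\mathrm{qc}}(\smash{\widetilde{\D}} _{\PP ^\sharp} ^{(\bullet)}(D\cup T))$, and the equivalence of categories $\underrightarrow{\lim}$ is only available at the coherent level. Corollary \ref{coro1limTouD} is exactly the tool that bridges this gap, which is why this lemma can be viewed as a direct corollary of the central Theorem \ref{limTouD}.
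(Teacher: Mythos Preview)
Your proof is correct, but you take an unnecessary detour through Corollary \ref{coro1limTouD}. The paper's argument for $(1)\Rightarrow(3)$ is more direct: since $\E^{(\bullet)}$ is coherent over $\smash{\widetilde{\D}} _{\PP ^\sharp} ^{(\bullet)}(D)$, the complex $(\hdag T)(\E^{(\bullet)})$ is \emph{already} coherent over $\smash{\widetilde{\D}} _{\PP ^\sharp} ^{(\bullet)}(D\cup T)$ (via \ref{hdagT'T=cup} this is $(\hdag T\cup D)(\E^{(\bullet)})$). One can therefore apply the full faithfulness of $\underrightarrow{\lim}$ directly on $\smash{\underrightarrow{LD}} ^{\mathrm{b}} _{\Q,\mathrm{coh}} ( \smash{\widetilde{\D}} _{\PP ^\sharp} ^{(\bullet)} (D \cup T))$, and it suffices to check that $\underrightarrow{\lim}(\hdag T)(\E^{(\bullet)})$ vanishes as a $\smash{\D} ^\dag _{\PP ^\sharp}(\hdag T\cup D)_\Q$-coherent complex supported in $T\cup D$, which is \cite[4.8]{caro_log-iso-hol}. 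The vanishing in $\smash{\underrightarrow{LD}} ^{\mathrm{b}} _{\Q,\mathrm{coh}} ( \smash{\widetilde{\D}} _{\PP ^\sharp} ^{(\bullet)} (D))$ then follows via the fully faithful forgetful functor (Proposition \ref{oub-pl-fid}) and strict fullness of the coherent subcategory.

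In other words, you are working hard to descend coherence from the level $D\cup T$ down to $D$ before applying full faithfulness, whereas the paper simply stays at level $D\cup T$. Your approach does illustrate how \ref{coro1limTouD} can be used, but the lemma itself does not depend on it.
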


\begin{proof}
L'équivalence entre $2$ et $3$ est tautologique (i.e., on dispose du triangle distingué \ref{tri-loc-berthelot}). 
L'assertion $3 \Rightarrow 1$ est triviale. 
Réciproquement, établissons $1 \Rightarrow 3$. 
Comme 
$(\hdag T) (\E ^{(\bullet)} ) 
\riso 
(\hdag T)\circ (\hdag D) (\E ^{(\bullet)} ) 
\riso 
(\hdag T \cup D) (\E ^{(\bullet)} ) 
\in 
\smash{\underrightarrow{LD}} ^{\mathrm{b}} _{\Q,\mathrm{coh}} ( \smash{\widetilde{\D}} _{\PP ^\sharp} ^{(\bullet)} (D \cup T))$, 
comme le foncteur
$\underrightarrow{\lim}$
est pleinement fidèle sur
$\smash{\underrightarrow{LD}} ^{\mathrm{b}} _{\Q,\mathrm{coh}} ( \smash{\widetilde{\D}} _{\PP ^\sharp} ^{(\bullet)} (D \cup T))$,
il est alors équivalent de prouver que 
$\underrightarrow{\lim}  (\hdag T) (\E ^{(\bullet)} ) 
\riso 0$. 
Comme $\underrightarrow{\lim} (\hdag T) (\E ^{(\bullet)} ) $
est un $\smash{\D} ^\dag _{\PP ^\sharp} (\hdag T\cup D) _{\Q}$-cohérent à support dans 
$T$ (et donc dans $T \cup D$), ce dernier est bien nul
grâce à \cite[4.8]{caro_log-iso-hol}.
\end{proof}

\begin{lemm}
\label{induction-div-coh}
Soient $T _1, \dots, T _{r}$ des diviseurs de $P$.
Alors
$\R \underline{\Gamma} ^\dag _{T _r} \circ \cdots \circ 
\R \underline{\Gamma} ^\dag _{T _1} (\smash{\widetilde{\B}} _{\PP} ^{(\bullet)} (T  ))
\in 
\smash{\underrightarrow{LD}} ^{\mathrm{b}} _{\Q,\mathrm{coh}} ( \smash{\widehat{\D}} _{\PP } ^{(\bullet)})$.
\end{lemm}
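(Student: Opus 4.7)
The plan is to proceed by induction on $r$, with stronger induction hypothesis allowing the divisor $T$ to vary. Specifically, I would prove: for every integer $r \geq 0$, every divisor $T$, and every choice of divisors $T_1, \ldots, T_r$ of $P$, the complex $\R \underline{\Gamma} ^\dag _{T _r} \circ \cdots \circ \R \underline{\Gamma} ^\dag _{T _1} (\smash{\widetilde{\B}} _{\PP} ^{(\bullet)} (T))$ lies in $\smash{\underrightarrow{LD}} ^{\mathrm{b}} _{\Q,\mathrm{coh}} ( \smash{\widehat{\D}} _{\PP } ^{(\bullet)})$.

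The base case $r=0$ is precisely Corollary \ref{coh-Bbullet}, which states that $\smash{\widetilde{\B}} _{\PP} ^{(\bullet)} (T)$ is a coherent $\smash{\widehat{\D}} _{\PP} ^{(\bullet)}$-module at lim-ind-isogeny. For the inductive step, set $\E^{(\bullet)} := \R \underline{\Gamma} ^\dag _{T _{r-1}} \circ \cdots \circ \R \underline{\Gamma} ^\dag _{T _1}(\smash{\widetilde{\B}} _{\PP} ^{(\bullet)} (T))$ and consider the localisation triangle
\begin{equation}
\notag
\R \underline{\Gamma} ^\dag _{T _r}(\E^{(\bullet)}) \to \E^{(\bullet)} \to (\hdag T_r)(\E^{(\bullet)}) \to \R \underline{\Gamma} ^\dag _{T _r}(\E^{(\bullet)})[1].
\end{equation}
By the induction hypothesis applied to $(T, T_1, \ldots, T_{r-1})$, the middle term $\E^{(\bullet)}$ is coherent. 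Since the full subcategory of coherent objects is triangulated inside $\smash{\underrightarrow{LD}} ^{\mathrm{b}} _{\Q,\mathrm{qc}} (\smash{\widehat{\D}} _{\PP} ^{(\bullet)})$ (stable under cones and shifts), it suffices to establish coherence of $(\hdag T_r)(\E^{(\bullet)})$.

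The decisive step is to push $(\hdag T_r)$ through the iterated local cohomology functors. Iterating the commutation isomorphism \ref{commGammahdag} from \ref{iso-comm-locaux}, one obtains a canonical isomorphism
\begin{equation}
\notag
(\hdag T_r) \circ \R \underline{\Gamma} ^\dag _{T _{r-1}} \circ \cdots \circ \R \underline{\Gamma} ^\dag _{T _1} \riso \R \underline{\Gamma} ^\dag _{T _{r-1}} \circ \cdots \circ \R \underline{\Gamma} ^\dag _{T _1} \circ (\hdag T_r).
\end{equation}
Applied to $\smash{\widetilde{\B}} _{\PP} ^{(\bullet)} (T) = (\hdag T)(\O_{\PP}^{(\bullet)})$, the iso \ref{hdagT'T=cup} gives $(\hdag T_r)(\smash{\widetilde{\B}} _{\PP} ^{(\bullet)} (T)) \riso \smash{\widetilde{\B}} _{\PP} ^{(\bullet)} (T \cup T_r)$, so
\begin{equation}
\notag
(\hdag T_r)(\E^{(\bullet)}) \riso \R \underline{\Gamma} ^\dag _{T _{r-1}} \circ \cdots \circ \R \underline{\Gamma} ^\dag _{T _1}(\smash{\widetilde{\B}} _{\PP} ^{(\bullet)} (T \cup T_r)).
\end{equation}
This is precisely the statement for the smaller parameter $(T \cup T_r, T_1, \ldots, T_{r-1})$, hence coherent by the induction hypothesis. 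Combined with the triangle above, this yields coherence of $\R \underline{\Gamma} ^\dag _{T _r}(\E^{(\bullet)})$ and completes the induction.

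The main subtlety, rather than a genuine obstacle, is tracking the ambient category: the functors $\R \underline{\Gamma} ^\dag _{T_i}$ and $(\hdag T_i)$ are constructed on the quasi-coherent side and the triangle lives there, but one needs to interpret everything through the forgetful functors so that coherence is tested over $\smash{\widehat{\D}} _{\PP} ^{(\bullet)}$ (no divisor, no log structure). This is harmless because both $\R \underline{\Gamma} ^\dag _{T_i}$ and $(\hdag T_i)$ commute with these oubli functors (see \ref{GammaT-oub-sharp} and \ref{nota-oub-sharp-iso}), and Corollary \ref{coh-Bbullet} gives $\smash{\widetilde{\B}} _{\PP} ^{(\bullet)} (T \cup T_r) \in \underrightarrow{LM}_{\Q, \mathrm{coh}}(\smash{\widehat{\D}} _{\PP} ^{(\bullet)})$ for every divisor.
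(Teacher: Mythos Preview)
Your proof is correct and follows essentially the same induction on $r$ (with $T$ allowed to vary) as the paper, with the same base case \ref{coh-Bbullet} and the same identification $(\hdag T_i)(\smash{\widetilde{\B}} _{\PP} ^{(\bullet)} (T)) \riso \smash{\widetilde{\B}} _{\PP} ^{(\bullet)} (T \cup T_i)$ from \ref{hdagT'T=cup}. The only organizational difference is that the paper peels off the \emph{innermost} divisor $T_1$ (applying $\R\underline{\Gamma}^\dag_{T_r}\circ\cdots\circ\R\underline{\Gamma}^\dag_{T_2}$ directly to the localisation triangle \ref{tri-distT1TB}), which avoids invoking the commutation isomorphism \ref{commGammahdag} that you need to push $(\hdag T_r)$ through from the outside.
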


\begin{proof}
Comme d'après \ref{hdagT'T=cup} on dispose de l'isomorphisme canonique
$(\hdag T _1) ( \smash{\widetilde{\B}} _{\PP} ^{(\bullet)} (T) )
\riso 
\smash{\widetilde{\B}} _{\PP} ^{(\bullet)} (T _1 \cup T) $, 
on obtient le triangle distingué :
\begin{equation}
\label{tri-distT1TB}
\R \underline{\Gamma} ^\dag _{T _1} (\smash{\widetilde{\B}} _{\PP} ^{(\bullet)} (T  ))
\to 
\smash{\widetilde{\B}} _{\PP} ^{(\bullet)} (T ) 
\to 
\smash{\widetilde{\B}} _{\PP} ^{(\bullet)} (T _1 \cup T ) 
\to
\R \underline{\Gamma} ^\dag _{T _1} (\smash{\widetilde{\B}} _{\PP} ^{(\bullet)} (T  )) [1].
\end{equation}
D'après \ref{coh-Bbullet}, 
il en résulte que le triangle distingué \ref{tri-distT1TB}
est un triangle dans
$\smash{\underrightarrow{LD}} ^{\mathrm{b}} _{\Q,\mathrm{coh}} ( \smash{\widehat{\D}} _{\PP} ^{(\bullet)})$.
On en déduit par récurrence sur $r$ (on applique 
le foncteur 
$\R \underline{\Gamma} ^\dag _{T _r} \circ \cdots \circ 
\R \underline{\Gamma} ^\dag _{T _2}$ au triangle distingué
\ref{tri-distT1TB}) le résultat. 

\end{proof}

\begin{vide}
\label{dfn-4.3.4}
Soit $X$ un sous-schéma fermé réduit de $P$.
Comme pour \cite[2.2]{caro_surcoherent}, on définit
le foncteur cohomologique local $\R \underline{\Gamma} ^\dag _{X}$ à support strict dans $X$.
Puisque $P$ est somme de ses composantes irréductibles, il suffit de le définir dans le cas où $P$ est intègre. 
\begin{enumerate}
\item Si $X= P$, alors le foncteur $\R \underline{\Gamma} ^\dag _{X}$ 
est par définition l'identité. 

\item Supposons à présent $X \not = P$. 
D'après \cite[2.2.5]{caro_surcoherent} (il y a une coquille: il faut rajouter l'hypothèse {\og $P$ est intègre\fg})
$X$ est une intersection finie de diviseur de $P$.
Précisons que l'on s'était ramené au cas où $X$ est irréductible en utilisant la formule
de commutation des réunions et intersections de la forme
$\cup _{i=1} ^{r} \cap _{j=1} ^{s} D _{i,j} =
\cap _{1\leq j _1, \dots, j _r\leq s}
\cup _{i=1} ^{r} D _{i,j _i},$
pour une famille de sous-ensembles (e.g. des supports de diviseurs)
$D _{i,j}$ de $P$.
Si 
$T _1, \dots, T _{r}$ sont des diviseurs de $P$ tels que 
$X = \cap _{i =1} ^{r} T _i$, 
alors, pour tout $\E ^{(\bullet)}
\in \smash{\underrightarrow{LD}} ^\mathrm{b} _{\Q,\mathrm{qc}} ( \smash{\widetilde{\D}} _{\PP ^\sharp} ^{(\bullet)})$,
 le complexe
$\R \underline{\Gamma} ^\dag _{X} (\E ^{(\bullet)} ):=
\R \underline{\Gamma} ^\dag _{T _r} \circ \cdots \circ 
\R \underline{\Gamma} ^\dag _{T _1} (\E ^{(\bullet)} )$
ne dépend canoniquement pas du choix de tels diviseurs $T _1,\dots, T _r$.
En effet, grâce à \ref{4.4.5.2}, 
on se ramène au cas où $\E ^{(\bullet)} = \O ^{(\bullet)} _{\PP}$ et par conséquent au cas où $\ZZ$ est vide
;
on vérifie ensuite que 
cela ne dépend pas de l'ordre des diviseurs $T _1, \dots, T _r$
;
puis, grâce aux lemmes \ref{lemme2.2.3} et \ref{induction-div-coh},
il est superflu de rajouter d'autres diviseurs contenant $ X$.

Pour toutes sous-variétés fermées $X$, $X'$ de $P$, 
on dispose alors par construction (de même que \cite[2.2.8]{caro_surcoherent})
de l'isomorphisme canonique:
\begin{equation}
\label{theo2.2.8}
\R \underline{\Gamma} ^\dag _{X} \circ \R \underline{\Gamma} ^\dag _{X'} 
(\E ^{(\bullet)} )
\riso
\R \underline{\Gamma} ^\dag _{X\cap X'}  (\E ^{(\bullet)} ).
\end{equation}

\end{enumerate}

\end{vide}

\begin{vide}
\label{GammaX-oub-sharp}
Soit $X$ un sous-schéma fermé réduit de $P$.
On déduit de \ref{GammaT-oub-sharp}
que le foncteur 
$ \R \underline{\Gamma} ^\dag _{X}$
ne dépend pas de la log-structure, i.e., 
on dispose de l'isomorphisme canonique 
\begin{equation}
\label{GammaX-oub-sharp-iso}
\R \underline{\Gamma} ^\dag _{X}\circ \mathrm{oub} _{\sharp} 
\riso \mathrm{oub} _{\sharp} \circ  \R \underline{\Gamma} ^\dag _{X}
\end{equation}
de foncteurs de 
$ \smash{\underrightarrow{LD}} ^\mathrm{b} _{\Q,\mathrm{qc}} ( \smash{\widetilde{\D}} _{\PP } ^{(\bullet)})
\to 
\smash{\underrightarrow{LD}} ^\mathrm{b} _{\Q,\mathrm{qc}} ( \smash{\widetilde{\D}} _{\PP ^\sharp} ^{(\bullet)})$. 
\end{vide}

\begin{prop}
\label{fonctXX'Gamma}
 Soient $X,~X'$ deux sous-schémas fermés de $P$, 
$\E ^{(\bullet)},~\FF ^{(\bullet)}
\in \smash{\underrightarrow{LD}} ^\mathrm{b} _{\Q,\mathrm{qc}} ( \smash{\widetilde{\D}} _{\PP ^\sharp} ^{(\bullet)})$.
On dispose de l'isomorphisme canonique fonctoriel en $\E ^{(\bullet)},~\FF ^{(\bullet)},~ X,~X'$ :
\begin{equation}
\label{fonctXX'Gamma-iso}
\R \underline{\Gamma} ^\dag _{X \cap X'} (\E ^{(\bullet)}
\smash{\overset{\L}{\otimes}}   ^{\dag}
_{\O  _{\PP, \Q}} \FF ^{(\bullet)} )
\riso 
\R \underline{\Gamma} ^\dag _{X} 
(\E ^{(\bullet)})
\smash{\overset{\L}{\otimes}}   ^{\dag}
_{\O  _{\PP, \Q}} 
\R \underline{\Gamma} ^\dag _{X'}
(\FF ^{(\bullet)}).
\end{equation}
\end{prop}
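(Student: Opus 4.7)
The plan is to reduce the claim to the already-established formula \ref{fonctXX'Gammadiag1} for iterated local cohomology along divisors, using the construction of $\R \underline{\Gamma} ^\dag _{X}$ as such an iterate.

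First, since $P$ is a sum of its irreducible components and the assertion is local in $\PP$, I would reduce to the case where $P$ is integral. If $X=P$ or $X'=P$, the statement follows from \ref{iso-comm-locaux-prod-tens} (together with the convention that $\R \underline{\Gamma} ^\dag _{P}$ is the identity), so one may assume $X,X' \subsetneq P$. Then by \cite[2.2.5]{caro_surcoherent}, choose finite families of divisors $T _1, \dots, T _r$ and $T' _1, \dots, T' _s$ of $P$ with $X = \cap _{i} T _i$ and $X' = \cap _{j} T ' _j$, so that $X \cap X' = \cap _i T _i \cap \cap _j T '_j$. By the very definition of the local cohomology functor along a closed subscheme, we have canonical identifications
\begin{gather*}
\R \underline{\Gamma} ^\dag _{X} \riso \R \underline{\Gamma} ^\dag _{T _r} \circ \cdots \circ \R \underline{\Gamma} ^\dag _{T _1},\quad
\R \underline{\Gamma} ^\dag _{X'} \riso \R \underline{\Gamma} ^\dag _{T '_s} \circ \cdots \circ \R \underline{\Gamma} ^\dag _{T '_1},
\\
\R \underline{\Gamma} ^\dag _{X \cap X'} \riso \R \underline{\Gamma} ^\dag _{T '_s} \circ \cdots \circ \R \underline{\Gamma} ^\dag _{T '_1}
\circ \R \underline{\Gamma} ^\dag _{T _r} \circ \cdots \circ \R \underline{\Gamma} ^\dag _{T _1}.
\end{gather*}

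Using these, I would define the desired isomorphism \ref{fonctXX'Gamma-iso} as the composite
\[
\R \underline{\Gamma} ^\dag _{X \cap X'} (\E ^{(\bullet)} \smash{\overset{\L}{\otimes}}^{\dag} _{\O _{\PP,\Q}} \FF ^{(\bullet)})
\underset{\ref{fonctXX'Gammadiag1}}{\riso}
\R \underline{\Gamma} ^\dag _{T _r} \circ \cdots \circ \R \underline{\Gamma} ^\dag _{T _1}(\E ^{(\bullet)})
\smash{\overset{\L}{\otimes}}^{\dag} _{\O _{\PP,\Q}}
\R \underline{\Gamma} ^\dag _{T '_s} \circ \cdots \circ \R \underline{\Gamma} ^\dag _{T '_1}(\FF ^{(\bullet)})
\riso
\R \underline{\Gamma} ^\dag _{X}(\E ^{(\bullet)}) \smash{\overset{\L}{\otimes}}^{\dag} _{\O _{\PP,\Q}} \R \underline{\Gamma} ^\dag _{X'}(\FF ^{(\bullet)}).
\]

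The main obstacle will be to check that this isomorphism is canonical, i.e., independent of the choice of divisors $T _i$ and $T '_j$ representing $X$ and $X'$. This independence ultimately reduces, via \ref{4.4.5.2} applied to $\E ^{(\bullet)} \smash{\overset{\L}{\otimes}}^{\dag} _{\O _{\PP,\Q}} \FF ^{(\bullet)}$ and the compatibility \ref{comp-prod-tens-gamma-gamma}, to the already-verified independence statements used in the construction of $\R \underline{\Gamma} ^\dag _{X}$ and $\R \underline{\Gamma} ^\dag _{X'}$. Functoriality in $\E ^{(\bullet)}$ and $\FF ^{(\bullet)}$ is automatic since every arrow in the composite is itself functorial; functoriality in $X$ and $X'$ then follows by choosing, for nested closed subschemes, compatible systems of divisors (or, alternatively, by appealing to the universal property \ref{fonct-hdagX2} and the uniqueness statement from \cite[1.1.9]{BBD}). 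The latter uniqueness argument, already used in \ref{iso-comm-locaux} and \ref{comp-prod-tens}, is what guarantees that diagrams built out of the building-block isomorphisms agree with those constructed from any alternative description.
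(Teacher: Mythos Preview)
Your proposal is correct and follows the same route as the paper: the paper's proof is the single sentence ``Cela r\'esulte des isomorphismes canoniques \ref{fonctXX'Gammadiag1}'', and you have simply unpacked why that citation suffices, including the reduction to $P$ integral, the degenerate cases, and the independence-of-choice checks that are implicit in the construction of $\R \underline{\Gamma} ^\dag _{X}$.
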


\begin{proof}
Cela résulte des isomorphismes canoniques \ref{fonctXX'Gammadiag1}.
\end{proof}

\subsection{Foncteur de localisation en dehors d'un sous-schéma fermé}
\begin{lemm}
\label{annulationHom}
Soient $X \subset X'$ deux sous-variétés fermées de $P$,
$\E ^{(\bullet)}, 
\FF ^{(\bullet)}
\in \smash{\underrightarrow{LD}} ^\mathrm{b} _{\Q,\mathrm{qc}} ( \smash{\widetilde{\D}} _{\PP ^\sharp} ^{(\bullet)})$.
On suppose de plus que l'on dispose 
dans 
$\smash{\underrightarrow{LD}} ^\mathrm{b} _{\Q,\mathrm{qc}} ( \smash{\widetilde{\D}} _{\PP ^\sharp} ^{(\bullet)})$
de l'isomorphisme
$\R \underline{\Gamma} ^\dag _{X '} (\FF ^{(\bullet)} )\riso 0$.
Alors 
$\mathrm{Hom} _{\smash{\underrightarrow{LD}}  _{\Q} ( \smash{\widetilde{\D}} _{\PP ^\sharp} ^{(\bullet)})}
(\R \underline{\Gamma} ^\dag _{X} (\E ^{(\bullet)} ), \FF ^{(\bullet)}) =0.$
\end{lemm}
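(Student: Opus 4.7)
La preuve calque celle du lemme \ref{annulationHom-hdag}, en rempla\c cant le r\^ole du foncteur de localisation $(\hdag T)$ par celui du foncteur local $\R \underline{\Gamma} ^\dag _{X'}$ (et en utilisant l'isomorphisme de composition \ref{theo2.2.8} au lieu de l'idempotence de $(\hdag T)$). Le point de d\'epart est l'observation que l'hypoth\`ese $X \subset X'$ entra\^\i ne $X \cap X' = X$, de sorte que l'isomorphisme canonique \ref{theo2.2.8} se sp\'ecialise en un isomorphisme
\begin{equation*}
\R \underline{\Gamma} ^\dag _{X'} \circ \R \underline{\Gamma} ^\dag _{X} (\E ^{(\bullet)})
\riso
\R \underline{\Gamma} ^\dag _{X \cap X'} (\E ^{(\bullet)})
=
\R \underline{\Gamma} ^\dag _{X} (\E ^{(\bullet)}).
\end{equation*}

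Soit alors $\phi \colon \R \underline{\Gamma} ^\dag _{X} (\E ^{(\bullet)}) \to \FF ^{(\bullet)}$ un morphisme de $\smash{\underrightarrow{LD}}  _{\Q} ( \smash{\widetilde{\D}} _{\PP ^\sharp} ^{(\bullet)})$. J'envisagerais de former le diagramme canonique
\begin{equation*}
\xymatrix @ R=0,4cm{
{\R \underline{\Gamma} ^\dag _{X'} \circ \R \underline{\Gamma} ^\dag _{X} (\E ^{(\bullet)})}
\ar[r] ^-{\sim}
\ar[d] _-{\R \underline{\Gamma} ^\dag _{X'} (\phi)}
&
{\R \underline{\Gamma} ^\dag _{X} (\E ^{(\bullet)})}
\ar[d] ^-{\phi}
\\
{\R \underline{\Gamma} ^\dag _{X'} (\FF ^{(\bullet)})}
\ar[r]
&
{\FF ^{(\bullet)},}
}
\end{equation*}
dont les fl\`eches horizontales sont les morphismes canoniques (la transformation naturelle $\R \underline{\Gamma} ^\dag _{X'} \to \mathrm{id}$ \'etant obtenue par composition des morphismes canoniques $\R \underline{\Gamma} ^\dag _{T '_j} \to \mathrm{id}$ issus des triangles distingu\'es de localisation, pour une \'ecriture $X' = T '_1 \cap \cdots \cap T '_s$), dont le carr\'e commute par naturalit\'e, et dont la ligne du haut est un isomorphisme en vertu de la discussion qui pr\'ec\`ede. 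Comme par hypoth\`ese $\R \underline{\Gamma} ^\dag _{X'} (\FF ^{(\bullet)}) \riso 0$, le compos\'e inf\'erieur-gauche est nul, d'o\`u $\phi = 0$ en pr\'ecomposant par l'inverse de l'isomorphisme du haut.

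L'unique point d\'elicat consiste \`a v\'erifier que l'isomorphisme fourni par \ref{theo2.2.8} s'identifie bien au morphisme canonique $\R \underline{\Gamma} ^\dag _{X'} \circ \R \underline{\Gamma} ^\dag _{X} (\E ^{(\bullet)}) \to \R \underline{\Gamma} ^\dag _{X} (\E ^{(\bullet)})$ induit par la transformation $\R \underline{\Gamma} ^\dag _{X'} \to \mathrm{id}$. Cette compatibilit\'e se d\'emontre par r\'ecurrence sur le nombre de diviseurs intervenant dans les \'ecritures $X = \cap T _i$ et $X' = \cap T '_j$, en se ramenant au cas d'un seul diviseur via les isomorphismes de commutation \ref{commGammaT1T2} : au niveau d'un divisor $T '_j$ contenant $X$, l'assertion r\'esulte de ce que $\R \underline{\Gamma} ^\dag _{T '_j} \circ \R \underline{\Gamma} ^\dag _{X} \riso \R \underline{\Gamma} ^\dag _{X}$ s'obtient en appliquant $\R \underline{\Gamma} ^\dag _{X}$ au triangle distingu\'e $\Delta _{T '_j}$ et en utilisant que $(\hdag T '_j) \circ \R \underline{\Gamma} ^\dag _{X} \riso 0$ (cons\'equence de \ref{lemme2.2.3} appliqu\'e \`a $\R \underline{\Gamma} ^\dag _{X} (\E ^{(\bullet)})$, dont la restriction au compl\'ementaire de $X$, et donc a fortiori au compl\'ementaire de $T '_j$, est nulle).
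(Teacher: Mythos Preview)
Your approach is correct and essentially the same as the paper's: you apply $\R \underline{\Gamma} ^\dag _{X'}$ to $\phi$ and use that the source is unchanged (via \ref{theo2.2.8}) while the target vanishes by hypothesis; the paper instead applies $\R \underline{\Gamma} ^\dag _{X}$ to $\phi$, so that the source is unchanged by idempotence and the target $\R \underline{\Gamma} ^\dag _{X}(\FF^{(\bullet)}) \riso \R \underline{\Gamma} ^\dag _{X}\circ\R \underline{\Gamma} ^\dag _{X'}(\FF^{(\bullet)})$ vanishes via one extra use of \ref{theo2.2.8}. These are symmetric variants of the same factorisation argument.

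One small technical point: in your last paragraph you invoke \ref{lemme2.2.3} to obtain $(\hdag T'_j)\circ\R \underline{\Gamma} ^\dag _{X}(\E^{(\bullet)})\riso 0$, but that lemma is stated for \emph{coherent} complexes, whereas here $\E^{(\bullet)}$ is only quasi-coherent. The conclusion you need is still available directly: since $X\subset T'_j$, one may include $T'_j$ among the divisors cutting out $X$ and write $\R \underline{\Gamma} ^\dag _{X}\riso \R \underline{\Gamma} ^\dag _{T'_j}\circ\R \underline{\Gamma} ^\dag _{X}$; then $(\hdag T'_j)\circ\R \underline{\Gamma} ^\dag _{T'_j}\riso 0$ follows for any quasi-coherent complex by applying $(\hdag T'_j)$ to the localisation triangle $\Delta_{T'_j}$ and using the idempotence $(\hdag T'_j)\circ(\hdag T'_j)\riso(\hdag T'_j)$ (cf.\ \ref{hdagT'T=cup}, \ref{oub-pl-fid}). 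With this adjustment your compatibility discussion goes through, and is in fact more explicit than the paper's treatment, which simply asserts the canonical factorisation.
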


\begin{proof}
Soit $\phi \colon 
\R \underline{\Gamma} ^\dag _{X} (\E ^{(\bullet)} )\to  \FF ^{(\bullet)}$
un morphisme de $\smash{\underrightarrow{LD}} ^{\mathrm{b}} _{\Q,\mathrm{qc}} ( \smash{\widetilde{\D}} _{\PP ^\sharp} ^{(\bullet)})$. 
Comme le morphisme canonique 
$\R \underline{\Gamma} ^\dag _{X} (\R \underline{\Gamma} ^\dag _{X} (\E ^{(\bullet)} ))
\to
\R \underline{\Gamma} ^\dag _{X} (\E ^{(\bullet)} )$
est un isomorphisme de $\smash{\underrightarrow{LD}} ^{\mathrm{b}} _{\Q,\mathrm{qc}} ( \smash{\widetilde{\D}} _{\PP ^\sharp} ^{(\bullet)})$(voir \ref{theo2.2.8}),
le morphisme $\phi$ se factorise canoniquement par 
$\R \underline{\Gamma} ^\dag _{X} (\phi )$.
Or, comme 
$\R \underline{\Gamma} ^\dag _{X} (\FF ^{(\bullet)} )\liso 
\R \underline{\Gamma} ^\dag _{X} \circ \R \underline{\Gamma} ^\dag _{X '} (\FF ^{(\bullet)} )\riso 0$, 
alors 
$\R \underline{\Gamma} ^\dag _{X} (\phi )\riso0$ dans 
$\smash{\underrightarrow{LD}} ^{\mathrm{b}} _{\Q,\mathrm{qc}} ( \smash{\widetilde{\D}} _{\PP ^\sharp} ^{(\bullet)})$.
On en déduit que $\phi =0$.
D'où le résultat. 
\end{proof}

\begin{lemm}
\label{lemm-GammaXEtoEtoF}
Soient $X$ une sous-variété fermée de $P$
et un triangle distingué dans 
$\smash{\underrightarrow{LD}} ^\mathrm{b} _{\Q,\mathrm{qc}} ( \smash{\widetilde{\D}} _{\PP ^\sharp} ^{(\bullet)})$
de la forme
\begin{equation}
\label{GammaXEtoEtoF}
\R \underline{\Gamma} ^\dag _{X} (\E ^{(\bullet)} )
\to 
\E ^{(\bullet)} 
\to 
 \FF ^{(\bullet)}
 \to 
\R \underline{\Gamma} ^\dag _{X} (\E ^{(\bullet)} )[1] ,
\end{equation}
où le premier morphisme est le morphisme canonique. 
On dispose alors de l'isomorphisme 
$\R \underline{\Gamma} ^\dag _{X} (\FF ^{(\bullet)} )\riso 0$
dans 
$\smash{\underrightarrow{LD}} ^\mathrm{b} _{\Q,\mathrm{qc}} ( \smash{\widetilde{\D}} _{\PP ^\sharp} ^{(\bullet)})$.
\end{lemm}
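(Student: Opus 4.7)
The plan is to apply the functor $\R \underline{\Gamma} ^\dag _{X}$ to the distinguished triangle \ref{GammaXEtoEtoF}. Since $\R \underline{\Gamma} ^\dag _{X}$ is a triangulated functor on $\smash{\underrightarrow{LD}} ^\mathrm{b} _{\Q,\mathrm{qc}} ( \smash{\widetilde{\D}} _{\PP ^\sharp} ^{(\bullet)})$ (it is constructed as an iterate of the functors $\R \underline{\Gamma} ^\dag _{T _i}$, each of which sits in the triangulated localisation triangle \ref{tri-loc-berthelot}), we obtain a distinguished triangle
\begin{equation*}
\R \underline{\Gamma} ^\dag _{X} \circ \R \underline{\Gamma} ^\dag _{X} (\E ^{(\bullet)} )
\to
\R \underline{\Gamma} ^\dag _{X} (\E ^{(\bullet)} )
\to
\R \underline{\Gamma} ^\dag _{X} (\FF ^{(\bullet)})
\to
\R \underline{\Gamma} ^\dag _{X} \circ \R \underline{\Gamma} ^\dag _{X} (\E ^{(\bullet)} )[1],
\end{equation*}
where the first arrow is the image under $\R \underline{\Gamma} ^\dag _{X}$ of the canonical adjunction-like map $\R \underline{\Gamma} ^\dag _{X}(\E^{(\bullet)}) \to \E^{(\bullet)}$.

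The key step is then to invoke the idempotency isomorphism \ref{theo2.2.8} applied with $X' = X$, which gives
$$\R \underline{\Gamma} ^\dag _{X} \circ \R \underline{\Gamma} ^\dag _{X} (\E ^{(\bullet)}) \riso \R \underline{\Gamma} ^\dag _{X\cap X}(\E^{(\bullet)}) = \R \underline{\Gamma} ^\dag _{X}(\E^{(\bullet)}),$$
and to identify this isomorphism with the first arrow of the triangle above. Once this identification is made, the cone of an isomorphism is zero, so $\R \underline{\Gamma} ^\dag _{X}(\FF^{(\bullet)}) \riso 0$ in $\smash{\underrightarrow{LD}} ^\mathrm{b} _{\Q,\mathrm{qc}} ( \smash{\widetilde{\D}} _{\PP ^\sharp} ^{(\bullet)})$, which is the desired conclusion.

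The main point to verify carefully is the compatibility claim: namely that the canonical map $\R \underline{\Gamma} ^\dag _{X} \circ \R \underline{\Gamma} ^\dag _{X}(\E^{(\bullet)}) \to \R \underline{\Gamma} ^\dag _{X}(\E^{(\bullet)})$ produced by functoriality of $\R \underline{\Gamma} ^\dag _{X}$ coincides, under the identification of \ref{theo2.2.8}, with an isomorphism (rather than the zero map, say). For the case $X$ a single divisor $T$, this follows directly from the construction \ref{GammaT}, since the map $\R \underline{\Gamma} ^\dag _{T}(\R \underline{\Gamma} ^\dag _{T}(\E^{(\bullet)})) \to \R \underline{\Gamma} ^\dag _{T}(\E^{(\bullet)})$ fits, together with the identity on $(\hdag T)\R \underline{\Gamma} ^\dag _{T}(\E^{(\bullet)}) \riso 0$, into the defining diagram \ref{fonct-hdagT} characterising $\R \underline{\Gamma} ^\dag _{T}$ up to unique isomorphism; the general case of a closed subscheme reduces to this by the iterated construction of $\R \underline{\Gamma} ^\dag _{X}$ as $\R \underline{\Gamma} ^\dag _{T_r}\circ\cdots\circ\R \underline{\Gamma} ^\dag _{T_1}$ together with the commutation isomorphisms \ref{commGammaT1T2}. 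This routine but slightly delicate identification of canonical morphisms, coming from the universal property of the mapping cone \cite[1.1.9]{BBD} used in the definition of $\R \underline{\Gamma}^\dag _T$, is the only point requiring care.
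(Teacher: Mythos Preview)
Your proof is correct and follows exactly the same approach as the paper: apply $\R \underline{\Gamma} ^\dag _{X}$ to the triangle, use the idempotency isomorphism \ref{theo2.2.8} to see that the first map of the resulting triangle is an isomorphism, and conclude that the cone vanishes. The paper's proof is a single sentence that simply asserts the canonical morphism $\R \underline{\Gamma} ^\dag _{X} \circ \R \underline{\Gamma} ^\dag _{X} (\E ^{(\bullet)} ) \to \R \underline{\Gamma} ^\dag _{X} (\E ^{(\bullet)} )$ is an isomorphism by \ref{theo2.2.8}; your additional paragraph justifying the compatibility of this canonical map with the isomorphism of \ref{theo2.2.8} is a legitimate clarification of a point the paper leaves implicit.
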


\begin{proof}
Comme le morphisme canonique 
$\R \underline{\Gamma} ^\dag _{X} (\R \underline{\Gamma} ^\dag _{X} (\E ^{(\bullet)} ))
\to
\R \underline{\Gamma} ^\dag _{X} (\E ^{(\bullet)} )$
est un isomorphisme (voir \ref{theo2.2.8}),
en appliquant le foncteur 
$\R \underline{\Gamma} ^\dag _{X}$ au triangle distingué \ref{GammaXEtoEtoF},
l'un des axiomes sur les catégories triangulées nous permet de conclure. 
\end{proof}

\begin{vide}
[Foncteur localisation en dehors d'une sous-variété fermée $X$]
\label{hdagX}
Soient $X \subset X'$ deux sous-variétés de $P$.
Supposons donné le diagramme commutatif dans 
$\smash{\underrightarrow{LD}} ^\mathrm{b} _{\Q,\mathrm{qc}} ( \smash{\widetilde{\D}} _{\PP ^\sharp} ^{(\bullet)})$
de la forme
\begin{equation}
\label{prefonct-hdagX}
\xymatrix @=0,4cm{
{\R \underline{\Gamma} ^\dag _{X} (\E ^{(\bullet)} )} 
\ar[r] ^-{}
\ar[d] ^-{\R \underline{\Gamma} ^\dag _{X}(\phi)}
& 
{\E ^{(\bullet)}  } 
\ar[r] ^-{}
\ar[d] ^-{\phi}
& 
{\FF ^{(\bullet)} } 
\ar[r] ^-{}
&
{\R \underline{\Gamma} ^\dag _{X} (\E ^{(\bullet)} )[1]} 
\ar[d] ^-{\R \underline{\Gamma} ^\dag _{X}(\phi)}
\\ 
{\R \underline{\Gamma} ^\dag _{X'} (\E ^{\prime (\bullet)} )} 
\ar[r] ^-{}
& 
{\E ^{\prime (\bullet)}  } 
\ar[r] ^-{}
& 
{\FF ^{\prime (\bullet)}  } 
\ar[r] ^-{}
&
{\R \underline{\Gamma} ^\dag _{X'} (\E ^{\prime (\bullet)} )[1]} 
}
\end{equation}
dont les flèches horizontales de gauche sont les morphismes canoniques
et dont les deux triangles horizontaux sont distingués. 
D'après les lemmes \ref{annulationHom} et \ref{lemm-GammaXEtoEtoF}, on obtient alors 
$$H ^{-1} (\R \mathrm{Hom} _{D (\underrightarrow{LM} _{\Q} (\smash{\widetilde{\D}} _{\PP ^\sharp} ^{(\bullet)} ))}
(\R \underline{\Gamma} ^\dag _{X} (\E ^{(\bullet)} ), \FF ^{\prime (\bullet)} ) )
=
\mathrm{Hom} _{D (\underrightarrow{LM} _{\Q} (\smash{\widetilde{\D}} _{\PP ^\sharp} ^{(\bullet)} ))}
(\R \underline{\Gamma} ^\dag _{X} (\E ^{(\bullet)} ), \FF ^{\prime (\bullet)}[-1]) =0.$$
On en déduit, grâce à 
\cite[1.1.9]{BBD},
qu'il existe donc un unique morphisme 
$\FF ^{(\bullet)}\to \FF ^{\prime (\bullet)} $
induisant dans 
$\smash{\underrightarrow{LD}} ^\mathrm{b} _{\Q,\mathrm{qc}} ( \smash{\widetilde{\D}} _{\PP ^\sharp} ^{(\bullet)})$
le diagramme commutatif:
\begin{equation}
\label{fonct-hdagX}
\xymatrix @=0,4cm{
{\R \underline{\Gamma} ^\dag _{X} (\E ^{(\bullet)} )} 
\ar[r] ^-{}
\ar[d] ^-{\R \underline{\Gamma} ^\dag _{X}(\phi)}
& 
{\E ^{(\bullet)}  } 
\ar[r] ^-{}
\ar[d] ^-{\phi}
& 
{\FF ^{(\bullet)}  } 
\ar[r] ^-{}
\ar@{.>}[d] ^-{\exists !}
& 
{\R \underline{\Gamma} ^\dag _{X} (\E ^{(\bullet)} )[1]} 
\ar[d] ^-{\R \underline{\Gamma} ^\dag _{X}(\phi)}
\\ 
{\R \underline{\Gamma} ^\dag _{X'} (\E ^{\prime (\bullet)} )} 
\ar[r] ^-{}
& 
{\E ^{\prime (\bullet)}  } 
\ar[r] ^-{}
& 
{\FF ^{\prime (\bullet)}  } 
\ar[r] ^-{}
& 
{\R \underline{\Gamma} ^\dag _{X'} (\E ^{\prime (\bullet)} )[1].} 
}
\end{equation}
Comme pour \cite[1.1.10]{BBD}, 
cela implique que le cône de 
$\R \underline{\Gamma} ^\dag _{X} (\E ^{(\bullet)} )
\to  
\E ^{(\bullet)} $
est unique à isomorphisme canonique près. 
On le notera 
$(\hdag X) (\E ^{(\bullet)} )$.
On vérifie de plus que 
$(\hdag X) (\E ^{(\bullet)} )$
est fonctoriel en $X$ et $\E ^{(\bullet)} $, e.g.
on dispose
du morphisme fonctoriel en $\E ^{(\bullet)} $ de forme
$\E ^{(\bullet)} \to  (\hdag X) (\E ^{(\bullet)} )$.
 
On déduit de \ref{GammaX-oub-sharp}
que le foncteur 
$(\hdag X) $
ne dépend pas de la log-structure, i.e., 
on dispose de l'isomorphisme canonique 
$(\hdag X) \circ \mathrm{oub} _{\sharp} 
\riso \mathrm{oub} _{\sharp} \circ (\hdag X) $ de foncteurs de 
$ \smash{\underrightarrow{LD}} ^\mathrm{b} _{\Q,\mathrm{qc}} ( \smash{\widetilde{\D}} _{\PP } ^{(\bullet)})
\to 
\smash{\underrightarrow{LD}} ^\mathrm{b} _{\Q,\mathrm{qc}} ( \smash{\widetilde{\D}} _{\PP ^\sharp} ^{(\bullet)})$. 

\end{vide}

\begin{vide}
Pour toute
sous-variété fermée $X$ de $P$, 
pour tous
$\E ^{(\bullet)},~\FF ^{(\bullet)}
\in \smash{\underrightarrow{LD}} ^\mathrm{b} _{\Q,\mathrm{qc}} ( \smash{\widetilde{\D}} _{\PP ^\sharp} ^{(\bullet)})$,
il existe un unique isomorphisme de la forme
$(\hdag X) (\E ^{(\bullet)}
\smash{\overset{\L}{\otimes}}   ^{\dag}
_{\O  _{\PP, \Q}} 
\FF ^{(\bullet)}) 
\riso
\E ^{(\bullet)}
\smash{\overset{\L}{\otimes}}   ^{\dag}
_{\O  _{\PP, \Q}} 
(\hdag X) (\FF ^{(\bullet)}) $
s'inscrivant dans le diagramme commutatif
\begin{equation}
\notag
\xymatrix @=0,4cm{
{\R \underline{\Gamma} ^\dag _{X} 
(\E ^{(\bullet)}
\smash{\overset{\L}{\otimes}}   ^{\dag}
_{\O  _{\PP, \Q}} 
\FF ^{(\bullet)})} 
\ar[d] ^-{\sim} _-{\ref{fonctXX'Gamma-iso}}
\ar[r] ^-{}
& 
{\E ^{(\bullet)}
\smash{\overset{\L}{\otimes}}   ^{\dag}
_{\O  _{\PP, \Q}} 
\FF ^{(\bullet)} } 
\ar@{=}[d] ^-{}
\ar[r] ^-{}
& 
{(\hdag X) (\E ^{(\bullet)}
\smash{\overset{\L}{\otimes}}   ^{\dag}
_{\O  _{\PP, \Q}} 
\FF ^{(\bullet)})  } 
 \ar@{.>}[d] ^-{\exists !} 
\ar[r] ^-{}
& 
{\R \underline{\Gamma} ^\dag _{X} 
(\E ^{(\bullet)}
\smash{\overset{\L}{\otimes}}   ^{\dag}
_{\O  _{\PP, \Q}} 
\FF ^{(\bullet)}) [1]}
\ar[d] ^-{\sim} _-{\ref{fonctXX'Gamma-iso}}
\\ 
 {\E ^{(\bullet)}
\smash{\overset{\L}{\otimes}}   ^{\dag}
_{\O  _{\PP, \Q}} 
\R \underline{\Gamma} ^\dag _{X}
(\FF ^{(\bullet)})}
\ar[r] ^-{}
& 
{\E ^{(\bullet)}
\smash{\overset{\L}{\otimes}}   ^{\dag}
_{\O  _{\PP, \Q}} 
\FF ^{(\bullet)} } 
\ar[r] ^-{}
& 
{ \E ^{(\bullet)}
\smash{\overset{\L}{\otimes}}   ^{\dag}
_{\O  _{\PP, \Q}} 
(\hdag X) (\FF ^{(\bullet)})  }\ar[r] ^-{}
& 
{\E ^{(\bullet)}
\smash{\overset{\L}{\otimes}}   ^{\dag}
_{\O  _{\PP, \Q}} 
\R \underline{\Gamma} ^\dag _{X}
(\FF ^{(\bullet)})[1].} 
}
\end{equation}
Comme d'habitude (en écrivant des parallélépipèdes), on vérifie qu'il est fonctoriel en 
$X,~\E ^{(\bullet)},~\FF ^{(\bullet)}$.
\end{vide}

\begin{vide}
Soient $X, X'$ deux sous-variétés fermées de $P$,
$\E ^{(\bullet)}
\in \smash{\underrightarrow{LD}} ^\mathrm{b} _{\Q,\mathrm{qc}} ( \smash{\widetilde{\D}} _{\PP ^\sharp} ^{(\bullet)})$.
Il existe alors un unique isomorphisme 
$(\hdag X') \circ \R \underline{\Gamma} ^\dag _{X}(\E ^{(\bullet)} )
\riso
\R \underline{\Gamma} ^\dag _{X} \circ (\hdag X') (\E ^{(\bullet)})$
fonctoriel en $X,~X ',~\E ^{(\bullet)}$
s'incrivant dans le diagramme commutatif de la forme
\begin{equation}
\notag
\xymatrix @=0,4cm{
{\R \underline{\Gamma} ^\dag _{X'} \circ \R \underline{\Gamma} ^\dag _{X} (\E ^{(\bullet)})} 
\ar[r] ^-{}
\ar[d] ^-{\sim}
& 
{\R \underline{\Gamma} ^\dag _{X} (\E ^{(\bullet)} )} 
\ar[r] ^-{}
\ar@{=}[d] ^-{}
& 
{(\hdag X') \circ \R \underline{\Gamma} ^\dag _{X}(\E ^{(\bullet)} )} 
\ar@{.>}[d] ^-{\exists !}
\ar[r] ^-{}
& 
{\R \underline{\Gamma} ^\dag _{X'} \circ \R \underline{\Gamma} ^\dag _{X} (\E ^{(\bullet)})[1]} 
\ar[d] ^-{\sim}
\\ 
{\R \underline{\Gamma} ^\dag _{X} \circ \R \underline{\Gamma} ^\dag _{X'} (\E ^{(\bullet)}) } 
\ar[r] ^-{}
& 
{\R \underline{\Gamma} ^\dag _{X} (\E ^{(\bullet)} ) } 
\ar[r] ^-{}
& 
{\R \underline{\Gamma} ^\dag _{X} \circ (\hdag X') (\E ^{(\bullet)})} 
\ar[r] ^-{}
& 
{\R \underline{\Gamma} ^\dag _{X} \circ \R \underline{\Gamma} ^\dag _{X'} (\E ^{(\bullet)})[1],} 
}
\end{equation}
De manière identique à \cite[2.2.14]{caro_surcoherent} (cette fois-ci, il n'y a aucune précision à apporter), 
on vérifie que l'on dispose de l'isomorphisme canonique
\begin{equation}
\label{2.2.14-surcoh}
(\hdag X) \circ (\hdag X') (\E ^{(\bullet)})
\riso
(\hdag X \cup X')(\E ^{(\bullet)}),
\end{equation}
fonctoriel en $X,~X ',~\E ^{(\bullet)}$.
On établit comme pour \cite[2.2.16]{caro_surcoherent},
que l'on bénéficie des triangles distingués de localisation de
Mayer-Vietoris
\begin{gather}\label{eq1mayer-vietoris}
  \R \underline{\Gamma} ^\dag _{X \cap X'}(\E ^{(\bullet)}) \rightarrow
  \R \underline{\Gamma} ^\dag _{X }(\E ^{(\bullet)}) \oplus
\R \underline{\Gamma} ^\dag _{X ' }(\E ^{(\bullet)})  \rightarrow
\R \underline{\Gamma} ^\dag _{X \cup X '}(\E^{(\bullet)} ) \rightarrow
\R \underline{\Gamma} ^\dag _{X \cap X'}(\E^{(\bullet)} )[1],\\
 (\hdag X \cap X')(\E ) \rightarrow  (\hdag X )(\E^{(\bullet)} ) \oplus
 (\hdag X ') (\E^{(\bullet)} )  \rightarrow   (\hdag X \cup X ')(\E^{(\bullet)} ) \rightarrow (\hdag X \cap X')(\E ^{(\bullet)})[1].
\end{gather}
\end{vide}

Nous ajouterons dans le prochain chapitre (voir \ref{2.2.18}) 
un détail sur la vérification de la commutativité des foncteurs locaux et de localisation par rapport aux images inverses extraordinaires
de \cite[2.2.18]{caro_surcoherent}.
Finissons enfin le chapitre par une extension du lemme \ref{lemme2.2.3}:
 \begin{prop}
\label{prop2.2.9}
Soient $D$ un diviseur de $P$, 
$X$ une sous-variété fermée de $P$,
$\U$ l'ouvert de $\PP$ complémentaire du support de $X$,
$\E ^{(\bullet)}
\in 
\smash{\underrightarrow{LD}} ^{\mathrm{b}} _{\Q,\mathrm{coh}} ( \smash{\widetilde{\D}} _{\PP ^\sharp} ^{(\bullet)} (D))$.
Les assertions suivantes sont équivalentes:
\begin{enumerate}
\item On dispose dans $\smash{\underrightarrow{LD}} ^{\mathrm{b}} _{\Q,\mathrm{coh}} ( \smash{\widetilde{\D}} _{\U ^{\sharp}} ^{(\bullet)} (D\cap U))$
de l'isomorphisme $\E ^{(\bullet)}|\U \riso 0$.
\item Le morphisme canonique 
$\R \underline{\Gamma} ^\dag _{X} (\E ^{(\bullet)})
\to 
\E ^{(\bullet)}$ est un isomorphisme dans 
$\smash{\underrightarrow{LD}} ^{\mathrm{b}} _{\Q} ( \smash{\widetilde{\D}} _{\PP ^\sharp} ^{(\bullet)} (D))$.
\item On dispose dans $\smash{\underrightarrow{LD}} ^{\mathrm{b}} _{\Q,\mathrm{coh}} ( \smash{\widetilde{\D}} _{\PP ^\sharp} ^{(\bullet)} (D))$
de l'isomorphisme $(\hdag X) (\E ^{(\bullet)} )\riso 0$.
\end{enumerate}
\end{prop}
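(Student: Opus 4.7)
L'équivalence $2 \Leftrightarrow 3$ sera immédiate à partir du triangle distingué définissant $(\hdag X)$ (voir \ref{hdagX}): le morphisme canonique $\R \underline{\Gamma} ^\dag _X(\E ^{(\bullet)}) \to \E ^{(\bullet)}$ est un isomorphisme exactement lorsque son c\^one $(\hdag X)(\E ^{(\bullet)})$ est nul, cette nullit\'e \'etant indiff\'eremment comprise dans la cat\'egorie quasi-coh\'erente ou coh\'erente sur $(D)$, puisque l'objet nul est coh\'erent.

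Pour l'implication $3 \Rightarrow 1$, ma strat\'egie sera de restreindre \`a $\U$. Je me ram\`ene d'abord au cas o\`u $P$ est int\`egre et $X \neq P$ (le cas $X = P$ \'etant trivial puisqu'alors $\U = \emptyset$), puis j'\'ecris $X = \bigcap _{i=1} ^r T _i$ comme intersection finie de diviseurs, de sorte que $\U = \bigcup _i (\PP \setminus T _i)$. Sur chaque $\PP \setminus T _i$, le triangle de localisation associ\'e au diviseur $T _i$ fournit $\R \underline{\Gamma} ^\dag _{T _i}(-)|_{\PP \setminus T _i} \riso 0$; combin\'e \`a la d\'efinition m\^eme $\R \underline{\Gamma} ^\dag _X := \R \underline{\Gamma} ^\dag _{T _r} \circ \cdots \circ \R \underline{\Gamma} ^\dag _{T _1}$, j'en d\'eduirai l'annulation de $\R \underline{\Gamma} ^\dag _X(\E ^{(\bullet)})|_{\PP \setminus T _i}$, puis celle sur $\U$ tout entier par localit\'e des isomorphismes (proposition \ref{LDiso-local}); la restriction du triangle d\'efinissant $(\hdag X)$ donnera alors $\E ^{(\bullet)}|_\U \riso (\hdag X)(\E ^{(\bullet)})|_\U \riso 0$.

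L'implication $1 \Rightarrow 3$ constitue la partie substantielle, et mon plan consiste \`a la ramener au cas des diviseurs d\'ej\`a trait\'e dans le lemme \ref{lemme2.2.3}. En \'ecrivant toujours $X = \bigcap _{i=1} ^r T _i$, on a $\PP \setminus T _i \subset \U$ pour chaque $i$, donc l'hypoth\`ese $\E ^{(\bullet)}|_\U \riso 0$ entra\^{\i}ne $\E ^{(\bullet)}|_{\PP \setminus T _i} \riso 0$; le lemme \ref{lemme2.2.3} appliqu\'e au diviseur $T _i$ fournit alors $(\hdag T _i)(\E ^{(\bullet)}) \riso 0$ dans $\smash{\underrightarrow{LD}} ^{\mathrm{b}} _{\Q ,\mathrm{coh}}(\smash{\widetilde{\D}} _{\PP ^\sharp} ^{(\bullet)}(D))$, ce qui \'equivaut via le triangle de localisation \`a $\R \underline{\Gamma} ^\dag _{T _i}(\E ^{(\bullet)}) \riso \E ^{(\bullet)}$ dans cette m\^eme cat\'egorie. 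En it\'erant cet isomorphisme le long du compos\'e $\R \underline{\Gamma} ^\dag _X = \R \underline{\Gamma} ^\dag _{T _r} \circ \cdots \circ \R \underline{\Gamma} ^\dag _{T _1}$, j'aboutirai \`a $\R \underline{\Gamma} ^\dag _X(\E ^{(\bullet)}) \riso \E ^{(\bullet)}$, d'o\`u $(\hdag X)(\E ^{(\bullet)}) \riso 0$.

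L'obstacle principal \`a contr\^oler sera la stabilit\'e de la coh\'erence sur $(D)$ au fil de l'it\'eration: a priori $(\hdag T _i)$ prend ses valeurs dans la cat\'egorie coh\'erente \'elargie sur $(D \cup T _i)$, mais l'annulation donn\'ee par le lemme \ref{lemme2.2.3} identifie $\R \underline{\Gamma} ^\dag _{T _i}(\E ^{(\bullet)})$ \`a $\E ^{(\bullet)}$ lui-m\^eme, qui reste par hypoth\`ese dans $\smash{\underrightarrow{LD}} ^{\mathrm{b}} _{\Q ,\mathrm{coh}}(\smash{\widetilde{\D}} _{\PP ^\sharp} ^{(\bullet)}(D))$; c'est cette stabilit\'e \`a chaque pas qui assurera que le compos\'e final $\R \underline{\Gamma} ^\dag _X(\E ^{(\bullet)})$ demeure dans la bonne cat\'egorie coh\'erente sur $(D)$, et donc que $(\hdag X)(\E ^{(\bullet)}) \riso 0$ y a bien un sens.
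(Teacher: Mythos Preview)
Your proof is correct and follows essentially the same approach as the paper: the equivalence $2 \Leftrightarrow 3$ is tautological from the defining triangle of $(\hdag X)$, the implication $3 \Rightarrow 1$ is immediate (the paper simply calls it trivial, since $\R \underline{\Gamma}^\dag_X(\E^{(\bullet)})$ vanishes on $\U$), and the substantive direction $1 \Rightarrow 2$ is reduced via the decomposition $X = \bigcap_i T_i$ to the divisor case treated in Lemma~\ref{lemme2.2.3}. Your final paragraph on coherence stability is a legitimate check but not really an obstacle: once Lemma~\ref{lemme2.2.3} gives $\R\underline{\Gamma}^\dag_{T_i}(\E^{(\bullet)}) \riso \E^{(\bullet)}$ at each step, the composite stays isomorphic to $\E^{(\bullet)}$ itself, which is coherent by hypothesis.
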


\begin{proof}
L'équivalence entre $2$ et $3$ est tautologique (voir la définition du foncteur $(\hdag X)$ donnée dans \ref{hdagX}). 
L'assertion $3 \Rightarrow 1$ est triviale. 
Il reste à établir l'implication $1 \Rightarrow 2$. 
Soit $T$ un diviseur contenant $X$. 
Par construction du foncteur 
$\R \underline{\Gamma} ^\dag _{X}$, 
il suffit de vérifier que le morphisme $\R \underline{\Gamma} ^\dag _{T} (\E ^{(\bullet)})
\to 
\E ^{(\bullet)}$ est un isomorphisme, ce qui résulte du lemme \ref{lemme2.2.3}.
\end{proof}

\section{Surcohérence}

\subsection{Image inverse extraordinaire, image directe}

Soient $f \colon \PP' \to \PP$ un morphisme de $\V$-schémas formels lisses, 
$T$ et $T'$ des diviseurs respectifs de $P$ et $P'$ tels que
$f ( P '\setminus T' ) \subset P \setminus T$, 
$\ZZ$ et $\ZZ'$ des diviseurs à croisements normaux stricts de 
respectivement $\PP$ et $\PP'$ tels que $f ^{-1} (\ZZ) \subset \ZZ'$.
On pose $\PP ^{\sharp}:= (\PP, \ZZ)$, $\PP ^{\prime \sharp}:= (\PP', \ZZ')$
et
$f ^{\sharp}
\colon 
\PP ^{\prime \sharp}
\to 
\PP ^{\sharp}$ le morphisme de $\V$-schémas formels logarithmiques induits par $f$. 
Nous rappelons dans cette section les constructions des
images inverses extraordinaires et 
des images directes par $f ^{\sharp}$ 
à singularités surconvergentes le long de $T$ et $T'$.
Nous donnons de plus quelques premières propriétés.

\begin{nota}
Nous aurons besoin de quelques notations préliminaires afin
de définir l'image inverse extraordinaire par $f ^{\sharp}$
à singularités surconvergentes le long de $T$ et $T'$.
\begin{itemize}
\item Comme $f ^{-1} (T) \subset T'$, 
on dispose alors du morphisme canonique
$f ^{-1} \widetilde{\B} _{P _i} ^{(m)} ( T ) \rightarrow \widetilde{\B} _{P' _i} ^{(m)} ( T ')$.
Il en résulte que 
le faisceau 
$\widetilde{\B} _{P' _i} ^{(m)} ( T ') \otimes _{\O _{P ^\prime _i}} f _i ^* \D _{P ^{\sharp} _i} ^{(m)}
\riso 
\widetilde{\B} _{P' _i} ^{(m)} ( T ') \otimes _{f ^{-1} \widetilde{\B} _{P _i} ^{(m)} ( T ) } f ^{-1} \widetilde{\D} _{P ^{\sharp} _i} ^{(m)} (T)$
est muni d'une structure canonique
de ($ \widetilde{\D} _{P  ^{\prime \sharp} _i} ^{(m)} ( T ')$, $ f  ^{-1} \widetilde{\D} _{P ^{\sharp} _i} ^{(m)}(T)$)-bimodule.
Ce bimodule sera noté $\widetilde{\D} ^{(m)} _{P ^{\prime \sharp} _i \rightarrow P ^{\sharp} _i} ( T' , T)$.

\item Il en dérive par complétion $p$-adique  le
$(\smash{\widetilde{\D}} _{\PP ^{\prime \sharp}} ^{(m)}(T') , f ^{-1} \smash{\widetilde{\D}} _{\PP ^{\sharp}} ^{(m)} (T))$-bimodule :
$\smash{\widetilde{\D}} _{\PP ^{\prime \sharp}\rightarrow \PP ^{\sharp}} ^{(m)} ( T' , T):=
\underset{\underset{i}{\longleftarrow}}{\lim}\, \widetilde{\D}_{P ^{\prime \sharp} _i \rightarrow P ^{\sharp} _i} ^{(m)}( T' , T)$.

\item On obtient un $(\D ^{\dag } _{\PP ^{\prime \sharp}} (\hdag T' )_{\Q}, f ^{-1} \D ^{\dag }_{\PP ^{ \sharp}} (\hdag T) _{\Q})$-bimodule
en posant 
$\D ^{\dag} _{\PP ^{\prime \sharp}\rightarrow \PP ^{ \sharp}} (\hdag T' ,T) _{\Q}:=\underset{\underset{m}{\longrightarrow}}{\lim}\,
\smash{\widetilde{\D}} _{\PP ^{\prime \sharp}\rightarrow \PP ^{ \sharp}} ^{(m)} ( T' ,T)_{\Q}$.

\end{itemize}

\end{nota}

\begin{vide}
$\bullet$ On bénéficie du foncteur image inverse extraordinaire par $f$ à singularités surconvergentes le long de $T$ et $T'$
de la forme
$f  ^{\sharp(\bullet)!} _{T',T} \colon
\smash{\underrightarrow{LD}} ^{\mathrm{b}} _{\Q,\mathrm{qc}} ( \smash{\widetilde{\D}} _{\PP ^\sharp} ^{(\bullet)}(T))
\to
\smash{\underrightarrow{LD}} ^{\mathrm{b}} _{\Q,\mathrm{qc}} ( \smash{\widetilde{\D}} _{\PP ^{\prime \sharp}} ^{(\bullet)}(T'))$
en posant, 
pour tout $\E ^{(\bullet)} \in \smash{\underrightarrow{LD}}  ^\mathrm{b} _{\Q, \mathrm{qc}}
( \smash{\widetilde{\D}} _{\PP ^{\sharp}} ^{(\bullet)} (T ))$:
\begin{equation}
\notag
  f _{T',T} ^{\sharp (\bullet)!} ( \E ^{(\bullet)}) :=
\smash{\widetilde{\D}} ^{(\bullet)} _{\PP ^{\prime \sharp} \rightarrow \PP ^{\sharp}} (T',T)
\smash{\widehat{\otimes}} ^\L _{f ^{-1} \smash{\widetilde{\D}} ^{(\bullet)} _{\PP ^{\sharp}} (T)}
f ^{-1} \E ^{(\bullet)} [ d_{\PP ' /\PP}].
\end{equation}
Par abus de notation (ce n'est pas un foncteur exact à droite), 
on pose 
$\L f _{T',T} ^{\sharp (\bullet)*} ( \E ^{(\bullet)}) :=  f _{T',T} ^{\sharp (\bullet)!} ( \E ^{(\bullet)}) [ -d_{\PP ' /\PP}]$.

$\bullet$ On dispose du foncteur image inverse extraordinaire par $f$ à singularités surconvergentes le long de $T$ et $T'$
de la forme
$f  ^{\sharp !} _{T',T} \colon
D ^\mathrm{b} _\mathrm{coh} ( \smash{\D} ^\dag _{\PP ^\sharp} (\hdag T) _{\Q} )
\to 
 D ^\mathrm{b} ( \smash{\D} ^\dag _{\PP ^{\prime \sharp}} (\hdag T') _{\Q} )$
en posant
(comme pour \cite[4.3.2]{Beintro2}), 
pour tout $\E \in D ^{\mathrm{b}} _{\mathrm{coh}} ( \D ^{\dag} _{\PP ^{\sharp} } (\hdag T ) _{\Q})$ :
\begin{equation}
\label{def-image-inv-extr}
f  ^{\sharp !} _{T' , T} (\E ):=\D ^{\dag} _{\PP ^{\prime \sharp}\rightarrow \PP ^{\sharp} }  ( \hdag T' , T ) _{\Q}
\otimes ^{\L} _{ f ^{-1} \D ^{\dag} _{\PP ^{\sharp} } (\hdag T ) _{\Q}} f ^{-1} \E [d _{\PP '/\PP} ].
\end{equation}
\end{vide}

\begin{vide}
\label{f!oubsharp}
Avec les notations de \ref{nota-oub-sharp}, 
pour tout $\E ^{(\bullet)} \in \smash{\underrightarrow{LD}}  ^\mathrm{b} _{\Q, \mathrm{qc}}
(\widetilde{\D} _{\PP } ^{(\bullet)} (T ))$
on vérifie que le morphisme canonique
$$f _{T',T} ^{\sharp (\bullet)!} 
\circ \mathrm{oub} _{\sharp}
(\E ^{(\bullet)})
\to 
\mathrm{oub} _{\sharp} 
\circ 
f _{T',T} ^{ (\bullet)!} 
(\E ^{(\bullet)})$$
est un isomorphisme
(en effet, au niveau des schémas, ces foncteurs sont tous les deux 
au décalage près le foncteur dérivé gauche de l'image inverse en tant que $\O$-modules composé avec le foncteur de localisation).  
\end{vide}

\begin{nota}
Nous aurons besoin
pour définir l'image directe par $f$
à singularités surconvergentes le long de $T$ et $T'$ de considérer les bimodules ci-dessous.

\begin{itemize}
\item Le ($f ^{-1} \widetilde{\D} _{P ^{\sharp} _i} ^{(m)}(T)$, $ \widetilde{\D} _{P ^{\prime \sharp} _i} ^{(m)} ( T ')$)-bimodule
$\widetilde{\B} _{P' _i} ^{(m)} ( T ') \otimes _{\O _{P ^\prime _i}}
(\omega _{P ^{\prime \sharp} _i} \otimes _{\O _{P ' _i}}f ^* _g ( \D _{P ^{\sharp} _i} ^{(m)}(T) \otimes _{\O _{P _i}} \omega ^{-1} _{P ^{\sharp} _i} ))$,
où l'indice $g$ signifie que l'on choisit la structure gauche de $\D _{P _i} ^{(m)} $-module à gauche,
sera noté $\widetilde{\D} ^{(m)} _{P ^{\sharp} _i \leftarrow P ^{\prime \sharp  }_i} ( T , T')$.

\item On dispose alors du
$(f ^{-1} \smash{\widetilde{\D}} _{\PP ^{\sharp}} ^{(m)} (T),~\smash{\widetilde{\D}} _{\PP ^{\prime \sharp}} ^{(m)}(T') )$-bimodule :
$\smash{\widetilde{\D}} _{\PP ^{\sharp} \leftarrow \PP ^{\prime \sharp}} ^{(m)} ( T, T'):=
\underset{\underset{i}{\longleftarrow}}{\lim}\, \widetilde{\D}_{P ^{\sharp} _i \leftarrow P ^{\prime \sharp} _i} ^{(m)}( T , T')$.

\item D'où le
($f ^{-1} \D ^{\dag }_{\PP ^{\sharp} } (\hdag T) _{\Q}$, $\D ^{\dag } _{\PP ^{\prime \sharp}} (\hdag T' )_{\Q}$)-bimodule
$\D ^{\dag} _{\PP ^{\sharp} \leftarrow \PP ^{\prime \sharp}} (\hdag T ,T ')_{\Q}:=
\underset{\underset{m}{\longrightarrow}}{\lim}
\smash{\widetilde{\D}} _{\PP ^{\sharp} \leftarrow \PP ^{\prime \sharp}} ^{(m)} ( T, T') _\Q$.

\end{itemize}

\end{nota}

\begin{vide}
$\bullet$ On dispose du foncteur 
image directe par $f$ à singularités surconvergentes le long de $T$ et $T'$ de la forme
$f  ^{\sharp (\bullet)} _{T,T',+} \colon
\smash{\underrightarrow{LD}} ^{\mathrm{b}} _{\Q,\mathrm{qc}} ( \smash{\widetilde{\D}} _{\PP ^{\prime \sharp}} ^{(\bullet)}(T'))
\to 
\smash{\underrightarrow{LD}} ^{\mathrm{b}} _{\Q,\mathrm{qc}} ( \smash{\widetilde{\D}} _{\PP ^\sharp} ^{(\bullet)}(T))$
en posant, 
pour tout $\E ^{\prime (\bullet)} \in \smash{\underrightarrow{LD}}  ^\mathrm{b} _{\Q, \mathrm{qc}}
( \smash{\widetilde{\D}} _{\PP ^{\prime \sharp}} ^{(\bullet)} (T '))$:
\begin{gather}\notag
f _{T,T',+} ^{\sharp (\bullet)} ( \E ^{\prime (\bullet)} ):= 
\R f _* (
\smash{\widetilde{\D}} ^{(\bullet)} _{\PP ^{\sharp} \leftarrow \PP ^{\prime \sharp}} (T,T')
\smash{\widehat{\otimes}} ^\L _{\smash{\widetilde{\D}} ^{(\bullet)} _{\PP ^{\prime \sharp}} (T')}
\E ^{\prime (\bullet)}).
\end{gather}

$\bullet$ Avec \cite[4.3.7]{Beintro2},
on construit le foncteur 
image directe par $f$ à singularités surconvergentes le long de $T$ et $T'$ de la forme
$f  ^{\sharp} _{T,T',+} \colon
D ^\mathrm{b} _\mathrm{coh} ( \smash{\D} ^\dag _{\PP ^{\prime \sharp}} (\hdag T') _{\Q} )
\to 
 D ^\mathrm{b} ( \smash{\D} ^\dag _{\PP ^\sharp} (\hdag T) _{\Q} )$
en posant,
pour tout $\E '\in D ^{\mathrm{b}} _{\mathrm{coh}} ( \D ^{\dag} _{\PP ^{\prime \sharp}} (\hdag T' ) _{\Q})$ :
\begin{equation}
\label{ftt'+}
f  ^{\sharp} _{T , T ', +}( \E '):=
 \R f_* (\D ^{\dag} _{\PP ^{\sharp} \leftarrow \PP ^{\prime \sharp}}  ( \hdag T , T ') _{\Q}
\otimes ^{\L} _{ \D ^{\dag} _{\PP ^{\prime \sharp}} (\hdag T ') _{\Q}} \E ').
\end{equation}
\end{vide}

\begin{vide}
\label{com-f+-oubsharp}
Dans ce paragraphe, supposons que $f ^{-1} (\ZZ) =\ZZ'$ et $f ^{\sharp}$ est exact.
Pour tout $\E ^{\prime (\bullet)} \in \smash{\underrightarrow{LD}}  ^\mathrm{b} _{\Q, \mathrm{qc}}
( \smash{\widetilde{\D}} _{\PP ^{\prime }} ^{(\bullet)} (T '))$, le morphisme canonique
\begin{equation}
\label{com-f+-oubsharp-iso}
f _{T,T',+} ^{\sharp (\bullet)} \circ \mathrm{oub} _{\sharp} ( \E ^{\prime (\bullet)} )
\to 
\mathrm{oub} _{\sharp}  \circ f _{T,T',+} ^{(\bullet)} ( \E ^{\prime (\bullet)} )
\end{equation}
est alors un isomorphisme. 
En effet, quitte à décomposer $f$ en son graphe suivant de la projection canonique
$\PP '\times \PP \to \PP$, on se ramène au cas où $f $ est une immersion fermée ou un morphisme lisse. 
Dans chacun de ces deux cas, cela se vérifie par un calcul aisé en coordonnées locales. 

Sans l'hypothèse que $f ^{\sharp}$ soit exact, on prendra garde au fait que 
le morphisme \ref{com-f+-oubsharp-iso} n'est plus un isomorphisme (il suffit de considérer $f =id$).

\end{vide}

\begin{vide}
\label{coh-Qcoh}
Avec les notations du paragraphe \ref{fct-qcoh2coh}, on dispose de 
l'isomorphisme de foncteurs 
$\mathrm{Coh} _{T'} (f  ^{\sharp (\bullet)}_{T , T ', +}) \riso f  ^{\sharp} _{T , T ', +}$ 
et $\mathrm{Coh} _{T} ( f _{T',T} ^{\sharp (\bullet)!}) \riso  f _{T',T} ^{\sharp !}$
(cela se vérifie de manière analogue à \cite[4.3.2.2 et 4.3.7.1]{Beintro2}).
\end{vide}

\begin{vide}
\label{stab-coh}
$\bullet$ Soit $\E \in D ^{\mathrm{b}} _{\mathrm{coh}} ( \D ^{\dag} _{\PP ^{\sharp} } (\hdag T ) _{\Q})$.
Si $f$ est lisse et si $f ^{\sharp}$ est exact, alors 
$f  ^{\sharp !} _{T' , T} (\E ) \in
D ^{\mathrm{b}} _{\mathrm{coh}} ( \D ^{\dag} _{\PP ^{\prime \sharp}} (\hdag T' ) _{\Q})$.

$\bullet$ Soit $\E '\in D ^{\mathrm{b}} _{\mathrm{coh}} ( \D ^{\dag} _{\PP ^{\prime \sharp}} (\hdag T' ) _{\Q})$.
Si $f$ est propre et $T ' = f ^{-1}(T)$, alors
$f  ^{\sharp} _{T , T ', +}( \E ')\in D ^{\mathrm{b}} _{\mathrm{coh}} ( \D ^{\dag} _{\PP ^{\sharp} } (\hdag T ) _{\Q})$
(cela se vérifie de manière analogue à \cite[4.3.7-8]{Beintro2}, i.e. cela résulte des isomorphismes de la forme
\ref{f_+-chgb}).

\end{vide}

\begin{vide}
Supposons dans ce paragraphe que $T' = f ^{-1} (T)$.
On écrit alors
$ f _{T} ^{\sharp (\bullet)!}$, 
$f ^{\sharp !} _T$, 
 $f  ^{\sharp (\bullet)}_{T ,  +}$ et 
 $f  ^{\sharp} _{T,+} $ 
 à la place respectivement de 
$ f _{T',T} ^{\sharp (\bullet)!}$, 
$f ^{\sharp !} _{T',T}$,
$f  ^{\sharp (\bullet)}_{T , T ', +} $ et 
$f  ^{\sharp} _{T,T',+}$.
Si $T$ est l'ensemble vide, nous omettrons de l'indiquer dans toutes
les expressions faisant intervenir $T$. 
\end{vide}

\begin{prop}
\label{2.1.4-caro-surcoh}
Soient $\E ^{(\bullet)}
\in \underrightarrow{LD}  ^\mathrm{b} _{\Q, \mathrm{coh}}
(\overset{^\mathrm{g}}{} \smash{\widehat{\D}} _{\PP ^{\sharp}} ^{(\bullet)})$
et
$\E ^{\prime (\bullet)}
\in \underrightarrow{LD}  ^\mathrm{b} _{\Q, \mathrm{coh}}
(\overset{^\mathrm{g}}{} \smash{\widehat{\D}} _{\PP ^{\prime \sharp }} ^{(\bullet)})$.
On dispose de l'isomorphisme :
\begin{equation}
\label{2.1.4-caro-surcoh-iso}
f ^{\sharp (\bullet)} _{+} 
(\E ^{\prime (\bullet)})
\smash{\overset{\L}{\otimes}}   ^{\dag} _{\O _{\PP} }
 \E ^{(\bullet)}
\liso
f ^{\sharp (\bullet)} _{+} \left ( \E ^{\prime (\bullet)}
\smash{\overset{\L}{\otimes}}   ^{\dag} _{\O _{\PP '}}
\L f  ^{\sharp (\bullet)*}  (\E ^{(\bullet)}) \right).
\end{equation}
\end{prop}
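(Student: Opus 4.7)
The plan is to construct the projection-formula morphism at each finite level $m$ and then to promote it to a morphism in the inductive-system category, finally checking it is an isomorphism by passing to $\D^\dag _{\PP ^\sharp,\Q}$ and invoking pleine fid�lit�.

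First I would unwind the definitions. At level $m$,
\begin{gather*}
f _{+} ^{(m)} (\E ^{\prime (m)})
\widehat{\otimes} ^{\L} _{\widetilde{\B} _{\PP} ^{(m)}}
\E ^{(m)}
=
\R f _{*}\!\left (
\smash{\widetilde{\D}} _{\PP ^{\sharp}\leftarrow \PP ^{\prime \sharp}} ^{(m)}
\widehat{\otimes} ^{\L} _{\smash{\widetilde{\D}} _{\PP ^{\prime \sharp}} ^{(m)}}
\E ^{\prime (m)}
\right )
\widehat{\otimes} ^{\L} _{\widetilde{\B} _{\PP} ^{(m)}} \E ^{(m)},
\\
f _{+} ^{(m)} \!\left ( \E ^{\prime (m)}
\widehat{\otimes} ^{\L} _{\widetilde{\B} _{\PP '} ^{(m)}}
\L f  ^{\sharp (m)*}\E ^{(m)}\right )
=
\R f _{*}\!\left (
\smash{\widetilde{\D}} _{\PP ^{\sharp}\leftarrow \PP ^{\prime \sharp}} ^{(m)}
\widehat{\otimes} ^{\L} _{\smash{\widetilde{\D}} _{\PP ^{\prime \sharp}} ^{(m)}}
\E ^{\prime (m)}
\widehat{\otimes} ^{\L} _{\widetilde{\B} _{\PP '} ^{(m)}}
f ^{-1}\E ^{(m)}\right ).
\end{gather*}
The right-hand side is linked to the left by Berthelot's projection formula for $\R f _{*}$ applied to a bounded complex of $\widetilde{\B} _{\PP '} ^{(m)}\text{-}\widetilde{\B} _{\PP} ^{(m)}$-bimodules (this is a $p$-adically completed avatar of the usual $\R f _{*} (A) \otimes ^\L B \simeq \R f _{*} (A \otimes ^\L f ^{-1}B)$). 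This produces a natural morphism
$$\alpha ^{(m)}\colon f _{+} ^{(m)} (\E ^{\prime (m)})
\widehat{\otimes} ^{\L} _{\widetilde{\B} _{\PP} ^{(m)}} \E ^{(m)}
\to
f _{+} ^{(m)} \!\left ( \E ^{\prime (m)}
\widehat{\otimes} ^{\L} _{\widetilde{\B} _{\PP '} ^{(m)}}
\L f  ^{\sharp (m)*}\E ^{(m)}\right ),$$
which I would check is functorial in $m$, so that it assembles into a morphism $\alpha ^{(\bullet)}$ of inductive systems; the compatibility with the transition maps is an essentially formal check, since the projection formula is natural in the sheaf-theoretic input.

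Next I would verify that $\alpha ^{(\bullet)}$ is an isomorphism in $\underrightarrow{LD} ^{\mathrm{b}} _{\Q,\mathrm{qc}}
 ( \smash{\widehat{\D}} _{\PP ^\sharp} ^{(\bullet)})$. By Corollary \ref{LDLMiso=isoHn} it suffices to check that $\mathcal{H} ^n (\alpha ^{(\bullet)})$ is an isomorphism in $\underrightarrow{LM} _{\Q}$. A more efficient route is to apply the functor $\underrightarrow{\lim}$: since $\E ^{(\bullet)}$ and $\E ^{\prime (\bullet)}$ are coherent, Berthelot's classical projection formula (cf.\ \cite[4.3.6]{Beintro2}) provides the analogous isomorphism
$$f ^{\sharp} _{+} (\underrightarrow{\lim}\E ^{\prime (\bullet)})
\otimes ^{\dag} _{\O _{\PP} (\hdag ~) _{\Q}}
\underrightarrow{\lim}\E ^{(\bullet)}
\riso
f ^{\sharp} _{+} \!\left ( \underrightarrow{\lim}\E ^{\prime (\bullet)}
\otimes ^{\dag} _{\O _{\PP '} (\hdag ~) _{\Q}}
\L f ^{\sharp *} (\underrightarrow{\lim}\E ^{(\bullet)})\right ),$$
and $\underrightarrow{\lim}\circ \alpha ^{(\bullet)}$ is this morphism, which is an isomorphism by Berthelot's projection formula for $\D ^{\dag} _{\PP ^{\sharp},\Q}$-modules. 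Then working locally on $\PP$ and on $\PP '$, reducing $\E ^{(\bullet)}$ and $\E ^{\prime (\bullet)}$ to $\lambda ^{*}$-modules that are locally of finite presentation (via \ref{strict-m0}), both sides of $\alpha ^{(\bullet)}$ belong to $\underrightarrow{LD} ^{\mathrm{b}} _{\Q,\mathrm{coh}}$ of a suitable level (the direct image of a coherent complex by a morphism of formal schemes being coherent on each open affine up to a limit-shift); one can then invoke the pleine fid�lit� of $\underrightarrow{\lim}$ on coherent objects (Theorem \ref{theo-eq-coh-lim}) to conclude.

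The main obstacle will be the second step: the direct image $f _{+} ^{(\bullet)}$ does not in general preserve coherence (properness is needed), so we cannot invoke \ref{theo-eq-coh-lim} directly on the whole setup. The remedy is to handle the non-coherent case by the $\mathcal{H} ^n$-criterion of \ref{LDLMiso=isoHn} together with the locality of being an isomorphism in $\underrightarrow{LM} _{\Q}$ (Proposition \ref{LMQ-iso:local}): working on a quasi-compact affine covering adapted to both $\PP$ and $f$, one reduces to checking the projection formula level-by-level for Koszul-style resolutions of $\E ^{\prime (\bullet)}$ by the free bimodules $\smash{\widetilde{\D}} _{\PP ^{\sharp}\leftarrow \PP ^{\prime \sharp}} ^{(\bullet)}$, for which the sheaf-theoretic projection formula is directly available.
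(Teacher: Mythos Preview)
Your construction of the morphism $\alpha^{(\bullet)}$ is on the right track, but the strategy for checking it is an isomorphism has a real gap. As you yourself note, $f^{\sharp(\bullet)}_+$ need not preserve coherence without properness of $f$, so neither side of $\alpha^{(\bullet)}$ is known to lie in $\underrightarrow{LD}^{\mathrm{b}}_{\Q,\mathrm{coh}}$, and the pleine-fid\'elit\'e of $\underrightarrow{\lim}$ (Theorem~\ref{theo-eq-coh-lim}) cannot be invoked. Your attempted repair in the final paragraph is too vague to count as a proof: it is unclear what ``Koszul-style resolutions of $\E^{\prime(\bullet)}$ by the free bimodules $\smash{\widetilde{\D}}^{(\bullet)}_{\PP^\sharp\leftarrow\PP^{\prime\sharp}}$'' would mean (the object $\E^{\prime(\bullet)}$ is a left $\smash{\widetilde{\D}}^{(\bullet)}_{\PP^{\prime\sharp}}$-module, not a bimodule, and such resolutions do not exist in general), and even granting such a d\'evissage you have only pushed the problem back to the completed projection formula for those building blocks.

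The paper's proof is simply a pointer to \cite[2.1.4]{caro_surcoherent}, with the remark that the logarithmic transposition isomorphism of \cite{caro_log-iso-hol} takes care of the bimodule structures. That argument is a \emph{direct} verification, not a detour through $\underrightarrow{\lim}$: working at each level $m$ (and reducing mod $\pi^{i+1}$), one uses the way-out lemma and locality on the \emph{coherent} factor $\E^{(m)}$ --- not on $\E^{\prime(m)}$ --- to reduce to $\E^{(m)}$ locally free of finite rank over $\smash{\widehat{\D}}^{(m)}_{\PP^\sharp}$; the transposition isomorphism then rewrites the two sides so that the ordinary sheaf-theoretic projection formula for $\R f_*$ applies and gives an isomorphism. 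The point you are missing is that each $\alpha^{(m)}$ is already an isomorphism in $D(\smash{\widehat{\D}}^{(m)}_{\PP^\sharp})$, so no appeal to full faithfulness on the inductive limit is needed at all.
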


\begin{proof}
Il s'agit de reprendre la preuve de la proposition analogue non logarithmique, i.e. la preuve de 
\cite[2.1.4]{caro_surcoherent}.
Plus précisément, l'isomorphisme 
\ref{2.1.4-caro-surcoh-iso} se déduit {\og par complétion et passage à la limite\fg} de l'isomorphisme analogue 
au niveau des log schémas. La vérification de cet isomorphisme au niveau des log schémas est identique à celle non logarithmique, i.e. 
à celle de \cite[1.2.27]{caro_surcoherent}. En d'autres termes, on se ramène à vérifier que les analogues logarithmiques de 
\cite[1.2.21]{caro_surcoherent} et \cite[1.2.25]{caro_surcoherent} sont encore valables
(pour valider l'analogue logarithmique de  \cite[1.2.25]{caro_surcoherent}, on aura besoin d'utiliser l'isomorphisme de transposition logarithmique de
\cite[1.18]{caro_log-iso-hol}).
\end{proof}

\begin{rema}
\begin{itemize}
\item Grâce à \ref{oub-div-opcoh} dont la preuve utilise \ref{2.1.4-caro-surcoh}, 
on dispose d'une version avec singularités surconvergentes de 
\ref{2.1.4-caro-surcoh-iso}. 
\item Lorsque $f=id$ et $\ZZ$ est vide, 
l'isomorphisme \ref{2.1.4-caro-surcoh-iso}
est un avatar du théorème \cite[3.6]{caro_log-iso-hol} 
dont la preuve reprenait celle fournie pour les variétés complexes.

\end{itemize}
\end{rema}

\subsection{Stabilité par image inverse du coefficient constant avec singularités surconvergentes}

 Soient $u \colon \X \hookrightarrow \PP$ une immersion fermée de $\V$-schémas formels lisses,
$\ZZ$ et $\mathfrak{D}$ des diviseurs à croisements normaux stricts de 
respectivement $\PP$ et $\X$ tels que $u ^{-1} (\ZZ) \subset \mathfrak{D}$.
On pose $\PP ^{\sharp}:= (\PP, \ZZ)$, $\X ^{ \sharp}:= (\X, \mathfrak{D})$
et
$u ^{\sharp}
\colon 
\X ^{ \sharp}
\to 
\PP ^{\sharp}$ le morphisme de $\V$-schémas formels logarithmiques induits par $u$. 

\begin{lemm}
\label{u*Odiv}
Soient $f \colon \PP' \to \PP$ un morphisme de $\V$-schémas formels lisses, 
$T$ un diviseur de $P$ tel que $T':= f ^{-1} (T)$ soit un diviseur de $P'$.  
On dispose alors de l'isomorphisme 
canonique
$$\O _{P ' _i} \otimes  ^\L
_{f ^{-1} \O _{P _i}} f ^{-1} \B ^{(m)}  _{P _i} ( T) 
\riso 
\B ^{(m)}  _{P ' _i} ( T') .$$
On bénéficie de plus de l'isomorphisme canonique
$\L f ^{(\bullet)*} (\widetilde{\B} ^{(\bullet)}  _{\PP} ( T) )
\riso 
\widetilde{\B} ^{(\bullet)}  _{\PP '} ( T')$
dans 
$\smash{\underrightarrow{LD}} ^{\mathrm{b}} _{\Q,\mathrm{coh}} ( \smash{\widetilde{\D}} _{\PP'} ^{(\bullet)})$.

\end{lemm}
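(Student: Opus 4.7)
The plan is to prove the two displayed isomorphisms separately, reducing the second (on inductive systems of $\D$-modules) to the first (on single $\O$-modules) by unpacking the definitions and then passing through the $p$-adic and inductive limits.

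First I would work locally on $\PP$ and assume $T$ is cut out by a single equation $g \in \O_\PP$. Then Berthelot's construction gives the presentation $\B ^{(m)} _{P _i}(T) = \O _{P _i}[X]/(\bar g ^{p^{m+1}}X - p)$, where $\bar g$ is the class of $g$ in $\O _{P _i}$, together with the flat two-term resolution
\begin{equation}
\notag
0 \longrightarrow \O _{P _i}[X] \xrightarrow{\bar g ^{p^{m+1}}X - p} \O _{P _i}[X] \longrightarrow \B ^{(m)} _{P _i}(T) \longrightarrow 0.
\end{equation}
Applying the underived functor $\O _{P' _i}\otimes _{f ^{-1}\O _{P _i}}-$ (which computes $\O _{P' _i}\otimes ^{\L} _{f ^{-1}\O _{P _i}}-$ since the terms are $\O _{P _i}$-flat) transforms this complex into $\O _{P' _i}[X] \xrightarrow{(f ^{*}\bar g) ^{p^{m+1}}X - p} \O _{P' _i}[X]$, where $f ^{*}\bar g$ is a local equation of $T' _i$.

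The key point is the injectivity of this last map. Since $T' = f ^{-1}(T)$ is a divisor of $P'$ by hypothesis, the element $f ^{*}g \in \O _{\PP'}$ generates an invertible ideal, hence is a non-zero-divisor, and its reduction $f ^{*}\bar g$ remains a non-zero-divisor in $\O _{P' _i}$ by $\V$-flatness of $\PP'$. An argument analogous to step $1)$ of the proof of \ref{lem1-hdagT1T2} (or \cite[4.3.3]{Be1}) then shows that $(f ^{*}\bar g) ^{p^{m+1}}X - p$ acts injectively on $\O _{P' _i}[X]$. Berthelot's presentation applied to $T' _i$ identifies the cokernel with $\B ^{(m)} _{P' _i}(T')$, so the complex is a resolution of $\B ^{(m)} _{P' _i}(T')$, establishing the first isomorphism. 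The local construction globalizes by uniqueness.

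For the second assertion, I would unpack the definition $\L f ^{(\bullet)*} = f ^{(\bullet)!}[-d _{\PP'/\PP}]$. Since $\widetilde{\D} ^{(m)} _{\PP}$ is locally $\O _{\PP}$-free, we have $f ^{*}\widetilde{\D} ^{(m)} _{\PP} \riso \O _{\PP'}\otimes ^{\L} _{f ^{-1}\O _{\PP}} f ^{-1}\widetilde{\D} ^{(m)} _{\PP}$, and by associativity of the derived tensor product the $\D$-module pullback reduces, up to the built-in shift, to the $\O$-module pullback:
\begin{equation}
\notag
f ^{(m)!}(\widetilde{\B} ^{(m)} _{\PP}(T)) \riso \bigl(\O _{\PP'}\widehat{\otimes} ^{\L} _{f ^{-1}\O _{\PP}} f ^{-1}\widetilde{\B} ^{(m)} _{\PP}(T)\bigr)[d _{\PP'/\PP}].
\end{equation}
Applying the first assertion at each finite level and invoking Mittag-Leffler for the $p$-adic completion (as in the proof of \ref{lem1-hdagT1T2}) gives the computation at every level $m$; passing to the inductive system over $m$ (and using \ref{coh-Bbullet} for coherence as an object of $\smash{\underrightarrow{LD}} ^{\mathrm{b}} _{\Q,\mathrm{coh}}(\smash{\widetilde{\D}} _{\PP'} ^{(\bullet)})$) yields the second isomorphism.

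The main obstacle is the careful bookkeeping in the second part: one must verify that the reduction of the $\D$-module derived pullback to the $\O$-module derived pullback is compatible with $p$-adic completion and with the transition maps of the inductive system, so that the local computation of the first part indeed assembles into an isomorphism of the desired inductive system, and that the shift conventions in the definition of $\L f ^{(\bullet)*}$ match up correctly.
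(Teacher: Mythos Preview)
Your approach is correct and takes a genuinely different route from the paper for the first isomorphism. The paper factors $f$ through its graph, dispatches the smooth projection trivially, and then for the closed immersion resolves $\O_{\PP'}$ over $\O_{\PP}$ by its Koszul complex; acyclicity of the resulting complex comes from the integrality of $\widehat{B}^{(m)}_{\PP}(T)$ established in \cite[4.3.3]{Be1}, and a final $p$-torsion-freeness step brings the computation down to the level $P'_i$. You resolve the other variable instead --- the explicit two-term presentation of $\B^{(m)}_{P_i}(T)$ --- and pull it back directly, which treats all $f$ uniformly with no factorization. Your key input is that $f^{*}\bar g$ is a non-zero-divisor in $\O_{P'_i}$; the phrase ``by $\V$-flatness of $\PP'$'' is a little thin here, since what you really need is that $(\pi, f^{*}g)$ is a regular sequence locally on $\PP'$, which follows because $f^{*}g$ reduces modulo $\pi$ to a local equation of the divisor $T'$ in the smooth scheme $P'$. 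Once that regular-sequence fact is in hand, the leading-coefficient argument gives the injectivity of $(f^{*}\bar g)^{p^{m+1}}X - p$ on $\O_{P'_i}[X]$ immediately. Your route is more elementary (no appeal to integrality of $\widehat{B}^{(m)}_{\PP}(T)$, no case split on $f$); the paper's route isolates the closed-immersion case and leans on deeper structural facts about Berthelot's sheaves. For the second assertion both arguments amount to the same thing: lift from the finite levels to the $p$-adic completion via $p$-torsion-freeness, then to the inductive system.
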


\begin{proof}
1) Pour vérifier le premier isomorphisme, 
comme 
$\O _{P ' _i} \otimes 
_{f ^{-1} \O _{P _i}} f ^{-1} \B ^{(m)}  _{P _i} ( T) 
\riso 
\B ^{(m)}  _{P ' _i} ( T') $,
il s'agit de prouver que le morphisme canonique
$\O _{P ' _i} \otimes  ^\L
_{f ^{-1} \O _{P _i}} f ^{-1} \B ^{(m)}  _{P _i} ( T) 
\to 
\O _{P ' _i} \otimes 
_{f ^{-1} \O _{P _i}} f ^{-1} \B ^{(m)}  _{P _i} ( T) $
est un isomorphisme. 
Comme $f $ est le composé de son graphe suivi de la projection 
$\PP ' \times \PP \to \PP$, comme le cas où $f $ est lisse est trivial, 
on se ramène au cas où $f $ est une immersion fermée.
Comme cela est local, on peut supposer $\PP$ affine et intègre. 
Comme $\B ^{(m)}  _{P _i} ( T) $ est quasi-cohérent,
on se ramène à établir que 
le morphisme canonique
$O _{P ' _i} \otimes  ^\L
_{O _{P _i}}  B ^{(m)}  _{P _i} ( T) 
\to 
O _{P ' _i} \otimes 
_{O _{P _i}} B ^{(m)}  _{P _i} ( T) $
est un isomorphisme. 

Comme $\widehat{B} ^{(m)}  _{\PP} ( T) $ est sans $p$-torsion, 
on a : 
$O _{P ' _i} \otimes  ^\L
_{O _{P _i}} B ^{(m)}  _{P _i} ( T) 
\riso
O _{P ' _i} 
\otimes  ^\L _{O _{\PP '}} 
(O _{\PP '}
\otimes  ^\L _{O _{\PP}} 
\widehat{B} ^{(m)}  _{\PP} ( T) )$.
Or, comme $f $ est fini, 
on dispose de l'isomorphisme canonique
$O _{\PP '}
\otimes  _{O _{\PP}} 
\widehat{B} ^{(m)}  _{\PP} ( T) 
\riso 
\widehat{B} ^{(m)}  _{\PP'} ( T')$. 
Comme $\widehat{B} ^{(m)}  _{\PP} ( T) $ est intègre,
le morphisme canonique
$O _{\PP '}
\otimes  ^\L _{O _{\PP}} 
\widehat{B} ^{(m)}  _{\PP} ( T) 
\to 
O _{\PP '}
\otimes  _{O _{\PP}} 
\widehat{B} ^{(m)}  _{\PP} ( T) $
est un isomorphisme (pour le voir, 
on utilise la résolution de Koszul de $O _{\PP '}$).
Comme $\widehat{B} ^{(m)}  _{\PP'} ( T')$ est sans $p$-torsion, on conclut.

2) Comme, pour tout entier positif $m$, 
les $\V$-modules 
$\widetilde{\B} ^{(m)}  _{\PP '} ( T')$
et
$\widetilde{\B} ^{(m)}  _{\PP } ( T)$
sont sans $p$-torsion,
le deuxième isomorphisme à prouver résulte aussitôt du premier.
\end{proof}

\begin{vide}
\label{nota-u^!alg}
Grâce à
\cite[3.2.4]{Be1}, on vérifie que la flèche de droite du carré ci-dessous:
\begin{equation}
\label{pre-lemm-ualg-u!nivm}
\xymatrix @=0,4cm{
{\O _{\X} \otimes ^{\L} _{u ^{-1} \O _{\PP}} u ^{-1} \smash{\widetilde{\D}} _{\PP ^\sharp} ^{(m)}} 
\ar[r] ^-{\sim}
\ar[d] ^-{}
& 
{\O _{\X} \otimes _{u ^{-1} \O _{\PP}} u ^{-1} \smash{\widetilde{\D}} _{\PP ^\sharp} ^{(m)}} 
\ar[d] ^-{\sim}
\\ 
{\O _{\X} \widehat{\otimes} ^{\L} _{u ^{-1} \O _{\PP}} u ^{-1} \smash{\widetilde{\D}} _{\PP ^\sharp} ^{(m)}} 
\ar[r] ^-{\sim}
& 
{\O _{\X} \widehat{\otimes} _{u ^{-1} \O _{\PP}} u ^{-1} \smash{\widetilde{\D}} _{\PP ^\sharp} ^{(m)}
=\smash{\widetilde{\D}} _{\X ^{\sharp} \hookrightarrow \PP ^{\sharp}} ^{(m)}} 
}
\end{equation}
est un isomorphisme.
De plus, par platitude de $\smash{\widetilde{\D}} _{\PP ^\sharp} ^{(m)}$ sur 
$\O _{\PP}$, on vérifie 
que la flèche horizontale du haut est un isomorphisme. 
Comme la flèche du bas est un morphisme de
$D ^{\mathrm{b}} _{\mathrm{qc}} (\smash{\widetilde{\D}} _{\X ^{\sharp}} ^{(m)})$ (cela résulte du théorème 
\cite[3.2.3]{Beintro2} de Berthelot), on vérifie de même que celle-ci est un isomorphisme.
Il en est donc de même de la flèche de gauche. 
Pour tout
$\E ^{(m)} \in  D ^{-} ( \smash{\widetilde{\D}} _{\PP ^\sharp} ^{(m)}) $
on en déduit que le morphisme canonique
$\O _{\X} \otimes ^{\L} _{u ^{-1} \O _{\PP}} u ^{-1} \E ^{(m)} 
\to
\smash{\widetilde{\D}} _{\X ^{\sharp} \hookrightarrow \PP ^{\sharp}} ^{(m)}
\otimes ^{\L} _{u ^{-1} \smash{\widetilde{\D}} _{\PP ^\sharp} ^{(m)}} u ^{-1} \E ^{(m)} $
est un isomorphisme.
On dispose alors du foncteur 
$u ^{\sharp (m)!} _{\mathrm{alg}}
\colon 
D ^{-} ( \smash{\widetilde{\D}} _{\PP ^\sharp} ^{(m)}) 
\to D ^{-} ( \smash{\widetilde{\D}} _{\X ^{\sharp}} ^{(m)})$
défini en posant, pour tout $\E ^{(m)} \in  D ^{-} ( \smash{\widetilde{\D}} _{\PP ^\sharp} ^{(m)}) $,
\begin{equation}
\label{defu!alg-m}
u ^{\sharp (m)!} _{\mathrm{alg}} (\E ^{(m)} )
:=
\O _{\X} \otimes ^{\L} _{u ^{-1} \O _{\PP}} u ^{-1} \E ^{(m)} 
[ d _{X/P}].
\end{equation}

\end{vide}

\begin{lemm}
\label{lemm-ualg-u!nivm}
Pour tout $\E ^{(m)} \in  D ^{-} ( \smash{\widetilde{\D}} _{\PP ^\sharp} ^{(m)}) $,
on dispose du morphisme canonique
$u ^{\sharp (m)!} _{\mathrm{alg}} (\E ^{(m)} )
\to 
u ^{\sharp (m)!} (\E ^{(m)} )$.
Si 
$\E ^{(m)} \in D ^{-} _{\mathrm{coh}}( \smash{\widetilde{\D}} _{\PP ^\sharp} ^{(m)}) $,
alors celui-ci est un isomorphisme.

\end{lemm}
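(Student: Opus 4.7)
The plan is to proceed in two steps: first construct the natural comparison morphism from the algebraic to the completed inverse image, then verify it is an isomorphism on coherent complexes by a standard reduction to the free rank-one case.

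First, I would construct the morphism. By the argument at the start of \ref{nota-u^!alg}, all four maps in diagram \ref{pre-lemm-ualg-u!nivm} are isomorphisms; in particular the horizontal morphism $\O_{\X} \otimes^{\L}_{u^{-1}\O_{\PP}} u^{-1}\smash{\widetilde{\D}}_{\PP^{\sharp}}^{(m)} \riso \O_{\X} \otimes_{u^{-1}\O_{\PP}} u^{-1}\smash{\widetilde{\D}}_{\PP^{\sharp}}^{(m)} = \smash{\widetilde{\D}}_{\X^{\sharp} \hookrightarrow \PP^{\sharp}}^{(m)}$ (since $\smash{\widetilde{\D}}^{(m)}_{\PP^\sharp}$ is $\O_{\PP}$-flat) identifies, for any $\E^{(m)} \in D^{-}(\smash{\widetilde{\D}}_{\PP^\sharp}^{(m)})$, the complex $u^{\sharp(m)!}_{\mathrm{alg}}(\E^{(m)})$ with $\smash{\widetilde{\D}}^{(m)}_{\X^{\sharp}\hookrightarrow \PP^{\sharp}} \otimes^{\L}_{u^{-1}\smash{\widetilde{\D}}_{\PP^\sharp}^{(m)}} u^{-1}\E^{(m)}[d_{X/P}]$. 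The canonical morphism from the uncompleted to the $p$-adically completed derived tensor product then yields the desired comparison
\[
u^{\sharp(m)!}_{\mathrm{alg}}(\E^{(m)}) \longrightarrow \smash{\widetilde{\D}}^{(m)}_{\X^{\sharp}\hookrightarrow \PP^{\sharp}} \widehat{\otimes}^{\L}_{u^{-1}\smash{\widetilde{\D}}_{\PP^\sharp}^{(m)}} u^{-1}\E^{(m)}[d_{X/P}] = u^{\sharp(m)!}(\E^{(m)}),
\]
functorial in $\E^{(m)}$.

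Next, I would verify this morphism is an isomorphism when $\E^{(m)}$ is coherent. Both source and target, as functors on $D^{-}$, are way-out to the right with uniformly bounded amplitude (the source by flatness of $\smash{\widetilde{\D}}^{(m)}_{\PP^\sharp}$ over $\O_{\PP}$ of finite Tor-dimension $d_P$; the target since $\smash{\widetilde{\D}}^{(m)}_{\X^{\sharp} \hookrightarrow \PP^{\sharp}}$ has bounded Tor-dimension over $u^{-1}\smash{\widetilde{\D}}^{(m)}_{\PP^\sharp}$, via the algebraic reidentification above). Hence by the standard way-out truncation lemma \cite[I.7.1]{HaRD}, combined with the stability of the coherent subcategory under cohomology, it suffices to treat the case of a single coherent $\smash{\widetilde{\D}}^{(m)}_{\PP^\sharp}$-module. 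Working locally and using a resolution by free modules of finite rank $(\smash{\widetilde{\D}}^{(m)}_{\PP^\sharp})^r$, another way-out reduction further reduces to the case $\E^{(m)} = \smash{\widetilde{\D}}^{(m)}_{\PP^\sharp}$. In that case the comparison morphism is exactly the composition of the two isomorphisms of the left column of \ref{pre-lemm-ualg-u!nivm}, hence an isomorphism.

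The main obstacle is the way-out bound for the completed functor: one needs that derived tensoring with $\smash{\widetilde{\D}}^{(m)}_{\X^{\sharp} \hookrightarrow \PP^{\sharp}}$ over $u^{-1}\smash{\widetilde{\D}}^{(m)}_{\PP^\sharp}$, then completing, has cohomological amplitude bounded independently of the module. This bound is available through the algebraic reidentification of $\smash{\widetilde{\D}}^{(m)}_{\X^{\sharp} \hookrightarrow \PP^{\sharp}}$ as $\O_{\X} \otimes^{\L}_{u^{-1}\O_{\PP}} u^{-1}\smash{\widetilde{\D}}^{(m)}_{\PP^\sharp}$ and Berthelot's quasi-coherence theorem \cite[3.2.2]{Beintro2}, which guarantees that the completion is harmless when the underlying $\O_{\PP}$-complex is quasi-coherent and bounded; the remaining checks are then routine compatibility verifications analogous to those already used in the proof of \ref{126Beintro}.
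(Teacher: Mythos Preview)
Your proposal is correct and follows essentially the same approach as the paper: reduce locally via the way-out lemma and type~$A$ to the free case $\E^{(m)}=(\smash{\widetilde{\D}}_{\PP^\sharp}^{(m)})^r$, then invoke the left vertical isomorphism of diagram~\ref{pre-lemm-ualg-u!nivm}. The paper phrases the construction of the comparison morphism as coming from adjunction via the morphism of ringed spaces $(X_{\bullet},\O_{X_{\bullet}})\to(\X,\O_{\X})$, which is equivalent to your description via the canonical map from the uncompleted to the completed derived tensor product; and the paper simply invokes the way-out lemma without spelling out the amplitude bound for the completed functor that you (correctly) flag and resolve.
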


\begin{proof}
La construction se construit par adjonction via le morphisme d'espaces annelés
$(X _{\bullet}, \O _{X _{\bullet}}) \to (\X, \O _{\X})$. 
Vérifions à présent que celui-ci est un isomorphisme pour les complexes à cohomologie cohérente. 
Comme cela est local, on peut supposer $\PP$ affine. 
D'après le lemme sur les foncteurs way-out (et le théorème de type $A$ pour les $\smash{\widetilde{\D}} _{\PP ^\sharp} ^{(m)} $-modules cohérents),
on se ramène au cas où $\E ^{(m)}$ est de la forme
$\E ^{(m)} = (\smash{\widetilde{\D}} _{\PP ^\sharp} ^{(m)} )^r$.
Dans ce cas, cela résulte de l'isomorphisme vertical de gauche du carré
\ref{pre-lemm-ualg-u!nivm}.
\end{proof}

\begin{vide}
On déduit de 
\ref{defu!alg-m}
que l'on dispose du foncteur 
$u ^{\sharp (\bullet)!} _{\mathrm{alg}}
\colon 
D ^{-} ( \smash{\widetilde{\D}} _{\PP ^\sharp} ^{(\bullet)}) 
\to D ^{-} ( \smash{\widetilde{\D}} _{\X ^{\sharp}} ^{(\bullet)})$
défini en posant, pour tout $\E ^{(\bullet)} \in  D ^{-} ( \smash{\widetilde{\D}} _{\PP ^\sharp} ^{(\bullet)}) $,
\begin{equation}
\label{defu!alg}
u ^{\sharp (\bullet)!} _{\mathrm{alg}} (\E ^{(\bullet)} )
:=
\O _{\X} \otimes ^{\L} _{u ^{-1} \O _{\PP}} u ^{-1} \E ^{(\bullet)} 
[ d _{X/P}].
\end{equation}
Ce foncteur  envoie les ind-isogénies sur les ind-isogénies
et les lim-ind-isogénies sur les lim-ind-isogénies (voir les définitions de \ref{loc-LM}). 
Ce foncteur se factorise donc en
$u ^{\sharp (\bullet)!} _{\mathrm{alg}}
\colon 
\smash{\underrightarrow{LD}} ^{\mathrm{b}} _{\Q}   ( \smash{\widetilde{\D}} _{\PP ^\sharp} ^{(\bullet)}) 
\to 
\smash{\underrightarrow{LD}} ^{\mathrm{b}} _{\Q}  ( \smash{\widetilde{\D}} _{\X ^{\sharp}} ^{(\bullet)})$.
On prendra garde de constater que ce foncteur ne préserve pas la quasi-cohérence.
On bénéficie, pour tout 
$\E ^{(\bullet)}  \in
\smash{\underrightarrow{LD}} ^{\mathrm{b}} _{\Q,\mathrm{qc}} 
( \smash{\widetilde{\D}} _{\PP ^\sharp} ^{(\bullet)}) $,
du morphisme canonique
$u ^{\sharp (\bullet)!} _{\mathrm{alg}} (\E ^{(\bullet)} )
\to 
u ^{\sharp (\bullet)!} (\E ^{(\bullet)} )$ (ce morphisme se construit même dans
$D  ( \smash{\widetilde{\D}} _{\X ^{\sharp}} ^{(\bullet)})$, i.e. avant de localiser).

\end{vide}

\begin{lemm}
\label{u!alg=u!}
Soit $\E ^{(\bullet)}  \in
\smash{\underrightarrow{LD}} ^{\mathrm{b}} _{\Q,\mathrm{coh}} 
( \smash{\widetilde{\D}} _{\PP ^\sharp} ^{(\bullet)}) $.
Le morphisme canonique
$u ^{\sharp (\bullet)!} _{\mathrm{alg}} (\E ^{(\bullet)} )
\to 
u ^{\sharp (\bullet)!} (\E ^{(\bullet)} )$
est un isomorphisme dans
$\smash{\underrightarrow{LD}} ^{\mathrm{b}} _{\Q} 
( \smash{\widetilde{\D}} _{\X ^{\sharp}} ^{(\bullet)}) $.
\end{lemm}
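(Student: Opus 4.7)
The statement is local on $\PP$, so first I would reduce to the case where $\PP$ is affine. By the definition of coherence in the sense of Berthelot (Definition \ref{defi-LDQ0coh}) together with Corollary \ref{rema-dim-coh-finie} ensuring boundedness, after replacing $\E^{(\bullet)}$ by an isomorphic object in $\smash{\underrightarrow{LD}}^{\mathrm{b}}_{\Q}(\smash{\widetilde{\D}}_{\PP^\sharp}^{(\bullet)})$ we may assume that there exists $\lambda \in L$ such that $\E^{(\bullet)} \in \smash{\underrightarrow{LD}}^{\mathrm{b}}_{\Q}(\lambda^{*}\smash{\widetilde{\D}}_{\PP^\sharp}^{(\bullet)})$, every $\E^{(m)}$ lies in $D^{\mathrm{b}}_{\mathrm{coh}}(\smash{\widetilde{\D}}_{\PP^\sharp}^{(\lambda(m))})$, and the transition maps of $\E^{(\bullet)}$ are $\L$-isomorphisms after extension of scalars. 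Since the morphism $u^{\sharp(\bullet)!}_{\mathrm{alg}}(\E^{(\bullet)}) \to u^{\sharp(\bullet)!}(\E^{(\bullet)})$ is already defined before localisation in $D(\smash{\widetilde{\D}}_{\X^\sharp}^{(\bullet)})$, the question is to prove it becomes an isomorphism in the localised category.

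Next I would examine the morphism level by level. For each fixed $m$, the $m$-th component of $u^{\sharp(\bullet)!}_{\mathrm{alg}}(\E^{(\bullet)}) \to u^{\sharp(\bullet)!}(\E^{(\bullet)})$ is precisely the morphism $u^{\sharp(\lambda(m))!}_{\mathrm{alg}}(\E^{(m)}) \to u^{\sharp(\lambda(m))!}(\E^{(m)})$ of Lemma \ref{lemm-ualg-u!nivm}. Since $\E^{(m)}$ lies in $D^{-}_{\mathrm{coh}}(\smash{\widetilde{\D}}_{\PP^\sharp}^{(\lambda(m))})$ by hypothesis, that lemma asserts that this level-$m$ morphism is an isomorphism in $D^{-}(\smash{\widetilde{\D}}_{\X^\sharp}^{(\lambda(m))})$.

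To conclude, I would argue that these level-wise isomorphisms assemble into an isomorphism of inductive systems: functoriality of both $u^{\sharp(m)!}_{\mathrm{alg}}$ and $u^{\sharp(m)!}$ with respect to the change-of-level morphisms $\smash{\widetilde{\D}}_{\PP^\sharp}^{(\lambda(m))} \to \smash{\widetilde{\D}}_{\PP^\sharp}^{(\lambda(m+1))}$ guarantees that the squares defining the morphism of inductive systems commute strictly in $D(\smash{\widetilde{\D}}_{\X^\sharp}^{(\bullet)})$. Therefore, all components of the induced morphism in $\smash{\underrightarrow{LD}}^{\mathrm{b}}_{\Q}(\smash{\widetilde{\D}}_{\X^\sharp}^{(\bullet)})$ are isomorphisms at every level, which (since an isomorphism at every level is in particular an isomorphism after localisation by lim-ind-isogenies, cf. Remark \ref{rema-iso-0}) forces the morphism to be an isomorphism in the localised category.

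The main obstacle, and the step I expect to require the most care, is the initial reduction: to pass from a general coherent object of $\smash{\underrightarrow{LD}}^{\mathrm{b}}_{\Q,\mathrm{coh}}$ to a genuine complex of $\lambda^{*}\smash{\widetilde{\D}}_{\PP^\sharp}^{(\bullet)}$-modules satisfying Berthelot's level-wise coherence and compatibility, one must check that the morphism $u^{\sharp(\bullet)!}_{\mathrm{alg}}(\E^{(\bullet)}) \to u^{\sharp(\bullet)!}(\E^{(\bullet)})$ is preserved (up to canonical isomorphism) under such replacements. This amounts to verifying that both functors $u^{\sharp(\bullet)!}_{\mathrm{alg}}$ and $u^{\sharp(\bullet)!}$ commute with the equivalences of categories \ref{prop-MQlambda2LMQ} and \ref{bis-prop-MQlambda2LMQ}, which is straightforward but must be set up with care so that the transposition and shift data $[d_{X/P}]$ match. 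Once this is done, the remainder of the proof is a level-wise application of Lemma \ref{lemm-ualg-u!nivm}.
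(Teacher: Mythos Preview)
Your overall strategy matches the paper's, but there is a genuine gap in your level-by-level analysis. You claim that the $m$-th component of $u^{\sharp(\bullet)!}_{\mathrm{alg}}(\E^{(\bullet)}) \to u^{\sharp(\bullet)!}(\E^{(\bullet)})$ is the morphism $u^{\sharp(\lambda(m))!}_{\mathrm{alg}}(\E^{(m)}) \to u^{\sharp(\lambda(m))!}(\E^{(m)})$. This is not correct: by definition of the inductive-system functor $u^{\sharp(\bullet)!}$, its $m$-th component is $u^{\sharp(m)!}(\E^{(m)})$, computed via the bimodule $\smash{\widetilde{\D}}^{(m)}_{\X^\sharp\to\PP^\sharp}$ and the $\smash{\widetilde{\D}}^{(m)}_{\PP^\sharp}$-structure on $\E^{(m)}$, \emph{not} via the level-$\lambda(m)$ structure. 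Since $\E^{(m)}$ is only known to be coherent over $\smash{\widetilde{\D}}^{(\lambda(m))}_{\PP^\sharp}$ and not over $\smash{\widetilde{\D}}^{(m)}_{\PP^\sharp}$, Lemma~\ref{lemm-ualg-u!nivm} does not apply directly at level $m$.

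The paper fixes this by introducing an auxiliary object $u^{\sharp\lambda(\bullet)!}(\FF^{(\bullet)}) := (u^{\sharp\lambda(m)!}(\FF^{(m)}))_{m}$ in $D^{\mathrm{b}}(\lambda^{*}\smash{\widetilde{\D}}^{(\bullet)}_{\X^\sharp})$ and exhibiting a factorisation $u^{\sharp\lambda(\bullet)!}(\FF^{(\bullet)}) \to \lambda^{*}\bigl(u^{\sharp(\bullet)!}(\FF^{(\bullet)})\bigr)$ fitting into a commutative square of the type \ref{diag-lambda-id-iso}, which shows that $u^{\sharp\lambda(\bullet)!}(\FF^{(\bullet)})$ and $u^{\sharp(\bullet)!}(\FF^{(\bullet)})$ are canonically isomorphic in $\smash{\underrightarrow{LD}}^{\mathrm{b}}_{\Q}(\smash{\widetilde{\D}}^{(\bullet)}_{\X^\sharp})$. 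Since the algebraic version is level-independent ($u^{\sharp\lambda(\bullet)!}_{\mathrm{alg}} = u^{\sharp(\bullet)!}_{\mathrm{alg}}$), one is then reduced to checking that $u^{\sharp\lambda(\bullet)!}_{\mathrm{alg}}(\FF^{(\bullet)}) \to u^{\sharp\lambda(\bullet)!}(\FF^{(\bullet)})$ is an isomorphism in $D^{\mathrm{b}}(\lambda^{*}\smash{\widetilde{\D}}^{(\bullet)}_{\X^\sharp})$, and \emph{that} follows level-wise from Lemma~\ref{lemm-ualg-u!nivm}. Your final paragraph misplaces the difficulty: the initial replacement of $\E^{(\bullet)}$ by $\FF^{(\bullet)}$ is automatic (both functors are defined on the localised category), whereas the real subtlety is this level mismatch, which you skipped over. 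The reduction to the affine case and the appeal to Corollary~\ref{rema-dim-coh-finie} are also unnecessary here.
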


\begin{proof}
Par définition, il existe
$\lambda \in L$ un isomorphisme 
$\smash{\underrightarrow{LD}} ^{\mathrm{b}} _{\Q}  ( \smash{\widetilde{\D}} _{\PP ^\sharp} ^{(\bullet)})$
de la forme 
$\E ^{(\bullet)} \riso \FF^{(\bullet)}$
tel que $\FF ^{(m)} \in D ^{\mathrm{b}}_{\mathrm{coh}}( \smash{\widetilde{\D}} _{\PP ^\sharp} ^{(\lambda (m))}) $
(plus une autre condition que l'on n'utilisera pas).
On se ramène donc à établir le lemme pour $\FF^{(\bullet)}$ à la place de $\E ^{(\bullet)} $.
Notons $u ^{\sharp (\lambda (\bullet))!} _{\mathrm{alg}} (\FF ^{(\bullet)} ):=
(u ^{\sharp( \lambda (m))!} _{\mathrm{alg}} (\FF ^{(m)} )) _{m\in \N}
\in 
D ^{\mathrm{b}} ( \smash{\widetilde{\D}} _{\X ^{\sharp}} ^{(\lambda (\bullet))})$
et
$u ^{\sharp (\lambda (\bullet))!} (\FF ^{(\bullet)} ):= (u ^{\sharp ( \lambda (m))!} (\FF ^{(m)} )) _{m\in \N}
\in 
D ^{\mathrm{b}}  ( \smash{\widetilde{\D}} _{\X ^{\sharp}} ^{(\lambda (\bullet))})$.
En omettant d'indiquer les foncteurs oublis,
on dispose par définition de l'égalité 
$u ^{\sharp ( \lambda (\bullet))!} _{\mathrm{alg}} (\FF ^{(\bullet)} )=u ^{\sharp  (\bullet)!} _{\mathrm{alg}} (\FF ^{(\bullet)} )$.
De plus, on dispose du morphisme canonique
$u ^{\sharp (\lambda (\bullet))!} (\FF ^{(\bullet)} )
\to 
 \lambda ^{*} \left (u ^{\sharp  (\bullet)!} (\FF ^{(\bullet)} )\right) $
 induisant le diagramme canonique commutatif:
\begin{equation}
\notag
\xymatrix @ R=0,4cm {
{u ^{\sharp (\bullet)!}  (\FF ^{(\bullet)} )} 
\ar[r] ^-{}
\ar[d] ^-{}
& 
{u ^{\sharp (\lambda (\bullet))!} (\FF ^{(\bullet)} )} 
\ar[d] ^-{}
\ar@{.>}[dl] ^-{}
\\ 
{ \lambda ^{*} \left (u ^{\sharp  (\bullet)!} (\FF ^{(\bullet)} )\right) } 
 \ar[r] ^-{}
& 
{ \lambda ^{*} \left (u ^{\sharp (\lambda (\bullet))!} (\FF ^{(\bullet)} )\right) .} 
}
\end{equation}
Il en résulte que 
$u ^{\sharp (\lambda (\bullet))!} (\FF ^{(\bullet)} )$
est canoniquement isomorphisme 
à 
$u ^{\sharp (\bullet)!}  (\FF ^{(\bullet)} )$
dans 
$\smash{\underrightarrow{LD}} ^{\mathrm{b}} _{\Q}  ( \smash{\widetilde{\D}} _{\X ^{\sharp}} ^{(\bullet)})$.
On se ramène à vérifier que le morphisme canonique 
\begin{equation}
u ^{\sharp (\lambda (\bullet))!} _{\mathrm{alg}} (\FF ^{(\bullet)} )
\to
u ^{\sharp (\lambda (\bullet))!}  (\FF ^{(\bullet)} )
\end{equation}
est un isomorphisme de 
$D ^{\mathrm{b}} ( \smash{\widetilde{\D}} _{\X ^{\sharp}} ^{(\lambda (\bullet))})$, ce qui résulte du fait que 
$\FF ^{(m)} \in D ^{\mathrm{b}}_{\mathrm{coh}}( \smash{\widetilde{\D}} _{\PP ^\sharp} ^{(\lambda (m))}) $ 
et du lemme \ref{lemm-ualg-u!nivm}.
\end{proof}

\begin{lemm}
\label{annul-imm-ferm}
Soit $T$ un diviseur de $P$ tel que $u (X) \subset T$.
On 
dispose de l'isomorphisme canonique:
$u ^{(\bullet) !} (\widetilde{\B} ^{(\bullet)}  _{\PP} ( T) )
\riso 
0$
dans 
$\smash{\underrightarrow{LD}} ^{\mathrm{b}} _{\Q} ( \smash{\widetilde{\D}} _{\X} ^{(\bullet)})$.
\end{lemm}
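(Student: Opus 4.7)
The plan is to reduce to the algebraic pullback $u^{(\bullet)!}_{\mathrm{alg}}$ via Lemma \ref{u!alg=u!}, and then exhibit the vanishing through a local computation in coordinates that exploits the inclusion $u^{*}f \in \pi \O_{\X}$ for $f$ any local equation of $T$.

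By Corollary \ref{coh-Bbullet}, the complex $\widetilde{\B}^{(\bullet)}_{\PP}(T)$ lies in $\underrightarrow{LM}_{\Q,\mathrm{coh}}(\widehat{\D}_{\PP}^{(\bullet)})$, so Lemma \ref{u!alg=u!} yields a canonical isomorphism $u^{(\bullet)!}_{\mathrm{alg}}(\widetilde{\B}^{(\bullet)}_{\PP}(T)) \riso u^{(\bullet)!}(\widetilde{\B}^{(\bullet)}_{\PP}(T))$ in $\underrightarrow{LD}^{\mathrm{b}}_{\Q}(\widetilde{\D}_{\X}^{(\bullet)})$. It suffices therefore to establish $u^{(\bullet)!}_{\mathrm{alg}}(\widetilde{\B}^{(\bullet)}_{\PP}(T)) \riso 0$. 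Since the assertion is local on $\PP$ by \ref{LDiso-local}, one may assume $\PP$ affine and integral, and fix a lift $f \in \O_{\PP}$ of a local equation of $T \subset P$. The hypothesis $u(X) \subset T$ then forces $u^{*}f \in \pi \O_{\X}$; write $u^{*}f = \pi g$ with $g \in \O_{\X}$.

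Next I would use the free $\O_{\PP}$-resolution $0 \to \O_{\PP}[Y] \xrightarrow{f^{p^{\lambda_0(m)+1}}Y - p} \O_{\PP}[Y] \to \B^{(\lambda_0(m))}_{\PP}(T) \to 0$ and pass to the $p$-adic completion by the quasi-coherence arguments already employed in Lemma \ref{lem1-hdagDT}. The resulting representative of $u^{(m)!}_{\mathrm{alg}}(\widetilde{\B}^{(m)}_{\PP}(T))[-d_{X/P}]$ is the two-term complex $\O_{\X}\{Y\} \xrightarrow{\phi_m} \O_{\X}\{Y\}$ with $\phi_m = \pi^{p^{\lambda_0(m)+1}} g^{p^{\lambda_0(m)+1}} Y - p$. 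Writing $p = \pi^{e} u_0$ with $e$ the absolute ramification index of $\V$ and $u_0 \in \V^{\times}$, choose $m_0$ such that $p^{\lambda_0(m_0)+1} > e$; for all $m \geq m_0$ one factors $\phi_m = p \cdot \psi_m$ where $\psi_m = u_0^{-1} \pi^{p^{\lambda_0(m)+1}-e} g^{p^{\lambda_0(m)+1}} Y - 1$ is a unit of $\O_{\X}\{Y\}$ (its reduction modulo $\pi$ equals $-1$). Consequently, for such $m$ the complex is isomorphic in $K(\widetilde{\D}_{\X}^{(m)})$ to $\O_{\X}\{Y\} \xrightarrow{p} \O_{\X}\{Y\}$, on which multiplication by $p$ is null-homotopic (take the identity as the degree-$(-1)$ homotopy).

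Finally I would translate these level-wise null-homotopies uniformly into $\chi \in M$ and $\lambda \in L$ such that the canonical morphism $u^{(\bullet)!}_{\mathrm{alg}}(\widetilde{\B}^{(\bullet)}_{\PP}(T)) \to \lambda^{*} \chi^{*} u^{(\bullet)!}_{\mathrm{alg}}(\widetilde{\B}^{(\bullet)}_{\PP}(T))$ is the zero morphism in $D(\widetilde{\D}_{\X}^{(\bullet)})$: essentially $\lambda(m) = \max(m, m_0)$ shifts everything to the range where the above computation applies, and $\chi \equiv 1$ (on levels $\geq m_0$) activates the vanishing of multiplication by $p$. The second remark of \ref{rema-iso-0} then gives $u^{(\bullet)!}_{\mathrm{alg}}(\widetilde{\B}^{(\bullet)}_{\PP}(T)) \riso 0$ in $\underrightarrow{LD}_{\Q}(\widetilde{\D}_{\X}^{(\bullet)})$. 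The main obstacle will be assembling the level-wise null-homotopies into a single system compatible with the transition maps so as to produce an explicit $\chi$; apart from this bookkeeping, the argument is an elementary continuation of the ring-theoretic computations of Section \ref{section3.2}.
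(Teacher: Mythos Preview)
Your approach is correct and takes a genuinely different route from the paper's.

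The paper proceeds by induction on the codimension $r$ of $\X$ in $\PP$, choosing local coordinates $t_1,\dots,t_d$ with $\X=V(t_1,\dots,t_r)$. In the base case $r=1$, rather than writing $u^*f=\pi g$, it exploits the freedom in the lift of a local equation of $T$ (since $\widetilde{\B}^{(m)}_\PP(T)$ depends only on $\bar f$) to arrange $t_1\mid f$, i.e.\ $u^*f=0$ exactly. Using the Koszul resolution $[u^{-1}\O_\PP\xrightarrow{t_1}u^{-1}\O_\PP]$ of $\O_\X$ together with integrality of $\widetilde{\B}^{(m)}_\PP(T)$, it reduces $u^{(\bullet)!}_{\mathrm{alg}}(\widetilde{\B}^{(\bullet)}_\PP(T))$ to the single module $\O_\X\otimes_{u^{-1}\O_\PP}u^{-1}\widetilde{\B}^{(\bullet)}_\PP(T)\cong \O_X[Y]$, which is annihilated by $p$ at every level; the constant $\chi\equiv 1$ then suffices, with no $\lambda$ needed. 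The inductive step factors $u$ through $V(t_1)$ and invokes Lemma~\ref{u*Odiv} or the base case.

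Your argument trades the induction for the two-term flat resolution $0\to\O_\PP\{Y\}\xrightarrow{f^{p^{\lambda_0(m)+1}}Y-p}\O_\PP\{Y\}\to\widetilde{\B}^{(m)}_\PP(T)\to 0$, which handles arbitrary codimension at once; the cost is the threshold $m_0$ and the need to organise homotopies. Your ``main obstacle'' is in fact harmless: if you set $s_m$ to be multiplication by $\psi_m^{-1}$, then because the transition ring homomorphism $\alpha_{m,m'}$ sends $Y_m\mapsto (\pi g)^{p^{\lambda_0(m')+1}-p^{\lambda_0(m)+1}}Y_{m'}$, one checks $\alpha_{m,m'}(\psi_m)=\psi_{m'}$, so the $s_m$ are compatible with transitions and $s^{(m)}:=s_{\lambda(m)}\circ\alpha_{m,\lambda(m)}$ gives a homotopy for the full inductive system with $\lambda(m)=m+m_0$, $\chi\equiv 1$. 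Alternatively you can sidestep the homotopies entirely: your computation already shows $\mathcal H^{-1}=0$ and $\mathcal H^{0}\cong \O_\X\{Y\}/(p)$ for $m\geq m_0$, so each $\mathcal H^j(\E^{(\bullet)})\riso 0$ in $\underrightarrow{LM}_\Q$, and Corollary~\ref{LDLMiso=isoHn} (or Lemma~\ref{eq-cat-LM-LD0}) gives the vanishing in $\underrightarrow{LD}^{\mathrm b}_\Q$ directly.
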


\begin{proof}
0) D'après la proposition \ref{LDiso-local}, 
ce que l'on doit vérifier est local en $\PP$.
On peut supposer $\PP$ affine et muni de coordonnées locales
$t _1, \dots, t _d$ telles que $\X = V (t _1, \dots, t _r)$
et qu'il existe un élément $f$ de $O _{\PP}$
tel que $T = V (\overline{f})$, où $\overline{f}$ est la réduction de $f$ modulo $\pi O _{\PP}$.
Vérifions par récurrence sur $r \geq 1$ le lemme.

1) Traitons d'abord le cas $r =1$. 
Grâce à \ref{u!alg=u!}, 
il s'agit d'établir l'isomorphisme 
$u ^{(\bullet)!} _{\mathrm{alg}} ( \widetilde{\B} ^{(\bullet)}  _{\PP} ( T) ) \riso 0$
dans 
$\smash{\underrightarrow{LD}} ^{\mathrm{b}} _{\Q} ( \smash{\widetilde{\D}} _{\X} ^{(\bullet)})$.
Comme 
$\widetilde{\B} ^{(\bullet)}  _{\PP} ( T) $ est un faisceau d'anneaux intègres, 
en utilisant la suite exacte
$0 \to u ^{-1} \O _{\PP}\overset{t _1}{\longrightarrow} u ^{-1} \O _{\PP}\to \O _{\X} \to 0$
qui permet de résoudre $\O _{\X}$ par des $u ^{-1} \O _{\PP}$-modules plats,
on vérifie que le morphisme canonique 
 de la forme
$\O _{\X} \otimes ^{\L} _{u ^{-1} \O _{\PP}} u ^{-1} \widetilde{\B} ^{(\bullet)}  _{\PP} ( T)  
\to 
\O _{\X} \otimes  _{u ^{-1} \O _{\PP}} u ^{-1} \widetilde{\B} ^{(\bullet)}  _{\PP} ( T) $
est alors un isomorphisme dans $D ( \smash{\widetilde{\D}} _{\X} ^{(\bullet)})$.
Ainsi, on dispose de l'isomorphisme canonique 
$u ^{(\bullet)!} _{\mathrm{alg}} ( \widetilde{\B} ^{(\bullet)}  _{\PP} ( T) )
\riso 
\O _{\X} \otimes  _{u ^{-1} \O _{\PP}} u ^{-1}  \widetilde{\B} ^{(\bullet)}  _{\PP} ( T) 
[ d _{X/P}]$ dans 
$D ( \smash{\widetilde{\D}} _{\X} ^{(\bullet)})$ et donc dans 
$\smash{\underrightarrow{LD}} ^{\mathrm{b}} _{\Q} ( \smash{\widetilde{\D}} _{\X} ^{(\bullet)})$.
Soit $\chi \colon \N \to \N$ l'élément de $M$ (voir la définition dans le paragraphe \ref{defi-M-L})
défini par $\chi (m)=1$.
Il suffit alors de vérifier que le morphisme canonique de $\smash{\widetilde{\D}} _{\X} ^{(\bullet)}$-modules
$$\O _{\X} \otimes  _{u ^{-1} \O _{\PP}} u ^{-1} \widetilde{\B} ^{(\bullet)}  _{\PP} ( T) 
\to
\chi ^{*} 
(\O _{\X} \otimes  _{u ^{-1} \O _{\PP}} u ^{-1} \widetilde{\B} ^{(\bullet)}  _{\PP} ( T) )$$
est le morphisme nul (en d'autres termes, 
$\O _{\X} \otimes  _{u ^{-1} \O _{\PP}} u ^{-1} \widetilde{\B} ^{(\bullet)}  _{\PP} ( T) $ est annulé par la multiplication par $p$).
Comme $ \widetilde{\B} ^{(m)}  _{\PP} ( T) $ ne dépend pas, à isomorphisme canonique près, 
du choix du relèvement $f$ d'une équation locale de $T$ (voir \cite[4.2.3]{Be1}), on peut supposer 
que $t$ divise $f$, i.e. l'image de $f$ sur $\O _{\X}$ est nulle. Dans ce cas, on calcule que 
$\O _{\X} \otimes  _{u ^{-1} \O _{\PP}} u ^{-1} \widetilde{\B} ^{(m)}  _{\PP} ( T) =
\O _{\X} \otimes  _{u ^{-1} \O _{\PP}} u ^{-1} \O _{\PP} \{ X \} /( f ^{p ^{m+1}}X -p)
=
\O _{\X} \{ X \} /( p)
= \O _{X} [X]$, qui est annulé par $p$. 
D'où le résultat.

2) Supposons maintenant la propriété vraie pour $r-1$ et prouvons-la pour $r$.
Notons $\X:= V (t _1)$. Si $T \cap X$ est un diviseur de $X$ (resp. si $T \supset X$), 
le lemme \ref{u*Odiv} (resp. le cas $r =1$) nous permet de conclure.

\end{proof}

\begin{prop}
\label{stab-coh-u!}
Soit $X' \subset X$ un sous-schéma fermé de $X$. 
On a alors
$ \R \underline{\Gamma} ^\dag _{X'} \circ u ^{(\bullet) !} (\widetilde{\B} ^{(\bullet)}  _{\PP} ( T) )
\in \smash{\underrightarrow{LD}} ^{\mathrm{b}} _{\Q,\mathrm{coh}} 
( \smash{\widetilde{\D}} _{\X} ^{(\bullet)})$.
\end{prop}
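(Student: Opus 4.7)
The plan is to commute the local cohomology functor $\R \underline{\Gamma} ^\dag _{X'}$ past the extraordinary inverse image $u ^{(\bullet) !}$, thereby reducing the coherence claim on $\X$ to one already obtained on $\PP$.

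First, since membership in $\underrightarrow{LD} ^{\mathrm{b}} _{\Q,\mathrm{coh}}$ is local on $\PP$ by \ref{QCoh-local} and $\R \underline{\Gamma} ^\dag _{X'}$ is compatible with restriction to opens, one reduces to the case where $P$ is integral. By \cite[2.2.5]{caro_surcoherent}, the closed subscheme $X' \subset P$ can then be written as a finite intersection $X' = T'_1 \cap \cdots \cap T'_r$ of divisors of $P$. The construction of $\R \underline{\Gamma} ^\dag _{X'}$ on $\PP$ gives the canonical isomorphism
\[
\R \underline{\Gamma} ^\dag _{X'}(\widetilde{\B} ^{(\bullet)} _{\PP}(T)) \riso \R \underline{\Gamma} ^\dag _{T'_r} \circ \cdots \circ \R \underline{\Gamma} ^\dag _{T'_1}(\widetilde{\B} ^{(\bullet)} _{\PP}(T)),
\]
and Lemma \ref{induction-div-coh} then ensures that this complex lies in $\underrightarrow{LD} ^{\mathrm{b}} _{\Q,\mathrm{coh}}(\widehat{\D} _{\PP} ^{(\bullet)})$.

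Second, I would invoke the commutation isomorphism
\[
\R \underline{\Gamma} ^\dag _{u ^{-1}(Y)} \circ u ^{(\bullet) !} \riso u ^{(\bullet) !} \circ \R \underline{\Gamma} ^\dag _{Y}
\]
for $Y$ a closed subscheme of $P$, which is the analogue in the present inductive-system setting of \cite[2.2.18]{caro_surcoherent}. Applied with $Y = X'$ and using that $u ^{-1}(X') = X'$ since $X' \subset X$, this yields
\[
\R \underline{\Gamma} ^\dag _{X'}(u ^{(\bullet) !}(\widetilde{\B} ^{(\bullet)} _{\PP}(T))) \riso u ^{(\bullet) !}(\R \underline{\Gamma} ^\dag _{X'}(\widetilde{\B} ^{(\bullet)} _{\PP}(T))).
\]

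Finally, since $u$ is a closed immersion of smooth formal schemes, the functor $u ^{(\bullet) !}$ preserves membership in $\underrightarrow{LD} ^{\mathrm{b}} _{\Q,\mathrm{coh}}$; this is the inductive-system avatar of the Berthelot--Kashiwara stability of coherence under the extraordinary inverse image by a closed immersion, and it rests on the fact that the transfer bimodule $\widetilde{\D} ^{(\bullet)} _{\X \to \PP}$ is locally of finite presentation over $\widetilde{\D} _{\X} ^{(\bullet)}$ compatibly with the level transitions. Applied to the coherent complex $\R \underline{\Gamma} ^\dag _{X'}(\widetilde{\B} ^{(\bullet)} _{\PP}(T))$ from the first step, this produces the required object of $\underrightarrow{LD} ^{\mathrm{b}} _{\Q,\mathrm{coh}}(\widetilde{\D} _{\X} ^{(\bullet)})$.

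The hard part will be the second step: lifting the commutation between $u ^{(\bullet) !}$ and $\R \underline{\Gamma} ^\dag$ from the level of coherent complexes over $\D ^{\dag} _{\Q}$ (where it is \cite[2.2.18]{caro_surcoherent}) to the level of inductive systems localized by lim-ind-isog\'enies in $\underrightarrow{LD} ^{\mathrm{b}} _{\Q,\mathrm{qc}}$. This is exactly the kind of functoriality verification that the preceding chapter was designed to supply, following the strategy by which the local cohomology and localization functors were constructed there.
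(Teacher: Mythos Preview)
Your approach has two genuine gaps.

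First, the commutation in your second step is circular. In this paper, the isomorphism $\R\underline{\Gamma}^\dag_{X'} \circ u^{(\bullet)!} \riso u^{(\bullet)!} \circ \R\underline{\Gamma}^\dag_{X'}$ is Theorem~\ref{2.2.18}, and its proof explicitly invokes Proposition~\ref{stab-coh-u!}: after reducing to $\E^{(\bullet)} = \O_{\PP}^{(\bullet)}$ and to a closed immersion, one uses \ref{stab-coh-u!} to place both sides in $\underrightarrow{LD}^{\mathrm{b}}_{\Q,\mathrm{coh}}$, then applies the full faithfulness of $\underrightarrow{\lim}$ to descend to the known $\D^\dag_\Q$-level statement. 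So you cannot quote \ref{2.2.18} here. Your hope that ``the preceding chapter was designed to supply'' this commutation is not borne out: chapter~4 builds the local functors and their internal algebra, but the commutation with $u^{(\bullet)!}$ lives in chapter~5 and depends on the very proposition you are proving.

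Second, your justification in the third step is wrong. The transfer bimodule $\widetilde{\D}^{(\bullet)}_{\X \to \PP}$ is \emph{not} locally of finite presentation as a left $\widetilde{\D}_{\X}^{(\bullet)}$-module: in local coordinates with $X = V(t_1, \ldots, t_r)$ it is free on the infinite family $\{\partial_1^{\langle k_1 \rangle} \cdots \partial_r^{\langle k_r \rangle}\}$. Hence $u^{(\bullet)!}$ does not preserve coherence in general (e.g.\ $u^{(\bullet)!}(\widetilde{\D}_{\PP}^{(\bullet)})$ is not coherent). What is true is that $u^{(\bullet)!}$ preserves coherence on complexes \emph{supported in $X$} --- this is the Berthelot--Kashiwara statement \ref{u!u+=id} --- and your input $\R\underline{\Gamma}^\dag_{X'}(\widetilde{\B}^{(\bullet)}_{\PP}(T))$ is indeed supported in $X' \subset X$, so this repair is available in principle; but \ref{u!u+=id} also sits after \ref{stab-coh-u!} in the logical order of the paper.

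The paper's argument sidesteps both issues by computing $u^{(\bullet)!}(\widetilde{\B}^{(\bullet)}_{\PP}(T))$ \emph{first}, before applying any local cohomology. After reducing to $P$ and $X$ integral, there are two cases: either $T \cap X$ is a divisor of $X$, in which case Lemma~\ref{u*Odiv} gives $u^{(\bullet)!}(\widetilde{\B}^{(\bullet)}_{\PP}(T)) \riso \widetilde{\B}^{(\bullet)}_{\X}(T \cap X)$ and Lemma~\ref{induction-div-coh} applied on $\X$ handles $\R\underline{\Gamma}^\dag_{X'}$; or $X \subset T$, in which case Lemma~\ref{annul-imm-ferm} gives $u^{(\bullet)!}(\widetilde{\B}^{(\bullet)}_{\PP}(T)) \riso 0$. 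This is both shorter and logically prior to the tools you invoke.
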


\begin{proof}
Comme $P$ et $X$ sont lisses, il ne coûte pas cher de les supposer intègres.
Dans ce cas, soit $T \cap X$ est un diviseur de $X$, soit
$T \supset X$. Le premier cas résulte alors des lemmes \ref{induction-div-coh} et \ref{u*Odiv}, 
tandis que le second cas découle du lemme \ref{annul-imm-ferm}.
\end{proof}

\begin{theo}
\label{2.2.18}
Soient $f \colon \PP' \to \PP$ un morphisme de $\V$-schémas formels lisses,
$\ZZ$ et $\ZZ'$ des diviseurs à croisements normaux stricts de 
respectivement $\PP$ et $\PP'$ tels que $f ^{-1} (\ZZ) \subset \ZZ'$.
On pose $\PP ^{\sharp}:= (\PP, \ZZ)$, $\PP ^{\prime \sharp}:= (\PP', \ZZ')$
et
$f ^{\sharp}
\colon 
\PP ^{\prime \sharp}
\to 
\PP ^{\sharp}$ le morphisme de $\V$-schémas formels logarithmiques induits par $f$. 
Soient $X$ un sous-schéma fermé de $P$, $X':= f ^{-1} (X)$, 
$\E ^{(\bullet)} \in \smash{\underrightarrow{LD}}
^\mathrm{b} _{\Q, \mathrm{qc}} (\overset{^g}{} \widetilde{\D} _{\PP ^{\sharp}}
^{(\bullet)} )$ et
$\E ^{\prime (\bullet)} \in\smash{\underrightarrow{LD}}  ^\mathrm{b}
_{\Q, \mathrm{qc}} (\overset{^g}{} \widetilde{\D} _{\PP ^{\prime \sharp}}
^{(\bullet)} )$. 
On a des isomorphismes fonctoriels en $X$ et compatibles à Frobenius :
\begin{gather}
\label{commutfonctcohlocal1}
  f ^{\sharp (\bullet)!}  \circ\R \underline{\Gamma} ^\dag _{X}(\E ^{(\bullet)}) 
  \riso
   \R \underline{\Gamma} ^\dag _{X' }\circ f ^{\sharp (\bullet)!}  (\E ^{(\bullet)}), \
f ^{\sharp (\bullet)!} \circ (\hdag X)(\E ^{(\bullet)} ) \riso (\hdag X') \circ f ^{\sharp (\bullet)!}(\E^{(\bullet)} )\\
\label{commutfonctcohlocal2}
\R \underline{\Gamma} ^\dag _{X}\circ f ^{\sharp (\bullet)} _{+} (\E ^{\prime (\bullet)})
\riso
f ^{\sharp (\bullet)}  _{+} \circ \R \underline{\Gamma} ^\dag _{X'}(\E ^{\prime (\bullet)})
 , \
(\hdag X)\circ f ^{\sharp (\bullet)} _{+} (\E ^{\prime (\bullet)}) 
\riso  f ^{\sharp (\bullet)} _{+} \circ (\hdag X')(\E ^{\prime (\bullet)} ).
\end{gather}

\end{theo}

\begin{proof}
Ce théorème se vérifie de manière identique à sa version non-logarithmique de \cite[2.2.18]{caro_surcoherent}. 
Pour la commodité du lecteur, nous allons rappeler sa preuve en ajoutant aussi quelques détails lorsque cela est nécessaire. 
On établit d'abord les isomorphismes \ref{commutfonctcohlocal1}. 
Via les triangles de localisation, il suffit de valider celui de droite ou celui de gauche.
Avec \ref{theo2.2.8}, on peut en outre supposer que 
$X$ et $X'$ sont des diviseurs.
Comme $f$ se décompose en son graphe suivi de la projection canonique 
$\PP ' \times \PP \to \PP$, comme l'isomorphisme de droite est immédiat lorsque $f$ est plat, 
on se ramène au cas où $f$ est une immersion fermée.
Comme le foncteur cohomologique local commute au produit tensoriel 
(voir \ref{fonctXX'Gamma}) et de même pour l'image inverse extraordinaire, 
on se ramène au cas où 
$\E ^{(\bullet)}=\O _{\PP} ^{(\bullet)}$.
Avec \ref{GammaX-oub-sharp-iso} et \ref{f!oubsharp}, on peut oublier les structures logarithmiques. 
Grâce à \ref{stab-coh-u!},
les morphismes canoniques
$\R \underline{\Gamma} ^\dag _{X' }\circ f ^{(\bullet)!} \circ\R \underline{\Gamma} ^\dag _{X}(\O _{\PP} ^{(\bullet)})
\to 
\R \underline{\Gamma} ^\dag _{X' }\circ f ^! (\O _{\PP} ^{(\bullet)})$
et
$\R \underline{\Gamma} ^\dag _{X' }\circ f ^! \circ\R \underline{\Gamma} ^\dag _{X}(\O _{\PP} ^{(\bullet)})
\to 
f ^! \circ\R \underline{\Gamma} ^\dag _{X}(\O _{\PP} ^{(\bullet)})$
sont dans $\smash{\underrightarrow{LD}} ^{\mathrm{b}} _{\Q,\mathrm{coh}} ( \smash{\widetilde{\D}} _{\PP} ^{(\bullet)})$.
Via l'équivalence de catégories 
de \ref{eq-catLDBer-LD-D} de la forme
$\underrightarrow{\lim} 
\colon 
\underrightarrow{LD} ^{\mathrm{b}}  _{\Q, \mathrm{coh}} (\smash{\widetilde{\D}} _{\PP} ^{(\bullet)} (T))
\cong
D ^{\mathrm{b}} _{\mathrm{coh}}( \smash{\D} ^\dag _{\PP} (\hdag T) _{\Q} )$, 
pour vérifier que ce sont des isomorphismes, il suffit de le valider après application de ce foncteur $\underrightarrow{\lim} $ pleinement fidèle, 
ce qui sont les lemmes 
\cite[2.2.19 et 2.2.20]{caro_surcoherent}.
D'où \ref{commutfonctcohlocal1}. 
L'isomorphisme \ref{2.1.4-caro-surcoh-iso} permet de déduire \ref{commutfonctcohlocal2}
à partir de \ref{commutfonctcohlocal1}.
\end{proof}

\begin{vide}
[On peut oublier les diviseurs]
\label{oub-div-opcoh}
Soient $f \colon \PP' \to \PP$ un morphisme de $\V$-schémas formels lisses,
$\ZZ$ et $\ZZ'$ des diviseurs à croisements normaux stricts de 
respectivement $\PP$ et $\PP'$ tels que $f ^{-1} (\ZZ) \subset \ZZ'$.
On pose $\PP ^{\sharp}:= (\PP, \ZZ)$, $\PP ^{\prime \sharp}:= (\PP', \ZZ')$
et
$f ^{\sharp}
\colon 
\PP ^{\prime \sharp}
\to 
\PP ^{\sharp}$ le morphisme de $\V$-schémas formels logarithmiques induits par $f$. 
Soient $T$ et $T'$ des diviseurs respectifs de $P$ et $P'$ tels que
$f ( P '\setminus T' ) \subset P \setminus T$.

\begin{enumerate}
\item   Soit $\E ^{\prime (\bullet)} \in \smash{\underset{^{\longrightarrow}}{LD}} ^{\mathrm{b}} _{\Q ,\mathrm{qc}}
( \smash{\widetilde{\D}} _{\PP ^{\prime \sharp }} ^{(\bullet)}(T'))$.
Comme pour \cite[1.1.9 ]{caro_courbe-nouveau}, on vérifie de manière élémentaire que 
l'on dispose de l'isomorphisme canonique
$oub _{T} \circ f ^{\sharp(\bullet)} _{T,T',+} (\E ^{\prime (\bullet)} )\riso f ^{\sharp(\bullet)} _+  \circ oub _{T'} (\E ^{\prime (\bullet)})$.
On notera alors simplement 
$f ^{\sharp(\bullet)} _{+}$ à la place de $f ^{\sharp(\bullet)} _{T, T',+}$.

D'après la remarque \ref{rema-fct-qcoh2coh}, 
les foncteurs 
$\mathrm{Coh} _{T'} (f ^{\sharp(\bullet)} _{T,T',+}) $
et
$\mathrm{Coh}  (f ^{\sharp(\bullet)} _{+}) $ (on n'indique pas dans les notations l'ensemble vide par convention)
sont isomorphes sur 
$D ^\mathrm{b} _\mathrm{coh} ( \smash{\D} ^\dag _{\PP ^{\prime \sharp },\Q} ) 
\cap D ^\mathrm{b} _\mathrm{coh} ( \smash{\D} ^\dag _{\PP ^{\prime \sharp }} (\hdag T') _{\Q} )$.
Or, on a vu dans le paragraphe \ref{coh-Qcoh} que 
l'on dispose de l'isomorphisme de foncteurs 
$\mathrm{Coh} _{T'} (f  ^{\sharp (\bullet)}_{T , T ', +}) \riso f  ^{\sharp} _{T , T ', +}$ 
et
$\mathrm{Coh} (f  ^{\sharp (\bullet)}_{+}) \riso f  ^{\sharp} _+$.
On pourra alors noter sans risque de confusion
$f  ^{\sharp} _+$ à la place de $f  ^{\sharp} _{T, T',+}$.

\item Supposons $T ' = f ^{-1} (T)$ et 
soit $\E ^{(\bullet)} \in \smash{\underset{^{\longrightarrow}}{LD}} ^{\mathrm{b}} _{\Q ,\mathrm{qc}}
( \smash{\widetilde{\D}} _{\PP ^\sharp} ^{(\bullet)} (T))$.
De manière analogue à \cite[1.1.10]{caro_courbe-nouveau},
on déduit du 
théorème \ref{2.2.18} ci-dessus, 
que l'on bénéficie de l'isomorphisme canonique
$oub _{T'} \circ f ^{\sharp (\bullet)!} _{T} (\E^{(\bullet)} )\riso f ^{\sharp(\bullet)!} \circ oub _T (\E^{(\bullet)})$.
On notera alors simplement 
$f ^{\sharp(\bullet)!} $ à la place de $f ^{\sharp(\bullet)!} _T$. 
Pour des raisons identiques au cas de l'image directe, on pourra aussi noter simplement
$f ^{\sharp!} $ à la place de $f ^{\sharp!} _T$. 

\end{enumerate}
\end{vide}

\subsection{Surcohérence dans un $\V$-schéma formel lisse, stabilité par image inverse extraordinaire par une immersion fermée}

\begin{defi}
\label{surcohP}
$\bullet$ Soit $\E ^{(\bullet)}
\in 
\smash{\underrightarrow{LD}} ^{\mathrm{b}} _{\Q,\mathrm{coh}} ( \smash{\widetilde{\D}} _{\PP ^\sharp} ^{(\bullet)} (T))$.
On dit que $\E ^{(\bullet)}$ est surcohérent dans $\PP ^\sharp$ (ou dans $\PP$ s'il n'y a pas d'ambiguïté avec la log structure) 
si, pour tout diviseur $T'$ 
de $P$, on a alors
$(\hdag T') (\E ^{(\bullet)}) \in 
\smash{\underrightarrow{LD}} ^{\mathrm{b}} _{\Q,\mathrm{coh}} ( \smash{\widetilde{\D}} _{\PP ^\sharp} ^{(\bullet)} (T))$.
On note $\smash{\underrightarrow{LD}} ^{\mathrm{b}} _{\Q,\mathrm{surcoh}, \PP} ( \smash{\widetilde{\D}} _{\PP ^\sharp} ^{(\bullet)} (T))$
la sous-catégorie pleine 
de 
$\smash{\underrightarrow{LD}} ^{\mathrm{b}} _{\Q,\mathrm{coh}} ( \smash{\widetilde{\D}} _{\PP ^\sharp} ^{(\bullet)} (T))$
des complexes surcohérents dans $\PP$.

$\bullet$ De même, on note $D ^{\mathrm{b}}  _{\mathrm{surcoh}, \PP} (\smash{\D} ^\dag _{\PP ^\sharp} (\hdag T) _{\Q})$ 
la sous-catégorie pleine de
$D ^{\mathrm{b}}  _{\mathrm{coh}} (\smash{\D} ^\dag _{\PP ^\sharp} (\hdag T) _{\Q})$
des complexes $\E$ tels que 
pour tout diviseur $T'$ de $P$, 
on ait 
$(\hdag T') (\E) \in 
D ^{\mathrm{b}}  _{\mathrm{coh}} (\smash{\D} ^\dag _{\PP ^\sharp} (\hdag T) _{\Q})$.

\end{defi}

\begin{prop}
\label{surcohP=surcohP}
Soit $\E ^{(\bullet)}
\in 
\underrightarrow{LD} ^{\mathrm{b}} _{\Q, \mathrm{coh}} (\smash{\widetilde{\D}} _{\PP ^\sharp} ^{(\bullet)} (T))$.
La propriété 
$\E ^{(\bullet)}
\in 
\smash{\underrightarrow{LD}} ^{\mathrm{b}} _{\Q,\mathrm{surcoh}, \PP} ( \smash{\widetilde{\D}} _{\PP ^\sharp} ^{(\bullet)} (T))$
est équivalente à la propriété 
$\underrightarrow{\lim}~
\E ^{ (\bullet)}
\in
D ^{\mathrm{b}}  _{\mathrm{surcoh}, \PP} (\smash{\D} ^\dag _{\PP ^\sharp} (\hdag T) _{\Q})$.
Ces deux propriétés sont locales en $\PP$ et
le foncteur $\underrightarrow{\lim}$ se factorise en 
l'équivalence de catégories de la forme
$\underrightarrow{\lim}
\colon 
\smash{\underrightarrow{LD}} ^{\mathrm{b}} _{\Q,\mathrm{surcoh}, \PP} ( \smash{\widetilde{\D}} _{\PP ^\sharp} ^{(\bullet)} (T))
\cong 
D ^{\mathrm{b}}  _{\mathrm{surcoh}, \PP} (\smash{\D} ^\dag _{\PP ^\sharp} (\hdag T) _{\Q})$.
\end{prop}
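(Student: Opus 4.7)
The plan is to split the proof into three parts: the pointwise equivalence of the two surcoherence properties, the locality assertions, and finally the induced equivalence of categories.

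First I would establish the pointwise equivalence. The key ingredient is the canonical isomorphism $\underrightarrow{\lim} \circ (\hdag T') \riso (\hdag T') \circ \underrightarrow{\lim}$ for any divisor $T'$ of $P$, which is immediate from the definitions \ref{hdag-def-qc} and \ref{hdag-def-coh} of the localisation functors as compatible inductive limits. The direct implication is then easy: if $(\hdag T')(\E^{(\bullet)})$ is coherent in $\smash{\underrightarrow{LD}}^{\mathrm{b}}_{\Q,\mathrm{coh}}(\smash{\widetilde{\D}}_{\PP^\sharp}^{(\bullet)}(T))$, the equivalence \ref{eqcatcoh} sends it to a coherent $\smash{\D}^\dag_{\PP^\sharp}(\hdag T)_\Q$-complex, namely $(\hdag T')(\underrightarrow{\lim}\E^{(\bullet)})$. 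For the converse, assuming $\underrightarrow{\lim}\E^{(\bullet)}$ is surcoherent in $\PP$, I would invoke Corollary \ref{coro1limTouD}: one has $(\hdag T')(\E^{(\bullet)}) \in \smash{\underrightarrow{LD}}^{\mathrm{b}}_{\Q,\mathrm{coh}}(\smash{\widetilde{\D}}_{\PP^\sharp}^{(\bullet)}(T \cup T'))$ by construction of $(\hdag T')$, its limit $(\hdag T')(\underrightarrow{\lim}\E^{(\bullet)})$ is coherent over $\smash{\D}^\dag_{\PP^\sharp}(\hdag T)_\Q$ by hypothesis, and the corollary transports this coherence back to $\smash{\underrightarrow{LD}}^{\mathrm{b}}_{\Q,\mathrm{coh}}(\smash{\widetilde{\D}}_{\PP^\sharp}^{(\bullet)}(T))$. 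This step is where the substantial content of Theorem \ref{limTouD} enters.

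Next, for locality, I would treat the $\smash{\D}^\dag$-side directly. Let $(\PP_i)_{i\in I}$ be an open cover of $\PP$. The key geometric observation is that since $P$ is smooth, any reduced divisor $T'_i$ of $P_i$ extends to a reduced divisor of $P$, namely the Zariski closure (codimension-$1$ reduced closed subsets of a smooth noetherian scheme have codimension-$1$ reduced closures). Assuming $\E$ surcoherent in $\PP$, given a divisor $T'_i$ of $P_i$, I choose such an extension $T'$; then $(\hdag T'_i)(\E|\PP_i)$ is canonically identified with $(\hdag T')(\E)|\PP_i$, which is coherent by hypothesis, so $\E|\PP_i$ is surcoherent in $\PP_i$. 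The converse direction is easier: given a divisor $T'$ of $P$, the intersection $T' \cap P_i$ is a divisor of $P_i$, and locality of coherence (\ref{QCoh-local}) combined with $(\hdag T')(\E)|\PP_i \cong (\hdag T' \cap P_i)(\E|\PP_i)$ concludes. Locality on the inductive-system side then follows from the pointwise equivalence established in the first part.

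Finally, the equivalence of categories is formal: fully faithfulness is inherited from the equivalence \ref{eqcatcoh} on all coherent objects, and essential surjectivity follows because any $\E \in D^{\mathrm{b}}_{\mathrm{surcoh},\PP}(\smash{\D}^\dag_{\PP^\sharp}(\hdag T)_\Q)$ is in particular coherent, hence of the form $\underrightarrow{\lim}\E^{(\bullet)}$ for some $\E^{(\bullet)} \in \smash{\underrightarrow{LD}}^{\mathrm{b}}_{\Q,\mathrm{coh}}(\smash{\widetilde{\D}}_{\PP^\sharp}^{(\bullet)}(T))$, and the first part guarantees that $\E^{(\bullet)}$ is surcoherent in $\PP$. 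The main obstacle throughout is the converse implication in the first step, which rests essentially on the central Theorem \ref{limTouD} of the paper.
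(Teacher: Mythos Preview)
Your proposal is correct and follows essentially the same approach as the paper: the core of both arguments is the commutation $\underrightarrow{\lim}\circ(\hdag T')\riso(\hdag T'\cup T,T)\circ\underrightarrow{\lim}$ together with an application of Corollary~\ref{coro1limTouD} to $\E^{\prime(\bullet)}:=(\hdag T')(\E^{(\bullet)})$. You are more explicit than the paper about the locality step (the divisor-extension argument via Zariski closure) and about deriving the categorical equivalence from \ref{eqcatcoh}, both of which the paper's proof leaves implicit.
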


\begin{proof}
Soit $T'$ un diviseur de $P$. 
Comme 
$\underrightarrow{\lim} \circ (\hdag T') (\E ^{(\bullet)})
\underset{\ref{hdagT'T=cup}}{\riso} 
\underrightarrow{\lim} \circ (\hdag T' \cup T) (\E ^{(\bullet)})
\riso
(\hdag T' \cup T,T) \circ \underrightarrow{\lim} (\E ^{(\bullet)})$, 
il suffit d'appliquer le corollaire \ref{coro1limTouD}
au complexe 
$\E ^{\prime (\bullet)} := (\hdag T') (\E ^{(\bullet)})$.
\end{proof}

\begin{lemm}
\label{1ercrit-surcohP}
Soit $\E ^{(\bullet)}
\in 
\smash{\underrightarrow{LD}} ^{\mathrm{b}} _{\Q,\mathrm{coh}} ( \smash{\widetilde{\D}} _{\PP ^\sharp} ^{(\bullet)} (T))$.
Avec les notations et définitions de \ref{dfn-4.3.4} et \ref{hdagX}, 
l'objet $\E ^{(\bullet)}$ appartient à 
$\smash{\underrightarrow{LD}} ^{\mathrm{b}} _{\Q,\mathrm{surcoh},\PP} ( \smash{\widetilde{\D}} _{\PP ^\sharp} ^{(\bullet)} (T))$
si et seulement si, 
pour tout sous-schéma fermé $Z$ de $P$, 
les objets de $(\hdag Z) (\E ^{(\bullet)}) $ et 
$\R \underline{\Gamma} ^\dag _{Z} (\E ^{(\bullet)})$
sont dans 
$\smash{\underrightarrow{LD}} ^{\mathrm{b}} _{\Q,\mathrm{surcoh},\PP} ( \smash{\widetilde{\D}} _{\PP ^\sharp} ^{(\bullet)} (T))$.
\end{lemm}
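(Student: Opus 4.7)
Mon plan est de ramener le cas d'un sous-schéma fermé général à celui d'un diviseur, puis d'exploiter la stabilité par cône de la sous-catégorie $\smash{\underrightarrow{LD}} ^{\mathrm{b}} _{\Q,\mathrm{coh}} ( \smash{\widetilde{\D}} _{\PP ^\sharp} ^{(\bullet)} (T))$ dans la catégorie triangulée ambiante. Le sens direct est immédiat : en prenant $Z = T'$ un diviseur quelconque de $P$ (qui est bien un sous-schéma fermé), l'hypothèse fournit que $(\hdag T')(\E ^{(\bullet)})$ est surcohérent dans $\PP$, donc cohérent, ce qui est la définition de la surcohérence de $\E ^{(\bullet)}$.

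Pour la réciproque, je procéderais d'abord à vérifier que la surcohérence est préservée par $(\hdag T')$ pour un diviseur $T'$. Pour tout second diviseur $T''$, on dispose de l'isomorphisme canonique $(\hdag T'') \circ (\hdag T')(\E ^{(\bullet)}) \riso (\hdag T'' \cup T')(\E ^{(\bullet)})$, déduit de \ref{hdagT'T=cup} (ou de son analogue \ref{2.2.14-surcoh}) ; comme $T'' \cup T'$ est encore un diviseur, la surcohérence de $\E ^{(\bullet)}$ garantit la cohérence du membre de droite, d'où la surcohérence de $(\hdag T')(\E ^{(\bullet)})$. J'appliquerais ensuite ceci au triangle distingué de localisation
\begin{equation}
\notag
\R \underline{\Gamma} ^\dag _{T'}(\E ^{(\bullet)}) \to \E ^{(\bullet)} \to (\hdag T')(\E ^{(\bullet)}) \to +1
\end{equation}
dont les deux derniers sommets sont surcohérents, pour conclure que $\R \underline{\Gamma} ^\dag _{T'}(\E ^{(\bullet)})$ est aussi surcohérent : en effet, pour tout diviseur $T''$ le foncteur $(\hdag T'')$ est triangulé, et la cohérence est stable par cône dans $\smash{\underrightarrow{LD}} ^{\mathrm{b}} _{\Q}$ grâce à la proposition \ref{LQ-coh-stab} et au théorème \ref{theo-eq-coh-lim}.

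Pour un sous-schéma fermé $Z \subset P$ général, je décomposerais $P$ selon ses composantes irréductibles afin de me ramener au cas intègre, où l'on sait (cf.\ la construction rappelée juste avant \ref{theo2.2.8}) que $Z$ s'écrit comme intersection finie de diviseurs $T _1, \dots, T _r$ de $P$. Par définition, $\R \underline{\Gamma} ^\dag _{Z}(\E ^{(\bullet)})$ s'identifie alors à $\R \underline{\Gamma} ^\dag _{T _r} \circ \cdots \circ \R \underline{\Gamma} ^\dag _{T _1}(\E ^{(\bullet)})$, qui est surcohérent par récurrence immédiate à partir de l'étape précédente ; puis le triangle distingué définissant $(\hdag Z)$ et la stabilité par cône déjà évoquée livrent la surcohérence de $(\hdag Z)(\E ^{(\bullet)})$. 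Le point technique le plus délicat me semble être la vérification soignée de cette stabilité par extension dans la catégorie des complexes surcohérents, qui exige de commuter le foncteur (triangulé) $(\hdag T'')$ au triangle obtenu et d'invoquer la stabilité de la cohérence par cône ; le reste s'enchaîne de manière formelle.
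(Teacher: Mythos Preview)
Your proposal is correct and follows essentially the same d\'evissage as the paper, which simply refers back to the argument of lemma~\ref{induction-div-coh}: reduce to divisors, use the localization triangle together with the stability of coherence by c\^ones (via \ref{LQ-coh-stab} and \ref{theo-eq-coh-lim}), and iterate to reach an arbitrary closed subscheme. Two minor remarks: you have swapped the labels ``sens direct'' and ``r\'eciproque'' relative to the usual reading of the \'enonc\'e, and the implication you call ``sens direct'' is even more immediate by taking $Z = P$, since $\R\underline{\Gamma}^\dag _{P}$ is the identity by definition.
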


\begin{proof}
On procède par dévissage de manière identique à \ref{induction-div-coh}.
\end{proof}

\begin{nota}
\label{nota-usharp}
 Soient $u \colon \X \hookrightarrow \PP$ une immersion fermée de $\V$-schémas formels lisses,
$\ZZ$ et $\mathfrak{D}$ des diviseurs à croisements normaux stricts de 
respectivement $\PP$ et $\X$ tels que $u ^{-1} (\ZZ) \subset  \mathfrak{D}$.
On pose $\PP ^{\sharp}:= (\PP, \ZZ)$, $\X ^{ \sharp}:= (\X, \mathfrak{D})$
et
$u ^{\sharp}
\colon 
\X ^{ \sharp}
\to 
\PP ^{\sharp}$ l'immersion fermée de $\V$-schémas formels logarithmiques induits par $u$. 
Soient $\U$ l'ouvert de $\PP$ complémentaire de $u (X)$, 
$T$ un diviseur de $P$ tel que $T \cap X$ soit un diviseur de $X$. 

\end{nota}

\begin{lemm}
\label{imm-fer-coh}
Avec les notations de \ref{nota-usharp}, 
on dispose des carrés commutatifs à isomorphisme canonique:
\begin{equation}
\label{imm-fer-coh-diag}
\xymatrix @=0,4cm{
{ \underrightarrow{LD} ^{\mathrm{b}} _{\Q, \mathrm{coh}} (\smash{\widetilde{\D}} _{\X ^\sharp} ^{(\bullet)} (T\cap X))}
\ar[r]  _-{u  ^{(\bullet)} _+}
\ar[d] ^-{\underrightarrow{\lim} } _-{\cong} 
& 
{ \underrightarrow{LD} ^{\mathrm{b}} _{\Q, \mathrm{coh}} (\smash{\widetilde{\D}} _{\PP ^\sharp} ^{(\bullet)} (T))}
\ar[d] ^-{\underrightarrow{\lim} } _-{\cong} 
\\ 
{ D ^{\mathrm{b}} _{\mathrm{coh}} (\smash{\D} ^\dag _{\X ^\sharp} (\hdag T \cap X) _{\Q})}
\ar[r]  _-{u _{T+}}
& 
{D ^{\mathrm{b}} _{\mathrm{coh}}  ( \smash{\D} ^\dag _{\PP ^\sharp} (\hdag T) _{\Q} ),} 
}
\xymatrix @=0,4cm{
{ \underrightarrow{LD} ^{0} _{\Q, \mathrm{coh}} (\smash{\widetilde{\D}} _{\X ^\sharp} ^{(\bullet)} (T\cap X))}
\ar[r]  _-{u  ^{(\bullet)} _+}
\ar[d] ^-{\underrightarrow{\lim} } _-{\cong} 
& 
{ \underrightarrow{LD} ^{0} _{\Q, \mathrm{coh}} (\smash{\widetilde{\D}} _{\PP ^\sharp} ^{(\bullet)} (T))}
\ar[d] ^-{\underrightarrow{\lim} } _-{\cong} 
\\ 
{ D ^{0} _{\mathrm{coh}} (\smash{\D} ^\dag _{\X ^\sharp} (\hdag T \cap X) _{\Q})}
\ar[r]  _-{u _{T+}}
& 
{D ^{0} _{\mathrm{coh}}  ( \smash{\D} ^\dag _{\PP ^\sharp} (\hdag T) _{\Q} ).} 
}
\end{equation}
\end{lemm}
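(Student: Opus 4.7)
\emph{Plan.} The statement packages two assertions for each square: (i) the horizontal functor $u_+^{(\bullet)}$ in the top row actually lands in the displayed target subcategory, and (ii) the square commutes up to a canonical isomorphism. I would treat commutativity first and deduce the stability assertions from it via the key corollary \ref{coro1limTouD}.

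For commutativity of the first square, I would unwind the definition
\[
u_+^{(\bullet)}(\E^{(\bullet)}) \;=\; \R u_*\bigl(\widetilde{\D}^{(\bullet)}_{\PP^\sharp \leftarrow \X^\sharp}(T,T\cap X) \,\widehat{\otimes}^{\L}_{\widetilde{\D}^{(\bullet)}_{\X^\sharp}(T\cap X)}\, \E^{(\bullet)}\bigr)
\]
level by level and check that $\underrightarrow{\lim}$ commutes with each of the three operations involved: the completed tensor product (where passing to $\underrightarrow{\lim}$ produces $\D^\dag_{\PP^\sharp \leftarrow \X^\sharp}(\hdag T,T\cap X)_\Q \otimes^{\L}_{\D^\dag_{\X^\sharp}(\hdag T\cap X)_\Q}-$ by the very construction of the transfer bimodule in the limit), and the direct image $\R u_*$ (where I would use that $u$ is a closed immersion, so $\R u_* = u_*$ commutes with filtered colimits on an affine cover). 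This yields a canonical isomorphism of functors $\underrightarrow{\lim}\circ u_+^{(\bullet)} \riso u_{T+}\circ \underrightarrow{\lim}$ after restriction to coherent objects.

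Next I would establish the factorisation through $\underrightarrow{LD}^{\mathrm{b}}_{\Q,\mathrm{coh}}$ in the top row. Starting from $\E^{(\bullet)} \in \underrightarrow{LD}^{\mathrm{b}}_{\Q,\mathrm{coh}}(\widetilde{\D}_{\X}^{(\bullet)}(T\cap X))$, the complex $\underrightarrow{\lim}\,\E^{(\bullet)}$ is a coherent $\D^\dag_{\X}(\hdag T\cap X)_\Q$-module, hence $u_{T+}\underrightarrow{\lim}\,\E^{(\bullet)}$ is coherent over $\D^\dag_{\PP^\sharp}(\hdag T)_\Q$ by the classical coherence-preservation theorem for the direct image under a proper morphism. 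Using the equivalence of categories \ref{eqcatcoh} (i.e.\ the forward direction of \ref{coro1limTouD} applied with $T'=T$), the object $u_+^{(\bullet)}\E^{(\bullet)}$, whose image under $\underrightarrow{\lim}$ is canonically coherent by the commutativity already established, must itself belong to $\underrightarrow{LD}^{\mathrm{b}}_{\Q,\mathrm{coh}}$; the full faithfulness of $\underrightarrow{\lim}$ on coherent complexes then promotes the level-wise comparison to a canonical isomorphism in the top row.

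For the second square, the remaining point is that $u_+^{(\bullet)}$ and $u_{T+}$ preserve the property of being concentrated in degree zero. Since $u$ is a closed immersion, the transfer bimodule $\widetilde{\D}^{(\bullet)}_{\PP^\sharp \leftarrow \X^\sharp}(T,T\cap X)$ is locally free as a right $\widetilde{\D}^{(\bullet)}_{\X^\sharp}(T\cap X)$-module, $u_*$ is exact for closed immersions, and the formula \ref{ftt'+} defining $u_+$ carries no cohomological shift; the analogous classical statement holds for $u_{T+}$. Combined with the characterisation of $\underrightarrow{LD}^0_{\Q,\mathrm{coh}}$ via vanishing of $\mathcal{H}^n$ for $n\neq 0$ and lemma \ref{lemm-lim-MD2square}.\ref{lemm1-lim-MD2square}, this gives the factorisation through $\underrightarrow{LD}^0_{\Q,\mathrm{coh}}$ and reduces the commutativity of the second square to that of the first one, which has already been proved. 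The main technical point, where I would invest most effort, is verifying rigorously that $\underrightarrow{\lim}$ commutes with $\widehat{\otimes}^{\L}$ at the level of coherent complexes (so that we remain inside the categories where \ref{coro1limTouD} applies); everything else is formal bookkeeping with the equivalences of \ref{theo-eq-coh-lim} and \ref{LMeqLD0}.
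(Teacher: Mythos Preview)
Your argument for the first square contains a genuine circularity. You want to deduce that $u_+^{(\bullet)}\E^{(\bullet)} \in \underrightarrow{LD}^{\mathrm{b}}_{\Q,\mathrm{coh}}$ from the coherence of $\underrightarrow{\lim}\,u_+^{(\bullet)}\E^{(\bullet)}$ via the equivalence \ref{eqcatcoh}. But that equivalence is between the \emph{coherent} subcategories: it tells you only that \emph{some} coherent object of $\underrightarrow{LD}^{\mathrm{b}}_{\Q,\mathrm{coh}}$ maps to $u_{T+}\underrightarrow{\lim}\,\E^{(\bullet)}$, not that $u_+^{(\bullet)}\E^{(\bullet)}$ \emph{is} that object. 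Your appeal to ``\ref{coro1limTouD} with $T'=T$'' collapses to a tautology, and in general the implication ``$\underrightarrow{\lim}\,\FF^{(\bullet)}$ coherent $\Rightarrow \FF^{(\bullet)}$ coherent'' is \emph{false} for an arbitrary $\FF^{(\bullet)}\in \underrightarrow{LD}^{\mathrm{b}}_{\Q,\mathrm{qc}}$. This is precisely the subtlety emphasised in the introduction of the paper and the raison d'\^etre of results like~\ref{coro1limTouD}, which require an \emph{a priori} coherence hypothesis over some larger divisor.

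The paper sidesteps this by quoting Berthelot's direct theorem: for $f$ proper, $f_+^{(\bullet)}$ preserves $\underrightarrow{LD}^{\mathrm{b}}_{\Q,\mathrm{coh}}$ and satisfies $\underrightarrow{\lim}\circ f_+^{(\bullet)} \riso f_+\circ \underrightarrow{\lim}$ on coherent objects (see \cite[4.3.7--8]{Beintro2}, recorded in this paper at \ref{coh-Qcoh}). That result is proved level by level from the definition of coherence in $\underrightarrow{LD}$ (one shows the transition isomorphisms \ref{Beintro-4.2.3M} are preserved by $f_+^{(m)}$), not by descending coherence through $\underrightarrow{\lim}$. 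Once the first square is granted in this way, your treatment of the second square is essentially the paper's: exactness of $u_{T+}$ on coherent modules gives the bottom factorisation through $D^0_{\mathrm{coh}}$, and then \ref{lemm-lim-MD2square}.\ref{lemm1-lim-MD2square} lifts the degree-$0$ property back to $\underrightarrow{LD}^0_{\Q,\mathrm{coh}}$. (Your alternative of invoking local freeness of the transfer bimodule to get exactness of $u_+^{(\bullet)}$ directly also works, but is not needed.)
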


\begin{proof}
Comme $u$ est en particulier propre, 
on obtient alors la commutativité à isomorphisme canonique du carré de gauche.
Comme le foncteur 
$u _{T+} \colon 
\mathrm{Coh} (\smash{\D} ^\dag _{\X ^\sharp} (\hdag T \cap X) _{\Q})
\to
\mathrm{Coh}  ( \smash{\D} ^\dag _{\PP ^\sharp} (\hdag T) _{\Q} )$
est exact, 
on obtient la factorisation 
$ D ^{0} _{\mathrm{coh}} (\smash{\D} ^\dag _{\X ^\sharp} (\hdag T \cap X) _{\Q})
\to D ^{0} _{\mathrm{coh}}  ( \smash{\D} ^\dag _{\PP ^\sharp} (\hdag T) _{\Q} )$.
On obtient alors le diagramme commutatif à isomorphisme canonique près 
identique à celui de droite de \ref{imm-fer-coh-diag} où le symbole {\og $0$\fg}
du terme en haut à droite est remplacé par {\og $\mathrm{b}$\fg}.
Grâce à \ref{lemm-lim-MD2square}.\ref{lemm1-lim-MD2square}, on en déduit la factorisation 
$u  ^{(\bullet)} _+\colon 
 \underrightarrow{LD} ^{0} _{\Q, \mathrm{coh}} (\smash{\widetilde{\D}} _{\X ^\sharp} ^{(\bullet)} (T\cap X))
\to 
\underrightarrow{LD} ^{0} _{\Q, \mathrm{coh}} (\smash{\widetilde{\D}} _{\PP ^\sharp} ^{(\bullet)} (T))$
voulue.
\end{proof}

\begin{theo}
[Version logarithmique du théorème de Berthelot-Kashiwara]
\label{dagu!u+=id}
Avec les notations de \ref{nota-usharp}, on suppose que $u ^{\sharp}$ est exacte. 
Les foncteurs $u ^{\sharp !}$ et $u ^{\sharp} _{+}$ induisent des équivalences quasi-inverses entre la catégorie 
des $\smash{\D} ^\dag _{\PP ^\sharp} (\hdag T ) _{\Q} $-modules cohérents à support dans $X$ 
et celle des $\smash{\D} ^\dag _{\X ^{\sharp}} (\hdag T \cap X) _{\Q} $-modules cohérents. 
Ces foncteurs $u ^{\sharp !}$ et $u ^{\sharp} _{+}$ sont acycliques sur ces catégories. 
\end{theo}

\begin{proof}
C'est un cas particulier de \ref{Berthelot-Kashiwara-full}.
\end{proof}

\begin{theo}
[Version système inductif et logarithmique du théorème de Berthelot-Kashiwara]
\label{u!u+=id}
Avec les notations de \ref{nota-usharp}, on suppose que $u ^{\sharp}$ est exacte. 
Soient
$\FF ^{(\bullet)} 
\in \smash{\underrightarrow{LD}}  ^\mathrm{b} _{\Q, \mathrm{coh}}
(\overset{^\mathrm{g}}{} \smash{\widetilde{\D}} _{\X ^{\sharp}} ^{(\bullet)} (T \cap X))$,
$\E ^{(\bullet)}  
\in \smash{\underrightarrow{LD}}  ^\mathrm{b} _{\Q, \mathrm{coh}}
(\overset{^\mathrm{g}}{} \smash{\widetilde{\D}} _{\PP ^\sharp} ^{(\bullet)} (T ))$
tel que 
$\E ^{(\bullet)} |\U \riso 0$
dans $\smash{\underrightarrow{LD}}  ^\mathrm{b} _{\Q, \mathrm{coh}}
(\overset{^\mathrm{g}}{} \smash{\widetilde{\D}} _{\PP ^\sharp} ^{(\bullet)} (T ))$. 

\begin{enumerate}
\item On dispose de l'isomorphisme canonique dans 
$\smash{\underrightarrow{LD}}  ^\mathrm{b} _{\Q, \mathrm{coh}}
(\overset{^\mathrm{g}}{} \smash{\widetilde{\D}} _{\X ^{\sharp}} ^{(\bullet)} (T \cap X))$:
\begin{equation}
\label{u!u+=id-iso}
u ^{\sharp (\bullet)!} \circ u _{+} ^{\sharp (\bullet)} (\FF ^{(\bullet)})
\riso \FF ^{(\bullet)}.
\end{equation}

\item On a $u ^{\sharp (\bullet)!} (\E ^{(\bullet)} )\in \smash{\underrightarrow{LD}}  ^\mathrm{b} _{\Q, \mathrm{coh}}
(\overset{^\mathrm{g}}{} \smash{\widetilde{\D}} _{\X ^{\sharp}} ^{(\bullet)} (T \cap X))$ et l'on bénéficie de l'isomorphisme canonique
\begin{equation}
\label{u!u+=id-isobis}
u _{+} ^{\sharp (\bullet)} \circ  u ^{\sharp (\bullet)!}  (\E ^{(\bullet)})
\riso \E ^{(\bullet)}.
\end{equation}

\item 
\label{u!u+=id-eq-cat}
Les foncteurs $u _{+} ^{\sharp (\bullet)} $ et $u ^{\sharp (\bullet)!}  $ induisent des équivalences quasi-inverses entre
la catégorie $\smash{\underrightarrow{LD}}  ^\mathrm{b} _{\Q, \mathrm{coh}}
(\overset{^\mathrm{g}}{} \smash{\widetilde{\D}} _{\X ^{\sharp}} ^{(\bullet)} (T \cap X))$
(resp. 
$\smash{\underrightarrow{LD}}  ^0 _{\Q, \mathrm{coh}}
(\overset{^\mathrm{g}}{} \smash{\widetilde{\D}} _{\X ^{\sharp}} ^{(\bullet)} (T \cap X))$)
et la sous-catégorie pleine de 
$\smash{\underrightarrow{LD}}  ^\mathrm{b} _{\Q, \mathrm{coh}}
(\overset{^\mathrm{g}}{} \smash{\widetilde{\D}} _{\PP ^\sharp} ^{(\bullet)} (T ))$
(resp. $\smash{\underrightarrow{LD}}  ^0 _{\Q, \mathrm{coh}}
(\overset{^\mathrm{g}}{} \smash{\widetilde{\D}} _{\PP ^\sharp} ^{(\bullet)} (T ))$) des complexes
$\E ^{(\bullet)} $ tels que 
$\E ^{(\bullet)} |\U \riso 0$.
\end{enumerate}

\end{theo}

\begin{proof}
Vérifions dans un premier temps l'isomorphisme \ref{u!u+=id-iso}. 
On dispose du morphisme canonique d'adjonction 
$u ^{\sharp (\bullet)!} \circ u _{+} ^{\sharp (\bullet)} (\FF ^{(\bullet)})
\to \FF ^{(\bullet)}$. 
Il s'agit de vérifier que cette flèche est un isomorphisme. 
D'après la proposition \ref{LDiso-local}, cela est local. 
Par transitivité des morphismes d'adjonction, 
on se ramène alors au cas où $P$ est muni de coordonnées locales $t _1, \dots, t _d$ telles que $X=V (t _1)$.
Dans ce cas, on déduit de \ref{u!alg=u!} que, 
pour tout entier $l \not \in \{0,1 \}$, on a $\mathcal{H} ^l u ^{\sharp (\bullet)!}=0$ sur 
$\smash{\underrightarrow{LD}}  ^\mathrm{b} _{\Q, \mathrm{coh}}
(\overset{^\mathrm{g}}{} \smash{\widetilde{\D}} _{\PP ^\sharp} ^{(\bullet)} (T ))$.
Comme le foncteur
$u _{+} ^{\sharp (\bullet)} $ 
préserve la cohérence et est exact, 
le foncteur 
$u ^{\sharp (\bullet)!} \circ u _{+} ^{\sharp (\bullet)}$ est donc 
way-out à gauche
(cela a un sens grâce à l'équivalence de catégories
$\underrightarrow{LD} ^{\mathrm{b}}  _{\Q, \mathrm{coh}} (\smash{\widetilde{\D}} _{\X ^{\sharp}} ^{(\bullet)} (T \cap X))
\cong 
D ^{\mathrm{b}} _{\mathrm{coh}}
(\underrightarrow{LM} _{\Q} (\smash{\widetilde{\D}} _{\X ^{\sharp}} ^{(\bullet)} (T \cap X)))$
de \ref{eq-coh-lim}).
En recopiant le début de la preuve de \ref{theo-fleche-Rhomcoh}, on se ramène alors au cas où 
$\FF ^{(\bullet)}=\lambda ^* \smash{\widetilde{\D}} _{\X ^{\sharp}} ^{(\bullet)} (T \cap X)$, pour un certain $\lambda \in L$.
On dispose de l'isomorphisme canonique
$\mathcal{H} ^0 u ^{\sharp (m)!} \circ u _{+} ^{\sharp (m)} (\FF ^{(m)})
\riso \FF ^{(m)}$ (calcul classique, voir par exemple \cite{surcoh-hol}).
De plus, 
via la formule \cite[1.7.1]{caro_log-iso-hol},
on calcule que la flèche 
canonique 
$\mathcal{H} ^1 u ^{\sharp (m)!} \circ u _{+} ^{\sharp (m)} (\FF ^{(m)})
\to 
\mathcal{H} ^1 u ^{\sharp (m+1)!} \circ u _{+} ^{\sharp (m+1)} (\FF ^{(m+1)})$
est le morphisme nul (grâce à \ref{lemm-ualg-u!nivm}, 
$\mathcal{H} ^1 u ^{\sharp (m)!} =\mathcal{H} ^1 u ^{\sharp (m+1)!}= u ^*$ car le module est cohérent).
On en déduit le résultat.

Vérifions maintenant \ref{u!u+=id-isobis}. 
Posons $\E:=  \underrightarrow{\lim}~ \E ^{(\bullet)}$.
D'après la version logarithmique du théorème de Berthelot-Kashiwara de \ref{dagu!u+=id}, 
comme $\E \in D ^\mathrm{b} _\mathrm{coh} ( \smash{\D} ^\dag _{\PP ^\sharp} (\hdag T ) _{\Q} )$ 
et est à support dans $X$, 
alors
$u ^{\sharp !} (\E) \in D ^\mathrm{b} _\mathrm{coh} ( \smash{\D} ^\dag _{\X ^{\sharp}} (\hdag T \cap X) _{\Q} )$. 
Soit 
$\H ^{(\bullet)}
\in \smash{\underrightarrow{LD}}  ^\mathrm{b} _{\Q, \mathrm{coh}}
(\overset{^\mathrm{g}}{} \smash{\widetilde{\D}} _{\X ^{\sharp}} ^{(\bullet)} (T \cap X))$
tel que 
$u ^{\sharp !} (\E) 
\riso 
 \underrightarrow{\lim}~
 \H ^{(\bullet)}$.
Via \ref{imm-fer-coh}, on en déduit 
$u ^{\sharp} _+ (u ^{\sharp !} (\E) ) 
\riso  
\underrightarrow{\lim} \, u _{+} ^{\sharp (\bullet)} \H ^{(\bullet)}$.
Comme d'après le théorème de Berthelot-Kashiwara
$\E \riso u _+ ^{\sharp}  (u ^{\sharp !} (\E) ) $, comme 
$\E ^{(\bullet)} ,~u _{+} ^{\sharp (\bullet)} \H ^{(\bullet)}
\in  \smash{\underrightarrow{LD}}  ^\mathrm{b} _{\Q, \mathrm{coh}}
(\overset{^\mathrm{g}}{} \smash{\widetilde{\D}} _{\PP ^\sharp} ^{(\bullet)} (T ))$
et comme le foncteur $ \underrightarrow{\lim} \, $ est pleinement fidèle sur
$ \smash{\underrightarrow{LD}}  ^\mathrm{b} _{\Q, \mathrm{coh}}
(\overset{^\mathrm{g}}{} \smash{\widetilde{\D}} _{\PP ^\sharp} ^{(\bullet)} (T ))$, 
on en déduit $\E ^{(\bullet)} \riso u _{+} ^{\sharp (\bullet)} \H ^{(\bullet)}$.
Cela entraîne 
$u ^{\sharp (\bullet)!}  (\E ^{(\bullet)} )\riso u ^{\sharp (\bullet)!}  \circ u _{+} ^{\sharp (\bullet)} \H ^{(\bullet)} \underset{\ref{u!u+=id-iso}}{\riso}
\H ^{(\bullet)}$.
En appliquant le foncteur 
$u _{+} ^{\sharp (\bullet)}$ à ce composé, on obtient
l'isomorphisme \ref{u!u+=id-isobis} voulu.
Le troisième point est évident une fois validés les deux premiers.
\end{proof}

\begin{coro}
\label{pre-loc-tri-B-t1T}
Avec les notations de \ref{nota-usharp}, on suppose que $u ^{\sharp}$ est exacte. 
Pour tout
$\E ^{(\bullet)} \in \smash{\underrightarrow{LD}} ^{\mathrm{b}} _{\Q,\mathrm{qc}} ( \smash{\widetilde{\D}} _{\PP ^\sharp} ^{(\bullet)})$,
on dispose de l'isomorphisme 
\begin{equation}
\label{pre-loc-tri-B-t1T-iso}
\R \underline{\Gamma} ^\dag _{X } (\E ^{(\bullet)}) 
\riso 
u _{+} ^{\sharp (\bullet)} \circ  u ^{\sharp (\bullet)!} (\E ^{(\bullet)}).
\end{equation}
\end{coro}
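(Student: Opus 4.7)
The plan is to reduce the statement to the case of the constant object $\E^{(\bullet)} = \O_{\PP}^{(\bullet)}$, where the coherence hypotheses of the inductive-system Berthelot--Kashiwara theorem \ref{u!u+=id} are in force. For the reduction, I combine two formulas already at our disposal. First, the commutation of $\R\underline{\Gamma}^\dag_X$ with $\smash{\overset{\L}{\otimes}}^{\dag}$ from \ref{4.4.5.2}, applied to any local presentation $X = T_1 \cap \cdots \cap T_r$ as an intersection of divisors, gives the canonical isomorphism
\[
\R \underline{\Gamma} ^\dag _{X} (\E ^{(\bullet)}) \riso \E ^{(\bullet)} \smash{\overset{\L}{\otimes}}^{\dag}_{\O_{\PP,\Q}} \R \underline{\Gamma} ^\dag _{X} (\O _{\PP} ^{(\bullet)}).
\]
Second, since $u^{\sharp(\bullet)!}(\O_{\PP}^{(\bullet)}) \riso \O_\X^{(\bullet)}[d_{X/P}]$ by \ref{u!alg=u!}, there is a natural identification $u^{\sharp(\bullet)!}(\E^{(\bullet)}) \riso u^{\sharp(\bullet)!}(\O_{\PP}^{(\bullet)}) \otimes^\L_{\O_\X} \L u^{\sharp(\bullet)*}(\E^{(\bullet)})$, so the projection formula \ref{2.1.4-caro-surcoh-iso} applied with $\E^{\prime(\bullet)} = u^{\sharp(\bullet)!}(\O_{\PP}^{(\bullet)})$ yields
\[
u _{+} ^{\sharp (\bullet)} u ^{\sharp(\bullet)!}(\O _{\PP} ^{(\bullet)}) \smash{\overset{\L}{\otimes}}^{\dag}_{\O_{\PP,\Q}} \E ^{(\bullet)} \riso u _{+} ^{\sharp (\bullet)} u ^{\sharp(\bullet)!}(\E ^{(\bullet)}).
\]

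Together these reductions show it suffices to produce a canonical isomorphism $\R\underline{\Gamma}^\dag_X(\O_{\PP}^{(\bullet)}) \riso u_+^{\sharp(\bullet)} u^{\sharp(\bullet)!}(\O_{\PP}^{(\bullet)})$. For this constant case, I first observe that $\R\underline{\Gamma}^\dag_X(\O_{\PP}^{(\bullet)}) \in \smash{\underrightarrow{LD}}^{\mathrm{b}}_{\Q,\mathrm{coh}}(\smash{\widehat{\D}}_{\PP}^{(\bullet)})$ by \ref{stab-coh-u!} (applied with $u = \mathrm{id}$ and $T = \emptyset$), and that this object vanishes on $\U = \PP \setminus X$: indeed, the idempotence $\R\underline{\Gamma}^\dag_X \circ \R\underline{\Gamma}^\dag_X \riso \R\underline{\Gamma}^\dag_X$ provided by \ref{theo2.2.8} supplies the equivalent condition (2) of \ref{prop2.2.9}. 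Hence the inductive-system Berthelot--Kashiwara equivalence \ref{u!u+=id}.\ref{u!u+=id-eq-cat} applies, and the canonical isomorphism \ref{u!u+=id-isobis} gives
\[
\R \underline{\Gamma} ^\dag _{X} (\O _{\PP} ^{(\bullet)}) \riso u _{+} ^{\sharp (\bullet)} u ^{\sharp(\bullet)!}(\R \underline{\Gamma} ^\dag _{X} (\O _{\PP} ^{(\bullet)})).
\]
Finally, the commutation \ref{commutfonctcohlocal1} yields $u^{\sharp(\bullet)!}(\R\underline{\Gamma}^\dag_X(\O_{\PP}^{(\bullet)})) \riso \R\underline{\Gamma}^\dag_{u^{-1}(X)}(u^{\sharp(\bullet)!}(\O_{\PP}^{(\bullet)}))$; since $u^{-1}(X) = \X$ is the whole of $\X$, the convention $\R\underline{\Gamma}^\dag_\X = \mathrm{id}$ from the paragraph after \ref{tri-loc-berthelot} identifies this with $u^{\sharp(\bullet)!}(\O_{\PP}^{(\bullet)})$.

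The main point requiring care is the canonicity of the resulting isomorphism, i.e.\ its compatibility with the natural comparison morphism arising from the counit $u_+^{\sharp(\bullet)} u^{\sharp(\bullet)!}(\E^{(\bullet)}) \to \E^{(\bullet)}$. The vanishing $(\hdag X) \circ u_+^{\sharp(\bullet)} \riso 0$ supplied by \ref{commutfonctcohlocal2} (again using $u^{-1}(X) = \X$ together with the convention that $(\hdag \X) = 0$) ensures that the counit composes to zero with $\E^{(\bullet)} \to (\hdag X)(\E^{(\bullet)})$, so it factors uniquely through $\R\underline{\Gamma}^\dag_X(\E^{(\bullet)})$ by the universal property described in \ref{GammaT} together with \cite[1.1.9]{BBD}; verifying that this unique factorization coincides with the isomorphism produced by the two reductions above is a matter of chasing the functorialities of the projection formula and of \ref{4.4.5.2}, but presents no substantive difficulty.
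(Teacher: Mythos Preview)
Your proof is correct and follows essentially the same route as the paper: reduce to $\E^{(\bullet)}=\O_{\PP}^{(\bullet)}$ via the projection formula~\ref{2.1.4-caro-surcoh-iso} (you additionally spell out the $\R\underline{\Gamma}^\dag_X$ side via~\ref{4.4.5.2}), then apply the inductive Berthelot--Kashiwara theorem~\ref{u!u+=id} to $\R\underline{\Gamma}^\dag_X(\O_{\PP}^{(\bullet)})$ and identify $u^{\sharp(\bullet)!}\R\underline{\Gamma}^\dag_X(\O_{\PP}^{(\bullet)})$ with $u^{\sharp(\bullet)!}(\O_{\PP}^{(\bullet)})$ using~\ref{commutfonctcohlocal1}. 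The one point the paper makes explicit and you leave implicit is that, since~\ref{stab-coh-u!} gives coherence over the \emph{non-log} ring $\widehat{\D}_{\PP}^{(\bullet)}$ rather than $\widehat{\D}_{\PP^\sharp}^{(\bullet)}$, one should first forget the log structures via~\ref{GammaX-oub-sharp-iso},~\ref{f!oubsharp} and~\ref{com-f+-oubsharp-iso} (valid because $u^\sharp$ is exact) before invoking~\ref{u!u+=id}.
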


\begin{proof}
En utilisant les isomorphismes
$$u _+ ^{\sharp (\bullet)} (\O _{\X} ^{(\bullet)})
\smash{\overset{\L}{\otimes}}   ^{\dag}
_{\O  _{\PP,\Q}} 
\E ^{(\bullet)}
[d _{X/P}]
\underset{\ref{2.1.4-caro-surcoh-iso}}{\riso}
u _+ ^{\sharp (\bullet)} (\O _{\X} ^{(\bullet)}
\smash{\overset{\L}{\otimes}}   ^{\dag}
_{\O  _{\X,\Q}} 
 u  ^{\sharp (\bullet)!}(\E ^{(\bullet)}))
\riso 
u _+ ^{\sharp (\bullet)} \circ
 u  ^{\sharp (\bullet)!}(\E ^{(\bullet)}),$$
 on se ramène au cas où 
$ \E ^{(\bullet)} = \O _{\PP} ^{(\bullet)}$. 
Comme l'immersion $u ^{\sharp}$ est exacte, 
par \ref{GammaX-oub-sharp-iso}, \ref{f!oubsharp} et \ref{com-f+-oubsharp-iso}, on peut oublier les structures logarithmiques. 
Dans ce cas, 
comme $\R \underline{\Gamma} ^\dag _{X } (\O _{\PP} ^{(\bullet)}) \in \smash{\underrightarrow{LD}}  ^\mathrm{b} _{\Q, \mathrm{coh}}
(\overset{^\mathrm{g}}{} \smash{\widetilde{\D}} _{\PP } ^{(\bullet)} (T ))$ (voir \ref{stab-coh-u!})
et est à support dans $X$, 
le théorème de Kashiwara sous la forme
\ref{u!u+=id} implique que l'on dispose de l'isomorphisme canonique 
$u _+ ^{(\bullet)} \circ
 u  ^{(\bullet)!} (\R \underline{\Gamma} ^\dag _{X } (\O _{\PP} ^{(\bullet)}) )
\riso
\R \underline{\Gamma} ^\dag _{X } (\O _{\PP} ^{(\bullet)}) $.
Comme d'après \ref{commutfonctcohlocal1}
le morphisme canonique
$ u  ^{(\bullet)!} (\R \underline{\Gamma} ^\dag _{X } (\O _{\PP} ^{(\bullet)}) )
 \to 
  u  ^{(\bullet)!} (\O _{\PP} ^{(\bullet)}) $
  est un isomorphisme, on obtient le résultat voulu. 
\end{proof}

\begin{coro}
Avec les notations de \ref{nota-usharp}, on suppose que $u ^{\sharp}$ est exacte. 
\begin{enumerate}
\item Pour tout complexe
$\E ^{(\bullet)}
\in 
\smash{\underrightarrow{LD}} ^{\mathrm{b}} _{\Q,\mathrm{surcoh},\PP} ( \smash{\widetilde{\D}} _{\PP ^\sharp} ^{(\bullet)} (T))$,
on a alors $u ^{\sharp (\bullet)!} (\E ^{(\bullet)} )\in \smash{\underrightarrow{LD}}  ^\mathrm{b} _{\Q, \mathrm{surcoh}, \X}
(\overset{^\mathrm{g}}{} \smash{\widetilde{\D}} _{\X ^{\sharp}} ^{(\bullet)} (T \cap X))$.

\item Les foncteurs $u _{+} ^{\sharp (\bullet)} $ et $u ^{\sharp (\bullet)!}  $ induisent des équivalences quasi-inverses entre
la catégorie $\smash{\underrightarrow{LD}}  ^\mathrm{b} _{\Q, \mathrm{surcoh}, \X}
(\overset{^\mathrm{g}}{} \smash{\widetilde{\D}} _{\X ^{\sharp}} ^{(\bullet)} (T \cap X))$
et la sous-catégorie pleine de 
$\smash{\underrightarrow{LD}}  ^\mathrm{b} _{\Q, \mathrm{surcoh}, \PP}
(\overset{^\mathrm{g}}{} \smash{\widetilde{\D}} _{\PP ^\sharp} ^{(\bullet)} (T ))$
des complexes
$\E ^{(\bullet)} $ tels que 
$\E ^{(\bullet)} |\U \riso 0$.
\end{enumerate}
\end{coro}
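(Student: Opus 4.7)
The plan is to derive both parts from the commutation isomorphisms of Theorem \ref{2.2.18} combined with the characterization of surcoherence via closed subschemes given in Lemma \ref{1ercrit-surcohP} and the inductive-system Berthelot-Kashiwara equivalence of Theorem \ref{u!u+=id}.

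For assertion (1), I would first establish that $u ^{\sharp (\bullet)!}(\E ^{(\bullet)})$ is coherent. To this end, apply $u ^{\sharp (\bullet)!}$ to the localization triangle
$\R \underline{\Gamma} ^\dag _{X}(\E ^{(\bullet)}) \to \E ^{(\bullet)} \to (\hdag X)(\E ^{(\bullet)}) \to +1$
and use the commutation of Theorem \ref{2.2.18} to obtain
$u ^{\sharp (\bullet)!}((\hdag X)(\E ^{(\bullet)})) \riso (\hdag u ^{-1}(X))(u ^{\sharp (\bullet)!}(\E ^{(\bullet)}))$,
which vanishes since $u ^{-1}(X) = \X$ is the whole target scheme and $(\hdag \X)$ is thus the zero functor (via the convention of \ref{hdagX}: cone of the identity). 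This yields an isomorphism
$u ^{\sharp (\bullet)!}(\E ^{(\bullet)}) \riso u ^{\sharp (\bullet)!}(\R \underline{\Gamma} ^\dag _{X}(\E ^{(\bullet)}))$,
and since $\R \underline{\Gamma} ^\dag _{X}(\E ^{(\bullet)})$ is coherent and supported in $X$ by Lemma \ref{1ercrit-surcohP}, Theorem \ref{u!u+=id} ensures coherence of the image on $\X$. To upgrade this to surcoherence, I would fix an arbitrary divisor $T''$ of $X$; viewing $T''$ as a closed subscheme of $P$ via $u$ gives $u ^{-1}(T'') = T''$, and Theorem \ref{2.2.18} furnishes
$(\hdag T'')(u ^{\sharp (\bullet)!}(\E ^{(\bullet)})) \riso u ^{\sharp (\bullet)!}((\hdag T'')(\E ^{(\bullet)}))$,
whose right-hand side is coherent by applying the previous coherence step to the complex $(\hdag T'')(\E ^{(\bullet)})$, itself surcoherent by Lemma \ref{1ercrit-surcohP}.

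For assertion (2), Theorem \ref{u!u+=id}.\ref{u!u+=id-eq-cat} already provides the equivalence at the coherent level, so it suffices to verify that both functors respect the surcoherent subcategories. One direction is exactly assertion (1). For the other, given $\FF ^{(\bullet)}$ surcoherent in $\X$ and a divisor $T'$ of $P$, apply the commutation
$(\hdag T')(u ^{\sharp (\bullet)} _{+}(\FF ^{(\bullet)})) \riso u ^{\sharp (\bullet)} _{+}((\hdag u ^{-1}(T'))(\FF ^{(\bullet)}))$
of Theorem \ref{2.2.18}: when $X \subset T'$, the right-hand side vanishes (since $(\hdag \X)$ is zero on complexes over $\X$), while otherwise $T' \cap X$ is a divisor of $X$, and surcoherence of $\FF ^{(\bullet)}$ together with the preservation of coherence by $u ^{\sharp (\bullet)} _{+}$ (from Theorem \ref{u!u+=id}) yield the coherence of $(\hdag T')(u ^{\sharp (\bullet)} _{+}(\FF ^{(\bullet)}))$.

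The main obstacle I anticipate is the compatibility between categories with different ambient divisors: the complexes $(\hdag T'')(\E ^{(\bullet)})$ and $(\hdag T')(u ^{\sharp (\bullet)} _{+}(\FF ^{(\bullet)}))$ a priori live in enlarged divisor categories (obtained by iterated $(\hdag \cdot)$), so one must invoke the conventions of \ref{nota-hag-sansrisque} to match sources and targets of the various $(\hdag \cdot)$, and use the compatibility \ref{oub-div-opcoh} to transfer the commutation isomorphisms of Theorem \ref{2.2.18} (stated without surconvergent divisor) to the present setting where $\E ^{(\bullet)}$ and $\FF ^{(\bullet)}$ carry the divisor $T$ and $T \cap X$ respectively, the relation $u ^{-1}(T) = T \cap X$ being precisely what legitimizes this transfer.
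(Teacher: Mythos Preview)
Your proposal is correct and follows essentially the same approach as the paper: both rely on the commutation isomorphisms of Theorem \ref{2.2.18}, Lemma \ref{1ercrit-surcohP}, and the Berthelot--Kashiwara equivalence of Theorem \ref{u!u+=id}. The only cosmetic difference is that the paper checks surcoherence of $u^{\sharp(\bullet)!}(\E^{(\bullet)})$ by showing directly that $\R\underline{\Gamma}^\dag_{X'}(u^{\sharp(\bullet)!}(\E^{(\bullet)}))$ is coherent for every closed $X'\subset X$ (the support condition makes Kashiwara apply in one step), whereas you test against $(\hdag T'')$ for divisors $T''$ of $X$ and then feed the resulting surcoherent complex back through your coherence argument; this just trades one application of the localization triangle for another and buys no real gain or loss.
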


\begin{proof}
Vérifions d'abord la première assertion.
Soient $\E ^{(\bullet)}
\in 
\smash{\underrightarrow{LD}} ^{\mathrm{b}} _{\Q,\mathrm{surcoh},\PP} ( \smash{\widetilde{\D}} _{\PP ^\sharp} ^{(\bullet)} (T))$
et
$X'$ une sous-variété fermée  de $X$.
Il résulte de \ref{1ercrit-surcohP} que 
$\R \underline{\Gamma} ^\dag _{X '} (\E ^{(\bullet)})
\in \smash{\underrightarrow{LD}}  ^\mathrm{b} _{\Q, \mathrm{surcoh}, \PP}
(\overset{^\mathrm{g}}{} \smash{\widetilde{\D}} _{\PP ^\sharp} ^{(\bullet)} (T ))$.
Comme ce dernier est en particulier cohérent et nul en dehors de $X$, 
on déduit du théorème \ref{u!u+=id} que 
$u ^{\sharp (\bullet)!} (\R \underline{\Gamma} ^\dag _{X'} (\E ^{(\bullet)})  )
\in \smash{\underrightarrow{LD}}  ^\mathrm{b} _{\Q, \mathrm{coh}}
(\overset{^\mathrm{g}}{} \smash{\widetilde{\D}} _{\X ^{\sharp}} ^{(\bullet)} (T \cap X))$.
Or, il découle de 
\ref{commutfonctcohlocal1}, 
que l'on dispose de l'isomorphisme canonique 
$u ^{\sharp (\bullet)!} \circ  \R \underline{\Gamma} ^\dag _{X'} (\E ^{(\bullet)}) 
\riso 
\R \underline{\Gamma} ^\dag _{X'} \circ u ^{\sharp (\bullet)!} (\E ^{(\bullet)} )$.
Cela implique
$u ^{\sharp (\bullet)!} (\E ^{(\bullet)} )
\in \smash{\underrightarrow{LD}}  ^\mathrm{b} _{\Q, \mathrm{surcoh}, \X}
(\overset{^\mathrm{g}}{} \smash{\widetilde{\D}} _{\X ^{\sharp}} ^{(\bullet)} (T \cap X))$
(il ne faut pas oublier de d'abord vérifier la cohérence, ce qui résulte du cas particulier $X' =X$ car dans ce cas 
$\R \underline{\Gamma} ^\dag _{X} \circ u ^{\sharp (\bullet)!} =
u ^{\sharp (\bullet)!} $).
Traitons à présent l'assertion 2). 
On vérifie la stabilité de la surcohérence par 
$u _{+} ^{\sharp (\bullet)} $ grâce à \ref{commutfonctcohlocal2} (et à la stabilité de la cohérence par $u _{+} ^{\sharp (\bullet)} $).
On en déduit alors 
les équivalences quasi-inverses voulues via celles de \ref{u!u+=id}.\ref{u!u+=id-eq-cat}.

\end{proof}

\subsection{Surcohérence}
\begin{defi}
\label{surcoh}

\begin{enumerate}
\item De manière analogue à  \cite[3]{caro_surcoherent},
on définit la sous-catégorie pleine 
$D ^{\mathrm{b}}  _{\mathrm{surcoh}} (\smash{\D} ^\dag _{\PP ^\sharp} (\hdag T) _{\Q})$ 
de
$D ^{\mathrm{b}}  _{\mathrm{coh}} (\smash{\D} ^\dag _{\PP ^\sharp} (\hdag T) _{\Q})$
des complexes surcohérents $\E$, i.e.,  tels que,
pour tout morphisme lisse de la forme $f \colon \PP ' \to \PP$, 
en notant 
$\PP ^{\prime \sharp }:= (\PP', f ^{-1} (\ZZ) ) $
et
$f ^{\sharp}\colon \PP ^{\prime \sharp }\to \PP ^{\sharp}$ le morphisme induit par $f$, 
on ait 
$f ^{\sharp !}(\E )\in
D ^{\mathrm{b}}  _{\mathrm{surcoh}, \PP ^{\prime \sharp }} (\smash{\D} ^\dag _{\PP ^{\prime \sharp }} (\hdag f ^{-1}(T)) _{\Q})$.

\item De manière analogue, 
on dit que $\E ^{(\bullet)}
\in 
\smash{\underrightarrow{LD}} ^{\mathrm{b}} _{\Q,\mathrm{coh}} ( \smash{\widetilde{\D}} _{\PP ^\sharp} ^{(\bullet)} (T))$ est surcohérent si, 
pour tout morphisme lisse de la forme $f \colon \PP ' \to \PP$, 
en notant 
$\PP ^{\prime \sharp }:= (\PP', f ^{-1} (\ZZ) ) $
et
$f ^{\sharp}\colon \PP ^{\prime \sharp }\to \PP ^{\sharp}$ le morphisme induit par $f$,
on ait 
$f ^{\sharp (\bullet)!}(\E ^{(\bullet)} )\in 
\smash{\underrightarrow{LD}} ^{\mathrm{b}} _{\Q,\mathrm{surcoh}, \PP ^{\prime \sharp }} ( \smash{\widetilde{\D}} _{\PP ^{\prime \sharp }} ^{(\bullet)} (f ^{-1}(T)))$.
On note $\smash{\underrightarrow{LD}} ^{\mathrm{b}} _{\Q,\mathrm{surcoh}} ( \smash{\widetilde{\D}} _{\PP ^\sharp} ^{(\bullet)} (T))$
la sous-catégorie pleine 
de 
$\smash{\underrightarrow{LD}} ^{\mathrm{b}} _{\Q,\mathrm{coh}} ( \smash{\widetilde{\D}} _{\PP ^\sharp} ^{(\bullet)} (T))$
des complexes surcohérents.

\end{enumerate}

\end{defi}

\begin{rema}
\begin{enumerate}
\item Dans la définition \ref{surcoh} lorsque $T$ est vide, 
pour garantir que $\O _{\PP,\Q}$ soit bien surcohérent,
il est important que le morphisme lisse $f ^{\sharp}$ soit exact.  
En effet, si $\ZZ$ est un diviseur lisse de $\PP$ et si 
$\PP ^{\sharp}:= (\PP,\ZZ)$, alors 
$\O _{\PP} (\hdag Z) _{\Q}$ n'est pas a priori 
$\smash{\D} ^\dag _{\PP ^\sharp, \Q}$-cohérent.

\item Avec la remarque précédente, la notion de surcohérence est seulement intéressante sans structure logarithmique.
 Par commodité de rédaction nous resterons cependant dans cette sous-section dans le cas général logarithmique. 
\end{enumerate}
\end{rema}

\begin{prop}
\label{surcoh=surcoh}
Soit $\E ^{(\bullet)}
\in 
\underrightarrow{LD} ^{\mathrm{b}} _{\Q, \mathrm{coh}} (\smash{\widetilde{\D}} _{\PP ^\sharp} ^{(\bullet)} (T))$.
La propriété 
$\E ^{(\bullet)}
\in 
\smash{\underrightarrow{LD}} ^{\mathrm{b}} _{\Q,\mathrm{surcoh}} ( \smash{\widetilde{\D}} _{\PP ^\sharp} ^{(\bullet)} (T))$
est équivalente à la propriété 
$\underrightarrow{\lim}~
\E ^{ (\bullet)}
\in
D ^{\mathrm{b}}  _{\mathrm{surcoh}} (\smash{\D} ^\dag _{\PP ^\sharp} (\hdag T) _{\Q})$.
Ces deux propriétés sont locales en $\PP$.
On dispose de l'équivalence de catégories
$\underrightarrow{\lim}
\colon 
\smash{\underrightarrow{LD}} ^{\mathrm{b}} _{\Q,\mathrm{surcoh}} ( \smash{\widetilde{\D}} _{\PP ^\sharp} ^{(\bullet)} (T))
\cong 
D ^{\mathrm{b}}  _{\mathrm{surcoh}} (\smash{\D} ^\dag _{\PP ^\sharp} (\hdag T) _{\Q})$.
\end{prop}

\begin{proof}
Cela résulte de la proposition \ref{surcohP=surcohP}
et du fait que, pour tout morphisme exact et lisse $f ^{\sharp}$ de $\V$-schémas formels logarithmiques lisses, 
 le foncteur $f ^{\sharp (\bullet)!}$ préservent la cohérence. 
\end{proof}

\begin{nota}
\label{nota5.4.4}
Soient $f \colon \PP' \to \PP$ un morphisme de $\V$-schémas formels lisses, 
$T$ et $T'$ des diviseurs respectifs de $P$ et $P'$ tels que
$T'= f ^{-1} (T)$,
$\ZZ$ et $\ZZ'$ des diviseurs à croisements normaux stricts de 
respectivement $\PP$ et $\PP'$ tels que $f ^{-1} (\ZZ) \subset \ZZ'$.
On pose $\PP ^{\sharp}:= (\PP, \ZZ)$, $\PP ^{\prime \sharp}:= (\PP', \ZZ')$
et
$f ^{\sharp}
\colon 
\PP ^{\prime \sharp}
\to 
\PP ^{\sharp}$ le morphisme de $\V$-schémas formels logarithmiques induits par $f$. 

\end{nota}

\begin{theo}
\label{3.1.7bis}
Avec les notations \ref{nota5.4.4}, 
on suppose que
$f$ est strict.
Pour tout $\E ^{(\bullet)} 
\in \smash{\underrightarrow{LD}} ^{\mathrm{b}} _{\Q,\mathrm{surcoh}} ( \smash{\widetilde{\D}} _{\PP ^\sharp} ^{(\bullet)} (T))$, 
on a alors 
$f ^{\sharp (\bullet)!}(\E ^{(\bullet)} )\in 
\smash{\underrightarrow{LD}} ^{\mathrm{b}} _{\Q,\mathrm{surcoh}} ( \smash{\widetilde{\D}} _{\PP ^{\prime \sharp}} ^{(\bullet)} (T'))$.
\end{theo}

\begin{proof}
Il s'agit de calquer la preuve du théorème analogue de \cite[3.1.7]{caro_surcoherent}.
\end{proof}

\begin{theo}
\label{theo-iso-chgtbase}
Soient $f \colon \X ' \to \X$, $g \colon \Y \to \X$ deux morphismes de $\V$-schémas formels lisses
tels que 
$g$ soit lisse et $f$ soit propre. 
On se donne $\ZZ$ (resp. $\ZZ'$) des diviseurs à croisements normaux stricts de 
$\X$ (resp. $\X'$) tels que $f ^{-1} (\ZZ) \subset \ZZ'$. 
On pose $\Y ' := \X ' \times _{\X} \Y$, 
$f ' \colon \Y ^{\prime } \to \Y$, $g ' \colon \Y ^{\prime}\to \X ^{\prime}$ 
les deux projections canoniques.
On note
$\T:= g ^{-1} (\ZZ)$, $\T ':= g ^{\prime -1} (\ZZ')$,
$\X ^\sharp= (\X ,\ZZ)$ (resp. $\X ^{\prime \sharp }:= (\X ',\ZZ')$, $\Y ^{\sharp}:= (\Y ,\T)$, $\Y^{\prime \sharp }:= (\Y ',\T')$)
les $\V$-schémas formels logarithmiques lisses induits
et
$f ^{\sharp} \colon \X ^{\prime \sharp } \to \X ^{\sharp}$, $g ^{\sharp} \colon \Y ^{\sharp}\to \X ^{\sharp}$,
$f ^{\prime \sharp } \colon \Y ^{\prime \sharp } \to \Y ^{\sharp}$ et $g ^{\prime \sharp } \colon \Y ^{\prime \sharp } \to \X ^{\prime \sharp }$ 
les deux morphismes de log-$\V$-schémas formels lisses induits. 
Soit
$\E ^{\prime (\bullet)}
\in  \smash{\underrightarrow{LD}} ^\mathrm{b} _{\Q, \mathrm{coh}}
(\overset{^\mathrm{g}}{} \smash{\widetilde{\D}} _{\X ^{\prime \sharp}} ^{(\bullet)})$. 
Il existe alors un isomorphisme canonique dans 
$\smash{\underrightarrow{LD}} ^\mathrm{b} _{\Q, \mathrm{coh}}
(\overset{^\mathrm{g}}{} \smash{\widetilde{\D}} _{\X ^{\prime \sharp}} ^{(\bullet)})$:
\begin{equation}
\label{iso-chgtbase}
g ^{\sharp (\bullet)!} \circ f ^{\sharp(\bullet)}_{+} (\E ^{\prime (\bullet)})
\riso
f  ^{\prime \sharp(\bullet)}_{+}  \circ g ^{\prime \sharp(\bullet) !} (\E ^{\prime (\bullet)}). 
\end{equation}
\end{theo}

\begin{proof}
Avec les hypothèses sur $f$ et $g$, il résulte de 
\ref{stab-coh} que les complexes de \ref{iso-chgtbase} sont bien cohérents.
Le morphisme $g$ (resp. $g'$) se décompose en son graphe $\gamma$ (resp. $\gamma'$) 
suivi de la projection canonique $\pi \colon  \X \times \Y \to \X$
(resp. $\pi '\colon  \X '\times \Y \to \X'$).
Posons 
$\U : = \X \times \Y $, $\U ': = \X '\times \Y $.
On note $\pi ^{\sharp} \colon \U^{\sharp}  \to \X ^{\sharp}$,
$\gamma ^{\sharp}\colon \Y ^\sharp \to \U ^{\sharp}$ les morphismes stricts de log-schémas formels 
dont les morphismes de schémas formels sont $\pi$ et $\gamma$ ; de même avec des primes.
On note $f ^{\prime \prime \sharp}= f ^{\sharp} \times \mathrm{id} _\Y \colon  \U ^{\prime \sharp} \to  \U ^{\sharp}$ le morphisme 
canoniquement induit. 
Grâce au théorème \ref{u!u+=id},
on se ramène à vérifier que l'on dispose 
d'un isomorphisme canonique de la forme
  $\gamma ^{\sharp (\bullet)} _{+}  \circ  g ^{\sharp (\bullet)!} \circ f ^{\sharp(\bullet)}_{+} (\E ^{\prime (\bullet)})
\riso 
\gamma ^{\sharp (\bullet)} _{+}  \circ  f  ^{\prime \sharp(\bullet)}_{+}  \circ g ^{\prime \sharp(\bullet) !} (\E ^{\prime (\bullet)})$.
Concernant le terme de gauche, par transitivité des images inverses extraordinaires, 
on obtient l'isomorphisme canonique
  $\gamma ^{\sharp (\bullet)} _{+}  \circ  g ^{\sharp (\bullet)!} \circ f ^{\sharp(\bullet)}_{+} (\E ^{\prime (\bullet)})
\riso 
\gamma ^{\sharp (\bullet)} _{+}  \circ  
\gamma ^{\sharp (\bullet)!} \circ
\pi ^{\sharp (\bullet)!} \circ
 f ^{\sharp(\bullet)}_{+} (\E ^{\prime (\bullet)})
 $.
Pour celui de droite, par transitivité des images directes ou images inverses extraordinaires et 
en utilisant \ref{commutfonctcohlocal2} et \ref{pre-loc-tri-B-t1T-iso} on obtient les isomorphismes canoniques 
$\gamma ^{\sharp (\bullet)} _{+}  \circ  f  ^{\prime \sharp(\bullet)}_{+}  \circ g ^{\prime \sharp(\bullet) !} (\E ^{\prime (\bullet)})
 \riso 
f  ^{\prime \prime \sharp(\bullet)}_{+}  \circ  \gamma ^{\prime \sharp (\bullet)} _{+}   
 \circ \gamma ^{\prime \sharp(\bullet) !} 
 \circ \pi ^{\prime \sharp(\bullet) !} 
 (\E ^{\prime (\bullet)})
 \riso
\gamma ^{\prime \sharp (\bullet)} _{+}   
 \circ \gamma ^{\prime \sharp(\bullet) !}
 \circ   
 f  ^{\prime \prime \sharp(\bullet)}_{+}  
 \circ \pi ^{\prime \sharp(\bullet) !} 
 (\E ^{\prime (\bullet)})
 $.
 On se ramène ainsi à vérifier l'isomorphisme
 $\pi ^{\sharp (\bullet)!} \circ
 f ^{\sharp(\bullet)}_{+} (\E ^{\prime (\bullet)})
 \riso
 f  ^{\prime \prime \sharp(\bullet)}_{+}  
 \circ \pi ^{\prime \sharp(\bullet) !} 
 (\E ^{\prime (\bullet)})$.

De manière analogue à \cite[2.4.2]{Beintro2}, pour tout entier $i \in \N$, on vérifie que l'on dispose dans 
$D^\mathrm{b} _{\mathrm{qc}}
(\smash{\D} _{ U ^{\sharp}_i} ^{(m)})$ de l'isomorphisme canonique
\begin{equation}
\label{pre-adj-morph-gen-Rham3}
\pi _i ^{\sharp (m)!}  \circ f ^{\sharp (m)} _{i+}( \E _i^{\prime (m)})
\riso 
f ^{\prime \prime  \sharp  (m)} _{i+}  \circ \pi _i ^{\prime \sharp  (m) !} (\E _i^{\prime (m)}).
\end{equation}
Par soucis pour le lecteur, donnons les arguments principaux
de Berthelot (pour prouver \cite[2.4.2]{Beintro2}, mais il n'y a rien à changer)
dans son cours de 1997 sur les $\D$-modules arithmétiques. 
Comme les foncteurs sont way-out à gauche, comme tout $\smash{\D} _{ X ^{\prime \sharp} _i} ^{(m)}$-module
est un quotient d'un module de la forme 
$\smash{\D} _{ X ^{\prime \sharp} _i} ^{(m)} \otimes _{\O _{ X ^{\prime } _i}} \M$
avec $\M$ un $\O _{ X ^{\prime} _i}$-module, on se ramène à supposer que 
$\E _i^{\prime (m)}=\smash{\D} _{ X ^{\prime \sharp} _i} ^{(m)} \otimes _{\O _{ X ^{\prime}_i}} \FF _i ^{\prime (m)}$
pour un certain
$\O _{X ^{\prime}_i}$-module $\FF _i ^{\prime (m)}$.
On vérifie alors les isomorphismes canoniques
\begin{gather}
\label{f_+-chgb}
f ^{\sharp (m)} _{i+}( \E _i^{\prime (m)})
\riso
\smash{\D} _{ X ^{ \sharp} _i} ^{(m)}
 \otimes _{\O _{ X  _i}}
 \R f _{*}( \omega _{X ^{\prime \sharp}_i /X ^{ \sharp}_i}
 \otimes _{\O _{ X ^{\prime }_i}} \FF _i ^{\prime (m)}),
\\
\label{g^!-chgb}
\pi _i ^{\prime \sharp  (m) !} (\E _i^{\prime (m)})
 \riso
 \pi _i ^{\prime *} \smash{\D} _{ X ^{\prime \sharp} _i} ^{(m)}
 \otimes _{\O _{ U ^{\prime} _i}}
 \pi _i ^{\prime *} (\FF _i^{\prime (m)}) [d _{U'/X'}].
\end{gather}
Comme pour \cite[2.3.1]{Beintro2}, on dispose de l'isomorphisme 
de $\smash{\D} _{ U ^{\prime \sharp} _i} ^{(m)} $-modules à gauche
$\smash{\D} _{ U ^{\prime \sharp} _i} ^{(m)} 
\riso 
 \pi _i ^{\prime *} \smash{\D} _{ X ^{\prime \sharp} _i} ^{(m)} 
\otimes _{\O _{U _i}}
f _i ^{\prime * }\varpi _i ^{*} \smash{\D} _{ Y ^{\prime \sharp} _i} ^{(m)} $.
Le morphisme  
$ \pi _i ^{\prime *} \smash{\D} _{ X ^{\prime \sharp} _i} ^{(m)} 
\to 
\smash{\D} _{ U ^{\prime \sharp} _i} ^{(m)}$ induit est 
 $\smash{\D} _{ U ^{\prime \sharp} _i} ^{(m)} $-linéaire et fait de
$ \pi _i ^{\prime *} \smash{\D} _{ X ^{\prime \sharp} _i} ^{(m)} $ une sous-algèbre
de  $\smash{\D} _{ U ^{\prime \sharp} _i} ^{(m)} $.
De même, 
$f ^{\prime \prime  *} \smash{\D} _{ U ^{\sharp} _i} ^{(m)} 
\riso 
 \pi _i ^{\prime *}  f _i ^{*} \smash{\D} _{ X ^{\sharp} _i} ^{(m)} 
\otimes _{\O _{U _i}}
f _i ^{\prime * }\varpi _i ^{*} \smash{\D} _{ Y ^{\prime \sharp} _i} ^{(m)} $.
Le morphisme canonique 
$\smash{\D} _{ U ^{\sharp} _i} ^{(m)} 
\to
\pi _i ^{*} \smash{\D} _{ X ^{\sharp} _i} ^{(m)}  
$ 
induit alors
$$(\omega _{U ^{\prime \sharp}_i /U ^{ \sharp}_i} \otimes _{\O _{U ' _i }}f ^{\prime \prime  *} \smash{\D} _{ U ^{\sharp} _i} ^{(m)})
\otimes ^{\L}_{\smash{\D} _{ U ^{\prime \sharp} _i} ^{(m)} } 
 \pi _i ^{\prime *} \smash{\D} _{ X ^{\prime \sharp} _i} ^{(m)} 
\to 
\omega _{U ^{\prime \sharp}_i /U ^{ \sharp}_i} \otimes _{\O _{U ' _i }}f ^{\prime \prime  *} 
\pi _i ^{*} \smash{\D} _{ X ^{\sharp} _i} ^{(m)}  .
$$
Par un calcul en coordonnées locales, 
on vérifie que ce morphisme est un isomorphisme.
Via \ref{g^!-chgb}, on en déduit l'isomorphisme
\begin{gather}
\notag
f ^{\prime \prime  \sharp  (m)} _{i+}  \circ \pi _i ^{\prime \sharp  (m) !} (\E _i^{\prime (m)})
\riso 
\R f '_{*}(
\omega _{U ^{\prime \sharp}_i /U ^{ \sharp}_i} \otimes _{\O _{U ' _i }}f ^{\prime \prime  *} 
\pi _i ^{*} \smash{\D} _{ X ^{\sharp} _i} ^{(m)}  
 \otimes _{\O _{ U ^{\prime }_i}} 
  \pi _i ^{\prime *} (\FF _i^{\prime (m)}))
   [d _{U'/X'}]
   \\
   \notag
   \riso
\pi _i ^{*} \smash{\D} _{ X ^{\sharp} _i} ^{(m)} 
 \otimes _{\O _{ U _i}} 
\R f '_{*}(
\omega _{U ^{\prime \sharp}_i /U ^{ \sharp}_i} \otimes _{\O _{U ' _i }}
  \pi _i ^{\prime *} (\FF _i^{\prime (m)}))   [d _{U'/X'}].
\end{gather}
On déduit de  \ref{f_+-chgb} l'isomorphisme
$\pi _i ^{\sharp  (m) !} f ^{\sharp (m)} _{i+}( \E _i^{\prime (m)})
 \riso 
\pi _i ^{*}  (\smash{\D} _{ X ^{ \sharp} _i} ^{(m)}) 
 \otimes _{\O _{ U  _i}}
\pi _i ^{*} \R f _{*}( \omega _{X ^{\prime \sharp}_i /X ^{ \sharp}_i}
 \otimes _{\O _{ X ^{\prime }_i}} \FF _i ^{\prime (m)})) [d _{U'/X'}]$.
  Comme $\pi _i$ est plat, 
 alors 
 $  \pi _i ^{ *} \circ \R f _{*}( \omega _{X ^{\prime \sharp}_i /X ^{ \sharp}_i}
 \otimes _{\O _{ X ^{\prime }_i}} \FF _i ^{\prime (m)})
 \riso
 \R f '_{*}  \circ \pi _i ^{ \prime *} ( \omega _{X ^{\prime \sharp}_i /X ^{ \sharp}_i}
 \otimes _{\O _{ X ^{\prime }_i}} \FF _i ^{\prime (m)})
 \riso 
 \R f '_{*}( \omega _{U ^{\prime \sharp}_i /U ^{ \sharp}_i}
 \otimes _{\O _{ U ^{\prime }_i}} 
  \pi _i ^{\prime *} (\FF _i^{\prime (m)}))$.
D'où le résultat. 
Par passage à la limite projective et compatibilité au changement de niveaux
de ces isomorphes \ref{pre-adj-morph-gen-Rham3}, on obtient le théorème.
\end{proof}

\begin{rema}
Lorsque l'on ne dispose pas de structure logarithmique, 
T. Abe a prouvé que l'isomorphisme de changement de base
\ref{iso-chgtbase} est compatible à Frobenius
(voir \cite[5.7]{Abe-Frob-Poincare-dual}).
\end{rema}

\begin{theo}
\label{3.1.9bis}
Avec les notations \ref{nota5.4.4}, 
on suppose $f$ propre.
Pour tout $\E ^{\prime (\bullet)} 
\in \smash{\underrightarrow{LD}} ^{\mathrm{b}} _{\Q,\mathrm{surcoh}} ( \smash{\widetilde{\D}} _{\PP ^{\prime \sharp}} ^{(\bullet)} (T'))$, 
on a alors 
$f ^{\sharp (\bullet)} _{+}(\E ^{\prime(\bullet)} )\in 
\smash{\underrightarrow{LD}} ^{\mathrm{b}} _{\Q,\mathrm{surcoh}} ( \smash{\widetilde{\D}} _{\PP ^\sharp} ^{(\bullet)} (T))$.
\end{theo}

\begin{proof}
Il s'agit de calquer la preuve du théorème analogue de \cite[3.1.9]{caro_surcoherent}, 
i.e. cela découle du théorème 
\ref{commutfonctcohlocal2}
et du théorème de changement de base 
\ref{theo-iso-chgtbase}.
\end{proof}

\begin{rema}
\label{rema-interet papier}
Soit $\E ^{(\bullet)}
\in 
\underrightarrow{LD} ^{\mathrm{b}} _{\Q, \mathrm{qc}} (\smash{\widetilde{\D}} _{\PP ^\sharp} ^{(\bullet)} (T))$.
Si 
$\underrightarrow{\lim}~
\E ^{ (\bullet)}
\in
D ^{\mathrm{b}}  _{\mathrm{coh}} (\smash{\D} ^\dag _{\PP ^\sharp} (\hdag T) _{\Q})$, 
il semble faux que cela implique
que 
$\E ^{(\bullet)}
\in 
\smash{\underrightarrow{LD}} ^{\mathrm{b}} _{\Q,\mathrm{coh}} ( \smash{\widetilde{\D}} _{\PP ^\sharp} ^{(\bullet)} (T))$.
Même sans structure logarithmique, 
la stabilité de la surcohérence par images directes ou images inverses extraordinaires
pour les systèmes inductifs est donc un outil plus puissant (et manipulable) que celle de la surcohérence définie dans
\cite{caro_surcoherent}. La notion de surcohérence pour les systèmes inductifs apparaît avec du recul comme la bonne notion.

\end{rema}

\numberwithin{prop}{section}
\appendix

\section{Théorème de Berthelot-Kashiwara avec log-structures et coefficients}
 
 Le but de cette section est le théorème de Berthelot-Kashiwara sous la forme
 \ref{Berthelot-Kashiwara-full}.  
 Comme ce théorème est donné dans un contexte log-géométrique un peu plus général, 
nous nous permettons de nous écarter dans cette section des notations générales.

\begin{vide}[Notations]
\label{sectionA.1}
Soient $n\geq r$ deux entiers naturels. 
On désigne par $A ^{n,r}$ le log-schéma dont l'espace sous-jacent est  $\A _\Z ^n= \Spec \Z [ t _1,\dots, t _n]$ et tel qu'une pré-log-structure soit
donné par  $\N ^{r} \to \Z [t _1,\dots, t _n]$ définie par $e _i \mapsto t _i$ pour $i= 1,\dots, r$.
Soit $\mathfrak{A} ^{n,r}$ le $\Z _p$-schéma formel égale à la complétion $p$-adique de  $A ^{n,r}$.

Soient  $T$ un log-schéma fin
et  $\T$ un log-$\V$-schéma formel.
On pose  $A ^{n,r} _T :=A ^{n,r} \times _{\Spec \Z} T$ et 
$\mathfrak{A} _\T  ^{n,r}:= \mathfrak{A} ^{n,r} \times _{\Spf \Z _p} \T$.
On dispose des immersions fermées exactes canoniques:  
$A ^{r,r} _T  \hookrightarrow A ^{n,r} _T $
et
$\mathfrak{A} _\T^{r,r}
\hookrightarrow 
\mathfrak{A} _\T ^{n,r}$
données par $t _{r+1}= 0, \dots, t _n=0$.

Soit $m\in \N \cup \{ \infty\}$.
On dispose des inclusions 
$\D ^{(m)} _{A ^{n,0} _T  /T}\subset \D ^{(m)} _{A ^{n,r} _T  /T} \subset \D ^{(m)} _{A ^{n,n} _T  /T}$
(lorsque $m=\infty$, la convention de Berthelot donnée dans \cite{Be2} signifie les opérateurs différentiels usuels). 
On note $\partial _1, \dots, \partial _n$ (resp. $\partial _{\sharp ,1}, \dots, \partial _{\sharp ,n}$)
les dérivations (resp. les dérivations logarithmiques) correspondant aux
coordonnées locales $t _1,\dots, t _n$ de $A ^{n,0} _T  /T$ (resp. coordonnées locales logarithmiques $t _1,\dots, t _n$  de $A ^{n,n} _T  /T$).
Le $\O _{\A ^{n} _T }$-module (pour les structures droite ou gauche)
$\D ^{(m)} _{A ^{n,0} _T  /T}$ 
(resp. $\D ^{(m)} _{A ^{n,n} _T  /T}$) 
est  libre 
de base $\underline{\partial} _{} ^{< \underline{i}> _{(m)}}$
(resp. $\underline{\partial} _{\sharp} ^{< \underline{i}> _{(m)}}$)
avec $\underline{i} \in \N ^n$.
On a la relation $\underline{\partial} _{\sharp} ^{< \underline{i}> _{(m)}}
 = \underline{t} ^{\underline{i}}\underline{\partial} _{} ^{< \underline{i}> _{(m)}}$, 
 où 
 $\underline{t} ^{\underline{i}}:= t _1 ^{i _1}\cdots t _n ^{i _n}$ pour 
 $(i _1,\dots, i _n)=\underline{i}$.
  Pour tout $(i _1,\dots, i _n)=\underline{i} \in \N ^n$, 
on pose
$\underline{\partial} _{(r)} ^{< \underline{i}> _{(m)}}:=
\underline{\partial} _{\sharp} ^{< (i_1,\dots, i _r, 0,\dots, 0)> _{(m)}}\underline{\partial} _{} ^{<(0,\dots, 0,i _{r+1},\dots, i _n)> _{(m)}}$.
 Le $\O _{\A ^{n} _T }$-module (pour les structures droite ou gauche)
$\D ^{(m)} _{A ^{n,r} _T  /T}$ 
est  libre 
de base $\underline{\partial} _{(r)} ^{< \underline{i}> _{(m)}}$
avec $\underline{i} \in \N ^n$.

 On remarque alors qu'un $T$-morphisme 
$f\colon X \to A ^{n,r} _T $
équivaut à la donnée de sections globales
$e _1, \dots , e _r$  de  $M _X$ (le faisceau de la log-structure de $X$) et 
$a _{r+1}, \dots , a _n$ de $\O _X$. 
Si $f$ est log-étale alors 
$\D ^{(m)} _{X /T} =f ^* \D ^{(m)} _{A ^{n,n} _T  /T}$ et l'on dispose du morphisme d'anneaux
$f ^{-1}\D ^{(m)} _{A ^{n,n} _T  /T} \to \D ^{(m)} _{X /T}$. 
On notera encore $\underline{\partial} _{(r)} ^{< \underline{i}> _{(m)}}$
les sections globales de
$\D ^{(m)} _{X /T}$ égales à l'image de $\underline{\partial} _{(r)} ^{< \underline{i}> _{(m)}}$ via le morphisme
$f ^{-1}\D ^{(m)} _{A ^{n,n} _T  /T} \to \D ^{(m)} _{X /T}$.
Lorsque $m=\infty$, on note plutôt 
$\underline{\partial} _{(r)} ^{[ \underline{i}]}:= \underline{\partial} _{(r)} ^{< \underline{i}> _{(\infty)}}$.
\end{vide}

\begin{lemm}
\label{SA1II.4.6}
Soit  $f \colon X \to Y$ un morphisme de $T$-log-schémas lisses. 
Le morphisms $f$ est log-étale si et seulement si le morphisme canonique
$f ^{*} \Omega _{Y/T} ^{1} \to \Omega _{X/T} ^{1}$ est un isomorphisme. 
\end{lemm}

\begin{proof}
D'après \cite[IV.3.2.3.2 et IV.3.1.3]{Ogus-Logbook}, 
si le morphisme canonique
$f ^{*} \Omega _{Y/T} ^{1} \to \Omega _{X/T} ^{1}$ est un isomorphisme alors
$f$ est log-étale.
Réciproquement, si  $f$ est log-étale alors  $f$ est log-lisse et alors
 (en utilisant \cite[IV.3.2.3.1]{Ogus-Logbook}) $f ^{*} \Omega _{Y/T} ^{1} \to \Omega _{X/T} ^{1}$ est injective. 
Puisque $f$ est non ramifié alors 
$\Omega _{X/Y} ^{1}=0$ (voir \cite[IV.3.1.3]{Ogus-Logbook}).
Il en résulte que  $f ^{*} \Omega _{Y/T} ^{1} \to \Omega _{X/T} ^{1}$ est surjective. 

\end{proof}

\begin{lemm}
\label{SA1II.4.8}
Soit
$f \colon X \to A ^{n,r} _T $ un morphisme de  $T$-log-schémas lisses donné
par les sections globales
$e _1, \dots , e _r$ de $M _X$ et $a _{r+1}, \dots , a _n$ de $\O _X$. 
Le morphisme $f$ est log-étale si et seulement si 
le $\O _X$-module 
$\Omega ^{1} _{X /T}$ est libre et si 
$(dlog e _{1}, \dots, dlog e _{r},
d a _{r+1}, \dots , d a _n)$
en forme une base.

\end{lemm}

\begin{proof}
D'après \ref{SA1II.4.6}, $f$ est log-étale si et seulement si 
le morphisme canonique 
$f ^{*} \Omega _{A ^{n,r} _T/T} ^{1} \to \Omega _{X/T} ^{1}$
est un isomorphisme.
Cette dernière propriété équivaut au fait que le 
 $\O _X$-module 
$\Omega ^{1} _{X /T}$ est libre et que 
$(dlog e _{1}, \dots, dlog e _{r}, d a _{r+1}, \dots , d a _n)$
en soit une base (pour calculer explicitement $ \Omega _{A ^{n,r} _T/T} ^{1}$, 
il vaut mieux utiliser la remarque \cite[IV.1.1.8]{Ogus-Logbook}).

\end{proof}

\begin{lemm}
\label{Log-SGA1-II.4.10}
Soit $u \colon Z \hookrightarrow X$ une immersion fermée de log-schémas lisses sur $T$.
Soit $x$ un point de $|Z|$. 
Alors, en remplaçant $X$ par un ouvert contenant $x$ si nécessaire, 
il existe des entiers $n \geq r$ et un diagramme cartésien de morphismes de $T$-log-schémas de la forme: 
$$\xymatrix{
{X} 
\ar[r] ^-{}
\ar@{}[rd] ^-{}|\square
& {A ^{n,r} _T } 
\\ 
{Z} 
\ar@{^{(}->}[u] ^-{u}
\ar[r] ^-{}
& 
{ A ^{r,r} _T} 
\ar@{^{(}->}[u] ^-{}
}
$$
tel que les morphismes horizontaux sont log-étale et
le morphisme de droite est l'immersion fermée stricte   canonique.
De même, en remplaçant {\og log-schémas\fg} par {\og log-schémas formels \fg},
on obtient une version analogue. 
\end{lemm}

\begin{proof}
La preuve est analogue à celle du théorème \cite[II.4.10]{sga1} :
Puisque $u$ est une immersion fermée stricte, 
en notant  $\I$ l'idéal définissant $u$,
d'après \cite[IV.3.2.2]{Ogus-Logbook}
on obtient la suite exacte
\begin{equation}
\label{IV.3.2.2Logbook}
0 
\to 
\I /\I ^{2}
\to 
u ^{*} \Omega ^{1} _{X/T}
\to 
 \Omega ^{1} _{Z/T}
 \to
 0.
\end{equation}
Puisque $Z$ et $X$ sont log-lisse sur $T$, 
d'après \cite[IV.3.2.1]{Ogus-Logbook},
$ \Omega ^{1} _{X/T}$ 
et  
$\Omega ^{1} _{Z/T}$
sont localement libre. 
En regardant la description de 
$\Omega ^{1} _{Z/T}$  de
\cite[IV,1.1.6]{Ogus-Logbook}, 
en remplaçant $X$ par un ouvert contenant $x$ si nécessaire (et $Z$ par la trace de cet ouvert sur $Z$),
on vérifie que 
$\Omega ^{1} _{Z/T}$ est engendré comme $\O _Z$-module on vérifie que 
les éléments de la forme $d log a$ avec $a$ une section globale  de $M _Z$.
Ainsi, (en remplaçant $X$ par un ouvert contenant $x$ si nécessaire)
on trouve des sections globales $a _{1},\dots, a _r$ de $M _X$ telles que
$d log \overline{a} _{1}
,\dots, d log \overline{a}  _r$ forment une base de 
$\Omega ^{1} _{Z/T}$, $\overline{a}$ signifiant l'image de $a$ via la surjection 
$u ^{-1} M _X \to M _Z$.
Via la suite exacte 
\ref{IV.3.2.2Logbook}, on obtient 
des sections globales
$a _{r+1}, \dots, a_{n}$  de $\I$ telles que
$1 \otimes d log a _{1},\dots, 1 \otimes dlog  a _{r}, 
1 \otimes d a  _{r+1} ,\dots, 1 \otimes d a  _n$ forment une base de 
$u ^{*}\Omega ^{1} _{X/T}$.
Comme $\Omega ^{1} _{X/T}$ est un $\O _X$-module localement libre, 
quitte à remplacer $X$ par un ouvert contenant $x$, 
on peut alors supposer que 
$d log a _{1},\dots, 1 \otimes dlog  a _{r}, 
d a  _{r+1} ,\dots, d a  _n$ forment une base de 
$\Omega ^{1} _{X/T}$.
Les sections $a _{1}, \dots, a _{n}$ (resp. $\overline{a} _{1}
,\dots, \overline{a}  _r$)
induisent  un morphisme de $T$-log-schémas lisses de la forme
$f \colon X \to A ^{n,r} _T $
(resp. 
$f '\colon Z \to A ^{r,r} _T $).
D'après \ref{SA1II.4.8}, $f$ et $f'$ sont log-étales.
Soit le diagramme
$$\xymatrix{
{}
&
{X} 
\ar[r] ^-{f}
\ar@{}[rd] ^-{}|\square
& {A ^{n,r} _T } 
\\ 
{Z} 
\ar@{^{(}->}[ur] ^-{u}
\ar@{.>}[r] ^-{}
\ar@/_0,3cm/[rr] _-{f'}
& 
{Z'} 
\ar@{^{(}->}[u] ^-{}
\ar[r] ^-{}
&
{ A ^{r,r} _T} 
\ar@{^{(}->}[u] ^-{}
}
$$
dont les trois immersions fermées sont strictes et dont le carré est cartésien. 
Puisque $f'$ et $f$ sont log-étales, 
il en est alors de même de  $Z \to Z'$. Puisque les immersions fermées sont exactes, le morphisme $Z \to Z'$ est en outre strict.
Le morphisme sous-jacents de schémas $\underline{Z} \to \underline{Z'} $
est alors une immersion fermée étale, et donc une immersion ouverte.
Quitte à rétrécir $X$ si nécessaire, on peut alors conclure. 
\end{proof}

 \begin{vide}
 Sot $\X$ un $\T$-log-schéma formel log lisse.
  Soit $\B$ une  $\O _{\X}$-algèbre commutative.
 On dira que $\B$ est complètement cohérente s'il satisfait les conditions suivantes: 
  \begin{enumerate}[(i)]
  \item Pour n'importe quel entier $i\geq 0$, $\B _i := \B / \pi ^{i+1}\B$ est un 
  $\O _{X _i}$-module quasi-cohérent et  l'homomorphisme canonique 
  $\B \to \underleftarrow{\lim} \, _i \B _i$ est un isomorphisme ; 
  \item pour n'importe quel ouvert affine $\U \subset \X$, l'anneau
  $\Gamma (\U, \B)$ est noetherian. 
  \end{enumerate}

Soit $\B$ une  $\O _{\X}$-algèbre commutative complètement cohérente
munie d'une structure compatible de $\D _{\X/\T} ^{(m)}$-module.
Alors, d'après \cite[3.3]{Be1},
l'anneau $  \B \widehat{\otimes} _{\O _\X}\D _{\X/\T} ^{(m)}$ 
est cohérent et satisfait les théorèmes  de type $A$ et $B$. 
Soit $\E$ 
un $  \B \widehat{\otimes} _{\O _\X}\D _{\X/\T} ^{(m)}$-module coherent (à gauche).
En utilisant le théorème de type $A$, on vérifie que l'ensemble $\U$ de $\X$ des éléments $x$ tels que $\E _x = 0$ est ouvert. 
Le support de $\E$ est par définition le complémentaire de $\U$ dans $\X$. 
Ainsi $\E |\U =0$.
 \end{vide}

\begin{theo}
[Berthelot]
\label{Berthelot-Kashiwara}
Soit $u \colon \ZZ \hookrightarrow \X$ une immersion fermée exacte de 
$\T$-log-schémas formels log lisses. Soit $\Y$ l'ouvert de $\X$ complémentaire à 
 l'espace topologique sous-jacent de $\ZZ$. 
Soit $(\B ^{(m)}) _{m\in \N}$ un système inductif de $\O _{\X}$-algèbres commutatives complètement cohérentes. 
On suppose que $\B ^{(m)}$ est muni d'une
structure compatible de $\D ^{(m)} _{\X/\T}$-module telle que l'homomorphisme de 
$\O _\X$-algèbres
$\B ^{(m)}\to \B ^{(m+1)}$ soit aussi un monomorphisme de
$\D ^{(m)} _{\X/\T}$-modules.  
On pose
$\widetilde{\D} ^{(m)} _{\X/\T}: = 
\B ^{(m)} \widehat{\otimes} _{\O _\X}
\widehat{\D} ^{(m)} _{\X/\T}$.
On suppose que la famille 
$(u ^*\B ^{(m)}) _{m\in \N}$ vérifie les mêmes propriétés et on  pose 
$\widetilde{\D} ^{(m)} _{\ZZ/\T}: = 
(u ^*\B ^{(m)})\widehat{\otimes} _{\O _\ZZ}
\widehat{\D} ^{(m)} _{\ZZ/\T}$.
Soit $\E $ un $\widetilde{\D} ^{(m)} _{\X/\T,\Q}$-module cohérent à support dans $\ZZ$ (i.e. tel que $\E | \Y = 0$).

Il existe alors un entier assez grand $m' \geq m$, un $\widetilde{\D} ^{(m')} _{\ZZ/\T,\Q}$-module cohérent $\FF$ et 
un isomorphisme de $\widetilde{\D} ^{(m')} _{\X/\T,\Q}$-modules de la forme
\begin{equation}
\notag
u _{+} ^{(m')} (\FF) \riso 
\widetilde{\D} ^{(m')} _{\X/\T,\Q} \otimes _{\widetilde{\D} ^{(m)} _{\X/\T,\Q}} \E.
\end{equation}

\end{theo}
\begin{proof}
La preuve suivante reprend très fidèlement celle donnée par 
Berthelot dans son cours à Paris de 1997 dans le cas logarithmiques et sans coefficients
(i.e. pour $\B ^{(m)} _\X= \O _\X$). Pour la commodité du lecteur, 
en voici une écriture. 
Pour n'importe quel 
$\widetilde{\D} ^{(m')} _{\ZZ/\T,\Q}$-module cohérent $\G$,
on dispose de l'isomorphisme canonique
$\mathcal{H} ^{0} u ^{!} u _+ ^{(m')} (\G) \riso 
\G$. Cela implique que le foncteur $u _+  ^{(m')}$ est pleinement fidèle. 
Le théorème est donc local et avec \ref{Log-SGA1-II.4.10} l'on peut supposer que 
 $\underline{\X}$ est affine et 
qu'il existe des entiers $n \geq r$ et 
un diagramme cartésien de $\T$-log-schémas formels de la forme: 
$$\xymatrix{
{\X} 
\ar[r] ^-{}
\ar@{}[rd] ^-{}|\square
& {\mathfrak{A} _\T  ^{n,r} } 
\\ 
{\ZZ} 
\ar@{^{(}->}[u] ^-{u}
\ar[r] ^-{}
& 
{\mathfrak{A} _\T ^{r,r}} 
\ar@{^{(}->}[u] ^-{}
}
$$
tel que les morphismes horizontaux sont log-étales et
le morphisme de droite est l'immersion fermée exacte canonique.
Quitte à faire une récurrence sur $n$, on peut supposer que 
$n = r+1$, i.e. $\ZZ = V (t)$ où $t$ est la section de $\O _\X$ 
image de $t _n$ via $\X \to \mathfrak{A} _\T  ^{n,n-1}$.
On reprend alors librement les notations de la fin de
\ref{sectionA.1} concernant notamment les dérivations 
$\partial _{\sharp,1},\dots, \partial _{\sharp,r}, \partial _n$. 
On notera simplement $\partial $ pour $\partial _n$.
Pour terminer la preuve du théorème, nous aurons besoin du 
lemme clé suivant.

\begin{lemm}
[Lemme clé de Berthelot]
\label{Key-Lemma}
Soient $s \geq 1$ un entier, 
$\widetilde{D} ^{(m)} _{\X/\T}: =
\Gamma (\X, \widetilde{\D} ^{(m)} _{\X/\T}) $,
$R \in M _s (\widetilde{D} ^{(m)} _{\X/\T})$ une matrice. 
Il existe alors un entier assez grand
$m'\geq m$ et une matrice
$P \in M _s (\widetilde{D} ^{(m')} _{\X/\T}) $ 
satisfaisant les propriétés suivantes :
\begin{enumerate}
\item $P \equiv I _s \mod \pi M _s (\widetilde{D} ^{(m')} _{\X/\T}) $, 
où 
$I _s$ est la matrice identité ;\\
\item $t ^{p ^{m}} P = P ( t ^{p ^{m}} I _s - \pi R)$,
où $R $ est considéré comme un élément de $M _s (\widetilde{D} ^{(m')} _{\X/\T}) $
via l'inclusion $M _s (\widetilde{D} ^{(m)} _{\X/\T}) \subset M _s (\widetilde{D} ^{(m')} _{\X/\T})$.
\end{enumerate}

\end{lemm}

\begin{proof}
Puisque la preuve pour un $s$ quelconque est identique, 
on peut supposer  $s=1$. 

0) {\it 
Notations}.  
Soit $S \in 
 \widehat{D}  _{\X/\T} \widehat{\otimes} _{O _\X}B ^{(m)}$ 
(on prendra garde au fait que $\widehat{D}  _{\X/\T} \widehat{\otimes} _{O _\X}B ^{(m)}$ 
est seulement un $B ^{(m)}$-module, on prend la structure droite de $O _\X$-module de $\widehat{D}  _{\X/\T}$
pour calculer le produit tensoriel car celui-ci est placé à gauche du produit tensoriel).
L'élément $S$ s'écrit de manière unique sous la formel 
$S 
= 
\sum _{\underline{i}\in \N ^n} 
 \underline{\partial} _{(r)} ^{[ \underline{i}]} a _{\underline{i}}$, 
avec $a _{\underline{i}} \in B ^{(m)} $ tendant $\pi$-adiquement vers $0$ lorsque $| \underline{i}|$ tend vers l'infini
et où la notation $\underline{\partial} _{(r)} ^{[ \underline{i}]}$ est définie à la fin de 
\ref{sectionA.1}.  
Posons $c _{\underline{i}} (S) := a _{\underline{i}}$.
Lorsque $S\in D  _{\X/\T} \otimes _{O _\X}B ^{(m)}$, la somme est finie et on peut alors définir
$\mathrm{ord} (S)$ comme étant le maximal des éléments $|\underline{i}|$ tels que 
$c _{\underline{i}} (S) \not =0$.
Posons $[S] _l  := \pi ^{-l}  \sum _{\underline{i} \in E _l } 
  \underline{\partial} _{(r)} ^{[ \underline{i}]}c _{\underline{i}} (S)  $, où $E _l $ est le sous-ensemble (fini) de  $\N ^n$ des éléments 
 $\underline{i}$ tels que $ v _{\pi } ( c _{\underline{i}} (S))= l$, où $v _{\pi } $ signifie la valuation $\pi$-adique sur $B ^{(m)}$.  
Posons
$\sigma   _l (S) := \sum _{l' \leq l} \pi ^{l'}[S ]_{l'} $.

Comme pour  \cite[2.4.4.2]{Be1}, on vérifie que 
les propriétés suivantes :
\begin{enumerate}[(i)]
\item il existe un entier $m ' \geq m$ tel que
$S \in  \widehat{D}  _{\X/\T} ^{(m')} \widehat{\otimes} _{O _\X}B ^{(m)}$ ;
\item il existe un nombre réel  $a>0$ tel que
$\mathrm{ord} ( \sigma _l  (S))\leq a (l+1)$ pour n'importe quel entier $l$ ;
\item il existe un nombre réel  $a>0$ tel que
$\mathrm{ord} ([S]  _l)\leq a (l+1)$ pour n'importe quel entier $l$
\end{enumerate}
sont équivalentes.
Si $a >0$ est un réel, on dira alors que  $S$ est $a$-borné si, pour n'importe quel $\underline{i}\in \N ^n$, l'inégalité suivante est satisfaite:
$v _\pi ( c _{\underline{i}}(S)) \geq \frac{| \underline{i}|}{a} -1$.
On remarque que 
$S$ est $a$-borné si et seulement si
on a $\mathrm{ord} ( [S] _l) \leq a (l+1)$ pour n'importe quel entier $l$.

1) En utilisant l'inégalité de gauche de \cite[2.4.3.1]{Be1} (et la formule \cite[2.2.3.2]{Be1} encore valable dans le cas logarithmique),
on calcule que pour tout $a > p ^{m}(p-1)$, il existe $b \geq a$ assez grand tel que 
pour tout $P\in  \widehat{D}  _{\X/\T} ^{(m)}\widehat{\otimes} _{O _\X} B ^{(m)} $
on ait $v _\pi ( c _{\underline{i}}(P))\geq \frac{| \underline{i}|}{a} -\frac{b}{a}$ pour n'importe quel entier $\underline{i}$ (il s'agit d'étudier une fonction
et vérifier qu'elle est minorée).
On en déduit qu'il existe $\alpha$ assez grand (e.g. $a+b$) tel que 
pour tout $P\in  \widehat{D}  _{\X/\T} ^{(m)}\widehat{\otimes} _{O _\X} B ^{(m)} $,
$P$ soit $\alpha$-borné. 
On fixe à présent un tel $\alpha$ et on pose 
$\beta := \alpha + p ^m$.

2) On remarque que si $P \in D _{\X/\T} \otimes _{O _\X} B ^{(m)} $ 
alors
$P R \in \widehat{D}  _{\X/\T} \widehat{\otimes} _{O _\X}B ^{(m)}$
(en effet $R \in \widehat{D}  _{\X/\T} ^{(m)}\widehat{\otimes} _{O _\X} B ^{(m)} $, comme ce dernier est un anneau, ses éléments sont en particulier
stables par multiplication à gauche par un élément de $B ^{(m)} $). 
Supposons de plus que $P$ soit  $\beta$-borné.
L'opérateur $P$ est alors une somme finie d'éléments de la forme
$\pi ^{l _2} \underline{\partial} _{(r)} ^{[ \underline{i} _2]}b $,
avec $b \in B ^{(m)}$ et $| \underline{i} _2| \leq \beta ( l _2 +1)$.
Comme $bR\in \widehat{D}  _{\X/\T} ^{(m)}\widehat{\otimes} _{O _\X} B ^{(m)} $ 
(car $\widehat{D}  _{\X/\T} ^{(m)}\widehat{\otimes} _{O _\X} B ^{(m)} $ est un anneau),
d'après 1), $bR$ est $\alpha$-borné. On peut donc l'écrire 
comme une somme d'éléments
de la forme $\pi ^{l _1} \underline{\partial} _{(r)} ^{[ \underline{i} _1]}a $
avec  $a \in B ^{(m)}$ et $| \underline{i} _1| \leq \alpha ( l _1+1)$.
Ainsi, $PR$ est égale à une somme d'éléments de la forme 
$\pi ^{l _1+l _2} \underline{\partial} _{(r)} ^{[ \underline{i} _1]}\underline{\partial} _{(r)} ^{[ \underline{i} _2]}a $
avec $a \in B ^{(m)}$, $| \underline{i} _1| \leq \alpha ( l _1+1)$ et $| \underline{i} _2| \leq \beta ( l _2 +1)$.
Comme $\alpha \leq \beta$, on a donc
$| \underline{i} _1| + | \underline{i} _2| \leq \alpha + \beta ( l _1+ l _2+1)$.
Ainsi $P R $ est quasiment $\beta$-borné modulo l'ajout de $\alpha$.

3) On vérifie
 pour n'importe quel entier naturel $N$ 
 la formule
$$t ^{p ^{m}} \partial ^{[N + p ^m]}
-
\partial ^{[N + p ^m]}t ^{p ^{m}}
\equiv
- \partial ^{[N]}
\mod \pi D _{\X/\T} .$$
Cela implique que pour n'importe quel opérateur
$U\in D  _{\X/\T} \otimes _{O _\X}B ^{(m)}$ 
il existe un opérateur
$Q\in D  _{\X/\T} \otimes _{O _\X}B ^{(m)}$
tel que 
$$t ^{p ^{m}} Q
-
Qt ^{p ^{m}}
\equiv
U \mod \pi \widehat{D}  _{\X/\T} \widehat{\otimes} _{O _\X}B ^{(m)}$$
et 
$\mathrm{ord} (Q) \leq \mathrm{ord} (U) + p^m$.

4) Dans cette étape, nous construisons par récurrence sur  $l\geq 0$, des opérateurs
$P _l \in D  _{\X/\T} \otimes _{O _\X}B ^{(m)}$ vérifiant pour n'importe quel  $l \geq 0$ les conditions suivantes:
\begin{enumerate}[(i)]
\item $P _0 = 1$ et pour $l \geq 1$ on  a $P _{l} \equiv P _{l-1} \mod \pi ^{l}  D  _{\X/\T} \otimes _{O _\X}B ^{(m)}$ ,
\item 
$P _l$ est $\beta$-borné, 
\item $ t ^{ p ^m} P _l \equiv P _l ( t ^{ p ^m} - \pi  R) 
\mod \pi ^{l+1}  \widehat{D}  _{\X/\T} \widehat{\otimes} _{O _\X}B ^{(m)}$.
\end{enumerate}
Remarquons que d'après l'étape 2), les éléments de la congruence de la propriété (iii) 
appartiennent bien à $\widehat{D}  _{\X/\T} \widehat{\otimes} _{O _\X}B ^{(m)}$.
On a forcément $P _0 = 1$. Soit $l \in \N$ et supposons construit
$P _0, \dots, P _l$ vérifiant les conditions (i), (ii) et (iii).
Posons 
$U:= 
-[t ^{ p ^m} P _l - P _l ( t ^{ p ^m} - \pi R )] _{l+1}\in  D  _{\X/\T} \otimes _{O _\X}B ^{(m)}$.
D'après 
la propriété (iii) satisfaite par $P _l$,
on a $t ^{ p ^m} P _l - P _l ( t ^{ p ^m} - \pi R )
\equiv 0 \mod \pi ^{l+1}  \widehat{D}  _{\X/\T} \widehat{\otimes} _{O _\X}B ^{(m)}$.
On en déduit la congruence :
\begin{equation}
\label{3}
-\pi ^{l+1}  U 
\equiv
t ^{ p ^m} P _l - P _l ( t ^{ p ^m} - \pi R)
\mod \pi ^{l+2}  \widehat{D}  _{\X/\T} \widehat{\otimes} _{O _\X}B ^{(m)}.
\end{equation}
D'après l'étape 2), comme $P _l$ est $\beta$-borné 
on vérifie que 
$\sigma _{l+1} (\pi P _l  R )= \pi \sigma _{l} (P _l  R )$
est
la somme d'éléments de la forme 
$\pi ^{1+l _1+l _2} \underline{\partial} _{(r)} ^{[ \underline{i} _1]}\underline{\partial} _{(r)} ^{[ \underline{i} _2]}a $
avec  $a \in B ^{(m)}$, 
$| \underline{i} _1| + | \underline{i} _2| \leq \alpha + \beta ( l _1+ l _2+1)$
et $1+l _1 +l _2 \leq l +1$.
Il en résulte 
$\mathrm{ord} ( U)  \leq \beta (l+1) + \alpha$.
D'après la partie 3) de la preuve, il existe alors 
$Q \in  D  _{\X/\T} \otimes _{O _\X}B ^{(m)}$ tel que 
$t ^{p ^m} Q - Q t ^{ p ^m} \equiv U \mod  \pi \widehat{D}  _{\X/\T} \widehat{\otimes} _{O _\X}B ^{(m)}$ et 
$\mathrm{ord} ( Q) \leq \mathrm{ord} ( U) + p ^{m}$.
On en déduit 
$\mathrm{ord} (Q) \leq \beta ( l +1)+ \alpha + p ^{m} = \beta (l +2)$
(on rappelle que $\beta :=  \alpha + p ^{m}$).
Ainsi 
$\pi ^{l+1} Q$ est $\beta$-borné.

D'après l'étape 2), $QR \in \widehat{D}  _{\X/\T} \widehat{\otimes} _{O _\X}B ^{(m)}$. 
En utilisant \ref{3}, 
on obtient les congruences 
\begin{align}
\notag
t ^{p ^m} (P _l + \pi ^{l+1}  Q) 
-(P _l + \pi ^{l+1}  Q) (  t ^{ p ^m} - \pi R )
&
=
t ^{ p ^m} P _l - P _l ( t ^{ p ^m} - \pi R )
+
\pi ^{l+1} (t ^{p ^m} Q - Q t ^{ p ^m})
+
\pi ^{l+2} QR
\\
&
\equiv 
-\pi ^{l+1}  U 
+
\pi ^{l+1} (t ^{p ^m} Q - Q t ^{ p ^m})
\mod \pi ^{l+2} \widehat{D}  _{\X/\T} \widehat{\otimes} _{O _\X}B ^{(m)}
\\
&
\equiv 
0\mod \pi ^{l+2} \widehat{D}  _{\X/\T} \widehat{\otimes} _{O _\X}B ^{(m)}.
\end{align}
Ainsi  $P _{l+1}:= P _l + \pi ^{l+1}  Q$ est $\beta$-borné et 
satisfait  
$ t ^{ p ^m} P _{l+1} \equiv P _{l+1} ( t ^{ p ^m} - \pi R) 
\mod \pi ^{l+2}  \widehat{D}  _{\X/\T} \widehat{\otimes} _{O _\X}B ^{(m)}$.

5) Finalement, d'après l'étape 4), on obtient l'élément $P := \lim _{l\to \infty} \, P _l$ de 
$ \widehat{D}  _{\X/\T} \widehat{\otimes} _{O _\X}B ^{(m)}$
qui est $\beta$-borné et vérifie $ t ^{ p ^m} P = P ( t ^{ p ^m} - \pi R) $.
D'après l'étape 0),  comme $P$ est $\beta $-borné, il existe alors $m' \geq m$ tel 
que 
$P \in  \widehat{D}  ^{(m')} _{\X/\T} \widehat{\otimes} _{O _\X}B ^{(m)} 
\subset  \widetilde{D}  _{\X/\T} ^{(m')}$.
Cet opérateur $P$ satisfait aux conditions requises.
\end{proof}

Revenons à présent à la preuve de \ref{Berthelot-Kashiwara}. 
Soit $\overset{\circ}{\E}$ un $\widetilde{\D} ^{(m)} _{\X/\T}$-module cohérent sans $\pi$-torsion 
tel que $\overset{\circ}{\E} _\Q \riso \E$.
Choisissons des générateurs $e _1, \dots, e _s$ de $\overset{\circ}{\E}$.
Posons $\underline{e}:= \left(
\begin{array}{c}
e _1\\
\vdots  \\
e _s   
\end{array}
\right)$.
Puisque $\overset{\circ}{\E} / \pi \overset{\circ}{\E}$ est un $\O _X$-module quasi-cohérent et à support dans $Z$ (i.e. 
$(\overset{\circ}{\E} / \pi \overset{\circ}{\E})|Y= 0$), alors
$ t ^{p ^m } e _i \equiv 0 \mod \pi \overset{\circ}{\E}$ (augmenter $m$ si nécessaire). 
Il existe donc une matrice $R \in M _s (\widetilde{D} ^{(m)} _{\X/\T})$
telle que
$t ^{p ^{m}} \underline{e} =
\pi 
R \underline{e}$.
Soit $m'$ et $P\in M _s (\widetilde{D} ^{(m')} _{\X/\T})$ vérifiant les conditions du Lemma \ref{Key-Lemma}.
Posons
$E ^{(m')}:= \widetilde{D} ^{(m')} _{\X/\T,\Q} \otimes _{\widetilde{D} ^{(m)} _{\X/\T,\Q}} E$. 
On note 
$1 \otimes e _1, \dots, 1 \otimes e _s$ les éléments de $E ^{(m')}$
correspondant à $e _1, \dots, e _s$.
Posons alors $\underline{e}' : = P (1 \otimes \underline{e})$.
On calcule $t ^{p ^{m}} \underline{e}' = 
t ^{p ^{m}} P (1 \otimes \underline{e})
=
P ( t ^{p ^{m}} I _s - \pi R)
(1 \otimes \underline{e})=
P ( 1 \otimes (t ^{p ^{m}} \underline{e} -
\pi 
R \underline{e}))=0 $.

Notons
$\mathcal{H} ^{0} u ^{ !} (\smash{{E}} ^{(m')})$ 
l'ensemble des éléments de 
$\smash{{E}} ^{(m')}$
qui sont annulés par $t$. 
Notons 
$\smash{{K}} ^{(m')}$ 
le sous-$\widetilde{D} ^{(m')} _{\X/\T,\Q} $-module
de $\smash{{E}} ^{(m')}$
engendré par  $\mathcal{H} ^{0} u ^{ !} (\smash{{E}} ^{(m')})$.
Vérifions à présent par récurrence sur $i \geq 1$ que pour tout 
$x\in \smash{{E}} ^{(m')}$ 
si $t ^{i} x= 0$ alors  $x \in \smash{{K}} ^{(m')}$.
Lorsque $i= 1$, c'est immédiat. Supposons $i \geq 2$ et la propriété vraie pour $i -1$. 
Soit $x\in \smash{{E}} ^{(m')}$  tel que $t ^{i} x= 0$. 
A partir de la formule $\partial t ^{i-1} = t ^{i-1} \partial + (i-1)t ^{i-2}$,
on obtient par multiplication à droite par $t$ la formule $\partial t ^{i} = t ^{i-1} \partial t + (i-1)t ^{i-1}
=t ^{i-1} ( \partial t + (i-1))$.
Puisque $\partial t ^{i} \cdot x =0$, par hypothèse de récurrence, on obtient 
$( \partial t + (i-1)) \cdot x \in \smash{{K}} ^{(m')}$ (resp. $t \cdot x \in \smash{{K}} ^{(m')}$ et donc $\partial t \cdot x \in \smash{{K}} ^{(m')}$).
D'où par soustraction $(i-1) x \in \smash{{K}} ^{(m')}$ et donc  $x\in \smash{{K}} ^{(m')}$, ce qui conclut la récurrence.
Comme $\smash{{E}} ^{(m')}$ est engendré comme $\widetilde{D}  ^{(m')} _{\X/\T,\Q}$-module par des éléments annulés par $t ^{p ^m}$, 
on en conclut que $\smash{{K}} ^{(m')}=\smash{{E}} ^{(m')}$, 
i.e. que  $\smash{{E}} ^{(m')}$ est engendré comme $\widetilde{D}  ^{(m')} _{\X/\T,\Q}$-module par des éléments annulés par $t $.

Comme pour  \cite[2.3.2]{surcoh-hol}, on dispose du morphisme canonique de $\widetilde{D}  ^{(m')} _{\X/\T,\Q}$-modules 
\begin{equation}
\label{u_+u^!toif-limproj}
\mathrm{adj}\,:\,u ^{(m')} _{+} \circ \mathcal{H} ^{0} u ^{ !} (\smash{{E}} ^{(m')})
\to 
\smash{{E}} ^{(m')}
\end{equation}
donné par 
$\sum _{k\in \N }[ \partial ^{<k>} ]\otimes x _{k}
\mapsto 
\sum _{k\in \N } \partial ^{<k>}  \cdot x _{k}$,
avec $x _{k} \in \mathcal{H} ^{0} u ^{ !} (\smash{{E}} ^{(m')})$ 
tel que $\lim _{|k |\to \infty} x _{k} =0$ (où
$[ \partial ^{<k>} ]$ désigne la classe de $\partial ^{<k>} $ dans le bimodule $\widetilde{D}  ^{(m')} _{\X\leftarrow \ZZ/\T,\Q}$). 
Puisque $\smash{{E}} ^{(m')}$ est engendré comme $\widetilde{D}  ^{(m')} _{\X/\T,\Q}$-module par un nombre fini d'éléments de 
$\mathcal{H} ^{0} u ^{ !} (\smash{{E}} ^{(m')})$, 
on en déduit un morphisme surjectif qui se décompose de la forme 
$u ^{(m')} _{+} ( L )
\to 
u ^{(m')} _{+} \circ \mathcal{H} ^{0} u ^{ !} (\smash{{E}} ^{(m')})
\to 
\smash{{E}} ^{(m')}$,
où $L$ est 
un $\widetilde{D}  ^{(m')} _{\ZZ/\T,\Q}$-module libre de rang fini. 
Notons $N$ le noyau de la surjection
$u ^{(m')} _{+} ( L )
\to 
\smash{{E}} ^{(m')}$.
Comme  $N$  est aussi à support dans $\ZZ$, 
en réitérant le procédé ci-dessus, 
quitte à augmenter $m'$ si nécessaire, 
$N$ est engendré comme
 $\widetilde{D}  ^{(m')} _{\X/\T,\Q}$-module par un nombre fini d'éléments de   
$\mathcal{H} ^{0} u ^{ !} (N)$. En particulier, le morphisme canonique 
$u ^{(m')} _{+}\mathcal{H} ^{0} u ^{ !} (N) \to N$ est surjective. 
Puisque $\mathcal{H} ^{0} u ^{ !} (N)$ est un sous-module de $L$, 
alors $\mathcal{H} ^{0} u ^{ !} (N)$ est un $\widetilde{D}  ^{(m')} _{\ZZ/\T,\Q}$-module cohérent
et 
la composée 
$u ^{(m')} _{+}\mathcal{H} ^{0} u ^{ !} (N) \to N \to u ^{(m')} _{+} L$ est injective
(parce que  $u ^{(m')} _{+}$ est exact sur la catégorie
$\widetilde{D}  ^{(m')} _{\ZZ/\T,\Q}$-modules cohérents).
En particulier, 
$u ^{(m')} _{+}\mathcal{H} ^{0} u ^{ !} (N) \to N$ est aussi injective. 
Puisqu'il est aussi surjectif, la flèche canonique $u ^{(m')} _{+}\mathcal{H} ^{0} u ^{ !} (N) \to N$ est donc un isomorphisme. 
Ainsi, par exactitude de  $u ^{(m')} _{+}$, on obtient
$E ^{(m')} \riso u ^{(m')} _{+} ( L) /N
\riso u ^{(m')} _{+} ( L) /u ^{(m')} _{+} \mathcal{H} ^{0} u ^{ !} (N) 
\riso u ^{(m')} _{+} ( L/\mathcal{H} ^{0} u ^{ !} (N) )$.
\end{proof}

\begin{theo}
[Berthelot-Kashiwara]
\label{Berthelot-Kashiwara-full}
On garde les notations de 
\ref{Berthelot-Kashiwara} et on pose 
$\smash{\widetilde{\D}} ^{\dag} _{\X/\T,\Q}:=\underset{\underset{m}{\longrightarrow}}{\lim}\,
\smash{\widetilde{\D}} ^{(m)} _{\X/\T,\Q}$
et
$\smash{\widetilde{\D}} ^{\dag} _{\ZZ/\T,\Q}:=\underset{\underset{m}{\longrightarrow}}{\lim}\,
\smash{\widetilde{\D}} ^{(m)}  _{\ZZ/\T,\Q}$.
Les foncteurs image inverse extraordinaire $u ^{!}$ et et image directe $u _{+}$ induisent des équivalences quasi-inverses entre la catégorie 
des  $\smash{\widetilde{\D}} ^{\dag} _{\X/\T,\Q}$-modules cohérents à support dans $Z$ 
et celle des $\smash{\widetilde{\D}} ^{\dag} _{\ZZ/\T,\Q} $-modules cohérents. 
Ces foncteurs $u ^{!}$ et $u  _{+}$ sont acycliques sur ces catégories. 

\end{theo}

\begin{proof}
On se ramène à la situation géométrique de la preuve de \ref{Berthelot-Kashiwara}, avec $n=r-1$.
Pour tout $\smash{\widetilde{\D}} ^{\dag} _{\X/\T,\Q}$-module cohérent $\FF$ à support dans $Z$,
un calcul immédiat en coordonnées locales donne  
$\mathcal{H} ^{0} u ^{!} u _+  (\FF) \riso 
\FF$. 
Grâce au théorème \ref{Berthelot-Kashiwara}, 
il en résulte que les foncteurs 
$\mathcal{H} ^{0} u ^{!} $ et $u _+  $ induisent des équivalences 
quasi-inverses entre la catégorie 
des  $\smash{\widetilde{\D}} ^{\dag} _{\X/\T,\Q}$-modules cohérents à support dans $Z$ 
et celle des $\smash{\widetilde{\D}} ^{\dag} _{\ZZ/\T,\Q} $-modules cohérents. 
L'acyclité de $u _+$ découle d'un calcul facile en coordonnées locales. 
Il reste à établir l'acyclité de $u ^!$. 
Via la formule \cite[1.7.1]{caro_log-iso-hol},
pour tout $\smash{\widetilde{\D}} ^{(m)}  _{\ZZ/\T,\Q}$-module cohérent $\FF ^{(m)}$, 
en notant $\FF ^{(m+1)}:= \smash{\widetilde{\D}} ^{(m+1)}  _{\ZZ/\T,\Q} \otimes _{\smash{\widetilde{\D}} ^{(m)}  _{\ZZ/\T,\Q}}\FF ^{(m)}$,
on calcule que la flèche 
canonique 
$\mathcal{H} ^1 u ^{ (m)!} \circ u _{+} ^{ (m)} (\FF ^{(m)})
\to 
\mathcal{H} ^1 u ^{ (m+1)!} \circ u _{+} ^{ (m+1)} (\FF ^{(m+1)})$
est le morphisme nul (grâce à \ref{lemm-ualg-u!nivm}, 
$\mathcal{H} ^1 u ^{ (m)!} =\mathcal{H} ^1 u ^{ (m+1)!}= u ^*$ pour des modules cohérents).
On en déduit que 
$\mathcal{H} ^1 u ^{ !} \circ u _{+}  (\FF )=0$.
Comme pour $i \not \in \{ 0,1\}$, on a $\mathcal{H} ^i u ^{ !} \circ u _{+}  (\FF )=0$,
on en déduit l'acyclicité de $u ^{ !}$.
\end{proof}

\section{Preuve direct de la cohérence du coefficient constant avec singularités surconvergentes pour les systèmes inductifs}

On propose ci-dessous une preuve directe (i.e., sans passer par le théorème \ref{limTouD} 
qui permet de se ramener au cas déjà traité par Berthelot dans  \cite{Becohdiff}) du théorème
\ref{coh-Bbullet} ci-dessus, dont l'énoncé est le même que \ref{coh-Bbulletbis} ci-dessous.
La preuve du théorème \ref{coh-Bbulletbis} reprend les étapes clés du théorème analogue de cohérence de Berthelot de \cite{Becohdiff}
(on  pourra comparer avec \cite{Becohdiff}). Il s'agit en quelque sorte d'une adaptation de sa preuve 
au cas des systèmes inductifs. 
On termine l'appendice par une 
vérification à la main du théorème
\ref{coh-Bbullet} ci-dessus lorsque le diviseur $T$ est lisse (voir \ref{lisse}).

Nous aurons d'abord besoin du lemme suivant. 
\begin{lemm}
\label{fact-dirc-coh}
Soient  $T' \supset T$ un diviseur,
$\E  ^{(\bullet)} \in \underrightarrow{LD} ^{\mathrm{b}} _{\Q, \mathrm{coh}} (\smash{\widetilde{\D}} _{\PP ^\sharp} ^{(\bullet)}(T'))$
et 
$\G  ^{(\bullet)} \in 
\underrightarrow{LD} ^{\mathrm{b}} _{\Q, \mathrm{coh}} (\smash{\widetilde{\D}} _{\PP ^\sharp} ^{(\bullet)}(T))
\cap 
\underrightarrow{LD} ^{\mathrm{b}} _{\Q, \mathrm{qc}} (\smash{\widetilde{\D}} _{\PP ^\sharp} ^{(\bullet)}(T'))$.
Si $\underrightarrow{\lim}(\E ^{(\bullet)})$ est un facteur direct de $\underrightarrow{\lim} (\G ^{(\bullet)})$ dans
$D ^\mathrm{b}  (\smash{\D} ^\dag _{\PP ^{\sharp}, \Q} )$,
alors
$\E ^{(\bullet)} \in 
\underrightarrow{LD} ^{\mathrm{b}}  _{\Q, \mathrm{coh}} (\smash{\widetilde{\D}} _{\PP ^\sharp} ^{(\bullet)}(T))$.
\end{lemm}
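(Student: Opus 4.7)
Le plan consiste à remonter l'hypothèse de décomposition en facteurs directs au niveau du système inductif via l'équivalence de catégories \ref{eqcatcoh} et le théorème \ref{limTouD}, sans recourir à \ref{coro1limTouD}. Je commence par établir que $\underrightarrow{\lim}(\E^{(\bullet)}) \in D ^{\mathrm{b}} _{\mathrm{coh}}(\smash{\D} ^\dag _{\PP ^\sharp} (\hdag T) _{\Q})$. Par hypothèse, $\underrightarrow{\lim}(\G^{(\bullet)})$ y appartient déjà, et la décomposition en facteurs directs a lieu dans $D ^{\mathrm{b}}(\smash{\D} ^\dag _{\PP ^\sharp,\Q})$. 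Comme $\O _{\PP}(\hdag T) _{\Q}$ agit inversiblement sur tout $\smash{\D} ^\dag _{\PP ^\sharp} (\hdag T) _{\Q}$-module, tout morphisme $\smash{\D} ^\dag _{\PP ^\sharp,\Q}$-linéaire entre deux tels modules est en fait $\smash{\D} ^\dag _{\PP ^\sharp} (\hdag T) _{\Q}$-linéaire, ce qui s'étend au niveau dérivé par l'adjonction entre $(\hdag T)$ et le foncteur oubli. Les idempotents définissant la décomposition dans $D ^{\mathrm{b}}(\smash{\D} ^\dag _{\PP ^\sharp,\Q})$ se relèvent donc en endomorphismes de $\underrightarrow{\lim}(\G^{(\bullet)})$ dans $D ^{\mathrm{b}} _{\mathrm{coh}}(\smash{\D} ^\dag _{\PP ^\sharp} (\hdag T) _{\Q})$, et la stabilité usuelle de cette catégorie par facteurs directs (l'anneau étant cohérent de dimension cohomologique finie) donne la conclusion souhaitée.

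Je fais ensuite appel à l'équivalence de catégories \ref{eqcatcoh} pour le diviseur $T$: il existe un objet $\E _0 ^{(\bullet)} \in \underrightarrow{LD} ^{\mathrm{b}} _{\Q,\mathrm{coh}}(\smash{\widetilde{\D}} _{\PP ^\sharp} ^{(\bullet)}(T))$ tel que $\underrightarrow{\lim}(\E _0 ^{(\bullet)}) \riso \underrightarrow{\lim}(\E ^{(\bullet)})$ dans $D ^{\mathrm{b}} _{\mathrm{coh}}(\smash{\D} ^\dag _{\PP ^\sharp} (\hdag T) _{\Q})$. Comme $\E^{(\bullet)}$ appartient à la catégorie cohérente pour $T'$, sa limite est déjà un $\smash{\D} ^\dag _{\PP ^\sharp} (\hdag T') _{\Q}$-module, et le morphisme canonique $\underrightarrow{\lim}(\E _0 ^{(\bullet)}) \to (\hdag T',T)(\underrightarrow{\lim}(\E _0 ^{(\bullet)}))$ est donc un isomorphisme dans $D ^{\mathrm{b}} (\smash{\D} ^\dag _{\PP ^\sharp} (\hdag T) _{\Q})$.

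Le théorème \ref{limTouD} appliqué à $\E _0 ^{(\bullet)}$ et au diviseur $T'$ fournit alors l'isomorphisme canonique $\E _0 ^{(\bullet)} \riso (\hdag T',T)(\E _0 ^{(\bullet)})$ dans $\underrightarrow{LD} ^{\mathrm{b}} _{\Q,\mathrm{coh}}(\smash{\widetilde{\D}} _{\PP ^\sharp} ^{(\bullet)}(T))$. D'autre part, $(\hdag T',T)(\E _0 ^{(\bullet)})$ et $\E ^{(\bullet)}$ vivent tous deux dans $\underrightarrow{LD} ^{\mathrm{b}} _{\Q,\mathrm{coh}}(\smash{\widetilde{\D}} _{\PP ^\sharp} ^{(\bullet)}(T'))$ et partagent la même limite: en effet $\underrightarrow{\lim} \circ (\hdag T',T) \riso (\hdag T',T) \circ \underrightarrow{\lim}$, et $\underrightarrow{\lim}(\E^{(\bullet)})$ est déjà $(\hdag T')$-localisé. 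La pleine fidélité de $\underrightarrow{\lim}$ sur la catégorie cohérente relative à $T'$ (voir \ref{eqcatcoh}) fournit alors l'isomorphisme $(\hdag T',T)(\E _0 ^{(\bullet)}) \riso \E ^{(\bullet)}$ dans $\underrightarrow{LD} ^{\mathrm{b}} _{\Q,\mathrm{coh}}(\smash{\widetilde{\D}} _{\PP ^\sharp} ^{(\bullet)}(T'))$, et l'on conclut selon la convention habituelle (comme en \ref{coro1limTouD} ou \ref{stab-coh-oub-DT}) que $\E^{(\bullet)}$ appartient à $\underrightarrow{LD} ^{\mathrm{b}} _{\Q,\mathrm{coh}}(\smash{\widetilde{\D}} _{\PP ^\sharp} ^{(\bullet)}(T))$.

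L'obstacle principal se situe dans la première étape, à savoir justifier que les idempotents de la décomposition dans $D ^{\mathrm{b}}(\smash{\D} ^\dag _{\PP ^\sharp,\Q})$ passent à $D ^{\mathrm{b}}(\smash{\D} ^\dag _{\PP ^\sharp} (\hdag T) _{\Q})$ tout en préservant la cohérence. Cette identification des catégories de morphismes, immédiate au niveau des modules ordinaires grâce à l'inversibilité de l'action des équations de $T$, se transpose au niveau dérivé par la pleine fidélité du foncteur oubli $D ^{\mathrm{b}}(\smash{\D} ^\dag _{\PP ^\sharp} (\hdag T) _{\Q}) \to D ^{\mathrm{b}}(\smash{\D} ^\dag _{\PP ^\sharp,\Q})$, conséquence de ce que l'unité d'adjonction associée à $(\hdag T)$ est un isomorphisme sur les $\smash{\D} ^\dag _{\PP ^\sharp} (\hdag T) _{\Q}$-modules. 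C'est précisément cette remontée qui évite l'appel à \ref{coro1limTouD} et permet d'obtenir une preuve directe via \ref{limTouD} seul.
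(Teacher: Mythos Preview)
Your proof is correct, but it takes a different route from the paper's and in doing so partly defeats the purpose of the lemma. The paper's proof avoids \ref{limTouD} altogether: after obtaining $\E \in D ^{\mathrm{b}} _{\mathrm{coh}}(\smash{\D} ^\dag _{\PP ^\sharp} (\hdag T) _{\Q})$ as you do, it lifts the \emph{entire} direct sum decomposition $\E \oplus \E' \riso \G$ to $\underrightarrow{LD} ^{\mathrm{b}}  _{\Q, \mathrm{coh}} (\smash{\widetilde{\D}} _{\PP ^\sharp} ^{(\bullet)}(T))$ via the equivalence \ref{eq-catLDBer-LD-D}, writing $\H^{(\bullet)} \oplus \FF^{(\bullet)} \riso \G^{(\bullet)}$ with $\underrightarrow{\lim}\,\H^{(\bullet)} \riso \E$. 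Now the extra hypothesis $\G^{(\bullet)} \in \underrightarrow{LD} ^{\mathrm{b}} _{\Q, \mathrm{qc}} (\smash{\widetilde{\D}} _{\PP ^\sharp} ^{(\bullet)}(T'))$ gives directly (via \ref{oub-pl-fid}) that $\G^{(\bullet)} \to (\hdag T')\G^{(\bullet)}$ is an isomorphism \emph{at the level of systems}, hence the same holds for the summand $\H^{(\bullet)}$; full faithfulness on the $T'$-coherent side then identifies $(\hdag T')\H^{(\bullet)}$ with $\E^{(\bullet)}$.

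The contrast is instructive: you never use the $T'$-quasi-coherence of $\G^{(\bullet)}$, so your argument would prove a slightly stronger statement, but at the price of invoking \ref{limTouD}. Since this lemma is introduced precisely as one \og que l'on peut v\'erifier sans utiliser \ref{coro1limTouD}\fg, and your steps 2--5 are essentially a re-derivation of \ref{coro1limTouD} from \ref{limTouD}, you have not achieved the intended independence. The paper's argument is the genuinely more elementary one here: it trades the heavy theorem for the concrete hypothesis on $\G^{(\bullet)}$, which is exactly what is available in the application to \ref{coh-Bbulletbis}.
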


\begin{proof}
Comme $\E := \underrightarrow{\lim}~\E  ^{(\bullet)} $
est un facteur direct de 
$\G := \underrightarrow{\lim}\G  ^{(\bullet)} $
dans $D ^\mathrm{b} 
(\smash{\D} ^\dag _{\PP ^{\sharp}, \Q}   )$, 
comme 
$\G \in D ^\mathrm{b} _{\textrm{coh}}
(\smash{\D} ^\dag _{\PP ^\sharp}( \hdag T) _{\Q} )$, 
on obtient alors $\E \in D ^\mathrm{b} _{\textrm{coh}}
(\smash{\D} ^\dag _{\PP ^\sharp}( \hdag T) _{\Q} )$.
Comme le foncteur 
$\underrightarrow{\lim}$ 
est une équivalence de catégories sur les complexes cohérents et à cohomologie bornée, 
il existe $\H  ^{ (\bullet)} ,\, \FF  ^{(\bullet)}\in 
\underrightarrow{LD} ^{\mathrm{b}}  _{\Q, \mathrm{coh}} (\smash{\widetilde{\D}} _{\PP ^\sharp} ^{(\bullet)}(T))$
tels que 
$\E \riso  \underrightarrow{\lim}\H  ^{ (\bullet)} $
et 
$\H  ^{ (\bullet)} \oplus \FF  ^{(\bullet)} \riso 
\G  ^{(\bullet)}$ 
dans 
$\underrightarrow{LD} ^{\mathrm{b}} _{\Q, \mathrm{coh}} (\smash{\widetilde{\D}} _{\PP ^\sharp} ^{(\bullet)}(T))$.
Comme $\G  ^{(\bullet)} \in 
\underrightarrow{LD} ^{\mathrm{b}} _{\Q, \mathrm{qc}} (\smash{\widetilde{\D}} _{\PP ^\sharp} ^{(\bullet)}(T'))$,
on en déduit que 
le morphisme canonique
$\H  ^{ (\bullet)} \oplus \FF  ^{(\bullet)} 
\to 
(\hdag T ')  (\H  ^{ (\bullet)} \oplus \FF  ^{(\bullet)})
=
(\hdag T ')  (\H  ^{ (\bullet)} ) 
\oplus (\hdag T ') (\FF  ^{(\bullet)})$
est un isomorphisme. 
Il en résulte que le morphisme canonique
$\H  ^{ (\bullet)} 
\to 
(\hdag T ') (\H  ^{ (\bullet)} )$
est un isomorphisme. 
Or, 
$\underrightarrow{\lim} (\hdag T ') (\H  ^{ (\bullet)} )
\riso (\hdag T') \underrightarrow{\lim}(\H  ^{ (\bullet)} )
\riso (\hdag T')(\E )$.
De plus, 
 comme $\E \in 
D ^\mathrm{b} _{\textrm{coh}}
(\smash{\D} ^\dag _{\PP ^\sharp} (\hdag T) _{\Q} )
\cap 
D ^\mathrm{b} _{\textrm{coh}}
(\smash{\D} ^\dag _{\PP ^\sharp} (\hdag T') _{\Q} )$,
le morphisme canonique
$ (\hdag T')(\E ) \to \E $
est un isomorphisme (grâce à \cite[4.8]{caro_log-iso-hol}).
Par pleine fidélité de 
$\underrightarrow{\lim}$
sur 
$\underrightarrow{LD} ^{\mathrm{b}}  _{\Q, \mathrm{coh}} (\smash{\widetilde{\D}} _{\PP ^\sharp} ^{(\bullet)}(T'))$,
on en déduit que 
$ (\hdag T ') (\H  ^{ (\bullet)} ) \riso  \E  ^{ (\bullet)} $. D'où le résultat. 
\end{proof}

\begin{theo}
\label{coh-Bbulletbis}
On a
$\smash{\widetilde{\B}} _{\PP} ^{(\bullet)} (T ) \in 
\underrightarrow{LM}  _{\Q, \mathrm{coh}} (\smash{\widehat{\D}} _{\PP } ^{(\bullet)})
\cap \underrightarrow{LM}  _{\Q, \mathrm{coh}} (\smash{\widetilde{\D}} _{\PP } ^{(\bullet)} (T))$.
\end{theo}

\begin{proof}
Il est immédiat que 
$\smash{\widetilde{\B}} _{\PP} ^{(\bullet)} (T ) \in \underrightarrow{LM}  _{\Q, \mathrm{coh}} (\smash{\widetilde{\D}} _{\PP } ^{(\bullet)} (T))$.
Vérifions à présent 
$\smash{\widetilde{\B}} _{\PP} ^{(\bullet)} (T ) \in \underrightarrow{LM}  _{\Q, \mathrm{coh}} (\smash{\widehat{\D}} _{\PP } ^{(\bullet)})$.

{\it Étape I. Supposons à présent que $T$ soit un diviseur à croisements normaux.} 
On peut supposer $\PP$ affine et muni de coordonnées locales $t _1, \dots, t _d$ 
telles que $T= V (\overline{t} _1\cdots \overline{t} _r)$, où $\overline{t} _1,\dots, \overline{t} _r\in \O _P$ désignent 
respectivement la réduction de $t _1,\dots, t _r$ modulo $\pi \O _{\PP}$.
On procède alors par récurrence sur $r$, i.e. le nombre de composante irréductible de $T$. 
Notons $\X = V (t _1)$,
$u\colon \X \hookrightarrow \PP$ l'immersion fermée canonique
et
$T'= V (\overline{t} _2\cdots \overline{t} _{r})$. 

Comme d'après le lemme \ref{u*Odiv}
$u  ^{(\bullet)!}(\smash{\widetilde{\B}} _{\PP} ^{(\bullet)} (T'))[1]
\riso
\smash{\widetilde{\B}} _{\X} ^{(\bullet)} (X \cap T')$, 
comme 
d'après \ref{hdagT'T=cup} 
$(\hdag X) (\smash{\widetilde{\B}} _{\PP} ^{(\bullet)} (T'))
\riso 
\smash{\widetilde{\B}} _{\PP} ^{(\bullet)} (X \cup T')$, 
avec l'isomorphisme \ref{pre-loc-tri-B-t1T-iso}, 
le triangle de localisation de 
$\smash{\widetilde{\B}} _{\PP} ^{(\bullet)} (T ' )$ par rapport à $X$ 
donne la suite exacte courte:
\begin{equation}
\label{loc-tri-B-t1T}
0\to
\smash{\widetilde{\B}} _{\PP} ^{(\bullet)} (T')
\to
\smash{\widetilde{\B}} _{\PP} ^{(\bullet)} (T )
\to 
u _+ ^{(\bullet)} (\smash{\widetilde{\B}} _{\X} ^{(\bullet)} (T' \cap X))
\to 
0.
\end{equation}
Comme $T'\cap X$ est un diviseur à croisements normaux de $X$ avec $r-1$ composantes irréductibles, 
comme la cohérence est préservée par le foncteur $u _+ ^{(\bullet)}$, 
on conclut alors par hypothèse de récurrence.

{\it Étape II. Cas général.} 
Il résulte du théorème de désingularisation de de Jong qu'il existe
un morphisme de cadres 
de la forme 
$\alpha = (f, g, h)  \colon (\widetilde{\PP} , \widetilde{T} ,\widetilde{X} ,\widetilde{Y}) \to (\PP, T,P,Y)$
tel que
$\widetilde{X} $ soit lisse, 
$\widetilde{T} = f ^{-1} (T)$ et $\widetilde{T} \cap \widetilde{X} $ soit un diviseur à croisements normaux de $\widetilde{X}$,
$f$ soit un morphisme propre et lisse de $\V$-schémas formels séparés et lisses,
$g$ soit un morphisme propre, surjectif, génériquement fini et étale de $k$-variétés.
Posons $\widetilde{\E} ^{(\bullet)}:=
 \R \underline{\Gamma} ^\dag _{\widetilde{X}} f  ^{(\bullet)!} (\smash{\widetilde{\B}} _{\PP} ^{(\bullet)} (T ))$. 
Prouvons que 
$\widetilde{\E} ^{(\bullet)} \in 
\underrightarrow{LD} ^{0} _{\Q, \mathrm{coh}} (\smash{\widetilde{\D}} _{\widetilde{\PP}} ^{(\bullet)})
\cap 
\underrightarrow{LD} ^{0} _{\Q, \mathrm{coh}} (\smash{\widetilde{\D}} _{\widetilde{\PP}} ^{(\bullet)} (\widetilde{T}))$.
D'après \ref{QCoh-local}, comme cela est local en $\widetilde{\PP}$, 
on peut supposer qu'il existe une immersion fermée de $\V$-schémas formels lisses
$u \colon \widetilde{\X} \hookrightarrow \widetilde{\PP}$ qui relève
$\widetilde{X} \hookrightarrow \widetilde{\PP}$.
Dans ce cas, avec \ref{pre-loc-tri-B-t1T-iso} puis \ref{u*Odiv}, on obtient les isomorphismes
$\widetilde{\E} ^{(\bullet)}\riso 
u _{+} ^{(\bullet) } \circ u ^{(\bullet) !} \circ  f  ^{(\bullet)!} (\smash{\widetilde{\B}} _{\PP} ^{(\bullet)} (T ))
\riso 
u _{+} ^{(\bullet) }  (\smash{\widetilde{\B}} _{\widetilde{\X}} ^{(\bullet)} (\widetilde{T} \cap \widetilde{\X}))$.
D'après l'étape $II$, on sait $\smash{\widetilde{\B}} _{\widetilde{\X}} ^{(\bullet)} (\widetilde{T} \cap \widetilde{\X})
\in \underrightarrow{LD} ^{0} _{\Q, \mathrm{coh}} (\smash{\widetilde{\D}} _{\widetilde{\X}} ^{(\bullet)})
\cap 
\underrightarrow{LD} ^{0} _{\Q, \mathrm{coh}} (\smash{\widetilde{\D}} _{\widetilde{\X}} ^{(\bullet)} (\widetilde{T} \cap \widetilde{\X}))$.
Via la factorisation de
$u _{+} ^{(\bullet) } $ du carré de droite de \ref{imm-fer-coh-diag}, 
on en déduit 
$\widetilde{\E} ^{(\bullet)} \in 
\underrightarrow{LD} ^{0} _{\Q, \mathrm{coh}} (\smash{\widetilde{\D}} _{\widetilde{\PP}} ^{(\bullet)})
\cap 
\underrightarrow{LD} ^{0} _{\Q, \mathrm{coh}} (\smash{\widetilde{\D}} _{\widetilde{\PP}} ^{(\bullet)} (\widetilde{T}))$.

Notons 
$\widetilde{\E} := \underrightarrow{\lim} (\widetilde{\E} ^{(\bullet)})$.
Comme $f$ est propre, 
alors $f ^{(\bullet)} _{+} (\widetilde{\E} ^{(\bullet)} ) 
\in 
\underrightarrow{LD} ^{\mathrm{b}} _{\Q, \mathrm{coh}} (\smash{\widehat{\D}} _{\PP } ^{(\bullet)})
\cap 
\underrightarrow{LD} ^{\mathrm{b}} _{\Q, \mathrm{coh}} (\smash{\widetilde{\D}} _{\PP } ^{(\bullet)} (T))$.
On en déduit 
$f _{+} (\widetilde{\E} ) \riso 
\underrightarrow{\lim} (f ^{(\bullet)}  _{+} (\widetilde{\E} ^{(\bullet)} ) )
\in 
\underrightarrow{LD} ^{\mathrm{b}} _{\Q, \mathrm{coh}} (\smash{\widehat{\D}} _{\PP } ^{(\bullet)})$.
Comme $\O _{\PP} (\hdag T) _{\Q}$ est un facteur direct de $f _{+} (\widetilde{\E} )$, 
par pleine fidélité du foncteur 
$\underrightarrow{\lim}\colon 
\underrightarrow{LD} ^{\mathrm{b}} _{\Q, \mathrm{coh}} (\smash{\widetilde{\D}} _{\PP } ^{(\bullet)} (T))
\to 
D ^\mathrm{b} _\mathrm{coh} ( \smash{\D} ^\dag _{\PP } (\hdag T) _{\Q} )$, 
il en résulte que
$\smash{\widetilde{\B}} _{\PP} ^{(\bullet)} (T )$ est un facteur direct de 
 $f ^{(\bullet)} _{+} (\widetilde{\E} ^{(\bullet)} ) $
dans la catégorie 
$\underrightarrow{LD}  ^{\mathrm{b}} _{\Q, \mathrm{coh}} (\smash{\widetilde{\D}} _{\PP } ^{(\bullet)} (T))$, 
et donc dans $\underrightarrow{LD}  ^{\mathrm{b}} _{\Q} (\smash{\widehat{\D}} _{\PP } ^{(\bullet)})$ (en effet, 
le foncteur oubli du diviseur est pleinement fidèle d'après \ref{oub-pl-fid}).
Il résulte de \ref{fact-dirc-coh} que l'on obtient
$\smash{\widetilde{\B}} _{\PP} ^{(\bullet)} (T ) \in 
\underrightarrow{LD} ^{\mathrm{b}}  _{\Q, \mathrm{coh}} (\smash{\widehat{\D}} _{\PP } ^{(\bullet)})$.

\end{proof}

\begin{rema}
\label{lisse}
Lorsque $T$ lisse, on peut vérifier par un calcul direct le théorème 
\ref{coh-Bbulletbis}. 
Enfin, d'après \ref{QCoh-local}, 
comme le résultat à vérifier est local, 
on peut supposer $\PP$ affine et muni de coordonnées locales $t _1, \dots, t _d$ 
telles que $T= V (\overline{t} _1)$, où $\overline{t} _1\in \O _P$ désigne la réduction de $t _1$ modulo $\pi \O _{\PP}$.
Pour tout entier $m\geq 0$, notons 
$\smash{\widetilde{\B}} _{\PP} ^{(m)} (*T ):= \frac{1}{p} \smash{\widetilde{\B}} _{\PP} ^{(m)} (T ) \subset \O _{\PP} (\hdag T ) _{\Q}$ (pour l'injection, voir 
\cite[4.3.3]{Be1}).
Comme $\smash{\widetilde{\B}} _{\PP} ^{(m)} (T )$ est un
sous-$\smash{\widetilde{\D}} _{\PP } ^{(m)} $-module de $\O _{\PP} (\hdag T ) _{\Q}$ (voir \cite[4.2.4]{Be1}), il en est de même de 
$\smash{\widetilde{\B}} _{\PP} ^{(m)} (*T )$.
Comme $\frac{1}{t _1} \in \smash{\widetilde{\B}} _{\PP} ^{(m)} (*T )$, on dispose donc de l'application canonique
$\phi \colon \smash{\widetilde{\D}} _{\PP } ^{(m)}  \to \smash{\widetilde{\B}} _{\PP} ^{(m)} (*T )$
définie par la formule $\phi (P) = P \cdot (\frac{1}{t _1})$. 
Notons $\I $ l'idéal à gauche de 
$\smash{\widetilde{\D}} _{\PP } ^{(m)} $ engendré par $\partial _1 t _1$ et $\partial _2, \dots, \partial _d$
et
$\FF ^{(m)}:= (\smash{\widetilde{\D}} _{\PP } ^{(m)} / \I ) / (\smash{\widetilde{\D}} _{\PP } ^{(m)} / \I ) _{\text{p-tors}}$.
Comme les éléments de $\I$ agissent de manière triviale sur $\frac{1}{t _1}$, comme 
$\smash{\widetilde{\B}} _{\PP} ^{(m)} (*T )$ est sans $p$-torsion, le morphisme $\phi$ se factorise
en la flèche
$\FF ^{(m)} \to \smash{\widetilde{\B}} _{\PP} ^{(m)} (*T )$.
On remarque que comme $\FF ^{(m)} _\Q = \smash{\widetilde{\D}} _{\PP,\Q} ^{(m)} / \I _\Q \subset \smash{\widetilde{\B}} _{\PP } ^{(m)} (T ) _\Q$
(voir \cite[3.2.1.(i) et 4.2.2]{Be0}), 
alors 
$\FF ^{(m)} \subset \smash{\widetilde{\B}} _{\PP} ^{(m)} (*T )$.
On peut donc identifier les éléments de $\FF ^{(m)} $ avec les éléments $f$ de $\O _{\PP} (\hdag T ) _{\Q}$ tel qu'il existe 
$P \in \smash{\widetilde{\D}} _{\PP } ^{(m)}$ tel que $f= P \cdot (\frac{1}{t _1})$.

Vérifions à présent que 
$\smash{\widetilde{\B}} _{\PP} ^{(m)} (T ) \subset \FF ^{(m+1)}$.
Soit $f \in \smash{\widetilde{\B}} _{\PP} ^{(m)} (T ) $.
On a $f = \sum _{k \in \N} a _{k} \frac{p ^k}{t _1 ^{kp ^{m+1}}}$
où $a _k \in \O _{\PP}$ converge vers $0$ pour la topologie $p$-adique lorsque $k$ tend vers l'infini.
Soit $P = \sum _{k \geq 0} (-1) ^{kp ^{m+1} -1} a _k p ^{k} \partial _1 ^{[kp ^{m+1} -1]} \in \smash{\D} ^\dag _{\PP }$. 
On calcule alors dans $\O _{\PP} (\hdag T ) _{\Q}$ la formule $f= P \cdot (\frac{1}{t _1})$. Il suffit alors de vérifier que l'on a en fait 
$P \in \smash{\widetilde{\D}} _{\PP } ^{(m+1)}$.
Posons $kp ^{m+1} -1 = p ^{m+1} q _k + r _k$ avec $0 \leq r _k < p ^{m+1}$.
On a alors 
$\partial _1 ^{[kp ^{m+1} -1]}  = \frac{1}{ q _k !} \partial _1 ^{<kp ^{m+1} -1> _{(m+1)}}$ (voir la formule \cite[2.2.3.2]{Be1}).
Donc 
$P = \sum _{k \geq 0} (-1) ^{kp ^{m+1} -1} a _k  \frac{p ^{k}}{ q _k !} \partial _1 ^{<kp ^{m+1} -1> _{(m+1)}}$.
D'après \cite[2.4.3.1]{Be1} (cette formule est utilisée au niveau $m+1$), 
$v _p (q _k !) \leq \frac{kp ^{m+1} -1 }{p ^{m+1}(p-1)}  \leq \frac{k}{(p-1)}\leq k$.
D'où $P \in \smash{\widetilde{\D}} _{\PP } ^{(m+1)}$.

On en déduit que 
$\smash{\widetilde{\B}} _{\PP} ^{(\bullet)} (T )$
et 
$\FF ^{(\bullet)}$
sont isomorphes dans
$\underrightarrow{LM}  _{\Q} (\smash{\widehat{\D}} _{\PP } ^{(\bullet)})$.

\end{rema}

\bibliographystyle{smfalpha}
\newcommand{\etalchar}[1]{$^{#1}$}
\def\cprime{$'$}
\providecommand{\bysame}{\leavevmode ---\ }
\providecommand{\og}{``}
\providecommand{\fg}{''}
\providecommand{\smfandname}{et}
\providecommand{\smfedsname}{\'eds.}
\providecommand{\smfedname}{\'ed.}
\providecommand{\smfmastersthesisname}{M\'emoire}
\providecommand{\smfphdthesisname}{Th\`ese}

\bigskip
\noindent Daniel Caro\\
Laboratoire de Mathématiques Nicolas Oresme\\
Université de Caen
Campus 2\\
14032 Caen Cedex\\
France.\\
email: daniel.caro@unicaen.fr

\end{document}